\theoremstyle{plain}
\newtheorem{thm}{Theorem}[section]
\newtheorem*{thm*}{Theorem}
\newtheorem{prop}[thm]{Proposition}
\newtheorem*{prop*}{Proposition}
\newtheorem{lemma}[thm]{Lemma}
\newtheorem*{lemma*}{Lemma}
\newtheorem{example}[thm]{Example}
\newtheorem{corollary}[thm]{Corollary}
\newtheorem*{corollary*}{Corollary}
\newtheorem{property}[thm]{}
\newtheorem{conj}[thm]{Conjecture}
\newtheorem{observation}[thm]{Observation}
\newtheorem{defthm}[thm]{Theorem and Definition}
\newtheorem*{ThmIso*}{Theorem \ref{Thm_iso}}
\newtheorem*{ThmRD*}{Theorem \ref{Thm_RDexistence}}
\newtheorem*{ThmConv*}{Theorem \ref{Thm_convexity}}
\newtheorem*{ThmBNpair*}{Theorem \ref{Thm_BNpair}}
\theoremstyle{definition}
\newtheorem{definition}[thm]{Definition}
\newtheorem*{definition*}{Definition}
\theoremstyle{remark}
\newtheorem{remark}[thm]{Remark}
\newtheorem{notation}[thm]{Notation}
\newcommand{\C}{\mathbb{C}} 
\newcommand{\R}{\mathbb{R}} 
\newcommand{\N}{\mathbb{N}}
\newcommand{\Z}{\mathbb{Z}}
\newcommand{\Q}{\mathbb{Q}}
\newcommand{\define}{\mathrel{\mathop:}=}
\newcommand{\ddefine}{\mathrel{=\mathop:}}
\newcommand{\MS}{\mathbb{A}} 
\newcommand{\App}{\mathcal{A}}
\newcommand{\Hype}{\mathcal{H}}
\newcommand{\type}{\mathrm{type}} 
\newcommand{\seg}{\mathrm{seg}} 
\newcommand{\Aut}{\mathrm{Aut}} 
\newcommand{\Gdagger}{\mathrm{G}^\dagger} 
\newcommand{\binfinity}{\partial_\App} 
\newcommand{\RS}{\mathrm{R}}
\newcommand{\CC}{\Sigma}
\newcommand{\rk}{\mathrm{rank}}
\newcommand{\PP}{{\bf\mathrm{P}}} 
\newcommand{\QQ}{{\bf\mathrm{Q}}} 
\newcommand{\fweight}[1][\alpha]{\overline{\omega}_{#1}}
\newcommand{\fcw}[1][\alpha]{{\omega}_{#1}}
\newcommand{\sW}{\overline{W}} 
\newcommand{\aW}{W} 
\newcommand{\WT}{\overline{W}T}
\newcommand{\cf}{c_0} 
\newcommand{\Cf}{\mathcal{{C}}_{f}} 
\newcommand{\Cfm}{-\mathcal{{C}}_{f}} 
\newcommand{\Cp}{\mathcal{{C}}_{p}} 
\newcommand{\dconv}{\mathrm{{conv}}} 
\newcommand{\wgt}{\mathrm{{wgt}}} 
\newcommand{\Hdual}{\overline{H}}
\newcommand{\Ghat}{\hat{\mathcal{G}}} 
\newcommand{\G}{\mathcal{G}} 
\newcommand{\KK}{K_\theta^{(2)}} 
\newcommand{\LL}{L_\theta^{(2)}} 
\newcommand{\TT}{T^{(3)}} 
\newcommand{\const}{\frac{1}{\sqrt{2+\sqrt{2}}}} 
\newcommand{\cconst}{\sqrt{2+\sqrt{2}}} 
\newcommand{\one}{\mathbbm{1}} 
\newcommand{\Qd}{\mathcal{Q}_\mathcal{D}} 
\newcommand{\Sz}{\mathcal{S}z} 
\newcommand{\Ree}{\mathcal{R}ee} 
\newcommand{\Fee}{^2\mathrm{F}_4} 
\newcommand{\Btwo}{\mathcal{B}^\mathcal{D}_2} 
\newcommand{\Ffour}{\mathrm{F}_4} 
\newcommand{\Gtwo}{\mathrm{G}_2} 
\newcommand{\Evier}{E^{(4)}} 
\newcommand{\lb}{\langle} 
\newcommand{\rb}{\rangle} 
\newcommand{\val}{\omega} 
\newcommand{\Ant}{\mathrm{\widetilde{A}_n}}
\newcommand{\tie}{{\bowtie}}
\newcommand{\Tie}{{\mathcal{B}}}
 \newcommand{\TTie}{{\widetilde{\mathcal{B}}}}
\newcommand{\mepi}{{e}} 
\DeclareMathOperator{\Span}{span}
\numberwithin{equation}{thm}
\begin{document}

\hypersetup{pdfauthor={Petra Hitzelberger},pdftitle={Generalized affine buildings}}.


%
%



{\large{Mathematik} } \\
\vspace{15ex}

{\LARGE{Generalized affine buildings:}\\}
{\ }\\
{\Large{Automorphisms, affine Suzuki-Ree Buildings and Convexity}\\}

\vspace{5ex}
{\ }\vfill

{\large{Inaugural-Dissertation}}\\
{\small{zur Erlangung des akademischen Grades eines\\
Doktors der Naturwissenschaften\\
im Fachbereich Mathematik und Informatik\\
der Westf\"alischen Wilhelms Universit\"at M\"unster}}\\

\vspace{7ex}
{\small{vorgelegt von }}\\
{\large{Petra Hitzelberger}}\\
{\small{aus Pirmasens\\2008}}


\thispagestyle{empty}

\newpage

{\ }\vfill
\begin{tabular}{llll}
Dekan & & & Prof. Dr. Dr. h.c. Joachim Cuntz \\
Erster Gutachter & & & Prof. Dr. Linus Kramer \\
Zweiter Gutachter & & & Prof. Dr. Richard M. Weiss \\
Tag der m\"undlichen Pr\"ufung & & & 23. Januar 2009\\
Tag der Promotion & & & 23. Januar 2009
\end{tabular}

\thispagestyle{empty}

%
%
%

\thispagestyle{empty}

\newpage

\begin{flushright}

 \begin{bf}{Parabeln und R\"athsel}\\
 {Nummer 7}\\
 \end{bf}
 \vspace{3ex}
 Ein Gebäude steht da von uralten Zeiten,\\
 Es ist kein Tempel, es ist kein Haus;\\
 Ein Reiter kann hundert Tage reiten,\\
 Er umwandert es nicht, er reitet's nicht aus.\\
 \vspace{2ex}
 Jahrhunderte sind vorüber geflogen, \\
 Es trotzte der Zeit und der Stürme Heer; \\
 Frei steht es unter dem himmlischen Bogen, \\
 Es reicht in die Wolken, es netzt sich im Meer.\\
 \vspace{2ex}
 Nicht eitle Prahlsucht hat es gethürmet, \\
 Es dienet zum Heil, es rettet und schirmet;\\ 
 Seines Gleichen ist nicht auf Erden bekannt, \\
 Und doch ist's ein Werk von Menschenhand.\\
 \vspace{3ex}
 \small{Friedrich Schiller}
  
\end{flushright}

{\ } \vfill
\thispagestyle{empty}
\indent
\newpage 

\newpage
\thispagestyle{empty}
{\ } \vfill
\indent

\newpage
\pagestyle{fancy}
\pagenumbering{roman}
\selectlanguage{english}
\section*{Introduction}
\addcontentsline{toc}{section}{Introduction}
\label{sec_introduction}

Buildings, developed by Jacques Tits beginning in the 1950s and 1960s, have proven to be a useful tool in several areas of mathematics. Their theory is ``a central unifying principle with an amazing range of applications''.\footnote{http://www.abelprisen.no/nedlastning/2008/Artikkel\_7E.pdf, \emph{Why Jacques Tits is awarded the Abel Prize for 2008.}} 

First introduced to provide a geometric framework in order to understand semisimple complex Lie groups,
the theory of buildings quickly developed to an area interesting in its own right.

One essentially distinguishes  three classes of buildings differing in their apartment structure: There are the spherical, affine and hyperbolic (sometimes called Fuchsian) buildings whose apartments are subspaces isomorphic to tiled spheres, affine or hyperbolic spaces, respectively. Affine buildings, which are a subclass of the geometric objects studied in the present thesis, were introduced by Bruhat and Tits in \cite{BruhatTits} as spaces associated to semisimple algebraic groups defined over fields with discrete valuations. They were used to understand the group structure by means of the geometry of the associated building. The role of these affine buildings is similar to the one of symmetric spaces associated to semisimple Lie groups.

Spherical and affine buildings, in the aforementioned sense, were viewed at that time as simplicial complexes with a family of subcomplexes, the \emph{apartments}, satisfying certain axioms. All maximal simplices, the \emph{chambers}, are of the same dimension.
Buildings are extensively studied by numerous authors and several books have been written on this subject. There are for example the recent monograph by Abramenko and Brown \cite{AB}, which is a sequel to the introductory book by Brown \cite{Brown}, Garrett's book \cite{Garrett} and Ronan's \emph{Lectures on buildings} \cite{Ronan}. Great references for their classification, which is due to Tits \cite{TitsComo}, are the books of Weiss \cite{Weiss,AffineW}.

Nowadays several approaches to buildings provide a great variety in the methods used to study buildings as well as in the possibilities for applications. Above, we already mentioned the \emph{simplicial approach} where buildings are viewed as simplicial complexes, but there are several equivalent ways to characterize buildings. 

They can, for example, be described as a set of chambers together with a distance function taking values in a Weyl group. Here one forgets completely about apartments and simplices other than chambers. In this \emph{$W$-metric} or \emph{chamber system approach}, which is explained in \cite{AB}, chambers can be thought of as vertices of an edge colored graph, where two chambers are adjacent of color $i$ if, spoken in the language of the simplicial approach, they share a co-dimension one face of type $i$. This viewpoint is taken in \cite{AffineW}. 

Thinking of a building as the geometric realization of one of these structures just described, it turns out that one obtains a metric space satisfying certain nice properties.
Davis \cite{Davis} proved that each building, be it affine or not, has a metric realization carrying a natural $\mathrm{CAT}(0)$ metric. In the case of affine buildings this was already shown by Bruhat and Tits in \cite{BruhatTits}. In fact, spherical and affine buildings are characterized by metric properties of their geometric realizations, as proven by Charney and Lytchak in \cite{LC}.

This so called \emph{metric approach} is the viewpoint generalizing to non-discrete affine buildings and allows the treatment of geometric realizations of simplicial affine buildings as a subclass of this generalized version.

In \cite{TitsComo} and \cite{BruhatTits, BruhatTits2} affine buildings were generalized allowing fields with non-discrete (non-archimedian) valuations rather than discrete valuations. The arising geometries, which no longer carry a simplicial structure, are nowadays usually called \emph{non-discrete affine buildings} or \emph{$\R$-buildings}. Some readers might be familiar with $\R$-trees which appear in several areas of mathematics. They are the one-dimensional examples. In \cite{TitsComo}, $\R$-buildings were axiomatized and, for sufficiently large rank, classified under the name \emph{syst\`eme d'appartements}. A short history of the development of the axioms can be found in \cite[Appendix 3]{Ronan}. A recent geometric reference for non-discrete affine buildings is the survey article by Rousseau \cite{Rousseau}.

Buildings allowed the classification of semisimple algebraic and Lie groups, but also have many other uses.
Applications are known in various mathematical areas, such as the cohomology theory of groups, number theory, combinatorial group theory or (combinatorial) representation theory, which we make use of in Section~\ref{Sec_KostantConvexity}. Connections to incidence geometry, the theory of Kac-Moody groups (which are used in theoretical physics) and several aspects of group theory are known. For example, specific presentations of groups which act on a building are obtained. 

Furthermore, geometric realizations of buildings provide an interesting class of examples of metric spaces. This leads directly to a connection with differential geometry.  
Studying, for example, asymptotic cones of symmetric spaces, $\R$-buildings arise in a natural way; compare for example the work of Kleiner and Leeb \cite{KleinerLeeb} or Kramer and Tent \cite{KTcones}. Notice that Kleiner and Leeb's  \emph{Euclidean buildings} are, as proven by Parreau \cite{Parreau}, a proper subclass of Tits' syst\`eme d'appartements.

Finally, in \cite{Bennett, BennettDiss} Bennett introduced a class of spaces called \emph{affine $\Lambda$-buildings} giving axioms similar to the ones in \cite{TitsComo}. Examples of these spaces arise from simple algebraic groups defined over fields with valuations now taking their values in an arbitrary ordered abelian group $\Lambda$, instead of $\R$. The biggest difficulty in defining affine $\Lambda$-buildings arose in the definition of an apartment structure and of a metric, which now is $\Lambda$-valued. Bennett was able to prove that affine $\Lambda$-buildings again have simplicial spherical buildings at infinity and made major steps towards their classification.
Throughout this thesis we will refer to affine $\Lambda$-buildings as \emph{generalized affine buildings} to avoid the appearance of the group $\Lambda$ in the name.

The class of generalized affine buildings does not only include all previously known classes of (non-discrete) affine buildings, but also generalizes \emph{$\Lambda$-trees} in a natural way, so that $\Lambda$-trees are just the generalized affine buildings of dimension one. Standard references for $\Lambda$-trees are the work of Morgan and Shalen \cite{MorganShalen}, of Alperin and Bass \cite{AlperinBass} and the book by Chiswell \cite{Chiswell}. Applications of $\Lambda$-trees are explained in Morgan's survey article \cite{Morgan}. 

Studying generalized affine buildings rather than non-discrete affine buildings has an important advantage: the class of generalized affine buildings is closed under ultraproducts. Kramer and Tent made use of this fact in \cite{KramerTent} where they give a geometric construction of generalized affine buildings and a new proof of the topological rigidity of non-discrete affine buildings thereby simplifying the proof of the Margulis conjecture.

Little is known about generalized affine buildings. As far as I know, the only references (besides material on $\Lambda$-trees) are \cite{Bennett, BennettDiss} and \cite{KramerTent}.
The present thesis started out aiming at a classification result for generalized affine buildings (a step in this direction is Theorem~\ref{Thm_iso}) and ended up as a collection of miscellaneous results. We hope, however, that we were able to provide an accessible introduction to the theory of generalized affine buildings and to add some useful results to their structure theory.

In addition to the three subjects explained below, in Sections~\ref{Sec_modelSpace} and \ref{Sec_generalizedAffineBuildings} we study the structure of the model space of apartments, the local and global structure of generalized affine buildings and collect certain facts about the one-dimensional case. The latter is done in Section \ref{Subsec_trees} and Appendix \ref{Sec_prooftrees}. Small simplifications to the work of Bennett \cite{Bennett} are made as well.

Let us mention two of the results of Section \ref{Sec_generalizedAffineBuildings}.
Apartments of generalized affine buildings carry an action of a spherical Coxeter group, called the Weyl group. A closure of a fundamental domain of this action is called a Weyl chamber. We say that two Weyl chambers $S,S'$ in $X$ having the same basepoint $x$ are \emph{equivalent} if they coincide in a non-empty neighborhood of $x$. The equivalence class is called a \emph{germ of $S$ at $x$}. 
In Theorem~\ref{Thm_residue} we show, following Parreau \cite{Parreau}, that the equivalence classes of Weyl chambers based at a given point $x$ form the chambers of a (simplicial) spherical building, the \emph{residue of $X$ at $x$}. These residues are, spoken in the language of chamber systems, precisely the residues of special vertices having co-rank one, which are, by definition, the connected components of the edge colored graph considering adjacency with respect to all but one (special) type.  

In analogy to a classical result it was proven in \cite{Parreau} that $\R$-buildings admit a maximal atlas. Two facts are crucial for her proof of this statement: Given a building $X$ equipped with a system of apartments $\App$, also called \emph{atlas}, one first has to see that an intersection of isometric embeddings of half-apartments is contained in a single apartment with chart in $\App$. Secondly, one has to prove that the germs of Weyl chambers $S$ at $x$ and $S'$ at $x'$ are contained in a common apartment. We were not able to generalize the first, but prove the second fact for generalized affine buildings in Proposition \ref{Prop_A3'}.

This thesis is organized as follows: In Section \ref{Sec_rootSystems} we give a brief overview on root systems. We cover basic material on simplicial buildings, which is needed for Section \ref{Sec_KostantConvexity}, in Section \ref{Sec_classicalCase}. The model apartment of a generalized affine building is defined in Section \ref{Sec_modelSpace} where we also discuss its metric structure.
Generalized affine buildings are then defined in Section \ref{Sec_generalizedAffineBuildings} where the aforementioned results on their local structure are proved. The proof of a theorem on trees, stated in the same section, appears in appendix \ref{Sec_prooftrees}. Section \ref{Sec_automorphisms} contains a generalization of a result by Tits about automorphisms of affine buildings. Convexity results for simplicial as well as for generalized affine buildings are proven in Sections \ref{Sec_KostantConvexity} and \ref{Sec_convexityRevisited}, respectively.  Finally in Section \ref{Sec_ReeSuzuki} we prove a partial result of  \cite{Octagons} using algebraic instead of geometric methods. Note that $\Lambda=\R$ in Section \ref{Sec_ReeSuzuki}. Detailed introductions to the last three topics mentioned are given separately below.
The appendix \ref{Sec_dictionary} of this thesis contains a \emph{dictionary} showing the correspondence of names of objects used in the present thesis and the names of their counterparts in \cite{Bennett, BruhatTits, KapovichMillson, KleinerLeeb, Parreau, TitsComo} and \cite{AffineW}.

Parts of the results contained in Section \ref{Sec_KostantConvexity} are available in \cite{Convexity}. Additional work on the subject covered in Section \ref{Sec_ReeSuzuki} can be found in \cite{Octagons}.
Let me now give a detailed introduction to the three major subjects of the present thesis.

\subsubsection*{Automorphisms}

The main task of Section~\ref{Sec_automorphisms} is to generalize a well known result by Tits \cite{TitsComo} to generalized affine buildings. Theorem~\ref{Thm_iso} gives a necessary and sufficient condition on automorphisms of the building at infinity of a generalized affine building $X$ to extend to an automorphism of $X$.

An affine building $(X,\App)$ gives rise to a collection of panel- and wall-trees which are themselves one-dimensional buildings, i.e. $\Lambda$-trees, encoding the branching of the apartments. In fact, the building can be reconstructed given its building $\binfinity X$ at infinity and these trees. The proof of Theorem~\ref{Thm_iso} goes back to Tits who described points in in an affine building using the spherical building at infinity and the mentioned trees. This idea is precisely reflected in the definition of a \emph{bowtie} which was first given in \cite{Leeb}, where Leeb proves a version of \ref{Thm_iso} for thick simplicial affine buildings. We simplified his definition of a bowtie and generalized it to (not necessarily thick) generalized affine buildings in the sense of Definition~\ref{Def_LambdaBuilding}. The main idea of Leeb's proof does, with suitable changes, carry over to generalized affine buildings.

A certain equivalence relation on the set of bowties of a generalized affine building is defined whose equivalence classes are in one to one correspondence with points in the affine building, see Proposition \ref{Prop_oek5}. Due to the fact that bowties are defined using the tree structure it is not surprising that the crucial condition for a morphism of the buildings at infinity of two different generalized affine buildings $X$ and $Y$ to extend to a morphism from $X$ to $Y$ is, that it has to preserve their tree structure. Maps satisfying this condition are called \emph{ecological}.\footnote{This colorful name, \emph{ecological}, was suggested by Richard Weiss in \cite{AffineW}.}  Note that, in contrast to \cite{TitsComo, Leeb} and \cite{AffineW} we do not assume the buildings to be thick.

Let $\Lambda$ and $\Gamma$ be ordered abelian groups and assume that there exists an epimorphism $\mepi:\Lambda\rightarrow \Gamma$. Let $X_\Lambda$ and $X_\Gamma$ be generalized affine buildings with spherical buildings $\Delta_\Lambda$ and $\Delta_\Gamma$ at infinity. We prove

\begin{ThmIso*}
An ecological isomorphism $\tau:\Delta_\Lambda\rightarrow\Delta_\Gamma$ of the buildings at infinity extends uniquely to a map $\rho:X_\Lambda\to X_\Gamma$ which is compatible with the given apartment structures on $X_\Lambda$ and $X_\Gamma$ and satisfies
\begin{equation*}
d_\Gamma(\rho(x), \rho(y))=\mepi(d_\Lambda(x,y)) \text{ for all points } x,y\in X_\Lambda,
\end{equation*}
where $d_\Lambda$, $d_\Gamma$ are the canonical metrics on $X_\Lambda$ and $X_\Gamma$, respectively.
\end{ThmIso*}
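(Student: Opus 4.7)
My plan is to prove the theorem via the bowtie machinery sketched in the introduction and formalized in Section~\ref{Sec_automorphisms} (culminating in Proposition~\ref{Prop_oek5}). Recall that a bowtie of $X_\Lambda$ is essentially a pair of opposite Weyl chambers of $\binfinity X_\Lambda$ together with coordinates in the associated panel- and wall-trees that pin down where the two chambers ``meet'' in $X_\Lambda$. By \ref{Prop_oek5}, points of $X_\Lambda$ correspond bijectively to equivalence classes of bowties, and similarly for $X_\Gamma$. The plan is to transport bowties from $X_\Lambda$ to $X_\Gamma$ using $\tau$ together with the epimorphism $\mepi:\Lambda\to\Gamma$, descend to equivalence classes, and then verify the required properties.

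The first step is to define a map on bowties. Given a bowtie $\Tie$ in $X_\Lambda$ consisting of two opposite Weyl chambers $S,S^\op$ at infinity together with tree data, apply $\tau$ to the chambers and apply $\mepi$ to the tree-coordinates. Since $\tau$ is ecological, by definition it is compatible with the panel- and wall-tree structures, so this tree data is genuinely transported to tree data of the corresponding panel- and wall-trees of $X_\Gamma$. I then verify that the result is a bowtie in $X_\Gamma$ (the axioms for a bowtie are stated in tree-theoretic language and are preserved by the $\mepi$-pushforward because $\mepi$ is an order-preserving homomorphism). I next check that $\mepi$-equivalent tree data produces $\tau$-equivalent bowties, so that the map descends to a map $\rho:X_\Lambda\to X_\Gamma$ on equivalence classes via \ref{Prop_oek5}.

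Next I check the structural properties. Compatibility with apartments follows because an apartment $A\subset X_\Lambda$ is determined by a pair of opposite Weyl chambers at infinity (together with a basepoint), and $\tau$ being a building isomorphism sends such a pair to an opposite pair in $\binfinity X_\Gamma$. More concretely, fix $A$ with chart $f\in\App_\Lambda$; the image bowties all lie in the apartment $A'\subset X_\Gamma$ corresponding to $\tau$ applied to $\binfinity A$, and the induced map $A\to A'$ reads in model coordinates as the coefficientwise application of $\mepi$ to the $\Lambda$-vector, i.e.\ as the scalar extension along $\mepi$ of the model apartment. This identification immediately yields the metric formula $d_\Gamma(\rho(x),\rho(y))=\mepi(d_\Lambda(x,y))$ for $x,y$ in a common apartment (the canonical metric is computed apartment-wise from the model metric and the model metric is natural in the coefficient group), and any two points lie in a common apartment by the axioms for generalized affine buildings.

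Finally, uniqueness is clear: any extension compatible with the apartment structures and satisfying the metric formula must agree with $\rho$ on every apartment, since on an apartment chart it is forced to be the $\mepi$-pushforward of $f$ composed with the apartment chart in $X_\Gamma$ picked out by $\tau$. I expect the main obstacle to be the first step, specifically the verification that the bowtie axioms (which involve simultaneous conditions on many panel- and wall-trees attached to opposite Weyl chambers) survive the $\mepi$-pushforward in the presence of a possibly non-injective $\mepi$; degeneracies can occur when nonzero elements of $\Lambda$ map to $0\in\Gamma$, so one must show that these degeneracies correspond precisely to the equivalence relation that identifies bowties at the same point of $X_\Gamma$. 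Once this bookkeeping is in place, the rest of the argument follows Leeb's scheme with only cosmetic changes.
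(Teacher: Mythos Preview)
Your approach is essentially the same as the paper's: define $\rho$ on bowties via $\tau$ on chambers and induced maps on the panel trees, check that equivalence is preserved so that $\rho$ descends to points via Proposition~\ref{Prop_oek5}, and then verify the metric formula apartmentwise. Two places deserve sharpening. First, ``apply $\mepi$ to the tree-coordinates'' needs content: the panel- and wall-trees have no canonical coordinates, and what you actually need are maps $\tau_p:T_p\to T_{\tau(p)}$ satisfying $d_{T_{\tau(p)}}(\tau_p(x),\tau_p(y))=\mepi(d_{T_p}(x,y))$; the paper obtains these from the base change functor for $\Lambda$-trees (Proposition~\ref{Prop_baseChangeTrees}) together with the uniqueness Theorem~\ref{Thm_trees}, using precisely the ecological condition on the projective valuations. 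Second, the claim that on an apartment $\rho$ ``reads as coefficientwise $\mepi$'' is the heart of the metric statement and is not automatic: the paper deduces it by relating the wall-tree distances to the hyperplane coordinates of a point (Corollary~\ref{Cor_hyperplaneCoordinates}) and then invoking the explicit formula for $d$ in Proposition~\ref{Prop_distance0x}. Your worry about degeneracies from non-injective $\mepi$ is not where the difficulty lies: equivalence of bowties is generated by adjacency and the $\sW$-action, both visibly preserved by $(c,\hat c,\{y_i\})\mapsto(\tau(c),\tau(\hat c),\{\tau_{p_i}(y_i)\})$, so descent is immediate; the fibers being $(\ker\mepi)^n$ just reflects surjectivity and the kernel of $\mepi$ on each coordinate.
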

%

We obtain Theorem \ref{Thm_corollary1}, which says that for rank at least two, elements of the root groups of the building at infinity extend to the affine building, as a consequence of Theorem \ref{Thm_iso}. The proof of this fact for $\R$-buildings is a major step in the classification of non-discrete buildings \cite{TitsComo}. Chapter 12 of Weiss' book on the classification of affine buildings is dedicated to the proof in the simplicial case. Compare Theorem 12.3 of \cite{AffineW}.

Viewing a building as the set of equivalence classes of its bowties might be useful in other situations as well, like proving a higher dimensional analog of Proposition~\ref{Prop_baseChangeTrees}.

\subsubsection*{Buildings for the Ree and Suzuki groups}

Motivated by a remark made by Tits in \cite{TitsComo} we classified in \cite{Octagons} the non-discrete buildings having Suzuki-Ree buildings at infinity. In Section \ref{Sec_ReeSuzuki} an algebraic proof of a partial result of \cite{Octagons} is given.
The affine buildings dealt with in this section are all modeled on $\Lambda=\R$.

The automorphism group of the building at infinity of an affine $\R$- building contains certain geometrically defined subgroups, the so called \emph{root groups}, which form a \emph{root datum}. A non-discrete  \emph{valuation of the root datum}, defined in \ref{Def_RDvaluation}, is a collection of maps from the root groups to $\R$ satisfying certain ompatability condtions. As proved in \cite{TitsComo}, non-discrete affine buildings with a Moufang building at infinity are (up to equipollence) classified by non-discrete valuations of the root datum.

\emph{Suzuki-Ree buildings} appear as fixed point sets $\Delta^\rho$ of a (non-type preserving) polarity $\rho$ on a spherical building $\Delta$ of type $B_2, G_2$ or $F_4$. The polarity $\rho$ is defined by means of a Tits-endomorphism $\theta$ of the defining field of the spherical building.
These fixed point sets $\Delta^\rho$ are spherical buildings of type $A_1$ in cases $B_2$ and $G_2$ and of type $I_2(8)$ in case $F_4$. 
Let $\Gdagger$ be the subgroup of the automorphism group of $\Delta$ generated by the root groups. The group of automorphisms of $\Delta^\rho$ induced by the centralizer of $\rho$ in $\Gdagger$ is a \emph{Ree-group} in case $G_2$ and $F_4$ and a \emph{Suzuki-group} in case $B_2$. These groups where studied by Tits in \cite{TitsOctagons} and \cite{TitsSuzukiRee}.

The remark Tits made, as mentioned at the beginning, was that an arbitrary real valued non-archimedean valuation of a field $K$ extends to a valuation of the root datum (and hence defines a non-discrete affine building with boundary $\Delta^\rho$) if and only if  $\nu(k^\theta)=\sqrt{p}\;\nu(k)$ for all $k\in K$ where $p$ is the characteristic of $K$. We fill in all details in \cite{Octagons} and prove with purely geometric arguments the slightly stronger result \ref{Thm_strongerResult} that the non-discrete affine buildings determined by the induced valuations of the root datum of $\Delta^\rho$ are naturally embedded in a non-discrete affine building having $\Delta$ as boundary. For a precise formulation of the results compare Theorem 2.1 of \cite{Octagons}.

The following theorem is the main result of Section \ref{Sec_ReeSuzuki}. Based on the surprising equations established in Proposition \ref{Prop_key} we give a purely algebraic proof of this fact.
Note  that in \cite{Octagons} we obtained Theorem \ref{Thm_RDexistence} as a consequence of \ref{Thm_strongerResult}.

\begin{ThmRD*}
Let $K$ be the defining field of $\Delta$. Assume that $\nu$ is a $\theta$-invariant valuation of $K$ such that $\vert \nu(K)\vert \geq 3 $. 
Then $\nu$ extends uniquely to a valuation of the root datum of the Suzuki-Ree building $\Delta^\rho$.
Furthermore, there exists a non-discrete affine building having $\Delta^\rho$ as its building at infinity.
\end{ThmRD*}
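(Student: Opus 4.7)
The plan is to construct, for each root group $U_\alpha$ of the root datum of $\Delta^\rho$, an explicit $\R$-valued map $\varphi_\alpha$ built from $\nu$ and $\theta$, verify the axioms of Definition~\ref{Def_RDvaluation} algebraically, and then invoke the classical result of Bruhat--Tits and Tits (cf.~\cite{TitsComo,BruhatTits}) which asserts that a non-trivial valuation of a Moufang root datum gives rise to a non-discrete affine building having the prescribed Moufang building at infinity. In contrast to the geometric argument of \cite{Octagons}, which deduced the theorem from the embedding statement~\ref{Thm_strongerResult}, I aim to produce the maps $\varphi_\alpha$ by direct computation, using the identities of Proposition~\ref{Prop_key} as the main algebraic input.

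First I would record the standard parameterizations of the root groups of $\Delta^\rho$. In the Suzuki case (coming from $B_2$) and the $G_2$-Ree case, $\Delta^\rho$ is of type $A_1$ and the two root groups admit parameterizations by $K$, suitably twisted by $\theta$. In the $\Fee$ case, $\Delta^\rho$ is of type $I_2(8)$ and its eight root groups alternate between ones parameterized by $K$ and ones parameterized by pairs $(a,b)\in K\times K$ with a group law involving $\theta$. I then set $\varphi_\alpha(u_\alpha(k)) := c_\alpha\,\nu(k)$ on a root group of the first kind and $\varphi_\alpha(u_\alpha(a,b)) := \min\bigl(c_\alpha\,\nu(a),\,c'_\alpha\,\nu(b)\bigr)$ on one of the second kind, the constants $c_\alpha,c'_\alpha$ being fixed by requiring that conjugation by the standard reflections $m(u)$ preserve the filtrations. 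The hypothesis that $\nu$ be $\theta$-invariant is exactly what forces $c_\alpha$ and $c'_\alpha$ to be related in the manner dictated by the root lengths and, in particular, what makes the definitions consistent on the overlaps coming from the polarity $\rho$.

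The main obstacle is axiom (V2), the commutator relations. Axioms (V0), (V1) and (V3) reduce, via the parameterizations, to standard properties of $\nu$, the ultrametric inequality, and equivariance under the $m(u)$-maps, all of which are immediate once the normalizations $c_\alpha$ are in place. For (V2), however, a commutator $[u_\alpha(k),u_\beta(l)]$ in the octagon case expands into a product of root group elements whose parameters are complicated polynomial expressions in $k,l,k^\theta,l^\theta$ and the characteristic $p$; to close the argument one must show that the valuation of every such compound parameter is bounded below by the required linear combination $p\,\varphi_\alpha+q\,\varphi_\beta$. This is precisely what Proposition~\ref{Prop_key} delivers: its identities supply the cancellations and $\theta$-twisted rewritings without which these bounds would fail, and they do so in a form compatible with the minimum-of-valuations definition of $\varphi_\alpha$ on the pair-parameterized root groups.

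Uniqueness is then forced: a valuation of a root datum is determined by its values on the one-parameter subgroups, and these must restrict to a scalar multiple of $\nu$ by the $m(u)$-compatibility, leaving no freedom beyond the common scaling. Finally, the assumption $|\nu(K)|\geq 3$ guarantees that the value group of $\varphi$ is non-trivial in the sense required by the classification, so applying the Bruhat--Tits--Tits construction to the valuation of the root datum just produced yields a non-discrete affine $\R$-building whose building at infinity is $\Delta^\rho$, completing the proof.
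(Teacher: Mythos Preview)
Your overall strategy---define $\varphi_\alpha$ explicitly from $\nu$, verify (V0)--(V3), then invoke Tits' construction---matches the paper's approach exactly. However, several concrete details are wrong in ways that would make the proposal fail as written.

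First, the root groups in cases B and G are \emph{not} one-parameter groups ``parameterized by $K$''. In the Suzuki case (B) both root groups $U_\pm$ are parameterized by $\LL = L\times L$ with a $\theta$-twisted multiplication, and in the $G_2$-Ree case (G) by $\TT = K\times K\times K$; see \ref{Prop_rgCaseB} and \ref{Prop_rgCaseG}. The paper defines $\varphi_\alpha$ via the anisotropic norms $R$ and $N$, namely $\varphi_\alpha(x_\alpha(s,t))=\nu(R(s,t))$ and $\varphi_\alpha(x_\alpha(r,s,t))=\nu(N(r,s,t))$; your min-based formula is correct only because Proposition~\ref{Prop_key} (equations (\ref{Equ_no5}) and (\ref{Equ_no7})) proves these coincide with a minimum, and that equality is already a nontrivial fact, not a definition. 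Your uniqueness argument, which relies on restricting to ``one-parameter subgroups'', collapses for the same reason; uniqueness in the paper is the separate Theorem~\ref{Thm_RDuniqueness}.

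Second, you have the difficulty in the wrong place. Axiom (V2) is \emph{empty} in the rank-one cases B and G, so there it cannot be the ``main obstacle''. The real work is (V1), which is precisely equation~(\ref{Equ_no6}) (resp.\ (\ref{Equ_no8})) of Proposition~\ref{Prop_key}: the cross-term $t^\theta v$ in the group law of $\LL$ (and its analogue in $\TT$) means the submultiplicativity of $\varphi_\alpha$ is not at all the bare ultrametric inequality. Likewise (V3) is far from immediate: it depends on the explicit conjugation formulas of Lemmas~\ref{Lem_NUM19} and~\ref{Lem_tec21}, e.g.\ $x_\alpha(u,v)^{h_\alpha(s,t)}=x_\alpha(uR(s,t)^\theta,\,vR(s,t)^{2-\theta})$, together with the identity $R((s,t)^{-1})=R(s,t)$. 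In case F, (V2) is indeed nontrivial, but the paper checks it by direct comparison with the commutator list \cite[16.9]{TW} and the action of $N$ (Lemma~\ref{Lem_oct2}); Proposition~\ref{Prop_key} enters mainly through (\ref{Equ_no5}) to evaluate $\varphi$, not to produce the required cancellations.
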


As already mentioned, a valuation of the root datum corresponds to a non-discrete affine building. The Moufang spherical buildings associated to the Ree groups of type $F_4$ are precisely the Moufang generalized octagons. We have therefore classified all affine buildings having a Moufang generalized octagon at infinity.

Recent work by Berenstein and Kapovich \cite{BerensteinKapovich} provides the existence of thick non-discrete affine buildings of rank two having generalized $n$-gons, for arbitrary $n\geq 2$, at infinity. Their proof uses a strategy based on a free construction by Tits which provides thick spherical buildings of rank 2 modeled on arbitrary finite Coxeter groups.

\subsubsection*{Convexity}

Kostant \cite{Kostant} proved a convexity theorem for symmetric spaces, generalizing a well known theorem of Schur \cite{Schur}. Let $G\!/\!K$ be a symmetric space and $T$ a maximal flat of $G\!/\!K$. The spherical Weyl group $\sW$
 of $G\!/\!K$  acts on $T$. Denote by $\pi$ the Iwasawa projection of $G$ onto $T$. Kostant proved
that the image of the orbit $K.x$ under the Iwasawa projection is precisely the convex hull of the Weyl group orbit $\sW.x$.

Since affine buildings are in many ways similar to symmetric spaces it is natural to ask whether a convexity result in the spirit of Kostant's can be formulated for affine buildings. It turns out that the analog statement is true, assuming thickness. The role of flats is played by a fixed apartment $A$ in the building $X$, the Iwasawa projection corresponds to a certain retraction $\rho$ of the building onto $A$ and the orbit $K.x$ is replaced by the preimage of the Weyl-group orbit of a vertex $x$ by a second type of retraction which is denoted by $r$.

The retraction $r:X\to A$ is defined with respect to the germ of a certain Weyl chamber $\Cf$ and the retraction $\rho: X\to A$ with respect to a chamber in the building at infinity containing the opposite Weyl chamber $\Cfm$ of the fundamental Weyl chamber $\Cf$ in the apartment $A$. See Sections~\ref{Sec_retractions} and \ref{Sec_convexityThm} for definitions in the simplicial, respectively the generalized setting.

One can, in analogy to the definition of hyperplanes, define \emph{dual hyperplanes} $\Hdual$ which are co-dimension one subspaces of apartments perpendicular to (fundamental) co-weights. We call a subset of an apartment \emph{convex}\footnote{One could also use the somewhat redundant name \emph{dual convexity}.} if it is the intersection of finitely many dual half-apartments determined by dual-hyperplanes. 

In the simplicial case the result reads as follows

\begin{ThmConv*}
Denote by $\QQ$ the co-weight lattice in the fixed apartment $A$. If $X$ is thick we have for each special vertex $x\in X$ that
$$
\rho(r^{-1}(\sW.x)) =\dconv(\sW.x)\cap (x+\QQ),
$$
where $r:X\to A$ and $\rho: X\to A$ are the retractions mentioned above.
\end{ThmConv*}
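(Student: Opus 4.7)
I would prove the two inclusions separately, treating the $\subseteq$ direction as the ``easy'' one (a general comparison of the two retractions) and the $\supseteq$ direction as the one where thickness is essential.

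\emph{Step 1: the lattice condition.} Both $r$ and $\rho$ are type-preserving retractions, and the coweight lattice $\QQ$ acts simply transitively on special vertices of a fixed type in $A$. Hence, for $y\in r^{-1}(\sW.x)$ the point $r(y)\in\sW.x$ is a special vertex of the same type as $x$, so $y$ is, and $\rho(y)\in A$ is a special vertex of the same type as $x$, i.e.\ $\rho(y)\in x+\QQ$. This gives $\rho(r^{-1}(\sW.x))\subseteq x+\QQ$ with no use of thickness.

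\emph{Step 2: the inclusion $\rho(r^{-1}(\sW.x))\subseteq \dconv(\sW.x)$.} The crucial general fact is that for every $y\in X$,
\begin{equation*}
\rho(y)\in\dconv\bigl(\sW.r(y)\bigr).
\end{equation*}
To establish it, choose an apartment $A'$ containing a germ of $\Cf$ at a suitable basepoint so that $r$ coincides with the isomorphism $A'\to A$ identifying these germs; then track $\rho$ along $A'$ via minimal galleries from a chamber in the class of $\Cfm$ (at infinity) to the chamber of $A'$ containing $y$. Using the retraction of galleries onto $A$ and the standard inequality comparing the types of galleries before and after folding, one obtains that $\rho(y)$ and $r(y)$ lie in the same coset of $\QQ$, and that $\rho(y)$ is dominated by $r(y)$ in the partial order induced by the fundamental coweights. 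This is exactly the statement that $\rho(y)\in\dconv(\sW.r(y))$. Applying this with $r(y)\in\sW.x$, so $\sW.r(y)=\sW.x$, yields the claim.

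\emph{Step 3: the reverse inclusion $\dconv(\sW.x)\cap(x+\QQ)\subseteq\rho(r^{-1}(\sW.x))$.} Given $\lambda\in\dconv(\sW.x)\cap(x+\QQ)$, I would construct a ``positively folded'' gallery (Hecke path) $\pi$ in $A$ from some element $w.x\in\sW.x$ to $\lambda$ that is positively folded with respect to the orientation induced by the chamber at infinity defining $\rho$. The existence of such a $\pi$ is a purely combinatorial statement inside the spherical Weyl group and the coweight lattice, and is precisely the description of $\dconv(\sW.x)\cap(x+\QQ)$ as the set of endpoints of positively folded galleries starting in $\sW.x$. Then, inductively at each positive fold of $\pi$, use thickness of $X$ (the existence of a third chamber across the folded panel, not lying in $A$) to ``unfold'' $\pi$ inside $X$: one successively replaces segments of $\pi$ that run along a wall by segments in a neighboring apartment, producing a straight (unfolded) gallery $\tilde\pi$ in $X$ with $\rho\circ\tilde\pi=\pi$. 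The starting endpoint of $\tilde\pi$ lies in an apartment which intersects the germ of $\Cf$ in the same way that the starting point of $\pi$ does, so $r$ sends it to $w.x\in\sW.x$; the terminal endpoint $y$ satisfies $\rho(y)=\lambda$. Hence $y\in r^{-1}(\sW.x)$ with $\rho(y)=\lambda$, as required.

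\emph{Expected main obstacle.} Step~2 reduces to a careful gallery/type-inequality bookkeeping, which although technical is standard. The substantive difficulty is Step~3, and within it the unfolding procedure: one must verify that a positively folded gallery can be unfolded, one fold at a time, inside the thick simplicial building while preserving the image under $\rho$ and controlling where the starting endpoint lands under $r$. The correct combinatorial setup (choice of base chamber at infinity, orientation of panels, type-compatibility across folds) and the inductive use of thickness at each fold are what make the reverse inclusion the heart of the argument.
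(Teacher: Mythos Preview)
Your overall architecture---reduce both inclusions to statements about positively folded galleries, then unfold using thickness---is exactly the paper's strategy. Your Step~3 unfolding is precisely Proposition~\ref{Prop_existencePreimage}, and the paper carries it out just as you describe: at each fold one uses thickness to branch into a new apartment, reflects the tail of the gallery there, and iterates; Lemma~\ref{Lem_compatibleRetractions} guarantees the composite of the apartment-wise retractions is $\rho$.

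Where you and the paper diverge is in locating the difficulty. You treat the unfolding as the main obstacle and take for granted (in Step~3) that every lattice point of $\dconv(\sW.x)$ is the endpoint of some positively folded gallery of the correct type, calling this ``a purely combinatorial statement.'' In the paper this is Proposition~\ref{Prop_existenceLSgalleries}, and it is the genuine crux: the paper proves it not combinatorially but via the Gaussent--Littelmann character formula for highest weight representations (Theorem~\ref{Thm_characterFormula}), which simultaneously gives both the existence direction you need in Step~3 and the containment direction you sketch in Step~2. A purely combinatorial proof does exist (Parkinson--Ram, discussed in Section~\ref{Sec_ParkinsonRam}), but it is not a triviality one can simply cite as ``the description of $\dconv(\sW.x)\cap(x+\QQ)$.''

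Conversely, your Step~2 proposes a direct gallery-folding argument for $\rho(y)\in\dconv(\sW.r(y))$, bypassing representation theory. This is plausible and would be more elementary than what the paper does for that inclusion, but your sketch (``standard inequality comparing the types of galleries before and after folding'') does not yet pin down the key point: one needs that each positive fold moves the endpoint by a \emph{nonnegative} coroot combination, which is exactly the content of Lemma~\ref{Lem_conv^*}. So: right plan, but the weight is on the step you passed over, not on the one you flagged.
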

Note that $x+\QQ$ is precisely the set of special vertices in $A$ having the same type as $x$. 

Let $G$ be a group with affine $BN$-pair whose spherical $BN$-pair is split, i.e. the spherical $B$ equals $UT$, where $T$ can be interpreted as a group of translations on the fixed apartment $A$. Let $K$ be the stabilizer in $G$ of the origin in $A$. The special vertices in $A$ of the same type as $0$ correspond to cosets $tK$ of $K$, with $t\in T$. In this case the result can be reformulated as follows.
 \begin{ThmBNpair*}
For all $tK\in A$ we have 
$$
\emptyset \neq Ut'K \cap KtK \:\Longleftrightarrow\: t'K \in \dconv(\sW. tK).
$$
\end{ThmBNpair*}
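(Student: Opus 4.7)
The plan is to deduce Theorem \ref{Thm_BNpair} directly from Theorem \ref{Thm_convexity} by translating both sides into geometric language using the standard dictionary between the affine BN-pair $(G,B,N)$ and its associated simplicial building. Set $x := t \cdot 0$ and $x' := t' \cdot 0$. The bijection $tK \mapsto t \cdot 0$ identifies the cosets $\{tK : t \in T\}$ with the set $x + \QQ$ of special vertices of $A$ of the same type as $0$, and it is $\sW$-equivariant, so the right-hand condition $t'K \in \dconv(\sW \cdot tK)$ becomes $x' \in \dconv(\sW \cdot x) \cap (x + \QQ)$, which matches exactly the right-hand side of Theorem \ref{Thm_convexity}.

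For the left-hand side I would first establish two orbit descriptions of the retraction fibers on the set $V$ of special vertices of type $0$: namely
$$r^{-1}(\sW \cdot x) \cap V = K \cdot x \quad \text{and} \quad \rho^{-1}(x') \cap V = U \cdot x'.$$
The first follows from $K = \mathrm{Stab}_G(0)$ together with the fact that $K$ acts transitively on apartments containing $0$ and the germ of $\Cf$ used to define $r$, so that the $K$-orbit of $x$ meets $A$ exactly in $\sW \cdot x$. The second uses the splitness hypothesis $B = UT$ to identify $U$ with the stabilizer of the chamber at infinity containing $\Cfm$ used to define $\rho$, after which the $U$-orbits coincide with the $\rho$-fibers on $V$.

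With these two identifications in hand, the group-theoretic condition unfolds as
\begin{align*}
Ut'K \cap KtK \neq \emptyset
&\iff \exists\, g \in G : gK \in U \cdot t'K \cap K \cdot tK \\
&\iff \exists\, v \in V : v \in U \cdot x' \cap K \cdot x \\
&\iff \exists\, v \in V : \rho(v) = x' \text{ and } r(v) \in \sW \cdot x \\
&\iff x' \in \rho(r^{-1}(\sW \cdot x)),
\end{align*}
and applying Theorem \ref{Thm_convexity} closes the equivalence. The main obstacle is the verification of the two orbit descriptions of the retraction fibers: for classical Bruhat--Tits buildings these are essentially the Cartan and Iwasawa decompositions, but in the axiomatic simplicial BN-pair framework of Section \ref{Sec_KostantConvexity} one has to pin down carefully which chamber of $\binfinity A$ is fixed by $U$ and to check that this is indeed the chamber used to define $\rho$; the splitness hypothesis $B = UT$ enters precisely at this point, and the thickness assumption inherited from Theorem \ref{Thm_convexity} ensures that no unintended coincidences collapse the two fiber descriptions.
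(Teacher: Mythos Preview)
Your proposal is correct and follows essentially the same route as the paper: the paper sets up the same dictionary in the paragraphs preceding the theorem (identifying $r^{-1}(\sW.tK)=KtK$ and $\rho^{-1}(t'K)=Ut'K$ via the actions of $K$ and $U$) and then declares the statement a ``direct reformulation of Theorem~\ref{Thm_convexity}'', whereas you spell out explicitly the chain of equivalences that this reformulation entails. One small point of alignment: in the paper's simplicial setting $r=r_{A,\cf}$ is the retraction centered at the fundamental \emph{alcove} $\cf$ rather than at a germ, but this does not affect your argument since the transitivity of $K$ on apartments through $0$ (Lemma~\ref{Lem_preimage}) yields the same orbit description.
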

Details can be found in Section \ref{Sec_openProblem}.

Extending $\rho$ and $r$ to galleries the proof of Theorem \ref{Thm_convexity} can be reduced to the following two facts.
\begin{itemize}
  \item[\ref{Prop_existenceLSgalleries}] All vertices in $\dconv(\sW.x)$ of the same type as $x$ are endpoints of positively folded galleries of a certain (fixed) type. All such endpoints are contained in this set.
 \item[\ref{Prop_existencePreimage}] Every positively folded gallery in the statement above has a pre-image under $\rho$ which is a minimal gallery starting at $0$.
\end{itemize}
We obtain Proposition~\ref{Prop_existenceLSgalleries} as a consequence of a character formula for highest weight representations given by Gaussent and Littelmann in \cite{GaussentLittelmann}. 
It is obvious that we have to replace the representation theoretic arguments by something combinatorial in order to be able to prove the analog result for generalized affine buildings. During the preparation of this thesis Parkinson and Ram published the preprint \cite{ParkinsonRam} providing a combinatorial proof of \ref{Prop_existenceLSgalleries} on which the generalization made in Section \ref{Sec_convexityRevisited} is based. 

Unfortunately, generalizing Theorem \ref{Thm_convexity} turns out to be much more complicated than we had hoped for and we did not succeed in proving a direct analog. However, we were able to prove in Theorem \ref{Thm_convexityGeneral} that the statement is true for thick generalized affine buildings satisfying a certain condition (FC) described in \ref{Prop_finiteCovering}. We conjecture that (FC) holds for all generalized affine buildings. \emph{(Conjecture proven in Appendix \ref{Sec_FC}.)}

In Section \ref{Sec_looseEnds} we explain problems that occur generalizing the idea of \cite{ParkinsonRam} and formulate in \ref{Conj_convexityGeneral} the precise analog of Theorem \ref{Thm_convexity}. We are able to prove this fact in dimension one, see \ref{Thm_Conj1}.

\subsubsection*{Acknowledgments}
First of all it is a pleasure to thank my advisors Prof. Dr. Linus Kramer and Prof. Dr. Richard Weiss. I am profoundly grateful for all they taught me, for their support and their motivating interest in my research.
I am indebted to the \emph {Studientiftung des deutschen Volkes}, the \emph{Deutsche Forschungsgemeinschaft} and the  \emph{Sonderforschungsbe\-reich SFB 478 “Geometrische Strukturen in der Mathematik”}  for their financial support. 
Further I would like to thank Prof. Dr. Peter Abramenko  and Prof. Dr. Hendrik Van Maldeghem for their hospitality. Special thanks go to the members of Tufts University Department of Mathematics who made my stay a memorable experience. 
Finally I would like to thank all who helped by some means or other to make this thesis possible.

\newpage
\pagestyle{empty}

\tableofcontents

\newpage
\thispagestyle{empty}
\indent

\newpage
\pagestyle{fancy}
\pagenumbering{arabic}

\section{Root systems, Weyl groups and lattices}\label{Sec_rootSystems}

In this section we collect standard facts about root systems, the affine Weyl group and the root and weight lattice. This is mainly done in order to fix notation. If you are familiar with the presented material you might want to skip this section. For details and proofs we refer to \cite[Chapter IV,9 and Chapter VI]{Bourbaki4-6}.

\subsection{Definition of root systems}

\index{reflection}
Let $V$ be a (finite dimensional) Euclidean vector space with scalar product $(\cdot, \cdot)$.
For every nonzero vector $\alpha\in V$ denote by $s_\alpha$ the associated \emph{reflection} defined by 
\begin{equation}
\label{Equ_reflection}
s_\alpha(\beta) = \beta - 2\frac{(\alpha, \beta)}{(\alpha,\alpha)}\alpha.
\end{equation}

The reflection $s_\alpha$ fixes the hyperplane $H_\alpha=\{x\in V : (\alpha,x)=0\}$.

\begin{definition}\label{Def_rootsystem}
\index{{root system}}
A subset $\RS$ of $V$ is called \emph{root system} if the following two axioms are satisfied
	\begin{description}
	\item[(RS1)] $\RS$ is finite, spans $V$ and does not contain $0$, and 
	\item[(RS2)] $\RS$ is $s_\alpha$ invariant for all $\alpha \in \RS$. 
	\end{description}
The elements of $\RS$ are called \emph{roots}, the dimension of $V$ is called \emph{rank of $\RS$}. 
If in addition 
	\begin{description}
	\item[(RS3)] for all $\alpha, \beta \in \RS$ the value $\frac{2(\alpha, \beta)}{(\alpha, \alpha)}$ is an integer.
	\end{description}
the root system is called \emph{crystallographic}.
\end{definition}

Root systems as defined above are sometimes called \emph{reduced}. In order to include the dihedral groups we do in general not assume root systems to be crystallographic. If so, we mention it separately.

\begin{definition}
\index{{Weyl group}!spherical}
\label{Def_Weylgroup}
Let $\RS$ be a root system. The \emph{spherical Weyl group} $\sW$ is the group generated by the reflections $s_\alpha$, $\alpha\in \RS$. The connected components of $V\setminus \{H_\alpha : \alpha \in \RS \}$ are called \emph{open Weyl chambers} and their closures \emph{Weyl chambers} of $\RS$.
\end{definition}

\begin{remark}\label{Rem_CoxeterComplex}
\index{Weyl simplex}
Note that Weyl chambers are polyhedral cones in $V$. A face of the cone is called \emph{Weyl simplex}. The name is well chosen, since the set of all faces of Weyl chambers forms a simplicial complex called the \emph{Coxeter complex} of $\RS$. We will denote this simplicial complex with $\CC(\RS)$.
A \emph{root} of $\CC(\RS)$ is the set of Weyl simplices contained in a half-apartment determined by the hyperplanes $H_{\alpha,0}$, as defined below.
\end{remark}

\index{{Weyl group}!{length function}}
The group $\sW$ acts simply transitive on the set of Weyl chambers in $V$. Each Weyl chamber is a fundamental domain and has $n$ bounding hyperplanes $H_{\alpha_i}$ called \emph{walls}. The $n$ roots perpendicular to these walls and contained in $H_{\alpha_i}^+=\{x\in V : (\alpha,x)\geq 0\}$ form a \emph{basis} $B$ of $\RS$, which is at the same time a basis of the vector space $V$. For each such basis $B$ the $s_\alpha$, $\alpha\in B$, generate the Weyl group, which is a spherical Coxeter group. Given a basis $B$ one can define a \emph{length function} $l:\sW \rightarrow \N_0$ with $l(w)=\min \{k : w= s_{\alpha_1}\cdots s_{\alpha_k}\}$ with $\alpha_i\in B$ for all $i$. Denote by $w_0$ the longest element in $\sW$.

\index{{root system}!{simple root}}
\index{{root system}!basis}
\index{{root system}!{positive root}}
\index{{fundamental chamber}}

The elements of a basis $B$ are sometimes called \emph{simple roots}. They can canonically be enumerated as explained in \cite[VI,1,5, Remark 7]{Bourbaki4-6}. Hence $B=\{\beta_1, \ldots, \beta_n\}$, where $n=\rk(\RS)$. A root $\alpha\in\RS$ is called \emph{positive}, with respect to $B$, if the half-apartment $H_\alpha^+ =\{x\in V :\; (\alpha,x)\geq 0 \}$ contains the \emph{fundamental Weyl chamber} $C\define \cap_{\alpha_i\in B} H_{\alpha_i}^+$. Every positive root is a linear combination of simple roots with integer coefficients. We follow \cite{Bourbaki4-6} and denote half the sum of the positive roots by $\rho$.\index{$\rho$}

\begin{example}\label{Ex_I2(8)}
We give an example for a non-crystallographic root system. Let $\RS$ be a set of sixteen unit length vectors evenly distributed around the unit circle in $\R^2$. Enumerate them as illustrated in figure~\ref{Fig_figure1}. The associated spherical Weyl group, which is for example generated by $\alpha_1$ and $\alpha_8$, is the Coxeter group of type $I_2(8)$. The corresponding Coxeter complex is isomorphic to an apartment of a generalized octagon, i.e. a $16$-gon.

\begin{figure}[htbp]
\begin{center}
	\resizebox{!}{0.3\textheight}{\input{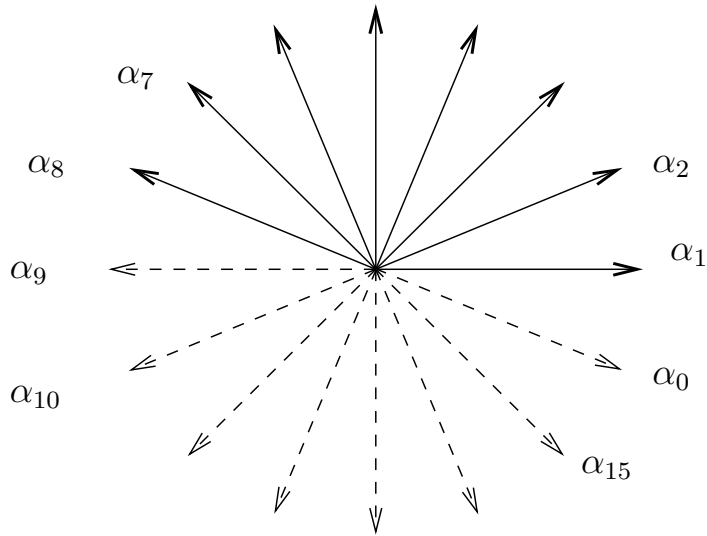}}
	\caption[octagon]{root system of type $I_2(8)$}
	\label{Fig_figure1}
\end{center} 
\end{figure}

\end{example}

\begin{definition}
\index{{root system}!dual}
Let $\RS$ be a root system. For all $\alpha \in \RS$ there exists a unique element $\alpha^\vee$ of the dual space $V^*$ of $V$ such that, 
by the usual identification of $V$ and $V^*$ via the scalar product, $\alpha^\vee$ corresponds to $\frac{2\alpha}{(\alpha, \alpha)}$.
The element $\alpha^\vee$ of $V^*$ is called \emph{dual root of $\alpha$} and the set $\RS^\vee$ the \emph{dual root system of $\RS$}.
\end{definition}

Note that by definition the evaluation $\langle \alpha,\alpha^\vee \rangle$ equals $2$.

The set $\RS^\vee$ is itself a root system, which is crystallographic if and only if $\RS$ is. In particular $(\RS^\vee)^\vee =\RS$. Identifying $V$ and $V^*$, the reflections $s_\alpha$ can be written as 
\begin{equation}
s_\alpha(\beta)=\beta - \alpha^\vee(\beta) \;\alpha.
\end{equation}

\subsection{The geometry of the affine Weyl group}

Let $\RS$ be crystallographic. 

\begin{definition}\label{Def_reflection}
\index{{reflection}!{affine}}
\index{{hyperplane}}
\index{{Weyl group}!affine}
Given $\alpha\in\RS$ and $k\in \Z$. Denote by $s_{\alpha, k}$ the \emph{affine reflection}
\begin{equation}
\label{Equ_affineReflection}
s_{\alpha, k}(x) = s_\alpha(x) + k\frac{2}{(\alpha,\alpha)} \alpha,  \text{ for all } x\in V,
\end{equation}
and by $H_{\alpha, k}$ the \emph{affine hyperplane}
\begin{equation}
\label{Equ_affineHyperplane}
H_{\alpha, k} 
	= \{x\in V : (\alpha, x) = k\} 
	= \{x\in V : \lb x,\alpha^\vee\rb = k\frac{(\alpha, \alpha)}{2} \}.
\end{equation}
The group generated by the set $\{s_{\alpha,k} :\; \alpha\in \RS, k\in\Z \}$ of affine reflections is called \emph{affine Weyl group} and denoted by $\aW$.
\end{definition}

Note that $s_{\alpha, k} = t_{\frac{2k}{(\alpha, \alpha)} \alpha} \circ s_\alpha$, where $t_v$ denotes the translation in $V$ by $v$. For all $\alpha\in\RS$ and all $k\in \Z$ the fixed point set of the affine reflection $s_{\alpha,k}$ equals the hyperplane $H_{\alpha, k}$. 
Denote by $\Hype$ the set $\{H_{\alpha,k} : \alpha\in\RS , k\in\Z\}$ of all affine hyperplanes.

\index{alcove}
Connected components $V\setminus \{\Hype\}$ are colled \emph{open alcoves}, their closures \emph{alcoves}. We have chosen the name alcove to avoid confusion with the chambers of $\sW$. 

\begin{definition}
\index{vertex!special}
\index{vertex}\label{Def_vertex}
An element $v\in V$ is called \emph{vertex} if it is an extremal point of an alcove. A vertex is called \emph{special} if it is the intersection of $n=\rk(\RS)$ hyperplanes.
\end{definition}

One can color the vertices of the fundamental alcove. This coloring extends to a coloring of all vertices in $V$ and we say two \emph{vertices are of the same type} if they have the same color.

\subsection*{The root and weight lattice}

\index{{co-root lattice}}
\index{{co-root lattice}!{radical coweights}}
\index{{weight lattice}}
\index{{weight lattice}!{weights}}
Let $\RS$ be a crystallographic root system.

The additive subgroup of $V$ generated by $\RS$ is a lattice, which is a free abelian subgroup of $V$ generated by a vector space basis. We call it the \emph{root lattice} $\QQ(R)$ of the root system $R$. We have
\begin{equation}\label{Equ_roots}
\QQ(R)=\{ x=\sum_{\alpha\in B} k_\alpha \alpha \in V : k_\alpha\in \Z \text{ for all } \alpha\in B\}.
\end{equation}
The \emph{weight lattice} $\PP(\RS)$ of $\RS$ is defined by the equation below. It is 
by Proposition 26 in chapter VI.9 of \cite{Bourbaki4-6} a discrete subgroup of $V$ containing $\QQ(\RS)$. 
\begin{equation}\label{Equ_weights}
\PP(\RS)=\{x\in V : \lb x,\alpha^\vee\rb\in \Z \text{ for all } \alpha\in\RS \}.
\end{equation}

Assume that $\RS$ is reduced and let $B$ be a basis of $\RS$. The elements of the dual basis of $B^\vee$ in $V$ are called \emph{fundamental weights}. They form a basis of $\PP(Q)$. Denote the dual element of $\alpha^\vee$ by $\fweight[\alpha]$, hence $\{\fweight\}_{\alpha\in\RS}$ is a basis of $\PP(\RS)$.
With this notation
\begin{equation*}
\PP(\RS)=\{x=\sum_{\alpha\in B} k_\alpha\fweight[\alpha] : k_\alpha\in\Z \text{ for all } \alpha\in\RS \}.
\end{equation*}

Obviously one can consider the \emph{co-root-lattice} $\QQ(\RS^\vee)$ of $\RS$ and the \emph{co-weight lattice} $\PP(\RS^\vee)$ of $\RS$. The fundamental weights of $\RS^\vee$ are sometimes called \emph{fundamental co-weights} of $\RS$. The fundamental co-weight corresponding to $\alpha^\vee$ is denoted by $\fcw$. Observe that identifying $V^*$ and $V$ in the usual way we have that
$$\QQ(R^\vee)\subset \QQ(R)\subset \PP(R)\subset \PP(R^\vee).$$

Is $\RS$ for example of type $\Ant$ then $\QQ(\RS)$ equals $\QQ(\RS^\vee)$ which is not the case in general.

\begin{prop}
Let $\RS$ be a reduced root system let $B$ be a basis of $\RS$ and let $\Cf$ be the associated fundamental Weyl chamber.
\begin{itemize}
  \item Let $x$ be an element of $V$. Then $x\in\Cf$ if and only if $\langle x,\alpha^\vee \rangle \geq 0$ for all $\alpha\in B$.
  \item For all $\alpha, \beta\in B$ we have $\langle \fweight, \beta^\vee  \rangle =\delta_{\alpha,\beta}$ and, symmetrically, 
$\lb \beta,\fcw\rb = \delta_{\alpha,\beta}$.
  \item Let $\rho=\frac{1}{2}\sum_{\alpha\in\RS^+} \alpha$ be as defined above. Then $\rho=\sum_{\alpha\in B} \fweight \in\Cf$.
  \item For all $\alpha\in B$ we have $s_\alpha(\rho)=\rho-\alpha$.
  \item Finally $\langle \rho, \alpha^\vee\rangle =1$ for all $\alpha\in B$ and, symmetrically, with $\rho^\vee=\frac{1}{2}\sum_{\alpha^\vee\in (\RS^\vee)^+} \alpha^\vee$ also $\langle \alpha,\rho^\vee \rangle =1$ for all $\alpha\in B$.

\end{itemize}
\end{prop}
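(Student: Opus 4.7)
The proposition bundles five standard facts about reduced crystallographic root systems, and my plan is to prove them in an order that lets each later claim rest on the earlier ones rather than on Bourbaki chapter references.

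First I would handle (1) as a direct translation of the definition of $\Cf$. By definition $\Cf=\bigcap_{\alpha\in B}H_\alpha^+=\bigcap_{\alpha\in B}\{x\in V:(\alpha,x)\geq 0\}$, and since the identification of $V$ with $V^*$ sends $\alpha^\vee$ to $2\alpha/(\alpha,\alpha)$, the inequality $(\alpha,x)\geq 0$ is equivalent to $\lb x,\alpha^\vee\rb\geq 0$. For (2), the first equality $\lb\fweight[\alpha],\beta^\vee\rb=\delta_{\alpha,\beta}$ is simply the definition of $\{\fweight[\alpha]\}_{\alpha\in B}$ as the basis of $V$ dual to $\{\alpha^\vee:\alpha\in B\}$. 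For the symmetric equality, I apply the same reasoning with $\RS$ replaced by the dual root system $\RS^\vee$: the fundamental co-weights $\fcw[\alpha]$ form the basis of $V^*$ dual to $B$, so $\lb\beta,\fcw[\alpha]\rb=\delta_{\alpha,\beta}$.

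The heart of the argument is (4). The key combinatorial fact I would invoke (or prove in one line from \cite[VI,1,6]{Bourbaki4-6}) is that for each $\alpha\in B$ the reflection $s_\alpha$ permutes the set $\RS^+\setminus\{\alpha\}$ and sends $\alpha$ to $-\alpha$. Writing $2\rho=\sum_{\beta\in\RS^+}\beta$, this yields
\begin{equation*}
s_\alpha(2\rho)=\sum_{\beta\in\RS^+\setminus\{\alpha\}}\beta+s_\alpha(\alpha)=2\rho-2\alpha,
\end{equation*}
hence $s_\alpha(\rho)=\rho-\alpha$. Now (5) drops out by comparing the reflection formula $s_\alpha(\rho)=\rho-\lb\rho,\alpha^\vee\rb\alpha$ with the identity just obtained, giving $\lb\rho,\alpha^\vee\rb=1$ for all $\alpha\in B$; the symmetric statement $\lb\alpha,\rho^\vee\rb=1$ is the same argument applied to $\RS^\vee$.

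Finally (3) follows from (5) and (2): the element $\rho\in V$ satisfies $\lb\rho,\beta^\vee\rb=1$ for every $\beta\in B$, and the fundamental weight $\fweight[\alpha]$ is characterized by $\lb\fweight[\alpha],\beta^\vee\rb=\delta_{\alpha,\beta}$, so expanding $\rho$ in the basis $\{\fweight[\alpha]\}_{\alpha\in B}$ forces $\rho=\sum_{\alpha\in B}\fweight[\alpha]$. Since each $\fweight[\alpha]$ satisfies $\lb\fweight[\alpha],\beta^\vee\rb\geq 0$ for all $\beta\in B$, part (1) gives $\fweight[\alpha]\in\Cf$, and $\Cf$ being a convex cone implies $\rho\in\Cf$. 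The only step requiring genuine work is the permutation lemma underlying (4); everything else is bookkeeping with dual bases and the reflection formula.
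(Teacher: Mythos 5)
Your proof is correct. Note that the paper does not argue this proposition at all: its ``proof'' is a one-line citation to page 181 of \cite{Bourbaki4-6}, so your write-up essentially supplies the standard argument that the paper outsources. Within that argument the structure is sound: (1) and (2) are indeed definitional once one unwinds the identification $\alpha^\vee \leftrightarrow 2\alpha/(\alpha,\alpha)$ and the dual-basis definitions of $\fweight$ and $\fcw$; the permutation lemma (that $s_\alpha$ permutes $\RS^+\setminus\{\alpha\}$ for simple $\alpha$) is the only substantive ingredient, and it is exactly where the hypothesis that $\RS$ is \emph{reduced} enters --- for a non-reduced system $s_\alpha$ would also have to account for $2\alpha$, so it is worth stating that dependence explicitly rather than leaving it inside the Bourbaki reference. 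Deriving (5) by comparing $s_\alpha(\rho)=\rho-\lb\rho,\alpha^\vee\rb\alpha$ with $s_\alpha(\rho)=\rho-\alpha$, and then (3) by expanding $\rho$ in the basis $\{\fweight\}_{\alpha\in B}$ and using that $\Cf$ is a convex cone containing each $\fweight$, is the same chain of deductions Bourbaki uses, just reordered so that (4) and (5) precede (3); the logical dependencies are respected, so the reordering is harmless.
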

\begin{proof}
See page 181 of \cite{Bourbaki4-6}.
\end{proof}

\index{{root system}!{highest root}}
There exists a unique \emph{highest root} $\theta \in \RS$ satisfying $(\fcw[i],\theta)\geq (\fcw[i],\alpha)$ for all $1\leq i\leq n$ and for all $\alpha\in\RS$.

The \emph{fundamental alcove} corresponding to a basis $B$ of $\RS$ is the set 
\begin{equation}
\label{Equ_fundAlcove}
\cf=\{x\in V : (\theta, x ) \leq 1 \text{ and } (\alpha_i, x)\geq 0 \text{ for all } i=1,\ldots, n \}.
\end{equation}
The affine Weyl group acts simply transitively on the set of alcoves. Hence the elements of $\aW$ are in bijection with the alcoves. Identify $\cf$ with the identity of $\aW$. Note that $\cf$ is a fundamental domain of the $\aW$-action on $V$.

\begin{remark}
If it is clear from the context to which root system we refer we will simply write $\QQ$, respectively $\PP$, instead of $\QQ(\RS)$ and $\PP(\RS)$.
For the translation subgroups of $V$ generated by $\{t_v,\, v\in \QQ\}$ and $\{t_v,\, v\in \PP\}$ we will use the same letters $\QQ$, respectively $\PP$. It should not be difficult to figure out from the context if an element of $\PP$ is interpreted as a translation or a vector space element. 
\end{remark}

\begin{prop}
Let $\RS$ be a crystallographic root system and let $\aW$ denote its affine Weyl group. Identify $V$ and $V^*$ via the scalar product. Then 
\begin{enumerate}
 \item $\aW$ is the semidirect product of $\sW$ by $\QQ(\RS^\vee)$, where $\QQ(\RS^\vee)$ is interpreted as the group of translations $t_v$ associated to the elements $v$ of the co-root lattice.
 \item The hyperplanes $H_{\alpha, k}=\{x\in V : \frac{(\alpha,\alpha)}{2}\lb x, \alpha^\vee\rb = k\}$, with $\alpha\in \RS$ and $k\in\Z$, are permuted by $\aW$.
 \item The special vertices of $\aW$ are precisely the elements of the co-weight lattice of $\RS$ which are by definition the weights of $\RS^\vee$.  Two special vertices $x,y\in P$ are of the same type if and only if there exists $v\in  \QQ(\RS^\vee)$ with $x=t_v(y)=y+v$. The co-root lattice $\QQ(\RS^\vee)$ of $\RS$ acts transitive on the set of special vertices with fixed type.
\end{enumerate}
Moreover 
$s_\beta(H_{\alpha, k}) = H_{s_{\beta}(\alpha), k}$
and 
$t_{\beta^\vee}(H_{\alpha, k})=H_{\alpha, k+\beta^\vee(\alpha)},$
where $t_{\beta^\vee}$ is the translation in $V$ by $\frac{2}{(\beta, \beta)}\beta$.
\end{prop}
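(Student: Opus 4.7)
The plan is to reduce everything to the identity $s_{\alpha,k}=t_{k\alpha^\vee}\circ s_\alpha$, which is immediate from Equation~(\ref{Equ_affineReflection}) and the identification $\alpha^\vee=2\alpha/(\alpha,\alpha)$, together with two direct computations describing how reflections and translations move hyperplanes.

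First I would verify the two displayed formulas. A point $y$ lies in $s_\beta(H_{\alpha,k})$ iff $(\alpha,s_\beta(y))=k$; by self-adjointness of $s_\beta$ this reads $(s_\beta(\alpha),y)=k$, giving $H_{s_\beta(\alpha),k}$. Similarly $y\in t_{\beta^\vee}(H_{\alpha,k})$ iff $(\alpha,y-\beta^\vee)=k$, i.e.\ $(\alpha,y)=k+\beta^\vee(\alpha)$.

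For (1), the identity $s_{\alpha,k}=t_{k\alpha^\vee}\circ s_\alpha$ shows that $\aW$ is generated by $\sW$ (the case $k=0$) together with the translations $\{t_{k\alpha^\vee} : \alpha\in\RS,\, k\in\Z\}$. These translations generate a subgroup $T$ of the translation group of $V$, and since the set $\{\alpha^\vee : \alpha\in\RS\}$ $\Z$-spans the co-root lattice, we have $T=\QQ(\RS^\vee)$. The conjugation rule $w\, t_v\, w^{-1}=t_{w(v)}$ combined with the $\sW$-invariance of $\QQ(\RS^\vee)$ (the co-root system is $\sW$-stable) shows that $T$ is normal in $\aW$; since a nontrivial translation cannot fix the origin, $T\cap \sW=\{1\}$, yielding $\aW=\sW\ltimes\QQ(\RS^\vee)$. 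Part (2) then follows, because both $\sW$ and the translations $t_v$ with $v\in\QQ(\RS^\vee)$ preserve $\Hype$ by the formulas just established, and these generate $\aW$.

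For (3), a point $x\in V$ is special iff it lies on $n=\rk(\RS)$ hyperplanes, equivalently $(\alpha_i,x)\in\Z$ for every simple root, equivalently $x\in\PP(\RS^\vee)$ (the $\alpha_i^\vee$ form a basis of $V^*$ identified with $V$). The type coloring extends from $\cf$ so that $\aW$ acts type-preservingly and each type class on special vertices is a single $\aW$-orbit. Using the semidirect decomposition from (1), the stabilizer in $\aW$ of the special vertex $0\in\cf$ is exactly $\sW$ (the only way $u(0)+v=0$ is $v=0$), so by orbit-stabilizer $\aW\cdot 0=\QQ(\RS^\vee)$; translating by any $w\in\PP(\RS^\vee)$ gives $\aW\cdot w=w+\QQ(\RS^\vee)$. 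This simultaneously proves that two special vertices share a type iff they differ by an element of $\QQ(\RS^\vee)$ and that $\QQ(\RS^\vee)$ acts simply transitively on each type class. The one place where care is needed is pinning down the stabilizer of a special vertex as precisely $\sW$—i.e.\ ruling out any translation contribution—after which everything reduces to the semidirect product bookkeeping.
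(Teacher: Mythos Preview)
The paper does not give its own proof of this proposition; it simply refers to Bourbaki, Chapter~VI, \S 2, Propositions~1 and~2. Your argument is a correct direct verification and supplies precisely what that citation would contain.

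One step is worth making explicit. When you pass from $\aW\cdot 0=\QQ(\RS^\vee)$ to $\aW\cdot w=w+\QQ(\RS^\vee)$ for an arbitrary special vertex $w$, the phrase ``translating by $w$'' hides a short computation, since $t_w$ need not lie in $\aW$. What makes it work is that for $w\in\PP(\RS^\vee)$ and $u\in\sW$ one has $u(w)-w\in\QQ(\RS^\vee)$: on a simple reflection, $s_\alpha(w)-w=-(w,\alpha)\,\alpha^\vee$ with $(w,\alpha)\in\Z$ by the definition of $\PP(\RS^\vee)$. Hence every orbit element $u(w)+v$ lies in $w+\QQ(\RS^\vee)$, and conversely each $w+v$ is reached by $(\mathrm{id},v)$. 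Equivalently, $t_w$ normalizes $\aW$ for $w\in\PP(\RS^\vee)$. With that line added your argument is complete.
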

\begin{proof} See \cite{Bourbaki4-6} VI, \S 2, Propositions 1 and 2.\end{proof}

\newpage
\section{Buildings: The simplicial approach}\label{Sec_classicalCase}

In this section we briefly recall basic definitions of the simplicial approach to buildings, collect necessary preliminaries for Section~\ref{Sec_KostantConvexity} and make some remarks on the metric structure of an affine building which is the starting point for the generalization. The following is by no means a complete, self-contained introduction to affine buildings. For this purpose we refer to \emph{Buildings} by Brown \cite{Brown}, the recent monograph \emph{Buildings: Theory and Applications} by Abramenko and Brown \cite{AB} or Ronan's \emph{Lectures on buildings} \cite{Ronan}.

\begin{definition}\label{Def_building}
\index{{building}!{simplicial}}
\index{system of apartments}
A \emph{building} is a simplicial complex $\Delta$ which is the union of subcomplexes called \emph{apartments}, satisfying the following axioms:
\begin{enumerate}
  \item\label{tec17} Each apartment is a Coxeter complex.
  \item\label{tec18} For any two simplices there exists an apartment containing both. 
  \item\label{tec19} If $A$ and $A'$ are two apartments both containing simplices $a$ and $b$ then there exists an isomorphism $A \rightarrow A'$ fixing $a$ and $b$ pointwise.
\end{enumerate}
Any collection of subcomplexes satisfying these axioms is called \emph{system of apartments}.
The \emph{chambers} of $\Delta$ are the maximal simplices and by a \emph{panel} we mean a co-dimension one face of a chamber.
If each panel is contained in at least three chambers $\Delta$ is \emph{thick}.
\end{definition}

A given wall $H$ in an apartment determines two half-spaces $H^+$ and $H^-$ called \emph{half-apartments}.
If $\Delta$ is thick, then for each apartment $A$ and each wall $H\in A$ there exists a half-apartment $B$ such that $B\cap A=H$ and such that $H^+\cup B$ as well as $H^-\cup B$ is an apartment of $\Delta$ different from $A$.

Note that we do not require that the building is equipped with a specific system of apartments. One can, however, prove that there exists a unique \emph{complete} system of apartments, which is the collection of all subcomplexes isomorphic to a fixed apartment or, equivalently, the union of all possible apartment systems.

\subsection{Spherical and affine buildings}

Let $(W,S)$ be the \emph{Coxeter system} associated to a Coxeter complex $\Sigma$. Then $S\subset W$ is a set of elements of order $2$ that generates $W$, which admits a presentation 
$$W=\langle S : (st)^{m(s,t)} = 1 \rangle$$ 
where $m(s,t)\in \Z$ is the order of the product $st$ of elements $s,t$ of $S$. There is one relation for each pair $s,t$ with $m(s,t)<\infty$. For details on Coxeter complexes and Coxeter systems see Chapter II.4 and III of \cite{Brown}.

\begin{definition}
A Coxeter group $W$ and the associated Coxeter complex $\Sigma$ are called \emph{spherical} if the group $W$ is finite.
We say that $W$ is a \emph{Euclidean reflection group} and the associated Complex $\Sigma$ is a \emph{Euclidean Coxeter complex} if $W$ is isomorphic to an affine Weyl group of a crystallographic root system $\RS$.
\end{definition}

Note that a spherical Coxeter group is the spherical Weyl group of some, not necessarily crystallographic, root system. The geometric realization of a spherical Coxeter complex is a topological sphere, whereas the geometric realization of a Euclidean Coxeter complex is isomorphic to the tiled vector space $V$ underlying a crystallographic root system $\RS$.
The maximal simplices of a Euclidean Coxeter complex $\Sigma$ are, under such an isomorphism, identified with alcoves in $V$.

\begin{definition}\label{Def_sphericalAffine}
Let $\Delta$ be a building with apartments isomorphic to a Coxeter complex $\Sigma$ and denote by $(W,S)$ be the associated Coxeter system. Then $\Delta$ is \emph{spherical} if $W$ is finite and \emph{affine} if $W$ is a Euclidean reflection group. The building $\Delta$ is \emph{irreducible} if the root system associated to $W$ is irreducible.
\end{definition}

In the following we will always denote spherical Weyl groups by $\sW$ and affine Weyl groups by $\aW$. 

Let $\Delta$ be a spherical building with apartments isomorphic to a Coxeter complex $\Sigma$. Denote by $\RS$ the root system associated to $\Sigma$. The half-apartments of $\Delta$ can canonically be be identified with elements of $\RS$. We therefore use the notation $\alpha$ for half-apartments in $\Delta$ and elements of $\RS$. The \emph{rank} of $\Delta$ is defined to be the number of elements in a basis of $\RS$. 

\begin{definition}\label{Def_Moufang}
\index{Moufang property}
\index{root group}
Let $\Delta$ be irreducible, spherical and of rank at least two. For each root $\alpha$ of $\Delta$ denote by $U_\alpha$ the stabilizer in $\Aut(\Delta)$ of all chambers sharing a panel with two chambers which are contained in $\alpha$. The group $U_\alpha$ acts trivially on $\alpha$ and is called the \emph{root group of $\alpha$}. The building $\Delta$ is \emph{Moufang} if it is thick and for each root $\alpha$ the root group acts transitively on the set of apartments containing $\alpha$. 
\end{definition}
Note that the group $U_\alpha$ acts in fact simply transitively on the set of apartments containing $\alpha$, see \cite[9.3 and 11.4]{Weiss}.

The Moufang property is an important tool in buildings theory. Tits showed that all thick irreducible spherical buildings of rank at least three are automatically Moufang. Being Moufang is furthermore a necessary assumption for the classification of spherical and affine buildings, which is illustrated in great detail in \cite{Weiss} and \cite{AffineW}.

\begin{remark}
Let $X=|\Delta|$ be the geometric realization of an affine building as described in \cite{AB}. Hence apartments $A$ of $|\Delta|$ are isomorphic to the tiled vector space $V$ underlying the associated root system $\RS$. 
We can therefore use the Euclidean distance of elements $x$ and $y$ in $V$ to define a distance function on $A$. Using the building axioms it is easy to see that this definition naturally extends to a distance function $d$ on $X$.
Taking the ``geometric'' point of view and thinking of buildings as their geometric realization equipped with some metric $d$, Charney and Lytchak \cite{LC} proved that spherical and affine buildings are characterized by metric properties.
\end{remark}

\begin{property}{\bf The building at infinity}
Associated to an affine building $\Delta$ with apartment system $\App$ there exists a simplicial spherical building called the \emph{building at infinity} $\binfinity \Delta$. It is canonically isomorphic to a certain subset of the Tits boundary of the metric space $(X,d)$, where $X= |\Delta |$ and $d$ is as defined above.
The \emph{chambers} of $\partial \Delta$ are parallel classes $\partial S$ of Weyl chambers $S$ contained in apartments of $\App$ which are \emph{adjacent} if and only if there exist representatives which are adjacent Weyl chambers in $\Delta$.
\end{property}

\subsection{Retractions}\label{Sec_retractions}

In the following let $X$ be an affine building. According to Proposition 2 on page 86 in \cite{Brown} we can define:
\begin{definition}\label{Def_simplChamberRetraction}
For any apartment $A$ and alcove $c$ in $A$ the \emph{canonical retraction} $r_{A,c}: X\rightarrow A$ \emph{onto $A$ centered at $c$} is the unique chamber map that fixes $c$ pointwise and preserves distance to $c$.
\end{definition}

Given an alcove $c$ and a Weyl chamber $S$ in $X$ there exists an apartment containing $c$ and a sub-Weyl chamber of $S$. Therefore $X$ is the union of the apartments which contain sub Weyl chamber of $S$ and we can define

\begin{definition}\label{Def_simplSectorRetraction}
\index{retraction}
Let $A$ be an apartment of $X$ and $\partial S$ a chamber in $\partial A$. The \emph{canonical retraction} $\rho_{A,S}: X\rightarrow A$ \emph{onto $A$ centered at $\partial S$} is the unique map whose restriction onto each apartment $B$ containing a sub-Weyl chamber of $S$ is the isomorphism onto $A$ fixing $A\cap B$ pointwise.
\end{definition}

In contrast to Definition~\ref{Def_simplSectorRetraction}, the definition of $r_{A,c}$ in \ref{Def_simplChamberRetraction} makes perfect sense for spherical buildings. We sometimes say that $\rho_{A,S}$ is a retraction \emph{centered at infinity} and call $r_{A,c}$ an \emph{alcove retraction}.

\begin{figure}[htbp]
\begin{center}
	\input{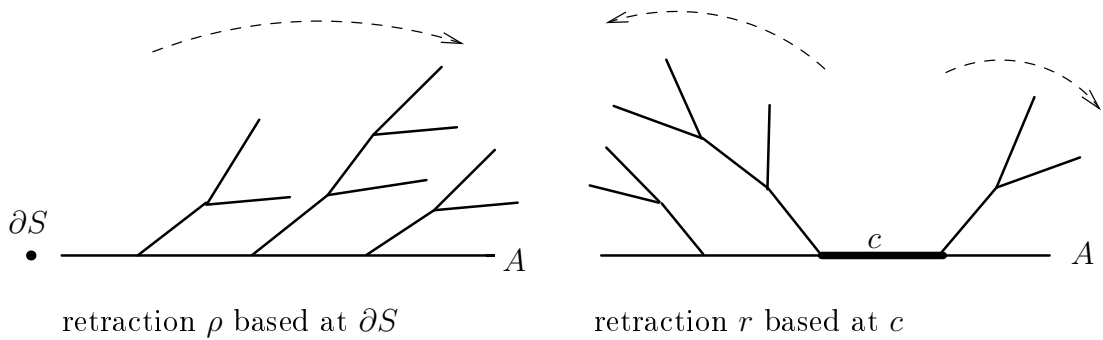}
	\caption[retractions]{On the left you can see the retraction onto $A$ centered at $\partial S$ as it is defined in \ref{Def_simplSectorRetraction} and on the right the alcove retraction $r_{A,c}$ as defined in \ref{Def_simplChamberRetraction}.}
	\label{Fig_retractions}
\end{center} 
\end{figure}

\begin{example}
The canonical retraction centered at infinity flattens out the entire building coming from a fixed chamber ``at infinity''. Under the alcove retraction a building behaves like being ironed onto the image apartment starting at the fixed alcove. We illustrated both retractions for a tree in Figure~\ref{Fig_retractions}.
\end{example}

\begin{thm*}{\cite[p.171]{Brown}}
Let $\rho_{A,S}$ be the sector retraction onto $A$ with respect to $\partial S$ in $\partial A$. Given an alcove $d\in X$ there exists an alcove $c\in A$ such that $$\rho_{A,S}(d)=r_{A,c}(d).$$
\end{thm*}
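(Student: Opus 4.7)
The plan is to exploit the fact that both retractions, when restricted to a suitable apartment, become isomorphisms to $A$, and then to arrange that they fix a common alcove so uniqueness of Coxeter complex isomorphisms forces them to coincide.

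First I would invoke the standard fact (used implicitly in Definition~\ref{Def_simplSectorRetraction}) that given the alcove $d$ and the Weyl chamber $S \subset A$, there exists an apartment $B$ of $X$ containing both $d$ and a sub-Weyl chamber $S'$ of $S$. Since $S' \subset S \subset A$ and $S' \subset B$, the sub-Weyl chamber $S'$ lies in the intersection $A \cap B$. I would then define $c$ to be any alcove of $A$ contained in $S'$; in particular $c \in A \cap B$.

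Next I would compare the two retractions on $B$. By Definition~\ref{Def_simplSectorRetraction}, the restriction $\rho_{A,S}|_B$ is the (unique) simplicial isomorphism $B \to A$ that fixes $A \cap B$ pointwise; in particular it fixes the alcove $c$. By Definition~\ref{Def_simplChamberRetraction}, the restriction $r_{A,c}|_B$ is a chamber map $B \to A$ that fixes $c$ pointwise and preserves gallery distance to $c$; since $B$ is an apartment containing $c$, this restriction is necessarily an isomorphism of Coxeter complexes $B \to A$ fixing $c$.

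Since any isomorphism between Coxeter complexes is uniquely determined by the image of a single chamber (axiom \ref{tec19} together with the simple transitivity of the Weyl group on chambers), the two isomorphisms $\rho_{A,S}|_B$ and $r_{A,c}|_B$, both fixing $c$, must agree on all of $B$. In particular $\rho_{A,S}(d) = r_{A,c}(d)$, as required. No step looks like a serious obstacle; the only point requiring care is the existence of the apartment $B$ containing $d$ together with a sub-Weyl chamber of $S$, but this is a standard consequence of axioms \ref{tec17}--\ref{tec19} of Definition~\ref{Def_building} and is exactly what makes $\rho_{A,S}$ well-defined in the first place.
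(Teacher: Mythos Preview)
Your proof is correct and is essentially the standard argument. The paper itself does not prove this statement; it simply cites \cite[p.~171]{Brown}. Your approach---choose an apartment $B$ containing $d$ and a sub-Weyl chamber $S'\subset S$, take $c$ to be an alcove in $S'\subset A\cap B$, and then observe that both $\rho_{A,S}|_B$ and $r_{A,c}|_B$ are isomorphisms $B\to A$ fixing $c$ pointwise---is exactly the proof one finds in Brown.

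One small remark on your justification of the last step: the uniqueness of an isomorphism of Coxeter complexes fixing a chamber pointwise is not really a consequence of axiom~\ref{tec19} of Definition~\ref{Def_building} (that axiom asserts existence, not uniqueness). The uniqueness comes, as you also say, from the simple transitivity of the Weyl group on chambers; equivalently, a type-preserving automorphism of a thin chamber complex fixing a chamber pointwise is the identity. You have the right ingredient, just a slightly misplaced citation.
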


\begin{lemma}\label{Lem_compatibleRetractions}
Given a Weyl chamber $S$ in $X$ and apartments $A_i, i=1,\ldots, n$ containing a sub-Weyl chamber of $S$. Denote by $\rho_i$ the retraction $\rho_{A_i, S}$. Then
$$(\rho_1\circ\rho_2\circ\ldots\circ\rho_n) = \rho_1 .$$
\end{lemma}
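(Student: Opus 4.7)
The plan is to prove the stronger pointwise statement: for every $x\in X$, both $\rho_1\circ\rho_2\circ\cdots\circ\rho_n$ and $\rho_1$ send $x$ to the same point of $A_1$. The main idea is that the composition, restricted to a suitable apartment through $x$, is an isomorphism onto $A_1$, and one then invokes uniqueness of apartment isomorphisms fixing a sufficiently large subcomplex.

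First, fix an arbitrary $x\in X$. Since each apartment containing a sub-Weyl chamber of $S$ is glued to the others via $S$-compatible isomorphisms, $X$ equals the union of such apartments, so there exists an apartment $B$ of $X$ containing $x$ and a sub-Weyl chamber of $S$. The key technical point is now the following: whenever apartments $A,A'$ each contain sub-Weyl chambers of $S$, their intersection $A\cap A'$ again contains a sub-Weyl chamber of $S$; this is standard for affine buildings and follows from the fact that two sub-Weyl chambers of $S$ always contain a common sub-Weyl chamber of $S$ (pick a point sufficiently deep in the common direction $S$) together with axiom~(\ref{tec19}). Applying this inductively to the $n+1$ apartments $B, A_1, A_2, \ldots, A_n$, I obtain a sub-Weyl chamber $S'$ of $S$ with
\[
S' \subseteq B\cap A_1 \cap A_2 \cap \cdots \cap A_n.
\]

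Next, I unwind the composition using Definition~\ref{Def_simplSectorRetraction}. Since $B$ contains the sub-Weyl chamber $S'$ of $S$, the restriction $\rho_n|_B\colon B\to A_n$ is the isomorphism fixing $A_n\cap B\supseteq S'$ pointwise; in particular $\rho_n(B)=A_n$. Because $A_n$ also contains $S'\subseteq A_{n-1}\cap A_n$, the restriction $\rho_{n-1}|_{A_n}\colon A_n\to A_{n-1}$ is the isomorphism fixing $A_{n-1}\cap A_n\supseteq S'$ pointwise. Iterating, each factor $\rho_{i}|_{A_{i+1}}\colon A_{i+1}\to A_i$ is an apartment isomorphism fixing $S'$ pointwise. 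Composing, I conclude that
\[
(\rho_1\circ\rho_2\circ\cdots\circ\rho_n)\big|_B \colon B \longrightarrow A_1
\]
is an isomorphism of apartments fixing $S'$ pointwise. On the other hand, by the very definition of $\rho_1=\rho_{A_1,S}$, its restriction $\rho_1|_B\colon B\to A_1$ is also an isomorphism fixing $A_1\cap B\supseteq S'$ pointwise.

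To finish, I apply the standard uniqueness principle for apartment isomorphisms (a consequence of axiom~(\ref{tec19}) and the simple transitivity of the Weyl group on chambers): two isomorphisms between apartments that agree on a common alcove must be equal, and $S'$ contains alcoves. Therefore $(\rho_1\circ\cdots\circ\rho_n)|_B = \rho_1|_B$, and in particular the two maps agree at $x$. Since $x\in X$ was arbitrary, the lemma follows. The only real obstacle is the intersection step — securing a common sub-Weyl chamber for all $n+1$ apartments — and I would handle it either by the inductive argument sketched above or by quoting the analogous statement from \cite{Brown} (same chapter as the cited Theorem on $\rho_{A,S}(d)=r_{A,c}(d)$).
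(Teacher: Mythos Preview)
Your proof is correct and follows essentially the same strategy as the paper: show that the composition, restricted to any apartment $B$ containing a sub-Weyl chamber of $S$, is an apartment isomorphism onto $A_1$ agreeing with $\rho_1|_B$, and then use that $X$ is the union of such apartments. The only difference is in how the uniqueness of that isomorphism is pinned down: you argue via simple transitivity of the Weyl group on chambers (two apartment isomorphisms agreeing on an alcove of $S'$ must coincide), whereas the paper packages the same conclusion by observing that the composition is distance non-increasing and invoking the characterizing properties of $\rho_1$. Your route is slightly more explicit and purely combinatorial, avoiding the metric input; the paper's is terser but leans on the distance structure. Both are equally valid and short.
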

\begin{proof}
Each $\rho_i$ maps apartments containing representatives of $\partial S$ isomorphically onto $A_i$. Every such apartment is isomorphically mapped onto $A_1$ by $\widetilde{\rho}\define \rho_1\circ\rho_2\circ\ldots\circ\rho_n$. As a set $X$ equals the union of all such apartments. The map $\widetilde{\rho}$ is distance non-increasing, since each $\rho_i$ is. Therefore $\widetilde{\rho}:X\mapsto A_1$ is the unique map defined by these two properties and is hence equal to $\rho_1$.
\end{proof}

\newpage
\section{Kostant convexity}\label{Sec_KostantConvexity}

Looking at the retractions defined in \ref{Def_simplSectorRetraction} and ~\ref{Def_simplChamberRetraction} it seems natural to ask how the images of a given alcove or vertex differ under these two retractions. The main Theorem of this section will give (at least a partial) answer. The original motivation to study this question was a different one: Does there exist a convexity theorem comparable to the one by Kostant \cite{Kostant} in the setting of affine buildings?
The answer to this question is ``yes''. After introducing the right notion of convexity, see Definition~\ref{Def_convex}, and collecting other necessary tools, we will prove Theorem \ref{Thm_convexity}.

\begin{notation}\label{Not_cthm}
Let $X$ be a thick affine building and $A$ an apartment of $X$. Fix an origin $0$ and identify the spherical Weyl group $\sW$ with the stabilizer of $0$ in $\aW$. 
Let $\Cf$ denote the fundamental Weyl chamber with respect to a fixed basis $B$ of the underlying root system $\RS$. The Weyl chamber opposite $\Cf$ in $A$ is denoted by $\Cfm$.
To simplify notation, write $r$ instead of $r_{A,\cf}$ and let $\rho$ stand for $\rho_{A,(\Cfm)}$.
\end{notation}

\begin{thm}\label{Thm_convexity}
With notation as in \ref{Not_cthm} let $x$ be a special vertex in $A$. 
Then
$$
\rho(r^{-1}(\sW.x)) =\dconv(\sW.x)\cap (x+\QQ) =: A^\QQ(x).
$$
where $\QQ=\QQ(\RS^\vee)$ is the co-weight lattice of $\RS$.
\end{thm}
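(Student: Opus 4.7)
My plan is to translate the theorem into a statement about galleries by extending the retractions $r$ and $\rho$ pointwise to the chains of alcoves traversed by minimal galleries in $X$. I would fix a minimal gallery $\gamma_0$ in $A$ from the fundamental alcove $\cf$ to a distinguished representative of $\sW.x$, and let $\vec{t}$ denote its type; since $A$ is a Coxeter complex, any gallery in $A$ of type $\vec{t}$ starting at $\cf$ is uniquely determined. Because $r$ is a chamber map that fixes $\cf$ pointwise and preserves gallery distance from $\cf$, a vertex $y\in X$ lies in $r^{-1}(\sW.x)$ precisely when it is the endpoint of a minimal gallery in $X$ of type $\vec{t}$ (or a $\sW$-conjugate thereof) starting at $\cf$. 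On the other hand, applying $\rho$ pointwise to such a minimal gallery yields a gallery in $A$ of the same type which, thanks to the defining property of $\rho$ as an isomorphism on every apartment containing a sub-Weyl chamber of $\Cfm$, is positively folded with respect to the chamber at infinity opposite $\Cf$; its endpoint automatically lies in $x+\QQ$ since $\rho$ preserves types of special vertices.

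The inclusion ``$\subseteq$'' is then essentially immediate: given $y\in r^{-1}(\sW.x)$, choose a minimal gallery $\gamma$ in $X$ from $\cf$ to $y$; its $\rho$-image is a positively folded gallery in $A$ of the prescribed type ending at $\rho(y)$, and Proposition~\ref{Prop_existenceLSgalleries} confines every such endpoint to $\dconv(\sW.x)$.

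For the reverse inclusion, I would start with $z\in\dconv(\sW.x)\cap(x+\QQ)$. The existence half of Proposition~\ref{Prop_existenceLSgalleries} supplies a positively folded gallery $\delta$ in $A$ of the fixed type starting at $\cf$ and ending at $z$, and Proposition~\ref{Prop_existencePreimage} lifts $\delta$ to a minimal gallery $\tilde\delta$ in $X$ starting at $\cf$ with $\rho(\tilde\delta)=\delta$. If $y$ denotes the endpoint of $\tilde\delta$, then $\rho(y)=z$; moreover, since $\tilde\delta$ is a minimal gallery of the fixed type starting at $\cf$, its $r$-image must coincide with the unique minimal gallery of that type in $A$ from $\cf$, whose endpoint lies in $\sW.x$. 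Therefore $y\in r^{-1}(\sW.x)$ and $z=\rho(y)\in\rho(r^{-1}(\sW.x))$.

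The hard work is entirely concentrated in the two key propositions. Proposition~\ref{Prop_existenceLSgalleries} asserts that the endpoints of positively folded galleries of the prescribed type cover exactly the $x$-typed lattice points inside $\dconv(\sW.x)$; the existence direction is a genuinely nontrivial combinatorial statement which, in this simplicial setting, can be extracted from the Gaussent--Littelmann character formula for irreducible highest-weight representations, and it is precisely the step that will later need to be re-proved by purely combinatorial methods (in the Parkinson--Ram style) to generalize to arbitrary $\Lambda$. Proposition~\ref{Prop_existencePreimage} is where the thickness hypothesis on $X$ is indispensable: at every folded panel of $\delta$ one must, by thickness, choose a third chamber of $X$ sharing that panel, and then verify inductively that the resulting extension is minimal in $X$ and maps to the next step of $\delta$ under $\rho$.
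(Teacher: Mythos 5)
Your proposal is correct and follows essentially the same route as the paper: you reduce the statement to galleries exactly as in Lemma~\ref{Lem_preimage} (identifying $r^{-1}(\sW.x)$ with endpoints of minimal galleries of the fixed type from the origin) and Lemma~\ref{Lem_folding} (showing $\hat\rho$ produces positively folded galleries), and then invoke Propositions~\ref{Prop_existenceLSgalleries} and~\ref{Prop_existencePreimage} for the two inclusions, just as the paper does. The only cosmetic difference is that the paper's combinatorial galleries are based at the special vertex $0$ rather than at the alcove $\cf$, which does not affect the argument.
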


\subsection{Dual convexity}

Let notation be as in \ref{Not_cthm}.

\begin{definition}
A \emph{dual hyperplane} in $A$ is a set $\Hdual_{\alpha,k}=\{x\in V : \lb x, \fcw[\alpha]) = k\}$, where $k\in \R$, $\alpha\in B$ and $\fcw[\alpha]$ a fundamental co-weight of $\RS$ as defined in Section~\ref{Sec_rootSystems}.
Dual hyperplanes determine \emph{dual half-apartments} ${\Hdual_{\alpha,k}}^\pm$. 
\end{definition}
As $H_{\alpha,k}$ is perpendicular to $\alpha$, so is $\Hdual_{\alpha,k}$ perpendicular to $\fcw[\alpha]$.
For any special vertex $x$ and root $\alpha\in\RS$ there exists a dual hyperplane containing $x$. Notice that the positive cone is the intersection of all positive half-apartments $\Hdual^+_{\alpha,0}$ with $\alpha\in B$.

\begin{definition}
\label{Def_convex}
\index{convex set}
A \emph{convex set} is an intersection of finitely many dual half-apartments in $A$, where the empty intersection is defined to be $A$. The \emph{convex hull} of a set $C$, denoted by $\dconv(C)$,  is the intersection of all dual half-apartments containing $C$.
\end{definition}

Let $C$ be the orbit $\sW.x$ of a special vertex $x$ in $A$. One can prove that $\dconv(C)$ equals the metric convex hull of $\sW.x$ in $A$.

The following Lemma, which is a direct analog of \cite[Lemma 3.3(2)]{Kostant}, plays a crucial role in the proof of our main result. As in Theorem~\ref{Thm_convexity} let $A^\QQ(x)$ denote the set of all vertices in $\dconv(\sW.x)$ having the same type as $x$. Given a basis $B$ of $\RS$ the \emph{positive cone} $\Cp$ (with respect to $B$) is the set of all positive linear combinations of positive roots in $\RS$.

\begin{lemma}\label{Lem_conv^*}
Let $X$, $A$, $0$ and $\Cf$ be as in \ref{Not_cthm}. Given a special vertex $x$ in $A$ let $x^+$ denote the unique element of $\sW.x$ contained in $\Cf$. Then
$$A^\QQ(x)=\{y\in V : (x^+-y^+)\in (\Cp\cap\QQ) \}=\bigcap_{w\in \sW } w(x^+ - \QQ^+) $$ 
where $\QQ=\QQ(\RS^\vee)$ is the co-root lattice of $\RS$.
\end{lemma}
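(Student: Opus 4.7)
The plan is to prove the two equalities separately. Write $\QQ^+:=\Cp\cap\QQ$ for the cone of non-negative integer combinations of simple co-roots, which is the natural meaning of the symbol $\QQ^+$ appearing in the statement.

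For the first equality I would split $A^\QQ(x)=\dconv(\sW.x)\cap(x+\QQ)$ into its two defining conditions and translate each into a condition on $x^+-y^+$. The type condition rests on the identity
\begin{equation*}
\lambda - s_\alpha\lambda \;=\; (\lambda,\alpha)\,\alpha^\vee,
\end{equation*}
which holds for any $\lambda\in\PP(\RS^\vee)$ and any simple $\alpha\in B$; since $(\lambda,\alpha)\in\Z$ by the definition of a co-weight, this quantity lies in $\Z\alpha^\vee\subseteq\QQ$. Induction on $l(w)$ then yields $\lambda-w\lambda\in\QQ$ for every $w\in\sW$, and applied to $\lambda=x$ and $\lambda=y$ this gives $y-x\in\QQ$ iff $y^+-x^+\in\QQ$. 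The convex-hull condition is handled by the classical description $\dconv(\sW.x)=\{y\in V : x^+-y^+\in\Cp\}$ (compare \cite[Ch.~VI, \S1.6, Prop.~18]{Bourbaki4-6} applied to the system $\RS^\vee$); since by the remark following Definition~\ref{Def_convex} the dual and metric convex hulls agree on Weyl orbits, this description transfers to our setting. Combining the two equivalences gives the first equality.

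For the second equality, unwinding the definitions gives $y\in w(x^+-\QQ^+)$ iff $x^+-w^{-1}y\in\Cp\cap\QQ$, hence $y\in\bigcap_{w\in\sW}w(x^+-\QQ^+)$ iff $x^+-wy\in\Cp\cap\QQ$ for every $w\in\sW$. Choosing a $w$ with $wy=y^+$ gives the forward inclusion. Conversely, assuming $x^+-y^+\in\Cp\cap\QQ$, for arbitrary $w\in\sW$ write $wy=w'y^+$ with $w'\in\sW$; the decomposition
\begin{equation*}
x^+-wy \;=\; (x^+-y^+) + (y^+-w'y^+)
\end{equation*}
places $x^+-wy$ in $\Cp\cap\QQ$: the first summand by hypothesis, the second by the companion classical fact that $\mu-w'\mu\in\Cp\cap\QQ$ for any dominant $\mu\in\PP(\RS^\vee)$. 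This companion fact is proved by the same induction on $l(w')$, using the identity above together with the standard sign argument that $(w'\mu,\beta)\leq 0$ whenever $l(s_\beta w')<l(w')$.

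The main potential obstacle is ensuring that the two classical Weyl-orbit facts, namely the convex-hull description and the dominance of dominant representatives, transfer cleanly from the weight/root to the co-weight/co-root setting and to the paper's dual notion of convexity; both are standard, the first guaranteed by the remark after Definition~\ref{Def_convex} and the second by the induction sketched above, so that the rest of the argument is formal bookkeeping.
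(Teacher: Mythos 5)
Your proof is correct, but it takes a genuinely different route from the paper's. The paper stays entirely inside the dual-convexity framework: assuming $x\in\Cf$, it uses the $\sW$-invariance of $A^\QQ(x)$ to reduce to the dominant chamber, writes $A^\QQ(x)\cap\Cf=\bigcap_{\alpha\in B}(\Hdual_{\alpha,k_\alpha})^-\cap\Cf$ directly from the definition of $\dconv$, and obtains both inclusions by comparing the co-weight coordinates $(\fcw,y)$ with $k_\alpha$; the second displayed equality is not spelled out there. You instead quote the classical description of the \emph{metric} convex hull of a Weyl orbit (Bourbaki, applied to $\RS^\vee$) and pass to $\dconv$ through the unproved remark after Definition~\ref{Def_convex}, and then supply the lattice bookkeeping via $\lambda-s_\alpha\lambda=(\lambda,\alpha)\alpha^\vee$ together with the dominance fact $\mu-w\mu\in\Cp\cap\QQ$ for dominant co-weights $\mu$; both of these auxiliary facts are standard and your sketches of them are sound. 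What your route buys is an explicit and careful treatment of the equality with $\bigcap_{w\in\sW}w(x^+-\QQ^+)$, which the paper leaves implicit. The trade-off is that the geometric substance of the lemma --- that the intersection of dual half-apartments containing $\sW.x$ is cut out on $\Cf$ by the inequalities $(\fcw,y)\le(\fcw,x^+)$ --- is delegated to the remark that $\dconv(\sW.x)$ coincides with the metric convex hull, a statement the paper asserts without proof and whose natural verification is essentially the coordinate computation the paper performs in its own proof; so your argument is best read as a correct reduction of the lemma to that standard fact rather than as a fully self-contained alternative.
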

\begin{proof}
Assume without loss of generality that $x\in \Cf$. Since $A^\QQ(x)$ is $\sW$-invariant it suffices to prove 
$$A^\QQ(x)\cap \Cf = \{y\in \Cf : x-y \in (\Cp\cap\QQ) \}.$$
For all $\alpha\in B$ define $k_\alpha$ implicitly by $x \in \Hdual_{\alpha,k_\alpha}$.
By definition 
\begin{equation}\label{Equ_no1}
A^\QQ(x) \cap\Cf = \bigcap_{\alpha\in B} (\Hdual_{\alpha,k_\alpha})^- \cap \Cf.
\end{equation}

Assume first that $y\in A^\QQ(x)\cap\Cf$.  Using (\ref{Equ_no1}) and the fact that $\Cf \subset \Cp$ conclude 
$$
0\leq ( \fcw , y )\leq k_\alpha \text{ for all } \alpha\in B.
$$
Therefore $( \fcw,x-y ) \frac{1}{(\alpha,\alpha)} \geq 0$ and $x-y\in \QQ$ since $A^\QQ(x)\subset (x+\QQ)$. 
The faces of $\Cp$ are contained in dual hyperplanes of the form $\Hdual_{\alpha,0}$ with ${\alpha\in B}$. Therefore
$$
A^\QQ(x)\cap\Cf \subset \{y\in V : x-y\in(\Cp\cap\QQ)\}\cap\Cf = (x-(\QQ\cap\Cp))\cap\Cf.
$$
Conversely assume that $y\in (x- (\QQ\cap\Cp))\cap\Cf$. Then
$0\leq (\fcw, (x-y))\frac{1}{(\alpha,\alpha)}$ and hence  
$( \fcw, x ) \frac{1}{(\alpha,\alpha)} \geq  ( \fcw, y ) \frac{1}{(\alpha,\alpha)}$. Therefore the assertion holds. 
\end{proof}

A common way to define convexity in affine buildings is the following: A subset $C$ of an apartment $A$ is \emph{$\sW$-convex} (or just \emph{convex}) if it is the intersection of finitely many half-apartments. The \emph{$\sW$-convex hull} of a set $C$ is the intersection of all half-apartments containing $C$. The following example illustrates why in our situation this is not the right notion of convexity.

\begin{figure}[htbp]
\begin{center}
	\resizebox{!}{0.26\textheight}{\input{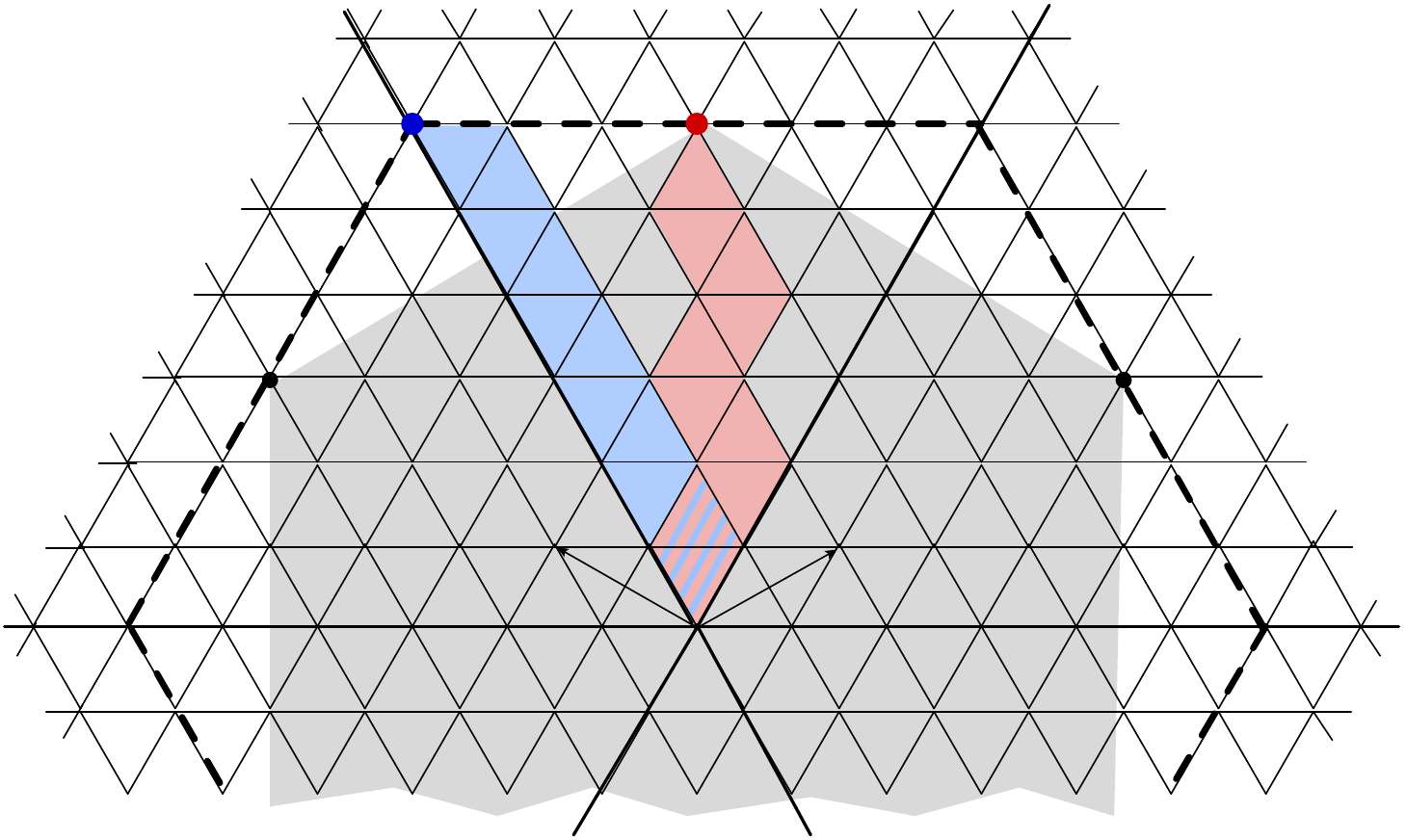tex_t}}
	\caption[Wconvex]{The notion of  $\sW$-convexity can not be used for Theorem~\ref{Thm_convexity}, since the $\sW$-convex hull of $\sW.x$ contains points, as $y$, which are further away from $0$ than $x$.}
	\label{Fig_Wconvex}
\end{center} 
\end{figure}

\begin{example}
Let $A$ be a Coxeter complex of type $\widetilde{A}_2$. Let $\alpha_1$ and $\alpha_2$ be a basis of the underlying root system $\RS$ as illustrated in Figure~\ref{Fig_Wconvex}. Let $x$ equal $3\alpha_1 + 3\alpha_2$. Observe that the $\sW$-convex hull of the Weyl group orbit $\sW.x$ contains $4\alpha_1+2\alpha_2$ which will be denoted by $y$. For special vertices $a,b$ let $\delta(a,b)$ be the length of the shortest gallery $\gamma$ such that $a$ is contained in the first and $b$ in the last chamber of $\gamma$. Then $\delta(0,x)=10$ and $\delta(0,y)=11$. 

For all elements $z$ in $\rho(r^{-1}(\sW.x))$ the distance $\delta(0,z)$ is smaller or equal to $10$ since $r$ preserves distance to $x$ and $\rho$ diminishes distance. Hence $y$ can not be contained in $\rho(r^{-1}(\sW.x))$ and the theorem can not be true using $\sW$-convexity.
\end{example}

\subsection{Tools and preliminary results}
\subsubsection*{Positively folded galleries}

Let the notation be as in \ref{Not_cthm}. We will extend the retractions $r$ and $\rho$ to galleries and use them to describe how the building is folded onto the fixed apartment. 

\begin{definition}
\label{Def_gallery}
\index{combinatorial gallery}
\index{panel}
A \emph{combinatorial gallery} $\gamma$ of \emph{length} $n$ in $X$ is a sequence 
$$\gamma=(u, c_0, d_1, c_1, \ldots, d_n, c_n, v) $$
of panels $d_i$  and alcoves $c_i$ such that 
\begin{enumerate}
\item \emph{source} $u$ and \emph{target} $v$ are special vertices contained in $c_0$ and $c_n$, respectively, and $u-v\in \QQ(\RS^\vee)$, i.e. $u$ and $v$ have the same type, and 
\item for all $i=1\ldots n-1$ the panel $d_i$ is contained in $c_{i-1}$ and $c_i$.
\end{enumerate}
The \emph{type} of a combinatorial gallery $\gamma$ is the list $t=(t_0,t_1,\ldots, t_n)$ of types $t_i$ of the faces $d_i$.
We say $\gamma$ is \emph{minimal} if it is a gallery of minimal length with source $u$ and target $v$.
\end{definition}

We abbreviate ``$\gamma$ is a combinatorial gallery with source $u$ and target $v$'' by $\gamma:u\rightsquigarrow v$. In the following a \emph{gallery} is a combinatorial gallery in the sense of Definition~\ref{Def_gallery}.

\begin{definition}\label{Def_Ghat}
Denote by $\Ghat_t$ \index{$\Ghat$} the set of all minimal galleries in $X$ of fixed type $t$ with source $0$. Denote by $\G_t$ \index{$\G_t$} the set of targets of galleries $\gamma$ in $\Ghat_t$. Notice that the elements of $\G_t$ are all vertices of the same type.
\end{definition}

\begin{lemma}\label{Lem_preimage}
With notation as in \ref{Not_cthm} let $x$ be a special vertex in $A$ and let $\gamma:0\rightsquigarrow x$ be a minimal gallery of fixed type $t$. Then
$$
r^{-1}(\sW.x) = \G_t.
$$
If $K\define\mathrm{Stab_{\mathrm{Aut(X)}}(0)}$ acts transitively on the set of all apartments containing $0$ 
then 
$$ r^{-1}(\sW.x) = K.x .$$
\end{lemma}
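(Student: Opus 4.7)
The plan is to reduce both equalities to one basic fact about the model apartment, which I'll denote $(\star)$: in $A$, the endpoints of minimal galleries of type $t$ starting at $0$ are precisely the orbit $\sW.x$. This is because such a gallery is determined by the choice of initial alcove containing $0$, and $\sW$ acts simply transitively on those alcoves; applying any $w \in \sW$ to the reference gallery $\gamma : 0 \rightsquigarrow x$ yields one such gallery, and all of them arise this way.

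For $(\supseteq)$ of the first equality, take $y \in \G_t$ witnessed by a minimal gallery $\delta : 0 \rightsquigarrow y$ of type $t$, lying in some apartment $A'$ (minimal galleries always lie in apartments). Using axiom~(\ref{tec18}) choose an apartment $A''$ containing both $\cf$ and the terminal alcove $c_n$ of $\delta$; by axiom~(\ref{tec19}) an isomorphism $\psi : A' \to A''$ fixes the simplices $0$ and $c_n$ pointwise, so $\psi(\delta)$ is a minimal gallery of type $t$ from $0$ to $y$ lying entirely in $A''$. Since $\cf \subset A''$, the restriction $r|_{A''}$ is an isomorphism onto $A$ fixing $\cf$ (hence $0$), so $r(\psi(\delta))$ is a minimal gallery of type $t$ from $0$ to $r(y)$ in $A$, forcing $r(y) \in \sW.x$ by $(\star)$. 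For $(\subseteq)$, given $r(y) = w.x$ pick any apartment $A''$ containing $\cf$ and some alcove at $y$; then $(r|_{A''})^{-1}$ pulls the minimal gallery $w.\gamma$ back from $A$ to $A''$, producing a minimal gallery of type $t$ from $0$ to $y$, so $y \in \G_t$.

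For the second equality under the transitivity hypothesis, the inclusion $K.x \subseteq \G_t$ is immediate: for any $k \in K$, the gallery $k\gamma$ is minimal of type $t$ from $k(0) = 0$ to $k.x$. For the reverse, given $y \in \G_t$ with minimal gallery $\delta$ in an apartment $A'$, transitivity of $K$ on apartments through $0$ provides $k \in K$ with $k(A) = A'$; then $k^{-1}(\delta)$ is a minimal gallery of type $t$ from $0$ in $A$, so $k^{-1}(y) = w.x$ for some $w \in \sW$ by $(\star)$. To upgrade this to $y \in K.x$, I use that each $w \in \sW$ lifts to an element of $K$ stabilising $A$ and acting as $w$ there: apply the transitivity hypothesis to the apartments $A$ and $w(A) = A$ and use axiom~(\ref{tec19}) to select a representative that sends $\cf$ to $w.\cf$. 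Post-composing $k$ with such a lift of $w^{-1}$ yields an element of $K$ mapping $x$ to $y$.

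The main obstacle I anticipate is this last lifting step — extracting a copy of $\sW$ inside $K$ from the mere transitivity of $K$ on apartments through $0$. Everything else is routine transport of galleries between apartments using the building axioms and the crucial property that $r$ restricts to an isomorphism on any apartment containing $\cf$.
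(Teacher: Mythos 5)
Your proof of the first equality is correct and in fact supplies the details the paper compresses into one sentence (the paper just observes that $r$ preserves adjacency and distance to $0$): transporting the witnessing gallery into an apartment containing $\cf$, where $r$ restricts to a type-preserving isomorphism onto $A$ fixing $\cf$, and then invoking the simply transitive action of $\sW$ on the alcoves containing the special vertex $0$, is a sound way to reduce everything to your fact $(\star)$ about the model apartment. (One small caveat you share with the paper: $k\gamma$ has type $t$ only if $k$ is type-preserving, so $\Aut(X)$ should be read as the type-preserving automorphism group.)

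The genuine gap is exactly at the step you flagged, and your proposed repair does not work. Transitivity of $K$ on the set of apartments containing $0$, applied to the pair $(A, A)$, only asserts that \emph{some} element of $K$ maps $A$ to itself --- the identity already does --- and gives no control whatsoever over the induced map on $A$; and axiom~(\ref{tec19}) of Definition~\ref{Def_building} produces abstract isomorphisms between apartments of the simplicial complex, not automorphisms of $X$, so it cannot be used to ``select'' an element of $K$ inducing a prescribed $w\in\sW$ on $A$. What your argument really needs is transitivity of $K$ on pairs (apartment containing $0$, alcove of it containing $0$), equivalently that the stabilizer of $A$ in $K$ induces all of $\sW$; this is strictly stronger than the stated hypothesis and is not a formal consequence of it. As written you only obtain $\G_t \subseteq K.(\sW.x)$, and the inclusion $\sW.x\subseteq K.x$ is left unproved. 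For comparison, the paper's own proof gives no argument here either (it declares the second equality ``direct''), and in the intended application --- $G$ with an affine $BN$-pair, where $K$ is the stabilizer of the origin --- the needed lift of $\sW$ into $K$ exists because the group $N$ stabilizes $A$ and its intersection with $K$ induces $\sW$; if one insists on deducing the statement from the transitivity hypothesis as literally stated, a different idea is required (for instance a thickness argument comparing $K$-orbits on $\G_t$), not the lifting step you propose.
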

\begin{proof}
The retraction $r$ preserves adjacency and distance to $0$ in $X$. Part one is an easy consequence of these properties. Secondly the transitivity of the $K$-action directly implies $\G_t=K.x$. 
\end{proof}

Note, for example, that $\mathrm{Stab_{\mathrm{Aut(X)}}(0)}$ is transitive on the set of all apartments containing $0$ if $G$ has an affine $BN$-pair.

\begin{definition}
\index{positively folded}
Fix an apartment $A$ in $X$ and let $H$ be a hyperplane, $d$ an alcove and $S$ a Weyl chamber in $A$. We say \emph{$H$ separates $d$ and $S$} if there exists a representative $S'$ of $\partial S$ in $A$ such that $S'$ and $d$ are contained in different half-apartments determined by $H$.
\end{definition}

\begin{definition}
A gallery $\gamma= (u, c_0, d_1, c_1, \ldots , d_n, c_n, v)$  is \emph{positively folded at $i$} if 
$c_i=c_{i-1}$ and 
the hyperplane $H= \Span({d_i})$ separates $c_i$ and $\Cfm$.
A gallery $\gamma$ is \emph{positively folded} if it is positively folded at $i$ whenever $c_{i-1}=c_i$.
\end{definition}

\begin{notation}\label{Not_extendedRetr}
Let $\gamma=(0, c_0, d_1, c_1, \ldots , d_n, c_n, v)$ be a minimal gallery in $X$. The retractions $r$ and $\rho$, see \ref{Not_cthm}, are defined on alcoves and faces of alcoves. Denote by $\hat{r}$ and $\hat{\rho}$ their extensions to galleries, defined as follows. The image $r(\gamma)$ is defined to be the sequence $\hat{r}=(r(u), r(c_0), r(d_1), r(c_1), \ldots , r(d_n), r(c_n), r(v))$, and $\hat\rho(\gamma)$ is defined analogously.
\end{notation}

\begin{figure}[htbp]
\begin{center}
	\input{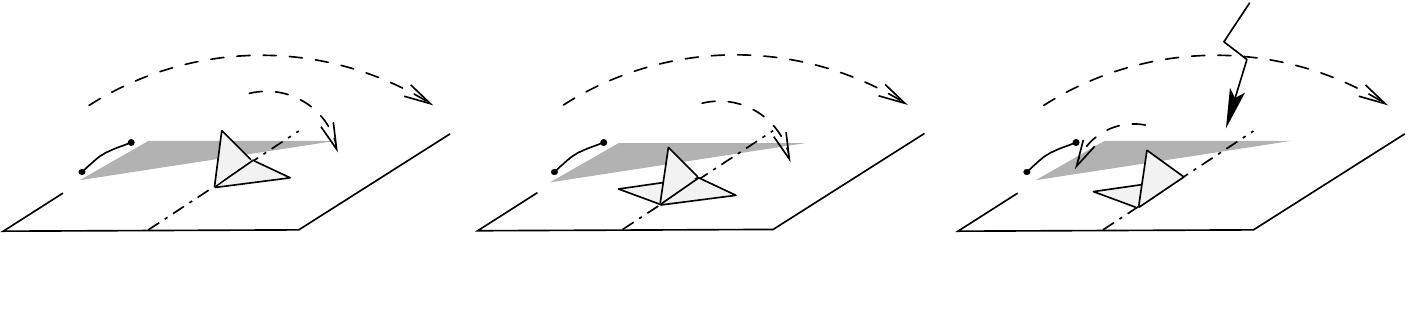tex_t}
	\caption[positivefold]{Retraction $\rho$ produces positive foldings.}
	\label{Fig_positiveFold}
\end{center} 
\end{figure}

\begin{lemma}\label{Lem_folding}
Let $X$, $A$ and $\rho:X\mapsto A$ be as in \ref{Not_cthm} and let $\hat{\rho}$ be the extension of $\rho$ defined in \ref{Not_extendedRetr}. The image of a gallery $\gamma \in\Ghat_t$ under $\hat{\rho}$ is a positively folded gallery in $A$ of the same type.
\end{lemma}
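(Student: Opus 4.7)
\emph{Plan.} The statement has two parts: that $\hat\rho(\gamma)$ is indeed a combinatorial gallery in $A$ of type $t$, and that at each index $i$ where a fold occurs, the fold is positive. The first part is immediate: since $\rho$ is type-preserving and simplicial, sends alcoves to alcoves and panels to panels, and preserves incidence, the image sequence has the correct types, satisfies $\rho(d_i) \subset \rho(c_{i-1}) \cap \rho(c_i)$, and has special-vertex endpoints with $\rho(0) - \rho(v) \in \QQ(\RS^\vee)$ because $0$ and $v$ share a type preserved by $\rho$. Thus Definition~\ref{Def_gallery} is satisfied.

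For the folding part, my plan is to replace $\rho$ by a well-chosen alcove retraction. By the theorem stated just before Lemma~\ref{Lem_compatibleRetractions}, for each alcove $c$ in $\gamma$ there is an alcove in $A$ (depending on $c$) such that $\rho(c)$ equals its image under the corresponding alcove retraction; iterating and pushing this auxiliary alcove sufficiently deep into $\Cfm$ yields a single $c^\ast \in A$ with $\rho(c) = r_{A,c^\ast}(c)$ for every alcove $c$ appearing in the finite gallery $\gamma$. Writing $r \define r_{A,c^\ast}$, one has $\hat\rho(\gamma) = \hat r(\gamma)$, and $r$ preserves combinatorial distance to $c^\ast$.

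Assume now that $\rho(c_{i-1}) = \rho(c_i)$, so that a fold occurs at $i$. Preservation of distance yields $d(c^\ast, c_{i-1}) = d(c^\ast, c_i)$. Let $c_\ast$ be the projection of $c^\ast$ onto the residue $R_{d_i}$ of chambers containing $d_i$; the standard gate property of residues gives $d(c^\ast, c) = d(c^\ast, c_\ast) + 1$ for every $c \in R_{d_i}\setminus\{c_\ast\}$, so the equality of distances together with $c_{i-1} \neq c_i$ forces both $c_{i-1}$ and $c_i$ to differ from $c_\ast$. Concatenating a minimal gallery from $c^\ast$ to $c_\ast$ with the single step $c_\ast \to c_i$ across $d_i$ produces a minimal gallery from $c^\ast$ to $c_i$; this gallery lies in some apartment $B$, which therefore contains $c^\ast$, $c_\ast$, $c_i$ and $d_i$. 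In $B$, the wall $H_B$ through $d_i$ separates $c_\ast$ (and with it $c^\ast$) from $c_i$. The restriction $r|_B \colon B \to A$ is the simplicial isomorphism fixing $A \cap B$ pointwise, so it fixes $c^\ast$ and carries $H_B$ to $H \define \Span(\rho(d_i))$ preserving sides; consequently $r(c_i)$ lies on the side of $H$ opposite $c^\ast$. Since $c^\ast$ is deep in $\Cfm$, it lies on the $\Cfm$-side of $H$, and hence $\rho(c_i) = r(c_i) = \rho(c_{i-1})$ lies on the side of $H$ opposite $\Cfm$, which is exactly the positive-folding condition.

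The main obstacle, as I see it, is the uniform choice of $c^\ast$ for the entire finite gallery $\gamma$ and the existence of the apartment $B$ simultaneously containing $c^\ast$, $c_\ast$ and $c_i$; both rest on standard building-theoretic facts (the cited theorem, the existence of an apartment through any minimal gallery, and the gate property of residues). Once these are granted, the positive-folding conclusion is a direct bookkeeping of sides via the isomorphism $r|_B$, which sidesteps any need to analyze $\rho$ apartment-by-apartment with varying sub-Weyl chambers of $\Cfm$ at infinity.
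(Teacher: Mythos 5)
Your proof is essentially correct, but it follows a genuinely different route from the paper. The paper reduces to length-two galleries whose first alcove lies in $A$ (via Lemma~\ref{Lem_compatibleRetractions} and induction) and then excludes a negative fold by a thickness argument: it produces an apartment $A'$ containing $c_2$ and the half-apartment on the $\Cfm$-side of $H$, on which $\rho$ restricts to an isomorphism, and derives a contradiction. You instead trade $\rho$ for a single alcove retraction $r_{A,c^\ast}$ with $c^\ast$ deep in $\Cfm$, and then use distance preservation to $c^\ast$, the gate property of the panel residue of $d_i$, and convexity of apartments to see that any fold must be ``away from $c^\ast$'', hence on the side of $H$ opposite $\Cfm$; notably your argument needs no induction and never invokes thickness, which the paper's proof does. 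Two points deserve to be made explicit. First, the theorem quoted from \cite[p.171]{Brown} as stated gives, for each alcove $d$, \emph{some} alcove $c$ with $\rho(d)=r_{A,c}(d)$; your uniform choice of $c^\ast$ needs the standard stronger form that this holds for \emph{all} alcoves $c$ in a suitable subsector of $\Cfm$ (easily proved: take an apartment $B$ containing $d$ and a subsector of $\Cfm$; both $\rho$ and $r_{A,c}$ restrict to the isomorphism $B\to A$ fixing $A\cap B$), after which one intersects finitely many subsectors. Second, ``$c^\ast$ deep in $\Cfm$ lies on the $\Cfm$-side of $H$'' is a further condition on $c^\ast$, not a consequence of the first choice; since the image gallery $\hat\rho(\gamma)$ is determined independently of $c^\ast$, its panels span only finitely many walls and one may choose $c^\ast$ in a subsector of $\Cfm$ strictly beyond all of them, so the quantifiers do work out. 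With these refinements spelled out, your argument is a clean and slightly more general alternative to the one in the text.
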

\begin{proof}
By Lemma~\ref{Lem_compatibleRetractions} and induction it is enough to consider the case where $\gamma$ is of length 2 and the first chamber is contained in $A$. Assume that $\gamma=(0,c_1,p,c_2,v)$ is minimal with $c_1$ an alcove in $A$ and $c_2$ an adjacent alcove not contained in $A$ sharing the $i$-panel $p$. The alcove $\rho(c_2)$ is either the unique alcove in $A$ which is $i$-adjacent to $c_1$ but different from $c_1$ or $\rho(c_2)=c_1$, compare cases $(i)$ and $(ii)$ of Figure~\ref{Fig_positiveFold}. 

If $\rho(c_2)\neq c_1$, then  $\hat{\rho}(\gamma)$ is minimal. Assume $\rho(c_2)= c_1$ and that $\hat{\rho}(\gamma)$ has a negative fold at $p$ (defined analogously to positive fold). The hyperplane $H=\Span_\R(p)$ in $A$ does not separate $c_1$ and $\Cfm$.
Since $X$ is a thick building there exists an apartment $A'$ containing $c_2$ and the half-apartment $H^-$.

By definition of $A'$ the retraction $\rho$ maps $A'$ isometrically onto $A$. Hence $c_2$ is mapped onto a chamber $c$ in $A$ which is $i$-adjacent to $c_2$. Furthermore let $u, u'$ be chosen such that the gallery $(u,c_2,p,c,u')$ is minimal and positively folded. The two choices for $c$ are $c_1$ and the unique $i$-adjacent chamber of $c_1$ in $A$, which we excluded in our assumption. Hence $c=c_1$. But this is a contradiction to the assumption that $H$ does not separate $\rho(c_2)=c_1$ and $\Cfm$. Hence case $(iii)$ of Figure~\ref{Fig_positiveFold} does not occur.
\end{proof}

\subsubsection*{A character formula}

The character formula stated below is used in the proof of Theorem~\ref{Thm_convexity}. For further references on representation theory see for example \cite{Bourbaki7-9} or \cite{Humphreys}.

Let $G$ be a connected semisimple complex algebraic group. Let $T$ be a maximal torus in $G$. Denote by $\mathfrak{X}$ the character group of $T$ and the subset of dominant co-characters in $\mathfrak{X}$ by $\mathfrak{X}_+$. Let $X$ be the affine Bruhat-Tits building associated to $G(\C[\![t]\!])$ 
as defined in \cite{BruhatTits}. Apartments in $X$ can be identified with translates $gA$ of $A\define \mathfrak{X}\otimes_\Z\R$. Dominant co-weights $\lambda\in \mathfrak{X}_+$ correspond to special vertices in $\Cf$.
Denote by $V_\lambda$ the irreducible complex representation of highest weight $\lambda$ for $G$. Let $\chi_\lambda$ be the character of $V_\lambda$. 

\begin{thm}{\cite{GaussentLittelmann}}
\label{Thm_characterFormula}
Fix $\lambda\in \mathfrak{X}_+$. 
Let $p_{\nu}$ denote the dimension of the $\nu$-weightspace of $V_\lambda$ and $\wgt(\gamma)$ the vertex of the final chamber of $\gamma$ having the same type as $\lambda$. Then
\begin{equation}
 \sum_{\gamma\in LS(t)} k^{\wgt(\gamma)} = \chi_\lambda = \sum_{\nu\in X} p_{\nu}k^\nu.
\end{equation}
\end{thm}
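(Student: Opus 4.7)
The plan is to establish a weight-preserving bijection between $LS(t)$ and a basis of $V_\lambda$ compatible with the $T$-weight decomposition; the character identity is then immediate. First I would spell out the LS condition combinatorially: a positively folded gallery of type $t$ starting at $0$ is LS if, at each folding panel, the pair of Weyl group elements describing the two sides of the fold satisfies a Bruhat-order chain condition in the appropriate parabolic quotient of $\sW$. This is the gallery analogue of Lakshmibai-Seshadri chains and is designed to pick out, among all positively folded galleries of a given type, exactly those of maximal combinatorial dimension.

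The geometric core of the argument, following \cite{GaussentLittelmann}, is to realize $V_\lambda$ via the affine Grassmannian of $G$. Fix a minimal gallery of type $t$ from $0$ to $\lambda$. The associated Bott--Samelson type variety resolves the closed Schubert variety $\overline{\mathrm{Gr}^\lambda}$, and composing this resolution with the retraction of the affine Grassmannian onto the standard apartment --- whose combinatorial shadow is precisely the retraction $\rho$ from Notation \ref{Not_cthm} --- decomposes each fiber over $\nu\in\mathfrak{X}$ into cells indexed by positively folded galleries of type $t$ ending at $\nu$. Lemma \ref{Lem_folding} is exactly the combinatorial reflection of the fact that retracting produces positively folded galleries. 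A dimension computation then shows that the top-dimensional cells in $\rho^{-1}(\nu)\cap \mathrm{Gr}^\lambda$ are indexed by the LS-galleries of weight $\nu$, and these cells are precisely the irreducible components of the Mirkovi\'c--Vilonen cycle $Z(\lambda,\nu)$.

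Finally, the geometric Satake correspondence of Mirkovi\'c and Vilonen identifies the number of irreducible components of $Z(\lambda,\nu)$ with $p_\nu=\dim V_\lambda(\nu)$. Summing over $\nu$ gives
\[
\sum_{\gamma\in LS(t)} k^{\wgt(\gamma)} \;=\; \sum_{\nu}\#\{\gamma\in LS(t):\wgt(\gamma)=\nu\}\,k^\nu \;=\; \sum_\nu p_\nu\,k^\nu \;=\; \chi_\lambda,
\]
which is the desired formula. The main obstacle is the dimension calculation inside the Bott--Samelson variety: one must verify that the purely combinatorial LS condition on a positively folded gallery is equivalent to the geometric condition that the corresponding cell indexes an irreducible component of $Z(\lambda,\nu)$. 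Establishing this equivalence, via an inductive analysis of the fiber structure of the Bott--Samelson resolution over each panel of the gallery, is the technical heart of the proof and the step most resistant to shortcuts.
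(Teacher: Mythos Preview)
The paper does not prove this theorem at all: it is stated as a quotation from \cite{GaussentLittelmann} and used as a black box in the proof of Proposition~\ref{Prop_existenceLSgalleries}. There is therefore no ``paper's own proof'' to compare your proposal against.

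That said, your sketch is a faithful high-level summary of the Gaussent--Littelmann argument: Bott--Samelson resolution of $\overline{\mathrm{Gr}^\lambda}$, cell decomposition of the fibres indexed by positively folded galleries, identification of top-dimensional cells with LS-galleries, and the count via geometric Satake and Mirkovi\'c--Vilonen cycles. As a proof outline it is correct in spirit; the only caution is that what you have written is a roadmap rather than a proof, and the dimension computation you flag as ``the technical heart'' genuinely is substantial and cannot be dispatched in a sentence. For the purposes of this thesis, however, citing \cite{GaussentLittelmann} (as the author does) is the appropriate move.
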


The set $LS(t)$ is a certain subset of the set of positively folded galleries of fixed type $t$ in $A$ with source $0$. The term $k^{wgt(\gamma)}$ is a group-like element of the group algebra associated to the targets $\wgt(\gamma)$ of $LS$-galleries. For all $\nu\in\Cf$ the coefficients $p_{\lambda,\nu}$ are strictly positive if and only if $\nu\leq \lambda$, which is equivalent to the fact that $\nu$ is contained in $A^\QQ(x)\cap\Cf$.

\begin{remark}
Actually Gaussent and Littelmann use a slightly more general notion of gallery as the one defined in \ref{Def_gallery}. Nevertheless by observations made in \cite{Schwer} their result is true in our setting as well.
\end{remark}

\subsection{Proof of \texorpdfstring{Theorem~\ref{Thm_convexity}}{theorem}}

Let notation be as in ~\ref{Not_cthm} and let $\hat{r}$ and $\hat{\rho}$ denote the extended retractions according to \ref{Not_extendedRetr}. The proof of Theorem~\ref{Thm_convexity} can be reduced to the following two propositions.

\begin{prop}\label{Prop_existenceLSgalleries}
Let $A$ be a Euclidean Coxeter complex with origin $0$ and fundamental Weyl chamber $\Cf$. Let $x$ be a special vertex in $A$ and let $x^+$ denote the unique element of $\sW.x$ contained in $\Cf$. Let $t$ be the type of a fixed minimal gallery $\gamma: 0 \rightsquigarrow x^+$.
All vertices in $A^\QQ(x)=\{y\in \dconv(\sW.x)\cap (x+\QQ(\RS^\vee))\}$ are targets of positively folded galleries having type $t$. Conversely the target of any positively folded gallery of type $t$ with source $0$ is contained in $A^\QQ(x)$.
\end{prop}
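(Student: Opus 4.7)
The plan is to prove the two inclusions separately. The forward inclusion will follow from the Gaussent--Littelmann character formula (Theorem~\ref{Thm_characterFormula}), while the reverse inclusion requires a combinatorial induction on the number of folds, following the combinatorial analysis of \cite{ParkinsonRam}.

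\textbf{Forward inclusion.} Set $\lambda = x^+$. By classical highest-weight theory (see e.g.\ \cite{Bourbaki7-9, Humphreys}), the support of the character $\chi_\lambda$ is exactly $\dconv(\sW.\lambda) \cap (\lambda + \QQ(\RS^\vee)) = A^\QQ(x)$, and every weight multiplicity $p_\nu$ with $\nu \in A^\QQ(x)$ is strictly positive. Theorem~\ref{Thm_characterFormula} then yields
\begin{equation*}
\sum_{\gamma \in LS(t)} k^{\wgt(\gamma)} \,=\, \chi_\lambda \,=\, \sum_{\nu \in A^\QQ(x)} p_\nu\, k^\nu ,
\end{equation*}
so for each $\nu \in A^\QQ(x)$ there is at least one LS-gallery $\gamma$ with $\wgt(\gamma) = \nu$. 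Since every LS-gallery is in particular a positively folded gallery of type $t$ with source $0$, this direction is established.

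\textbf{Reverse inclusion.} I would induct on the number $k$ of folds of $\gamma$. If $k=0$, then $\gamma$ is a minimal gallery in $A$ of type $t$ with source $0$; since the initial chamber containing $0$ can be chosen freely and $t$ is the type of a minimal gallery from $0$ to $x^+$, the set of possible targets is exactly $\sW.x^+ \subseteq A^\QQ(x)$. If $k \geq 1$, pick the last fold of $\gamma$, at position $i$ on the hyperplane $H$. Unfolding there -- replacing the tail of $\gamma$ starting at $c_i$ by its reflection across $H$ -- produces a gallery $\gamma'$ of type $t$ with source $0$ and $k-1$ folds. All remaining folds stay positive since they occur strictly before position $i$ and are untouched, while by choice there are no folds after $i$. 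The targets satisfy $\wgt(\gamma) = s_H(\wgt(\gamma'))$, and the inductive hypothesis gives $\wgt(\gamma') \in A^\QQ(x)$. Because $s_H$ lies in the affine Weyl group $\aW$, it preserves types of special vertices, so $\wgt(\gamma) \in x + \QQ(\RS^\vee)$.

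It remains to verify that $\wgt(\gamma) \in \dconv(\sW.x)$. Positivity of the fold forces $\wgt(\gamma)$ to lie on the side of $H$ opposite $\Cfm$, and hence $\wgt(\gamma')$ lies on the $\Cfm$-side. Invoking the characterization of $A^\QQ(x)$ from Lemma~\ref{Lem_conv^*}, one checks through a case analysis (as carried out in \cite{ParkinsonRam}) that reflection across a hyperplane satisfying the positivity condition carries the $\Cfm$-side portion of $A^\QQ(x)$ into $A^\QQ(x)$. This last step is the main obstacle: since $A^\QQ(x)$ is only $\sW$-invariant and not invariant under arbitrary affine reflections, the positivity condition of the fold must be exploited decisively to conclude that this particular reflection preserves the convex-hull condition.
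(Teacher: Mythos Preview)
Your forward inclusion is exactly the paper's argument: both invoke Theorem~\ref{Thm_characterFormula}, read off that $p_\nu > 0$ if and only if $\nu \le x^+$, note that $LS(t)\subset \Gamma_t^+$, and finish with Lemma~\ref{Lem_conv^*}.

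For the converse the paper does \emph{not} run a separate combinatorial argument at all: once the character identity pins down the set of LS-gallery targets as $A^\QQ(x)$, the paper simply states that ``by Lemma~\ref{Lem_conv^*} the proposition follows.'' Your unfolding induction is therefore an addition rather than a variant, and it carries a genuine gap beyond the one you already flag. Positivity of the fold at position $i$ says that $H=\Span(d_i)$ separates the \emph{chamber} $c_i=c_{i-1}$ from $\Cfm$; it does not determine on which side of $H$ the \emph{target} $\wgt(\gamma)$ lies, because the (minimal) tail of $\gamma$ beyond position $i$ may cross $H$ again. Hence your assertion that $\wgt(\gamma)$ sits on the non-$\Cfm$ side and $\wgt(\gamma')$ on the $\Cfm$ side is unjustified, and without it the reflection step you defer to a case analysis cannot even be set up. The argument from \cite{ParkinsonRam} sketched later in Section~\ref{Sec_ParkinsonRam} avoids this by working with root operators that move the endpoint by a single simple coroot at a time, rather than by unfolding a single arbitrary affine reflection.
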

\begin{proof}
Recall that $\Gamma_t^+$ denotes the set of all positively folded galleries of type $t$ in $A$ with source $0$. The so called $LS$-Galleries $LS(t)$ are contained in $\Gamma_t^+$. Let $G$ be a connected semisimple complex algebraic group and denote by $\mathfrak{X}$ the character group of a chosen maximal torus $T\subset G$. Assume that $G$ is such that $A$ can be identified with $\mathfrak{X}\otimes_\Z\R$.
By Theorem~\ref{Thm_characterFormula} the character of the (irreducible) highest weight representation $V_{x^+}$, with highest weight $x^+$, equals
\begin{equation*}
\sum_{\sigma\in LS(t)} k^{\wgt(\sigma)} = \chi_{x^+} = \sum_{\nu\in X} p_{\nu}k^\nu
\end{equation*}
where $\wgt(\sigma)$ is the target of $\sigma$. The coefficient $p_{\nu}$ equals the dimension of the $\nu$-weight space of $V_{x^+}$. Hence $p_\nu$ is positive if and only if $\nu\leq x^+$. By comparison of coefficients $k^\nu\neq 0$ if and only if there exist an $LS$-gallery with target $\nu$. This holds for all $\nu\leq x^+$. By Lemma~\ref{Lem_conv^*} the proposition follows.
\end{proof}

\begin{prop}\label{Prop_existencePreimage}
Let $A$ be a fixed apartment of an affine building $X$. Fix an origin $0$ and fundamental Weyl chamber $\Cf$ in $A$. If $\gamma\subset A$ is a positively folded gallery with source $0$ of type $t$ then there exists a minimal gallery $\widetilde{\gamma}\subset X$ with source $0$ such that $\hat{\rho}(\widetilde{\gamma})=\gamma$.
\end{prop}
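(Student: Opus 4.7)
The plan is to build $\widetilde{\gamma}$ alcove by alcove starting at the source, using thickness to supply an alternate apartment at each positive fold.

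Proceed by induction on the length $n$ of $\gamma=(0,c_0,d_1,\ldots,d_n,c_n,v)$. For $n=0$ take $\widetilde{\gamma}=\gamma\subset A$. Assume a non-stuttering partial lift $\widetilde{\gamma}_i=(0,\widetilde{c}_0,\widetilde{d}_1,\ldots,\widetilde{d}_i,\widetilde{c}_i)$ in $X$ with $\hat{\rho}(\widetilde{\gamma}_i)=(0,c_0,\ldots,d_i,c_i)$ has been constructed. By the standard fact that any alcove and any Weyl chamber in an affine building share a sub-Weyl chamber inside a common apartment, choose an apartment $A_i$ containing $\widetilde{c}_i$ together with a sub-Weyl chamber $\widetilde{S}_i$ of $-\Cf$. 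The restriction $\rho|_{A_i}\colon A_i\to A$ is a type-preserving isomorphism fixing $A\cap A_i$ pointwise and sending $\widetilde{c}_i$ to $c_i$; let $\widetilde{d}_{i+1}\subset\widetilde{c}_i$ be the preimage of $d_{i+1}$.

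If $c_{i+1}\neq c_i$, let $\widetilde{c}_{i+1}$ be the alcove in $A_i$ across $\widetilde{d}_{i+1}$ from $\widetilde{c}_i$; automatically $\widetilde{c}_{i+1}\neq\widetilde{c}_i$ and $\rho(\widetilde{c}_{i+1})=c_{i+1}$. Suppose instead that $\gamma$ is positively folded at step $i+1$, so $c_{i+1}=c_i$ and the wall $H=\Span(d_{i+1})$ separates $c_i$ from $-\Cf$. Let $H'$ be the corresponding wall of $A_i$ and $(H')^-$ the half-apartment of $A_i$ containing $\widetilde{S}_i$; positivity of the fold forces $\widetilde{c}_i$ onto the opposite side of $H'$. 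Thickness furnishes an alcove $\widetilde{c}_{i+1}$ sharing $\widetilde{d}_{i+1}$ with $\widetilde{c}_i$ that is distinct both from $\widetilde{c}_i$ and from the alcove of $A_i$ opposite $\widetilde{c}_i$ across $\widetilde{d}_{i+1}$, so in particular $\widetilde{c}_{i+1}\notin A_i$. A standard consequence of the building axioms~\ref{tec17}--\ref{tec19} combined with thickness then produces an apartment $A_{i+1}$ containing $(H')^-\cup\{\widetilde{c}_{i+1}\}$. Since $\widetilde{S}_i\subset A\cap A_{i+1}$, the retraction $\rho|_{A_{i+1}}\colon A_{i+1}\to A$ fixes $\widetilde{S}_i$ pointwise; in $A_{i+1}$ the wall through $\widetilde{d}_{i+1}$ therefore separates $\widetilde{S}_i\subset(H')^-$ from $\widetilde{c}_{i+1}$, and consequently $\rho|_{A_{i+1}}$ sends $\widetilde{c}_{i+1}$ to the alcove of $A$ adjacent to $d_{i+1}$ on the side of $H$ opposite $-\Cf$, which is exactly $c_i=c_{i+1}$.

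Iterating produces a non-stuttering gallery $\widetilde{\gamma}$ of type $t$ in $X$ with $\hat{\rho}(\widetilde{\gamma})=\gamma$, where the target $\widetilde{v}$ is taken to be the vertex of $\widetilde{c}_n$ of the same type as $v$ (then $\rho(\widetilde{v})=v$ as $\rho$ is a chamber map). As in Proposition~\ref{Prop_existenceLSgalleries} the type $t$ is that of a minimal gallery $0\rightsquigarrow x^+$ and is therefore reduced, so every non-stuttering gallery of type $t$ is automatically minimal, giving $\widetilde{\gamma}\in\Ghat_t$.

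The main obstacle is the positively folded step: producing an apartment $A_{i+1}$ containing $(H')^-$ together with the new alcove $\widetilde{c}_{i+1}$, and then verifying that the retraction genuinely sends $\widetilde{c}_{i+1}$ to $c_i$ rather than to its mirror $c_i^*$ across $H$. Both the existence of a third alcove across $\widetilde{d}_{i+1}$ and the correct orientation of $A_{i+1}$ rely on thickness together with the sign information encoded in the word \emph{positively} (i.e.\ that $-\Cf$ lies on the side of $H$ opposite $c_i$).
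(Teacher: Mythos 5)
Your argument is correct, and it runs on the same engine as the paper's proof: at each positive fold, thickness supplies an apartment branching off along (the pullback of) the fold wall on the side containing a subsector of $\Cfm$, and the retraction centered at $\partial(\Cfm)$ folds the new piece back onto $\gamma$. The bookkeeping, however, is genuinely different. You lift the gallery alcove by alcove, using at each fold the root-extension fact (a half-apartment together with a chamber attached along its wall lies in an apartment) and the global well-definedness of $\rho$, and you check $\rho(\widetilde{c}_{i+1})=c_{i+1}$ by a separation argument; minimality comes from reducedness of $t$, which indeed must be assumed and holds in the intended context of Proposition~\ref{Prop_existenceLSgalleries}. The paper instead inducts on the folding indices: it reflects the entire tail of $\gamma$ across the fold hyperplane into the branching apartment, producing successively less folded galleries $\gamma^1,\ldots,\gamma^k$, tracks minimality between folds, and recovers $\hat{\rho}(\gamma^k)=\gamma$ by composing the intermediate retractions via Lemma~\ref{Lem_compatibleRetractions}. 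Your route gives a cleaner step-by-step verification and avoids the intermediate galleries; the paper's exhibits the preimage explicitly as an iterated unfolding. Two small repairs: the positive-fold condition only guarantees that $(H')^-$ contains a \emph{sub}-Weyl chamber of $\Cfm$ (the paper's notion of ``separates'' is phrased via a representative of $\partial(\Cfm)$), so $\rho|_{A_{i+1}}$ fixes only such a subsector pointwise, which is all you need; and you could bypass the root-extension fact by taking the branching apartment furnished directly by the thickness statement following Definition~\ref{Def_building} and reading off $\widetilde{c}_{i+1}$ from it.
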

\begin{proof}
Let $\gamma=(0, c_0, d_1, c_1, \ldots , d_n, c_n, v)$. We will inductively construct a preimage $\widetilde{\gamma}$ of $\gamma$ under $\rho$ which is minimal in $X$.
Denote by $J=\{i_1, \ldots, i_k\}\subset \{1,2,\ldots,n\}$ the set of folding indices. Hence for each $i_j\in J$ we have  $c_{i_j}=c_{i_j -1}$. Assume $i_1 <i_2 <\ldots < i_k$. Let $H_{i_1}$ be the hyperplane separating $c_{i_1}$ from $\Cfm$. Write $H_{i_1}^-$ (respectively $H_{i_1}^+$) for the half-apartment of $A$ determined by $H_{i_1}$ which contains a subsector of $\Cfm$ (respectively a subsector of $\Cf$). Since $X$ is thick there exists an apartment $A_1$ such that $A_1\cap A = H_{i_1}^-$.

Let $A'= H_{i_1}^+\cup(A_1\setminus A)$. 
By construction $\gamma$ is contained in $A'$. Denote by $r_{i_1}$ the reflection at $H_{i_1}$ in $A'$ and define $\gamma^1$ as follows:
$$\gamma^1=(0, c_0, d_1, \ldots  , d_{i_1}, c_{i_1}^1, d_{i_1+1}^1, \ldots , d_n^1, c_n^1, y^1)$$
where $c_j^1 = r_{i_1}(c_j)$ and $d_j^1=r_{i_1}(d_j)$ for all $j\geq i_1$. Note that $d_{i_1}=d_{i_1}^1$ since $d_{i_1}$ is contained in $H_{i_1}$. Then $\gamma^1$ is a gallery of type $t =\type( \gamma)$. But, in contrast to $\gamma$ it is not folded at $i_1$. Furthermore $\gamma^1$ is minimal up to the next folding index, meaning the shortened gallery $(0, c_0, d_1, \ldots , d_{i_1}, c_{i_1}^1, d_{i_1+1}^1, \ldots , d_{i_2 -1}^1, c_{i_2 -1}^1)$ is minimal. The previous step reduced the number of folding indices by one and is done such that $\hat{\rho}(\gamma^1)=\gamma$.

Shorten the gallery $\gamma^1$ such that it is contained in $A_1$: 
$$\gamma_1 = (x^1, c_{i_1}^1, d_{i_1+1}^1, \ldots , d_n^1, c_n^1, y^1)$$
where $x^1$ is the vertex of $c_{i_1}$ such that $(x^1, c_{i_1}^1, d_{i_1+1}^1, \ldots , d_{i_2}^1, c_{i_2-1}^1)$ is a minimal gallery from $x^1$ to $c_{i_2-1}^1$. By definition $\gamma_1$ is positively folded of a suitably shortened type and contained in $A_1$.
As in the first step, define a gallery 
$$\widetilde{\gamma}^2=(x_1, c_{i_1}^1, d_{i_1+1}^1, \ldots  , d_{i_2}^1, c_{i_2}^2, d_{i_2+1}^2, \ldots , d_n^2, c_n^2, y^2)$$ having the analogous properties. Denote by $\hat{\rho}_1$ the retraction $\hat{\rho}_{A_1, \Cfm}: X\mapsto A_1$. By definition of $\widetilde{\gamma}^2$ one has $\hat{\rho}_1(\widetilde{\gamma}^2)=\gamma_1$. 
Let 
$$
\gamma^2 = (0, c_0, d_1, \ldots  , d_{i_1}, c_{i_1}^1, d_{i_1+1}^1, \ldots  , d_{i_2}^1, c_{i_2}^2, d_{i_2+1}^2, \ldots , d_n^2, c_n^2, y^2).
$$
(We add the part which was removed when $\gamma^1$ was shortened.) Now $\hat{\rho}(\gamma^2)=\gamma^1$.

Iterating the same procedure $k=\sharp J$ times, we get 
\begin{align*}
\gamma^k = 	&(0 , c_0, d_1,  \ldots , d_{i_1}, c_{i_1}^1, d_{i_1+1}^1, \ldots 
		 \ldots, d_{i_k}^{k-1}, c_{i_k}^k, d_{i_k+1}^k, 
		\ldots, d_n^k, c_n^k, y^k)
\end{align*}
and a sequence of apartments $A_1, \ldots A_k$ such that $c_*^j, d_*^j\subset A_j$ for all $j=1,\ldots, k$. By construction $\gamma^k: 0\rightsquigarrow y^k$ is minimal in $X$ and of type $t$. Again denote the retraction $\hat{\rho}_{A_{j}, \Cfm}$ onto $A_{j}$ by $\hat{\rho}_j$. Lemma~\ref{Lem_compatibleRetractions} implies $\hat{\rho}(\gamma^k)=(\hat{\rho}_k\circ\hat{\rho}_{k-1}\circ\ldots\circ\hat{\rho}_1)(\gamma)$. Hence $\gamma^k$ is the desired preimage of $\gamma$.
\end{proof}

The proof of Theorem~\ref{Thm_convexity} reads as follows:

\begin{proof}[{\textit{Proof of Theorem \ref{Thm_convexity}.}}]
By Lemma~\ref{Lem_preimage}, the set $r^{-1}(\sW.x)$ equals $\G_t$. Therefore $\rho(r^{-1}(\sW.x))$ equals $\rho(\G_t)$. Let $\hat{r}$ and $\hat{\rho}$ be as in \ref{Not_extendedRetr}. Lemma~\ref{Lem_folding} implies that $\hat{\rho}(\Ghat_t)$ is a set of positively folded galleries in $A$ and thus $\rho(\G_t)$ is the set of targets of the galleries contained in $\hat{\rho}(\Ghat_t)$. By Propositions~\ref{Prop_existenceLSgalleries} and~\ref{Prop_existencePreimage} the assertion follows.
\end{proof}

\begin{remark}
Proposition~\ref{Prop_existenceLSgalleries} is a purely combinatorial property of $LS$-galleries. It was our aim to find a proof that avoids representation theoretic methods. During the preparation of this thesis Parkinson and Ram published the preprint~\cite{ParkinsonRam}, where they give the desired geometrical proof of Proposition~\ref{Prop_existenceLSgalleries}. The main idea of their proof carries over to thick generalized affine buildings. Details can be found in Section~\ref{Sec_convexityRevisited}.
\end{remark}

\subsection{Application and open problem}\label{Sec_application} \label{Sec_openProblem}

We give an application of Theorem \ref{Thm_convexity}.

Let $G$ be a group with affine $BN$-pair. There exists then an affine building $(X,\App)$ associated to $G$ on which the group acts. Let $B$ and $N$ denote the groups of the sphercial $BN$-pair of $G$ which we assume to be \emph{split}, i.e. $B=UT$. Denote by $\Delta$ the spherical building associated to $G$ by means of the spherical $BN$-pair. Notice that $\Delta=\binfinity X$. The group $N$ stabilizes an apartment $a$ of $\Delta$ and $B$ stabilizes a chamber $c$ of $a$.  The groups $U$ and $T$ can be interpreted as follows: $T$ is a group of translations in the affine apartment $A$ with boundary $\partial A = a$ and $U$ acts simply transitive on the set of all affine apartments containing $c$ at infinity. Fix a chart of $A$ such that $c=\partial(\Cfm)$.

Let $K$ be the stabilizer of $0\in A$ in $G$. Then $G$ has an \emph{Iwasawa decomposition}\index{Iwasawa decomposition}
$$ G=BK=UTK .$$ 
Notice that special vertices of the same type as $0$ in $X$ are in one-to-one correspondence with cosets of $K$ in $G$. Let $x$ be a special vertex in $X$ having the same type as $0$.
Then there exists an element $u\in U$ such that $(u^{-1}).x\in A$. Let $t\in T$ be the translation mapping $0$ to $(u^{-1}).x$. If the origin $0$ is identified with $K$ the vertex $x$ corresponds to the coset $utK$. Denote by $t_0$ the type of the origin. One has :
\begin{align*}
\{ \text{special vertices of type } t_0 \text{ in } X\} &\stackrel{1:1}{\longleftrightarrow} \{ \text{ cosets of } K \text{ in } G \}\\
 0 	& \longmapsto  K \\
X\ni x 	& \longmapsto  utK  \text{ with } u,t \text{ chosen as above.}
\end{align*}
Any special vertex of type $t_0$ of $X$ can hence be identified with a coset $utK$ with suitably chosen $u\in U$ and $t\in T$. Note that vertices of type $t_0$ contained in $A$ correspond precisely to cosets of the form $tK$ with $t\in T$. 

Furthermore it is easy to see that $\rho$ is exactly the projection that maps $utK$ to $tK$, and hence to $A$. 

For all special vertices $tK$ contained in $A$ the set $r^{-1}(\sW.tK)$ is the same as the $K$-orbit of $tK$ which is 
$$r^{-1}(\sW.tK)= KtK.$$ 

In the situation as described above the following theorem is a direct reformulation of Theorem~\ref{Thm_convexity}.

\begin{thm}\label{Thm_BNpair}
For all $tK\in A$ we have 
$$\rho(KtK)=\dconv(tK)$$
or, since $\rho^{-1}(t'K)=Ut'K$, equivalently
$$
\emptyset \neq Ut'K \cap KtK \:\Longleftrightarrow\: t'K \in \dconv(\sW.tK).
$$
\end{thm}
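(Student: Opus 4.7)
The plan is to show that Theorem \ref{Thm_BNpair} is essentially a direct translation of Theorem \ref{Thm_convexity} into the group-theoretic language of the $BN$-pair, once we have the right dictionary between special vertices and cosets of $K$.

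First, I would unpack the identifications described just before the theorem. The discussion in Section~\ref{Sec_application} sets up a bijection between special vertices of type $t_0$ in $X$ and cosets of $K$ in $G$: a vertex $x$ corresponds to $utK$ where $u\in U$ is (uniquely) chosen so that $u^{-1}.x\in A$ and $t\in T$ is the translation carrying $0$ to $u^{-1}.x$. Under this bijection, vertices in $A$ of type $t_0$ correspond precisely to cosets $tK$ with $t\in T$, and the retraction $\rho$, which maps $u^{-1}.x$ back along $U$, corresponds to the projection $utK\mapsto tK$. In particular, for any $t'\in T$ one has $\rho^{-1}(t'K)=Ut'K$ because $U$ acts simply transitively on the affine apartments containing the chamber $\partial(\Cfm)$ at infinity. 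Moreover, since $G$ has an affine $BN$-pair, the stabilizer $K$ of $0$ acts transitively on the set of apartments through $0$, so Lemma~\ref{Lem_preimage} gives $r^{-1}(\sW.tK)=K.tK=KtK$ (the $K$-orbit equals the double coset).

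With these dictionaries in place, I would simply apply Theorem~\ref{Thm_convexity} with $x=tK\in A$. It yields
\begin{equation*}
\rho(KtK)=\rho(r^{-1}(\sW.tK))=\dconv(\sW.tK)\cap (tK+\QQ).
\end{equation*}
Since cosets of the form $t'K$ with $t'\in T$ exhaust the special vertices of type $t_0$ in $A$, the intersection with $tK+\QQ$ is automatic in this reformulation (every vertex arising as $\rho(utK)$ is necessarily of the type of $0$), so the first displayed equality of the theorem holds. For the biconditional, I would observe that $t'K\in \rho(KtK)$ is by definition equivalent to $\rho^{-1}(t'K)\cap KtK\neq\emptyset$, and then substitute $\rho^{-1}(t'K)=Ut'K$.

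The only real content beyond a bookkeeping exercise is the verification that the two ingredients of Lemma~\ref{Lem_preimage} and the dictionary $\rho^{-1}(t'K)=Ut'K$ hold in the split affine $BN$-pair setting; both are built into the construction of $(X,\App)$ from $G$ and the fact that $B=UT$, so I expect no genuine obstacle, only care to be sure that the type-$t_0$ condition is not lost when identifying vertices with $T$-cosets. The heavy lifting — namely that the image on the left equals the dual-convex hull — has already been done in Theorem~\ref{Thm_convexity}; \ref{Thm_BNpair} is the translation of that geometric statement into the language of Iwasawa and Cartan double cosets.
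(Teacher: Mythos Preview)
Your proposal is correct and matches the paper's own treatment: the paper explicitly presents Theorem~\ref{Thm_BNpair} as a direct reformulation of Theorem~\ref{Thm_convexity} via the dictionary $r^{-1}(\sW.tK)=KtK$ and $\rho^{-1}(t'K)=Ut'K$, exactly as you have written out. There is no separate proof in the paper beyond this translation, so your level of detail actually exceeds what the paper provides.
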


The proof of ``$\Rightarrow$'' in the second statement is well known and can be found in \cite{BruhatTits}. As already mentioned in the introduction partial results on ``$\Leftarrow$'' and related questions can, for example, be found in \cite{GaussentLittelmann, MirkovicVilonen, Rapoport, Schwer} or \cite{Silberger}.

\subsubsection*{An open problem}

It is natural to ask whether or not a result in the spirit of Theorem \ref{Thm_convexity} holds for faces of alcoves having smaller co-dimension or even for alcoves themselves. Assume that we are in the situation as described in \ref{Thm_convexity}. Hence $X$ is a thick simplicial affine building, an apartment $A$ is fixed together with an origin and a fundamental Weyl chamber $\Cf$. The retractions $r$ and $\rho$ are also as defined in \ref{Not_cthm}. 

\begin{figure}[htbp]
\begin{center}
 \begin{minipage}[b]{5 cm}
    \resizebox{!}{0.18\textheight}{\input{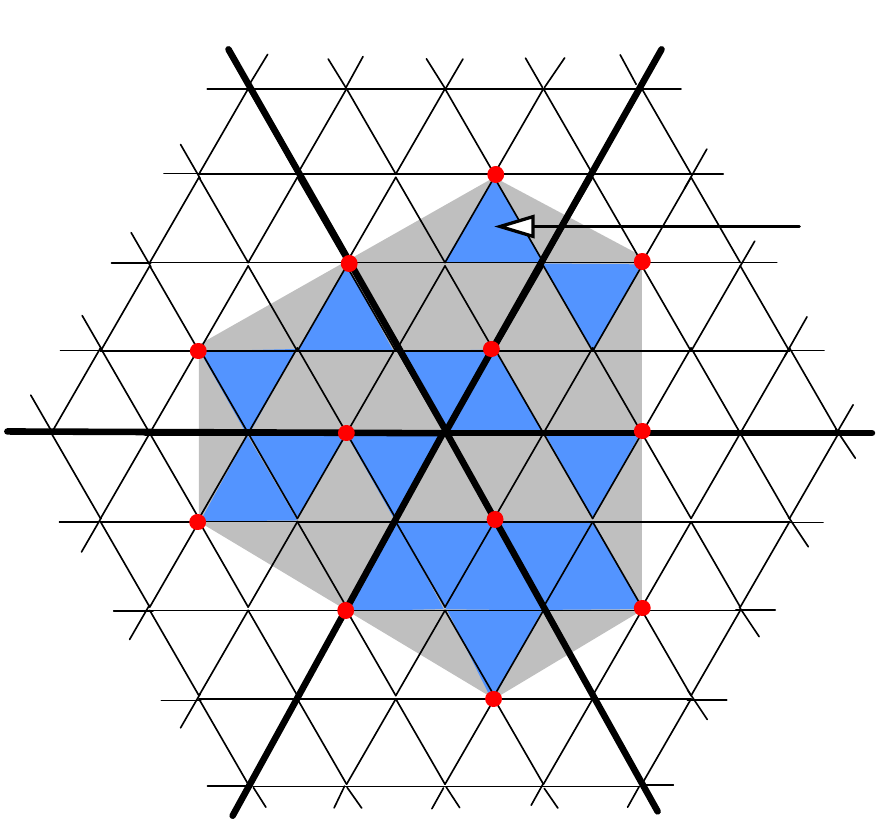tex_t}}
  \end{minipage}
  \begin{minipage}[b]{5 cm}
    	\resizebox{!}{0.18\textheight}{\input{graphics/hit-length5b.pdftex_t}}
  \end{minipage}
  \begin{minipage}[b]{5 cm}
    	\resizebox{!}{0.18\textheight}{\input{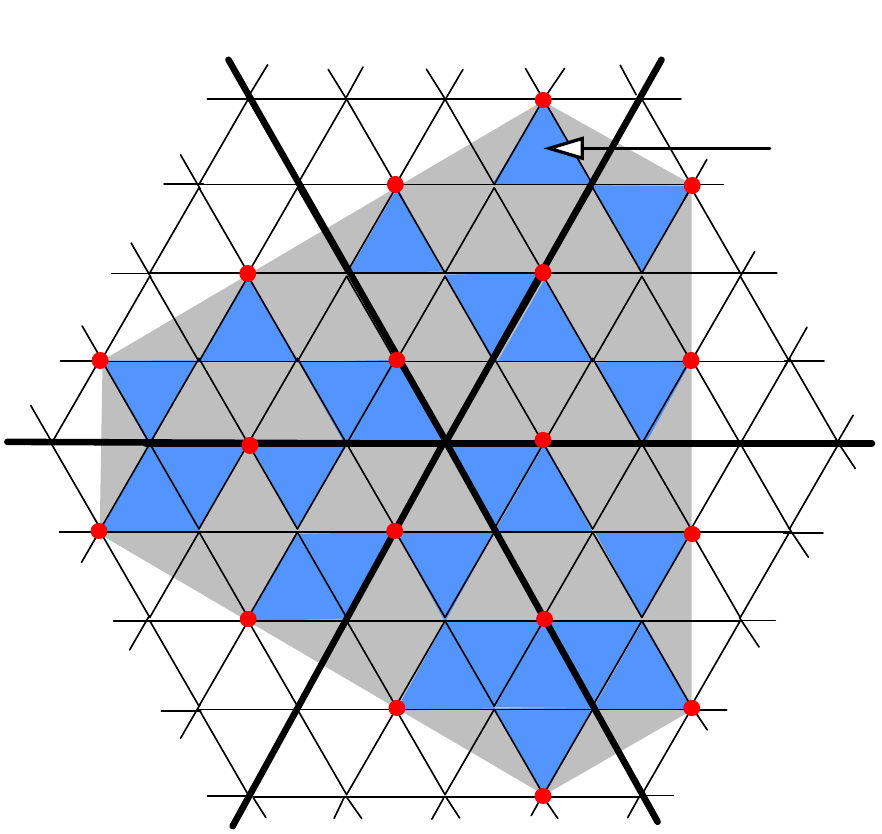tex_t}}
  \end{minipage}
  \caption[retractions]{Given an alcove $c$ contained in $\Cf$ the dark shaded alcoves are the ones contained in $\rho(r^{-1}(\sW.c))$.}
  \label{Fig_chamber}
\end{center} 
\end{figure}

Easy examples show that if we replace in the statement of Theorem~\ref{Thm_convexity} the vertex $x$ by an alcove $c$ contained in $A$ makes the question seems to be much harder, but solving the question for alcoves might have representation theoretic consequences. 

Notice that the set $\rho(r^{-1}(\sW.c))$ does not cover the convex hull of the orbit $\sW.c$ and has a structure that is hard to describe. Let $v$ be the vertex of $c$ furthest away from $0$. The light shaded region in the pictures of Figure \ref{Fig_chamber} is the convex hull of $\sW.v$. Notice that not even this set is covered by $\rho(r^{-1}(\sW.c))$, quite the contrary: the holes seem to ``grow''.

Three examples showing $\rho(r^{-1}(\sW.c))$ for different choices of the alcove $c$ are given in Figure~\ref{Fig_chamber}. The picture illustrate that the pattern ``evolves'' in a certain regular way when gallery is elongated in each step by one alcove, as done from left to right in Figure \ref{Fig_chamber}.

As the application given above shows the question for vertices is related to the Iwasawa decomposition of groups with affine $BN$-pair. In similar ways the question for alcoves is related to the Iwahori decomposition. We will not make this connection precise here.

 \newpage
\section{The model space}
\label{Sec_modelSpace}

Geometric realizations of simplicial affine buildings are metric spaces ``covered by'' Coxeter complexes which are isomorphic to a tiled $\R^n$. The basic idea of the generalization is to substitute the real numbers by a totally ordered abelian group $\Lambda$.

\subsection{Definitions and basic properties}
\begin{definition}\label{Def_modelSpace}
\index{model space}
Let $\RS$ be a (not necessarily crystallographic) spherical root system and $F$ a subfield of $\R$ containing the set $\lb \beta, \alpha^\vee\rb : \alpha, \beta \in \RS\}$. Assume that $\Lambda$ is a totally ordered abelian group admitting an $F$-module structure. The space 
$$
\MS(\RS,\Lambda) = \mathrm{span}_F(\RS)\otimes_F \Lambda
$$ 
is the \emph{model space} of a generalized affine building of type $\RS$. 
\end{definition}

We omit $F$ in the notation, since $F$ can always be chosen to be the quotient field of $\Q[\lb \beta, \alpha^\vee\rb : \alpha, \beta \in \RS\}]$. If $\RS$ is crystallographic then $F=\Q$ is a valid choice. The space $\MS(\RS, \Lambda)$ is a $\rk(\RS)$-dimensional vector space over $F\otimes_F\Lambda$. If there is no doubt which root system $\RS$ and which $\Lambda$ we are referring to, we will abbreviate $\MS(\RS,\Lambda)$ by $\MS$.

\begin{remark}\label{Rem_coordinates}
\index{coordinates}
A fixed basis $B$ of the root system $\RS$ provides natural coordinates for the model space $\MS$. The vector space of formal sums 
$$
\left\{\sum_{\alpha\in B} \lambda_\alpha\alpha : \lambda_\alpha\in\Lambda\right\}
$$ 
is canonically isomorphic to $\MS$.
The evaluation of co-roots on roots $\lb\cdot, \cdot\rb$ used in section \ref{Sec_rootSystems} are linearly extended to elements of $\MS$.
\end{remark}

\begin{definition}\label{Def_reflections}
\index{hyperplane}
An action of the spherical Weyl group $\sW$ on $\MS$ is defined as follows.
Let $\alpha, \beta \in\RS$, $c\in F$ and $\lambda\in\Lambda'$, let $r_\alpha:\MS\mapsto\MS$ be the linear extension of 
$$r_\alpha(c\beta\otimes \lambda ):= c\, s_\alpha(\beta) \otimes \lambda$$
to $\MS$, where the reflection $s_\alpha$ is defined as in Section~\ref{Sec_rootSystems}. The fixed point set of $r_\alpha$ is called \emph{hyperplane} or \emph{wall} and is denoted by $H_\alpha$.
\end{definition}

As in the classical case $H_\alpha=\{x \in \MS: r_\alpha(x)=x\}=\{x \in\MS: \lb x, \alpha^\vee\rb =0\}$.
 
\begin{definition}\label{Def_WeylChamber1}
\index{Weyl chamber}
\index{panels}
A basis $B$ of $\RS$ determines a set of positive roots $\RS^+\subset \RS$. The set 
$$
\{x\in\MS : \lb x,\alpha^\vee\rb \geq 0 \text{ for all } \alpha\in\RS^+\}
$$
is the \emph{fundamental Weyl chamber} with respect to $B$, denoted by $\Cf$. 
\end{definition}

\begin{definition}\label{Def_affineWG}
\index{{Weyl group}!{affine}}
\index{{Weyl group}!{$\WT$, $\aW$}}
For a non-trivial group of translations $T$ of $\MS$ normalized by $\sW$ the \emph{affine Weyl group} with respect to $T$ is the semi direct product $\WT \define \sW\rtimes T$. In case $T=\MS$ we will call it the \emph{full affine Weyl group} and write $\aW$.
Elements of $T$ can be identified with points in $\MS$ by assigning to $t\in T$ the imgage $t(0)$ of the origin $0$ undert $t$. Given $k\in \MS$ we write $t_k$ for the translation defined by $0\mapsto k$.
\end{definition}

The actions of $\sW$ and $T$ on $\MS$ induce an action of $\WT$, respectively $\aW$, on $\MS$. 

\begin{notation}\label{Not_modelSpace}
In order to emphasize the freedom of choice for the translation part of the affine Weyl group, we will denote the model space $\MS(\RS, \Lambda)$ with affine Weyl group $\WT$ by $\MS(\RS, \Lambda, T)$.
\end{notation}

\begin{definition}\label{Def_specialHyperplane}
\index{{reflection}!{affine}}
\index{hyperplane}
An element of $\WT$ which can be written as $t \circ r_\alpha$ for some $t\in T$ and $\alpha\in \RS$ is called \emph{(affine) reflection}.
A reflection in $\aW$ is an element of the form $t_k\circ r_\alpha$ for arbitrary $k\in \MS$ and $\alpha\in\RS$.
A \emph{hyperplane} $H_r$ in $\MS$ is the fixed point set of an affine reflection $r \in \aW$. It is called \emph{special with respect to $T$} if $r\in \WT$. 
\end{definition}

\begin{remark}
Note that for any affine reflection there exists $\alpha\in\RS$ and $\lambda\in\Lambda$ such that the reflection is given by the following formula
$$
r_{\alpha,\lambda}(x) 
	= r_\alpha(x) + \frac{2\lambda}{(\alpha,\alpha)} \alpha, 
	\text{ for all } x\in \MS.
$$
Further easy calculations imply
$$
r_{\alpha, \lambda}(x) 
	= r_\alpha\left( x-\frac{\lambda}{(\alpha,\alpha)}\alpha \right) + \frac{\lambda}{(\alpha,\alpha)}\alpha ,
	\text{ for all } x\in \MS.
$$
The fixed point set $H_{\alpha, \lambda}$ is given by
$$
H_{\alpha,\lambda}=\left\{x \in\MS: \frac{(\alpha,\alpha)}{2}\lb x,\alpha^\vee\rb =\lambda \right\}.
$$

As in the classical case any hyperplane defines two \emph{half-apartments} 
$$H_{\alpha,k}^+=\{x \in\MS: \frac{(\alpha,\alpha)}{2}\lb x,\alpha^\vee\rb \geq \lambda \}\; \text{ and }\;
H_{\alpha,k}^-=\{x \in\MS: \frac{(\alpha,\alpha)}{2}\lb x,\alpha^\vee\rb \leq \lambda \}.$$
\end{remark}

\begin{definition}
A vertex $x\in\MS$ is called \emph{special} if for each $\alpha\in\RS^+$ there exists a special hyperplane parallel to $H_{\alpha,0}$ containing $x$. Hence $x$ is the intersection of the maximal possible number of special hyperplanes.
\end{definition}

Note that the translates of $0$ by $T$ are a subset of the set of special vertices.

\begin{definition}\label{Def_WeylChamber}\label{Def_specialVertex}
\index{Weyl chamber}
\index{special vertex}
We define a \emph{Weyl chamber} in $\MS$ to be an image of a fundamental Weyl chamber, as defined in~\ref{Def_WeylChamber1},  under the full affine Weyl group $\aW$. If $\Lambda=\R$ a Weyl chamber is a simplicial cone in the usual sense. Therefore Weyl chambers and faces of Weyl chambers are called \emph{Weyl simplices} and Weyl simplices of co-dimension one \emph{panels}.
\end{definition}

\begin{remark}
\index{{Weyl chamber}!{based at}}
Note that a Weyl chamber $S$ contains exactly one vertex $x$ which is the intersection of all bounding hyperplanes of $S$. We call it \emph{base point} of $S$ and say $S$ is \emph{based at $x$}.
\end{remark}

The following proposition is used to introduce a second type of coordinates on $\MS$.

\begin{prop}{ \cite[Prop. 2.1]{Bennett}}\label{Prop_aboveHyperplane}
Given $\alpha\in \RS$ and $x\in\MS$. Then there exist a unique $m_\alpha \in H_{\alpha,0}$ and a unique $x^\alpha\in\Lambda'$ such that 
$$x= m_\alpha + x^\alpha \alpha.$$
The value of $x^\alpha$ is $\frac{1}{2} \lb x, \alpha^\vee\rb$. Furthermore $x\in H_{\alpha,\frac{x^\alpha}{2}}$.
\end{prop}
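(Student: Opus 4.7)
The plan is to reduce everything to applying the linear functional $\langle \cdot, \alpha^\vee\rangle$ to the decomposition, exploiting the identity $\langle \alpha, \alpha^\vee\rangle = 2$ noted after the definition of the dual root system.

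\textbf{Uniqueness and the formula for $x^\alpha$.} Suppose $x = m_\alpha + x^\alpha\alpha$ with $m_\alpha \in H_{\alpha,0}$, i.e.\ $\langle m_\alpha, \alpha^\vee\rangle = 0$. Applying $\langle \cdot, \alpha^\vee\rangle$, which by Remark~\ref{Rem_coordinates} extends $F$-linearly to $\MS$, yields
\begin{equation*}
\langle x, \alpha^\vee\rangle = \langle m_\alpha, \alpha^\vee\rangle + x^\alpha\langle \alpha, \alpha^\vee\rangle = 2\, x^\alpha,
\end{equation*}
so $x^\alpha = \tfrac{1}{2}\langle x, \alpha^\vee\rangle$ is forced, and then $m_\alpha = x - x^\alpha\alpha$ is forced as well. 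Note that since $F$ contains $\langle\alpha,\alpha^\vee\rangle = 2$ and is a field, we have $\tfrac{1}{2}\in F$, so $x^\alpha$ is a well-defined element of $\Lambda$.

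\textbf{Existence.} Conversely, define $x^\alpha \mathrel{\mathop:}= \tfrac{1}{2}\langle x, \alpha^\vee\rangle \in \Lambda$ and $m_\alpha \mathrel{\mathop:}= x - x^\alpha\alpha \in \MS$. Then
\begin{equation*}
\langle m_\alpha, \alpha^\vee\rangle = \langle x, \alpha^\vee\rangle - x^\alpha \langle \alpha, \alpha^\vee\rangle = \langle x, \alpha^\vee\rangle - 2\cdot \tfrac{1}{2}\langle x, \alpha^\vee\rangle = 0,
\end{equation*}
so $m_\alpha \in H_{\alpha,0}$ and $x = m_\alpha + x^\alpha\alpha$ as required.

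\textbf{The hyperplane containing $x$.} Using the definition of $H_{\alpha,\lambda}$ and the formula $x^\alpha = \tfrac{1}{2}\langle x, \alpha^\vee\rangle$, one reads off
\begin{equation*}
\tfrac{(\alpha,\alpha)}{2}\langle x, \alpha^\vee\rangle = (\alpha,\alpha)\, x^\alpha,
\end{equation*}
which identifies the unique parallel translate of $H_{\alpha,0}$ through $x$ in the notation $H_{\alpha,\,\cdot\,}$.

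There is no real obstacle: the entire argument is a single application of the functional $\langle\cdot,\alpha^\vee\rangle$, relying only on $F$-linearity of this evaluation (Remark~\ref{Rem_coordinates}) and on $\tfrac{1}{2}\in F$, which is automatic from the assumption that $F$ contains the Cartan integers.
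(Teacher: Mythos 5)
Your proof is correct and essentially the paper's: the existence half is word-for-word the same construction ($x^\alpha=\tfrac12\lb x,\alpha^\vee\rb$, $m_\alpha=x-x^\alpha\alpha$, check $\lb m_\alpha,\alpha^\vee\rb=0$), and your uniqueness argument via applying the functional $\lb\cdot,\alpha^\vee\rb$ is only a cosmetic variant of the paper's, which instead notes that $m_\alpha$ and the competing $n_\alpha$ are fixed by the reflection $r_\alpha$. Like the paper's proof, you do not literally derive the stated index $\tfrac{x^\alpha}{2}$ for the final claim; your computation $\tfrac{(\alpha,\alpha)}{2}\lb x,\alpha^\vee\rb=(\alpha,\alpha)\,x^\alpha$ correctly identifies the parallel translate of $H_{\alpha,0}$ through $x$ under the normalization of $H_{\alpha,\lambda}$ fixed in the preceding remark, so this is a discrepancy in the statement's normalization rather than a gap in your argument.
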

\begin{proof}
Define $x^\alpha = \frac{1}{2} \lb x,\alpha^\vee\rb$ and consider $m_\alpha=x- x^\alpha\alpha$. Then 
$$\lb  m_\alpha,\alpha^\vee\rb = \lb x, \alpha^\vee\rb - x^\alpha \lb \alpha,\alpha^\vee\rb = 0 $$
and $m_\alpha$ is contained in $H_{\alpha,0}$. 
It remains to prove uniqueness. 
Let $y^\alpha$ and $n_\alpha$ be value such that 
$m_\alpha +x^\alpha\alpha = n_\alpha + y^\alpha\alpha.$
Then $n_\alpha=r_\alpha(n_\alpha) = r_\alpha(x) - y^\alpha r_\alpha(\alpha)$ and
$m_\alpha=r_\alpha(m_\alpha) = r_\alpha(x) - x^\alpha r_\alpha(\alpha)$. Therefore we have
$$
r_\alpha(x)-x^\alpha\alpha = m_\alpha + x^\alpha\alpha = x = n_\alpha + y^\alpha\alpha = r_\alpha(x) - y^\alpha\alpha 
$$ 
and conclude that $x^\alpha=y^\alpha$ and $m_\alpha=y_\alpha$.
\end{proof}

\begin{corollary}\label{Cor_hyperplaneCoordinates}
\index{hyperplane coordinates}
Let $\RS$ be a root system of rank $n$ and let $B$ be a basis of $\RS$. Any $x\in\MS$ is uniquely determined by the $n$ values $\{x^\alpha\}_{\alpha\in B}$, which will be called \emph{hyperplane coordinates} of $\MS$ with respect to $B$. 
\end{corollary}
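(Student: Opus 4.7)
The plan is to reduce the statement to invertibility of the Cartan matrix over $F$, which then transfers to $\MS$ via the $F$-module structure on $\Lambda$.

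First I would use Remark \ref{Rem_coordinates} to write any $x\in\MS$ uniquely as $x=\sum_{\alpha\in B}\lambda_\alpha\,\alpha$ with $\lambda_\alpha\in\Lambda$; this is possible because $B$ is a basis of $\mathrm{span}_F(\RS)$, so $\{\alpha\otimes 1:\alpha\in B\}$ is an $F\otimes_F\Lambda$-basis of $\MS$. Proposition \ref{Prop_aboveHyperplane} then tells us $x^\beta=\tfrac{1}{2}\langle x,\beta^\vee\rangle$ for each $\beta\in B$; expanding by bilinearity of the pairing, this gives
\begin{equation*}
x^\beta=\tfrac{1}{2}\sum_{\alpha\in B}\langle\alpha,\beta^\vee\rangle\,\lambda_\alpha .
\end{equation*}
Thus the assignment $(\lambda_\alpha)_{\alpha\in B}\mapsto(x^\beta)_{\beta\in B}$ is given by the matrix $M=\tfrac{1}{2}(\langle\alpha,\beta^\vee\rangle)_{\alpha,\beta\in B}$ acting on $\Lambda^n$.

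Next I would argue that $M$ is invertible. The entries lie in $F$ by hypothesis, and up to the factor $\tfrac12$ the matrix is the Cartan matrix of $\RS$ relative to $B$. Since $B$ is a vector space basis of $\mathrm{span}_F(\RS)$ and $B^\vee=\{\alpha^\vee:\alpha\in B\}$ is a basis of its dual (they are dual up to the diagonal rescaling by $\tfrac12(\alpha,\alpha)$), the pairing matrix is non-degenerate over $F$; equivalently, the system of linear functionals $\{\langle\,\cdot\,,\beta^\vee\rangle\}_{\beta\in B}$ on $\mathrm{span}_F(\RS)$ is a basis of the dual. Hence $M$ has an inverse $M^{-1}\in\mathrm{GL}_n(F)$.

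Finally, tensoring with $\Lambda$, the matrix $M$ acts as an $F$-linear (and hence $\Lambda$-linear, via the $F$-module structure) automorphism of $\MS\cong\Lambda^n$, so the coordinates $(\lambda_\alpha)_{\alpha\in B}$ are recovered from $(x^\beta)_{\beta\in B}$ by applying $M^{-1}$. This proves that $x\in\MS$ is uniquely determined by $\{x^\alpha\}_{\alpha\in B}$. The only mild point to watch is that $\Lambda$ need not be a field; the fact that $\Lambda$ is an $F$-module (part of the standing assumption on $\MS$) is precisely what makes the $F$-linear isomorphism $M^{-1}$ act on $\Lambda^n$, so there is no genuine obstacle — the result is a direct consequence of the invertibility of the Cartan matrix.
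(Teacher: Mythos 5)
Your proof is correct and is essentially the argument the paper intends: the corollary is stated without proof as an immediate consequence of Proposition~\ref{Prop_aboveHyperplane}, the point being exactly that $x^\beta=\tfrac12\lb x,\beta^\vee\rb$ and that the matrix $(\lb\alpha,\beta^\vee\rb)_{\alpha,\beta\in B}$ is invertible over any subfield of $\R$ containing its entries, hence acts invertibly on $\Lambda^n$ via the $F$-module structure. This is the same Cartan-matrix argument the author spells out later in the proof of Proposition~\ref{Prop_oek1}, so your write-up just makes the omitted details explicit.
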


\subsection{The metric structure of \texorpdfstring{ $\MS(\Lambda, \RS)$ } {the model space}}

The remainder of this section is used to define a $\aW$-invariant $\Lambda$-valued metric on the model space $\MS=\MS(\Lambda, \RS, T)$ of a generalized affine building and to discuss its properties.

\begin{definition}\label{Def_metric}
\index{metric}
Let $\Lambda$ be a totally ordered abelian group and let $X$ be a set. A metric on $X$ with values in $\Lambda$, short a \emph{$\Lambda$-valued metric}, is a map $d:X\times X \mapsto \Lambda$ such that for all $x,y,z\in X$ the following axioms hold
\begin{enumerate}
\item $d(x,y)=0$ if and only if $x=y$
\item $d(x,y)=d(y,x)$ and
\item the triangle inequality $d(x,z)+d(z,y)\geq d(x,y)$ holds.
\end{enumerate}
The pair $(X,d)$ is called \emph{$\Lambda$-metric space}.
\end{definition}

\begin{definition}
An \emph{isometric embedding} of $\Lambda$-metric spaces $(X,d)$, $(X',d')$ is a map $f:X\rightarrow X'$ such that $d(x,y)=d'(f(x),f(y))$ for all $x,y\in X$. Such a map is necessarily injective, but need not be onto. If it is onto we call it \emph{isometry} or isomorphism of $\Lambda$-metric spaces.
\end{definition}

\begin{definition}\label{Def_distance}
\index{metric}
Let $\MS=\MS(\Lambda,\RS, T)$ be as in \ref{Not_modelSpace}. Let the \emph{distance} between two points $x$ and $y$ in $\MS$ be defined by
$$d(x,y)= \sum_{\alpha\in\RS^+} \vert \lb y-x, \alpha^\vee \rb \vert.$$
\end{definition}

Note that $d(x,y)=2\lb y-x, \rho^\vee\rb$ if $y-x\in\Cf$, where $\rho^\vee=\frac{1}{2}\sum_{\alpha^\vee \in(\RS^\vee)^+} \alpha^\vee$, as defined in Section \ref{Sec_rootSystems}.

\begin{remark}
Let $B$ be a basis of $\RS$. Define $\varepsilon_\alpha=\frac{2}{(\alpha,\alpha)}$ and identify the co-roots $\alpha^\vee$ with $\frac{2\alpha}{(\alpha, \alpha)}$. Making these assumptions in the definition of the metric in \cite{BennettDiss} one obtains exactly the metric as defined in \ref{Def_distance}.

The metric $d$ of \ref{Def_distance} generalizes the length of a translation defined in the context of simplicial affine buildings: Let $v$ be a vertex in a Euclidean Coxeter complex. Let the length $l(t_v)$ of a translation $t_v$ be the number of hyperplanes crossed by a minimal gallery from $x$ to $t_v(x)=x+v$. The formula giving this integer is exactly $\frac{1}{2}\sum_{\alpha\in\RS} \vert \lb y-x, \alpha^\vee \rb \vert $.
The fact that $d$ is the direct generalization of a combinatorial length function justified, at least in my opinion, to make a specific choice of the $\varepsilon_i$ appearing in the definition of the metric as written in \cite{BennettDiss}.
\end{remark}

\begin{prop}
The distance $d:\MS\times\MS\mapsto \Lambda$ defined in~\ref{Def_distance} is a $\aW$-invariant $\Lambda$-valued metric on $\MS$.
\end{prop}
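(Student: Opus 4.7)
The plan is to verify the three metric axioms and then $\aW$-invariance one at a time, noting that everything reduces to elementary properties of the absolute value on a totally ordered abelian group and to the well-known behavior of $\sW$ on $\RS$.

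First I would handle symmetry and positive definiteness. Symmetry is immediate from $|{-a}|=|a|$ applied termwise in the sum $\sum_{\alpha\in\RS^+} |\lb y-x,\alpha^\vee\rb|$. For positive definiteness, $x=y$ gives $d(x,y)=0$ trivially. Conversely, if $d(x,y)=0$, then because each summand is non-negative in the totally ordered group $\Lambda$ and they add to $0$, each summand must vanish, so $\lb y-x,\alpha^\vee\rb=0$ for all $\alpha\in\RS^+$. Since a basis $B\subset\RS^+$ already spans $\mathrm{span}_F(\RS)$, and the pairing extends $\Lambda$-linearly on the second tensor factor of $\MS$, the dual evaluations $\{\lb\cdot,\alpha^\vee\rb\}_{\alpha\in B}$ separate points of $\MS$; hence $y-x=0$.

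Next I would prove the triangle inequality. Given any $z\in\MS$, expand $y-x=(y-z)+(z-x)$ inside each pairing and use the triangle inequality $|a+b|\le|a|+|b|$ for the absolute value on $\Lambda$ (which holds because $\Lambda$ is totally ordered, so both $\pm(a+b)$ are bounded above by $|a|+|b|$). Summing over $\alpha\in\RS^+$ then gives $d(x,y)\le d(x,z)+d(z,y)$ directly.

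Finally I would prove $\aW$-invariance by checking invariance under translations and reflections, since $\aW$ is generated by these. For a translation $t_k$ the difference $y-x$ is unchanged, so the formula is obviously preserved. For a reflection $r_\beta$, using that $r_\beta$ is self-adjoint with respect to the pairing (so $\lb r_\beta(v),\alpha^\vee\rb = \lb v, r_\beta(\alpha)^\vee\rb$) we get
\begin{equation*}
d(r_\beta(x),r_\beta(y)) = \sum_{\alpha\in\RS^+} |\lb y-x, r_\beta(\alpha)^\vee\rb|.
\end{equation*}
Now $r_\beta$ permutes $\RS$, and whenever $r_\beta(\alpha)=-\gamma$ with $\gamma\in\RS^+$ we have $|\lb y-x,-\gamma^\vee\rb|=|\lb y-x,\gamma^\vee\rb|$. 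Hence the multiset $\{|\lb y-x, r_\beta(\alpha)^\vee\rb|:\alpha\in\RS^+\}$ equals $\{|\lb y-x,\gamma^\vee\rb|:\gamma\in\RS^+\}$, and the sum is unchanged. The main (though very mild) obstacle is this last bookkeeping of signs, but once absolute values are used it is immediate. This establishes all four required properties.
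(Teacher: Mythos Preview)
Your proof is correct and follows essentially the same approach as the paper's: termwise use of the triangle inequality for $|\cdot|$ on $\Lambda$, translation invariance from the difference $y-x$ being unchanged, and $\sW$-invariance from the permutation action on $\RS$. The only cosmetic difference is that for $\sW$-invariance the paper rewrites the sum as $\tfrac{1}{2}\sum_{\alpha\in\RS}|\lb y-x,\alpha^\vee\rb|$ so that $w\in\sW$ literally permutes the index set, whereas you stay with $\RS^+$ and absorb the sign flips into the absolute value; these are equivalent.
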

\begin{proof}
By definition $d(x,y)=d(y,x)$ and $d(x,y)=0$ if and only if $x=y$, since otherwise, by Corollary 2.2 in \cite{Bennett}, one of the terms 
$\vert \lb z, \alpha^\vee\rb \vert$ would be strictly positive. 
It remains to prove $d(x,y)+d(y,z)\geq d(x,z)$:
\begin{align*}
d(x,z) 	&= \sum_{\alpha\in\RS^+} \vert \lb  z+ (y-y) - x , \alpha^\vee \rb\vert 
	 = \sum_{\alpha\in\RS^+} \vert \lb y-x, \alpha^\vee\rb + \lb z-y, \alpha^\vee\rb \vert \\
	&\leq \sum_{\alpha\in\RS^+} \big( \vert \lb y-x,\alpha^\vee\rb \vert +  \vert \lb z-y, \alpha^\vee\rb \vert \big) 
	 = d(x,y)+d(y,z).
\end{align*}
Hence $d$ is a metric. We prove $\aW$-invariance: 
Let $t_a: x\mapsto x+a$ be a translation in $\aW$. Then
\begin{align*}
d(x,y) 	&= \sum_{\alpha\in\RS^+} \vert \lb y-x, \alpha^\vee \rb \vert
	=  \sum_{\alpha\in\RS^+} \vert \lb y+a - (x+a), \alpha^\vee \rb \vert
	= d(t_a(x), t_a(y)).
\end{align*}
Therefore $d$ is translation invariant. With $w\in \sW$ we have
\begin{align*}
d(w.x, w.y)
	& = \sum_{\alpha\in\RS^+} \vert \lb w.y-w.x,\alpha^\vee \rb \vert 
	    =  \frac{1}{2}\sum_{\alpha\in\RS} \vert \lb w.y-w.x,\alpha^\vee \rb \vert  \\
	& = \frac{1}{2}\sum_{\alpha\in\RS} \vert \lb y-x, (w^{-1}.\alpha)^\vee \rb \vert 
	    = \frac{1}{2}\sum_{\alpha\in\RS} \vert \lb y-x, \alpha^\vee \rb\vert \\
	& =d(x,y).
\end{align*}
The second last equation holds since $\sW$ permutes the roots in $\RS$.
Therefore $d$ is $\sW$ invariant and $\aW$-invariance follows.
\end{proof}

Let $x$ be an element of the model space $\MS=\MS(\RS,\Lambda)$ defined in \ref{Def_modelSpace}. Fix a basis $B$ of $\RS$ and recall the definition of the hyperplane coordinates $\{x^\alpha\}_{\alpha\in B}$ of $x$ with respect to $B$ introduced in Corollary~\ref{Cor_hyperplaneCoordinates}.

\begin{prop}\label{Prop_distance0x}
Given $\MS$ with fixed basis $B$ of $\RS$. Let $x$ be an element of $\MS$. The distance $d(0,x)$ is uniquely determined by the hyperplane-coordinates $\{x^\alpha\}_{\alpha\in B}$ of $x$. 
With  $\alpha = \sum_{\beta\in B} p_\beta^\alpha\beta$ we have
$$
d(x,0)=\frac{1}{2} \sum_{\alpha \in \RS^+} \sum_{\beta\in B}  p_\beta^\alpha \; \vert x^\beta\vert.
$$
\end{prop}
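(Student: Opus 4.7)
The plan is to prove the identity first on the fundamental Weyl chamber $\Cf$, where all relevant absolute values vanish, and then to extend it to general $x \in \MS$ using the $\sW$-invariance of $d$ already established.

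To settle the determinacy claim I would expand each positive root as $\alpha = \sum_{\beta \in B} p_\beta^\alpha \beta$ and combine this with $\alpha^\vee = 2\alpha/(\alpha,\alpha)$ and bilinearity of $(\cdot,\cdot)$ to obtain
\[
\langle x, \alpha^\vee\rangle \;=\; \frac{2}{(\alpha,\alpha)} \sum_{\beta \in B} p_\beta^\alpha (x,\beta) \;=\; \frac{1}{(\alpha,\alpha)} \sum_{\beta \in B} p_\beta^\alpha (\beta,\beta)\,\langle x, \beta^\vee\rangle.
\]
Since $\langle x, \beta^\vee\rangle = 2 x^\beta$, substituting into Definition~\ref{Def_distance} exhibits $d(x,0)$ as an explicit function of $\{x^\beta\}_{\beta \in B}$ alone, which is the first assertion.

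For the explicit formula I would restrict to $x \in \Cf$: by Definition~\ref{Def_WeylChamber1} this is equivalent to $x^\beta \geq 0$ for every $\beta \in B$, and because each $p_\beta^\alpha$ is non-negative (as $\alpha$ is a positive root), it follows that $\langle x, \alpha^\vee\rangle \geq 0$ for every $\alpha \in \RS^+$. The absolute values on both sides of the target identity therefore drop \emph{simultaneously}, and what remains is a pure linear identity in $\{x^\beta\}_{\beta \in B}$. I would verify it by inserting the expansion from the previous step into $d(x,0) = \sum_{\alpha \in \RS^+}\langle x,\alpha^\vee\rangle$, interchanging the order of summation in $\sum_\alpha\sum_\beta$, and comparing the resulting coefficient of each $x^\beta$ with the corresponding coefficient on the right-hand side.

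The final step, which I expect to be the main obstacle, is propagating the identity from $\Cf$ to all of $\MS$. The left-hand side is $\sW$-invariant by the preceding proposition, so it suffices to check $\sW$-invariance of the right-hand side. This is delicate because a simple reflection $r_\gamma$ does not act on the hyperplane coordinates $\{x^\beta\}_\beta$ by a signed permutation but by a genuine linear transformation dual to its action on simple roots. I would handle this one reflection at a time, exploiting the facts that $r_\gamma$ permutes $\RS^+\setminus\{\gamma\}$ and sends $\gamma\mapsto -\gamma$, and then carefully tracking how the coefficients $p_\beta^\alpha$ reorganize under this permutation so that the weighted $\ell^1$-expression $\sum_{\alpha\in\RS^+}\sum_{\beta\in B} p_\beta^\alpha|x^\beta|$ is preserved; the absolute values on the $x^\beta$ will absorb the sign changes produced by $r_\gamma$.
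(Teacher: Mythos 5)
Your first two steps (determinacy via the expansion $\langle x,\alpha^\vee\rangle=\sum_{\beta\in B}\tfrac{(\beta,\beta)}{(\alpha,\alpha)}p_\beta^\alpha\,\langle x,\beta^\vee\rangle$, and the coefficient comparison for $x\in\Cf$, where all absolute values drop) are essentially the computation the paper does; note only that your own displayed identity shows the coefficients produced by this comparison are $\tfrac{(\beta,\beta)}{(\alpha,\alpha)}p_\beta^\alpha$, i.e.\ the coefficients of $\alpha^\vee$ in the simple coroots, which coincide with $p_\beta^\alpha$ only when all roots have the same length, so you should flag this when you actually execute the comparison.

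The genuine gap is your third step. You propose to prove that the right-hand side $\tfrac12\sum_{\alpha\in\RS^+}\sum_{\beta\in B}p_\beta^\alpha|x^\beta| \;=\;\sum_{\beta\in B}c_\beta|x^\beta|$, with $c_\beta=\tfrac12\sum_{\alpha\in\RS^+}p_\beta^\alpha$, is $\sW$-invariant in $x$. It is not: this expression is invariant under independent sign changes of the coordinates $x^\beta$, and such sign changes are not realized by the Weyl group once two simple roots are joined in the diagram. Concretely, in type $\mathrm{A}_2$ take $x$ with $(x^{\alpha_1},x^{\alpha_2})=(0,\tfrac12)$, i.e.\ $\langle x,\alpha_1^\vee\rangle=0$ and $\langle x,\alpha_2^\vee\rangle=1$; then $y=s_{\alpha_2}x$ has coordinates $(\tfrac12,-\tfrac12)$, so $d(x,0)=d(y,0)=2$ while $|x^{\alpha_1}|+|x^{\alpha_2}|=\tfrac12$ and $|y^{\alpha_1}|+|y^{\alpha_2}|=1$. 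Hence no reorganization of the $p_\beta^\alpha$ can make the weighted $\ell^1$-expression constant on $\sW$-orbits: a simple reflection mixes the coordinates linearly, and the absolute values on the \emph{simple} coordinates only absorb the sign of the reflected root itself, not the mixing of the remaining positive roots. The paper avoids claiming this invariance: it reduces to the dominant representative $\overline{x}=w.x\in\Cf$ using $\sW$-invariance of $d$ and then rewrites the coordinates with respect to the transported basis, via $x^\beta=\overline{x}^{\,w.\beta}$, rather than with respect to the fixed basis $B$ (and in its later applications, e.g.\ in the proof of Theorem~\ref{Thm_iso}, the formula is only ever invoked for points of $\Cf$, where your second step already suffices). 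So you should either restrict the explicit formula to $x\in\Cf$ (or $\pm\Cf$), or follow the paper and transport the basis along with $x$; the route through invariance of the fixed-basis expression cannot be completed.
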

\begin{proof}
Assume first that $x\in \Cf$. Then $x$ has hyperplane coordinates $\{x^\alpha\}_{\alpha\in B}$ defined in Corollary~\ref{Cor_hyperplaneCoordinates}, with $x^\alpha=\frac{1}{2}\lb x, \alpha^\vee\rb \geq 0$ for all $\alpha\in \RS^+$.  Hence, using $\alpha = \sum_{\beta\in B} p_\beta^\alpha\beta$, we have
\begin{align*}
d(x,0) 	&= \sum_{\alpha\in\RS^+} \vert \lb x, \alpha^\vee \rb\vert 
	 = \sum_{\alpha\in\RS^+} \sum_{\beta\in B} p_\beta^\alpha \,\lb x, \beta^\vee \rb
	 = \frac{1}{2} \sum_{\alpha\in\RS^+} \sum_{\beta\in B} p_\beta^\alpha  \,x^\beta.
\end{align*}
If $x$ is not $\Cf$ then $\aW$-invariance of $d$ implies that $d(0,x)=d(0,\overline{x})$, where $\overline{x} = w.x$ is the unique element of $\sW.x$ contained in $\Cf$ and $\overline{\beta}=w.\beta$. Further $\vert x^\beta \vert = \overline{x}^{\overline{\beta}}$ and  the assertion follows.
\end{proof}

\subsection{Convexity and parallelism}

As in the classical case, one can define

\begin{definition}\label{Def_GammaConvex}
\index{$\WT$-convex}
A subset $Y$ of $\MS$ is called \emph{convex}, or \emph{$\WT$-convex}, if it is the intersection of finitely many special half-apartments in the sense of Definition~\ref{Def_WeylChamber}. The \emph{$\WT$-convex hull} $c_{\WT}(Y)$ of a subset $Y\subset X$ is the intersection of all special half-apartments containing $Y$.
\end{definition}

Note that Weyl chambers and hyperplanes are $\aW$-convex, as well as finite intersections of convex sets. Special hyperplanes and Weyl chambers are, analogosly, $\WT$-convex.

\begin{lemma}{\cite[Prop.2.13]{BennettDiss}}\label{Lem_segment}\index{segment}
For any two special vertices $x,y$ in the model space $\MS$ the \emph{segment} $\seg(x,y)=\{z\in\MS : d(x,y)=d(x,z)+d(z,y)\}$ is the same as the convex hull $c_{\WT}(\{x,y\})$.
\end{lemma}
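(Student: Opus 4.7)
The plan is to prove both inclusions, using the explicit description of the metric $d(x,y)=\sum_{\alpha\in\RS^+}|\langle y-x,\alpha^\vee\rangle|$ and exploiting the speciality of $x$ and $y$ to produce enough separating hyperplanes.

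For the inclusion $\seg(x,y)\subseteq c_{\WT}(\{x,y\})$, I would first observe that since $y-x=(z-x)+(y-z)$, the ordinary triangle inequality in $\Lambda$ gives $|\langle y-x,\alpha^\vee\rangle|\leq|\langle z-x,\alpha^\vee\rangle|+|\langle y-z,\alpha^\vee\rangle|$ for every $\alpha\in\RS^+$. Summing over $\alpha\in\RS^+$ yields $d(x,y)\leq d(x,z)+d(z,y)$ with equality precisely when the per-$\alpha$ triangle inequalities are equalities, i.e.\ when $\langle z-x,\alpha^\vee\rangle$ and $\langle y-z,\alpha^\vee\rangle$ have the same sign (one possibly being zero) for every positive root. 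This is exactly the statement that $\langle z,\alpha^\vee\rangle$ lies between $\langle x,\alpha^\vee\rangle$ and $\langle y,\alpha^\vee\rangle$ for every $\alpha\in\RS^+$, hence $z\in H_{\alpha,\lambda}^+$ (resp.\ $H_{\alpha,\lambda}^-$) whenever $x,y$ are. Since every special half-apartment has the form $H_{\alpha,\lambda}^\pm$ for some $\alpha\in\RS$ and appropriate $\lambda$, this shows $z$ lies in every special half-apartment containing $\{x,y\}$, and therefore $z\in c_{\WT}(\{x,y\})$.

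For the converse $c_{\WT}(\{x,y\})\subseteq \seg(x,y)$, I would proceed by contraposition: assume $z\notin\seg(x,y)$ and construct a special half-apartment containing $x,y$ but not $z$. By the equality analysis above there must exist some $\alpha\in\RS^+$ such that $\langle z-x,\alpha^\vee\rangle$ and $\langle y-z,\alpha^\vee\rangle$ have strictly opposite signs. Equivalently, writing $\lambda_x=\tfrac{(\alpha,\alpha)}{2}\langle x,\alpha^\vee\rangle$ and similarly for $y,z$, the value $\lambda_z$ lies strictly outside the interval with endpoints $\lambda_x,\lambda_y$. Here I use the speciality assumption on $x$ and $y$: by Definition of special vertex, the hyperplanes $H_{\alpha,\lambda_x}$ and $H_{\alpha,\lambda_y}$ are both special. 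Assuming without loss of generality $\lambda_x\leq\lambda_y$, the half-apartment $H_{\alpha,\lambda_y}^-$ is special and contains both $x$ and $y$ (the latter on its bounding hyperplane); if $\lambda_z>\lambda_y$ then $z\notin H_{\alpha,\lambda_y}^-$, and symmetrically, if $\lambda_z<\lambda_x$, then $H_{\alpha,\lambda_x}^+$ separates $z$ from $\{x,y\}$. In either case $z\notin c_{\WT}(\{x,y\})$.

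The main obstacle I anticipate is the bookkeeping of signs in the equality case of the summed triangle inequality and, more importantly, ensuring that the separating hyperplanes produced in the second step are genuinely special rather than merely affine. This is where the hypothesis that $x$ and $y$ are special vertices (rather than arbitrary points) enters crucially, since it guarantees for every $\alpha\in\RS^+$ a special hyperplane parallel to $H_{\alpha,0}$ through each of $x$ and $y$. Once this observation is in place, the two inclusions combine to give $\seg(x,y)=c_{\WT}(\{x,y\})$.
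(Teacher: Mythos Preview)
Your proof is correct. The paper itself does not give a proof of this lemma; it simply cites \cite[Prop.~2.13]{BennettDiss} and states the result. Your argument via the per-root equality analysis of the summed triangle inequality is exactly the natural approach, and you correctly identify where the speciality hypothesis on $x$ and $y$ enters (namely, to guarantee that the separating hyperplanes produced in the contrapositive direction are special with respect to $\WT$, not merely affine).
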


\begin{definition}\label{Def_parallel}
\index{parallel}
Two subsets $\Omega_1,\Omega_2$ of a $\Lambda$-metric space are at \emph{bounded distance}\index{bounded distance} if there exists $N\in\Lambda$ such that for all $x\in \Omega_i$ there exists $y\in \Omega_j$ such that $d(x,y)\leq N$ for $\{i,j\}=\{1,2\}$.
Subsets of a metric space are \emph{parallel} if they are at bounded distance. 
\end{definition}

Note, that parallelism is an equivalence relation. One can prove

\begin{prop}{\cite[Section 2.4]{Bennett}}
Let $\MS=\MS(\RS, \Lambda)$ equipped with the full affine Weyl group $\aW$. Then the following is true
\begin{enumerate}
\item Two hyperplanes, Weyl chambers or faces of Weyl chambers are parallel if and only if they are translates of each other by elements of $\aW$.
\item For any two parallel Weyl chambers $S$ and $S'$ there exists a Weyl chamber $S''$ contained in $S\cap S'$ and parallel to both. 
\end{enumerate}
\end{prop}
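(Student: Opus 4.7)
The plan is to derive both statements from the hyperplane coordinates of Corollary \ref{Cor_hyperplaneCoordinates} together with the translation invariance of $d$. The ``if'' direction of (1) is immediate: if $Y_2 = t_v Y_1$ for a translation $t_v \in \aW$, then $d(y, t_v y) = d(0, v)$ for every $y \in Y_1$, so $Y_1$ and $Y_2$ lie at bounded distance. (Although the statement mentions general $\aW$, only the translation part is relevant, since a nontrivial rotation $\sigma \in \sW$ changes the direction of any Weyl chamber or hyperplane and its image therefore cannot be parallel to the original.)

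For the converse direction of (1), I would start with two parallel hyperplanes $H_1 = H_{\alpha,\lambda}$ and $H_2 = H_{\beta,\mu}$. The key point is that if $\alpha^\vee$ and $\beta^\vee$ are not proportional, then some $v \in V$ satisfies $\langle v,\alpha^\vee\rangle = 0$ and $\langle v,\beta^\vee\rangle \neq 0$. For any $p_0 \in H_1$ the points $p_0 + v \otimes n$ remain in $H_1$ as $n$ varies in $\Lambda$, and picking out the $\beta$-summand of the formula defining $d$ gives
\[
d(p_0 + v \otimes n,\, q) \;\geq\; \bigl\lvert \tfrac{2\mu}{(\beta,\beta)} - \langle p_0,\beta^\vee\rangle - n\langle v,\beta^\vee\rangle \bigr\rvert
\]
for every $q \in H_2$. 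Since $\langle v,\beta^\vee\rangle$ is a nonzero element of $F$ and $\Lambda$ is an $F$-module, we can choose $n$ so that the right-hand side exceeds any prescribed $N \in \Lambda$, contradicting parallelism. Hence $\beta = \pm\alpha$ and $H_2$ arises from $H_1$ by a translation along $\alpha$. For parallel Weyl chambers $S_1 = w_1\Cf + x_1$ and $S_2 = w_2\Cf + x_2$ the bounding walls must pair up as parallel hyperplanes, which by the preceding argument forces $w_1 = w_2$, and then $S_2 = t_{x_2-x_1}(S_1)$. Faces of Weyl chambers are handled by applying the same reasoning to their supporting walls.

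For (2), part (1) lets us write $S = w\Cf + x$ and $S' = w\Cf + y$ with the same $w \in \sW$. Using the evaluations $\langle\,\cdot\,, w\alpha^\vee\rangle$ for $\alpha \in B$ as hyperplane coordinates, the intersection takes the form
\[
S \cap S' \;=\; \bigl\{z \in \MS : \langle z, w\alpha^\vee\rangle \geq m_\alpha \text{ for all } \alpha \in B\bigr\},
\]
with $m_\alpha := \max\bigl(\langle x, w\alpha^\vee\rangle,\, \langle y, w\alpha^\vee\rangle\bigr)$, which is well defined because $\Lambda$ is totally ordered. By Corollary \ref{Cor_hyperplaneCoordinates} there is a unique $p \in \MS$ with $\langle p, w\alpha^\vee\rangle = m_\alpha$ for every $\alpha \in B$, and then $S'' := w\Cf + p$ is a Weyl chamber contained in $S \cap S'$ and parallel to both, again by part (1) applied to the translations $p-x$ and $p-y$.

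The main obstacle will be the ``parallel implies equal direction'' step in (1): the argument must be phrased so it works over an arbitrary totally ordered abelian group $\Lambda$, replacing the archimedean intuition ``the distance tends to $\infty$'' by the $F$-module assertion that for every candidate bound $N \in \Lambda$ one can find $n \in \Lambda$ making the summand above exceed $N$. Once that step is in hand, everything else reduces to routine bookkeeping with hyperplane coordinates and the $\aW$-invariance of $d$.
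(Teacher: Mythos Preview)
The paper does not give its own proof of this proposition; it is stated with a citation to \cite[Section 2.4]{Bennett} and nothing more. So there is no argument in the paper to compare your proposal against.

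On its own merits your plan is sound, and part (2) is completely correct as written: once you know $S=w\Cf+x$ and $S'=w\Cf+y$ share the same $w$, the hyperplane-coordinate description of $S\cap S'$ via the $m_\alpha=\max(\langle x,w\alpha^\vee\rangle,\langle y,w\alpha^\vee\rangle)$ and Corollary~\ref{Cor_hyperplaneCoordinates} does exactly what you say. The one step that needs more care is the Weyl-chamber case of (1). The sentence ``the bounding walls must pair up as parallel hyperplanes'' is not obvious: parallelism of $S_1$ and $S_2$ only says that every point of a \emph{face} of $S_1$ lies within $N$ of some point of $S_2$, not that the supporting hyperplane of that face is at bounded distance from a wall of $S_2$. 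A cleaner way to close this gap is to argue directly: if $w_1\neq w_2$, pick $\alpha\in B$ with $c:=\langle w_2^{-1}w_1\rho,\alpha^\vee\rangle<0$ (such $\alpha$ exists since $w_2^{-1}w_1\rho\notin\Cf$), and run your unboundedness argument along the ray $p_n=x_1+w_1\rho\otimes n\subset S_1$ against the half-space constraint $\langle q,w_2\alpha^\vee\rangle\geq\langle x_2,w_2\alpha^\vee\rangle$ that every $q\in S_2$ satisfies. This is the same $F$-module trick you already isolated for hyperplanes, just applied to the coordinate $w_2\alpha^\vee$ rather than to a wall-pairing argument.
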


Moreover $H_{\alpha, k}$ is parallel to $t_{\beta^\vee}(H_{\alpha, k})=H_{\alpha, k+\beta^\vee(\alpha)},$ for all $\beta\in\RS$
where $t_{\beta^\vee}$ is the translation in $\MS$ by $\beta^\vee $ where we identify $\beta^\vee$ with $\frac{2}{(\beta, \beta)}\beta$. Compare Section~\ref{Sec_rootSystems}.
 \newpage
\section{Generalized affine buildings}\label{Sec_generalizedAffineBuildings}

\subsection{Basic definitions and properties}

Throughout the following let $\MS=\MS(\RS,\Lambda, T)$ be as defined in \ref{Not_modelSpace} and denote the spherical Weyl group associated to $\RS$ by $\sW$.

\begin{definition}\label{Def_LambdaBuilding}
\index{generalized affine building}
\index{{generalized affine building}!apartment}
\index{{generalized affine building}!atlas}
\index{{generalized affine building}!{vertex retraction}}
Let $X$ be a set and $\App$ a collection of injective maps $f:\MS\hookrightarrow X$, called \emph{charts}.
The images $f(\MS)$  of charts $f\in\App$ are called \emph{apartments} of $X$. Define \emph{Weyl chambers, hyperplanes, half-apartments, special vertices, ... of $X$} to be images of such in $\MS$ under any $f\in\App$. The set $X$ is a \emph{(generalized) affine building} with \emph{atlas} (or \emph{apartment system}) $\App$ if the following conditions are satisfied
\begin{enumerate}[label={(A*)}, leftmargin=*]
\item[(A1)] Given $f\in\App$ and $w\in \WT$ then $f\circ w\in\App$. 
\item[(A2)] Given two charts $f,g\in\App$ with $f(\MS)\cap g(\MS)\neq\emptyset$. Then $f^{-1}(g(\MS))$ is a closed convex subset of $\MS$. There exists $w\in \WT$ with $f\vert_{f^{-1}(g(\MS))} = (g\circ w )\vert_{f^{-1}(g(\MS))}$.
\item[(A3)] For any two points in $X$ there is an apartment containing both.
\item[(A4)] Given Weyl chambers $S_1$ and $S_2$ in $X$ there exist sub-Weyl chambers $S_1', S_2'$ in $X$ and $f\in\App$ such that $S_1'\cup S_2' \subset f(\MS)$. 
\item[(A5)] For any apartment $A$ and all $x\in A$ there exists a \emph{retraction} $r_{A,x}:X\to A$ such that $r_{A,x}$ does not increase distances and $r^{-1}_{A,x}(x)=\{x\}$.
\item[(A6)] Given charts $f,g$ and $h$ such that the associated apartments intersect pairwise in half-apartments. Then $f(\MS)\cap g(\MS)\cap h(\MS)\neq \emptyset$. 
\end{enumerate}
The \emph{dimension} of the building $X$ is $n=\rk(\RS)$, where $\MS\cong(\Lambda')^n$. 
\end{definition}

\begin{remark}\label{Def_buildingMetric}
Condition $(A1)-(A3)$ imply the existence of a $\Lambda$-distance on $X$, that is a function $d:X\times X\mapsto \Lambda$ satisfying all conditions of Definition~\ref{Def_metric} but the triangle inequality.  Given $x,y$ in $X$ fix an apartment containing $x$ and $y$ with chart $f\in\App$ and let $x',y'$ in $\MS$ be defined by $f(x')=x, f(y')=y$. 
The \emph{distance} $d(x,y)$ between $x$ and $y$ in $X$ is given by $d(x',y')$. This extends to a well defined distance function on $X$. Therefore it makes sense to talk about a distance non-increasing function in $(A5)$. Note further that, by $(A5)$, the defined  distance function $d$  satisfies the triangle inequality. Hence $d$ is a metric on $X$.
\end{remark}

Tits defined his syst{\`e}me d'appartements in \cite{TitsComo} by giving five axioms. The first four are the same as $(A1)-(A4)$ above. The fifth axiom originally reads different from ours but was later replaced with $(A5)$ as presented in the definition above. One can find a short history of the axioms in \cite{Ronan}. In fact if $\Lambda=\R$ axiom $(A6)$ follows from $(A1)-(A5)$. But in the general case this additional axiom is necessary as illustrated with an example given on p. 563 in \cite{Bennett}.
However in \cite{Bennett}  axiom $(A6)$ is mostly used to avoid pathological cases and to guarantee the existence of the panel and wall trees as constructed in Section~\ref{Subsec_trees}.

\begin{definition}\label{Def_thick}
\index{generalized affine building!thick}
Let $X$ be a generalized affine building with model space $\MS(\RS, \Lambda, T)$ and apartment system $\App$. The building $X$ is called \emph{thick with respect to $\WT$} if for any special hyperplane $H$ of $X$ there exist apartments $A_1=f_1(\MS)$ and $A_2=f_2(\MS)$, with $f_i\in\App, i=1,2$ such that $H\in A_i, i=1,2$ and $A_1\cap A_2$ is one of the two half-apartments of $A_1$ (or $A_2$) determined by $H$.  Furthermore apartments do not branch at non-special hyperplanes.
\end{definition}

\begin{remark}
If in the previous definition $T=\MS$ then $X$ is a building branching everwhere.
\end{remark}

\begin{definition}\label{Def_iso}
\index{{generalized affine building}!{isomorphism}}
Two affine buildings $(X_1,\App_1),(X_2,\App_2)$ of the same type $\MS(\Lambda, \RS)$ are \emph{isomorphic} if there exist maps $\pi_1: X_1 \to X_2$, $\pi_2: X_2 \to X_1$, further maps $\pi_{\App_1}:\App_1 \to \App_2$ , $\pi_{\App_1}:\App_2 \to \App_1$ and an automorphism $\sigma$ of $\MS$ such that 
\begin{align*}
\pi_i\circ\pi_j=\one_{X_i} & \text{ with } \{i,j\}=\{1,2\},\\
\pi_{\App_i}\circ \pi_{\App_i}=\one_{\App_i} & \text{ with } \{i,j\}=\{1,2\},
\end{align*}
and the following diagram commutes for all $f\in\App_i$ with $\{i,j\}=\{1,2\}$
\[ \begin{xy}
 	\xymatrix{
		\MS \ar[d]_\sigma \ar[r]^f  & X_i \ar[d]^{\pi_i} \\
		\MS \ar[r]_{\pi_{\App_{i}}(f)} & X_j
	}.
\end{xy}\]
\end{definition}

Examples of a generalized affine buildings are $\Lambda$-trees without leaves. The definition of a $\Lambda$-tree, \cite[p.560]{Bennett} or \ref{Def_lambdatree}, is equivalent to the definition of an affine building of dimension one.
Simplicial buildings arise from groups defined over fields with discrete valuations. An example of this type is given in \cite[Example 3.2]{Bennett} associating to $SL_n(K)$, with $K$ a field with $\Lambda$-valued valuation, a generalized affine building. It is a generalization of the example given in \cite[Section 9.2]{Ronan}.

Note that the Davis realization of a simplicial affine building is a generalized affine building, as defined in~\ref{Def_LambdaBuilding} with $\Lambda=\R$ and $T$ chosen equal to the co-root lattice $\QQ(\RS^\vee)$ of $\RS$.

\subsection{Local and global structure}

Any simplicial affine building has an associated spherical building, the so called spherical building at infinity. This useful and important result by Bruhat and Tits \cite{BruhatTits} is also true in the generalized case.

\begin{definition}\label{Def_buildingAtInfinity}
\index{building at infinity}
Let $(X,\App)$ be an affine building. Denote by $\partial S$ the parallel class of a Weyl chamber $S$ in $X$. Let
$$\binfinity X =\{\partial S : S \text{ Weyl chamber of } X \text{ contained in an apartment of } \App\}$$
be the set of chambers of the \emph{spherical building at infinity} $\binfinity X$. Two chambers $\partial S_1$ and $\partial S_2$ are \emph{adjacent} if there exist representatives $S'_1, S'_2$ which are contained in a common apartment with chart in $\App$, have the same basepoint and are adjacent in $X$.
\end{definition}

\begin{prop}\label{Prop_buildingAtInfinity}
Let $(X,\App)$ be an affine building modeled on $\MS(\RS, \Lambda, T)$. The set $\binfinity X$ defined above is a spherical building of type $\RS$ with apartments in one to one correspondence with apartments of $X$.
\end{prop}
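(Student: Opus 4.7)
The plan is to equip $\binfinity X$ with a simplicial structure in which the simplices are parallel classes $\partial\tau$ of Weyl simplices $\tau$ in $X$, and in which the apartments are the sets $\partial A := \{\partial\tau : \tau \text{ a Weyl simplex in } A\}$ for $A \in \App$. With this setup, I would then verify the three axioms of Definition \ref{Def_building}.

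For axiom (B1), fix $A = f(\MS) \in \App$. Since parallelism of Weyl simplices in $\MS$ is by \cite[Section 2.4]{Bennett} the same as being translates under the full affine Weyl group, the parallel classes of Weyl simplices in $\MS$ are in canonical bijection with the Weyl simplices in $\MS$ based at $0$, i.e.\ with the spherical Coxeter complex $\Sigma(\RS)$. Transporting via $f$ and invoking (A1) to confirm the bijection is independent of the choice of chart, this identifies $\partial A$ canonically with $\Sigma(\RS)$ and simultaneously delivers the claimed correspondence between apartments of $X$ and apartments of $\binfinity X$. Axiom (B2) is then an immediate consequence of (A4): given chambers $\partial S_1, \partial S_2$ in $\binfinity X$, take sub-Weyl chambers $S_1' \subset S_1$, $S_2' \subset S_2$ contained in a common apartment $A \in \App$; since $\partial S_i = \partial S_i'$, both chambers lie in $\partial A$. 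The reduction to simplices of lower dimension is routine, using that every Weyl simplex is a face of some Weyl chamber.

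The hard part will be axiom (B3): for $\partial A_1, \partial A_2$ both containing simplices $\partial\sigma$ and $\partial\tau$, I must produce an isomorphism $\partial A_1 \to \partial A_2$ fixing $\partial\sigma$ and $\partial\tau$ pointwise. The plan is to apply (A2) to $A_1$ and $A_2$: the chart transition is the restriction of some $w \in \WT$, whose image $\bar w$ in the spherical quotient $\sW$ induces a Coxeter isomorphism $\partial A_1 \to \partial A_2$. To guarantee that $\bar w$ fixes $\partial\sigma$ and $\partial\tau$, one needs sub-Weyl simplices representing these classes that actually lie in $A_1 \cap A_2$; for then the corresponding germs at infinity are fixed by $w$ and hence by $\bar w$. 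The technical obstacle is precisely producing such a common sub-Weyl simplex, since a priori the representatives in $A_1$ and $A_2$ may be disjoint. I would resolve this by a sector-type argument: apply (A4) to the representatives of $\partial\sigma$ in $A_1$ and in $A_2$ to obtain sub-Weyl simplices that lie in a common apartment $B \in \App$, and then use (A2) together with (A6) iteratively to transport these back into $A_1 \cap A_2$, making repeated use of the convexity of intersections guaranteed by (A2) and the three-half-apartment property (A6). Once this sector lemma is in place the remainder is routine.

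Finally, adjacency in $\binfinity X$ as given in Definition \ref{Def_buildingAtInfinity} matches the adjacency inherited from the Coxeter complex $\Sigma(\RS)$ via the identification $\partial A \cong \Sigma(\RS)$ from axiom (B1), so the combinatorial and apartment-theoretic structures are compatible, and $\binfinity X$ is a spherical building of type $\RS$ as required.
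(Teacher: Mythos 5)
Your overall architecture coincides with the paper's proof: simplices of $\binfinity X$ are parallel classes of Weyl simplices, apartments are the sets $\partial A$, your axiom (B1) follows from identifying $\partial A$ with the Coxeter complex of $\RS$, and (B2) follows from (A4) exactly as in the paper. The genuine gap is in (B3), and it sits precisely where you flagged it: you never prove the ``sector lemma'' that representatives of a common chamber at infinity lying in $A_1$ and in $A_2$ meet in a sub-Weyl chamber contained in $A_1\cap A_2$ --- which is also what entitles you to apply (A2) to the pair $A_1,A_2$ at all, since (A2) presupposes a nonempty intersection. The resolution you sketch, iterating (A2) and (A6) to ``transport'' the sub-Weyl simplices found in the auxiliary apartment $B$ back into $A_1\cap A_2$, is not a workable step: a Weyl simplex cannot be moved by chart transitions, it either already lies in $A_1\cap A_2$ or it does not, and (A6) (three apartments pairwise intersecting in half-apartments) has no bearing on this configuration; indeed the paper's proof of this proposition uses neither (A5) nor (A6).

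What closes the gap, and what the paper uses, is the parallelism statement already quoted in Section \ref{Sec_modelSpace} from \cite[Section 2.4]{Bennett}: for any two parallel Weyl chambers there is a Weyl chamber contained in their intersection and parallel to both. Given representatives $S\subset A_1$ and $S'\subset A_2$ of the same chamber $c$ at infinity, axiom (A4) places sub-Weyl chambers of $S$ and $S'$ in a common apartment $B$; applying the parallelism statement inside $B$ yields a Weyl chamber $S''\subset S\cap S'\subset A_1\cap A_2$, and likewise $T''$ for the second chamber $d$, with no transporting needed. The transition map provided by (A2) (your $w\in\WT$, the paper's $f'\circ f^{-1}$) then fixes $S''$ and $T''$ pointwise, hence fixes $c$ and $d$ together with all their faces, and induces the required isomorphism $\partial A_1\to\partial A_2$. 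With this lemma inserted your argument goes through and agrees with the paper's; note also that (A4) is stated only for Weyl chambers, so lower-dimensional simplices should be handled by passing to chambers containing them, as you indicate for (B2).
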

\begin{proof}
It is obvious that $\binfinity X$ is a simplicial complex with adjacency as defined in \ref{Def_buildingAtInfinity}. 
An apartment in $\binfinity X$ is defined to be a the set of equivalence classes determined by an apartment of $X$. It is obvious that they are Coxeter complexes of type $\RS$ and that hence $\binfinity X$ has to be of type $\RS$.

Given two chambers $c$ and $d$ in $\binfinity X$. Let $S$ and $T$ be representatives of $c$, respectively $d$. By axiom $(A4)$ there exists an apartment $A$ containing sub-Weyl chambers of $S$ and $T$. The set of equivalence classes of Weyl chambers determined by $A$ hence contains $c$ and $d$. Therefore 2. of Definition \ref{Def_building} holds. 

If $\partial A$ and $\partial A'$ are two apartments of $\binfinity X$ both containing the chambers $c$ and $d$. Then there exist charts $f$ and $f'$ such that $f(\MS)$ and $f'(\MS)$ contain representatives $S, S'$ of $c$ and $T, T'$ of $d$. The Weyl chambers $S,S'$ and $T,T'$ intersect in sub-Weyl chambers $S''$ and $T''$, respectively. The map $f'\circ f^{-1}$ fixes $S''$ and $T''$ and induces an isomorphism from $\partial A$ to $\partial A'$. Therefore $\binfinity X$ is indeed a spherical building.
\end{proof}

In contrast to a remark made in \cite{BennettDiss} it is possible to prove Proposition \ref{Prop_buildingAtInfinity} without using axiom $(A5)$.

The local structure of an affine building was not examined in \cite{BennettDiss}. In analogy to the residues of vertices in a simplicial affine building one can associate to a vertex of a generalized affine building a spherical building. Most of the following in based on \cite{Parreau}.

Let in the following $(X,\App)$ be an affine building of type $\MS=\MS(\Lambda, \RS, T)$ and let $\binfinity X$ denote its spherical building at infinity.

\begin{definition}\label{Def_germ}
\index{{generalized affine building}!{germ}}
Two Weyl simplices $S$ and $S'$ \emph{share the same germ} if both are based at the same vertex and if $S\cap S'$ is a neighborhood of $x$ in $S$ and in $S'$.
It is easy to see that this is an equivalence relation on the set of Weyl simplices based at a given vertex. The equivalence class of $S$, based at $x$, is denoted by $\Delta_x S$ and is called \emph{germ of $S$ at $x$}.
\end{definition}

\begin{remark}\label{Rem_partialorder}
The germs of Weyl simplices at a special vertex $x$ are partially ordered by inclusion: $\Delta_x S_1\subset \Delta_xS_2$ if there exist representatives $S'_1, S'_2$ contained in a common apartment such that $S_1'$ is a face of $S_2'$. Let $\Delta_xX$ be the set of all germs of Weyl simplices based at $x$.
\end{remark}

\begin{prop}\label{Prop_tec16}
Let $(X, \App)$ be an affine building and $c$ a chamber in $\binfinity X$. Let $S$ be a Weyl chamber in $X$ based at $x$. Then there exists an apartment $A$ with chart in $\App$ containing a germ of $S$ at $x$ such that $c\in \partial A$.
\end{prop}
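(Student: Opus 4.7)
I would begin by picking a Weyl chamber $T$ in $X$ with $\partial T=c$ and applying axiom (A4) to the pair $(S,T)$, obtaining an apartment $A_1$ with chart in $\App$ containing sub-Weyl chambers $S_1\subset S$ and $T_1\subset T$. In particular $c=\partial T_1\in\partial A_1$ and $\partial S=\partial S_1\in\partial A_1$. Writing $x_1$ for the basepoint of $S_1$, we have $x_1\in S$ since $S_1$ is a sub-Weyl chamber of $S$, and therefore $[x,x_1]\subset S$. Because $c\in\partial A_1$ and $A_1$ is a Coxeter complex, there is a unique Weyl chamber $R\subset A_1$ based at $x_1$ with $\partial R=c$; this $R$ is a Weyl chamber of $X$ whose basepoint lies on $S$.

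Next I would apply (A4) a second time, now to $S$ (based at $x$) and $R$ (based at $x_1$), to obtain an apartment $A_2$ with chart in $\App$ containing sub-Weyl chambers $S_2\subset S$ based at some $x_2\in S$ and $R_2\subset R$. By construction $c=\partial R_2\in\partial A_2$, and the basepoint of $S_2$ has, intuitively, descended along $[x,x_1]$ toward $x$. Iterating this procedure — at each stage replacing $R$ by the Weyl chamber inside the current apartment based at the current basepoint of the sub-Weyl chamber of $S$ and having boundary $c$ — produces a sequence of apartments with sub-Weyl chambers of $S$ whose basepoints lie on the segment $[x,x_1]\subset S$, while preserving $c$ in the boundary at infinity.

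The main obstacle will be to ensure that this process (or a modification of it) actually yields an apartment containing the germ $\Delta_xS$, i.e.\ that it captures a neighborhood of $x$ in $S$. Axiom (A4) asserts only the existence of sub-Weyl chambers and gives no direct control over their basepoints, so further input is needed. I would combine the above iteration with axiom (A3), applied to $x$ and a point of the current $A_n$ close to $x_n$, to produce an auxiliary apartment $A_0\ni x$, and then use the convexity of apartment intersections provided by (A2) to argue that $A_0\cap A_n$ is large enough to transmit the germ of $S$ at $x$ into the picture. Finally, I would invoke (A6) on the triple $A_0$, $A_n$, and a candidate apartment $A$ to stitch the half-apartment of $A_0$ containing a neighborhood of $x$ in $S$ with the half-apartment of $A_n$ containing a sub-Weyl chamber of $R$, producing an apartment $A$ with chart in $\App$ such that $x\in A$, $\Delta_xS\subset A$ and $c\in\partial A$. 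The delicate point, and the main difficulty, is verifying that the pairwise intersections in this configuration are indeed half-apartments so that (A6) applies; this is the step where the careful local analysis of how $R$ sits in $A_1$ relative to $[x,x_1]\subset S$ must be made rigorous.
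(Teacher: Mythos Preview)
Your proposal correctly identifies the starting move (apply (A4) to $S$ and a representative of $c$) and honestly flags the remaining gap, but the strategy you outline does not close, and the obstructions are structural rather than merely technical. First, the claim that the basepoint $x_2$ of $S_2$ has ``descended along $[x,x_1]$ toward $x$'' is not justified: axiom (A4) guarantees only the \emph{existence} of sub-Weyl chambers in a common apartment, with no control whatsoever over their basepoints; nothing prevents $x_2$ from lying farther from $x$ than $x_1$ did. Second, even if one could arrange strict descent, the ordered abelian group $\Lambda$ need not be Archimedean, so an infinite iteration has no target to converge to and cannot be expected to produce a germ at $x$. Your fallback of invoking (A6) to splice half-apartments also does not land: (A6) requires three apartments that \emph{pairwise} intersect in half-apartments, and you have produced no such triple; in particular there is no reason $A_0\cap A_n$ should be a half-apartment rather than merely a convex set containing a Weyl chamber.

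The paper does not give its own proof here; it states that the argument is literally that of Proposition~1.8 in Parreau~\cite{Parreau}. That proof does not iterate (A4). It proceeds instead by induction on the gallery distance from $\partial S$ to $c$ in the spherical building $\binfinity X$: the base case is furnished by any apartment containing $S$, and for the inductive step one already has an apartment $A'$ containing $\Delta_xS$ together with a chamber $c'\in\partial A'$ adjacent to $c$, and one passes to $c$ by a single local modification of $A'$. Since the induction has length at most the diameter of the spherical Weyl group, no limiting process is involved and the argument carries over verbatim to arbitrary $\Lambda$, which is exactly why the paper can simply cite it.
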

The proof of the above proposition is literally the same as of Proposition 1.8 in \cite{Parreau}.

\begin{corollary}\label{Cor_WeylChamber}
Fix a point $x\in X$. For each (face of a) Weyl chamber $F$ exists a unique (face of a) Weyl chamber $F'$ which is based at $x$ and parallel to $F$.
\end{corollary}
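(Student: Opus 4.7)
The plan is to reduce both existence and uniqueness to the corresponding facts in the model space $\MS$, where a Weyl simplex is determined by its basepoint and its direction at infinity. The main tool will be Proposition~\ref{Prop_tec16}, combined with axiom $(A2)$.

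For existence, I would first embed $F$ as a face of some Weyl chamber $S$ and set $c = \partial S \in \binfinity X$. Choose any Weyl chamber $S_0$ based at $x$, obtained from a chart $f \in \App$ with $x$ in its image (using $(A1)$ to translate so that $f(0) = x$ and taking the image of $\Cf$). Applying Proposition~\ref{Prop_tec16} to $S_0$ and $c$ produces an apartment $A$ with $\Delta_x S_0 \subset A$ and $c \subset \partial A$; in particular $x \in A$, and $A$ contains a unique Weyl chamber based at $x$ in direction $c$. The candidate $F'$ is then the face of this Weyl chamber whose direction matches $\partial F$. By Proposition~\ref{Prop_buildingAtInfinity}, two Weyl simplices are parallel if and only if they determine the same simplex of $\binfinity X$, so $F' \parallel F$.

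For uniqueness, suppose $F_1'$ and $F_2'$ are both based at $x$ and parallel to $F$, so $\partial F_1' = \partial F_2' = \partial F$. Extend each $F_i'$ to a Weyl chamber $S_i$ based at $x$ with a common direction $c \supseteq \partial F$ at infinity. Applying Proposition~\ref{Prop_tec16} to $(S_1, c)$ and to $(S_2, c)$ yields apartments $A_1$ and $A_2$ each containing $x$ and with $c \subset \partial A_i$ and $\Delta_x S_i \subset A_i$. Inside each $A_i$, the Weyl chamber based at $x$ in direction $c$ is unique via the model-space structure, and its germ at $x$ must coincide with $\Delta_x S_i$ because the latter is already contained in $A_i$. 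Axiom $(A2)$ applied to a chart of $A_1$ against a chart representing $S_1$ (both sending $0$ to $x$) forces the comparing element of $\WT$ to fix $x$ and the germ of $\Cf$ at $0$, hence to be trivial; this identifies $S_1$ with the canonical Weyl chamber in $A_1$. The symmetric statement holds for $S_2$. Passing to the intersection $A_1 \cap A_2$, which is closed, convex, and contains $x$ by $(A2)$, the two germs $\Delta_x S_1$ and $\Delta_x S_2$ live in a common apartment and share basepoint and direction, so they coincide. Extending germs to full Weyl simplices by the model-space fact that a Weyl simplex in $\MS$ is determined by its basepoint and direction, we conclude $F_1' = F_2'$.

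The main obstacle is the uniqueness step: Proposition~\ref{Prop_tec16} tailors an apartment to a single germ, so exhibiting a single apartment containing both $F_1'$ and $F_2'$ is not immediate. The bridge is provided by $(A2)$, together with the observation that a $\WT$-element fixing the origin and the germ of $\Cf$ must be the identity; only once this rigidity of chart transitions is in hand does the model-space uniqueness of a Weyl simplex with prescribed basepoint and direction at infinity close the argument.
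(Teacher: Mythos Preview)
Your existence argument is exactly the paper's one-line proof: apply Proposition~\ref{Prop_tec16} to any Weyl chamber $S_0$ based at $x$ and to a chamber $c\supseteq\partial F$ at infinity, obtain an apartment $A$ through $x$ with $c\in\partial A$, and read off $F'$ inside $A$. The paper says nothing more; in particular it does not spell out uniqueness.

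You do attempt uniqueness, and the strategy is sound, but two steps are not justified as written. First, for a proper face $F$ you assert that each $F_i'$ can be extended to a Weyl chamber $S_i$ based at $x$ with a \emph{prescribed common} direction $c$. This is not automatic: the apartment in which $F_i'$ lives need not have $c$ in its boundary, so the only extensions available there may point in other directions. Second, your $(A2)$ step and the phrase ``passing to the intersection $A_1\cap A_2$'' do not by themselves put both Weyl chambers (or even both germs) into one apartment. Axiom $(A2)$ only compares charts on the overlap; to conclude that $S_1\subset A_1$ (equivalently, that both $S_i$ lie in $A_1\cap A_2$) you need the additional observation that a $\WT$-convex set containing the basepoint $x$ and a sub-Weyl chamber of $S_1$ must contain all of $S_1$. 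Concretely: $A_1$ and $A_2$ both contain $c$ at infinity, hence share a common sub-Weyl chamber $T''$; since $A_1\cap A_2$ is closed convex and contains $x$ and $T''$, it contains the full Weyl chamber at $x$ in direction $c$ in each apartment, forcing $S_1=S_2$. Once the Weyl-chamber case is settled this way, the face case follows by taking faces of the (now unique) Weyl chamber. With these two repairs your argument is complete and in fact supplies what the paper's proof leaves implicit.
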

\begin{proof}
Apply Proposition~\ref{Prop_tec16} to $x$ and $c=\partial F$ and arbitrary $S$ based at $x$.
\end{proof}

\begin{corollary}\label{Cor_tec17}
For any chamber $c\in\binfinity X$ the affine building $X$ is as a set the union of all apartments containing a representative of $c$.
\end{corollary}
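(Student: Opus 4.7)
The plan is to derive this directly from Proposition~\ref{Prop_tec16}, which is the substantive ingredient. Given any point $x \in X$ and the chamber $c \in \binfinity X$, I need only produce an apartment $A$ with chart in $\App$ such that $x \in A$ and $c \in \partial A$.

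First I would produce some Weyl chamber based at $x$. Pick any Weyl chamber $S_0$ in $X$ (for example, take any chart $f \in \App$ and let $S_0$ be the image of the fundamental Weyl chamber $\Cf \subset \MS$); then Corollary~\ref{Cor_WeylChamber}, applied to $x$ and $S_0$, furnishes a Weyl chamber $S$ based at $x$ and parallel to $S_0$. Now apply Proposition~\ref{Prop_tec16} to the pair $(S, c)$: there is an apartment $A$, with chart in $\App$, containing the germ $\Delta_x S$ and satisfying $c \in \partial A$.

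It remains to observe that $A$ has the two required properties. Since $A$ contains a representative of $\Delta_x S$, in particular $A$ contains a Weyl chamber based at $x$, so $x \in A$. Since $c \in \partial A$, there is by definition of $\binfinity X$ a Weyl chamber $S' \subseteq A$ with $\partial S' = c$, i.e.\ a representative of $c$ contained in $A$. Thus $x$ lies in an apartment of $\App$ containing a representative of $c$, which is what was to be shown.

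There is no real obstacle here; the content is entirely in Proposition~\ref{Prop_tec16}, and the corollary amounts to unpacking that statement together with the existence of a Weyl chamber based at an arbitrary point, which is provided by Corollary~\ref{Cor_WeylChamber}.
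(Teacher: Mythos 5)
Your proof is correct and takes essentially the same route as the paper: both reduce the statement to Proposition~\ref{Prop_tec16} applied to a Weyl chamber based at the given point $x$ and the chamber $c$ at infinity. The only difference is that you invoke Corollary~\ref{Cor_WeylChamber} to produce a Weyl chamber based at $x$ (one could equally take any apartment through $x$ and a Weyl chamber based at $x$ inside it), which the paper leaves implicit.
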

\begin{proof}
Fix a chamber $c$ at infinity. For all  points $x\in X$ and arbitrary Weyl chambers $S$ based at $x$. there exists by \ref{Prop_tec16} an apartment $A$ containing $x$ and a germ of $S$ at $x$ and $c$ is contained in $\partial A$. 
\end{proof}

\begin{corollary}\label{Cor_tec18}
Given the germ $\Delta_xS$ of a Weyl chamber $S$ at $x$. Then $X$ is the union of all apartments containing $\Delta_xS$.
\end{corollary}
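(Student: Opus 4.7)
The plan is to reduce Corollary~\ref{Cor_tec18} to an application of Proposition~\ref{Prop_tec16} combined with the uniqueness clause of Corollary~\ref{Cor_WeylChamber}. Fix an arbitrary $y\in X$; the goal is to produce a single apartment $A_2$ with chart in $\App$ containing both $\Delta_x S$ and $y$. First I would invoke axiom $(A3)$ to pick an apartment $A_1$ containing $x$ and $y$. Since $A_1$ is isomorphic to the model space $\MS$ and $\MS$ is covered by the closed Weyl chambers based at any prescribed point, I can find a Weyl chamber $T\subset A_1$ based at $x$ with $y\in T$, and set $c=\partial T\in\binfinity X$.

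Next I would apply Proposition~\ref{Prop_tec16} to the pair $(S,c)$, producing an apartment $A_2$ with chart in $\App$ that contains the germ $\Delta_x S$ and satisfies $c\in\partial A_2$. In particular $x\in A_2$, because some sub-Weyl chamber of $S$ based at $x$ must lie in $A_2$ in order for $A_2$ to contain $\Delta_x S$. Identifying $A_2$ with $\MS$ via its chart, the condition $c\in\partial A_2$ then yields a Weyl chamber $T''\subset A_2$ based at the point $x$ with $\partial T''=c$.

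The punchline is the uniqueness assertion in Corollary~\ref{Cor_WeylChamber}: in $X$ there exists exactly one Weyl chamber based at $x$ parallel to $T$. Both $T$ and $T''$ are based at $x$ with $\partial T''=c=\partial T$, hence $T=T''\subset A_2$. Therefore $y\in T\subset A_2$, and $A_2$ is an apartment in $\App$ that contains the germ $\Delta_x S$ as well as $y$; since $y\in X$ was arbitrary, this exhibits $X$ as the union of all apartments containing $\Delta_x S$.

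The step I expect to require the most care is verifying that the germ containment $\Delta_x S\subset A_2$ supplies both the vertex $x$ inside $A_2$ and the freedom to locate a Weyl chamber with parallel class $c$ that is actually based at this specific $x$ (rather than at some other point of $A_2$ producing the same chamber at infinity). Once this is set up properly, the uniqueness from Corollary~\ref{Cor_WeylChamber} is exactly what transfers the local data at $x$ to the global information about the position of $y$.
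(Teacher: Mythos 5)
Your argument is correct and follows essentially the same route as the paper's own proof: apply $(A3)$ to get an apartment through $x$ and $y$, take the Weyl chamber $T$ based at $x$ containing $y$, feed $c=\partial T$ and $S$ into Proposition~\ref{Prop_tec16}, and then use the uniqueness of the $x$-based representative of $c$ (Corollary~\ref{Cor_WeylChamber}) to conclude that the resulting apartment contains $T$ and hence $y$. The only difference is that you make the final uniqueness step explicit, which the paper states more briefly.
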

\begin{proof}
Given $y\in X$ there exists by axiom $(A3)$ an apartment $A'$ containing $x$ and $y$. Let $T$ be a Weyl chamber in $A'$ based at $x$ containing $y$ and define $c=\partial T$. By Proposition~\ref{Prop_tec16} there exists an apartment $A$ such that a germ of $S$ at $x$ is contained in $A$ and such that the corresponding apartment $\partial A$ of $\binfinity X$ contains $c$. But then the unique representative of $c$ based at $x$ is also contained in $A$. Therefore $A$ contains $y$ and a germ of $S$ at $x$.
\end{proof}

\begin{corollary}\label{Cor_GG}
Any two germs of Weyl chambers based at the same vertex are contained in a common apartment.
\end{corollary}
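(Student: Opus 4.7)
The plan is to reduce the statement to a single application of Proposition \ref{Prop_tec16} combined with the uniqueness part of Corollary \ref{Cor_WeylChamber}. Let $S_1$ and $S_2$ be two Weyl chambers of $X$ both based at the vertex $x$, and set $c_2 \define \partial S_2 \in \binfinity X$. Apply Proposition \ref{Prop_tec16} to the point $x$, the Weyl chamber $S_1$, and the chamber $c_2$ at infinity: this yields an apartment $A$ with chart in $\App$ such that $\Delta_x S_1 \subset A$ and $c_2 \in \partial A$. In particular $x \in A$, since $x$ is the basepoint of any representative of $\Delta_x S_1$.

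Next I would show that $A$ already contains $S_2$ itself (and hence its germ at $x$). Since $A$ is the image of $\MS$ under a chart and $c_2 \in \partial A$, the apartment $A$ contains a representative of the parallel class $c_2$; because $x \in A$, the unique such representative based at $x$ inside $A$, call it $T$, exists and lies entirely in $A$. Now $T$ is a Weyl chamber of $X$ based at $x$ and parallel to $S_2$. By the uniqueness part of Corollary \ref{Cor_WeylChamber}, there is only one such Weyl chamber in $X$, so $T = S_2$. Consequently $S_2 \subset A$, and in particular $\Delta_x S_2 \subset A$. Together with $\Delta_x S_1 \subset A$ obtained from Proposition \ref{Prop_tec16}, this gives the desired common apartment.

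The proof is essentially just bookkeeping once one notices that Proposition \ref{Prop_tec16} is asymmetric: one germ is handled directly by the proposition, the other via the fact that the boundary of the resulting apartment contains the required chamber at infinity. The only (minor) point to verify is that having $c_2 \in \partial A$ and $x \in A$ forces the whole representative of $c_2$ based at $x$ to lie in $A$, which follows from the model-space description of apartments together with the uniqueness statement in Corollary \ref{Cor_WeylChamber}; there is no real obstacle beyond this observation.
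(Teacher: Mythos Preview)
Your proof is correct and follows essentially the same approach as the paper's: apply Proposition~\ref{Prop_tec16} to one of the Weyl chambers and the boundary class of the other, then recover the second Weyl chamber inside the resulting apartment via the uniqueness in Corollary~\ref{Cor_WeylChamber}. The paper's version is just terser---it asserts directly that the apartment contains $S$ rather than spelling out the step through Corollary~\ref{Cor_WeylChamber}---but the argument is the same.
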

\begin{proof}
Let $S$ and $T$ be Weyl chambers both based at $x$. By Proposition~\ref{Prop_tec16} there exists an apartment $A$ of $X$ containing $S$ and a germ of $T$ at $x$. Therefore $\Delta_xS$ and $\Delta_xT$ are both contained in the apartment $\Delta_xA$.
\end{proof}

\begin{prop}\label{Prop_A3'}
Let $(X,\App)$ be an affine building. Let $S$ and $T$ be Weyl chambers based at $x$ and $y$, respectively. Then there exists an apartment $A$ of $X$ containing a germ of $S$ at $x$ and a germ of $T$ at $y$.
\end{prop}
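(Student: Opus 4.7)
My strategy is to reduce the problem to Corollary~\ref{Cor_GG} (germs at a single vertex share an apartment) by a two-step ``rotation'' of apartments: first align the asymptotic direction $\partial T$ with the germ $\Delta_x S$, then bring in the basepoint $y$ together with the germ $\Delta_y T$, and finally merge the resulting apartments via axiom (A6).

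I begin by applying Proposition~\ref{Prop_tec16} to $S$ at $x$ with $c=\partial T$. This yields an apartment $A_1$ containing the germ $\Delta_x S$ and with $\partial T\in\partial A_1$. By Corollary~\ref{Cor_WeylChamber} there is a unique Weyl chamber $T_1\subset A_1$ based at $x$ with $\partial T_1=\partial T$, so that $T_1$ is parallel to $T$. Since $T_1$ and $T$ are parallel, axiom (A4) produces sub-Weyl chambers $T_1'\subset T_1$ and $T'\subset T$ lying in a common apartment $A_0$, and the model-space property of parallel Weyl chambers (cf.\ Section~\ref{Sec_modelSpace}) then yields a common sub-Weyl chamber $T^*\subset T\cap T_1$. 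In particular $T^*\subset A_0\cap A_1$, so $A_0$ and $A_1$ have a full sub-Weyl chamber in their intersection. Symmetrically, applying Proposition~\ref{Prop_tec16} to $T$ at $y$ with $c'=\partial S$ yields an apartment $A_2$ containing $\Delta_y T$ with $\partial S\in\partial A_2$, a unique Weyl chamber $S_2\subset A_2$ based at $y$ parallel to $S$, and a common sub-Weyl chamber $S^*\subset S\cap S_2$ lying in an apartment meeting $A_2$ in a set containing $S^*$.

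To conclude, I need to merge $A_1$ and $A_2$ into a single apartment $A$ containing both germs. The spherical building $\binfinity X$ being spherical, its two chambers $\partial S$ and $\partial T$ lie in a common apartment $\partial B$, and one may choose the chambers $c,c'$ used above to be opposite or otherwise suitably related inside $\partial B$. With such a choice, by axiom (A2) the pairwise intersections of $A_1$, $A_2$, and a third apartment whose boundary contains $\partial B$ are closed convex sets containing the anchoring sub-Weyl chambers $T^*$ and $S^*$; the hope is that these intersections are in fact half-apartments, so that axiom (A6) forces a non-empty triple intersection, and a chart-matching argument based on (A1) and (A2) assembles the desired apartment $A$ with $\Delta_x S\subset A$ (inherited from $A_1$) and $\Delta_y T\subset A$ (inherited from $A_2$).

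The main obstacle is precisely this last verification: axiom (A2) alone guarantees only closed convex intersections, while (A6) demands half-apartment intersections, and a judicious choice of the chambers at infinity together with careful analysis of the $\WT$-convex structure of the model space $\MS(\RS,\Lambda)$ seems necessary to ensure the hypothesis of (A6) is met. In Parreau's proof in the $\R$-building case this half-apartment condition is established using metric-topological arguments which, in the generalized $\Lambda$-valued setting, have to be replaced by purely combinatorial or convexity-based arguments referring to Definition~\ref{Def_GammaConvex} and the parallelism structure on Weyl simplices described in Section~\ref{Sec_modelSpace}.
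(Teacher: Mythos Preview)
Your proposal has a genuine gap at the merging step, and the obstacle you flag is not a technicality but the heart of the matter. Axiom~(A6) demands pairwise half-apartment intersections, and nothing in your construction forces $A_1\cap A_2$ (or either of these with a third apartment) to be a half-apartment rather than some smaller convex set. Worse, your $A_1$ contains $\Delta_xS$ and a representative $T_1$ of $\partial T$ based at $x$, but there is no reason $y$ lies in $T_1$ or anywhere in $A_1$; symmetrically $x$ need not lie in $A_2$. So the two germs you want to unite are sitting in apartments that may not even share the basepoints, and no amount of convexity bookkeeping in $\MS(\RS,\Lambda)$ will manufacture the half-apartment hypothesis of~(A6) from this.

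The paper avoids~(A6) entirely by choosing the chambers at infinity to point \emph{toward the other basepoint} rather than toward the other germ's asymptotic class. Start with any apartment $A\ni x,y$ (axiom~(A3)). Let $S_{xy}\subset A$ be a Weyl chamber at $x$ containing $y$, and $S_{yx}\subset A$ the Weyl chamber at $y$ with $\partial S_{yx}$ opposite $\partial S_{xy}$; then $x\in S_{yx}$. Apply Proposition~\ref{Prop_tec16} to the germ $\Delta_yT$ with $c=\partial S_{yx}$: the resulting apartment $A'$ contains $\Delta_yT$ and, since it contains $y$ and the chamber $\partial S_{yx}$ at infinity, it contains all of $S_{yx}$ and hence $x$. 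Now inside $A'$ choose a Weyl chamber $S'_{xy}$ based at $x$ containing the germ $\Delta_yT$ (adjusting by an adjacent chamber if $y$ lies in a wall). A second application of Proposition~\ref{Prop_tec16}, this time to $\Delta_xS$ with $c=\partial S'_{xy}$, produces an apartment $A''$ containing $\Delta_xS$ together with the entire Weyl chamber $S'_{xy}$, and therefore $\Delta_yT$. Two applications of Proposition~\ref{Prop_tec16}, no merging, no~(A6).
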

\begin{proof}
By axiom $(A3)$ there exists an apartment $A$ containing $x$ and $y$. Denote by $S_{xy}$ a Weyl chamber in $A$ based at $x$ containing $y$ and denote by $S_{yx}$ the Weyl chamber based at $y$ such that $\partial S_{xy}$ and $\partial S_{yx}$ are opposite in $\partial A$. Then $x$ is contained in $S_{yx}$ by definition. If $\Delta_yT$ is not contained in $A$ apply Proposition~\ref{Prop_tec16} to obtain an apartment $A'$ containing a germ of $T$ at $y$ and $\partial S_{yx}$ at infinity. But then $x$ is also contained in $A'$. 

Let $S'_{xy}$ denote the unique Weyl chamber contained in $A'$ having the same germ at $x$ as $S_{xy}$. 
Without loss of generality we can assume that the germ $\Delta_yT$ is contained in $S'_{xy}$. Otherwise $y$ is contained in a wall of $S'_{xy}$ and we can replace $S'_{xy}$ by an adjacent Weyl chamber in $A'$ satisfying this condition.
A second application of Proposition~\ref{Prop_tec16} to $\partial S'_{xy}$ and the germ of $S$ at $x$ yields an apartment $A''$ containing $\Delta_xS$ and $S'_{xy}$ and therefore $\Delta_yT$.
\end{proof}

\begin{remark}
Corollaries \ref{Cor_WeylChamber} and \ref{Cor_GG} are the direct analogs of 1.9 and 1.11 of \cite{Parreau} and Proposition \ref{Prop_A3'} corresponds to \cite[1.16]{Parreau}. Note, however, that the proof is different.
\end{remark}

\begin{defthm}\label{Thm_residue}
\index{{generalized affine building}!{residue}}
Let $(X,\App)$ be an affine building with model space $\MS(\RS,\Lambda,T)$. Then $\Delta_xX$ is a spherical building of type $\RS$ for all $x$ in $X$. If $x$ is special and $X$ thick with respect to $\WT$, then $\Delta_xX$ is thick as well. Furthermore $\Delta_xX$ is independent of $\App$.
\end{defthm}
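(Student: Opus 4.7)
The plan is to follow the strategy of Parreau's Proposition~1.20 (see the closing remark) with the axioms in hand from Corollaries \ref{Cor_WeylChamber}, \ref{Cor_GG}, Proposition \ref{Prop_A3'} and axiom (A2). First I would declare the simplicial data on $\Delta_xX$: the chambers are the germs $\Delta_xS$ of Weyl chambers $S$ based at $x$, the face relation is the partial order of Remark~\ref{Rem_partialorder}, and the apartments of $\Delta_xX$ are the sets
\[
\Delta_xA := \{\Delta_xF : F \text{ a Weyl simplex based at } x \text{ with } F\subset A\}
\]
for apartments $A$ of $X$ through $x$. Inside such an $A$, axiom (A2) provides an identification with $\MS$ carrying a simply transitive $\sW$-action on Weyl chambers based at $x$; passing to germs, $\Delta_xA$ inherits the structure of a Coxeter complex of type $\RS$, which verifies axiom (1) of Definition~\ref{Def_building}.

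For axiom (2), given germs $\Delta_xF_1,\Delta_xF_2$ I would first apply Corollary~\ref{Cor_WeylChamber} to extend each $F_i$ to a Weyl chamber $S_i$ based at $x$; Corollary~\ref{Cor_GG} then produces an apartment $A$ containing both $\Delta_xS_1$ and $\Delta_xS_2$, hence both $\Delta_xF_i$. For axiom (3), assume two apartments $\Delta_xA$ and $\Delta_xA'$ share the germs $\Delta_xF_1,\Delta_xF_2$; choose charts $f,g\in\App$ with $A=f(\MS)$, $A'=g(\MS)$ and apply (A2) to obtain $w\in\WT$ with $f$ and $g\circ w$ agreeing on the closed convex set $C:=f^{-1}(A\cap A')$. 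The key observation is that $C$ contains a neighbourhood of $x$ inside each representative of $F_i$ (this is what being a common germ means), so the transition map $f\circ(g\circ w)^{-1}$ descends to an isomorphism $\Delta_xA'\to\Delta_xA$ fixing $\Delta_xF_1$ and $\Delta_xF_2$ pointwise.

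For thickness at a special vertex $x$, a panel of $\Delta_xX$ is a germ $\Delta_xP$ of a codimension-one face $P$ of a Weyl chamber based at $x$; inside any apartment containing it, $P$ lies in a unique special hyperplane $H$ through $x$. Definition~\ref{Def_thick} provides a third half-apartment branching along $H$, and Corollary~\ref{Cor_WeylChamber} (applied at $x$) converts this into a third germ on $\Delta_xP$, distinct from the two already visible inside a single apartment through $H$. For independence of $\App$, I would observe that both the set of germs at $x$ and the notion of apartment $\Delta_xA$ depend only on the Weyl chambers of $X$ based at $x$; by Corollary~\ref{Cor_WeylChamber} every parallel class of Weyl chambers is represented by a unique such chamber living in some apartment of $\App$, and enlarging $\App$ to the complete atlas adds no new germs at $x$. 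Hence $\Delta_xX$ is intrinsic.

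The main obstacle I expect is axiom (3): one must promote the equality $f|_C=(g\circ w)|_C$ supplied by (A2), which holds only on the convex intersection $C$, to an isomorphism of the full apartments $\Delta_xA\to\Delta_xA'$ fixing the prescribed germs. This requires checking that $C$ is large enough near $x$ to determine the transition map uniquely on germs, which is where the closed convexity clause in (A2) and, if needed, Proposition~\ref{Prop_A3'} (to extend an intersection containing the two germ data to a genuine common apartment) do the real work. Once this local-to-global step is clean, thickness and atlas independence follow with little further effort.
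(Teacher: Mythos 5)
Your verification of the building axioms and of thickness is essentially the paper's own argument: apartments of $\Delta_xX$ come from apartments of $X$ through $x$ and are Coxeter complexes of type $\RS$, axiom (2) follows from Corollary~\ref{Cor_GG} after extending germs of simplices to germs of chambers, axiom (3) comes from the chart-transition supplied by (A2) (your ``main obstacle'' is not really one: the map $f\circ w\circ g^{-1}$ is defined on the whole apartment and fixes $A\cap A'$, hence the two germs, pointwise), and thickness at a special $x$ uses exactly the branching apartment from Definition~\ref{Def_thick} along the special wall through $x$ spanned by the panel.

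The genuine gap is in the independence of $\App$. Your argument --- ``enlarging $\App$ to the complete atlas adds no new germs at $x$, by Corollary~\ref{Cor_WeylChamber}'' --- is circular: that no new germs appear is precisely the statement to be proved, and Corollary~\ref{Cor_WeylChamber} only concerns Weyl chambers of the fixed structure $(X,\App)$; it says nothing about a Weyl chamber $S$ based at $x$ that lives in an apartment of a larger atlas $\App'$ but in no apartment of $\App$. Note also that you cannot route the argument through the building at infinity, since $\partial_\App X$ genuinely depends on $\App$ (compare Remark~\ref{Rem_ends} for trees), so the germ of such an $S$ is not automatically represented by an $\App$-Weyl chamber. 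The paper closes this gap with a concrete construction: given a chamber $c$ of the germ building for $\App'$ not visible for $\App$, take an opposite chamber $d$ in the $\App'$-building at $x$, realize both by opposite Weyl chambers $S_c, S_d$ based at $x$ in an $\App'$-apartment $A'$, pick interior points $y\in S_c$, $z\in S_d$, and use axiom (A3) for $(X,\App)$ to find an $\App$-apartment $A$ containing $y$ and $z$; since $x\in\seg(y,z)\subset A\cap A'$, the Weyl chambers of $A$ based at $x$ through $y$ and $z$ have germs $c$ and $d$, a contradiction. Some argument of this kind (using opposition, interior points and the segment lying in $A\cap A'$) is needed; without it the last assertion of the theorem is unproved.
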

\begin{proof}
We verify the axioms of Definition~\ref{Def_building}.
It is easy to see that $\Delta_xX$ is a simplicial complex with the partial order defined in \ref{Rem_partialorder}. It is a pure simplicial complex, since each germ of a face is contained in a germ of a Weyl chamber. The set of equivalence classes determined by a given apartment of $X$ containing $x$ is a subcomplex of $\Delta_xX$ which is, obviously, a Coxeter complex of type $\RS$. Hence define them to be the apartments of $\Delta_xX$. Therefore equation \ref{tec17} of Definition~\ref{Def_building} holds.
Two apartments of $\Delta_xX$ are isomorphic via an isomorphism fixing the intersection of the corresponding apartments of $X$ which implies \ref{tec18} of Definition~\ref{Def_building}. 
Finally \ref{tec19} holds by Corollary~\ref{Cor_GG}.

Assume that $x$ is special and $X$ thick with respect to $\WT$. Let $c$ be a chamber in $\Delta_xX$ and $\Delta_xA$ an apartment containing $c$. For each panel $p$ of $c$ there exists a chamber $c'$ contained in $\Delta_xA$ such that $c\cap c'=p$. The panel $p=\Delta_xF$ determines a wall $H\subset A$. Since $X$ is thick there exists an apartment $A'$ whose intersection with $A$ is a half-apartment bounded by $H$. Hence there is a third chamber $c''$ of $\Delta_xX$ determined by a Weyl chamber in $A'$ based at $x$ containing $F$. Therefore $\Delta_xX$ is thick.

Let $\App'$ be a different system of apartments of $X$ and assume w.l.o.g. that $\App\subset \App'$.
Let $\Delta$ denote the spherical building of germs at $x$ with respect to $\App$ and denote by $\Delta'$ the building at $x$ with respect to $\App'$. Since spherical buildings have a unique apartment system $\Delta$ and $\Delta'$ are equal if they contain the same chambers. Assume there exists a chamber $c\in\Delta'$ which is not contained in $\Delta$. Let $d$ be a chamber opposite $c$ in $\Delta'$ and $a'$ the unique apartment containing both. Note that $a'$ corresponds to an apartment $A'$ of $X$ having a chart in $\App'$. There exist $\App'$-Weyl chambers $S_c$, $S_d$ contained in $A$ representing $c$ and $d$, respectively. Let $y$ be a point in the interior of $S_c$ and $z$ an interior point of $S_d$. Using axiom $(A3)$ there exists a chart $f\in \App$ such that $A=f(\MS)$ contains $y$ and $z$. The apartment $A$ contains $x$ since $x\in\seg(y,z)$ and $\seg(y,z)\subset A\cap A'$. The unique Weyl chambers of $A$ based at $x$ containing $y$, respectively $z$, have germs $c$, respectively $d$, which is a contradiction. Hence $\Delta=\Delta'$.
\end{proof}

\begin{remark}
Let $\MS=\MS(\RS,\Lambda,T)$ be the model space of an affine building and let $\partial \MS$ be canonically identified with the associated Coxeter complex. Note that for each $x\in \MS$ and each chamber $c$ of $\partial \MS$ there exists a Weyl chamber $S$ contained in $c$ and based at $x$. Therefore the type of $\Delta_xX$ for an affine building $(X,\App)$ modeled on $\MS$ is always $\RS$. 

If $(X,\App)$  is the geometric realization of a simplicial affine building (in the sense of Definition~\ref{Def_building}) then $\Delta_xX$ is canonically isomorphic to the residue (or link) of $x$ if and only if $x$ is a special vertex. The definition of a spherical building corresponding to the residue (respectively link) of a non-special vertex would be possible defining a second class of Weyl chambers based at a vertex $x$ with respect to the stabilizer $(\WT)_x$ of $x$ in the restricted affine Weyl group $\WT$. Since we will not make use of this fact, we will not give details here.
\end{remark}

\begin{prop}~\label{Prop_epi}
Let $(X,\App)$ be an affine building, $\binfinity X$ its building at infinity. For all vertices $x\in X$ there exists an epimorphism 
$$\pi_x:\; \partial_\App X \rightarrow \Delta_xX .$$
\end{prop}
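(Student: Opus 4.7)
The plan is to construct $\pi_x$ by sending the parallel class $\partial F$ of any Weyl simplex $F$ of $X$ to the germ $\Delta_x F^x$, where $F^x$ denotes the unique Weyl simplex based at $x$ parallel to $F$. Existence and uniqueness of such an $F^x$ are precisely Corollary~\ref{Cor_WeylChamber}, and since parallelism is an equivalence relation (cf.\ Definition~\ref{Def_parallel}), $F^x$ depends only on the class $\partial F$; hence $\pi_x(\partial F) := \Delta_x F^x$ is well-defined. Face relations are automatically preserved: if $P$ is a face of a Weyl chamber $S$, then $P^x$ and $S^x$ both sit in a common apartment (as parallel translates of $P$ and $S$ can be realised together), and the unique face of $S^x$ based at $x$ parallel to $P$ must equal $P^x$, so $\Delta_x P^x$ is a face of $\Delta_x S^x$.

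Next I would verify that $\pi_x$ respects adjacency. Suppose $\partial S_1$ and $\partial S_2$ are adjacent chambers of $\binfinity X$; by Definition~\ref{Def_buildingAtInfinity} they have representatives in a common apartment of $\App$ sharing a panel. Their parallel translates $S_1^x$ and $S_2^x$ based at $x$ are two germs at $x$ and, by Corollary~\ref{Cor_GG}, lie in a common apartment $A'$. Inside $A'$, the unique Weyl chambers based at $x$ in the given parallel classes must coincide with $S_1^x$ and $S_2^x$, and adjacency is preserved inside $A'$, so $S_1^x$ and $S_2^x$ share a panel based at $x$. Consequently $\Delta_x S_1^x$ and $\Delta_x S_2^x$ are adjacent chambers of $\Delta_x X$.

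For surjectivity, let $\Delta_x S$ be any chamber of $\Delta_x X$. By the construction of $\Delta_x X$ in Theorem~\ref{Thm_residue}, it is represented by a Weyl chamber $S$ based at $x$ and contained in an apartment with chart in $\App$. Then $\partial S$ is a chamber of $\binfinity X$, and since $S$ is already based at $x$ one has $S^x = S$, giving $\pi_x(\partial S) = \Delta_x S$. The only point requiring genuine care is the adjacency check, where one must ensure that the two unique parallel translates $S_i^x$ land in a common apartment even though the original representatives do; this is exactly what Corollary~\ref{Cor_GG} guarantees, so no further obstacle arises.
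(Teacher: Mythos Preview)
Your proof is correct and follows essentially the same approach as the paper: both define $\pi_x$ by sending a parallel class to the germ at $x$ of its unique $x$-based representative, invoking Corollary~\ref{Cor_WeylChamber} for well-definedness and obtaining surjectivity from the very definition of $\Delta_xX$. The paper's argument is considerably terser---it simply asserts that ``by definition of $\Delta_xX$ the partial order and adjacency is preserved''---whereas you spell out the adjacency check explicitly via Corollary~\ref{Cor_GG}; this extra care is justified but does not constitute a different route.
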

\begin{proof}
Given $x\in X$ and $c\in \binfinity X$. Let $S$ be the Weyl chamber based at $x$ and contained in $c$, which exists by Corollary~\ref{Cor_WeylChamber}. Define $\pi_x(c)=\Delta_xS$, the germ of $S$ at $x$. Since for all $d \in \Delta_xX$ there exists a Weyl chamber $S'$ in $X$ such that $\Delta_xS' = d$ the map $\pi_x$ is surjective. By definition of $\Delta_xX$ the partial order and adjacency is preserved.
\end{proof}

\begin{prop}\label{Prop_A5}
Let $X$ be an affine building and $A_i$ with $i=1,2,3$ three apartments of $X$ pairwise intersecting in half-apartments. Then $A_1\cap A_2\cap A_3$ is either a half-apartment or a hyperplane.
\end{prop}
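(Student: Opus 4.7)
The plan hinges on axiom (A6), which ensures $I:=A_1\cap A_2\cap A_3$ is non-empty; pick $x\in I$. Pulling back to the model space via $f_1\colon\MS\to A_1$, one has $f_1^{-1}(I)=\alpha\cap\beta$, where $\alpha:=f_1^{-1}(A_1\cap A_2)$ and $\beta:=f_1^{-1}(A_1\cap A_3)$ are half-apartments of $\MS$ bounded by hyperplanes $H_\alpha,H_\beta$. The conclusion is immediate in the ``degenerate'' situations: if $\alpha\subseteq\beta$ (or vice versa) then $\alpha\cap\beta$ is itself a half-apartment, and if $H_\alpha=H_\beta$ with $\alpha,\beta$ opposite then $\alpha\cap\beta=H_\alpha$ is a hyperplane. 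It remains to rule out the ``wedge'' case (two distinct intersecting hyperplanes $H_\alpha,H_\beta$ with neither half-apartment contained in the other) and the ``strip'' case (parallel $H_\alpha\neq H_\beta$ with opposite orientation).

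To derive a contradiction in these remaining cases, I would transport the third pairwise intersection $A_2\cap A_3$ into $A_1$ via the retraction $r:=r_{A_1,x}\colon X\to A_1$ from axiom (A5). Granted that $r$ restricts to the canonical chart-isomorphism $\phi_i\colon A_i\to A_1$ on each $A_i$ for $i=2,3$ (the isometry fixing $A_1\cap A_i$ pointwise), the half-apartment $A_2\cap A_3$ of $A_2$ maps isometrically onto a half-apartment $J$ of $A_1$; any point of $A_2\cap A_3$ lying outside $A_1$ lands outside both $A_1\cap A_2$ and $A_1\cap A_3$, since each $\phi_i$ sends $A_i\setminus(A_1\cap A_i)$ into $A_1\setminus(A_1\cap A_i)$. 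Pulling back by $f_1$ yields
$$
f_1^{-1}(J)\;\subset\;(\alpha\cap\beta)\;\cup\;\bigl(\MS\setminus(\alpha\cup\beta)\bigr).
$$
In the strip case the set $\MS\setminus(\alpha\cup\beta)$ is empty and the right-hand side is a strip of finite width containing no half-apartment. In the wedge case, writing $\alpha=\{\ell_\alpha\geq 0\}$, $\beta=\{\ell_\beta\geq 0\}$ and $J=\{\ell_J\geq 0\}$ in affine-linear form: if $\ell_J$ is linearly independent from $\{\ell_\alpha,\ell_\beta\}$ one directly exhibits a point of $J$ in the off-diagonal wedge $\{\ell_\alpha>0,\ell_\beta<0\}$, while if $\ell_J$ is proportional to $\ell_\alpha$ or $\ell_\beta$ then $J$ equals one of the four half-apartments $\alpha,\beta$ or their opposites, each of which meets an off-diagonal wedge. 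Either possibility contradicts the containment above, forcing $\alpha\cap\beta$ to be a half-apartment or a hyperplane.

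The main obstacle I foresee is verifying cleanly that the retraction $r_{A_1,x}$ from (A5) restricts to the canonical chart-isomorphism on each apartment containing $x$: in the classical $\R$-case this follows from standard topological arguments, but in the $\Lambda$-valued setting one must combine the distance non-increasing property with the chart-compatibility (A2) and the uniqueness of the isometry of apartments fixing a half-apartment pointwise.
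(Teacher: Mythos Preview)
Your approach is genuinely different from the paper's, and while it can be made to work, it is less economical and the obstacle you flag is real.

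The paper argues via the spherical building at infinity $\binfinity X$ (Proposition~\ref{Prop_buildingAtInfinity}). Since each $A_i\cap A_j$ is a half-apartment of $X$, the boundaries $\partial A_i$ pairwise intersect in half-apartments of the spherical building, and for three apartments of a spherical Coxeter complex this forces $\partial A_1\cap\partial A_2\cap\partial A_3$ to be either a half-apartment or a wall $m$. In the first case the affine triple intersection is a half-apartment. In the second, the bounding hyperplanes $H_{ij}$ of $M_{ij}:=A_i\cap A_j$ all lie in the parallel class $m$; this eliminates your ``wedge'' case structurally, with no computation. Inside each $A_k$ the two half-apartments $M_{ik},M_{jk}$ then have parallel walls and opposite orientation, so the triple intersection is the strip $M_{ik}\cap M_{jk}$ (nonempty by (A6)), which contains both $H_{ik}$ and $H_{jk}$. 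A symmetric betweenness argument across the three indices forces $H_{12}=H_{13}=H_{23}$, and this common hyperplane is the triple intersection.

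Your route through the retraction $r_{A_1,x}$ of axiom (A5) buys independence from the boundary theory, but at two costs. First, the identification $r|_{A_i}=\phi_i$ is not established anywhere prior to this point for the (A5) retraction over general $\Lambda$; the paper only later (Definition~\ref{Def_vertexRetraction}, Proposition~\ref{Prop_r-rho}) builds the germ-based retraction with this property, and even then one needs a germ lying in $A_1\cap A_2\cap A_3$. Second, your wedge-case analysis is loose: ``$\ell_J$ proportional to $\ell_\alpha$'' should mean that the \emph{linear parts} are proportional, in which case $H_J$ is merely parallel to $H_\alpha$ and $J$ is a translate of $\alpha$ or $-\alpha$, not necessarily equal to one of them; the argument still goes through, but this case needs to be written out. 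The paper's use of $\binfinity X$ sidesteps both issues.
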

\begin{proof}
For $i\neq j$ denote the intersection $A_i\cap A_j$ by $M_{ij}$. The corresponding apartments $\partial A_i$ in the spherical building at infinity pairwise intersect in half-apartments as well. Hence $\partial A_1\cap \partial A_2 \cap \partial A_3$ is either a half-apartment itself or a hyperplane in $\binfinity X$. Assume that $\partial A_1\cap \partial A_2 \cap \partial A_3$ is a half-apartment. Then $A_1\cap A_2\cap A_3$ is a half-apartment contained in each of the $A_i$.

Assume now that we are in the case where $\partial A_1\cap \partial A_2 \cap \partial A_3$ is a hyperplane $m$ in $\binfinity X$.
Walls at infinity correspond to parallel classes of hyperplanes in the affine building. Hence there are three hyperplanes $H_{ij}$ bounding the half-apartments $M_{ij}= A_i\cap A_j$ which are all contained in $m$. Note that the half-apartments $M_{13}$ and $M_{23}$ are opposite in $A_3$ in the sense that their union equals $A_3$. By axiom $(A6)$ the intersection $A_1\cap A_2\cap A_3$ is nonempty and equal to the strip $M_{13}\cap M_{23}$. It is obvious that the hyperplanes $H_{13}$ and $H_{23}$ are contained in $M_{13}\cap M_{23}$. Since this argument is completely symmetric in the indices, $H_{12}$ is contained in $M_{13}\cap M_{23}$ as well. Again by symmetry each of the hyperplanes is between the other two and hence $H_{12}=H_{13}=H_{23}= A_1\cap A_2 \cap A_3$.
\end{proof}

This leads to the following observation.

\begin{property}{\bf The sundial configuration. } \label{Prop_sundial}
Let $A$ be an apartment in $X$ and let $c$ be a chamber in $\binfinity X$ containing a panel of $\partial A$ but not contained in $\partial A$. Then $c$ is opposite to two uniquely determined chambers $d_1$ and $d_2$ in $\partial A$. Hence there exist apartments $A_1$ and $A_2$ of $X$ such that $\partial A_i$ contains $d_i$ and $c$ with $i=1,2$.  The three apartments $\partial A_1,\partial A_2$ and $\partial A$ pairwise intersect in half-apartments. Axiom $(A6)$ together with the proposition above implies that their intersection is a hyperplane.
\end{property}

 \begin{figure}[htbp]
 \begin{center}
 	\resizebox{!}{0.2\textheight}{\input{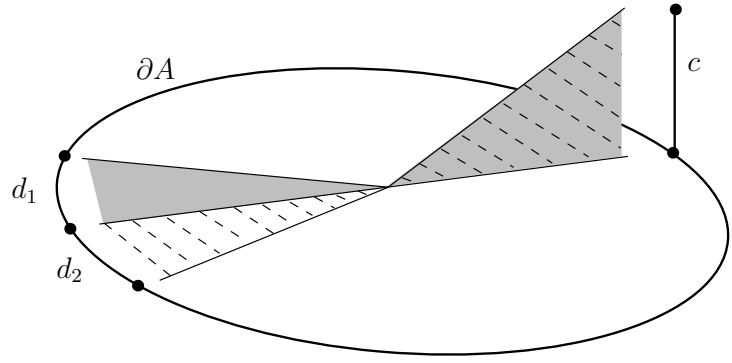}}
 	\caption[adjacent]{The sundial configuration.}
 	\label{Fig_sundial}
 \end{center} 
 \end{figure}

\begin{prop}\label{Prop_liftGallery}
Let $x$ be an element of $X$. Let $(c_0, \ldots, c_k)$ be a minimal gallery in $\binfinity X$. Find $x$-based representatives $S_i$ of $c_i$. If $(\pi_x(c_0), \ldots, \pi_x(c_k))$ is minimal in $\Delta_xX$, then there exists an apartment containing $\cup_{i=0}^k S_i$.
\end{prop}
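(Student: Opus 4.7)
The plan is an induction on $k$ in which I enlarge the containing apartment one Weyl chamber at a time using the sundial configuration~\ref{Prop_sundial}. The base case $k = 0$ is immediate, since any Weyl chamber is contained in an apartment. For the inductive step, suppose $A$ is an apartment of $X$ containing $S_0 \cup \cdots \cup S_{k-1}$. If $c_k \in \partial A$, then by Corollary~\ref{Cor_WeylChamber} the unique Weyl chamber based at $x$ in direction $c_k$ is contained in $A$, so $S_k \subset A$ and $A$ itself already works. Otherwise $c_k \notin \partial A$ but shares a panel at infinity with $c_{k-1} \in \partial A$, and the sundial configuration produces two apartments $A_1, A_2$ of $X$ with $c_k \in \partial A_i$ and $A \cap A_i$ a half-apartment of $A$.

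Both minimality assumptions enter decisively in the choice of $A_1$. Minimality of $(c_0, \ldots, c_k)$ in $\binfinity X$ forces, by the standard non-crossing property of minimal galleries in spherical Coxeter complexes, the chambers $c_0, \ldots, c_{k-1}$ to lie on a single side of the wall of $\partial A$ through the panel shared by $c_{k-1}, c_k$; this selects the sundial apartment $A_1$ in such a way that $c_0, \ldots, c_k \in \partial A_1$. Minimality of the germ gallery $(\pi_x(c_0), \ldots, \pi_x(c_k))$ in the residue $\Delta_x X$ (a simplicial spherical building by Theorem~\ref{Thm_residue}) then pins down the bounding wall of $A \cap A_1$ inside $A$ to be the wall carrying the $x$-based panel $F$ shared by $S_{k-1}$ and $S_k$; since $F$ is based at $x$, this wall passes through $x$, so $x \in A_1$. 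I would realize this specific choice concretely by applying Proposition~\ref{Prop_tec16} to the Weyl chamber $S_{k-1}$ and the chamber $c_k \in \binfinity X$: this produces an apartment containing the germ of $S_{k-1}$ at $x$ (hence containing $x$) and having $c_k$ in its boundary, and the germ-gallery minimality guarantees that this apartment is one of the sundial apartments with the correct bounding wall. With $x \in A_1$ and each $c_i \in \partial A_1$, Corollary~\ref{Cor_WeylChamber} applied inside $A_1$ then places every $S_i$ inside $A_1$, closing the induction.

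The main obstacle is precisely this coordination between the two minimality assumptions and the sundial configuration. The sundial alone leaves the bounding wall of $A \cap A_i$ free to be any wall of $A$ parallel at infinity to the wall through the panel shared by $c_{k-1}$ and $c_k$; one must use the germ-gallery minimality, via Proposition~\ref{Prop_tec16}, to pin this wall down as the wall through $x$, which is what allows the enlarged apartment to retain $x$ (and hence the basepoints of all $S_i$). A subsidiary technicality is the non-crossing property of minimal galleries in the (possibly non-thick) spherical Coxeter complexes $\partial A$ and $\Delta_x A$; this is standard, but needs to be invoked in both places in parallel so that the two minimality hypotheses really do single out the same candidate apartment $A_1$.
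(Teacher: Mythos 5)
Your overall architecture — induction on $k$, the sundial configuration when $c_k\notin\partial A$, minimality of $(c_0,\ldots,c_k)$ to put $c_0,\ldots,c_{k-1}$ on one side of the wall at infinity, and Corollary~\ref{Cor_WeylChamber} at the end — matches the paper's proof, but the decisive step, namely getting $x$ into the new apartment, has a genuine gap. You assert that minimality of the germ gallery ``pins down'' the bounding wall of $A\cap A_1$ to be the wall through $x$ carrying the panel $S_{k-1}\cap S_k$. This is false in general and also not what is needed. By \ref{Prop_sundial} (via Proposition~\ref{Prop_A5} and axiom $(A6)$) the hyperplane $H=A\cap A_1\cap A_2$ is uniquely determined by where the building actually branches; it lies in the parallel class of the wall at infinity but can be far from $x$ (picture the branching wall strictly between $x$ and infinity on the $c_0$-side: the germ gallery is still minimal, the proposition still holds, yet $x\notin H$). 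So your premise that ``the sundial alone leaves the bounding wall free'' and that minimality selects the wall through $x$ is a misreading of the configuration. What must be proved is only the weaker statement that $x$ lies in the closed half-apartment $H^+$ of $A$ containing representatives of $c_0,\ldots,c_{k-1}$, and your proposal contains no argument for this. The paper's missing idea is a folding argument: if $x$ were strictly on the $H^-$ side, then $x$ lies in the sundial apartment $A''$ glued along $H^-$, hence $S_k\subset A''$; the isomorphism $A''\to A$ fixing $H^-$ (hence a neighborhood of $x$) maps $S_k$ onto $S_{k-1}$, so $\pi_x(c_k)=\pi_x(c_{k-1})$, contradicting minimality of the germ gallery. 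Germ minimality is used to exclude the wrong side of $H$, not to locate $H$.

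Your fallback via Proposition~\ref{Prop_tec16} does not close this gap. Applied to $S_{k-1}$ and $c_k$ it yields an apartment containing a germ of $S_{k-1}$ at $x$ with $c_k$ in its boundary, but nothing forces this apartment to intersect $A$ in a half-apartment (i.e.\ to be one of the sundial apartments), to have $c_0,\ldots,c_{k-1}$ in its boundary, or to contain the full chambers $S_0,\ldots,S_{k-1}$ rather than just a germ — an apartment containing the germ of an $x$-based Weyl chamber need not contain the whole chamber, since it may branch off along a wall crossing it. Your concluding step is fine and even slightly streamlines the paper: once $x\in H^+\subset A_1$ and $c_0,\ldots,c_k\in\partial A_1$ are known, Corollary~\ref{Cor_WeylChamber} (uniqueness of $x$-based representatives) places every $S_i$ in $A_1$. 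But without the folding/contradiction argument establishing $x\in H^+$, the induction does not go through.
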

\begin{proof}
The proof is by induction on $k$. For $k=0$ there is just one Weyl chamber and the result holds. Let $A'$ be an apartment containing $S_1\cup S_2\cup \ldots \cup S_{k-1}$. If $c_k$ is contained in $\partial A'$ we are done. If $c_k$ is not contained in $\partial A'$ we have the sundial configuration, which determines a unique hyperplane $H$ in $A'$. Let $H^+$ be the unique half-apartment of $A'$ determined by this hyperplane which contains a representative of $c_0$. Then $H^+$  also contains representatives of $c_1,\ldots,c_{k-1}$, since this is a minimal gallery and $S_k$ is on the other side of $H$. 

We claim that $x$ is contained in $H^+$.
Let $A''$ be the apartment in the sundial configuration containing $H^- = (A'\setminus H^+)\cup H$ and $c_k$ at infinity. If $x\in A'\setminus H^+ \subset A''$ the Weyl chamber $S_k$ is contained in $A''$. Let $\rho: A'' \rightarrow A'$ be the isometry  fixing $A''\setminus H^+$. The Weyl chamber $S_k$ is mapped onto $S_{k-1}$ and the set $S_k\cap(A'\cap H^+)$ is pointwise fixed. Therefore $\pi_x(c_{k-1})=\pi_x(c_k)$ which is a contradiction. Therefore $x$ is contained in $H^+$ and $\cup_{i=0}^{k-1} S_i \subset H^+$. Let $A$ now be the apartment in the sundial configuration containing $H^+$ and $S_k$. Then $\cup_{i=0}^{k} S_i$ is contained in $A$.
\end{proof}

\begin{corollary}\label{Cor_CO}
Given two Weyl chambers $S,T$ based at the same vertex $x$. If their germs $\Delta_xS$ and $\Delta_xT$ are opposite in $\Delta_xX$ then there exists a unique apartment containing $S$ and $T$.
\end{corollary}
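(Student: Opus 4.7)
The plan is to realize $S \cup T$ inside the union of a minimal gallery of $x$-based Weyl chambers and then appeal to Proposition~\ref{Prop_liftGallery}. First I would verify that $\partial S$ and $\partial T$ are opposite in $\binfinity X$: Proposition~\ref{Prop_epi} gives a simplicial surjection $\pi_x\colon\binfinity X\to\Delta_xX$ which preserves adjacency, hence does not increase gallery distance; since $\pi_x(\partial S)=\Delta_xS$ and $\pi_x(\partial T)=\Delta_xT$ are opposite in $\Delta_xX$ by hypothesis, the chambers $\partial S$ and $\partial T$ must already lie at the maximum distance $N$ in $\binfinity X$. I would then pick any minimal gallery $\partial S=c_0,c_1,\ldots,c_N=\partial T$ in $\binfinity X$ and check that its projection $\pi_x(c_0),\ldots,\pi_x(c_N)$ is itself a minimal gallery in $\Delta_xX$: no two consecutive images can coincide, for otherwise the resulting sequence (after deleting duplicates) would be a gallery of length strictly less than $N$ between opposite chambers. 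Proposition~\ref{Prop_liftGallery} then yields an apartment $A$ containing $\bigcup_{i=0}^N S_i$, where $S_i$ is the unique $x$-based representative of $c_i$ furnished by Corollary~\ref{Cor_WeylChamber}. Since $S$ itself is $x$-based with $\partial S=c_0$, Corollary~\ref{Cor_WeylChamber} forces $S_0=S$; similarly $S_N=T$, so $A$ contains $S\cup T$ and existence follows.

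For uniqueness I would take a second apartment $A'$ containing $S\cup T$ and use axiom $(A2)$ to view $A\cap A'$, via a chart, as a closed $\WT$-convex subset $C\subset\MS$ which, identifying $x$ with the origin, contains the two opposite Weyl chambers $\Cf$ and $-\Cf$. It remains to observe that the only such $\WT$-convex subset of $\MS$ is $\MS$ itself: any special half-apartment $\{y:\langle y,\alpha^\vee\rangle\geq\lambda\}$ containing both $\Cf$ and $-\Cf$ must equal $\MS$, because for every $\alpha\in\RS$ the pairing $\langle\,\cdot\,,\alpha^\vee\rangle$ is unbounded below on one of the two opposite cones. Hence $C=\MS$ and $A=A'$.

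The main obstacle is the no-repetition step in the first paragraph: ensuring that the chosen minimal gallery in $\binfinity X$ actually projects without collapse to a minimal gallery in $\Delta_xX$. This is the point at which the opposition hypothesis on the germs is indispensable; dropping it leaves one only with Corollary~\ref{Cor_GG}, which places the two germs in a common apartment but gives no control over the ambient Weyl chambers $S$ and $T$ themselves.
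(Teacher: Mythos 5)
Your proof is correct and takes essentially the same route as the paper: pick a minimal gallery in $\binfinity X$ from $\partial S$ to $\partial T$, pass to the $x$-based representatives, and invoke Proposition~\ref{Prop_liftGallery}. The paper's own argument is just a terser version of this (it leaves implicit both the check that the projected gallery under $\pi_x$ is minimal and the uniqueness via axiom $(A2)$, which are precisely the details you fill in correctly).
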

\begin{proof}
Choose a minimal gallery $(c_0,c_1,\ldots, c_n)$ from $c_0=\partial S$ to $c_n=\partial T$ and consider the representatives $S_i$ of $c_i$ based at $x$. Then $S_0=S$ and $S_n=T$. Proposition~\ref{Prop_liftGallery} implies the assertion.
\end{proof}

\begin{remark}
Propositions \ref{Prop_A5} and \ref{Prop_liftGallery} as well as \ref{Prop_sundial} are due to L. Kramer.
\end{remark}

\begin{thm}\label{Thm_projection1}
Let $(X,\App)$ be an affine building, $A$ an apartment of $X$ and $c,d$ two opposite chambers in $\partial A$. Then
$$x\in A \Longleftrightarrow \pi_x(c) \text{ and } \pi_x(d) \text{ are opposite in } \Delta_xX. $$ 
The restriction of $\pi_x$ to the boundary of an apartment $A$ containing $x$ is an isomorphism onto its image.
\end{thm}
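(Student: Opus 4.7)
The plan splits naturally into the three claims: the two directions of the equivalence and the isomorphism statement at the end.

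\textbf{Direction $(\Rightarrow)$:} Suppose $x\in A$. By Corollary \ref{Cor_WeylChamber} there are unique Weyl chambers $S_c$ and $S_d$ based at $x$ lying in the parallel classes $c$ and $d$; since $x\in A$ and $c,d\in\partial A$, both $S_c$ and $S_d$ are contained in $A$. Their germs $\pi_x(c)=\Delta_xS_c$ and $\pi_x(d)=\Delta_xS_d$ therefore lie in the apartment $\Delta_xA$ of $\Delta_xX$ (as in the proof of Theorem \ref{Thm_residue}). In the Coxeter complex $\Delta_xA$, which is modeled on the link at $x\in A$ of two Weyl chambers opposite at infinity, the germs of $S_c$ and $S_d$ are opposite; hence they are opposite in the spherical building $\Delta_xX$.

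\textbf{Direction $(\Leftarrow)$:} Assume $\pi_x(c)$ and $\pi_x(d)$ are opposite in $\Delta_xX$. Let $S,T$ be the Weyl chambers based at $x$ in $c,d$ respectively. Since their germs are opposite, Corollary \ref{Cor_CO} gives a unique apartment $A'$ containing $S\cup T$; in particular $x\in A'$ and $c,d\in\partial A'$. Two opposite chambers of a spherical building lie in a unique apartment, so $\partial A'=\partial A$. It remains to show $A=A'$, which is the main obstacle. For this I would argue as follows: by axiom (A2), $A\cap A'$ is closed and convex. The Weyl chambers in $A$ and $A'$ of class $c$ are parallel, hence (by the parallelism results cited at the end of Section \ref{Sec_modelSpace}) share a common subsector, which necessarily lies in $A\cap A'$; the same holds for class $d$. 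Thus $A\cap A'$ is a convex subset of $A$ containing a subsector of $c$ and a subsector of the opposite class $d$. In an apartment modeled on $\MS(\RS,\Lambda,T)$, the convex hull (in the sense of Definition \ref{Def_GammaConvex}) of two subsectors in opposite parallel classes is the entire model space, because no special half-apartment can contain representatives of two opposite chambers at infinity while missing any point of $\MS$. Consequently $A\cap A'=A=A'$, and therefore $x\in A'=A$.

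\textbf{Isomorphism statement:} Fix an apartment $A$ with $x\in A$. Surjectivity of $\pi_x\!\mid_{\partial A}$ onto $\Delta_xA$ is immediate, since every germ of a Weyl chamber based at $x$ in $A$ is the image of its parallel class. For injectivity, if $\pi_x(c_1)=\pi_x(c_2)$ with $c_1,c_2\in\partial A$, the associated $x$-based Weyl chambers $S_1,S_2$ in $A$ coincide in a neighbourhood of $x$; but inside the apartment $A$ a Weyl chamber based at $x$ is determined by its germ, so $S_1=S_2$ and $c_1=c_2$. Adjacency and the partial order on simplices are preserved by definition of $\pi_x$, so $\pi_x\!\mid_{\partial A}$ is a simplicial bijection onto $\Delta_xA$, i.e.\ an isomorphism of Coxeter complexes.

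The delicate point is the claim $\partial A=\partial A'\Rightarrow A=A'$ in the reverse direction; everything else is a direct application of the local/global results already established (Corollaries \ref{Cor_WeylChamber}, \ref{Cor_CO} and Theorem \ref{Thm_residue}). The argument via convexity of $A\cap A'$ together with the presence of subsectors in two opposite classes is the place where axiom (A2) and the parallelism properties of Weyl chambers are doing the real work.
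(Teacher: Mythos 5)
Your proof is correct and follows essentially the same route as the paper: the forward direction via the two opposite $x$-based Weyl chambers inside $A$, and the reverse direction via the gallery-lifting machinery (you invoke Corollary \ref{Cor_CO}, while the paper reruns the argument of Proposition \ref{Prop_liftGallery} directly), followed by uniqueness of the apartment whose boundary contains the opposite chambers $c$ and $d$. The only real difference is one of detail: where the paper dismisses that uniqueness with ``clear by axiom (A2)'', you spell out the convexity argument (no half-apartment can contain subsectors of two opposite parallel classes, so $A\cap A'$ must be all of $A$), and you also treat the final isomorphism claim explicitly, which the paper's proof leaves implicit.
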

\begin{proof}
Assume $x\in A$. Each panel $p_i\in c$ defines an equivalence class $m_i$ of parallel hyperplanes in $A$. Denote by $H_i$ the unique wall in $m_i$ containing $x$ and let $\alpha^i$ be the (unique) half-apartment of $A$ with wall $H_i\in m_i$ such that $\partial \alpha_i$ contains $c$. The intersection of all $\alpha_i$ is a Weyl chamber $S\in c$ based at $x$. Similarly we have a Weyl chamber $T$ based at $x$ contained in $d$. Since the defining walls of $S$ and $T$ are the same, the chambers $\pi_x(c)$ and $\pi_x(d)$ are opposite in $\Delta_xX$.

Given $c,d$ in $\binfinity X$ such that the chambers $\pi_x(c)$ and $\pi_x(d)$ are opposite. Let $S$ and $T$ denote the Weyl chambers based at $x$ contained in $c$, $d$, respectively.
Choose a minimal gallery $\gamma'=(c'_0=\pi_x(c), c'_1, \ldots, c'_{n-1}, c'_n=\pi_x(d))$ in $\Delta_xX$.  Then there exists a minimal gallery $\gamma=(c_0=c, c_1, \ldots, c_{n-1}, c_n=d)$ in $\binfinity X$ such that $\pi_x(c_i)=c'_i$. Denote by $S_i$ the unique Weyl chamber based at $x$ and contained in $c_i$. Proposition~\ref{Prop_liftGallery} implies the existence of an apartment $A$ containing $\cup_{i=0}^k C_i$ and hence $x$. Uniqueness is clear by axiom $(A2)$. 
\end{proof}

%
%

\subsection{Trees: affine buildings of dimension one}\index{tree with sap}
\label{Subsec_trees}

The notion of an affine building generalizes in a natural way the well known $\Lambda$-trees as for example defined in \cite{AlperinBass}. Bennett proves \cite[Example 3.1]{Bennett} that a $\Lambda$-tree with sap, as defined below, is nothing else than a generalized affine building of dimension one in the sense of Definition~\ref{Def_LambdaBuilding}.

\begin{definition}\label{Def_lambdatree}
Let $(T,d)$ be a $\Lambda$-metric space. Denote by $[k,k']_\Lambda$ the interval between $k$ and $k'$ in $\Lambda$. Then $(T,d)$ is a \emph{$\Lambda$-tree} if the following axioms are satisfied:
\begin{enumerate}[label={(T*)}, leftmargin=*]
\item[(T1)] Given $x,y$ in $T$ then there exists a unique isometry $f:[0, d(x,y)]_\Lambda \rightarrow T$ such that $f(0)=x$ and $f(d(x,y))=y$. We define  $[x,y]\define f([0,d(x,y)]_\Lambda)$. 
\item[(T2)] Given $x,y,z$ in $T$ there exists $w\in T$, necessarily unique, such that $[x,y]\cap[x,z]=[x,w]$.
\item[(T3)] Given $x,y,z$ in $T$ such that $[x,y]\cap [y,z]=\{y\}$ then $[x,z]=[x,y]\cup[y,z]$.
\end{enumerate}
A \emph{ray of $T$ at $x$} is a subset $S_x$ in $T$ isometric to $[0,\infty)_\Lambda$. We say $T$ is a tree \emph{without leaves} if for any two points $x,y\in T$ there is a ray based at $x$ containing $y$. 
A \emph{line} of $T$ is a set $l\subset T$ such that $l$ is isometric to $\Lambda$.
\end{definition}

Condition $\mathrm{(T2)}$ is sometimes called the \emph{Y-condition}. It guarantees that rays diverge at points of $T$. For example two lines in a $\Q$-tree cannot diverge at an ``irrational point''. 
 
\begin{definition}\label{Def_end}
Let $(T,d)$ be a metric tree. Two rays $R, R'$ in $T$ are \emph{equivalent} if they are at bounded Hausdorff distance. Define an \emph{end} of $T$ to be an equivalence class of rays.

Fix a set of lines $\App$ in $T$ and define an \emph{$\App$-end} of $T$ to be an equivalence class of rays contained in lines of $\App$. The set of $\App$-ends of $T$ will be denoted by $\partial_\App T$.
\end{definition}

Obviously $\partial_\App T\subseteq\{\text{ends of } T\}$.
 
\begin{definition}\label{Def_atlas}
A \emph{tree with sap},\footnote{Here \emph{sap} stands for {\bf s}ystem of {\bf ap}artments. This abbreviation was first suggested by Mark Ronan.}  denoted by $(T,\App)$, is a metric tree $(T,d)$ together with a set of lines $\App$ such that the following axioms are satisfied:
\begin{enumerate}[label={(TA*)}, leftmargin=*]
\item[(TA1)] Given $x,y\in T$ there exists a line $l$ in $\App$ with $x,y\in l$.
\item[(TA2)] Given $\App$-ends $a,b$ of $(T,\App)$, there exists a line $l$ in $\App$ such that $a$ and $b$ are the ends of $l$.
\end{enumerate}
The set $\App$ is called \emph{atlas} or \emph{system of apartments}. Let the \emph{boundary} or \emph{building at infinity} of a tree $(T,\App)$ with sap be the set $\partial_\App T$ of $\App$-ends, as defined in \ref{Def_end}. 
\end{definition}
By axiom $\mathrm{(TA1)}$ the apartment system $\App$ is uniquely determined by $\partial_\App T$.

\begin{remark}\label{Rem_ends}
Note that the building at infinity $\partial_\App T$ of a tree $T$ with sap might be smaller than the set of all ends. An example is given in figure~\ref{Fig_ends}. Let $\App$ be the set of ``vertical'' lines, i.e. the set of all lines which are unions of two rays starting at the same point, one contained in an equivalence class $a_i$, the other in an equivalence class $b_j$. Then $x$ and $y$ are not contained in $\partial_\App T$ but they are ends of $(T,\App)$ in the sense of Definition~\ref{Def_end}. In fact the building at infinity strongly depends on $\App$.
\end{remark}

\begin{figure}[htbp]
\begin{center}
	\resizebox{!}{0.2\textheight}{\input{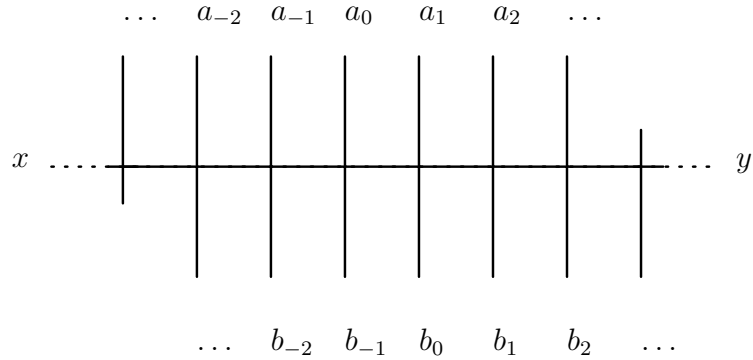}}
	\caption[The boundary of a tree with sap depends on the system of apartments.]{If $\App$ is the set of ``vertical'' lines then $\partial_\App T$ equals $\{a_i, b_i\vert\; i\in\Z\}$ which is a proper subset of the set of ends of $T$ which is the set $\partial_\App T\cup \{x,y\}$.}
	\label{Fig_ends}
\end{center} 
\end{figure}

An important tool is the following \emph{base change functor}. Given a morphism of ordered abelian groups $\Lambda$ and $\Lambda'$ this functor associates to each $\Lambda$-tree a $\Lambda'$-tree. A proof can be found on p. 70ff. of \cite{Chiswell}.

\begin{prop}\label{Prop_baseChangeTrees}
\index{{base change}!{$\Lambda$-trees}}
Let $e:\Lambda\rightarrow \Lambda'$ be a homomorphism of ordered abelian groups and let $(T,d)$ be a $\Lambda$-tree, then there exists a $\Lambda'$-tree $(T',d')$ and a map $\phi:T\rightarrow T'$ satisfying 
$$d'(\phi(x),\phi(y))=e(d(x,y))$$ 
for all $x,y\in T$. 

Furthermore, if $(T'',d'')$ is another $\Lambda'$-tree and $\varphi: T\rightarrow T''$ a map satisfying 
$$d''(\varphi(x),\varphi(y))=e(d(x,y)) $$
for all $x,y\in T$, then there is a unique $\Lambda'$-isometry $\mu:T'\rightarrow T''$ such that $\mu\circ\varphi=\phi$. 
\end{prop}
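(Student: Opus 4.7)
The plan is to construct $T'$ in two stages that mirror the canonical factorization $\Lambda\twoheadrightarrow e(\Lambda)\hookrightarrow \Lambda'$. First I would define a relation $\sim$ on $T$ by $x\sim y$ iff $e(d(x,y))=0$. Reflexivity and symmetry are immediate; transitivity follows from $0\le e(d(x,z))\le e(d(x,y))+e(d(y,z))=0$ using the triangle inequality and monotonicity of $e$ on non-negative elements. Set $T'_1 := T/{\sim}$, let $\phi_1:T\to T'_1$ be the quotient map, and define $d'_1([x],[y]):= e(d(x,y))$. Well-definedness is another triangle inequality argument, and the result is an $e(\Lambda)$-valued metric.

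Next I would verify that $(T'_1,d'_1)$ is an $e(\Lambda)$-tree. For axiom (T1) applied to $[x],[y]\in T'_1$, take the unique $\Lambda$-isometry $f:[0,d(x,y)]_\Lambda\to T$ given by (T1) in $T$; since $f$ is a $\Lambda$-isometry, $f(\lambda_1)\sim f(\lambda_2)$ iff $e(|\lambda_1-\lambda_2|)=0$, so $\phi_1\circ f$ descends to a well-defined $e(\Lambda)$-isometry $[0,d'_1([x],[y])]_{e(\Lambda)}\to T'_1$. Uniqueness transfers from the $\Lambda$-case. Axioms (T2) and (T3) transfer by projecting the relevant $Y$-points and concatenations through $\phi_1$ and invoking their $\Lambda$-versions.

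Stage two extends the $e(\Lambda)$-tree $T'_1$ to a $\Lambda'$-tree by formally filling in the missing interior points of every geodesic. I would form the disjoint union $\bigsqcup_{[x],[y]\in T'_1}[0,d'_1([x],[y])]_{\Lambda'}$, glue endpoints to $T'_1$, and quotient by the equivalence forced by (T2) in $T'_1$: whenever the $Y$-point $w$ of $[x],[y],[z]$ lies in $T'_1$, the two freshly inserted subsegments of $[\phi_1(x),\phi_1(y)]$ and $[\phi_1(x),\phi_1(z)]$ running from $\phi_1(x)$ to $\phi_1(w)$ are identified isometrically. Equip the resulting set $T'$ with the natural path-length $\Lambda'$-metric and set $\phi:=\iota\circ\phi_1$ for $\iota:T'_1\hookrightarrow T'$; then (T1) is immediate from the construction, (T3) is a straightforward concatenation, and (T2) reduces to the $e(\Lambda)$-case for $T'_1$ after locating which filled interval each of the three points lies in. For the universal property, the hypothesis $d''(\varphi(x),\varphi(y))=e(d(x,y))$ forces $\varphi$ to be constant on $\sim$-classes and hence to factor as $\bar{\varphi}\circ\phi_1$; the $\Lambda'$-geodesics in $T''$ joining images of points of $T'_1$ produced by (T1) in $T''$ then extend $\bar{\varphi}$ uniquely across each filled-in interval, yielding the required $\mu:T'\to T''$, whose uniqueness is forced by the isometry condition on each geodesic.

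The main obstacle will be Stage two, specifically verifying the $Y$-condition (T2) in $T'$: when the three pairwise $Y$-points of triples $([x],[y],[z])$ involve freshly inserted interior points on different filled-in intervals, one must check that the three meeting-points coincide in $T'$ as a single honest $Y$-point. This is the combinatorial core of the argument and requires careful bookkeeping of subintervals, but it reduces cleanly to (T2) for $T'_1$ combined with the convexity of sub-intervals of $[0,d'_1([x],[y])]_{\Lambda'}$.
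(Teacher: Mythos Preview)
The paper does not supply its own proof of this proposition; it simply refers the reader to Chiswell's book (p.~70ff.\ of \cite{Chiswell}). Your two-stage plan---quotient by the kernel of $e$ to obtain an $e(\Lambda)$-tree, then fill in the missing $\Lambda'$-points of each segment---is precisely the standard construction carried out there, so you are on the same path as the cited reference rather than diverging from it.

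Two small points are worth flagging. First, the composition in the paper's statement is a typo: with $\phi:T\to T'$, $\varphi:T\to T''$, and $\mu:T'\to T''$, the only sensible equation is $\mu\circ\phi=\varphi$, which is what you actually argue. Second, and relatedly, the paper's convention is that an ``isometry'' is surjective, whereas $\mu$ need only be an isometric embedding (nothing forces $T''$ to be minimal); your construction of $\mu$ by extending across filled intervals produces exactly an isometric embedding, which is the correct conclusion. Beyond that, your identification of the (T2) verification in the filled tree as the delicate step is accurate, and your reduction to (T2) in $T'_1$ together with interval convexity is the right mechanism; Chiswell handles this by working with the explicit description of points as pairs $(\text{segment},\text{parameter})$ modulo the $Y$-identifications you describe.
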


\begin{remark}
Note that $C=ker(e)$ is a convex subgroup of $\Lambda$ in the sense that for any $k,l\in C$ the sum $l+k$ is also contained in $C$. In a certain sense $T'$ is the quotient of $T$ modulo $C$.
\end{remark}

\begin{notation}\label{Not_kappa}
\index{$\kappa(a,b,c)$}
Let $(T,\App)$ be a tree with sap. Any triple $(a,b,c)$ of pairwise distinct $\App$-ends uniquely determines three  apartments $[ab], [ac]$ and $[bc]$. The intersection of these apartments is a hyperplane in a line, which is the unique point contained in all three apartments. We denote this point by $\kappa(a,b,c)$. 
\end{notation}

The aim of the following is to give data that will allow us to reconstruct a tree from $\partial_\App T$ plus certain additional information. This construction is used in the proof of Theorem~\ref{Thm_trees}. Compare Section~\ref{Sec_prooftrees}.

\begin{definition}
Let $E$ be a set. The pair $(E,\wedge)$ is called \emph{rooted tree datum} if $\wedge$ is a map 
$$ E\times E \longrightarrow \Lambda \cup \{\pm \infty\},\;\; (x,y) \longmapsto x\wedge y$$
such that for all $x,y,z\in E$ the following axioms are satisfied
\begin{description}
\item[(RT0)] $x\wedge y \geq 0$
\item[(RT1)] $x\wedge y = y\wedge x$
\item[(RT2)] $x\wedge z \geq \min\{x\wedge y, y\wedge z\}$.
\end{description}
\end{definition}

\subsubsection*{The Alperin-Bass construction}

The following construction will imply that a rooted tree datum $(E,\wedge)$ gives rise to a unique tree with sap having $E$ as its set of ends. Note, that the Alperin-Bass tree for an arbitrary rooted tree datum datum could have leaves. 
The construction provided is more general than needed in our case. Proofs can be found in \cite{AlperinBass}.

Let $(E, \wedge)$ be a rooted tree datum. We construct a tree using $(E, \wedge)$. It is easy to verify, that if $x\wedge x=\infty$ for all $x\in E$ the construction gives a tree without leaves. 
We define 
$$\Sigma = \Sigma(E,\wedge)= \{(x,t)\in E\times \Lambda \;\;\vert\;\; 0\leq t\leq x\wedge x\}.$$

We define a distance on $\Sigma$ by setting 
$$
d((x,s), (y,t)) = 
\left\{\begin{array}{ll} 
	\vert s-t \vert & \text{ if } s \text{ or } t \leq x\wedge y\\
	\vert s- x\wedge y\vert + \vert t- x\wedge y \vert & \text{ if } s \text{ or } t \geq x\wedge y .
      \end{array}
\right. 
$$

Note, that if $s\leq x\wedge y\leq t$, or $t\leq x\wedge y\leq s$, then $\vert s-t \vert = \vert s- x\wedge y\vert + \vert t- x\wedge y \vert$ and the two cases coincide.

See \cite[p. 302]{AlperinBass} for a proof of the following lemma.
\begin{lemma}
The function $d$ defined above is a pseudo metric on $\Sigma$.
\end{lemma}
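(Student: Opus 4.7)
The plan is to verify in turn well-definedness, non-negativity, reflexivity, symmetry, and then the triangle inequality, with the last being the only step that requires real work.

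First I would check that the two cases in the definition of $d$ agree on their overlap (namely when $s=x\wedge y$ or $t=x\wedge y$, or when $s$ and $t$ lie on opposite sides of $x\wedge y$). Non-negativity is immediate from the appearance of absolute values in both branches. For reflexivity, note that for any $(x,s)\in\Sigma$ one has $s\leq x\wedge x=x\wedge y$ with $y=x$, placing us in the first branch with value $|s-s|=0$. Symmetry follows at once from axiom (RT1) together with the symmetry of the absolute value.

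The substantive step is the triangle inequality. The key observation I would use is that both branches of the definition can be written uniformly as
\begin{equation*}
d\bigl((x,s),(y,t)\bigr) \;=\; s + t - 2\min\{s,\,t,\,x\wedge y\}.
\end{equation*}
This identity is routine to verify by separating the three subcases $s\leq x\wedge y$ with $s\leq t$, $t\leq x\wedge y$ with $t\leq s$, and $s,t\geq x\wedge y$. Given points $(x,s),(y,t),(z,u)\in\Sigma$, set $a=x\wedge y$, $b=y\wedge z$, $c=x\wedge z$. Axiom (RT2) forces the ultrametric-type condition that the minimum of $\{a,b,c\}$ is attained at least twice; after relabeling $x$ and $z$ if necessary, we may therefore assume $c\leq a$ and $c\leq b$. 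Plugging the unified formula into the desired inequality $d((x,s),(z,u))\leq d((x,s),(y,t))+d((y,t),(z,u))$ and simplifying, the problem reduces to showing
\begin{equation*}
\min\{s,t,a\} + \min\{t,u,b\} - t \;\leq\; \min\{s,u,c\}.
\end{equation*}

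The main (and essentially only) obstacle is a careful bookkeeping of cases in this last inequality, according to which of $s,t,u,a,b,c$ realize the minima on the left and which realize the minimum on the right. Since $c\leq a$ and $c\leq b$, one always has $\min\{s,t,a\}\leq\min\{s,t,c\}+(a-c)$ type estimates, and in each of the finitely many sign/order configurations either $c$ is the global minimum on the right (in which case the left hand side is bounded by $c$ using $\min\{s,t,a\}\leq t$ and $\min\{t,u,b\}\leq t+c\ldots$ — concretely: $\min\{t,u,b\}\leq b$ and hence $\leq t$ unless $t>b\geq c$, etc.), or one of $s,u$ realizes the minimum on the right and a direct bound suffices. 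This is the only place with genuine combinatorial content; once completed, we have a pseudo metric as claimed. (A fully spelled-out case analysis along these lines is carried out in \cite{AlperinBass}.)
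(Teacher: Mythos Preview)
The paper does not actually prove this lemma; it simply refers the reader to \cite[p.~302]{AlperinBass}. Your sketch goes further than the paper does, and the unified formula $d((x,s),(y,t))=s+t-2\min\{s,t,x\wedge y\}$ together with the reduction of the triangle inequality to $\min\{s,t,a\}+\min\{t,u,b\}-t\le\min\{s,u,c\}$ is exactly the standard route (and is what one finds in Alperin--Bass). So you are aligned with the paper's intended source.

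One small correction: your ``WLOG $c\le a$ and $c\le b$'' is not achievable by swapping $x$ and $z$ alone, since that swap exchanges $a$ and $b$ but fixes $c$; the case $a=b<c$ survives. This is harmless, because in that case replacing $c$ by $a$ only shrinks the right-hand side $\min\{s,u,c\}$, so proving the inequality with $a=b=c$ suffices. With that tweak your outline is fine; the remaining case analysis is indeed routine and, as you note, written out in \cite{AlperinBass}.
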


We introduce an equivalence relation on $\Sigma$.
\begin{definition}
Two points $(x,s)$ and $(t,y)$ in $\Sigma(E,\wedge)$ are called \emph{equivalent}, denoted by $(x,s)\sim (t,y)$, if $d((x,s),(t,y))=0$. Denote the equivalence class of $(x,s)$ by $\lb x,s \rb$.
\end{definition}

See \cite{AlperinBass} for the proof of the fact that $\sim$ is indeed an equivalence relation.

\begin{definition}
Let $T(E,\wedge)= \Sigma\diagup\sim$ and denote $\lb x,0\rb$ with $o_T$. Define a metric $d_T$ on $T$ by $d_T(\lb x,s\rb, \lb y,t\rb)\define d((x,s),(y,t))$. The space $T(E,\wedge)$ is called \emph{Alperin-Bass tree of $(E,\wedge)$}.
\end{definition}

As it was proven in \cite{AlperinBass} the space $T(E,\wedge)$ is a $\Lambda$-tree with $\Lambda$-valued metric $d_T$ in the sense of Definition \ref{Def_lambdatree}. 


\begin{thm}[Universal property]
Let $(E, \wedge)$ be a rooted tree datum and $(S,o_S)$ a rooted tree. Denote its set of ends by $F$ and let $\psi: E \mapsto F$ be a map satisfying $\psi(x)\wedge_S\psi(y) = x\wedge y$ for all $x,y\in E$. 
Then there exists a unique metric morphism $\hat{\psi}: T(E,\wedge) \rightarrow S$ such that $\hat{\psi}(o_T)=o_S$.
\end{thm}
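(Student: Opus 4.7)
The plan is to exhibit an explicit candidate for $\hat\psi$ using the rays of $(S,o_S)$, to verify directly that it is a well-defined isometry with the required property, and then to argue uniqueness from the fact that every point of $T(E,\wedge)$ lies on some ray from $o_T$ of a prescribed type.

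First I would construct the map. Since $(S,o_S)$ is a rooted tree, for every end $f\in F$ there is a unique ray $R_f\colon [0,\ell_f]_\Lambda\to S$ starting at $o_S$ and representing $f$, where $\ell_f\in\Lambda\cup\{+\infty\}$. For every $x\in E$ the hypothesis $\psi(x)\wedge_S\psi(x)=x\wedge x$ forces $\ell_{\psi(x)}=x\wedge x$, so I can set
\[
\hat\psi(\lb x,s\rb)\define R_{\psi(x)}(s)\quad\text{for }(x,s)\in\Sigma(E,\wedge).
\]
To see that this is well-defined on $\sim$-classes, suppose $(x,s)\sim(y,t)$, i.e.\ $d((x,s),(y,t))=0$. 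An inspection of the two cases in the definition of $d$ shows that in both cases $s=t$ and $s\leq x\wedge y$. By assumption $x\wedge y=\psi(x)\wedge_S\psi(y)$, so the rays $R_{\psi(x)}$ and $R_{\psi(y)}$ agree on the initial segment of length $x\wedge y$; hence $R_{\psi(x)}(s)=R_{\psi(y)}(t)$.

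Next I would check that $\hat\psi$ is distance-preserving. Fix $\lb x,s\rb,\lb y,t\rb$ and set $a\define x\wedge y=\psi(x)\wedge_S\psi(y)$. Since $R_{\psi(x)}$ and $R_{\psi(y)}$ share the initial segment of length $a$ and diverge afterwards, the tree axioms in $(S,d_S)$ give the two-case formula
\[
d_S\bigl(R_{\psi(x)}(s),R_{\psi(y)}(t)\bigr)=\begin{cases}\vert s-t\vert & \text{if }s\leq a\text{ or }t\leq a,\\ \vert s-a\vert+\vert t-a\vert & \text{if }s\geq a\text{ and }t\geq a,\end{cases}
\]
which is exactly the definition of $d((x,s),(y,t))$ on $\Sigma$. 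Passing to the quotient $T(E,\wedge)=\Sigma/\!\sim$ yields $d_S(\hat\psi(\lb x,s\rb),\hat\psi(\lb y,t\rb))=d_T(\lb x,s\rb,\lb y,t\rb)$. Moreover $\hat\psi(o_T)=\hat\psi(\lb x,0\rb)=R_{\psi(x)}(0)=o_S$.

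Finally I would address uniqueness, which is the only subtle point. Let $\hat\psi'\colon T(E,\wedge)\to S$ be any metric morphism with $\hat\psi'(o_T)=o_S$ extending $\psi$ at infinity in the sense that the image of the ray $\{\lb x,s\rb : 0\leq s\leq x\wedge x\}$ represents $\psi(x)$. Because $S$ is a tree, such a distance-preserving map sends this ray to the unique ray in $S$ starting at $o_S$ and representing $\psi(x)$, namely $R_{\psi(x)}$; being isometric, it must agree with $s\mapsto R_{\psi(x)}(s)$ on the ray. Since every point of $T(E,\wedge)$ lies on some such ray, this forces $\hat\psi'=\hat\psi$. The main obstacle in the argument is precisely pinning down this compatibility with $\psi$ at infinity; once it is fixed, the tree axioms (T1)--(T3) in $S$ make the remaining verifications routine.
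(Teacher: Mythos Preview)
The paper does not supply its own proof of this universal property; it states the theorem and defers to Alperin--Bass \cite{AlperinBass} for the argument. Your construction---define $\hat\psi(\lb x,s\rb)\define R_{\psi(x)}(s)$ via the unique ray in $S$ from $o_S$ toward $\psi(x)$, then verify well-definedness and the distance formula case-by-case---is exactly the standard route and is correct as written.

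One remark on uniqueness, which you already flag: the theorem as stated in the paper asserts uniqueness of $\hat\psi$ subject only to $\hat\psi(o_T)=o_S$, with no explicit mention of compatibility with $\psi$ on ends. Taken literally this is too strong (any root-fixing automorphism of $S$ would violate it), so the intended statement---and the one actually proved in Alperin--Bass---is uniqueness among metric morphisms that additionally send the ray $\{\lb x,s\rb : s\geq 0\}$ into a ray representing $\psi(x)$ for each $x\in E$. Your sketch handles precisely this corrected version, and your identification of the gap is on point.
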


Theorem~\ref{Thm_trees} is a generalization of a theorem by Tits for $\R$-trees saying that a metric tree with sap is uniquely determined by a projective valuation of its building at infinity.

\begin{definition}\label{Def_projval}
Let $E$ be a set and denote by $\Evier$ the set of ordered quadruples in $E$. A \emph{projective valuation $\val$} on $E$ is a map $\val: \Evier \longrightarrow \Lambda$ such that 
\begin{description}
\item[(PV1)] $\val(a,b;c,d)=\val(c,d;a,b)=-\val(a,b;d,c)$ 
\item[(PV2)] $\val(a,b;c,d)=k>0 \Rightarrow \val(a,d;c,b)=k \text{ and } \val(a,c;b,d)=0$ 
\item[(PV3)] $\val(a,b;d,e)+\val(b,c;d,e)=\val(a,c;d,e)$.
\end{description}
\end{definition}

\begin{definition}\label{Def_canval}
Associated to a tree $(T,\App)$ with sap there is its \emph{canonical valuation} $\val_T$, which is obtained as follows. Denote by $d$ the $\Lambda$-metric on $T$. Choose pairwise distinct $\App$-ends $a,b,c,d$ of $T$ and let $x=\kappa(a,b,c)$ and $y=\kappa(a,b,d)$. Then define 
$$\val_T(a,b;c,d)=
\left\{\begin{array}{ll} 
	d(x,y) & \text{ if } y\in \overrightarrow{x b}\\
	-d(x,y) & \text{ if } y\notin \overrightarrow{x b}.
      \end{array}
\right. 
$$
Bennet proved in \cite{BennettDiss} that $w_T$ is a projective valuation in the sense of Definition~\ref{Def_projval} on $\partial_\App$ of $T$. We call $w_T$ the \emph{canonical valuation of $T$}. 
\end{definition}

\begin{thm}{\cite[ Theorem 4.4]{BennettDiss}}\label{Thm_trees}
Given a $\Lambda$-valued projective valuation $\val$ on a set $E$. Then there exists a $\Lambda$-tree $T=T(\val, E)$ with sap and a one to one correspondence from $E$ to $\partial_\App T$ under which $\val$ corresponds to the canonical valuation $\val_T$ on $E$ arising from $T$.
\end{thm}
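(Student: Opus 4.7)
The plan is to reduce the theorem to the Alperin-Bass construction recalled just above, by extracting from the projective valuation $\omega$ a rooted tree datum on (essentially) $E$. First I would dispose of the degenerate cases $|E|\leq 2$ directly, letting $T$ be a single line or point. Assume then $|E|\geq 3$. Fix a distinguished element $o\in E$, which will play the role of a root at infinity, together with an auxiliary $e\in E\setminus\{o\}$, which fixes a reference origin on the eventual $(o,e)$-axis. For distinct $x,y\in E\setminus\{o\}$ with $x,y\neq e$ I would set
$$x\wedge y:=\omega(o,e;x,y),$$
suitably extended so that $x\wedge y\geq 0$ (possibly by adding a normalization term depending only on the choice of $o$ and $e$, which does not affect the geometry), $x\wedge x:=+\infty$, and with the obvious convention when $x=e$ or $y=e$. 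The geometric picture is that $x\wedge y$ should measure the height of the median $\kappa(o,x,y)$ above $e$ along the $(o,e)$-axis in the tree we are constructing.

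Next I would check that $(E\setminus\{o\},\wedge)$ satisfies (RT0)--(RT2). Symmetry (RT1) is immediate from (PV1). Nonnegativity (RT0) is built into the normalization. The ultrametric inequality (RT2), $x\wedge z\geq\min\{x\wedge y,\, y\wedge z\}$, is a direct translation of (PV3) in combination with (PV2): given any three elements $x,y,z$, axiom (PV2) forces two of the three relevant cross-ratios with respect to $(o,e)$ to be equal and the third to be at least as large, which is precisely the content of (RT2).

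Applying the Alperin-Bass construction to $(E\setminus\{o\},\wedge)$ then produces a $\Lambda$-tree $T_0$ together with a base point $o_{T_0}=\langle e,0\rangle$ whose set of ends is in canonical bijection with $E\setminus\{o\}$. To obtain the full tree $T$, I would graft on a single ray in the "upward" direction corresponding to $o$: concretely, let $T=T_0\cup L$ where $L$ is the unique maximal ray based at $o_{T_0}$ along the direction determined by $e$, together with its isometric extension to an end $o$. Declare $\App$ to be the collection of all lines $[a,b]$ with $a\neq b$ in $E$. Then (TA1) follows from the universal property of the Alperin-Bass tree (any two points lie in some line joining two ends), and (TA2) from the construction (each pair of $\App$-ends is realized by exactly one line), so $(T,\App)$ is a tree with sap whose boundary $\partial_\App T$ is canonically in bijection with $E$.

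Finally I would verify that under this bijection the canonical valuation $\omega_T$ defined in \ref{Def_canval} agrees with the given $\omega$. By (PV1) and (PV3), both valuations are determined by their values on quadruples of the form $(o,e;x,y)$, and for such quadruples the formula defining $\omega_T$ unwinds, via the description of $\kappa$ in terms of the Alperin-Bass metric, to exactly the defining expression $\omega(o,e;x,y)$ for $x\wedge y$. The main obstacle I expect is the bookkeeping in the last two steps, in particular correctly grafting $o$ as a genuine $\App$-end (not merely a tree-theoretic end, cf.\ Remark~\ref{Rem_ends}) and tracking signs so that the recovered valuation matches $\omega$ rather than some sign- or shift-modified variant; once the rooted tree datum is correctly set up, however, the remaining verifications reduce to systematic applications of (PV1)--(PV3).
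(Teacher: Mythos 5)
There is a genuine gap, and it sits at the heart of your construction: the function you propose, $x\wedge y:=\val(o,e;x,y)$, cannot be a rooted tree datum. By axiom $(PV1)$ the projective valuation is \emph{antisymmetric} in its last two arguments, $\val(o,e;x,y)=-\val(o,e;y,x)$, so your claim that $(RT1)$ is ``immediate from (PV1)'' is exactly backwards; adding a constant ``normalization term depending only on $o$ and $e$'' can neither restore symmetry nor force nonnegativity, since these values are unbounded below in $\Lambda$. More fundamentally, a pair of ends $(o,e)$ only determines an axis, not a base point, and the single cross-ratio $\val(o,e;x,y)$ only records the signed distance between the branch points $\kappa(o,e,x)$ and $\kappa(o,e,y)$ \emph{on that axis}. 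If $x$ and $y$ leave the $(o,e)$-axis at the same point and separate from each other only later, this cross-ratio is $0$ while the Gromov-product quantity the Alperin--Bass construction needs (the distance from the root to where the rays toward $x$ and $y$ diverge) is strictly positive; so no function of $\val(o,e;\cdot,\cdot)$ alone can encode the required data. This is precisely why the paper's construction (Definition~\ref{Def_rtd}) fixes \emph{three} reference ends $a_1,a_2,a_3$, whose median $\kappa(a_1,a_2,a_3)$ serves as the root, first determines via the 3-point Lemma~\ref{3ptlemma} the even permutation $(i_a,j_a,k_a)$ locating the ``sector'' in which $a$ branches off the tripod, and only then sets $a\wedge b=\max\{0,\val(a_{i_a},a;a_{j_a},b)\}$, with special conventions for the reference ends themselves; the verification of $(RT1)$--$(RT2)$ then genuinely needs the stability statement of Lemma~\ref{tec6}, not a direct appeal to $(PV2)$--$(PV3)$.

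A secondary, smaller issue: even granting a correct rooted tree datum, the final step is not the formal unwinding you describe. Identifying the canonical valuation $\val_T$ of the Alperin--Bass tree with $\val$ requires locating the root $\kappa(a_1,a_2,a_3)$ relative to the branch points $\kappa(a_i,a,a_j)$ and $\kappa(a_i,b,a_j)$ and handling the resulting sign cases; in the paper this occupies Lemmas~\ref{Lem_tec8} and~\ref{tec20} and the case analysis in the proof of Proposition~\ref{Prop_valuations}. Also note that no ``grafting'' of a ray for a distinguished end is needed: once all of $E$ enters the construction symmetrically, every element of $E$, including the reference ends, is realized as an $\App$-end of $T$ via the lines $\overleftrightarrow{ab}$, which is how axiom $\mathrm{(TA2)}$ and the bijection $E\to\partial_\App T$ are obtained.
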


We included a detailed proof of the above theorem in Section~\ref{Sec_prooftrees}. The main idea is as follows. One uses $(E, \val)$ to construct a tree $T$. First a rooted tree datum is defined; the Alperin-Bass construction will then give a tree $T$. Finally we have to prove that the canonical projective valuation $\val_T$ arising from $T$ equals $\val$. The outline of the given proof is due to Bennett, \cite[Chapter 4]{BennettDiss}. We correct some minor inaccuracies and try to give a complete proof that is easy to read.  None of the proofs are literally the same as the ones given in \cite{BennettDiss}, but almost all are based on the calculations there. If not, we will mention it separately.
 
\begin{remark}
Theorem \ref{Thm_trees} and Bennett's proof of it in \cite{BennettDiss} was also discussed in Chapter 3 of the PhD thesis by Koudela \cite{Koudela}. She claims, that the theorem as stated in \cite{BennettDiss} is not true and that in general the correspondence between $E$ and the set of ends of $T$ is not one to one, but that the best that can be done is an embedding of $E$ in the ends of $T$. A remark concerning this task is made in \cite{ChiswellKoudela} as well. 

The theorem Koudela proves, making a few remarks on the proof of Bennett, is

\begin{itemize}
  \item[] \cite[Theorem 3.5]{Koudela}\\
Given a projective valuation $\omega$ on a set $E$, we can find a $\Lambda$-tree $T$ such that $E$ can be embedded in the set of ends of $T$ and $\omega$ is the projective valuation arising from $T$.
\end{itemize}

On the first glance it seems that we have mutually contradictory statements, however both theorems are correct. The solution is quite simple and is buried in the definition of an end of a tree. Bennett formulates Theorem \ref{Thm_trees} for trees with sap. The definition of an ``end'' of a tree, as made in \cite{BennettDiss}, corresponds to an $\App$-end in the sense of Definition~\ref{Def_end}. On the contrary ``ends'' in \cite{Koudela} or \cite{ChiswellKoudela} are precisely what we call an end. Therefore the difference between the two assertions is precisely owed to the fact that in general $\partial_\App T \subsetneq \{ \text{ends of }T\}$, compare Remark~\ref{Rem_ends}.
\end{remark}

\subsubsection*{Panel- and wall-trees}

Associated to an affine building there are two classes of trees encoding the panel- and wall structure of the affine building. 

Let $H$ and $H'$ be parallel hyperplanes. By Corollary 3.11 of \cite{Bennett} 
there exists a chart $f\in\App$, values $k,k'\in\Lambda$ and $\alpha\in\RS^+$ such that $f^{-1}(H)=H_{\alpha, k}$ and $f^{-1}(H')=H_{\alpha, k'}$. Use this fact to define a distance between hyperplanes and panels of Weyl chambers as follows:

\begin{definition}\label{Def_wall-panelDistance}
Given two hyperplanes $H$ and $H'$. With the above notation the \emph{distance} from  $H$ to $H'$ is defined to be $\vert k-k'\vert$.
For parallel panels $P$ and $P'$ choose parallel hyperplanes $H$ and $H'$ containing sub-panels of $P$,  respectively $P'$.   
Let the distance between $P$ and $P'$ be the distance from $H$ to $H'$.
\end{definition}

Note that the proof of the following proposition directly carries over to affine buildings modeled on a non-reduced root system $\RS$.

\begin{prop}{ \cite[Prop. 3.14]{Bennett}}\label{Prop_wallTree}
\index{wall tree}
Let $(X,\App)$ be an affine building of type $\MS(\Lambda, \RS)$ and dimension at least two. The set of hyperplanes in $X$ belonging to a given parallel class $m\define \partial H$ make up the points of an affine building $(T_{m},\App_{T_{m}})$ of dimension one (i.e. a $\Lambda$-tree with sap) with apartments in one to one correspondence with the apartments of $X$. Moreover the $\App_{T_{m}}$-ends of this tree are in one-to-one correspondence with the half-apartments of $\partial X$ having $m$ as boundary. 
These trees are called \emph{wall trees} of $\partial X$.
\end{prop}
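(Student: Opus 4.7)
The plan is to realize $T_m$ as follows. Its points are all hyperplanes of $X$ in the parallel class $m$, with the $\Lambda$-distance of \ref{Def_wall-panelDistance}. For each apartment $A=f(\MS)$ of $X$ whose boundary contains $m$, I fix $\alpha\in\RS^+$ with $f(H_{\alpha,0})\in m$ and set
\[
L_A=\{f(H_{\alpha,k}):k\in\Lambda\}\subset T_m.
\]
By Corollary~3.11 of \cite{Bennett}, the set $L_A$ depends only on $A$ (not on the chart or on $\alpha$), and the map $k\mapsto f(H_{\alpha,k})$ is a $\Lambda$-isometry $\Lambda\to L_A$. Define $\App_{T_m}=\{L_A:A\in\App,\; m\subset\partial A\}$. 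These will be the lines of the tree with sap, and the correspondence $A\leftrightarrow L_A$ will be the bijection claimed between apartments of $X$ (with $m\subset\partial A$) and apartments of $T_m$.

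Axiom (T1) is now essentially a restatement of the aforementioned corollary: given $H_1,H_2\in m$, a single chart realizes both as $f(H_{\alpha,k_1})$ and $f(H_{\alpha,k_2})$, hence they both lie in a common line $L_A$, and the restriction $[k_1,k_2]_\Lambda\to L_A$ is the required isometry. Uniqueness of the isometry follows from axiom (A2). The main obstacle is the Y-condition (T2). Given three parallel hyperplanes $H_1,H_2,H_3$, choose apartments $A_{ij}$ ($i\neq j$) containing $H_i$ and $H_j$; by (A2) each intersection $A_{1i}\cap A_{1j}$ is a closed convex subset of either apartment and, in particular, its subset of hyperplanes parallel to $H$ is a convex sub-interval of $L_{A_{1i}}\cap L_{A_{1j}}$. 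I would argue that this intersection is of the form $[H_1,H_4]$ for some $H_4\in T_m$ by analysing where the apartments $A_{12}$ and $A_{13}$ first branch along the $m$-direction, using the sundial configuration \ref{Prop_sundial} to produce a bounding apartment and axiom (A6) to pin down the branching hyperplane. The triangle $A_{12},A_{13},A_{23}$ then forces the same $H_4$ to be the branching hyperplane for any two of the three pairs, yielding (T2). Axiom (T3) follows by the same type of reasoning: if $[H_1,H_2]\cap[H_2,H_3]=\{H_2\}$, the apartments $A_{12}$ and $A_{23}$ branch exactly at $H_2$, so the concatenation lies in a single apartment $A_{13}$ provided by the sundial and reproduces $[H_1,H_3]$.

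The axioms for a tree with sap and the remaining correspondences are then straightforward. Axiom (TA1) is again Corollary~3.11 of \cite{Bennett}. For (TA2) and the description of the boundary, observe that each end of $L_A$ in $T_m$ corresponds, under $L_A\cong\Lambda$, to a ray of parallel hyperplanes in $A$ which at infinity traces out one of the two half-apartments of $\partial A$ bounded by $m$; thus an $\App_{T_m}$-end is canonically identified with a half-apartment of $\partial X$ bounded by $m$. Two such half-apartments are opposite in the spherical building $\partial X$ and therefore lie in a common apartment $\partial A$ by \ref{Prop_buildingAtInfinity}, and the corresponding $L_A\in\App_{T_m}$ is the desired line, proving (TA2). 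This identification also gives the claimed bijection between $\App_{T_m}$-ends and half-apartments of $\partial X$ bounded by $m$, completing the proof.
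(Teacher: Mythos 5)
Note first that the paper does not prove this statement at all: it is imported verbatim from Bennett (\cite[Prop.\ 3.14]{Bennett}), and the surrounding text even stresses that axiom $(A6)$ exists largely to make this construction work. So your proposal cannot be matched against an in-paper argument; judged on its own it has the right skeleton (points = hyperplanes in the class $m$, lines $L_A$ coming from apartments with $m\subset\partial A$, distance as in \ref{Def_wall-panelDistance}), but the heart of the proposition is exactly the part you only gesture at. The verification of (T2) and (T3) is where all the work lies, and the tools you invoke do not apply in the form you use them: Proposition \ref{Prop_A5} and the sundial configuration \ref{Prop_sundial} require three apartments that pairwise intersect in \emph{half-apartments} (respectively a chamber at infinity adjacent to a panel of $\partial A$), whereas your apartments $A_{12},A_{13},A_{23}$ are only known, via $(A2)$, to intersect pairwise in closed convex sets containing one common hyperplane. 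Saying you ``would argue'' that the common part of $L_{A_{12}}$ and $L_{A_{13}}$ is of the form $[H_1,H_4]$ and that the same $H_4$ works for all three pairs is precisely the statement to be proved; for a general ordered abelian group $\Lambda$ one must actually exhibit $H_4$ as a maximum of the set of common hyperplanes (using the description of apartment intersections as finite intersections of half-apartments) and then use $(A6)$ in an honest way to rule out incoherent branching. Related to this, you never verify that $d$ is well defined independently of the chart and satisfies the triangle inequality on $T_m$ (three hyperplanes need not lie in one apartment), nor the uniqueness part of (T1): ``uniqueness follows from $(A2)$'' hides the claim that the set of $m$-hyperplanes between $H_1$ and $H_2$ is the same in every apartment containing both and that every isometric embedding of the interval lands in it.

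The boundary identification is also asserted rather than proved. That an end of $L_A$ corresponds to a half-apartment of $\partial A$ bounded by $m$ is clear inside one apartment, but an $\App_{T_m}$-end is an equivalence class of rays possibly lying in different lines, so you must show that two rays of hyperplanes at bounded distance in $T_m$ determine the same half-apartment of $\partial X$ and conversely; this is where most of Bennett's proof is spent. Finally, your argument for (TA2) rests on the claim that any two distinct half-apartments of $\partial X$ with the same boundary wall $m$ are opposite, i.e.\ that their union is an apartment of $\partial X$. This is true, but it is a nontrivial lemma about spherical buildings (it is essentially the statement that the roots bounded by $m$ form a rank-one building) and cannot be taken for granted; as written, your proof neither proves nor cites it. In short, the outline is the standard one, but (T2)/(T3), the metric well-definedness, and the end/half-apartment bijection are genuine gaps, not routine verifications.
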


A direct consequence of \cite{Bennett}[Cor 3.12] is the analog of \ref{Prop_wallTree} for panels.

\begin{corollary}{ \cite[Cor. 3.15]{Bennett}}
Let $(X,\App)$ be an affine building of type $\MS(\Lambda, \RS)$ and dimension at least two. The set of panels of Weyl chambers in $X$ belonging to a given parallel class $p\define \partial P$ make up the points of an affine building $(T_{p},\App_{T_{p}})$ of dimension one (i.e. a $\Lambda$-tree with sap) with apartments in one to one correspondence with the apartments of $X$. Moreover the $\App_{T_{p}}$-ends of this tree tree are in one-to-one correspondence with the chambers of $\partial X$ containing $p$. In analogy to the wall trees, we call them \emph{panel trees} of $\partial X$.
\end{corollary}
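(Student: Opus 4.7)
The plan is to mirror the construction of the wall tree $(T_m, \App_{T_m})$ from Proposition \ref{Prop_wallTree}, transporting its tree structure across the natural correspondence between parallel panels and their supporting hyperplanes. Fix the parallel class $p = \partial P$; every panel $P'\in p$ lies in a unique hyperplane $H'$, namely its affine span, and all such $H'$ form a single parallel class $m$ of hyperplanes. The assignment $P' \mapsto H'$ is the key link to $T_m$, and by \cite[Cor.~3.12]{Bennett} it is compatible with the panel/hyperplane distance introduced in Definition~\ref{Def_wall-panelDistance}; in particular the distance on $T_p$ is well-defined and inherits its good properties from the wall case.

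First I would define the points of $T_p$ to be the set of all panels of Weyl chambers parallel to $P$, equipped with the metric of Definition~\ref{Def_wall-panelDistance}. For every apartment $A$ of $X$ whose boundary at infinity contains $p$, the collection of panels in $A$ belonging to $p$ is naturally parametrized by $\Lambda$ via the coordinate on $A$ transverse to any hyperplane in $m$; I declare these subsets to be the lines of $\App_{T_p}$. Under the map $P' \mapsto H'$ the lines of $\App_{T_p}$ correspond to the lines of $\App_{T_m}$ coming from the same apartments of $X$, and Proposition~\ref{Prop_wallTree} gives the bijection between lines and apartments.

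Second, I would verify the sap axioms (TA1) and (TA2). For (TA1), given $P_1, P_2 \in T_p$, pick interior points $x_i \in P_i$ and invoke axiom (A3) of Definition~\ref{Def_LambdaBuilding} to obtain an apartment containing both; its associated line contains $P_1$ and $P_2$. For (TA2), two $\App_{T_p}$-ends lift to two chambers of $\binfinity X$ both containing $p$ at infinity; the sundial configuration of \ref{Prop_sundial} together with Proposition~\ref{Prop_A5} produces an apartment of $X$ whose boundary contains both chambers, and its associated line of $\App_{T_p}$ realizes the two prescribed ends. The $\Lambda$-tree axioms (T1)--(T3) are inherited from the Coxeter structure of each apartment and the convexity of intersections guaranteed by (A2), exactly as in \ref{Prop_wallTree}.

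Finally, I would identify the $\App_{T_p}$-ends with chambers of $\binfinity X$ containing $p$: a chamber $c$ containing $p$ determines, in any apartment $A$ with $c \subset \partial A$, a $\Lambda$-indexed ray of panels in $p$ tending to infinity in the direction of $c$, and conversely an end of $T_p$ is represented by such a ray whose supporting Weyl chambers accumulate to a unique chamber at infinity containing $p$. The main obstacle is the verification of (TA2), which requires producing a single apartment of $X$ whose boundary contains two prescribed chambers simultaneously; this is precisely what the sundial machinery of \ref{Prop_sundial} and axiom (A6) are designed to deliver, so the argument reduces cleanly to the wall-tree case once the correspondence $P' \mapsto H'$ is set up.
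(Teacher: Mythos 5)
The paper itself gives no proof of this corollary: it is quoted from \cite[Cor.~3.15]{Bennett} and justified only by the remark that it is ``a direct consequence of \cite[Cor.~3.12]{Bennett}'', as the panel analogue of \ref{Prop_wallTree}. Your strategy of transporting the wall-tree structure along the assignment $P'\mapsto H'$ is therefore in the intended spirit (compare the remark after the corollary, which records the natural isomorphism between the panel tree and the wall trees of walls containing $p$). The genuine gap is the ends statement, which is exactly the content that distinguishes $T_p$ from $T_m$. If you obtain $T_p$ by transport from $T_m$, then \ref{Prop_wallTree} identifies its ends with the half-apartments of $\binfinity X$ having $m$ as boundary, not with the chambers of $\binfinity X$ containing $p$; you still must prove that these two sets correspond bijectively, i.e.\ that every chamber containing $p$ lies in one and only one half-apartment at infinity with boundary wall $m$, or equivalently that a ray of panels ``accumulates to a unique chamber at infinity containing $p$'' independently of the apartment used to see it. Your last paragraph simply asserts this uniqueness; neither \ref{Prop_sundial} nor \ref{Prop_A5} delivers it as stated, and it is precisely the point the paper delegates to Bennett's Cor.~3.12.

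Two further steps need repair. First, $P'\mapsto H'$ is far from injective: distinct panels of class $p$ lying in one and the same hyperplane are parallel to each other and have distance $0$ in the sense of Definition~\ref{Def_wall-panelDistance}, so the distance you place on the set of panels is only a pseudometric, and the panels of class $p$ inside an apartment are parametrized by $\Lambda^n$ rather than $\Lambda$. The tree really lives on asymptote classes of panels (this is how the panel tree is used later, cf.\ \ref{Prop_oek1}, where its points are asymptote classes of codimension-one Weyl simplices), and your write-up never performs this identification. Second, your verification of (TA1) does not work as written: axiom (A3) applied to interior points $x_i\in P_i$ yields an apartment containing the two points, but nothing forces that apartment to contain sub-panels of $P_1$ and $P_2$, nor to have $p$ in its boundary, so it need not define a line of $\App_{T_p}$ at all. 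What is needed here is the fact quoted just before Definition~\ref{Def_wall-panelDistance} from \cite{Bennett} (Cor.~3.11, resp.\ 3.12): parallel hyperplanes, respectively parallel panels, have representatives in a common apartment. Your (TA2) step, by contrast, can be made to work via the sundial, but it follows more directly from axiom (A4) and \ref{Prop_buildingAtInfinity}, since two distinct chambers containing $p$ are $p$-adjacent.
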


\begin{remark}
Note that the panel tree associated to the panel $\partial P$ is naturally isomorphic to all wall trees associated to hyperplanes $\partial H$ of $\partial X$ containing $\partial P$. By standard facts on opposition maps of spherical buildings there are at most two isomorphism types of panel/wall trees associated to a thick spherical building at infinity. There are two different type if ``taking the opposite in an apartment'' is not transitive on all panels of $\partial X$ and just one type otherwise. Compare \cite{Bennett}[Cor 3.17] for a proof of this fact.
\end{remark}

 \newpage
\section{Automorphisms}
\label{Sec_automorphisms}

The main result of this section is Theorem \ref{Thm_iso} which is a generalization of a well known result by Tits in \cite{TitsComo}. 

In the first subsection we explain the concept of bowties and prove the main result in the second. We hope that seeing a building as the collection of its equivalence classes of bowties will have other applications than the given one. It might for example be useful to prove a higher dimensional analog of Proposition~\ref{Prop_baseChangeTrees}.

\begin{remark}
By writing \emph{affine building} in the present section we always mean a generalized affine building in the sense of Definition~\ref{Def_LambdaBuilding}.
\end{remark}

\subsection{The space of bowties}\label{Sec_spaceofbowties}

The term \emph{bowtie} is due to Leeb \cite{Leeb} who refers to B. Kleiner for the main idea. However the basic idea of a bowtie was already used by Tits. In \cite{TitsComo}, Tits described points in an affine building using the building at infinity and points in the wall trees.
We simplified the definition of a bowtie to make it usable in the more general setting of generalized (not necessarily thick) affine buildings.

Let $\RS$ be a root system of rank $n$ and let the associated Coxeter complex be colored by $I=\{1, \ldots, n\}$. 

\begin{definition}\label{Def_bowtie}
\index{bowties}
\index{{bowties}!{space of}}
Let $(X, \mathcal{A})$ be an affine building modeled on $\MS(\RS,\Lambda)$, let $n$ be the rank of $\RS$ and denote by $\Delta=\binfinity X$ the building at infinity of $X$. Assume that the panels of $\Delta$ are consistently colored by $I=\{1,\ldots, n\}$. A \emph{bowtie} in $(X, \mathcal{A})$ is a triple $\tie \;= (c, \hat{c}, \{y_i\}_{i\in I})$ such that
\begin{enumerate}
\item \label{num9} $c$ and $\hat{c}$ are opposite chambers in $\Delta$
\item \label{num10} $y_i$ is a point in the panel tree $T_{p_i}$ of the $i$-panel $p_i$ of $c$.
\end{enumerate}
Denote by $a_\tie$ the unique apartment of $\Delta$ containing $c$ and $\hat{c}$ and by $A_\tie$ the associated affine apartment of $X$. The collection $\Tie$ of all bowties is called the \emph{space of bowties of $X$}.
\end{definition}

\begin{prop}\label{Prop_oek1}
A bowtie $\bowtie$ in an affine building $(X, \mathcal{A})$ determines a unique point $x_\tie$ in the unique apartment $A_\tie$ associated to $\bowtie$.
\end{prop}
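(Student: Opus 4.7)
I will extract $x_\tie$ from $\tie$ in three canonical steps.

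\emph{Step 1 (the apartment).} Since $c$ and $\hat c$ are opposite chambers of the spherical building $\Delta = \binfinity X$, they are contained in a unique common apartment $a_\tie$ of $\Delta$. By Proposition~\ref{Prop_buildingAtInfinity} the apartments of $\Delta$ are in bijection with those of $X$, so $a_\tie$ lifts to a unique affine apartment $A_\tie$ of $X$ with $\partial A_\tie = a_\tie$.

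\emph{Step 2 (from $y_i$ to a hyperplane of $A_\tie$).} By the panel-tree analogue of Proposition~\ref{Prop_wallTree}, the apartment $A_\tie$ corresponds to a unique line (one-dimensional apartment) $L_i$ in the $\Lambda$-tree $T_{p_i}$, whose points are in bijection with the hyperplanes of $A_\tie$ parallel to the wall supporting $p_i$. In any $\Lambda$-tree the nearest-point projection onto a line is well-defined: for any $z \in L_i$ the segment $[y_i, z]$ enters $L_i$ at a unique point $\tilde y_i$, and by the Y-condition (T2) this point $\tilde y_i$ is independent of the choice of $z$. Let $H_i \subset A_\tie$ be the hyperplane corresponding to $\tilde y_i$.

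\emph{Step 3 (intersecting the $H_i$).} Fix a chart $f\colon \MS \to A_\tie$ in $\App$ sending the fundamental Weyl chamber to a representative of $c$, and let $\alpha_i \in \RS$ be the simple root whose wall $H_{\alpha_i, 0}$ corresponds under $f$ to the wall of $c$ carrying $p_i$. Then $H_i = f(H_{\alpha_i, k_i})$ for a unique $k_i \in \Lambda$ determined by $\tilde y_i$. Since $\{\alpha_i\}_{i \in I}$ is a basis of $\RS$, Corollary~\ref{Cor_hyperplaneCoordinates} yields a unique point $x_\tie \in A_\tie$ whose $\alpha_i$-hyperplane coordinate equals $k_i$ for every $i$; equivalently, $\{x_\tie\} = \bigcap_{i \in I} H_i$. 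Every step of the construction is canonical in $\tie$, so $x_\tie$ is the unique point associated to the bowtie.

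The main delicate point is Step~2: one must check that the nearest-point projection onto a line in a $\Lambda$-tree is genuinely well-defined. This is a clean consequence of axioms (T1)--(T3) (in particular, two segments from $y_i$ to distinct points of $L_i$ share a common initial subsegment terminating on $L_i$ by (T2)), but requires a small argument rather than being entirely formal. Steps~1 and~3 are direct applications of already-stated results.
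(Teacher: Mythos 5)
Your overall route is the paper's: each tree point $y_i$ is converted into a hyperplane of $A_\tie$ parallel to the wall of $a_\tie$ containing $p_i$, and $x_\tie$ is the common point of these $n$ hyperplanes, pinned down via the coordinates attached to the basis of $\RS$ determined by $c$ (Step 1 is even part of Definition~\ref{Def_bowtie} rather than of the proof). The one methodological difference is your Step 2: the paper transports $y_i$ to the wall tree $T_{m_i}$ by the canonical isomorphism $\psi_{p_i,m_i}$, so that $y_i$ simply \emph{is} a hyperplane $H_i$ in the parallel class $m_i$, which it then writes as $H_{\alpha_i,k_i}$ inside $A_\tie$; since for an arbitrary point of the tree this hyperplane need not be a wall of $A_\tie$, your nearest-point projection onto the line $L_i$ corresponding to $A_\tie$ is a legitimate way of making that step explicit, and it coincides with the paper's construction whenever $H_i$ does lie in $A_\tie$. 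Your caveat that well-definedness of the projection needs a short argument from (T1)--(T3) is accurate and the argument goes through (the median of $y_i$ and two points of $L_i$ lies on $L_i$ and does not depend on the choice of those points).

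The genuine gap is in Step 3. Corollary~\ref{Cor_hyperplaneCoordinates} only states that a point of $\MS$ is \emph{uniquely determined} by its hyperplane coordinates, i.e.\ it gives that $\bigcap_{i\in I}H_i$ contains at most one point; it does not give the existence of a point with the prescribed coordinates, which is the other half of the claim that the bowtie ``determines a point''. Over $\R$ this would be trivial linear algebra, but over $\Lambda$ it is exactly the content of the paper's proof: writing $x=\sum_j x_j\alpha_j$, one must solve the system $k_i=\sum_j \frac{(\alpha_i,\alpha_i)}{2}\lb \alpha_j,\alpha_i^\vee\rb\, x_j$ with $x_j\in\Lambda$, and this is done by noting that the Cartan matrix $(\lb\alpha_j,\alpha_i^\vee\rb)_{i,j}$ is invertible over the field $F$ of Definition~\ref{Def_modelSpace} (which by assumption contains all evaluations $\lb\beta,\alpha^\vee\rb$), so that its inverse acts on $\Lambda^n$ through the $F$-module structure. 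Your phrase ``since $\{\alpha_i\}_{i\in I}$ is a basis of $\RS$'' gestures at this, but surjectivity of the coordinate map is not contained in the cited corollary and needs to be said; once you add that one line, your proof is complete and essentially the paper's.
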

\begin{proof}
Assume that $\bowtie = (c,\hat{c}, \{y_i\}_{i\in I})$ and let $(X,\App)$ be modeled on $\MS=\MS(\RS, \Lambda)$. For each $i\in I$ the point $y_i$ in the panel tree $T_{p_i}$ is an asymptote class of co-dimension one Weyl simplices. Let $m_i$ be the unique wall of $a_\tie$ containing $p_i$. The panel tree $T_{p_i}$ is canonically isomorphic to the wall tree  $T_{m_i}$ via a map $\psi_{p_i, m_i}$. Hence each representative of $y_i$ is contained in a unique hyperplane $H_i$ of the parallel class $m_i$.
Now fix a chart $f$ of $A_\tie$ and identify $A_\tie$ with the model space $\MS$ via $f$. The chamber $c$ determines a basis $B=\{\alpha_i, i\in I\}$ of $\RS$. Then there exist $k_i\in \Lambda$, for all $i\in I$, such that for all $i\in I$ the hyperplane $H_i$ equals $H_{\alpha_i,k_i}=\{x=\sum_{j=1}^n x_j\alpha_j \in A_\tie : \frac{(\alpha_i,\alpha_i)}{2} \lb x,\alpha_i^\vee\rb =k_i \}$. It remains to prove that $\bigcap_{i=0}^r H_i \neq \emptyset$, which is equivalent to finding a solution to the following system of equations
$$ 
k_i = \sum_{j=1}^n \frac{(\alpha_i,\alpha_i)}{2}\lb \alpha_j,\alpha_i^\vee\rb x_j \;\;\text{ for all } i\in I.
$$
But the system has a unique solution $x_\tie$ since the Cartan matrix $(\lb\alpha_i, \alpha_j^\vee \rb)_{ij}$ is invertible over $\Q(\{\alpha_i, \alpha_j^\vee\}_{i,j\in I})$ and hence over any subfield of $\R$ containing the set $\{\alpha_i, \alpha_j^\vee\}_{i,j\in I}$. 
\end{proof}

In the following an equivalence relation on the set of bowties is defined whose equivalence classes are, as we will prove in Proposition~\ref{Prop_oek5}, in one to one correspondence with the points of $X$. 

\begin{definition}\label{Def_adjacentbowties}
\index{{bowties}!{adjacent}}
Two bowties $\tie=(c, \hat{c},\{y_i\}_{i\in I})$ and $\tie'=(d, \hat{d},\{z_i\}_{i\in I})$ in an affine building $(X,\App)$ are called \emph{adjacent} if 
\begin{enumerate}
\item $c = d$
\item the intersection $\hat{c} \cap \hat{d}$ is a panel of both, and 
\item $y_i = z_i$ for all $i\in I$.
\end{enumerate}
\end{definition}

Figure~\ref{Fig_adj-bowties} gives an example of two adjacent bowties which might well remind you of the sundial configuration illustrated in Figure~\ref{Fig_sundial}.

 \begin{figure}[htbp]
 \begin{center}
 	\resizebox{!}{0.2\textheight}{\input{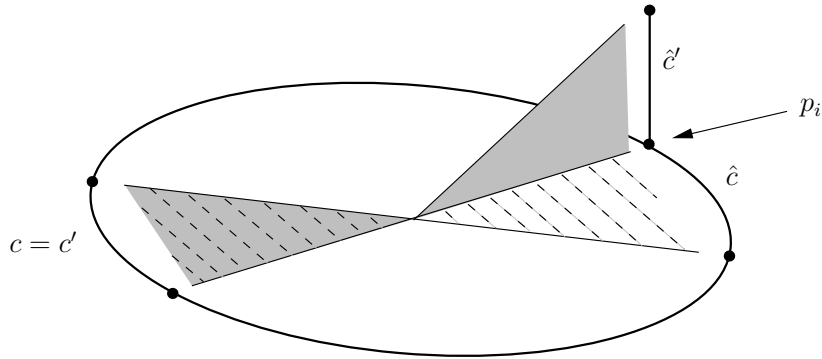}}
 	\caption[adjacent]{The bowties $\tie = (c, \hat{c}, \{y_i\}) $ and $\tie' = (c', \hat{c}', \{y_i'\})$ are adjacent.}
 	\label{Fig_adj-bowties}
 \end{center} 
 \end{figure}

\begin{observation}\label{Obs_W-action}
One can easily observe that there is a natural action of the spherical Weyl group on $\Tie$. Let $\tie$ be a bowtie. Recall, that for any (fixed) apartment containing $x_\tie$ the stabilizer of $x_\tie$ under the full affine Weyl group $\aW$ is isomorphic to the spherical Weyl group $\sW$. Hence given such an apartment $A$, the spherical Weyl group permutes the walls in $A$ containing $x_\tie$. Obviously $\sW$ also acts on the chambers, hyperplanes and panels in $\partial A$.

Let $\tie=(c, \hat{c}, \{y_i\}_{i\in I})$ be a bowtie. The points $y_i\in T_{p_i}$ determine walls $H_i$ in $\App_\tie$ and their images $w.H_i$ of some fixed $w\in \sW$ correspond to points $w.y_i$ in $T_{w.p_i}$. Define
$$
w.\tie = (w.c, w.\hat{c}, \{w.y_i\}_{i\in I}).
$$ 
Let $q_i$ be the unique panel of $\hat{c}$ opposite $p_i$ and denote by  $[p_i, q_i]:T_{p_i}\rightarrow T_{q_i}$ the \emph{perspectivity} map from the panel tree associated to $p_i$ to the tree associated to $q_i$. The obvious \emph{involution} $\iota$ of bowties is given by the multiplication with the longest element $w_0\in\sW$ 
$$
\iota(\tie)= w_0.\tie=(\hat{c}, c, \{[p_i, q_i](y_i)\}).
$$
\end{observation}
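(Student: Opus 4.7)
The claims to be verified are that the formula $w.\tie=(w.c,w.\hat c,\{w.y_i\}_{i\in I})$ defines a group action of $\sW$ on $\Tie$ and that the longest element $w_0$ produces the stated involution. The first step is to pin down the identification of $\sW$ with the stabilizer of $x_\tie$ in $\aW=\sW\rtimes\MS$: this stabilizer is the conjugate $t_{x_\tie}\sW t_{-x_\tie}$, so $\sW$ acts on $A_\tie$ fixing $x_\tie$ and therefore permutes the hyperplanes of $A_\tie$ through $x_\tie$ as well as the chambers, hyperplanes and panels of $a_\tie=\partial A_\tie$.

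Next I would explain precisely how the formula $w.y_i$ is defined, and check that $w.\tie$ really is a bowtie. Via the canonical isomorphism between the panel tree $T_{p_i}$ and the wall tree $T_{m_i}$ (Proposition~\ref{Prop_wallTree} and its corollary), the point $y_i$ corresponds to the unique hyperplane $H_i\in m_i$ passing through $x_\tie$, exactly as in the proof of Proposition~\ref{Prop_oek1}. The element $w$ sends $H_i$ to the hyperplane $w.H_i$ in the parallel class $w.m_i$, which contains the panel $w.p_i$; reversing the canonical wall--panel tree identification at $w.p_i$ then gives a well-defined point $w.y_i\in T_{w.p_i}$. Opposition of chambers is preserved by every apartment automorphism, so $(w.c,w.\hat c)$ remains an opposite pair in $a_\tie$, and $w.\tie$ is a bowtie in the sense of Definition~\ref{Def_bowtie}. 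Because all identifications used are canonical, the functoriality $(ww').\tie=w.(w'.\tie)$ follows by inspection, establishing the $\sW$-action.

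Finally, for the involution, $w_0$ is characterized by sending the fundamental chamber of $a_\tie$ to its opposite, so $w_0.c=\hat c$ and $w_0.\hat c=c$. For each $i\in I$, the opposite panel $q_i$ of $p_i$ in $\hat c$ is by definition $w_0(p_i)$, and it lies in the wall $w_0.m_i$. Tracing the identifications of the previous paragraph, $w_0.y_i\in T_{q_i}$ corresponds to the hyperplane $w_0.H_i$, which passes through $x_\tie$ and lies in $w_0.m_i$. On the other hand, the perspectivity $[p_i,q_i]\colon T_{p_i}\to T_{q_i}$ between panel trees is, by construction, the unique isomorphism intertwining the canonical identifications of both trees with the corresponding wall tree through the $w_0$-translation of parallel classes, so $[p_i,q_i](y_i)$ is represented by the same hyperplane; the two points therefore agree and $\iota(\tie)=(\hat c,c,\{[p_i,q_i](y_i)\}_{i\in I})$, with $\iota^2=\mathrm{id}$ following from $w_0^2=1$. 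The main obstacle, and the point I would verify with care, is exactly this compatibility between the canonical wall-tree identifications at $p_i$ and at $q_i$ under the action of $w_0$; once that naturality is in place, the rest of the argument is routine bookkeeping.
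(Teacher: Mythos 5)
Your overall route — bookkeeping with the hyperplanes $H_i$ through $x_\tie$ via the canonical panel-tree/wall-tree identifications — is the natural expansion of the sketch the paper itself gives (the statement is an unproved Observation, so there is no fuller argument to compare against), and your first two paragraphs are essentially fine: the stabilizer of $x_\tie$ in $\aW$, the definition of $w.y_i$ through $w.H_i$, the preservation of opposition (together with the type-preservation of the $\sW$-action on $a_\tie$, which is what guarantees that $w.p_i$ is again the $i$-panel of $w.c$, a point worth stating), and the group-action property, which works because $A_{w.\tie}=A_\tie$ and $x_{w.\tie}=x_\tie$, so the same identification of $\sW$ with the stabilizer is used throughout.

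The last paragraph, however, contains a genuine error whenever the opposition involution $\sigma$ of $\RS$ (defined by $w_0\alpha_i=-\alpha_{\sigma(i)}$) is nontrivial, e.g. in type $A_2$. It is not true that ``$q_i=w_0(p_i)$ by definition'': since $w_0$ acts type-preservingly, $w_0.p_i$ is the $i$-panel of $\hat c$, and geometrically it is the panel opposite $p_{\sigma(i)}$, not opposite $p_i$. Correspondingly $w_0.m_i=m_{\sigma(i)}$, and since $w_0$ fixes $x_\tie$ and every $H_j$ passes through $x_\tie$, one gets $w_0.H_i=H_{\sigma(i)}$, not $H_i$. Hence $w_0.y_i$ is represented by $H_{\sigma(i)}$, whereas $[p_i,q_i](y_i)$ is represented by $H_i$: both $p_i$ and its true opposite $q_i$ lie on the single wall $m_i$ at infinity, so the perspectivity is the identification through the one wall tree $T_{m_i}$, and no ``$w_0$-translation of parallel classes'' enters — that is exactly where your identification slips. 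The correct bookkeeping is: the point of $w_0.\tie$ attached to the panel $q_i=w_0.p_{\sigma(i)}$ is $w_0.y_{\sigma(i)}$, which corresponds to $w_0.H_{\sigma(i)}=H_i$ and therefore equals $[p_i,q_i](y_i)$; so the two families coincide after re-indexing by $\sigma$, which is how the displayed formula has to be read. As written, your ``same hyperplane'' step is valid only when $w_0=-1$ (so $\sigma=\mathrm{id}$); the fix is the re-indexing just described, after which $\iota^2=\mathrm{id}$ still follows from $w_0^2=1$.
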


\begin{definition}\label{Def_bowtiepath}
\index{{bowties}!{equivalent}}
\index{{bowties}!{$\TTie$ }}
A \emph{path of bowties} is a sequence $(\tie_0, \tie_1, \ldots, \tie_n)$ such that $\tie_i$ and $\tie_{i-1}$ are either adjacent or $\tie_i=w.\tie_{i-1}$ for some $w\in \sW$. 
Two bowties $\tie$ and $\tie'$ are \emph{equivalent}, denoted by $\tie\sim\tie'$, if there exists a path of bowties connecting them. 
\end{definition}

\begin{lemma}\label{tec1}
The relation $\sim$ defined in \ref{Def_bowtiepath} is an equivalence relation.
\end{lemma}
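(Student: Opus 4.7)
The plan is to verify the three axioms of an equivalence relation directly from the definition of a bowtie path in \ref{Def_bowtiepath}. All three properties will follow formally once we observe that both ``building blocks'' of a path --- adjacency and the $\sW$-action --- are already symmetric/invertible in a suitable sense.

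For reflexivity, I would point out that for any bowtie $\tie$ the one-term sequence $(\tie)$ is trivially a path; alternatively, $\tie = 1.\tie$ where $1 \in \sW$ is the identity, so $(\tie,\tie)$ is a legal path as well. For transitivity, given paths $(\tie_0,\dots,\tie_n)$ from $\tie$ to $\tie'$ and $(\tie'_0,\dots,\tie'_m)$ from $\tie'$ to $\tie''$ (with $\tie'_0 = \tie_n = \tie'$), the concatenated sequence $(\tie_0,\dots,\tie_n,\tie'_1,\dots,\tie'_m)$ has every pair of consecutive entries either adjacent or related by an element of $\sW$, and is therefore again a path of bowties in the sense of \ref{Def_bowtiepath}.

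Symmetry is the only step that requires a small remark. Given a path $(\tie_0,\dots,\tie_n)$ witnessing $\tie_0 \sim \tie_n$, I would reverse it to $(\tie_n,\tie_{n-1},\dots,\tie_0)$ and check each consecutive pair. If $\tie_{i}$ and $\tie_{i-1}$ were adjacent, then Definition~\ref{Def_adjacentbowties} is visibly symmetric in the two bowties (equality of the first chamber, sharing a panel in the opposite chamber, and equality of the tree points are all symmetric conditions), so the reversed pair is still adjacent. If instead $\tie_i = w.\tie_{i-1}$ for some $w\in\sW$, then because the action of $\sW$ on $\Tie$ described in Observation~\ref{Obs_W-action} comes from a group action, $\tie_{i-1} = w^{-1}.\tie_i$ with $w^{-1}\in\sW$, so the reversed pair is again related by a Weyl group element. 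Hence the reversed sequence is a valid path of bowties witnessing $\tie_n \sim \tie_0$.

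There is no real obstacle here; the only thing to be a little careful about is that the $\sW$-action of Observation~\ref{Obs_W-action} really is a group action on $\Tie$, i.e.\ that $(ww').\tie = w.(w'.\tie)$ and $1.\tie = \tie$. This follows immediately from the fact that $\sW$ acts on chambers, opposite chambers, panels, and (via the induced action on walls and panel trees) on the points $y_i$, with each of these actions a genuine group action. Once this is noted, reflexivity, symmetry, and transitivity are all formal.
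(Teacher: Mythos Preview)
Your proof is correct and follows exactly the same approach as the paper's own proof, which simply notes that reflexivity holds since $\sW$ contains the identity, symmetry follows from reversing a connecting path, and transitivity follows from concatenation of paths. You have merely supplied the details (in particular, the symmetry of adjacency and the invertibility of the $\sW$-action) that the paper leaves implicit.
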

\begin{proof}
Reflexivity is clear since $\sW$ contains the identity. Symmetry follows from reversing a connecting path and transitivity by concatenation of paths.
\end{proof}

\begin{prop}\label{Prop_oek5}
Two bowties $\tie$ and $\tie'$ in an affine building  $(X,\App)$ are equivalent if and only if they determine the same vertex, i.e.
$$\tie \sim \tie'\; \Longleftrightarrow \;x_\tie = x_{\tie'}.$$
\end{prop}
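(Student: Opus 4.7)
The forward direction will be handled by checking that each of the two elementary moves preserves $x_\tie$. For $\tie'=w.\tie$ with $w\in\sW$, Observation~\ref{Obs_W-action} identifies $w$ with an element of the stabilizer of $x_\tie$ in $\aW$ acting on $A_\tie$, whence $x_{w.\tie}=w.x_\tie=x_\tie$. For the adjacency step, the data $(c,\{y_i\}_{i\in I})$ is common to $\tie$ and $\tie'$; via the canonical identifications $T_{p_i}\cong T_{m_i}$ of panel and wall trees this data intrinsically singles out $n$ mutually transverse hyperplanes $H_i$ in $X$, and these are the walls through the apex of a uniquely determined Weyl chamber of class $c$ (Corollary~\ref{Cor_WeylChamber}). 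That apex, which is the point computed by the system of equations in the proof of Proposition~\ref{Prop_oek1}, depends only on $(c,\{y_i\})$ and not on the ambient apartment, so it is simultaneously $x_\tie$ and $x_{\tie'}$.

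For the backward direction, set $x:=x_{\tie_1}=x_{\tie_2}$. The intersection-of-walls description above shows, conversely, that once $x$ and a first chamber $c$ are fixed, the $\{y_i\}$-component of any bowtie of basepoint $x$ and first chamber $c$ is forced, since the $y_i$ are the points in $T_{p_i}$ corresponding to the walls through $x$ of the unique $x$-based Weyl chamber of class $c$. Thus it suffices to exhibit a sequence of moves carrying the pair $(c_1,\hat c_1)$ to $(c_2,\hat c_2)$; the $y_i$'s will then automatically match at the end.

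I will do this in two stages. Stage~1 brings $c_1$ to $c_2$ by induction on the chamber distance $\ell$ of $c_1,c_2$ in $\binfinity X$. In the base case $\ell=1$, with common panel $p$, the plan is first to walk $\hat c_1$ by adjacency moves to a chamber $\hat c$ opposite $c_1$ such that the affine apartment associated with $(c_1,\hat c)$ both contains $x$ and has $c_2\in\partial A$; existence of such an $\hat c$ is ensured by lifting, via Proposition~\ref{Prop_liftGallery}, a suitable chamber of $\Delta_xX$ opposite $\pi_x(c_1)$ in the apartment of $\Delta_xX$ corresponding to an apartment of $\binfinity X$ through $c_1$ and $c_2$. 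A single Weyl reflection in the wall of $A$ through $x$ containing a representative of $p$ then carries $c_1$ to $c_2$. Stage~2, with $c_1=c_2=:c$, needs only to connect $\hat c_1$ and $\hat c_2$ by adjacency moves; under $\pi_x$ this becomes a gallery of opposites of $\pi_x(c)$ in $\Delta_xX$, after which the $y_i$'s coincide automatically.

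The main obstacle is the connectivity assertion underlying both adjacency walks: the set of chambers of $\binfinity X$ opposite a given $c$ whose $\pi_x$-image is opposite $\pi_x(c)$ in $\Delta_xX$ must be connected under the panel-adjacency relation of Definition~\ref{Def_adjacentbowties}. I plan to reduce this to the classical connectivity of the set of opposites in the spherical residue $\Delta_xX$ (which is a spherical building by Theorem~\ref{Thm_residue}) and to lift galleries of such opposites back to $\binfinity X$ through Proposition~\ref{Prop_liftGallery} and Theorem~\ref{Thm_projection1}.
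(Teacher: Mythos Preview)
Your forward direction is correct and is essentially what the paper dismisses as ``an easy consequence of the definition of equivalence''; the observation that $(c,\{y_i\})$ alone pins down $x_\tie$ is exactly the point.

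For the backward direction your two-stage decomposition is close in spirit to the paper's, which instead packages the work as Lemma~\ref{Lem_tec2} (reach any target chamber $d$ as the \emph{second} entry of an equivalent bowtie) followed by Lemma~\ref{Lem_tec3} (once $c\in\partial A$ and $x_\tie\in A$, pull the bowtie into $A$); applying these with $A=A_{\tie_1}$ and then a single $\sW$-move finishes. You correctly isolate the core difficulty as a connectivity statement, but the tools you name do not do the job. Proposition~\ref{Prop_liftGallery} takes a \emph{minimal} gallery in $\binfinity X$ whose $\pi_x$-image is minimal and places the $x$-based representatives in a common apartment; it does not lift a gallery of opposites of $\pi_x(c)$ from $\Delta_xX$ to a gallery of opposites of $c$ in $\binfinity X$, and Theorem~\ref{Thm_projection1} only tells you when a single apartment at infinity passes through $x$. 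Concretely, given a panel-adjacency $\bar d\sim_{\bar q}\bar d'$ between opposites of $\pi_x(c)$ downstairs and a lift $d$ of $\bar d$, you would need a chamber $d'$ in the panel residue of the lifted panel $q\subset d$ with $\pi_x(d')=\bar d'$; since $\pi_x$ is far from injective on panel residues of $\binfinity X$, this is not automatic from the results you cite.

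The paper sidesteps this lifting problem entirely. Its spherical input is Lemma~\ref{Lem_tec9}: for chambers $c,d,e$ with $d,e$ both opposite $c$, there exist minimal galleries $d\to c$ and $c\to e$ that are \emph{chamberwise opposite}. This is applied inside $\Delta_xX$ (with $c=\pi_x(c^{op})$ and $e=\pi_x(\hat c)$), and the lift is then trivial because every term of each gallery already lies in one of the two fixed apartments $\partial A_{\tie_1}$, $\partial A_\tie$, on which $\pi_x$ is an isomorphism by Theorem~\ref{Thm_projection1}. That chamberwise-opposite structure is what drives the inductive shift in the proof of Lemma~\ref{Lem_tec3} and is the correct replacement for your connectivity-of-opposites plan.
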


For the proof of Proposition~\ref{Prop_oek5} some technical Lemmas are needed. The first one is an observation for spherical buildings.

\begin{lemma}\label{Lem_tec9}
Given three chambers $c,d,e$ in a spherical building $\Delta$ such that both $d$ and $e$ are opposite $c$. Then there exist minimal galleries $\gamma=(d_0, d_1, \ldots, d_n)$ and $\sigma=(c_0, c_1,\ldots, c_n)$ such that $d_0=d$, $d_n=c=c_0$, $c_n=e$ and such that for all $i$ the chamber $d_i$ is opposite $c_i$.\\
Furthermore, if $f$ is adjacent to $c$ and contained in the apartment which is determined by $c$ and $e$ then $\sigma$ can be chosen such that $c_1=f$.
\end{lemma}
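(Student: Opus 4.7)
The plan is to build the gallery $\sigma$ first inside the unique apartment $A_e$ of $\Delta$ containing the opposite pair $c,e$, and then construct $\gamma$ inductively via a parallel-transport argument for opposite chambers. Both galleries must have length $n$ equal to the diameter of $\Delta$, since $d$ and $e$ are both opposite $c$.

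For $\sigma$, I would take any minimal gallery from $c$ to $e$ inside $A_e$. To accommodate the ``furthermore'' clause, note that $e$ is the unique chamber of the spherical Coxeter complex $A_e$ opposite $c$, so every chamber of $A_e$ adjacent to $c$ lies on some minimal gallery from $c$ to $e$; hence if $f$ is adjacent to $c$ in $A_e$, the partial gallery $(c,f)$ extends to a length-$n$ minimal gallery terminating at $e$ inside $A_e$, and I may take $c_1 = f$.

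For $\gamma$, I would proceed by induction on $i$, maintaining the invariant that $d_i$ is opposite $c_i$ in $\Delta$ and that $(d_0,\ldots,d_i)$ is a minimal gallery based at $d$. Denoting by $A_i$ the unique apartment containing the opposite pair $(c_i,d_i)$ and by $p_i$ the common panel of $c_i$ and $c_{i+1}$, the opposition within $A_i$ carries $p_i$ to a panel $p_i^\ast$ of $d_i$. I would then define $d_{i+1}$ as a chamber adjacent to $d_i$ through $p_i^\ast$ that is opposite $c_{i+1}$ in $\Delta$. When $c_{i+1}$ lies in $A_i$, the chamber $\mathrm{op}_{A_i}(c_{i+1})$ is such a choice. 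Otherwise, $c_{i+1}$ must already be opposite $d_i$ in $\Delta$, for if the distance were $n-1$ then any minimal gallery from $d_i$ to $c_{i+1}$ would be contained in the apartment $A_i$ and would force $c_{i+1}\in A_i$; in this case the unique apartment $A(c_{i+1},d_i)$ supplies the required chamber $d_{i+1}$ adjacent to $d_i$ through $p_i^\ast$.

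The main obstacle I expect is verifying that this construction terminates with $d_n = c$. The idea is that each inductive step is a parallel-transport operation on the opposite pair $(c_i,d_i)$ along the fixed gallery $\sigma$; at the final step, the apartment $A_n$ containing the opposite pair $(e,d_n)$ must coincide with $A_e = A(c,e)$, and since $A_e$ contains a unique chamber opposite $e$, namely $c$, one concludes $d_n = c$, closing up the construction as required.
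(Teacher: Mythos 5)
Your construction of $\sigma$ inside the apartment determined by $c$ and $e$ is fine, but the inductive construction of $\gamma$ has two genuine problems. First, in the branch where $c_{i+1}\notin A_i$ (so that $c_{i+1}$ is opposite $d_i$), the chamber you propose --- the chamber of the apartment $A(c_{i+1},d_i)$ adjacent to $d_i$ through $p_i^{\ast}$ --- is exactly the wrong one: since $d_i$ is the chamber of $A(c_{i+1},d_i)$ opposite $c_{i+1}$, every chamber of that apartment adjacent to $d_i$ is at gallery distance $\ell(w_0)-1$ from $c_{i+1}$; equivalently, your candidate is $\mathrm{proj}_{p_i^{\ast}}(c_{i+1})$, the unique chamber containing the panel $p_i^{\ast}$ that is \emph{not} opposite $c_{i+1}$. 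A chamber on $p_i^{\ast}$ opposite $c_{i+1}$ and different from $d_i$ exists only if that panel carries a third chamber, i.e.\ under a thickness hypothesis which this lemma deliberately avoids. Second, and more fundamentally, the closing-up $d_n=c$ is never established: the assertion that the apartment containing the opposite pair $(e,d_n)$ ``must coincide with'' $A(c,e)$ is equivalent to $d_n=c$, and no argument is given; with the freedom your local step-by-step choices allow, $d_n=c$ simply need not hold (already in the rank-one case, after one step $d_1$ may be any chamber different from $d$ and $e$, not necessarily $c$). A smaller point: your justification that $c_{i+1}\notin A_i$ forces $c_{i+1}$ opposite $d_i$ is circular as written, since a minimal gallery from $d_i$ to $c_{i+1}$ is only known to lie in $A_i$ once both endpoints do; it can be repaired by appending $c_i$ and invoking convexity of $A_i$, or by the gate property of the panel $p_i$.

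What is missing is a global mechanism tying the whole construction to the pair $(c,d)$. The paper uses the retraction $r_{\partial A,c}$ onto $A=A(c,d)$ centered at $c$: it maps $A(c,e)$ isomorphically onto $A$ and sends $e$ to $d$, so one takes a minimal gallery $\widetilde{\sigma}$ from $c$ to $d$ in $A$ starting with $r_{\partial A,c}(f)$, defines $d_i$ as the opposite of $\widetilde{c}_i$ in $A$ and $c_i$ as the preimage of $\widetilde{c}_i$ in $A(c,e)$; then $d_n=c$ is automatic, and oppositeness of $c_i$ and $d_i$ follows because the retraction fixes $A$ pointwise and does not increase gallery distances. Your transport idea could be salvaged along these lines, but not by purely local choices along $\sigma$.
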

\begin{proof}
Let $A$ denote the unique apartment containing $c$ and $d$ and $B$ the one determined by $c$ and $e$. Let $r_{\partial A,c}$ denote the retraction\footnote{It is the analog in the spherical case of the retraction $r_{A,c}$ defined in \ref{Def_simplChamberRetraction}. } onto $\partial A$ based at the chamber $c$, which is an isomorphism restricted onto an apartment containing $c$. Since $c\in B$, the retraction $r_{\partial A,c}$ maps $B$ isomorphically onto $A$. Hence $r_{\partial A,c}(e)=d$. 
For any chamber $f\in B, f\sim c$ there exists a minimal gallery $\widetilde{\sigma}$ connecting $c$ and $d$, 
$$
\widetilde{\sigma}=(\widetilde{c}_0=c, \widetilde{c}_1=r_{\partial A,c}(f), \ldots, \widetilde{c}_n=d).
$$
Let $d_i$ be the opposite of $\widetilde{c}_i$ in $\partial A$. Then $d_0=d$ and $d_n=c$. Define
$$
\gamma=(d_0=d, d_1,\ldots,  d_n=c).
$$ 
The restriction of $r_{\partial A,c}$ to $B$ is an isomorphism of spherical apartments. Hence we can define
$$
c_i\define (r_{\partial A,c}\vert_b)^{-1}(\widetilde{c}_i) \;\text{ and }\; \sigma=(c_0, \ldots, c_n).
$$
By construction we have $c_0=c$ and $c_n=e$. The retraction $r_{\partial A,c}$ preserves distance to $c$ implying that $\sigma$ is a minimal gallery from $c$ to $e$. 

Assume there exists an index $i$ such that $d_i$ is not opposite $c_i$, meaning that $d(c_i, d_i)$ is strictly smaller than the maximal distance in the given spherical building $\Delta$. But
$$
d(d_i, c_i) \geq d(r_{\partial A,c}(d_i), r_{\partial A,c}(c_i))=d(d_i, \widetilde{c}_i)
$$
and $d_i$ and $\widetilde{c}_i$ are at maximal distance. This contradicts the assumption since $r_{\partial A,c}$ does not increase distances between chambers in $\Delta$. Hence $d_i$ is opposite $c_i$ for all $i$ and the assertion follows.
\end{proof}

\begin{lemma}\label{Lem_tec2}
For any bowtie $\tie=(c, \hat{c}, \{y_i\}_{i\in I})$ in $X$ and any chamber $d$ in $\binfinity X$ there exists a bowtie $\tie'=(d', d, \{z_i\}_{i\in I} )$ equivalent to $\tie$.
\end{lemma}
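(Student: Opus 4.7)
The plan is to argue by induction on the gallery distance $\delta(\hat c, d)$ in the spherical building $\binfinity X$. The base case $\delta(\hat c, d) = 0$ is trivial, since then $d = \hat c$ and we may take $\tie' = \tie$. For the inductive step, it suffices to show that, given any chamber $e$ sharing a panel with $\hat c$, one can find an equivalent bowtie whose second chamber is $e$; iterating this one-step reduction along a minimal gallery from $\hat c$ to $d$, together with the inductive hypothesis, completes the argument.

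To handle one step, fix $e$ adjacent to $\hat c$ in $\binfinity X$. If $e$ happens to be opposite $c$, then $(c, e, \{y_i\}_{i \in I})$ is a bowtie adjacent to $\tie$ in the sense of Definition~\ref{Def_adjacentbowties}, and we are done. Otherwise $e$ is not opposite $c$. Choose an apartment $a'$ of $\binfinity X$ containing both $c$ and $e$, and let $\hat c'$ denote the unique chamber of $a'$ opposite $c$. Invoking the standard fact that the set of chambers opposite a fixed chamber in a spherical building is connected under adjacency through chambers that remain opposite, one finds a gallery $\hat c = f_0, f_1, \dots, f_m = \hat c'$ with every $f_j$ opposite $c$. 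Each successive pair $(c, f_{j-1}, \{y_i\}_{i \in I})$ and $(c, f_j, \{y_i\}_{i \in I})$ is a bowtie adjacency, and concatenating these moves produces
\[
  \widetilde \tie := (c, \hat c', \{y_i\}_{i \in I}) \sim \tie.
\]

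The apartment at infinity associated with $\widetilde \tie$ is now $a'$. Since $\sW$ acts simply transitively on the chambers of $a'$, there is a unique $w \in \sW$ with $w.\hat c' = e$, and $w.c$ is then the unique chamber of $a'$ opposite $e$. Applying this Weyl group element as in Observation~\ref{Obs_W-action} yields
\[
  w.\widetilde \tie = (w.c,\; e,\; \{w.y_i\}_{i \in I}) \sim \widetilde \tie \sim \tie,
\]
which is a bowtie with second chamber $e$, finishing the inductive step. The main obstacle is invoking the connectivity of chambers opposite $c$ in $\binfinity X$: this is classical in the thick case (Dress--Scharlau), while in the thin case the opposite of $c$ is unique, so $\hat c = \hat c'$ and the auxiliary gallery step is vacuous.
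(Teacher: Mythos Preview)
Your overall strategy---induction along a gallery in $\binfinity X$ together with a one-step reduction---is the same as the paper's, and your argument is essentially correct. There is, however, one point that needs attention and one unnecessary detour.

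\textbf{The unnecessary detour.} In your case~(b), observe that if $e$ is adjacent to $\hat c$ via a panel $p$ and $e$ is \emph{not} opposite $c$, then necessarily $e=\mathrm{proj}_p(c)$, the unique chamber on $p$ closest to $c$. Since apartments are convex and $c,p\in a_\tie$, this projection lies in $a_\tie$. Thus $e\in a_\tie$, and you may simply apply the $\sW$-action on $a_\tie$ directly (or, equivalently, choose $a'=a_\tie$, which forces $\hat c'=\hat c$ and makes the Dress--Scharlau gallery trivial). No connectivity result for $\mathrm{Opp}(c)$ is needed at all.

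\textbf{The gap as written.} The paper explicitly does \emph{not} assume thickness of $X$, hence $\binfinity X$ need not be thick. You handle the thick case (Dress--Scharlau) and the thin case (unique opposite), but not the mixed case of a weak spherical building that is neither thick nor thin. While it is in fact true that such buildings split as a join of a thick building and a Coxeter complex---from which connectivity of $\mathrm{Opp}(c)$ would follow---this is itself a nontrivial structural result. The observation in the previous paragraph bypasses the issue entirely.

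\textbf{Comparison with the paper.} The paper's one-step argument proceeds in the other order: it first applies the $\sW$-action to arrange that $\hat c$ is adjacent to the target chamber $d$ (with $d\notin a_\tie$), and then argues via a local germ argument in the residue $\Delta_{x_\tie}X$ that $d$ and $c$ are opposite, yielding the adjacent bowtie $(c,d,\{y_i\})$. Both approaches amount to the same dichotomy---target chamber inside $a_\tie$ (use $\sW$) versus outside $a_\tie$ (automatically opposite $c$, use bowtie adjacency)---but the paper reaches opposition through germs whereas the cleanest route is the projection argument above.
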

\begin{proof}
Assume $d$ is contained in $a_\tie$. Then the assertion follows by the action of $\sW$.

First assume $d\cap a_\tie$ is a panel of $d$. Without loss of generality we may further assume that $d$ and $\hat{c}$ are adjacent and that  $\Delta_{x_\tie}d$ and $\Delta_{x_\tie}c$ are opposite in $\Delta_{x_\tie}X$. Then $d$ and $c$ are opposite as well. Otherwise replace $\tie$ by an equivalent bowtie $w.\tie$ in $a_\tie$. Define $\tie'\define (c, d, \{y_i\}).$
According to Definition~\ref{Def_adjacentbowties} the bowties $\tie$ and $\tie'$ are adjacent and therefore equivalent. 

If $d$ and $a_\tie$ do not share a panel choose a gallery $\gamma=(d_0, d_1, \ldots, d_n)$ of minimal length such that $d_0\in a_\tie$ and $d_n=d$. There exists a bowtie $(c_1, \hat{c}_1, \{z_i\})=\tie_1\sim \tie$ with $\hat{c_1}=d_1$ using the same argument as in the first step. Inductively find bowties $\tie_{i+1}\sim\tie_i$ using the fact that $d_{i+1}$ and $a_{\tie_i}$ share a panel. This implies the existence of a path of bowties from $\tie$ to a bowtie $\tie'$ such that $\hat{c}'=d_n=d$. The assertion follows.
\end{proof}

\begin{lemma}\label{Lem_tec3}
Given a bowtie $\tie=(c, \hat{c}, \{y_i\}_{i\in I})$ and an apartment $A$ of $(X,\App)$ such that $x_\tie$ is contained in $A$ and such that $c$ is a chamber of $ \partial A$. Then $\tie$ is equivalent to a bowtie in $A$.
\end{lemma}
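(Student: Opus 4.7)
Plan: The approach is to induct on the gallery distance $n := d_{\binfinity X}(\hat{c}, \hat{c}_A)$, where $\hat{c}_A$ denotes the unique chamber of $\partial A$ opposite $c$ (which exists and is unique since $c\in\partial A$). The base case will be immediate from the apartment structure, and the inductive step will reduce $n$ by one by constructing a single bowtie adjacency.

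For the base case $n=0$ we have $\hat{c} = \hat{c}_A \in \partial A$, and hence the apartment $a_\tie$ of $\binfinity X$ determined by the opposite pair $(c,\hat{c})$ coincides with $\partial A$. The bijection between apartments of $X$ and apartments of $\binfinity X$ from Proposition \ref{Prop_buildingAtInfinity} then forces $A_\tie = A$. Each $y_i \in T_{p_i}$ corresponds, via the canonical isomorphism $\psi_{p_i,m_i}$ with $m_i$ the wall of $a_\tie = \partial A$ through $p_i$, to a hyperplane $H_i$ of parallel class $m_i$ passing through $x_\tie \in A$; since $A$ contains a unique hyperplane of a fixed parallel class in $\partial A$ through any of its points, every $H_i$ lies in $A$, so $\tie$ is already a bowtie in $A$.

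For the inductive step with $n \geq 1$, the plan is to produce a bowtie $\tie^{(1)} = (c, \hat{c}^{(1)}, \{y_i\})$ adjacent to $\tie$ in the sense of Definition \ref{Def_adjacentbowties} with $d_{\binfinity X}(\hat{c}^{(1)}, \hat{c}_A) < n$; then the inductive hypothesis applied to $\tie^{(1)}$ concludes. To find $\hat{c}^{(1)}$, I would pass to the spherical residue $\Delta_{x_\tie} X$ of Theorem \ref{Thm_residue}. Theorem \ref{Thm_projection1} then gives that $\Delta_{x_\tie}(\hat{c})$ and $\Delta_{x_\tie}(\hat{c}_A)$ are both opposite $\Delta_{x_\tie}(c)$ in this residue (using $x_\tie \in A_\tie$ and $x_\tie \in A$ respectively). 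Since the set of chambers opposite a fixed chamber in a spherical building of rank $\geq 2$ is gallery-connected through chambers that remain opposite, one may pick the first step $\bar d$ of such a connecting gallery---adjacent to $\Delta_{x_\tie}(\hat{c})$, opposite $\Delta_{x_\tie}(c)$, and strictly closer to $\Delta_{x_\tie}(\hat{c}_A)$. Applying the sundial configuration of Proposition \ref{Prop_sundial} to the apartment $A_\tie$ together with the direction supplied by $\bar d$ lifts $\bar d$ back to a chamber $\hat{c}^{(1)} \in \binfinity X$ sharing a panel with $\hat{c}$, and the converse direction of Theorem \ref{Thm_projection1} guarantees $\hat{c}^{(1)}$ is opposite $c$ in $\binfinity X$, so $\tie^{(1)} := (c,\hat{c}^{(1)},\{y_i\})$ is a legitimate bowtie adjacent to $\tie$.

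The principal obstacle will be the lifting step: realizing the combinatorial adjacency produced in the residue $\Delta_{x_\tie} X$ as an honest bowtie adjacency in $X$, which requires carefully coupling the sundial configuration with Theorem \ref{Thm_projection1} to preserve opposition to $c$ and to identify the lifted panel at infinity. The rank-one case (where residues and spherical apartments degenerate) does not admit the connectivity argument above and has to be treated separately, but there the trees $T_{p_i}$ and the direct application of Lemma \ref{Lem_tec2} make the statement straightforward.
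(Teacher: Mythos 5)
Your overall target is right (move $\hat c$ to the chamber of $\partial A$ opposite $c$ through a chain of bowtie adjacencies), but the engine of your inductive step does not exist in this setting. You invoke the fact that the chambers opposite a fixed chamber of a spherical building of rank $\geq 2$ are gallery-connected \emph{through} chambers opposite that fixed chamber. That is a nontrivial theorem which requires thickness (and even then has low-rank exceptions), and it is nowhere available in this paper; indeed the whole point of Section~\ref{Sec_automorphisms} is that thickness is \emph{not} assumed, so the residue $\Delta_{x_\tie}X$ may be thin at $x_\tie$ while the building branches further out. In that situation $\pi_{x_\tie}(\hat c)=\pi_{x_\tie}(\hat c_A)$ even though $\hat c\neq\hat c_A$, your chamber $\bar d$ does not exist, and your induction is stuck with $n\geq 1$. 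The paper avoids exactly this by Lemma~\ref{Lem_tec9}: instead of keeping $c$ fixed and walking $\hat c$ through opposites of $c$, it produces a \emph{pair} of galleries whose members are termwise opposite (a ``rotating'' bowtie, using both adjacency moves and the $\sW$-action from \ref{Obs_W-action}); this only needs a retraction onto one apartment and works in thin residues, which is why no thickness hypothesis is needed.

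There are two further gaps in your step even where the connectivity holds. First, the lift: bowtie adjacency (Definition~\ref{Def_adjacentbowties}) requires $\hat c\cap\hat c^{(1)}$ to be an actual panel at infinity, but a residue chamber $\bar d$ adjacent to $\pi_{x_\tie}(\hat c)$ need not be the germ of any chamber at infinity containing the panel $\hat p_i$ of $\hat c$: a Weyl chamber based at $x_\tie$ with germ $\bar d$ contains a panel whose germ agrees with that of the $x_\tie$-based representative of $\hat p_i$, but the two panels may diverge, so $\partial S$ need not be adjacent to $\hat c$. Proposition~\ref{Prop_sundial} runs in the opposite direction (it starts from a chamber at infinity sticking out of $\partial A_\tie$) and cannot manufacture such a chamber from residue data. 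Second, your induction is on $d_{\binfinity X}(\hat c,\hat c_A)$, but your choice of $\bar d$ only controls the distance in $\Delta_{x_\tie}X$; since $\pi_{x_\tie}$ is merely distance non-increasing, a decrement in the residue does not yield a decrement at infinity, so even a successful lift is not shown to make progress. In contrast, the paper's induction is on the gallery distance from $\hat c$ to $\partial A$, and every intermediate pair $(c_i,d_i)$ lies in the boundaries of the fixed apartments $A$ and $A_\tie$ containing $x_\tie$, where $\pi_{x_\tie}$ restricts to isomorphisms (Theorem~\ref{Thm_projection1}), so the lifting and the bookkeeping are automatic; the new apartments are produced from opposite germs via Corollary~\ref{Cor_CO}. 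Your base case and the use of Theorem~\ref{Thm_projection1} to get opposition of $\hat c^{(1)}$ and $c$ are fine, but the step needs to be rebuilt along these lines.
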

\begin{proof}
Let $a\define\partial A$. Without loss of generality we may assume that the chamber $c$ has a panel $p_i$ contained in the boundary of $a\cap a_\tie$, as illustrated in Figure~\ref{Fig_figure12}. Otherwise use the $\sW$-action on $\Tie$, as described in~\ref{Obs_W-action}, to replace $\tie$ by an equivalent bowtie in $A$. Let $c^{op}$ denote the chamber opposite $c$ in $a$ and let $f$ be the unique chamber in $a_\tie$ different from $c$ containing $p_i$. Let $l$ be the length of the shortest gallery from $\hat{c}$ to a chamber in $a$.
Denote by $q_i$ the panel of $c^{op}$ opposite the $i$-panel $p_i$ of $c$.

If $l=2$ then $q_i$, which is opposite $p_i$, is contained in $\hat{c}$, $c^{op}$ and $a$.
The bowties $\tie_0=(c, c^{op}, \{y_j\}_{j\in I})$ and $\tie$ are, by definition, adjacent and hence equivalent.

The induction step is as follows. Again assume without loss of generality that the situation is as described at the beginning of the proof. Abbreviate $x\define x_\tie$. Both apartments $A$ and $A_\tie$ contain $x$. By Proposition~\ref{Prop_epi} the images $\pi_x(a)$ and $\pi_x(a_\tie)$ under the natural epimorphism from $\Delta_\App$ to $\Delta_xX$ are, therefore, apartments in $\Delta_xX$. 
Lemma~\ref{Lem_tec9} implies the existence of galleries 
\begin{align*}
\dot{\gamma} = (\dot{c}_0=\pi_x(c^{op}), \dot{c}_1, \ldots, \dot{c}_n=\pi_x(c)) &\;\text{ in } \pi_x(a) \text{ and }\\
\dot{\sigma} = (\dot{d}_0=\pi_x(c), \dot{d}_1=\pi_x(f), \ldots, \dot{d}_n=\pi_x(\hat{c})) &\;\text{ in } \pi_x(a_\tie) 
\end{align*}
such that $\dot{c}_i$ is opposite $\dot{d}_i$ for all $i$. 
We can lift $\dot{\gamma}$ and $\dot{\sigma}$ to galleries 
\begin{align*}
\gamma =(c_0=c^{op}, c_1, \ldots, c_n=c) &\;\text{ in } a \;\text{ and } \\
\sigma =(d_0=c, d_1=f, \ldots, d_n=\hat{c}) &\;\text{ in } a_\tie  .
\end{align*}
The bowtie $\tie_{l}\define (c, c^{op}, \{y_j\})$ is equivalent to $w.\tie_{l}$ with $w\in \sW$ chosen such that $w.c=c_1$ and $c \cap w.c^{op} = c\cap f$. 

Let $a_1$ be the apartment spanned by $c_1$ and $d_1=f$. Denote the corresponding affine apartment by $A_1$. The affine apartment associated to $a_1$ contains $x$, since $\dot{d}_1=\pi_x(d_1)$ and $\dot{c}_1=\pi_x(c_1)$ are opposite in $\Delta_xX$. Hence there is a bowtie $\tie_{l-1}\define (c_1, d_1, \{ w.y_j\})$ contained in $a_1$ which is by definition adjacent to $\tie_l$. Substitute $a$ by $a_1$ and $\tie$ by an equivalent bowtie in $A_\tie$ such that $c=f=d_1$. We are again in the situation described at the beginning of the proof with the distance to the apartment reduced by one. By reverse induction $\tie$ is equivalent to a bowtie in $A$. 
\end{proof}

\begin{figure}[htbp]
\begin{center}
	\resizebox{!}{0.3\textheight}{\input{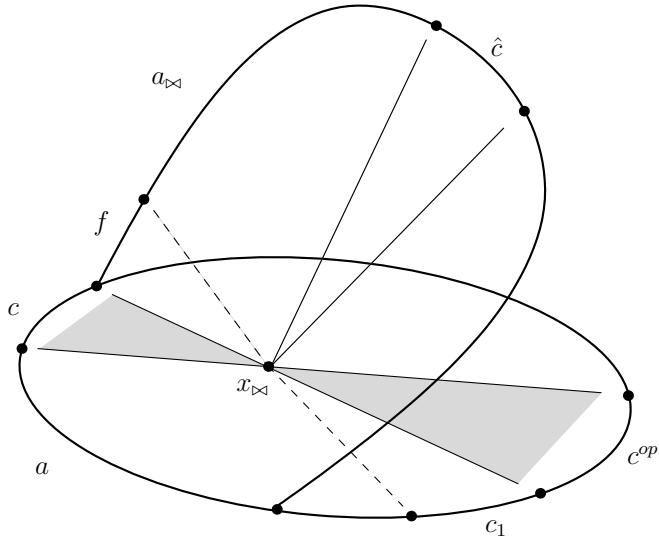}}
	\caption[Finding a path of equivalent bowties]{Let $\tie=(c,\hat{c},\{y_i\})$ be given and assume that $c$ has a panel in the boundary of $a\cap a_\tie$ and let $c^{op}$ be the chamber opposite $c$ in $a$. Shift the gray bowtie $(c, c^{op}, \{z_i\})$ to the (equivalent) one with chambers $f$ and $c_1$ which is ``closer'' to $\tie$. }
	\label{Fig_figure12}
\end{center}
\end{figure}

\begin{remark}
The main idea of the proof of Lemma~\ref{Lem_tec3} is as follows: Assume the situation is as described in Figure~\ref{Fig_figure12}. We are able to construct ``opposite'' galleries $\gamma:c'\rightsquigarrow c$ and $\sigma: c \rightsquigarrow \hat{c}$ such that each of the apartments $A_i$ determined by the opposite chambers $d_i$ and $c_i$ contains $x_\tie$. This enables us to ``shift'' $\tie_n\define (c^{op}, c, \{z_i^{op}\})$ along these galleries to $\tie$ while never leaving the equivalence class.
\end{remark}

\begin{proof}[Proof of Proposition~\ref{Prop_oek5}]
The implication $\tie\sim\tie' \; \Rightarrow \; x_\tie=x_{\tie'}$ is an easy consequence of the definition of equivalence of bowties.
To prove the converse assume that bowties $\tie_1=(c_1, \hat{c}_1, \{y_i\})$ and $\tie_2=(c_1, \hat{c}_1, \{z_i\})$ with the same basepoint $x$ are given. Let $a_1$ denote the apartment at infinity defined by $\tie_1$ and let $A_1$ denote the corresponding affine apartment. By Lemma~\ref{Lem_tec2} there exists a bowtie $\tie=(d,\hat{d}, \{x_i\})$ equivalent to $\tie_2$ such that $d= c_1$. But then $\tie$ is such that $c_\tie \in a_1$ and $x_\tie=x \in A_{1}$. Therefore, by Lemma~\ref{Lem_tec3}, $\tie$ is equivalent to $\tie_1$. Transitivity implies the equivalence of $\tie_1$ and $\tie_2$.
\end{proof}

\begin{remark}
These constructions heavily rely on the fact that for any vertex $x\in X$ and any chamber $c\in \Delta$ there exists a unique Weyl chamber $S\in c$ based at $x$, which we proved in \ref{Cor_WeylChamber}. 
\end{remark}

\subsection{Extending automorphisms}

The main result in this section is Theorem~\ref{Thm_iso} which says that an isomorphism from one building at infinity to another is induced by an isomorphism of the affine buildings if and only if it preserves certain additional data at infinity.
The analog result in the setting of $\R$-buildings was first proven by Tits \cite{TitsComo}. A detailed proof in the case of thick simplicial affine buildings can be found in \cite[Thm 12.3]{AffineW}. The simplicial case is also covered by \cite[Theorem 1.3]{Leeb}.

\begin{notation}\label{Not_eco}
Let $\RS$ be a root system in the sense of Definition~\ref{Def_rootsystem}. Let $n$ be the rank of $\RS$ and let the associated Coxeter complex be colored by the set $I=\{1, \ldots, n\}$. Assume that $F$ is a subfield of $\R$ containing the set of evaluations $\lb \beta,\alpha^\vee\rb$ for all pairs of roots $\alpha, \beta\in \RS$. Fix two ordered abelian groups $\Lambda$ and $\Gamma$ admitting an $F$-module structure, and assume that there exists an epimorphism $\mepi:\Lambda\to \Gamma$  of $F$-modules. Let further $(X_\Lambda,\App_\Lambda)$ and $(X_\Gamma,\App_\Gamma)$ be affine buildings in the sense of Definition \ref{Def_LambdaBuilding} modeled on $\MS(\RS, \Lambda)$, respectively $\MS(\RS, \Gamma)$. Denote by $\Delta_\Lambda$ and $\Delta_\Gamma$ the associated spherical buildings at infinity. 
\end{notation}

\begin{definition}\label{Def_ecological}
Let notation be as in ~\ref{Not_eco} and let $\tau:\Delta_\Lambda\rightarrow \Delta_\Gamma$ be an isomorphism. Then $\tau$ is \emph{ecological}\footnote{The colorful name \emph{ecological} was suggested by Richard M. Weiss in \cite{AffineW}. }
if for each wall $m$ and panel $p$ of $\Delta_\Lambda$ we have
$$ \val_{\tau(m)} \circ \tau = \mepi\circ\val_m 
\;\text{  and  } \;
\val_{\tau(p)} \circ \tau = \mepi\circ\val_p$$
where $\val_m$ and $\val_p$ are as defined in~\ref{Def_projval}.
\end{definition}

\begin{prop}\label{Prop_ind-iso}
Notation is as in \ref{Not_eco}. An ecological isomorphism $\tau: \Delta_\Lambda \longrightarrow \Delta_\Gamma$ induces maps
$\tau_m :T_m \longrightarrow T_{\tau(m)}$ and $\tau_p :T_p \longrightarrow T_{\tau(p)}$ for all walls $m$ and panels $p$ in $\Delta_\Lambda$ such that 
\begin{equation}\label{num42}
d_{T_{\tau(m)}}(\tau_m(x), \tau_m(y))=\mepi(d_{T_m}(x,y))
\end{equation}
for all $x,y\in T_m$, and 
\begin{equation}\label{num43}
d_{T_{\tau(p)}}(\tau_p(x), \tau_p(y))=\mepi(d_{T_p}(x,y))
\end{equation}
for all $x,y\in T_p$. In particular if $\Lambda=\Gamma$ and $e=id_\Lambda$ then $\tau_m$ and $\tau_p$ are isometries for all walls $m$ and panels $p$.
\end{prop}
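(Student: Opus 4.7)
The plan is to construct $\tau_m$ and $\tau_p$ by combining the ends-description of the wall- and panel-trees with the base change functor for $\Lambda$-trees (Proposition~\ref{Prop_baseChangeTrees}) and the reconstruction theorem (Theorem~\ref{Thm_trees}). First I would note that since $\tau\colon\Delta_\Lambda\to\Delta_\Gamma$ is an isomorphism of spherical buildings, it sends the wall $m$ to a wall $\tau(m)$ and the panel $p$ to a panel $\tau(p)$, and induces bijections between the half-apartments of $\Delta_\Lambda$ bounded by $m$ and those of $\Delta_\Gamma$ bounded by $\tau(m)$, respectively between the chambers of $\Delta_\Lambda$ containing $p$ and those of $\Delta_\Gamma$ containing $\tau(p)$. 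By Proposition~\ref{Prop_wallTree} and its panel-analogue these are precisely the $\App$-ends of $T_m$, $T_{\tau(m)}$, $T_p$, $T_{\tau(p)}$. So $\tau$ provides a bijection $\tau_*\colon\binfinity T_m\to\binfinity T_{\tau(m)}$ and, analogously, $\tau_*\colon\binfinity T_p\to\binfinity T_{\tau(p)}$.

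Next I would reinterpret the ecological condition. By Definition~\ref{Def_canval} the canonical valuations of $T_m$ and $T_{\tau(m)}$, viewed on ends, are exactly $\val_m$ and $\val_{\tau(m)}$, so the assumption $\val_{\tau(m)}\circ\tau=\mepi\circ\val_m$ says
\[
\val_{T_{\tau(m)}}\bigl(\tau_*(a),\tau_*(b);\tau_*(c),\tau_*(d)\bigr)=\mepi\bigl(\val_{T_m}(a,b;c,d)\bigr)
\]
for every quadruple of $\App$-ends of $T_m$, and likewise for panel trees.

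Now I apply Proposition~\ref{Prop_baseChangeTrees} to the $\Lambda$-tree $T_m$ along $\mepi\colon\Lambda\to\Gamma$, obtaining a $\Gamma$-tree $T_m'$ together with a map $\phi\colon T_m\to T_m'$ satisfying $d_{T_m'}(\phi(x),\phi(y))=\mepi(d_{T_m}(x,y))$. A direct check in the Alperin--Bass construction (which produces $T_m'$ as a quotient of $T_m$ by the convex subgroup $\ker\mepi$) shows that $\phi$ induces a bijection on $\App$-ends under which the canonical valuation of $T_m'$ becomes $\mepi\circ\val_m$. Combined with the ecological identity, the $\Gamma$-trees $T_m'$ and $T_{\tau(m)}$ carry the same projective valuation on their ends once the ends are identified via $\tau_*$. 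By the essentially unique reconstruction of a tree with sap from a projective valuation (Theorem~\ref{Thm_trees}, applied via the universal property of the Alperin--Bass construction), there is an isomorphism $\mu\colon T_m'\to T_{\tau(m)}$ of $\Gamma$-trees with sap inducing $\tau_*$ on ends. I then set $\tau_m\define \mu\circ\phi$, so that
\[
d_{T_{\tau(m)}}(\tau_m(x),\tau_m(y))=d_{T_m'}(\phi(x),\phi(y))=\mepi\bigl(d_{T_m}(x,y)\bigr),
\]
which is equation~\eqref{num42}. The argument for $\tau_p$ is identical, with panel trees and chambers in place of wall trees and half-apartments. If $\Lambda=\Gamma$ and $\mepi=\mathrm{id}_\Lambda$, the base change is trivial, $\phi$ is itself an isometry, and the composition $\tau_m=\mu\circ\phi$ is an isometry as well.

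The technically delicate step will be tracking ends through the base change, i.e.\ verifying that $\phi$ really induces a bijection $\binfinity T_m\to\binfinity T_m'$ compatible with projective valuations; this relies on the fact that $\ker\mepi$ is a convex subgroup, so lines and rays are preserved and asymptote classes descend faithfully. Once this bookkeeping is done, the combination of Propositions~\ref{Prop_wallTree} and~\ref{Prop_baseChangeTrees} with Theorem~\ref{Thm_trees} yields the induced maps essentially formally.
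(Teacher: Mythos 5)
Your proposal is correct and follows essentially the same route as the paper: apply the base change functor (Proposition~\ref{Prop_baseChangeTrees}) to $T_m$, identify the ends of the resulting $\Gamma$-tree with those of $T_m$ and, via $\tau$, with those of $T_{\tau(m)}$, observe that the canonical valuations then agree by ecologicity, and conclude with Theorem~\ref{Thm_trees}. The only difference is that you spell out the final composition $\tau_m=\mu\circ\phi$ and the bookkeeping of ends under base change, which the paper leaves implicit.
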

\begin{proof}
Fix $m$ in $\Delta_\Lambda$. By Proposition~\ref{Prop_baseChangeTrees} there exists an $\Gamma$-tree $(T,d)$ and a map $\phi: T_m \rightarrow T$, unique up to an $\Gamma$-isometry, such that for all $x,y\in T$
$$
d(\phi(x),\phi(y))=e(d_{T_m}(x,y)).
$$
The set $\partial_{\App} T$ is, by definition, in canonical bijection to the set $\partial_{\App_m} T_m$. Identify $\partial_{\App} T$ and $\partial_{\App_m} T_m$ and denote this set with $E$. The canonical valuation $\val_T$ of pairwise different $a,b,c,d$ in $E$ is the signed distance between the two branching points $\kappa(a,b,c)$ and $\kappa(a,b,d)$. Compare Definition~\ref{Def_canval}. Since $\tau$ is an isomorphism, one can also identify $\partial_{\App_m} T_{\tau(m)}$ and $E$. Therefore
\begin{align*}
 \val_T(a,b,c,d) 
	& = \pm d(\kappa(a,b,c),\kappa(a,b,d)) \\
	& = \pm e(d_{T_m}(\kappa(a,b,c),\kappa(a,b,d))) \\
	& = \val_{\tau(m)}(a,b,c,d)
\end{align*}
for pairwise different $a,b,c,d$ in $E$. Theorem~\ref{Thm_trees} implies the assertion.
\end{proof}

It would be possible to construct the maps $\tau_m$, $\tau_p$ directly, yet using the base change functor the proof is much shorter. 

The following is the main result of this section. The two major consequences are stated in \ref{Thm_corollary2} and \ref{Thm_corollary1}.

\begin{thm}\label{Thm_iso}
Let notation be as in \ref{Not_eco} and let $\tau:\Delta_\Lambda\rightarrow\Delta_\Gamma$ be an ecological isomorphism of the buildings at infinity. Then there exists a unique surjective map $\rho:X_\Lambda\mapsto X_\Gamma$ mapping $\App_\Lambda$ to $\App_\Gamma$ such that 
\begin{equation}
\partial\rho(S)=\tau(\partial S)
\end{equation}
for all $\App_\Lambda$-Weyl-chambers $S$ of $X_\Lambda$ and such that 
\begin{equation}\label{num44}
d_\Gamma(\rho(x), \rho(y))=\mepi(d_\Lambda(x,y))
\end{equation}
for all points $x,y\in X_\Lambda$. 
\end{thm}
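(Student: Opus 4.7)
The plan is to realise $\rho$ via the bowtie machinery of Section \ref{Sec_spaceofbowties}: by Proposition \ref{Prop_oek5} the points of an affine building are in bijection with equivalence classes of bowties, so it suffices to push bowties forward in a way that respects the equivalence relation and then to verify the metric identity on the resulting map.

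First I would define a map $\tilde\tau:\Tie_\Lambda\to\Tie_\Gamma$ between the spaces of bowties by
\[
\tilde\tau(c,\hat c,\{y_i\}_{i\in I}) \;:=\; \bigl(\tau(c),\,\tau(\hat c),\,\{\tau_{p_i}(y_i)\}_{i\in I}\bigr),
\]
where $\tau_{p_i}\colon T_{p_i}\to T_{\tau(p_i)}$ is the panel-tree map provided by Proposition \ref{Prop_ind-iso}. Since $\tau$ is an isomorphism of spherical buildings, $\tau(c)$ and $\tau(\hat c)$ are opposite and $\tau(p_i)$ is a panel of $\tau(c)$, so $\tilde\tau(\tie)$ is indeed a bowtie of $X_\Gamma$; the ecological hypothesis is precisely what makes the $\tau_{p_i}$ available. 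Next I would verify that $\tilde\tau$ sends adjacent bowties to adjacent bowties (both conditions of Definition \ref{Def_adjacentbowties} are preserved, the first because $\tau$ is a simplicial isomorphism and the second by construction) and that it intertwines the $\sW$-actions of Observation \ref{Obs_W-action} (because $\tau$ is $\sW$-equivariant and $\tau_{w.p}(w.y)=w.\tau_{p}(y)$). These two facts imply that $\tilde\tau$ descends to a well-defined map $\rho:X_\Lambda\to X_\Gamma$ via Proposition \ref{Prop_oek5}.

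The apartment compatibility and the boundary identity $\partial\rho(S)=\tau(\partial S)$ are then automatic from the bowtie definition: an apartment of $X_\Lambda$ is the affine apartment $A_\tie$ of some bowtie, and by construction $\rho(A_\tie)\subseteq A_{\tilde\tau(\tie)}$ with boundary $\tau(\partial A_\tie)$. For surjectivity I would use that $\tau$ is surjective and each $\tau_p$ is surjective, the latter because the base-change tree functor of Proposition \ref{Prop_baseChangeTrees}, applied to the epimorphism $\mepi$, produces onto maps. Uniqueness follows from the observation that any extension $\rho'$ with the listed properties must send a bowtie $\tie$ to a bowtie with chambers $\tau(c),\tau(\hat c)$ and with wall-tree coordinates forced, via (\ref{num42}) and the metric identity (\ref{num44}), to be $\tau_{p_i}(y_i)$; hence $\rho'=\rho$ on every equivalence class.

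The main obstacle is the metric identity $d_\Gamma(\rho(x),\rho(y))=\mepi(d_\Lambda(x,y))$, and my plan for it is to reduce to a single apartment and then compute in hyperplane coordinates. Given $x,y\in X_\Lambda$, axiom (A3) provides a chart $f\in\App_\Lambda$ with $x,y\in f(\MS)$; after translating by an element of $\WT$ we may assume $f^{-1}(x)=0$ and $y-x\in\Cf$. By Proposition \ref{Prop_distance0x}, $d_\Lambda(x,y)$ is an $F$-linear combination of the hyperplane coordinates $(y-x)^\alpha$, and each such coordinate is (up to a rational factor depending only on $\RS$) a signed distance in a wall tree $T_m$ for a wall $m$ of $\partial f(\MS)$. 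On the side of $X_\Gamma$, restricting $\rho$ to $f(\MS)$ produces a map $\rho\circ f$ whose hyperplane coordinates in the image apartment are exactly the images of the $(y-x)^\alpha$ under the tree maps $\tau_m$; by (\ref{num42}) these differ by application of $\mepi$. Since $\mepi$ is $F$-linear, the sum computing $d_\Gamma(\rho(x),\rho(y))$ is $\mepi$ applied to the sum computing $d_\Lambda(x,y)$, which establishes the identity and finishes the proof.
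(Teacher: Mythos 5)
Your proposal follows essentially the same route as the paper: you push bowties forward exactly as in Proposition \ref{Prop_bij}, descend to points via Proposition \ref{Prop_oek5}, and then prove the metric identity by restricting to a single apartment and comparing hyperplane coordinates through the wall-tree maps of Proposition \ref{Prop_ind-iso}, finishing with Proposition \ref{Prop_distance0x} and the $F$-linearity of $\mepi$, which is precisely the paper's argument. Your added sketch of uniqueness via the forced wall-tree coordinates is a reasonable way to make explicit a point the paper leaves largely implicit, but it does not change the substance of the proof.
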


Note that $\rho$ is an isomorphism if $e$ is the identity. Hence

\begin{corollary}\label{Cor_iso}
Let $(X,\App)$ be an affine building. Assume $\tau$ is an ecological automorphisms of the building at infinity $\partial_\App X$. Then there exists a unique automorphism $\rho:X\mapsto X$ such that $\tau$ is induced by $\rho$.
\end{corollary}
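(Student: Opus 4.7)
The plan is to deduce this directly from Theorem~\ref{Thm_iso} by specializing to the case $\Lambda=\Gamma$, $\mepi=\mathrm{id}_\Lambda$, and $X_\Lambda=X_\Gamma=X$. Since $\tau$ is an ecological isomorphism of $\partial_\App X$ with itself, the theorem produces a (unique) surjective map $\rho:X\to X$ sending $\App$ to $\App$ such that $\partial\rho(S)=\tau(\partial S)$ for every $\App$-Weyl chamber $S$ and such that $d(\rho(x),\rho(y))=d(x,y)$ for all $x,y\in X$. The distance formula immediately gives injectivity, so $\rho$ is bijective and distance-preserving.

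To promote $\rho$ to an automorphism in the sense of Definition~\ref{Def_iso}, I would construct its inverse as follows. First I would check that $\tau^{-1}:\partial_\App X\to\partial_\App X$ is again ecological: the conditions $\val_{\tau(m)}\circ\tau=\val_m$ and $\val_{\tau(p)}\circ\tau=\val_p$ in Definition~\ref{Def_ecological} are symmetric in $\tau$ and $\tau^{-1}$ once we substitute $m\mapsto\tau^{-1}(m)$, $p\mapsto\tau^{-1}(p)$. Applying Theorem~\ref{Thm_iso} a second time to $\tau^{-1}$ yields a surjective isometry $\rho':X\to X$ compatible with $\App$ satisfying $\partial\rho'(S)=\tau^{-1}(\partial S)$. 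Then $\rho\circ\rho'$ and $\rho'\circ\rho$ are both surjective isometries compatible with $\App$ which induce the identity on $\partial_\App X$; the uniqueness clause of Theorem~\ref{Thm_iso} applied to the identity ecological automorphism forces $\rho\circ\rho'=\rho'\circ\rho=\mathrm{id}_X$. Hence $\rho^{-1}=\rho'$ exists and shares the same compatibility properties.

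Finally, to verify the diagram in Definition~\ref{Def_iso}, I would take $\pi_1=\rho$, $\pi_2=\rho^{-1}$, and define $\pi_{\App}:\App\to\App$ chart-wise: for $f\in\App$, the image $\rho(f(\MS))$ is an apartment in $\App$ (by the ``maps $\App_\Lambda$ to $\App_\Gamma$'' part of Theorem~\ref{Thm_iso}); pick a chart $g$ for it, and use axiom~(A2) together with the isometry property of $\rho$ to produce a $\sigma\in\Aut(\MS)$ making the square commute. Uniqueness of $\rho$ is inherited directly from Theorem~\ref{Thm_iso}.

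The only real content beyond invoking Theorem~\ref{Thm_iso} is the construction of the inverse, and the main (minor) obstacle will be to check cleanly that $\tau^{-1}$ is ecological, which amounts to an unwinding of Definition~\ref{Def_ecological} using that the canonical valuations $\val_m,\val_p$ are intrinsic to the trees $T_m,T_p$ and that $\tau$ is a bijection on walls and panels.
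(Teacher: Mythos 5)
Your proposal is correct and follows essentially the same route as the paper, which obtains the corollary by specializing Theorem~\ref{Thm_iso} to $\Lambda=\Gamma$ and $\mepi=\mathrm{id}$ and noting that the resulting distance-preserving surjection is an isomorphism. Your explicit construction of the inverse via the ecological automorphism $\tau^{-1}$ and the uniqueness clause simply spells out what the paper leaves implicit in the remark preceding the corollary.
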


\begin{prop}\label{Prop_bij}
In addition to the notation fixed in \ref{Not_eco} let $\Tie_\Lambda$ and $\Tie_\Gamma$ denote the space of bowties of $X_\Lambda$ and $X_\Gamma$, respectively. Let $\tau$ be an ecological isomorphism and let $\tau_p$ be as in Proposition~\ref{Prop_ind-iso}. Let $\rho :\Tie_\Lambda \rightarrow \Tie_\Gamma$ be defined as follows
$$
\rho: (c, \hat{c}, \{y_i\}) \longmapsto (\tau(c), \tau(\hat{c}), \{\tau_{p_i}(y_i)\}).
$$
Then the following hold
\begin{enumerate}
\item \label{num45} The map $\rho$ is surjective and preserves equivalence of bowties.
\item \label{num46} The preimage $\rho^{-1}(\tie )$ of a bowtie $\tie\in\Tie_\Gamma$ is parametrized by $(\mathrm{ker}(e))^n$.
\end{enumerate}
\end{prop}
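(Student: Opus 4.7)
The plan is to verify the two assertions separately, with the fibre description in (2) carrying most of the content.

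For (1), surjectivity of $\rho$ splits into two pieces: $\tau$ is a bijection on chambers of $\Delta_\Lambda$ and $\Delta_\Gamma$ (part of being an isomorphism), and each $\tau_{p_i}\colon T_{p_i}\to T_{\tau(p_i)}$ is surjective. The latter follows from the construction in Proposition~\ref{Prop_ind-iso}, which exhibits $T_{\tau(p_i)}$ (up to canonical isometry) as the base change of $T_{p_i}$ along $e$: inspecting the Alperin--Bass model, the base-change morphism acts by $\langle x,s\rangle \mapsto \langle x,e(s)\rangle$ and is surjective whenever $e$ is. For the equivalence statement it suffices to check $\rho$ on the two generators of the equivalence relation: adjacency is immediate from Definition~\ref{Def_adjacentbowties} since $\tau$ is a type-preserving bijection on chambers and panels, and equivariance under the $\sW$-action follows because $\tau$ restricts to an isomorphism of colored Coxeter complexes on each apartment of $\Delta_\Lambda$ and so commutes with the Coxeter group action. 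A path of equivalent bowties is therefore mapped to a path of equivalent bowties.

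For (2), fix $\tie = (d, \hat d, \{z_i\}) \in \Tie_\Gamma$. Any preimage $(c, \hat c, \{y_i\})$ must satisfy $c = \tau^{-1}(d)$ and $\hat c = \tau^{-1}(\hat d)$ by bijectivity of $\tau$, so the panels $p_i$ of $c$ are fixed. Conversely, any choice of $y_i \in \tau_{p_i}^{-1}(z_i)$ produces a valid bowtie in the sense of Definition~\ref{Def_bowtie}, giving
\[ \rho^{-1}(\tie) \;\longleftrightarrow\; \prod_{i=1}^{n} \tau_{p_i}^{-1}(z_i). \]
It then remains to parametrize each fibre $\tau_{p_i}^{-1}(z_i)$ by $\mathrm{ker}(e)$. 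Fixing one preimage $y_{i,0}$, Proposition~\ref{Prop_ind-iso} gives $d_{T_{p_i}}(y_{i,0},y) \in \mathrm{ker}(e)$ for every $y$ in the fibre; choosing an apartment line of $T_{p_i}$ through $y_{i,0}$ (which exists because panel trees of generalized affine buildings are leafless), every $k \in \mathrm{ker}(e)$ is realized by a unique point at signed distance $k$ along a fixed direction; and the universal property in Proposition~\ref{Prop_baseChangeTrees} together with the rigidity of segments in a $\Gamma$-tree forces the entire fibre to collapse onto this line. This yields a bijection $\tau_{p_i}^{-1}(z_i) \longleftrightarrow \mathrm{ker}(e)$ after choice of base-point and orientation, and multiplying over $i$ gives the desired parametrization by $(\mathrm{ker}(e))^n$.

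The main obstacle will be the final step of (2): ruling out preimages of $z_i$ that lie off the chosen apartment line through $y_{i,0}$. This uses the specific structure of $T_{p_i}$ as a $\Lambda$-tree with sap inherited from $(X_\Lambda,\App_\Lambda)$, in which every point lies on an apartment line with both endpoints at infinity, and is where the setting of generalized affine buildings is essential, since the analogous statement for arbitrary $\Lambda$-trees with convex-subtree fibres need not hold.
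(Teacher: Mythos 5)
Your part (1) and the reduction at the start of part (2) run along the same lines as the paper's proof: surjectivity of $\rho$ is inherited from surjectivity of $\tau$ and of the induced tree maps, equivalence is checked on the two generating moves (the paper in fact disposes of the $\sW$-moves by noting that the apartment of a bowtie is determined by its pair of opposite chambers and then only checks adjacency; like the paper, you do not verify the compatibility $\tau_{w.p_i}(w.y_i)=w.\tau_{p_i}(y_i)$ explicitly), and the fibre of $\rho$ over a bowtie is identified with the product of the fibres of the $\tau_{p_i}$.

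The genuine problem is the last step of (2), which you yourself flag as the main obstacle: the claim that the whole fibre $\tau_{p_i}^{-1}(z_i)$ collapses onto a single apartment line of $T_{p_i}$. Nothing you invoke gives this. Proposition~\ref{Prop_ind-iso} only yields $e\bigl(d_{T_{p_i}}(y,y_{i,0})\bigr)=0$ for $y$ in the fibre, and conversely the same identity shows that \emph{every} point of $T_{p_i}$ whose distance to $y_{i,0}$ lies in $\ker(e)$ is automatically in the fibre; so the fibre is the entire subtree of points at $\ker(e)$-valued distance from $y_{i,0}$. Whenever $\ker(e)\neq 0$ and the panel tree branches within that distance (as it does for thick buildings), this subtree contains branch points and is not contained in any one line, so the collapse you assert is simply false, and neither the universal property of Proposition~\ref{Prop_baseChangeTrees} (a statement about factoring maps \emph{out of} $T_{p_i}$, silent about fibres) nor leaflessness of panel trees can repair it. The paper does something weaker at this point: it restricts the induced map to a fixed apartment of the panel tree, on which the restriction is literally $e:\Lambda\to\Gamma$, and reads off the $\ker(e)$-parametrization of the preimages inside that line; it never asserts, and does not attempt to prove, the global collapse. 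To align with the paper's proof you should delete the collapse argument and carry out the count inside the distinguished apartment line of each $T_{p_i}$ determined by the bowtie's apartment.
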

\begin{proof}
Since $\tau$ and the $\tau_m$ are surjective $\rho$ is surjective as well.
Let bowties $\tie$ and $\tie'$ be given such that the associated apartments $a_\tie$ and $ a_{\tie'}$ are equal. Then $a_{\rho(\tie)}= a_{\rho(\tie')}$ since $\tau$ is an isomorphism and the apartments $a_{\tie}$, respectively $a_{\tie'}$, are uniquely determined by $c$ and $\hat{c}$, respectively $c'$ and $\hat{c}'$.
To prove \ref{num45} it is therefore enough to show that $\rho$ preserves adjacency.
Assume $\tie$ and $\tie'$ are adjacent bowties in $\Tie_\Lambda$. Then there exists an index $i$ such that $\hat{c} \cap \hat{c}'=\hat{p}_i$.
By assumption $c=c'$ and $y_j=y'_j$ for all $j$. 
The map $\tau$ is an isomorphism hence $\tau(c)=\tau(c')$ and 
$\tau(\hat{c})\cap\tau(\hat{c}')= \tau(\hat{p}_i)$ therefore $\rho(\tie)$ and $\rho(\tie')$ are adjacent.

It remains to prove \ref{num46}. The preimage under $\rho$ of a bowtie $\tie=(c, \hat{c}, \{y_i\})$ in $\Tie_\Gamma$ is the following set
$$
\{ (d,\hat{d}, \{x_i\}) \;:\; c=\tau(d), \hat{c}= \tau(\hat{d}), \tau_{\tau^{-1}(p_i)}(x_i)=y_i \text{ for all } i\in I\}.
$$
Further
$$
\rho^{-1}(\tie)
=\{(d,\hat{d}, \{x_i\}\}) : \tau_{q_i}(x_i)=y_i \text{ for all } i\in I\}.
$$
Therefore $\rho^{-1}(\tie)$ is parametrized by the pre-images of the $y_i$ in the panel tree. Each apartment of a panel tree is isomorphic to $\Lambda$, respectively $\Gamma$, and for each $i$ the restriction of $\tau_{q_i}$ to a fixed apartment equals $e:\Lambda\mapsto \Gamma$. Therefore we see that $\ker{e}$ is exactly $\{x_i : \tau_{q_i}(x_i)= y_i\}$ and the assertion follows.
\end{proof}


\begin{proof}[Proof of Theorem~\ref{Thm_iso}]
It is clear by definition of $\rho$ that $\partial (\rho(S)) = \tau(\partial S)$ for all Weyl chambers $S$ in $X_\Lambda$. Therefore $\tau$ is induced by $\rho$. It remains to prove (\ref{num44}).
Given $x,y\in X_\Lambda$. Let $A$ be an apartment of $X_\Lambda$ containing $x$ and $y$. Fix a chart and identify $A$ with the model space $\MS_\Lambda$ such that $y=0_\Lambda$ and $x\in\Cf^\Lambda$, where by $\Cf^\Lambda$ we mean the fundamental Weyl chamber in $\MS_\Lambda$. 
Let $A'$ be the the image $\rho(A)$ in $X_\Gamma$. Identify $A'$ with $\MS_\Gamma$ such that $0_\Gamma=\rho(0_\Lambda)$ and $\rho(x)\in\Cf^\Gamma$. 
Recall that by Proposition~\ref{Prop_ind-iso} an ecological isomorphism induces maps $\tau_m:T_m\mapsto T_{\tau(m)}$ such that (\ref{num42}) holds. 

Associated to $x$ there exists an $n$-tuple $(k_1, \ldots, k_n)\in \Lambda^n$ such that 
$$
x\in H_{i, k_i}=\{z :  \lb z, \alpha_i^\vee\rb = k_i\}
$$
for all $i=1,\ldots,n$. Let $m_i$ be the parallel class of $H_{i, k_i}$ and let $ T_{m_i}$ be the corresponding wall tree. Then $H_{i,k_i}$ determines a unique point $y_i$ in $T_{m_i}$. By definition of the distance function $d_{T_{m_i}}$ on $T_{m_i}$ and by definition of $x^i$ we have that
$$
d_{T_{m_i}}(y_i, 0_{m_i})=k_i= \lb x, \alpha_i^\vee \rb = 2x^i. 
$$
Hence the $k_i$ determine the coordinates $x^i$ uniquely. 

By Proposition~\ref{Prop_ind-iso} and the definition of $\rho$ in Proposition~\ref{Prop_bij} the following is true for all $i\in I$ 
$$
2(\rho(x))^i=d_{T_{\tau(m_i)}}(\tau_{m_i}(y_i), \tau_{m_i}(0_{m_i})=\mepi(d_{T_{m_i}}(y_i, 0_{m_i})).
$$
Proposition~\ref{Prop_distance0x} implies that for all $z\in\Cf^\Gamma$
$$
d_\Gamma(z,0)=\sum_{\alpha\in\RS^+}\sum_{i=1}^n b_i^\alpha z^i
$$
with the coefficients $b_i^\alpha$ defined by $\alpha=\sum_{i=1}^n b_i^\alpha\alpha_i$. 
Put $z=\rho(x)$. Using the equations above, we have
\begin{align*}
d_\Gamma(\rho(x),0)
	&=\sum_{\alpha\in\RS^+}\sum_{i=1}^n p_i^\alpha (\rho(x))^i \\
	&=\sum_{\alpha\in\RS^+}\sum_{i=1}^n p_i^\alpha \mepi(d_{T_{m_i}}(y_i, 0_{m_i})) \\
	&=\mepi\left(\sum_{\alpha\in\RS^+}\sum_{i=1}^n p_i^\alpha d_{T_{m_i}}(y_i, 0_{m_i})\right) =\mepi\left(d_\Lambda(x,0)\right).	
\end{align*}
In particular $d_\Gamma(\rho(x),0)=d_\Lambda(x,0)$ if $\Lambda=\Gamma$ and $e$ is the identity.
\end{proof}

Theorem~\ref{Thm_iso} can be reformulated as follows.

\begin{thm}\label{Thm_corollary2}
Let $\Aut(X,\App)$ be the subgroup of the automorphism group of an affine building $(X,\App)$ that leaves the apartment system $\App$ invariant. For each $\tau\in\Aut(X,\App)$ let $\rho_\tau$ denote the automorphism of $X$ induced by $\tau$. Then
$\tau\mapsto\rho_\tau$ is an isomorphism from the group of ecological automorphisms of $\partial_\App X$ to $\Aut(X,\App)$.
\end{thm}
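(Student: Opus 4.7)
The plan is to verify, in order, that the map $\tau\mapsto\rho_\tau$ is well-defined, a group homomorphism, injective, and surjective. Well-definedness is already in hand: for an ecological automorphism $\tau\in\Aut(\partial_\App X)$ in the sense of Definition~\ref{Def_ecological} (with $\Lambda=\Gamma$, $\mepi=\mathrm{id}_\Lambda$), Corollary~\ref{Cor_iso} provides a unique $\rho_\tau\in\Aut(X,\App)$ which is an isometry (by equation~(\ref{num44}) with $\mepi=\mathrm{id}$) and satisfies $\partial\rho_\tau(S)=\tau(\partial S)$ for every $\App$-Weyl chamber $S$.

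For the homomorphism property, given two ecological automorphisms $\tau_1,\tau_2$ of $\partial_\App X$, the composition $\rho_{\tau_1}\circ\rho_{\tau_2}$ is an element of $\Aut(X,\App)$ (the class $\Aut(X,\App)$ being closed under composition), and on Weyl chambers it satisfies
\[
\partial(\rho_{\tau_1}\circ\rho_{\tau_2})(S)=\tau_1(\tau_2(\partial S))=(\tau_1\circ\tau_2)(\partial S).
\]
The composition $\tau_1\circ\tau_2$ is itself ecological since the ecological condition is preserved under composition of boundary automorphisms (the induced maps on panel and wall trees compose as isometries). The uniqueness clause of Theorem~\ref{Thm_iso} then forces $\rho_{\tau_1\circ\tau_2}=\rho_{\tau_1}\circ\rho_{\tau_2}$. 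Injectivity is immediate: if $\rho_\tau=\mathrm{id}_X$, then for every Weyl chamber $S$ we have $\tau(\partial S)=\partial S$, and since $\partial_\App X$ is the set of parallel classes of $\App$-Weyl chambers, $\tau=\mathrm{id}$.

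For surjectivity, take any $\rho\in\Aut(X,\App)$. Because $\rho$ preserves $\App$ and is an isometry, it carries Weyl chambers to Weyl chambers and preserves parallelism, so $\tau(\partial S):=\partial\rho(S)$ yields a well-defined automorphism $\tau$ of $\partial_\App X$. To see $\tau$ is ecological, observe that $\rho$ sends the set of hyperplanes in a parallel class $m$ isometrically onto those in $\tau(m)$ (via Definition~\ref{Def_wall-panelDistance}), and similarly for panel classes; this induces isometries $T_m\to T_{\tau(m)}$ and $T_p\to T_{\tau(p)}$ whose action on boundary ends agrees with $\tau$. Since the canonical projective valuation of a tree (Definition~\ref{Def_canval}) is defined purely in terms of the tree metric, these isometries yield $\val_{\tau(m)}\circ\tau=\val_m$ and $\val_{\tau(p)}\circ\tau=\val_p$, which is exactly the ecological condition with $\mepi=\mathrm{id}$.

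Having verified $\tau$ is ecological, the associated extension $\rho_\tau\in\Aut(X,\App)$ exists. Both $\rho$ and $\rho_\tau$ lie in $\Aut(X,\App)$ and induce the same map $\tau$ on $\partial_\App X$; by the uniqueness in Theorem~\ref{Thm_iso}, $\rho=\rho_\tau$, proving surjectivity. The main conceptual step — and the only one that requires genuine input beyond Theorem~\ref{Thm_iso} — is recognising that automorphisms preserving $\App$ automatically act by isometries on the panel and wall trees, so the induced boundary automorphism is automatically ecological; once this is granted, the rest is a formal application of the uniqueness provided by Theorem~\ref{Thm_iso}.
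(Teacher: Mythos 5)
Your argument is correct and follows essentially the same route as the paper: the paper's proof consists precisely of the observation that an element of $\Aut(X,\App)$ preserves the tree structure at infinity and hence induces an ecological automorphism of $\partial_\App X$, with the converse (and the identification $\rho=\rho_\tau$) supplied by Theorem~\ref{Thm_iso}. You merely spell out the bijectivity and homomorphism details that the paper leaves implicit via the uniqueness clause of Theorem~\ref{Thm_iso}.
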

\begin{proof}
An element of $\Aut(X,\App)$ preserves the tree structure at infinity and therefore induces an ecological automorphism of $\partial_\App X$. The converse holds by Theorem~\ref{Thm_iso}.
\end{proof}

Let $(X,\App)$ be an affine building and denote by $\Delta$ its building at infinity. Let $G^\dagger$ be the subgroup of the automorphism group $\Aut(\Delta)$ generated by the root groups of $\Delta$. Theorem \ref{Thm_corollary1}  is the analog of Theorem 12.31 in \cite{AffineW}. Due to Corollary~\ref{Cor_iso} the proof is literally the same as in \cite{AffineW}. 

\begin{thm}\label{Thm_corollary1}
Suppose the rank of $\Delta$ is at least two, then all elements of $G^\dagger$ are ecological and are therefore induced by a unique element of $\Aut(X,\App)$.
\end{thm}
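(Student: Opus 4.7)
The plan is to reduce the statement to a single root group element and then verify the ecological condition on panel and wall trees. By Corollary~\ref{Cor_iso} an ecological automorphism of $\partial_\App X$ extends uniquely to an element of $\Aut(X,\App)$, so it suffices to show that every generator of $G^\dagger$ is ecological. The collection of ecological automorphisms of $\Delta$ is clearly closed under composition and inversion (the defining identities $\val_{\tau(m)} \circ \tau = \val_m$ and $\val_{\tau(p)} \circ \tau = \val_p$ transport through compositions), hence it is a subgroup of $\Aut(\Delta)$, and we may restrict attention to a single $u \in U_\alpha$ for each root $\alpha$ of $\Delta$.

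Next I would unpack what Definition~\ref{Def_Moufang} provides for $u$: it fixes pointwise every chamber of $\Delta$ sharing a panel with two chambers contained in $\alpha$, and it acts simply transitively on the set of apartments of $\Delta$ that contain $\alpha$. For any wall $m$ of $\Delta$, I would consider the induced map $u_m$ on the panel/wall tree $T_m$ of Proposition~\ref{Prop_wallTree}, noting that the $\App$-ends of $T_m$ are in bijection with the half-apartments of $\partial X$ with boundary $m$, so that $u_m$ permutes $\App$-ends exactly as $u$ permutes those half-apartments. The goal is then to show that $u_m$ preserves the canonical projective valuation $\val_m$ of Definition~\ref{Def_canval}.

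If $m \subset \alpha$ (or $p \subset \alpha$ in the panel case), then $u$ fixes $m$ and every half-apartment of $\partial X$ with boundary $m$ that sits inside an apartment containing $\alpha$. Using rank $\geq 2$, one shows that such half-apartments already exhaust the $\App$-ends of $T_m$: any apartment containing $m$ can be linked to one containing $\alpha$ through a chain of apartments sharing a root, each step producing no new $\App$-end orbits under $u$. Hence $u_m$ fixes all $\App$-ends of $T_m$, and by Theorem~\ref{Thm_trees} (projective valuations determine the tree) this forces $u_m = \mathrm{id}$, which is trivially ecological. For $m \not\subset \alpha$, the strategy is to reduce to the previous case: pick an apartment $A$ of $X$ whose boundary contains both $m$ and $\alpha$ (possible because rank $\geq 2$ guarantees a hyperplane containing $m$ meeting $\alpha$ non-trivially at infinity), and compare $u$ acting on the $\App$-ends of $T_m$ coming from $A$ with $u$ acting on those coming from apartments obtained via the simply-transitive action of $U_\alpha$. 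This comparison, via the Alperin--Bass construction recalled in Section~\ref{Subsec_trees}, identifies $u_m$ with an isometry of $T_m$ that restricts to a translation (or identity) on each line through the basepoint, matching the valuation constraint. The same argument applies to panel trees.

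The main obstacle will be the second case, $m \not\subset \alpha$: we need to verify that a root-group element, defined purely at infinity, respects the $\Lambda$-valued metric coming from the affine structure on the panel/wall trees. This is exactly the point where the rank-$\geq 2$ hypothesis is indispensable, since in rank one there are no non-trivial residues to leverage and $\Delta$ has no root groups in the Moufang sense at all. The computational heart mirrors that of Chapter~12 of \cite{AffineW} for the simplicial case; the task is to check that the same combinatorial identities on perspectivities and projective valuations go through with $\Lambda$ in place of $\R$, which is guaranteed because the trees $T_m$ and their valuations were constructed to satisfy exactly the $\Lambda$-analogues of these identities.
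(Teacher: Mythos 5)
Your overall frame---reduce to a single root group element and then verify the identities of Definition~\ref{Def_ecological} wall by wall and panel by panel---has the right shape, and the reduction step (ecological automorphisms of $\Delta$ form a group; Corollary~\ref{Cor_iso} then supplies the extension and its uniqueness) is fine. For comparison: the paper does not redo this verification at all; it observes that, once Corollary~\ref{Cor_iso} is available, the argument of Theorem~12.31 of \cite{AffineW} applies verbatim. So the entire mathematical content of the theorem is precisely the verification you are sketching, and that is where your proposal has a genuine gap.

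The easy case is fine but your treatment of it is a detour: if the panel $p$ lies in $\alpha\setminus\partial\alpha$, then $u\in U_\alpha$ fixes every chamber containing $p$, hence fixes every $\App$-end of $T_p$, and $\val_{u(p)}\circ u=\val_p$ holds trivially; there is no need to invoke Theorem~\ref{Thm_trees} or an induced map with ``$u_m=\mathrm{id}$''. The essential case, however, is the boundary wall $m=\partial\alpha$ and its panels, where $u$ fixes the end corresponding to $\alpha$ but permutes the remaining ends of $T_m$ nontrivially (recall $U_\alpha$ is simply transitive on the apartments of $\Delta$ containing $\alpha$), so the projective valuation is a genuinely nontrivial invariant to preserve. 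Here your argument becomes circular and then turns into assertion: you speak of ``the induced map $u_m$ on $T_m$'' and of identifying it with an isometry of $T_m$, but $u$ is given only at infinity, and the existence of induced isometries of the panel and wall trees is, via Proposition~\ref{Prop_ind-iso} and Theorem~\ref{Thm_iso}, equivalent to the ecological property you are trying to prove. The closing claim that the needed identities are ``guaranteed because the trees were constructed to satisfy exactly the $\Lambda$-analogues'' is false as stated: the construction of $T_m$ and $\val_m$ does not make an arbitrary automorphism of $\Delta$ valuation-preserving (non-ecological automorphisms exist), so one must use specific properties of root group elements---concretely, how $U_\alpha$-translates of apartments of $X$ containing $\alpha$ at infinity branch along walls, which is the computation carried out in Chapter~12 of \cite{AffineW} and is neither reproduced nor replaced in your sketch. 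Two smaller points: the case split ``$m\subset\alpha$ versus $m\not\subset\alpha$'' does not match the geometry (a wall other than $\partial\alpha$ meeting $\alpha$ has panels interior to $\alpha$ and panels interior to $-\alpha$), and the ``chain of apartments producing no new end orbits'' step is an unproved assertion rather than an argument.
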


 \newpage
\section{Buildings for the Ree and Suzuki groups}\label{Sec_ReeSuzuki}

In \cite{Octagons} we gave a geometric classification of affine buildings associated to the Ree and Suzuki groups. A partial result of that classification is Theorem~\ref{Thm_RDexistence}. The main purpose of this section is to give an algebraic proof of \ref{Thm_RDexistence}. Along the way we prove interesting formulas in \ref{Prop_key} that might be of independent interest.

\subsection{Root data and valuations}

This section is a review of the definitions of root data of spherical buildings and their valuations.

\begin{notation}
In the following let $\sW$ be the spherical Weyl group of an irreducible root system $\RS$ in the sense of Definition~\ref{Def_rootsystem} and let $S$ be the set of generators of $\sW$ determined by a fixed basis $B$ of $\RS$. If $\vert S\vert=2$ and $\sW$ is a dihedral group of order $2n$ for $n=5$ or $n>6$, let $\RS$ consist of $2n$ vectors evenly distributed around the unit circle in $\R^2$ and think of $S$ as the reflections along the hyperplanes determined by two vectors forming an angle of $\frac{(n-1)}{n}180$ degrees, i.e. $\RS$ is a root system of type $I_2(n)$. Denote by $V$ the ambient vector space of $\RS$. 
\end{notation}

In the following the Moufang property, defined in \ref{Def_Moufang}, will play an important role. Let us define the analog in the spherical rank one case.

\begin{definition}\label{Def_MoufangStructure}
\index{Moufang!Moufang structure}
Let $\Delta$ be a spherical building of dimension zero, i.e. of type $\RS=A_1$. Let $\Sigma$ be an apartment of $\Delta$ and identify the two vertices of $\Sigma$ with the two elements of $\RS$. A \emph{Moufang structure} on $\Delta$ is a set of nontrivial groups $(U_\alpha)_{\alpha\in \RS}$ satisfying the following two axioms
\begin{enumerate}
  \item For any $\alpha \in \RS$ the root group $U_\alpha$ fixes $\alpha$ and acts simply transitively on $\Delta\setminus\{\alpha\}$.
  \item For each $\alpha\in \RS$ the stabilizer of $\alpha$ in $G\define \langle U_\alpha, U_{-\alpha} \rangle $ normalizes $U_\alpha$.
\end{enumerate}
The conjugates $U_\alpha^g$, for $g\in G$ are called \emph{root groups}.
\end{definition}

\begin{remark}
Note that a Moufang structure is independent of the choice of the apartment $\Sigma$. The building $\Delta$ has to be thick, i.e. $\vert \Delta \vert \geq 3$, since we assumed the root groups to be non-trivial. Saying that a rank one building $\Delta$ is \emph{Moufang}, we mean that we have a particular (fixed) Moufang structure in mind.
\end{remark}

For the remainder of this section we fix the following notation

\begin{notation}\label{Not_RD}
Let $\Delta$ be an irreducible spherical building of type $\RS$ satisfying the Moufang condition as defined in \ref{Def_Moufang} respectively \ref{Def_MoufangStructure}. Denote by $(\sW,S)$ the Coxeter system associated to $\RS$ and by $V$ its ambient space. Fix an apartment $\Sigma$ of $\Delta$ and identify its roots with the elements of $\RS$. The root group associated to a root $\alpha$ is denoted by $U_\alpha$.
\end{notation}

\begin{prop}\label{Prop_tec1}
With notation as in \ref{Not_RD} let $\alpha$ be an element of $\RS$ and $-\alpha$ its opposite root in an apartment $\Sigma$. Then
\begin{enumerate}
  \item There exist maps $\lambda,\kappa: U^*_\alpha \rightarrow U^*_{-\alpha}$ such that for all $u\in U^*_\alpha$
	$$m_\Sigma(u)\define \kappa(u) u \lambda(u)$$
	fixes $\Sigma$ setwise and induces the unique reflection $s_\alpha$ on $\RS$, as defined in \ref{Def_reflection}.
  \item For all $u\in U_\alpha$ one has $m_\Sigma(u)^{-1}=m_\Sigma(u^{-1})$.
\end{enumerate}
Further let $\Gdagger$ be the subgroup of $\Aut(\Delta)$ generated by the root groups $U_\alpha$, $\alpha\in\RS$. Then $\Gdagger$ is transitive on the set of all pairs $(\Sigma,c)$ of apartments $\Sigma$ in $\Delta$ and chambers $c\in \Sigma$.
\end{prop}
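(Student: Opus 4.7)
The plan is to first handle parts (1) and (2) by reducing to the rank-one Moufang set on the two vertices of $\Sigma$ obtained by restricting to $\langle U_\alpha,U_{-\alpha}\rangle$, and then to deduce the transitivity of $\Gdagger$ on pairs $(\Sigma,c)$ by combining the resulting ``Weyl elements'' with the transitivity built into the Moufang condition.

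First I would prove (1) in the rank-one situation, which is where the content really lies, since the construction of $m_\Sigma(u)$ is local to the Moufang set $\{\alpha,-\alpha\}$. Fix $u\in U_\alpha^*$. Because $U_\alpha$ fixes $\alpha$ and acts simply transitively on $\Delta\setminus\{\alpha\}$, the vertex $v:=u(-\alpha)$ is different from $-\alpha$ (otherwise simple transitivity on $\Delta\setminus\{\alpha\}$ would force $u=1$). Using simple transitivity of $U_{-\alpha}^*$ on $\Delta\setminus\{-\alpha\}$, I define $\kappa(u)\in U_{-\alpha}^*$ to be the unique element with $\kappa(u)(v)=\alpha$, so that $\kappa(u)u$ already sends $-\alpha\mapsto\alpha$. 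Dually, I let $\lambda(u)\in U_{-\alpha}^*$ be the unique element with $\lambda(u)(\alpha)=u^{-1}(-\alpha)$; then $\kappa(u)u\lambda(u)$ also sends $\alpha\mapsto -\alpha$, so $m_\Sigma(u)$ stabilizes $\{\alpha,-\alpha\}$ setwise and swaps the two vertices, whence it induces $s_\alpha$ on $V$. In higher rank I would run the same argument inside the rank-one Moufang substructure carried by $\langle U_\alpha,U_{-\alpha}\rangle$ (on a suitable rank-one residue obtained by intersecting with the panels perpendicular to $\alpha$), and then observe that any element of $\Aut(\Delta)$ which stabilizes $\Sigma$ and swaps $\alpha$ with $-\alpha$ is forced to act on $V$ as $s_\alpha$.

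Part (2) is then a uniqueness argument. Writing
\[
m_\Sigma(u)^{-1}=\lambda(u)^{-1}\,u^{-1}\,\kappa(u)^{-1},
\]
this is again an element of the form $\kappa'\,u^{-1}\,\lambda'$ with $\kappa',\lambda'\in U_{-\alpha}^*$ that stabilizes $\Sigma$ and swaps $\alpha$ and $-\alpha$. A short check shows $\lambda(u)^{-1}$ sends $u^{-1}(-\alpha)$ to $\alpha$ and $\kappa(u)^{-1}$ sends $\alpha$ to $u(-\alpha)$, so by the uniqueness of $\kappa(u^{-1})$ and $\lambda(u^{-1})$ these two outer factors are precisely $\kappa(u^{-1})$ and $\lambda(u^{-1})$, yielding $m_\Sigma(u)^{-1}=m_\Sigma(u^{-1})$.

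For the last assertion I would combine three ingredients. The elements $m_\Sigma(u)$ for $\alpha\in\RS$ and $u\in U_\alpha^*$ lie in $\Gdagger$ and act on $\Sigma$ as the generating reflections $s_\alpha$ of $\sW$, so $\Gdagger$ is already transitive on the chambers of a fixed apartment $\Sigma$. Next, the Moufang property (Definition~\ref{Def_Moufang}, or its rank-one analogue~\ref{Def_MoufangStructure}) gives, for each root $\alpha$, transitivity of $U_\alpha$ on apartments containing $\alpha$; an induction along a minimal gallery from $c'$ to a chamber of $\Sigma$, applying the appropriate root group at each step, shows that $\Gdagger$ is transitive on the chambers of $\Delta$. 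Combining these two facts: given an arbitrary pair $(\Sigma',c')$, first apply a chamber-transitivity element of $\Gdagger$ to move $c'$ to a chosen chamber $c\in\Sigma$, and then, stabilizing $c$, use root groups based at $c$ to carry the image of $\Sigma'$ onto $\Sigma$. The main obstacle will be the clean passage from the rank-one setting to higher rank in part (1): one must identify the right rank-one substructure (the ``wall residue'' transverse to $H_\alpha$) on which $\langle U_\alpha,U_{-\alpha}\rangle$ induces a Moufang set, and verify that the element constructed there acts as the global reflection $s_\alpha$ on all of $\Sigma$, not merely on the two-vertex residue.
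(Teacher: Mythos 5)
The paper itself does not prove this proposition; it quotes it from the literature (\cite[6.1--6.3]{TW} for parts 1 and 2, \cite[11.12]{Weiss} for the transitivity claim), so your attempt is a from-scratch reconstruction. Your rank-one construction of $\kappa(u),\lambda(u)$ and the uniqueness argument for $m_\Sigma(u)^{-1}=m_\Sigma(u^{-1})$ are correct and standard. However, there is a genuine gap exactly where you flag the ``main obstacle'': in rank at least two the hypothesis you use -- simple transitivity of $U_\alpha$ on $\Delta\setminus\{\alpha\}$ -- is simply not available (the Moufang condition of Definition \ref{Def_Moufang} gives simple transitivity of $U_\alpha$ on the set of \emph{apartments containing} $\alpha$), and your proposed detour through a panel residue only produces an element that behaves correctly on that single residue; the claim that it stabilizes all of $\Sigma$ and induces $s_\alpha$ globally is left unproved, and that is the actual content of part 1 in the cases the paper needs (types $\mathrm{B}_2$, $\mathrm{G}_2$, $\mathrm{F}_4$ and $I_2(8)$). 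The standard repair is to run your rank-one argument not on a panel residue but on the set of roots of $\Delta$ having the same wall $\partial\alpha$ (equivalently, apartments containing $\partial\alpha$): $U_{\pm\alpha}$ act on this set as a Moufang set, the resulting $m=\kappa(u)u\lambda(u)$ satisfies $m(\alpha)=-\alpha$ and $m(-\alpha)=\alpha$, hence $m(\Sigma)=\Sigma$ automatically, and since $\kappa(u),\lambda(u),u$ all fix the wall $\partial\alpha$ pointwise, $m$ fixes $\partial\alpha$ pointwise and therefore induces precisely $s_\alpha$ on $\Sigma$.

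The transitivity statement also hides its real work in one clause. Your first two ingredients are fine (the $m_\Sigma(u)$ induce $\sW$ on $\Sigma$, and chamber transitivity follows even more easily from the building axiom that any two chambers lie in a common apartment, whose reflections you can realize once part 1 is available for that apartment). But the final step, ``stabilizing $c$, use root groups based at $c$ to carry the image of $\Sigma'$ onto $\Sigma$,'' is exactly the nontrivial assertion that $\mathrm{Stab}_{\Gdagger}(c)$ is transitive on the apartments containing $c$: two such apartments need not share a root, so one must argue by induction (e.g.\ on the size of $\Sigma_1\cap\Sigma_2$, using convexity to find a root of $\Sigma_1$ containing $\Sigma_1\cap\Sigma_2$ and then the Moufang transitivity to pass to a closer apartment), and one must also justify that the root groups used along the way lie in $\Gdagger$, which as defined is generated only by the root groups of the roots of the fixed apartment $\Sigma$ (this requires the usual bootstrapping $U_{g\beta}=gU_\beta g^{-1}$). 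As written, neither the gallery induction for chamber transitivity nor this induction on apartments through $c$ is carried out, so part 3 is a sketch rather than a proof.
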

\begin{proof}
 For assertions 1.-3. see \cite[6.1-6.3]{TW} for the second half of the proposition compare \cite[Proposition 11.12]{Weiss}.
\end{proof}

\begin{definition}\label{Def_rootDatum}
\index{root datum}
Let $\Delta$ be an irreducible Moufang spherical building of type $\RS$. A \emph{root datum} of $\Delta$ (based at $\Sigma$) is a pair $(\Sigma, \{U_\alpha\}_{\alpha\in\RS})$, where $\Sigma$ is an apartment of $\Delta$ and $\{U_\alpha\}_{\alpha\in\RS}$ is the set of corresponding root groups.
\end{definition}

One can prove that a root datum is, up to conjugation in $\Gdagger$, independent of the choice of an apartment $\Sigma$ and the identification of the roots of $\Sigma$ with the elements of $\RS$. 

\begin{definition}\label{Def_intervalRS}
Let $\RS$ and $(\sW,S)$ be as in \ref{Not_RD}. For roots $\alpha,\beta\in\RS$ with $\alpha\neq\pm \beta$ let the \emph{interval} $[\alpha,\beta]$ be the sequence $(\gamma_1, \ldots,\gamma_s)$ of roots $\gamma_i\in\RS$ such that
$$
\frac{\gamma_i}{(\gamma_i,\gamma_i)} = p_i \frac{\alpha}{(\alpha,\alpha)} + q_i \frac{\beta}{(\beta,\beta)}
$$
for some positive real numbers $p_i,q_i$ and such that
$$
\angle (\gamma_i,\alpha) < \angle(\gamma_j ,\alpha) \text{ if and only if } i < j.
$$
Define the open interval $(\alpha, \beta)$ to be the set $[\alpha, \beta]\setminus\{\alpha, \beta\}$.
\end{definition}

Note that $s$ depends on $\alpha$ and $\beta$ and that sometimes $s=0$ and $[\alpha, \beta]=\emptyset$.

\begin{definition}\label{Def_RDvaluation}
\index{{root datum}!{valuation of a }}
Let $(\Sigma,\{U_\alpha\}_{\alpha\in\RS})$ be a root datum of a spherical building $\Delta$. A collection $\varphi=(\varphi_\alpha)_{\alpha\in\RS}$ of maps $\varphi_\alpha:U^*_\alpha \rightarrow \R$ is a \emph{valuation of the root datum} if the following axioms are satisfied.
\begin{itemize}
\item[$(V0)$] $\vert\varphi_\alpha(U_\alpha)\vert\geq 3$ for all $\alpha\in\RS$
\item[$(V1)$] The set $U_{\alpha,k}\define\{u\in U_\alpha : \varphi_\alpha(u)\geq k\}$ is a subgroup of $U_\alpha$ for all $\alpha\in\RS$ and all $k\in\R$, where we assign $\varphi_\alpha(1)=\infty$ for all $\alpha$.
\item[$(V2)$] Given $\alpha\neq \pm \beta$, the commutator 
$$
\left[U_{\alpha,k}, U_{\beta,l}\right]\subset \prod_{\gamma\in(\alpha,\beta)}U_{\gamma, pk+ql}
$$
where $p,q$ and $(\alpha,\beta)$ are as in \ref{Def_intervalRS}.
\item[$(V3)$] Given $\alpha,\beta\in\RS$ and $u\in U_\alpha^*$, there exists $t\in\R$, such that for all $x\in U_\beta$
$$
\varphi_{s_\alpha(\beta)}\left(x^{m_\Sigma(u)}\right)=\varphi_\beta(x)+t.
$$
Moreover if $\alpha=\beta$ then $t=-2\varphi_\alpha(u)$.
\end{itemize}
Note that condition $(V2)$ is empty in the rank one case.
\end{definition}

\begin{definition}\label{Def_equipollent}
\index{equipollence}
Let $\varphi$ and $\varphi'$ be valuations of a root datum based at $\Sigma$. Then $\varphi$ and $\varphi'$ are \emph{equipollent} if there exists $v$ in the ambient space $V$ of $\RS$ such that for all $\alpha\in\RS$ and all $u\in U_\alpha$ 
$$
\varphi'_\alpha(u)=\varphi_\alpha(u) + (v, \alpha).
$$
We also write $\varphi'=\varphi + v$.
\end{definition}

\subsection{Ree and Suzuki groups}

In this section we collect well known facts about the three families of Ree and Suzuki groups using \cite{TitsOctagons, TitsSuzukiRee} and \cite{TW} as references.

\begin{notation}\label{Not_cases}
We will consider three cases $B,F$ and $G$. Let in all cases $K$ be a field of positive characteristic $p$ and let $\theta$ be a \emph{Tits endomorphism}, meaning that $\theta^2$ is the Frobenius $x\mapsto x^p$ on $K$. Therefore $F\define K^\theta$ is a subfield of $K$ and containing $K^p$.

In \textbf{case B} let $p=2$ and let $L$ be an additive subgroup of $K$ containing $F$ such that $LF\subset L$.
Hence $(K,L,L^\theta)$ is an \emph{indifferent set} as defined in \cite[10.1]{TW}. The induced building $\Btwo(K,L,L^\theta)$ is a Moufang quadrangle called $\Qd(K,L,L^\theta)$ in \cite[16.4]{TW}. Hence its type is $\mathrm{B}_2$.

In \textbf{case F} let again $p=2$. The pair $(K,F)$ is a composition algebra as defined in \cite[30.17]{TW}. This data determines a building $\Ffour(K,F)$ which is spherical and of type $\mathrm{F}_4$.

In \textbf{case G} let $p=3$. Then $(K/F)^\circ$ is a \emph{hexagonal system} in the sense of definition \cite[15.20]{TW}. Determined by this data there is a Moufang hexagon $\Gtwo((K/F)^\circ)$ (of type $\mathrm{G}_2$) which was called $\mathcal{H}_\mathcal{D}((K/F)^\circ)$ in \cite[16.8]{TW}.
From now on let $\Delta$ denote one of the buildings $\Btwo(K,L,L^\theta), \Ffour(K,F)$ and $\Gtwo((K/F)^\circ)$.
\end{notation}

Let $\RS$ be the type of $\Delta$. Denote by $V$ the ambient vector space of $\RS$ and fix a Weyl chamber $S$ in $V$. 
In all three cases there exists a unique nontrivial element $\tau\in\Aut(\RS)$ fixing $S$. Then $\tau$  induces an automorphism of the associated Coxeter complex which has order two and is non-type-preserving.
One can prove 

\begin{thm}\label{Thm_ambientBdg}
Fix an apartment $\Sigma$ of $\Delta$ and a chamber $c\in\Sigma$. Identify the roots of $\Sigma$ with the elements of $\RS$ such that $S$ corresponds to $c$. Then there exists for each $\alpha\in\RS$ an isomorphism $x_\alpha$ from the additive group $(L,+)$ to $U_\alpha$ in case B and from $(K,+)$ to $U_\alpha$ in cases $F$ and $G$ such that the following is true:
There exists a (non-type preserving) automorphism $\rho$ of $\Delta$ fixing $\Sigma$ and $C$ setwise such that
$$x_\alpha(t)^\rho=x_{\tau(\alpha)}(t)$$
for all $\alpha \in\RS$ and all $t\in L$, respectively in $K$.
\end{thm}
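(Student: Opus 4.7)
My plan is to combine two classical ingredients: the explicit parametrizations of root groups coming from the classification of Moufang spherical buildings in \cite{TW}, and Tits's construction of non-type-preserving polarities from \cite{TitsOctagons, TitsSuzukiRee} (see also \cite[Ch.~28]{TW}). The assumption that $\theta$ is a Tits endomorphism, together with the special shape of the data $(K,L,L^\theta)$, $(K,F)$ and $(K/F)^\circ$, is precisely what is needed in each of the three cases to guarantee the existence of an involutive polarity $\rho$ of $\Delta$ that induces the unique nontrivial chamber-fixing diagram automorphism $\tau$.

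First I would fix such a polarity $\rho_0$ from \cite{TW} and adjust it so that it stabilizes the apartment $\Sigma$ and fixes the chamber $c$. This can be done by conjugating in $\Gdagger$: by Proposition~\ref{Prop_tec1}, $\Gdagger$ acts transitively on pairs (apartment, chamber), and any $\Gdagger$-conjugate of $\rho_0$ is again a polarity with the same diagram action $\tau$. Having fixed $\rho$, the classification theorems for $\Qd(K,L,L^\theta)$, $\Ffour(K,F)$ and $\Gtwo((K/F)^\circ)$ in \cite{TW} yield explicit ``standard'' parametrizations $x_\alpha^{\mathrm{std}}$ for the root groups: in cases F and G these are all isomorphisms from $(K,+)$; in case B the long root groups are parametrized by $L$ and the short ones by $L^\theta$.

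The polarity $\rho$ sends $U_\alpha$ to $U_{\tau(\alpha)}$, and its action on parameters typically carries a $\theta$-twist. In case B, $\rho$ sends $x_\alpha^{\mathrm{std}}(t)$ in a long root group to $x_{\tau(\alpha)}^{\mathrm{std}}(t^\theta)$ in the corresponding short root group, and conversely; similar formulas (with $\theta$ on alternating $\tau$-orbits of simple roots) appear in cases F and G. The key normalization step is to redefine $x_\alpha = x_\alpha^{\mathrm{std}}$ on one $\tau$-orbit of simple roots and $x_\beta = x_\beta^{\mathrm{std}} \circ \theta$ on the other, thereby absorbing the twist so that the identity $x_\alpha(t)^\rho = x_{\tau(\alpha)}(t)$ holds on simple roots. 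For a non-simple root $\gamma$, I would define $x_\gamma$ inductively by conjugating $x_\alpha$, for a simple $\alpha$ in the same $\sW$-orbit, by a Weyl group lift $n = m_\Sigma(u_1)\cdots m_\Sigma(u_k) \in \Gdagger$ realising an element $w \in \sW$ with $w(\alpha) = \gamma$. The formula then propagates to all of $\RS$ because $\rho$ permutes such Weyl group lifts compatibly with $\tau$: if $n$ represents $w$ and $x_\alpha(t)^\rho = x_{\tau(\alpha)}(t)$, then $\rho(n)$ represents $\tau w \tau^{-1}$ and conjugates $U_{\tau(\alpha)}$ onto $U_{\tau(\gamma)}$ in the analogous way.

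The main obstacle is the bookkeeping in case F, where $\Ffour(K,F)$ has four simple roots and twenty-four roots in total, and the commutator relations $(V2)$ mix long and short root groups in a way that must remain consistent with the normalization. One must verify that the inductive definition of $x_\gamma$ is independent of the reduced expression for $w$ (perhaps up to an adjustment by a scalar in $F$) and that the commutator formulas are preserved under the modified parametrizations. A subsidiary check is that $\theta \circ \theta$ equals the Frobenius on parameters, so that $\rho^2$ acts as the Frobenius of the building; this is exactly the defining property of a Tits endomorphism and is what makes the two compositions of the normalized $x_\alpha$'s with $\rho$ fit together consistently.
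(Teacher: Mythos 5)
First, a remark on the target: the thesis does not actually prove Theorem~\ref{Thm_ambientBdg} in the text --- the ``proof'' is a pointer to Theorem 5.3 of \cite{Octagons}, where the statement is obtained by writing the map down on the standard root-group parametrizations of \cite{TW} and checking it against the commutator relations. So your sketch has to be measured against that construction in spirit.

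Your strategy is close in outline, but it has a genuine gap at its pivot. After you conjugate an abstractly given polarity $\rho_0$ into one stabilizing $(\Sigma,c)$, all you know is that $\rho$ carries $U_\alpha$ onto $U_{\tau(\alpha)}$; the induced map on parameters is then merely \emph{some} additive isomorphism, and the formula you assert --- that $\rho$ sends $x^{\mathrm{std}}_\alpha(t)$ to $x^{\mathrm{std}}_{\tau(\alpha)}(t^\theta)$, to be absorbed by composing with $\theta$ on one $\tau$-orbit --- is not quotable: it is essentially the content of the theorem. The constructions in \cite{TitsOctagons,TitsSuzukiRee} are not phrased in the coordinates of \cite{TW}, so transporting them is exactly the verification you are skipping; and normalizing away unknown constants by $h_\alpha$-elements, simultaneously for both root lengths in case F and compatibly with $\rho^2$ inducing the Frobenius, is not automatic. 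The same unresolved issue resurfaces in your inductive extension to non-simple roots, where well-definedness ``up to a scalar in $F$'' is left open. The clean way to close the gap (and, in effect, what \cite{Octagons} does) is to run the argument in the opposite direction: fix the \cite{TW} parametrizations attached to $(\Sigma,c)$, reparametrize one $\tau$-orbit by composing with $\theta$ (using $L^\theta\subseteq F\subseteq L$ in case B and $F=K^\theta$ in cases F and G), \emph{define} the would-be $\rho$ on the root groups of $\Sigma$ by $x_\alpha(t)\mapsto x_{\tau(\alpha)}(t)$, verify by hand that this respects the commutator relations of \cite[16.4, 16.8, 16.9]{TW} (this is where $\theta^2=$ Frobenius and the indifferent/composition-algebra/hexagonal hypotheses enter), and then invoke the extension theorem for isomorphisms of root-group data of Moufang spherical buildings to extend it to an automorphism of $\Delta$. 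That automorphism fixes $\Sigma$ and $c$ setwise by construction, so the existence of the polarity is a conclusion rather than an input, and the bookkeeping you flag in case F is exactly the finite list of commutator checks rather than an independence-of-reduced-expression argument.
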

\begin{proof}
Compare \cite[Thm 5.3]{Octagons}
\end{proof}

The map $\rho$ of Theorem~\ref{Thm_ambientBdg}  is an involution of $\Delta$ switching the roots in the basis $B$ of $\RS$ which is determined by $C$ and switching the root groups associated to $B$ as well.

\begin{definition}\label{Def_SuzRee}
\index{Suzuki group}
\index{Ree group}
Denote by $\Delta^\rho$ the fixed point set of $\rho$ in $\Delta$. Let $\Gdagger$ be as in \ref{Prop_tec1} and let $G$ denote the group of automorphisms of $\Delta^\rho$ induced by the centralizer of $\rho$ in $\Gdagger$. In case B the group $\Sz(K,\theta, L)\define G$ is a \emph{Suzuki group}. In case G and F these are the \emph{Ree groups} denoted by $\Ree(K,\theta)$ and $\Fee(K,\theta)$, respectively.\footnote{The groups $\Fee(K,\theta)$ are nowadays more closely associated with Tits and more often called Ree-Tits groups.}
\end{definition}

The spherical buildings for the Ree and Suzuki groups are the fixed point sets of certain involutions of $\Delta$.

Let $\RS$ and $V$ be as in \ref{Thm_ambientBdg}. Denote by $\dot{V}$ the set of fixed points of $\tau$ in $V$. Let $\ddot{\alpha}\define \alpha+\alpha^\tau$ then $\ddot{\alpha}\in \dot{V}$ since $\tau^2$ is the identity. Define $\dot{\alpha}\define \frac{\ddot{\alpha}}{(\ddot{\alpha},\ddot{\alpha})}$ and 
$$
\dot{\RS} = \{\dot{\alpha} : \alpha\in\RS \}.
$$
Then $\dot{\RS}$ is the root system $A_1$ in case B and G, which is easy to see, and equals the root system $I_2(8)$ in case F, which was proved by Tits in \cite{TitsOctagons}.

\begin{prop}\label{Prop_fixedpoints}
The fixed point set $\Delta^\rho$ carries the structure of a spherical building of type $\dot{\RS}$ satisfying the Moufang condition. These are the \emph{Suzuki-Ree-buildings}.
\end{prop}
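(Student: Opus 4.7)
The plan is to follow the strategy of Tits in \cite{TitsOctagons} and \cite{TitsSuzukiRee}, where the Ree and Suzuki geometries are constructed as the fixed-point structures of the polarity $\rho$. The first step is to describe the ``chambers'' and ``apartments'' of $\Delta^\rho$ intrinsically. Since $\rho$ is non-type-preserving and of order two, the simplices of $\Delta$ fixed by $\rho$ that should serve as chambers of $\Delta^\rho$ are precisely those of middle codimension (vertices in cases B and G, edges in case F); in Tits's terminology these are the ``absolute'' simplices of the polarity. Using Theorem~\ref{Thm_ambientBdg} I would identify, for each $\rho$-stable apartment $\Sigma'$ of $\Delta$, the set of $\rho$-fixed simplices in $\Sigma'$ with the Coxeter complex of type $\dot{\RS}$ determined by the fixed subspace $\dot V$ and the root system $\dot\RS$: the projection $\alpha\mapsto \dot\alpha$ sends $\RS$ surjectively onto $\dot\RS$, and the corresponding walls of $\Sigma'$ descend to walls in the $\rho$-fixed apartment.

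Next I would verify the two building axioms for $\Delta^\rho$. For any two $\rho$-fixed chambers $c_1,c_2$ (in the sense above), one has to produce a $\rho$-stable apartment containing both. This is the central geometric lemma of \cite{TitsOctagons}; the idea is to use that $\rho$ acts on the set of apartments containing $c_1$ and $c_2$ and to average, i.e. apply a Moufang root-group element to move an apartment onto its image and invoke Proposition~\ref{Prop_tec1} together with the fixed-point properties of $\theta$ on $K$. The isomorphism axiom between two such apartments then follows from the corresponding axiom for $\Delta$ restricted to the $\rho$-fixed simplices, since an isomorphism between the two ambient apartments of $\Delta$ can be chosen $\rho$-equivariant.

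For the Moufang structure I would define, for each $\dot\alpha\in\dot\RS$, the group $\dot U_{\dot\alpha}$ to be the centralizer in $\langle U_\alpha : \alpha\mapsto \dot\alpha\rangle$ of $\rho$, acting on $\Delta^\rho$. In the rank-one cases (B and G) this is essentially the Suzuki or Ree group; the transitivity requirement of Definition~\ref{Def_MoufangStructure} follows from the simple transitivity of the ambient root group on the opposite chambers and a fixed-point count using $\theta$. In the rank-two case F one has two $W(\tau)$-orbits on $\RS$ corresponding to the two types of roots in $I_2(8)$, and each $\dot U_{\dot\alpha}$ acts simply transitively on the apartments of $\Delta^\rho$ containing $\dot\alpha$; the verification that these groups satisfy the Moufang axioms of Definition~\ref{Def_Moufang} is exactly the content of \cite[\S1-2]{TitsOctagons}.

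The main obstacle is really the case F: verifying that the fixed octagon is thick and that the root groups act transitively requires the explicit commutator calculus for $F_4$ together with the fact that $\theta$ has square equal to Frobenius, so that the equations defining absolute simplices and the identities that produce new $\rho$-fixed apartments have enough solutions. Since Proposition~\ref{Prop_key} (established later in this section) provides exactly these identities, my proof would invoke it at this step; the rank-one cases B and G then follow by a simpler and essentially formal version of the same argument, as carried out in \cite{TitsSuzukiRee}.
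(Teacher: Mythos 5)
The paper itself does not argue this proposition internally; its ``proof'' is a pointer to Theorem 6.5 of \cite{Octagons}, so deferring the hard verifications to Tits's papers is not in itself a defect of your sketch. However, two concrete points in your outline would not survive being written out. First, your combinatorial dictionary for $\Delta^\rho$ is wrong: since $\rho$ is non-type-preserving, \emph{no vertex} of $\Delta$ is fixed by $\rho$, so there are no ``fixed vertices'' to serve as chambers in cases B and G. The $\rho$-invariant simplices are exactly those whose type set is $\tau$-invariant: in cases B and G these are only the invariant chambers (the absolute flags $\{p,\rho(p)\}$), and these \emph{are} the chambers of the rank-one building $\Delta^\rho$; in case F the invariant simplices are the invariant chambers together with the invariant edges of types $\{1,4\}$ and $\{2,3\}$, and it is the invariant \emph{chambers} that become the chambers of the octagon, while the two families of invariant edges become its two kinds of vertices. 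Your assignment (``vertices in cases B and G, edges in case F'' as the chambers) has this backwards, and conflating ``fixed'' with ``absolute'' hides exactly the identification one must make precise before the building axioms can even be stated for $\Delta^\rho$.

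Second, invoking Proposition~\ref{Prop_key} to obtain thickness and root-group transitivity of the fixed octagon is off target. Proposition~\ref{Prop_key} is a statement about a $\theta$-invariant valuation $\nu$ of $K$ and the norms $R$ and $N$; it is proved for, and only used in, the extension of $\nu$ to a valuation of the root datum (Theorem~\ref{Thm_RDexistence}). Proposition~\ref{Prop_fixedpoints} is valuation-free: it holds for any $K$ with Tits endomorphism, and its proof rests on Theorem~\ref{Thm_ambientBdg}, the commutator relations of \cite[16.9]{TW}, the anisotropy of $R$ (respectively $N$) from \cite{TitsSuzukiRee}, and the action of the centralizer of $\rho$ in $\Gdagger$ --- i.e.\ precisely the content of \cite[Thm.~6.5]{Octagons} and of Tits's papers, not of the valuation identities established later in this section. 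As written, your argument for the crucial case F therefore leans on a result that cannot supply what is needed, and the central geometric step (producing a $\rho$-stable apartment through two given fixed chambers) remains a gesture toward ``averaging'' rather than an argument.
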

\begin{proof}
Compare Theorem 6.5 in \cite{Octagons}
\end{proof}

In case F the building $\Delta^\rho$ is a generalized octagon, in case B it is sometimes called Suzuki ovoid and Ree-Tits ovoid in case G.

In the following we will describe the structure of the root datum of the Suzuki-Ree buildings. Let us first fix some notation.

\begin{notation}\label{Not_universal}
Adopt notation for the cases B,F and G from \ref{Not_cases}. Let $\RS$, $V$ and $\Sigma$, $c$ as well as $\rho$ be as in \ref{Thm_ambientBdg}. Denote by $\dot{\Delta}$ the building of type $\dot{\RS}$ appearing in Proposition~\ref{Prop_fixedpoints} and let $\dot{\Sigma}$ be the set of fixed points under $\rho$ in $\Sigma$, which is isomorphic to the Coxeter complex $\Sigma(\dot{\RS})$. Identify the elements in $\dot{\RS}$ with the roots of $\dot{\Sigma }$. Let $\dot{c},\dot{V}$ and $\dot{S}$ be defined analogously. 
In general data belonging to the ambient spherical building $\Delta$ is denoted by letters without a dot and data belonging to the Ree- and Suzuki groups is denoted by the same letter with a dot. 
\end{notation}

\begin{property}{\bf Case F - Octagons}\label{Prop_rgCaseF}
Let $(K,\theta)$ be an octagonal set and let $\KK$ denote the group with underlying set $K\times K$ and multiplication 
$$
(s,t)*(u,v) = (s+u+ t^\theta v, t+v)
$$
for all $(s,t), (u,v)\in \KK$. Note that the inverse of an element $(s,t)\in\KK$ is given by $(s+t^{\theta+1}, t)$. 
The positive roots in $\dot{\RS}$ can be enumerated by $\{i=1,\ldots,8\}$ such that the root group $U_i$ is parametrized by the additive group of $K$ if $i$ is odd and by $\KK$ if $i$ is even. The parametrization maps are $t\mapsto x_i(t)$ if $i$ is odd and 
$(s,t)\mapsto x_i(s,t)$ if $i$ is even.
In the second case write $x_i(t)\define x_i(t,0)$ and $y_i(t)\define x_i(0,t)$  for arbitrary $t\in K$. We refer to these elements as \emph{monomials}. The building $\dot{\Delta}$ is uniquely determined by a list of commutators satisfied by the root groups $\dot{U}_\alpha$. Compare \cite[16.9]{TW}.
A \emph{norm} on $\KK$ is defined by
$$
R(s,t)=t^{\theta+2}+st+s^\theta 
$$ 
for all $(s,t)\in\KK$. Tits proved in \cite{TitsSuzukiRee} that $R$ is anisotropic, i.e. $R(s,t)=0$ only if $(s,t)=(0,0)$.
If $\alpha$ is a root whose root group is parametrized by $\KK$ then let $h_\alpha(s,t)$ denote the product $m_\Sigma(x_\alpha(1,0))m_\Sigma(x_\alpha(s,t))$.
\end{property}

\begin{property}{\bf Case B - Suzuki ovoids}\label{Prop_rgCaseB}
Let $K$ and $L$ be as in \ref{Not_cases}. Note, that as additive groups $L=K^\theta$. Define $\LL$ to be the set $L \times L$ together with the multiplication 
$$
(s,t)*(u,v) = (s+u+ t^\theta v, t+v)
$$
for all $(s,t), (u,v)\in \LL$. 
Note that, as in case F, the inverse of an element $(s,t)\in\LL$ is given by $(s+t^{\theta+1}, t)$. 
The root datum of the building $\Delta^\rho\ddefine \dot{\Delta}$ contains two root groups, namely  $U_+$ and $U_-$, both parametrized by $\LL$. There exist maps $x_\pm:\LL \rightarrow U_\pm$ such that $x_\pm(s,0)x_\pm(0,v)=x_\pm(s,v)$ and 
$$
x_\pm(s,t)x_\pm(u,v)=x_\pm\left(s+u+t^\theta v, t+v\right).
$$
In analogy to case F the \emph{norm} of $\LL$ is defined by
$$
R(s,t)=t^{\theta+2}+st+s^\theta 
$$ 
for all $(s,t)\in\LL$. By \cite{TitsSuzukiRee} the norm $R$ is anisotropic.
Let $h_\alpha(s,t)$ denote the product $m_\Sigma\left(x_\alpha(1,0)\right)m_\Sigma\left(x_\alpha(s,t)\right)$.
\end{property}

\begin{remark}\label{Rem_caseBF}
Let $\dot{\Delta}_F$ denote the building described in \ref{Prop_rgCaseF} and $\dot{\Delta}_B$ the one in \ref{Prop_rgCaseB}. Assume that in case B we have $K=L$. There exist rank two residues of $\dot{\Delta}_F$ which are fixed by $\rho$ and are isomorphic to $\dot{\Delta}_B$. One can identify the residue with $\dot{\Delta}_B$ such that the induced Moufang structure from $\dot{\Delta}_F$ coincides with the one on $\dot{\Delta}_B$.
\end{remark}

\begin{property}{\bf Case G - Ree Tits ovoids}\label{Prop_rgCaseG}
Let $K$ be as described for case G in \ref{Not_cases}. Define $\TT$ to be the set $K \times K \times K$ together with the multiplication 
$$
(r,s,t)*(w,u,v) = (r+w, s+u+r^\theta w, t+v-ru+sw-r^{\theta+1}w)
$$
for all $(r,s,t), (w,u,v)\in \TT$. Note that $(-r, -s + r^{\theta+1}, -t)$ is the inverse of $(r,s,t)$.
The root datum of a Ree Tits ovoid contains two root groups, namely  $U_+$ and $U_-$ which are both parametrized by $\TT$. Hence there exist isomorphisms $x_\pm:\TT \rightarrow U_\pm$ such that
$$
x_\pm(r,s,t)x_\pm(w,u,v)= x_\pm\left(r+w, s+u+r^\theta w, t+v-ru+sw-r^{\theta+1}w\right).
$$
A \emph{norm} of the group $\TT$ is defined by
$$
N(r,s,t)= r^{\theta+1}s^\theta - rt^\theta - r^{\theta+3}s-r^2s^2 + s^{\theta+1}+t^2-r^{2\theta+4} 
$$
for all $(r,s,t)\in\TT$. Note that $N$ is anisotropic by \cite{TitsSuzukiRee}, meaning $N(r,s,t)=0$ only if $(r,s,t)=(0,0,0)$.
Finally let $h_\alpha(r,s,t)$ be equal to $m_\Sigma(x_\alpha(0,0,1))m_\Sigma(x_\alpha(r,s,t))$.
\end{property}

\begin{lemma}\label{Lem_NUM19}
Consider the cases B and F as described in \ref{Prop_rgCaseB} and \ref{Prop_rgCaseF}. Then there exists a root $\alpha\in\RS$ such that $U_\alpha$ is parametrized by $\LL$, respectively $\KK$ and the following formula holds
\begin{equation}\label{Equ_no17}
x_\alpha(u,v)^{h_\alpha(s,t)} = x_\alpha\left(uR(s,t)^\theta, vR(s,t)^{2-\theta}\right)  
\end{equation}
for all $(s,t)\in \LL$ in case B, respectively in $\KK$ in case F. The root $\alpha$ is unique up to the choice of a basis of $\RS$.
Furthermore in case F there exists a second root $\beta$ such that $\alpha$ and $\beta$ form a basis of $\RS$ and such that 
\begin{equation}\label{Equ_no19}
x_\beta(u)^{m_\Sigma\left(x_\beta(t)\right)m_\Sigma\left(x_\beta(1)\right)} = x_\beta(t^{-2}u).
\end{equation}
\end{lemma}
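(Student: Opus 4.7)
The plan is to establish both formulas by reducing them to the standard action of the ``torus element'' $h_\alpha$ on its own root group in a Moufang building, and then identifying the precise automorphism using the non-abelian multiplication in $\KK$, respectively $\LL$.

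First I would fix the roots. In case F (type $\mathrm{F}_4$) the roots fall into two $\sW$-orbits; I take $\alpha$ in the orbit whose root group is parametrized by $\KK$ (the ``long'' orbit in the octagonal setup) and $\beta$ in the other orbit, so that $U_\beta \cong (K,+)$. Choosing them so that $\{\alpha,\beta\}$ lies in the basis $B$ determined by $c$ fixes both up to a choice of chamber. In case B (type $\mathrm{B}_2$) there is, up to $\sW$, a unique root $\alpha$ whose root group is isomorphic to $\LL$, so no choice is needed.

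Second, I would compute the conjugation action. By Proposition~\ref{Prop_tec1}, both $m_\Sigma(x_\alpha(1,0))$ and $m_\Sigma(x_\alpha(s,t))$ preserve $\Sigma$ and induce $s_\alpha$ on $\RS$, so their product $h_\alpha(s,t)$ fixes $\Sigma$ pointwise in the spherical sense and stabilizes $c$. Consequently $h_\alpha(s,t)$ normalizes each root group $U_\gamma$ and in particular induces a group automorphism of $U_\alpha$. Since $U_\alpha \cong \KK$ (respectively $\LL$), this automorphism must preserve the monomial decomposition $x_\alpha(u,0)$, $y_\alpha(0,v)$ up to a twist controlled by the rank-one Moufang structure of $\langle U_\alpha, U_{-\alpha} \rangle$. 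I then invoke the explicit formulas from \cite[Chapter 16 and 32]{TW}: in a Moufang octagon the torus element $h_\alpha(s,t)$ acts on $U_\alpha$ by $(u,v)\mapsto (u\cdot R(s,t)^\theta,\, v\cdot R(s,t)^{2-\theta})$, where $R$ is the octagonal norm defined in~\ref{Prop_rgCaseF}. The exponents $\theta$ and $2-\theta$ arise because $u$ lies in the center of $\KK$ while $v$ generates the quotient, and the $\theta$-twisted commutator $[y_\alpha(v_1), y_\alpha(v_2)] = x_\alpha(v_1^\theta v_2 + v_1 v_2^\theta)$ forces the two components to scale differently under any automorphism of $\KK$ coming from the Moufang torus. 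The anisotropy of $R$ proved by Tits in \cite{TitsSuzukiRee} guarantees that $R(s,t)\neq 0$ whenever $(s,t)\neq (0,0)$, so the scaling makes sense.

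Third, for case B I would use Remark~\ref{Rem_caseBF}: a rank-two residue of $\dot\Delta_F$ fixed by $\rho$ is isomorphic to $\dot\Delta_B$ in a way that respects the Moufang structure and the parametrizations. Restricting the formula for $\dot\Delta_F$ to such a residue, and then noting that the identity is expressed purely in terms of the subgroup $\LL \subseteq \KK$ and the restricted norm $R$, yields the same identity~(\ref{Equ_no17}) in case B. For the second assertion~(\ref{Equ_no19}), the subbuilding $\langle U_\beta, U_{-\beta}\rangle$ is a rank-one Moufang building whose root groups are the additive group $(K,+)$; this is the standard $\mathrm{SL}_2$-type situation, in which the action of $m_\Sigma(x_\beta(t))m_\Sigma(x_\beta(1))$ on $U_\beta$ is multiplication by $t^{-2}$ (again by \cite[Chapter 6 and 32]{TW}).

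The main obstacle will be bookkeeping the $\theta$-twists: showing that the exponents on $R(s,t)$ are really $\theta$ and $2-\theta$ (rather than, say, $1$ and $2-\theta$, or $\theta$ and $\theta+1$) requires carefully tracking how the non-commutative multiplication $(s,t)\ast(u,v)=(s+u+t^\theta v, t+v)$ interacts with the commutators $[U_\alpha, U_{-\alpha}]$ and the reflection formulas $\lambda,\kappa$ of Proposition~\ref{Prop_tec1}. Once these twists are verified on the two types of monomials $x_\alpha(u,0)$ and $y_\alpha(0,v)$, the general formula follows because $x_\alpha(u,v)=x_\alpha(u,0)y_\alpha(0,v)$ and conjugation is an automorphism.
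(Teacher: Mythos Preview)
Your overall architecture matches the paper's: work in case~F first, verify the action of $h_\alpha(s,t)$ on the two kinds of monomials separately, combine them via the decomposition of $x_\alpha(u,v)$, and then restrict to case~B via Remark~\ref{Rem_caseBF}. The treatment of case~B and the reduction to monomials are fine.

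The gap is in the central step. You write that you ``invoke the explicit formulas from [Chapter~16 and~32]{TW}'' to read off the action of $h_\alpha(s,t)$ on $U_\alpha$ itself. But \cite[32.13]{TW} does \emph{not} give this action directly: it records the action of $h_8(s,t)$ on the root groups $U_1,\dots,U_7$, not on $U_8$. The paper therefore computes the action on $U_8$ \emph{indirectly}: it conjugates the commutator relation $[x_2(u),x_8(v)]=x_4(u^\theta v)\,x_5(uv)\,x_6(uv^\theta)$ from \cite[16.9]{TW} by $h_8(s,t)$, applies the known scalars on $U_2,U_4,U_5,U_6$ from \cite[32.13]{TW}, and solves for the unknown scalar on $x_8(v)$; a parallel argument using $[x_1(u),y_8(v)^{-1}]$ determines the scalar on $y_8(v)$. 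Your heuristic that ``$u$ lies in the center of $\KK$ while $v$ generates the quotient'' explains \emph{why} the two exponents might differ, but it does not pin down that they are $\theta$ and $2-\theta$ rather than some other pair summing to $2$; the commutator trick is what actually nails the values.

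The same issue affects~(\ref{Equ_no19}). Calling $\langle U_\beta,U_{-\beta}\rangle$ an ``$\mathrm{SL}_2$-type situation'' is suggestive but not a proof in this setting: the paper again conjugates a commutator relation, here $[x_1(u),x_6(v)]=x_4(uv)$, by $m_\Sigma(x_1(1))m_\Sigma(x_1(t))$ and uses the known action on $U_4,U_6$ to solve for the action on $U_1$. If you want to keep your rank-one shortcut you would need to justify that the induced Moufang set on the $\beta$-residue really is $\mathrm{PSL}_2(K)$ with its standard parametrization, which is essentially the same amount of work.
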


\begin{remark}
In case F let the roots be enumerated as in \ref{Prop_rgCaseF} then $\alpha$ in the lemma above corresponds to $\alpha_8$ and $\beta$ to $\alpha_1$. In case B one can identify $\alpha$ with the positive root and $U_\alpha$ with $U_+$.
\end{remark}

\begin{proof}
We prove (\ref{Equ_no17}) using \cite[16.9]{TW} and \cite[32.13]{TW}. Let us first consider case F.
The commutator relations in \cite[16.9]{TW} are given for monomials $x_8(v)$ or $y_8(v)$ only. Hence let us first calculate $x_8(v)^{h_8(s,t)}$. The formulas in \cite[16.9]{TW} imply
$$
\left[x_2(u)^{h_8(s,t)} , x_8(v)^{h_8(s,t)}\right] = x_4\left(u^\theta v\right)^{h_8(s,t)} x_5(uv)^{h_8(s,t)} x_6\left(uv^\theta\right)^{h_8(s,t)}.
$$
Applying \cite[32.13]{TW} twice we get
\begin{align*}
x_2(u)^{h_8(s,t)} 		&= x_2(uR(s,t)^{-1})\\
x_4(u^\theta v)^{h_8(s,t)}	&= x_4(u^\theta v)\\
x_5(uv)^{h_8(s,t)} 		&= x_5(uv R(s,t)^{\theta-1})\\
x_6(uv^\theta)^{h_8(s,t)}	&= x_6(uv^\theta R(s,t)).
\end{align*}
Let $v'$ be defined implicitly by $x_8(v)^{h_8(s,t)}=x_8(v')$. Hence the commutator of $x_8(v')$ and $x_2(u')$, where $u'=uR(s,t)^{-1}$, determines $v'$ uniquely. By \cite[16.9]{TW} $(u')^\theta v'= u^\theta v$ hence $v'=vR(s,t)^\theta$ and therefore $$x_8(v)^{h_8(s,t)}=x_8(vR(s,t)^\theta).$$

Now calculate $y_8(v)^{h_8(s,t)}$. Using \cite[16.9]{TW} again
$$
\left[x_1(u), y_8(v)^{-1}\right]	= y_2(uv) \cdots x_7\left(uv^{\theta+2}\right).
$$ 
The element $y_8(v)$ is, by \cite[(6.4)(i)]{TW},
uniquely determined by $x_1(u)$ and $y_2(uv)$. To calculate the action of $h_8(s,t)$ it is therefore enough to look at the first term of the commutator.
Assuming that $(y_8(v)^{-1})^{h_8(s,t)} = y_8(v')^{-1}$ for some $v'$ the formulas in \cite[32.13]{TW} imply that 
$$
\left[x_1(u)^{h_8(s,t)} , \left(y_8(v)^{-1}\right)^{h_8(s,t)}\right]
			= y_2(uv)^{h_8(s,t)} \cdots   x_7\left(uv^{\theta+2}\right)^{h_8(s,t)}
$$
which is equivalent to
$$
\left[x_1(uR(s,t)^{-1}), y_8(v')^{-1}\right] = y_2\left(uvR(s,t)^{1-\theta}\right) \cdots x_7\left(uv^{\theta+2}\right)^{h_8(s,t)}.
$$
Since $uR(s,t)^{-1} v' = uvR(s,t)^{1-\theta}$ we have $v'=vR(s,t)^{2-\theta}$ and 
$$
\left(y_8(v)^{-1}\right)^{h_8(s,t)}=y_8\left(vR(s,t)^{2-\theta}\right)^{-1}.
$$
Since $v'$ is unique and since our choice satisfies the appropriate conditions we are done.

By~\ref{Prop_rgCaseF} we have $x_8(u,v)=x_8\left(u+v^{\theta+1}\right) y_8(v)^{-1}$. Using the above calculations we can conclude
\begin{align*}
x_8(u,v)^{h_8(s,t)} &= x_8\left(u+v^{\theta+1}\right)^{h_8(s,t)}\left(y_8(v)^{-1}\right)^{h_8(s,t)} \\
			&= x_8\left((u+v^{\theta+1})R(s,t)^\theta\right) y_8\left(vR(s,t)^{2-\theta}\right)^{-1}\\
			&= x_8\left(uR(s,t)^\theta,vR(s,t)^{2-\theta}\right).
\end{align*}
This finishes the proof in case F. By remark~\ref{Rem_caseBF} the formula holds in case B by restricting the parameters to $\LL$.
 
Consider the case F and let $\beta=\alpha_1$ with the enumeration of the roots as in \ref{Prop_rgCaseF}. The proof of (\ref{Equ_no19}) is as follows. The commutator has to satisfy 
$$
\left[x_1(u)^{m_\Sigma\left(x_1(1)\right)x_\Sigma\left(x_1(t)\right)}, x_6(v)^{m_\Sigma\left(x_1(1)\right)x_\Sigma\left(x_1(t)\right)}\right] = x_4(uv)^{m_\Sigma\left(x_1(1)\right)x_\Sigma\left(x_1(t)\right)}.
$$
This is by \cite[32.13]{TW} equivalent to
$$
\left[x_1(u)^{m_\Sigma(x_1(1))x_\Sigma\left(x_1(t)\right)}, x_6\left(t^{-1}v\right)\right] = x_4(tuv).
$$
Let $v'=t^{-1}v$ and $u'v'=tuv$. The value of $x_1(u)^{m_\Sigma(x_1(1))x_\Sigma(x_1(t))}$ is implicitly defined by the commutator of $x_1(v')$ and $x_6(u')$, which is $x_4(u'v')$ by \cite[32.13]{TW}. 
Therefore $x_1(u)^{m_\Sigma(x_1(1))x_\Sigma(x_1(t))} = x_1(u') = x_1\left(t^2u\right)$. Hence (\ref{Equ_no19}) holds.
\end{proof}

\begin{lemma}\label{Lem_tec21}
In case G there exists a root $\alpha\in\RS$, unique up to a choice of a basis of $\RS$, such that $U_\alpha$ is parametrized by $\TT$ and the following formula holds
$$
x_\alpha(w,u,v)^{h_\alpha(r,s,t)} = x_\alpha\left(w N(r,s,t)^{2-\theta}, u N(r,s,t)^{\theta-1}, v N(r,s,t)\right)
$$
for all $(r,s,t)\in \TT$.
\end{lemma}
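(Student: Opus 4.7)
The plan is to adapt the strategy used in the proof of Lemma \ref{Lem_NUM19} from case F to the three-parameter group $\TT$ of case G. The first step is a direct computation with the multiplication law on $\TT$ from \ref{Prop_rgCaseG}, which yields the monomial decomposition
$$x_\alpha(w,u,v) \;=\; x_\alpha(w,0,0)\cdot x_\alpha(0,u,0)\cdot x_\alpha(0,0,v+wu).$$
Since conjugation by $h_\alpha(r,s,t)$ is a group automorphism of $U_\alpha$, it is enough to determine its action on each of the three monomial types $x_\alpha(w,0,0)$, $x_\alpha(0,u,0)$ and $x_\alpha(0,0,v)$ separately and then recombine.

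For each monomial I would exploit the embedding of $\dot{\Delta}$ into the ambient $\mathrm{G}_2$ building $\Delta$: the Ree--Tits description realizes $x_\alpha(w,u,v)$ as an explicit word in elements of the six positive root groups of $\Delta$. Mirroring the role of $[x_2(u), x_8(v)]$ and $[x_1(u),y_8(v)^{-1}]$ in Lemma \ref{Lem_NUM19}, each monomial is then pinned down by its commutator with a suitable element $x_\beta(\cdot)$ of an auxiliary root group of $\Delta$. Conjugating these commutator identities by $h_\alpha(r,s,t)$ and applying the transformation rules of \cite[32.13]{TW}, which multiply each auxiliary parameter by an explicit power of $N(r,s,t)$, the uniqueness of the commutator expansion forces the claimed scaling on the transformed monomial. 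Thus I expect to obtain
\begin{align*}
x_\alpha(w,0,0)^{h_\alpha(r,s,t)} &= x_\alpha\!\bigl(wN(r,s,t)^{2-\theta},\,0,\,0\bigr),\\
x_\alpha(0,u,0)^{h_\alpha(r,s,t)} &= x_\alpha\!\bigl(0,\,uN(r,s,t)^{\theta-1},\,0\bigr),\\
x_\alpha(0,0,v)^{h_\alpha(r,s,t)} &= x_\alpha\!\bigl(0,\,0,\,vN(r,s,t)\bigr).
\end{align*}

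To conclude, one combines the transformed monomials using the law of $\TT$. Writing $N=N(r,s,t)$, the cross term $-wu$ produced in $(w,0,0)*(0,u,0) = (w,u,-wu)$ scales by $N^{2-\theta}\cdot N^{\theta-1}=N$, which is exactly the factor by which $v+wu$ in the third monomial has already been multiplied. A short verification with the $\TT$-multiplication gives
$$
x_\alpha(wN^{2-\theta},0,0)\cdot x_\alpha(0,uN^{\theta-1},0)\cdot x_\alpha(0,0,(v+wu)N)
= x_\alpha(wN^{2-\theta},\, uN^{\theta-1},\, vN),
$$
which is the claimed formula. The main obstacle is the middle step: correctly identifying which auxiliary root groups of the $\mathrm{G}_2$ ambient building detect each of the three monomial types, and then tracking the Tits endomorphism $\theta$ carefully through both the commutator relations of \cite[16.9]{TW} and the $h$-action formulas of \cite[32.13]{TW} so that the exponents come out as $2-\theta$, $\theta-1$ and $1$. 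Uniqueness of $\alpha$ up to a choice of basis of $\RS$ is automatic from the rank-one structure of $\dot{\Delta}$ and the distinguished role played by $\TT$ as the parameter group of $U_\alpha$.
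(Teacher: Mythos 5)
Your reduction is sound as bookkeeping: the decomposition $x_\alpha(w,u,v)=x_\alpha(w,0,0)\,x_\alpha(0,u,0)\,x_\alpha(0,0,v+wu)$ is correct for the multiplication of $\TT$ (the cross term $(w,0,0)*(0,u,0)=(w,u,-wu)$ checks out), conjugation by $h_\alpha(r,s,t)$ is an automorphism of $U_\alpha$, and the recombination arithmetic with the exponents $2-\theta$, $\theta-1$, $1$ does reproduce the claimed formula. The problem is that the three monomial conjugation formulas you ``expect to obtain'' are not a routine middle step — they are essentially the entire content of the lemma, and your proposal does not establish them. Moreover, the analogy with Lemma~\ref{Lem_NUM19} hides a genuine structural difference: in case F the computation takes place inside $\dot{\Delta}$ itself, which is a Moufang octagon of rank two with its own commutator relations \cite[16.9]{TW} and its own $h$-elements to which \cite[32.13]{TW} applies directly. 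In case G the building $\dot{\Delta}$ is the rank-one Ree--Tits ovoid, so there are no commutator relations within $\dot{\Delta}$ at all; everything must be carried out in the ambient Moufang hexagon, whose relations are \cite[16.8]{TW}, not \cite[16.9]{TW} as you cite. Concretely, you would first have to write $x_\alpha(r,s,t)$ explicitly as a word in the six positive root groups of the hexagon (the Ree--Tits embedding), then express $m_{\Sigma}(x_\alpha(r,s,t))$ — and hence $h_\alpha(r,s,t)$, which is built from the ovoid's $m_\Sigma$-maps, not from hexagon $h$-elements — in terms of hexagon $\mu$-maps before \cite[32.13]{TW} can be invoked, and also verify that conjugation by $h_\alpha$ sends each monomial type to a monomial of the same type. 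None of this is done, and it is precisely where the exponents $2-\theta$, $\theta-1$, $1$ have to come from; the weight-consistency with the norm $N$ is evidence, not proof.

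For comparison: the paper does not prove this lemma in the text either; it defers entirely to Chapter 6 of \cite{Octagons}, where the case-G computation (including the hexagon embedding and the action of $h_\alpha$) is carried out. So your proposal is a reasonable outline of that external argument, but as it stands it reduces the lemma to three unproved identities and therefore has a genuine gap.
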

\begin{proof}
  This is proved in chapter 6 of \cite{Octagons}.
\end{proof}

The following will be useful in the next subsection.

\begin{property}\label{Prop_oct1}
Assume in case F as described in \ref{Prop_rgCaseF}. Denote by $\dot{\RS}$ the roots of an apartment of $\dot{\Delta}$. Let $m_1$ denote $m_\Sigma(x_1(1))$ and write $m_8$ instead of $m_\Sigma(x_8(1,0))$. The group $N\define\langle m_1, m_8\rangle$ is a dihedral group of order 16 which acts by conjugation on the set of roots $\RS$. This action has two orbits, the roots with even index and the roots having odd index. The stabilizer of each root in $N$ is a group of order two centralizing the corresponding root group.
\end{property}

\begin{lemma}\label{Lem_oct2}
Let $\varphi$ be a valuation of the root datum of the building $\dot{\Delta}$ of case B. Let $\dot{\RS}$ be the set of roots in a fixed apartment $\dot{\Sigma}$ and let $N$ be as in \ref{Prop_oct1}. For all $g\in N$, all roots $\alpha\in\dot{\RS}$ and all $u\in U_\alpha^*$ the following holds:
\begin{enumerate}
\item $\varphi_\alpha(u)=\varphi_{\alpha^g}\left(u^g\right)$\label{NUM4}
\item $m_\Sigma(u)^g=m_\Sigma(u^g).$\label{NUM5}
\end{enumerate}
\end{lemma}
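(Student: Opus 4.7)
\emph{Plan.} I would prove (2) first, since it is purely group-theoretic and independent of the valuation. The element $m_\Sigma(u) = \kappa(u)\,u\,\lambda(u)$ is characterized in Proposition~\ref{Prop_tec1}(1) by that decomposition, with $\kappa(u), \lambda(u) \in U_{-\alpha}^*$, together with the property of stabilizing $\Sigma$ and inducing $s_\alpha$ on it. Since $g \in N$ stabilizes $\Sigma$ setwise and, being induced by elements of $\Gdagger$, normalizes the collection of root groups of $\Sigma$ (in particular $U_\alpha^g = U_{\alpha^g}$ and $U_{-\alpha}^g = U_{-\alpha^g}$), the conjugate
\begin{equation*}
 m_\Sigma(u)^g = \kappa(u)^g\,u^g\,\lambda(u)^g
\end{equation*}
is a decomposition of the required type for $u^g \in U_{\alpha^g}^*$. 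It stabilizes $\Sigma$ and induces $g^{-1} s_\alpha g = s_{\alpha^g}$ on it. The uniqueness in Proposition~\ref{Prop_tec1}(1) then forces $m_\Sigma(u)^g = m_\Sigma(u^g)$.

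For (1) I would reduce to the generators of $N$: by (2) together with an induction on the word length of $g$ in the generators $m_1, m_8$, it suffices to verify the claim when $g$ is one of these. For $g = m_\beta \in \{m_1, m_8\}$, axiom (V3) of Definition~\ref{Def_RDvaluation} yields a constant $t(\alpha,\beta) \in \R$ (independent of $u$) with
\begin{equation*}
\varphi_{s_\beta(\alpha)}\bigl(u^{m_\beta}\bigr) = \varphi_\alpha(u) + t(\alpha,\beta).
\end{equation*}
The content of (1) is then precisely that $t(\alpha,\beta) = 0$ for every $\alpha \in \dot{\RS}$. Using equipollence (Definition~\ref{Def_equipollent})---and the fact that $\alpha_1, \alpha_8$ are linearly independent in the ambient plane so the two linear conditions can be solved simultaneously for a shift vector $v$---we may assume the valuation is normalized so that $\varphi_{\alpha_1}(x_1(1)) = \varphi_{\alpha_8}(x_8(1,0)) = 0$. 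The \emph{moreover} clause of (V3) then gives $t(\beta,\beta) = -2\varphi_\beta(u_0) = 0$, where $u_0$ is the corresponding normalizing monomial.

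For the remaining roots $\alpha \notin \{\pm\beta\}$, I would propagate this vanishing through the commutator relations of \cite[16.9]{TW}: every such root group appears in a commutator of two monomials from the generator root groups, and axiom (V2) expresses the resulting filtration on $U_\alpha$ in terms of the filtrations on the generator root groups. Applying conjugation by $m_\beta$ to such a commutator, combining (V2), axiom (V3) for the generators (where the shift is already known to be zero), and the fact that both sides must be filtered identically, forces $t(\alpha,\beta) = 0$. The main obstacle here is the combinatorial bookkeeping: one must enumerate, for each of the eight positive roots in the $F_4$-case, an expression as an iterated commutator of generator monomials, and then check that the linear shift accumulated on both sides of (V2) matches only when $t(\alpha,\beta)$ vanishes. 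Once this is verified for both $\beta = \alpha_1$ and $\beta = \alpha_8$, (1) holds on generators and, by induction and part (2), on all of $N$.
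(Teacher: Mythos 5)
Your part (2) is fine: it is the standard uniqueness argument for the elements $\kappa(u),\lambda(u)$ flanking $u$ (this is exactly \cite[6.1, 6.2]{TW}, which is all the paper cites for that item). The problems are in part (1). First, the reduction ``by equipollence we may assume $\varphi_{\alpha_1}(x_1(1))=\varphi_{\alpha_8}(x_8(1,0))=0$'' is not a legitimate reduction, because the conclusion $\varphi_\alpha(u)=\varphi_{\alpha^g}(u^g)$ is not equipollence-invariant: if $\varphi'=\varphi+v$, then $\varphi'_\alpha(u)-\varphi'_{\alpha^g}(u^g)=\varphi_\alpha(u)-\varphi_{\alpha^g}(u^g)+(v,\alpha)-(v,\alpha^g)$, and $(v,\alpha)\neq(v,\alpha^g)$ for generic $v\neq 0$. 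So proving the identity for a normalized equipollent valuation proves nothing about the given one; indeed the identity simply fails for a shifted valuation. What makes the lemma usable in the paper is that the valuation it is applied to is the explicit one determined by $\nu$ (as in Theorem~\ref{Thm_RDexistence} and Lemma~\ref{Lem_opRg}), for which $\varphi_i(x_i(t))$ depends only on the parity of the index $i$ and on the parameter $t$; that normalization is part of the data, not something you may impose.

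Second, and more seriously, the decisive step of your argument is missing. For $\alpha\notin\{\pm\beta\}$ the whole content of (1) is the vanishing of the shift $t(\alpha,\beta)$ from $(V3)$, and your plan to force this out of $(V2)$ ``bookkeeping'' does not work as described: $(V2)$ only gives one-sided containments $[U_{\alpha,k},U_{\beta,l}]\subset\prod_{\gamma\in(\alpha,\beta)}U_{\gamma,pk+ql}$, i.e. inequalities for the values of the commutator components, so ``both sides must be filtered identically'' is not something the axioms hand you. Pinning down the exact shift amounts to re-deriving the formula $\varphi_{s_\beta(\gamma)}\bigl(x^{m_\Sigma(u)}\bigr)=\varphi_\gamma(x)+2\varphi_\beta(u)\,(\beta,\gamma)$ that the paper quotes from the proof of (10.21) in \cite{Ronan} inside Lemma~\ref{Lem_opRg}; that is a genuine argument which your sketch does not supply. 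The paper's proof takes a completely different and much shorter route: by \cite[32.13]{TW} conjugation by $m_1$, $m_8$, and hence by every $g\in N$, permutes the parametrized root groups without altering the parameter, $x_i(t)^g=x_{i^g}(t)$; combined with the parity-and-parameter dependence of $\varphi$ this gives (1) immediately, and (2) is the cited fact \cite[(6.2)]{TW} that you reproved.
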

\begin{proof}
The action of $N$ on $\dot{\RS}$ is induced by the action of $N$ on $\dot{\Sigma}$. By definition we have $u^g\in U_\alpha^g$ if and only if $u\in U_\alpha$. The root groups are parametrized by maps $t\mapsto x_i(t)$ where $t\in K$ or $t\in\KK$ if $i$ is even or odd, respectively. By \cite[32.13]{TW} the parameter $t$ is not changed by elements of $N$. Since $\varphi_i(x_i(t))$ only depends on the parity of the index (and the parameter $t$) equation \ref{NUM4} follows.
For part \ref{NUM5} see \cite[(6.2)]{TW}.
\end{proof}

\subsection{Affine buildings for the Ree and Suzuki groups}

The main result of this section is Theorem~\ref{Thm_RDexistence} proving (under certain conditions) the existence of generalized affine buildings $X$ associated to each of the Ree and Suzuki groups. Together with \ref{Thm_RDuniqueness} this gives a classification of all generalized affine buildings having a Suzuki-Ree-building at infinity.

For the rest of this chapter fix the following notation.
\begin{notation}\label{Not_global}
Let $K, \Delta$, $\Sigma$, $\RS$, $V$ and $\rho$ (as well as the same letters tagged with a dot) be as in \ref{Not_universal}. The root $\alpha$ is as in \ref{Lem_NUM19} and \ref{Lem_tec21}. Assume that opposite root groups are parametrized such that $x_\alpha(s,t)^{m_\Sigma(1,0)}=x_{-\alpha}(s,t)$ in case B and F and $x_\alpha(r,s,t)^{m_\Sigma(0,0,1)}=x_{-\alpha}(r,s,t)$ in case G.
\end{notation}

\begin{definition}\label{Def_thetaInvariant}
\index{{valuation}!{$\theta$-invariant}}
Let $K$ be a field of positive characteristic $p$ endowed with the Tits endomorphism $\theta$, as in \ref{Not_cases}. A valuation $\nu$ of $K$ is \emph{$\theta$-invariant} if $\nu\left(x^\theta\right)=\sqrt{p}\;\nu(x)$ for all $x\in K^*$. 
\end{definition}

A proof of the following result can be found in \cite{Octagons}.
\begin{thm}\label{Thm_RDuniqueness}
Let notation be as in \ref{Not_global}. Assume that $\psi$ is a valuation of the root datum of $\dot{\Delta}$ based at $\dot{\Sigma}$, as defined in \ref{Def_RDvaluation}. Let $\varphi=\psi-\psi_\alpha(w)$ with $w=x_\alpha(1,0)$ in cases B and F and $w=x_\alpha(0,0,1)$ in case G. Then there exists a unique $\theta$-invariant valuation $\nu$ of $K$ such that $\varphi$ is uniquely determined by $\nu$, i.e. 
  \begin{equation}\label{Equ_no3}
	\varphi_\alpha\left(x_\alpha(s,t)\right)=\nu\left(R(s,t)\right)
  \end{equation}
	for all $(s,t)$ in $(\LL)^*$ in case B, respectively for all $(s,t)$ in $(\KK)^*$ in case F and 
  \begin{equation}\label{Equ_no4}  
	\varphi_\alpha\left(x_\alpha(r,s,t)\right)=\nu\left(N(r,s,t)\right)
  \end{equation}
	for all $(r,s,t)\in (\TT)^*$ in case G.
\end{thm}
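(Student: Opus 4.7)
The plan is to extract the desired valuation $\nu$ of $K$ from the restriction of the normalised valuation $\varphi$ to the single root group $U_\alpha$, exploiting the scaling action of the ``torus-like'' elements $h_\alpha$ from \ref{Prop_rgCaseB}, \ref{Prop_rgCaseF}, \ref{Prop_rgCaseG}, combined with axiom $(V3)$ and the explicit conjugation formulas of Lemmas \ref{Lem_NUM19} and \ref{Lem_tec21}.

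The normalisation $\varphi := \psi - \psi_\alpha(w)$ guarantees $\varphi_\alpha(x_\alpha(w)) = 0$. Writing $h_\alpha(s,t) = m_\Sigma(x_\alpha(w))\,m_\Sigma(x_\alpha(s,t))$ and applying $(V3)$ to each factor -- the first giving the shift $-2\varphi_\alpha(x_\alpha(w)) = 0$, the second contributing a shift $T(s,t)$ depending only on $s,t$ -- one obtains
\begin{equation*}
\varphi_\alpha\bigl(x^{h_\alpha(s,t)}\bigr) = \varphi_\alpha(x) + T(s,t) \qquad \text{for all } x \in U_\alpha^*,
\end{equation*}
and $T(s,t)$ can be read off by evaluating both sides at $x = x_\alpha(w)$. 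Combined with the formula $x_\alpha(u,v)^{h_\alpha(s,t)} = x_\alpha(uR^\theta,\, vR^{2-\theta})$ of Lemma \ref{Lem_NUM19} (where $R = R(s,t)$) this produces the key scaling identity for $\varphi_\alpha$; case G is handled analogously using Lemma \ref{Lem_tec21}.

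A direct calculation yields the homogeneity identity $R(uR^\theta, vR^{2-\theta}) = R^2\,R(u,v)$ in cases B and F, and an analogous identity for $N$ in case G. Combined with the scaling identity and axiom $(V1)$ -- which forces the level sets of $\varphi_\alpha$ to form a filtration of $U_\alpha$ by subgroups -- this forces $\varphi_\alpha(x_\alpha(s,t))$ to depend on $(s,t)$ only through $R(s,t)$, so one can define $\nu$ unambiguously on $R(\KK^*) \subset K^*$ via \eqref{Equ_no3} (respectively on $N(\TT^*)$ via \eqref{Equ_no4}). Via the identity $R(a,0) = a^\theta$ this recovers the values of $\nu$ on $K^{*\theta}$, and imposing $\theta$-invariance $\nu(a^\theta) = \sqrt{p}\,\nu(a)$ propagates $\nu$ uniquely to all of $K^*$. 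Multiplicativity on $K^*$ follows from the group law in $\KK$ (resp.\ $\TT$), the ultrametric inequality from $(V1)$; consistency of the $\theta$-invariant extension with the values directly obtained from $\varphi$ is automatic because any valuation satisfies $\nu(a^{\theta^2}) = \nu(a^p) = p\,\nu(a)$, matching the prediction of iterated $\theta$-invariance. Uniqueness is then immediate since $\nu$ is prescribed on a generating subset of $K^*$.

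The main obstacle is the well-definedness step: showing that $\varphi_\alpha(x_\alpha(s,t))$ depends on $(s,t)$ only through $R(s,t)$. The scaling identity relates only elements within a single $h_\alpha$-orbit, so bridging between different orbits having the same norm value requires careful use of the commutator relations in $\KK$ (resp.\ $\LL$, $\TT$) together with iterated applications of $(V1)$; this is especially delicate in case G due to the three-dimensional parameter space and the more involved structure of $\TT$.
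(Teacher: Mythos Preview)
The paper does not actually prove Theorem~\ref{Thm_RDuniqueness}; it merely records the statement and refers the reader to \cite{Octagons} for a proof (see the sentence immediately preceding the theorem). So there is no in-paper argument to compare your proposal against.

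That said, your sketch is along the right lines and is compatible with the machinery the paper develops: the scaling identity you write down is exactly what comes out of combining $(V3)$ with Lemma~\ref{Lem_NUM19} (respectively Lemma~\ref{Lem_tec21}), and the homogeneity $R(uR^\theta,vR^{2-\theta})=R^2R(u,v)$ (and its analogue for $N$) is correct. Two points deserve more care than you give them. First, your claim that ``multiplicativity on $K^*$ follows from the group law in $\KK$'' is too quick: $\KK$ is nonabelian and $R$ is a norm form, so what the scaling identity gives directly is only an additivity relation of the shape $\nu(a^\theta R^2)=\nu(a^\theta)+\nu(R^2)$, i.e.\ multiplicativity where one factor is a square of a value of $R$; extending this to full multiplicativity on $K^*$ requires an extra argument (e.g.\ exploiting that $R(s,0)=s^\theta$ exhausts $K^{*\theta}$ and then using $\theta$-invariance). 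Second, the well-definedness step you flag as the main obstacle really is the crux: the $h_\alpha$-orbits do not a priori separate fibres of $R$, and bridging between orbits with equal norm value needs the filtration from $(V1)$ together with the explicit multiplication in $\KK$ (or $\TT$), not just ``iterated applications of $(V1)$'' in the abstract. In case G the bookkeeping is genuinely heavier, as you note.
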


Conversely each such valuation $\nu$ extends to a valuation of the root datum. 
\begin{thm}\label{Thm_RDexistence}
\index{{valuation}!{$\theta$-invariant}}
With notation as in \ref{Not_global} assume that $\nu$ is a $\theta$-invariant valuation of $K$ such that $\vert \nu(K)\vert \geq 3 $. Let $\varphi_\alpha: \dot{U}_\alpha \rightarrow \R$ be defined by equation~(\ref{Equ_no3}), in cases B and F, and by equation~(\ref{Equ_no4}) in case G. Then $\varphi_\alpha$ extends to a valuation $\varphi$ of the root datum of $\dot{\Delta}$ based at $\dot{\Sigma}$ and there exists a non-discrete affine building having $\dot{\Delta}$ as building at infinity.
\end{thm}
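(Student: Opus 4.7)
The plan is to decouple the two assertions. First I construct the valuation $\varphi$ of the root datum of $\dot\Delta$ extending $\varphi_\alpha$; once this is done, the existence of a non-discrete affine building with $\dot\Delta$ at infinity follows from the classical correspondence of Tits (\cite{TitsComo}) between non-discrete valuations of the root datum of a Moufang spherical building and non-discrete affine buildings, which is the same correspondence underpinning Theorem~\ref{Thm_RDuniqueness}. Thus the second half of the statement essentially reduces to a citation.

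For the construction of $\varphi$, I first extend $\varphi_\alpha$ to $\varphi_{-\alpha}$ via the opposition isomorphism $x_{-\alpha}(\cdot) = x_\alpha(\cdot)^{m_\Sigma(w)}$ fixed in~\ref{Not_global}, declaring $\varphi_{-\alpha}(x_{-\alpha}(\cdot)) := \nu(R(\cdot))$ in cases B, F and $\nu(N(\cdot))$ in case G. In the rank one cases B and G the root datum consists of only two root groups and axiom (V2) is vacuous. Axiom (V0) is immediate: $R(s,0) = s^\theta$ and $N(0,0,t) = t^2$, so $\varphi_\alpha(\dot U_\alpha^*)$ contains $\sqrt{p}\,\nu(K^*)$ and $2\nu(K^*)$ respectively, giving at least three values together with $\infty = \varphi_\alpha(1)$. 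Axiom (V1) demands subadditivity of $\nu\circ R$ and $\nu\circ N$ with respect to the twisted group laws on $\LL$, $\KK$ and $\TT$; expanding $R((s,t)\ast(u,v))$ in characteristic $p$ yields $R(s,t)+R(u,v)$ plus an explicit cross term whose $\nu$-value is bounded below by $\min\{\nu R(s,t),\,\nu R(u,v)\}$ thanks to $\theta$-invariance, and an analogous expansion works for $N$. These cross-term estimates are precisely the content of Proposition~\ref{Prop_key}. Axiom (V3) follows directly from Lemmas~\ref{Lem_NUM19} and~\ref{Lem_tec21}: substituting $(uR(s,t)^\theta, vR(s,t)^{2-\theta})$ into $R$ and applying $\nu(c^\theta) = \sqrt{p}\,\nu(c)$ shows that each of the three monomials in $R(u,v)$ picks up exactly $2\,\nu(R(s,t))$ in $\nu$-value, so $\varphi_\alpha$ shifts by $+2\varphi_\alpha(x_\alpha(s,t))$ under conjugation by $h_\alpha(s,t)$; combining this with the factorization $h_\alpha(s,t) = m_\Sigma(x_\alpha(1,0))\,m_\Sigma(x_\alpha(s,t))$ and the symmetric definition of $\varphi_{-\alpha}$ produces the required $-2\varphi_\alpha(u)$ shift under $m_\Sigma(u)$.

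In case F the construction is completed by defining $\varphi_{\alpha_1}$ on the odd-indexed generating root group $\dot U_{\alpha_1}$ as an appropriate scalar multiple of $\nu$ (the normalization is forced by demanding that (V3) for~(\ref{Equ_no19}) and the commutator axiom (V2) are simultaneously satisfied), and then transporting both base valuations $\varphi_{\alpha_8}$, $\varphi_{\alpha_1}$ around the octagonal apartment using the $N$-action of Proposition~\ref{Prop_oct1}. The resulting family $(\varphi_\gamma)_{\gamma\in\dot{\RS}}$ is well defined because, by Lemma~\ref{Lem_oct2}, the $N$-stabilizer of each root centralizes the corresponding root group. The main obstacle is verifying axiom (V2) pair by pair against the explicit commutator relations of \cite[16.9]{TW}: for each non-opposite pair $\alpha\neq\pm\beta$ of roots one must check that every factor $x_\gamma(\cdot)$ appearing in $[x_\alpha(u,v),\,x_\beta(s,t)]$ with $\gamma\in(\alpha,\beta)$ has $\varphi_\gamma$-value at least $p\,\varphi_\alpha(u,v) + q\,\varphi_\beta(s,t)$ for the weights $p,q$ of Definition~\ref{Def_intervalRS}. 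The monomial exponents in these commutators match the weights dictated by the $I_2(8)$ root system precisely because of the scaling $\nu(c^\theta) = \sqrt{p}\,\nu(c)$ provided by $\theta$-invariance; making this matching explicit and uniform in $(s,t) \in \KK$ is where Proposition~\ref{Prop_key} does its heaviest lifting.
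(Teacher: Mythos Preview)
Your proposal is correct and follows essentially the same approach as the paper: verify axioms $(V0)$--$(V3)$ directly, using Proposition~\ref{Prop_key} for $(V1)$, Lemmas~\ref{Lem_NUM19} and~\ref{Lem_tec21} for $(V3)$, and in case~F the $N$-action of \ref{Prop_oct1} together with Lemma~\ref{Lem_oct2} to transport the two base valuations around the octagon before checking $(V2)$ against the commutator list of \cite[16.9]{TW}. The only point worth tightening is your sketch of $(V3)$: the passage from the $+2\varphi_\alpha$ shift under $h_\alpha(s,t)$ to the required $-2\varphi_\alpha(u)$ shift under $m_\Sigma(u)$ needs the identity $R((s,t)^{-1})=R(s,t)$ (respectively $N(y^{-1})=N(y)$) together with $m_\Sigma(u)^{-1}=m_\Sigma(u^{-1})$ from Proposition~\ref{Prop_tec1}, which the paper makes explicit; your ``symmetric definition of $\varphi_{-\alpha}$'' alone does not produce the sign.
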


In \cite{Octagons} we obtained \ref{Thm_RDexistence} as a consequence of the following stronger result. Below we will give a direct (algebraic) proof of this theorem.

\begin{thm}\label{Thm_strongerResult}
Let the building $\Delta$ with polarity $\rho$ and $\dot{\Delta} = \Delta^\rho$ be as in \ref{Not_global}. Let $K$ be the defining field of $\Delta$.
Then for each $\theta$-invariant valuation $\nu$ of $K$ there exists a (nondiscrete) affine building $(X,\App)$ with $\Delta$ as boundary and there exists a unique automorphism $\widetilde{\rho}$ of $X$ inducing $\rho$ on $\Delta$. Furthermore there is a unique building $(\dot{X},\dot{\App})$ contained in the fixed point set of $\widetilde{\rho}$ in $X$ such that $\partial_{\dot{\App}} \dot{X} = \dot{\Delta}$. In addition $\Aut(X)$ contains a subgroup inducing a Suzuki or Ree group (depending on the case) on $\dot{\Delta}$.
\end{thm}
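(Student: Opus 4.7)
The plan is to construct $(X,\App)$ first by extending $\nu$ to a valuation of the root datum of the ambient building $\Delta$, then extend $\rho$ to an involution $\widetilde\rho$ of $X$ via the ecological isomorphism theorem, and finally identify $(\dot X,\dot\App)$ as a sub-building inside the fixed point set of $\widetilde\rho$. Since $\Delta$ is a Moufang building of type $B_2$, $F_4$ or $G_2$ defined over $K$, the classical theory of Tits (\cite{TitsComo}) and Bruhat--Tits (\cite{BruhatTits}) yields a valuation $\varphi$ of the root datum of $\Delta$ based at $\Sigma$, determined on each root group by $\varphi_\alpha(x_\alpha(t))=\nu(t)$ (with the appropriate rescaling between long and short roots); the hypothesis $|\nu(K)|\geq 3$ supplies axiom $(V0)$ of Definition~\ref{Def_RDvaluation}. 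Tits' existence theorem for non-discrete affine buildings then produces $(X,\App)$ modeled on $\MS(\RS,\R)$ with $\partial_\App X=\Delta$, unique up to equipollence.

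To extend $\rho$ to $X$ I would verify that $\rho$ is ecological in the sense of Definition~\ref{Def_ecological} and invoke Corollary~\ref{Cor_iso}. Since Theorem~\ref{Thm_ambientBdg} gives $x_\alpha(t)^\rho=x_{\tau(\alpha)}(t)$, the map induced by $\rho$ on wall trees sends a point of $T_m$ arising from $\varphi_\alpha(u)=\nu(t)$ to a point of $T_{\rho(m)}$ arising from $\varphi_{\tau(\alpha)}(u^\rho)=\nu(t)$. The subtlety is that $\tau$ interchanges long and short roots, so the canonical identifications $T_m\cong\R$ and $T_{\rho(m)}\cong\R$ differ by the length-rescaling factor between long and short roots; but this factor is exactly $\sqrt{p}$, which is absorbed by the defining $\theta$-invariance $\nu(k^\theta)=\sqrt{p}\,\nu(k)$. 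Consequently $\val_m$ and $\val_{\rho(m)}\circ\rho$ coincide, so $\rho$ is ecological and lifts uniquely to an involution $\widetilde\rho\in\Aut(X,\App)$.

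Finally, I would let $\dot\App$ be the set of charts in $\App$ whose image is stabilized set-wise by $\widetilde\rho$ and take $\dot X$ to be the union of these fixed apartments inside $X^{\widetilde\rho}$. Under the bijection from Proposition~\ref{Prop_buildingAtInfinity}, $\dot\App$ corresponds to the set of apartments of $\dot\Delta$ inside $\Delta$. Axioms (A1)--(A2) are inherited from $(X,\App)$ after identifying the correct translation group $T\subset\MS(\dot\RS,\R)$, which is dictated by $\nu(K^*)$ together with the $\sqrt{p}$-rescaling between the valuations on long and short root groups; (A3) and (A4) reduce to a sundial argument (Property~\ref{Prop_sundial}) using that $\dot\Delta$ is itself Moufang by Proposition~\ref{Prop_fixedpoints}; (A5) follows by restricting the retractions of $X$ and invoking $\widetilde\rho$-equivariance; and the crucial (A6) follows from (A6) in $X$ because the triple intersection of three $\widetilde\rho$-invariant half-apartments is itself $\widetilde\rho$-invariant, hence lies in $\dot X$. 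Uniqueness of $\dot X$ is then forced by (A3) applied to $\dot\Delta$. The Ree/Suzuki subgroup of $\Aut(\dot X)$ is obtained from the centralizer of $\rho$ in $\Gdagger$: each of its elements is ecological on $\Delta$, lifts by Theorem~\ref{Thm_corollary1} to an element of $\Aut(X,\App)$, commutes with $\widetilde\rho$ by uniqueness of the lift, and therefore preserves $\dot X$. The main obstacle will be the last step: pinning down the correct model space $\MS(\dot\RS,\R)$ for $\dot X$ together with its affine Weyl group $\WT$, since the translation lattice encodes the interaction between $\nu(K^*)$ and the Tits endomorphism $\theta$, and one must show that the fixed apartments of $\widetilde\rho$ inherit exactly this structure upon restriction.
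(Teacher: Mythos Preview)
The paper does not actually prove Theorem~\ref{Thm_strongerResult}. It is stated as a result imported from \cite{Octagons}, and the surrounding text makes this explicit: ``In \cite{Octagons} we obtained \ref{Thm_RDexistence} as a consequence of the following stronger result. Below we will give a direct (algebraic) proof of this theorem.'' The ``this theorem'' in that sentence is Theorem~\ref{Thm_RDexistence}, as confirmed by the fact that Section~7.4 is devoted entirely to the proof of \ref{Thm_RDexistence} via the algebraic identities of Proposition~\ref{Prop_key}. So there is no proof in the paper for you to be compared against.

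That said, your outline is very much in the spirit of the geometric argument the paper attributes to \cite{Octagons}, and it uses the right internal tools: Corollary~\ref{Cor_iso} to lift $\rho$ once it is shown to be ecological, and Theorem~\ref{Thm_corollary1} to lift the centralizer of $\rho$ in $\Gdagger$. Your identification of the $\theta$-invariance $\nu(k^\theta)=\sqrt{p}\,\nu(k)$ as exactly the condition making $\rho$ ecological is the key insight, and it matches Tits' original remark quoted in the introduction. The part you flag yourself as the main obstacle --- pinning down the model space $\MS(\dot\RS,\R)$ with the correct affine Weyl group and verifying (A1)--(A6) for $(\dot X,\dot\App)$ --- is indeed where the real work lies; your sketch of (A3)--(A6) via sundial arguments and $\widetilde\rho$-equivariance is plausible but would need substantial expansion, and it is precisely this verification that \cite{Octagons} carries out in detail.
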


The main ingredient in the proof of \ref{Thm_RDexistence} is the following proposition.
In case G the proof is based on a suggestion by Theo Grundh\"ofer.

\begin{prop}\label{Prop_key}
Let notation be as in \ref{Not_global}. Then (in cases B and F)
\begin{equation}\label{Equ_no5}
  \nu\left(R(s,t)\right)= min\left\{\sqrt{2}\;\nu(s), (\sqrt{2}+2)\nu(t)\right\} \text{ and}
\end{equation}
\begin{equation}\label{Equ_no6}
  \nu\left(R\left(x_\alpha(s,t)\cdot x_\alpha(u,v)\right)\right)\geq \min\left\{\nu\left(R(s,t)\right), \nu\left(R(u,v)\right)\right\}.  
\end{equation}
And in case G
\begin{equation}\label{Equ_no7}
 \nu\left(N(r,s,t)\right)= \min\left\{\nu(r)(2\sqrt{3}+4), \nu(s)(\sqrt{3}+1),2\nu(t)\right\} \text{ and} 
\end{equation}
\begin{equation}\label{Equ_no8}
 \nu\left(N\left(x_\alpha(r,s,t)\cdot x_\alpha(w,u,v)\right)\right) \geq \min\left\{\nu\left(N(r,s,t)\right), \nu\left(N(w,u,v)\right)\right\} .
\end{equation}
\end{prop}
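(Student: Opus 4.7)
My approach is to establish the equalities (\ref{Equ_no5}) and (\ref{Equ_no7}) first, and then to derive (\ref{Equ_no6}) and (\ref{Equ_no8}) from them by direct computation with the multiplications in $\LL$, $\KK$ and $\TT$. The $\theta$-invariance of $\nu$ assigns a definite valuation to every monomial of $R$ and $N$: in cases B and F one has $\nu(s^\theta)=\sqrt{2}\,\nu(s)$ and $\nu(t^{\theta+2})=(\sqrt{2}+2)\nu(t)$, and in case G every monomial of $N(r,s,t)$ receives a fixed $\R$-linear combination of $\nu(r),\nu(s),\nu(t)$ as its valuation. A routine case distinction on the ratios of the three ``pure'' valuations appearing on the right of (\ref{Equ_no5}) and (\ref{Equ_no7}) shows that every mixed monomial of $R$ or $N$ has valuation at least the minimum of the pure ones, so the ultrametric inequality yields the easy halves, and in fact equality whenever the minimum over pure valuations is uniquely attained.

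The delicate step is the degenerate case where several monomials share the minimal valuation and cancellation is a priori possible. For (\ref{Equ_no5}) this occurs precisely when $\sqrt{2}\,\nu(s)=(\sqrt{2}+2)\nu(t)$, i.e.\ when $s=u\,t^{\theta+1}$ for a unit $u$ of the valuation ring $\mathcal{O}\subset K$. Using $\theta^{2}=\mathrm{Frob}$ in characteristic $2$, a direct expansion gives
\begin{equation*}
R(u\,t^{\theta+1},t)=t^{\theta+2}\bigl(u^\theta+u+1\bigr),
\end{equation*}
so it suffices to show that $u^\theta+u+1$ is a unit. If it lay in the maximal ideal $\mathfrak{m}$, then $\bar u^{\overline\theta}=\bar u+1$ would hold in the residue field $\overline K$ for the induced Tits endomorphism $\overline\theta$; applying $\overline\theta$ and using $\overline\theta^{\,2}=\mathrm{Frob}$ would force $\bar u^2=\bar u$, hence $\bar u\in\{0,1\}$, but then $\bar u^{\overline\theta}+\bar u+1=1\ne 0$, a contradiction. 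This is essentially the anisotropy of $R$ transferred to the residue field.

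For (\ref{Equ_no7}) in case G the same philosophy handles every degenerate subcase. When $(2\sqrt{3}+4)\nu(r)$ coincides with one or both of the other pure valuations I normalize by $s=r^{\theta+1}u$ and/or $t=r^{\theta+2}v$ with units $u,v$, factor $r^{2\theta+4}$ out of $N(r,s,t)$, and reduce to a non-vanishing statement for an explicit polynomial in $\bar u,\bar v$ over $\overline K$. In the remaining subcase $(\sqrt{3}+1)\nu(s)=2\nu(t)<(2\sqrt{3}+4)\nu(r)$, the pertinent identity is $s^{\theta+1}+t^{2}=s^{\theta+1}(1+w)$ with $w=t^{2}s^{-(\theta+1)}$ a unit, and one must exclude $\bar w=-1$. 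In each subcase the pattern is identical: a hypothetical vanishing in $\overline K$, combined with $\overline\theta^{\,2}=\mathrm{Frob}$ (the cube Frobenius in characteristic $3$), confines the hypothetical solution to a finite subfield, where a direct enumeration rules it out; equivalently, the residue norm inherits its anisotropy from the Tits endomorphism $\overline\theta$ on $\overline K$. With the equalities in hand, (\ref{Equ_no6}) and (\ref{Equ_no8}) follow by applying them to the products: for $(s,t)\ast(u,v)=(s+u+t^\theta v,\,t+v)$ in cases B and F the only new summand $t^\theta v$ of valuation $\sqrt{2}\,\nu(t)+\nu(v)$ is checked to be bounded below by $\min\{(\sqrt{2}+2)\nu(t),(\sqrt{2}+2)\nu(v)\}$, and the analogous bookkeeping in $\TT$ controls the cross-terms $ru$, $sw$ and $r^{\theta+1}w$ appearing in the third coordinate of the product.

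The main obstacle is precisely the degenerate analysis in case G: several monomials of $N$ can simultaneously achieve the minimal valuation in genuinely different configurations, so the required residue-field identity is an intrinsically multi-variable non-vanishing statement. Its proof rests on the precise interaction between $\overline\theta$ and the cube Frobenius on $\overline K$, and it is this interaction—captured by the explicit formulas of Proposition~\ref{Prop_key}—that provides the algebraic key to the existence part of Theorem~\ref{Thm_RDexistence}.
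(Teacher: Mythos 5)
Your overall strategy---get the equalities by an ultrametric/convexity argument when the minimum of the ``pure'' valuations is uniquely attained, and kill the possible cancellation in the degenerate cases over the residue field---is sound, and for cases B and F it is complete and genuinely different from the paper's. The paper never passes to the residue field: it splits into $\nu(s)>(1+\sqrt2)\nu(t)$ and $\nu(s)\leq(1+\sqrt2)\nu(t)$ and, in the second case, derives a contradiction from $\nu(R(s,t))>\nu(s^{\theta})$ via the characteristic-two identity $s^{\theta+1}=tR^{\theta}(s,t)+(t^{\theta+1}+s)R(s,t)$; your computation $R(ut^{\theta+1},t)=t^{\theta+2}(u^{\theta}+u+1)$ plus the Frobenius trick in $\overline{K}$ is a valid substitute, arguably more transparent about \emph{why} cancellation cannot occur (anisotropy survives reduction). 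For case G the paper proves nothing here at all---it cites Appendix 9 of \cite{Octagons}---so your sketch is more ambitious; the subcases you reduce to the non-vanishing of $\bar N(1,\bar u,\bar v)$, $\bar N(1,\bar u,0)$, $\bar N(1,0,\bar v)$ are fine, since $\theta$ preserves the valuation ring and maximal ideal, so $\bar\theta$ is again a Tits endomorphism of $\overline K$ and Tits' anisotropy applies there.

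Two points need repair. First, in the subcase $(\sqrt3+1)\nu(s)=2\nu(t)<(2\sqrt3+4)\nu(r)$ the asserted ``same pattern'' does not go through as stated: $s$ and $t$ need not be units, so $\bar w$ with $w=t^{2}s^{-(\theta+1)}$ is not visibly a value of the residue norm, and $\bar\theta^{2}=\mathrm{Frob}$ alone does not confine $\bar w$ to a finite subfield. You need an extra identity, for instance $t^{2}/s^{\theta+1}=\bigl(t^{\theta-1}/s\bigr)^{\theta+1}$, which exhibits $\bar w$ as a twisted power $\bar u^{\bar\theta+1}$; then $\bar w=-1$ gives, after applying $\bar\theta$, $\bar u^{2}=1$ and hence $\bar w=1$, a contradiction. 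Without something of this kind this degenerate case is a genuine gap in your case-G argument. Second, in your derivation of (\ref{Equ_no6}) the stated inequality $\sqrt2\,\nu(t)+\nu(v)\geq\min\{(\sqrt2+2)\nu(t),(\sqrt2+2)\nu(v)\}$ is false in general (take $\nu(t)=1$, $\nu(v)=3/2$); what you actually need, and what does hold, is $2\nu(t)+\sqrt2\,\nu(v)\geq\min\{(\sqrt2+2)\nu(t),(\sqrt2+2)\nu(v)\}$, because the cross term $t^{\theta}v$ sits in the first coordinate, whose contribution to $\nu(R)$ carries the weight $\sqrt2$ (equivalently, expand $(s+u+t^{\theta}v)^{\theta}$ as the paper does, so the cross term becomes $t^{2}v^{\theta}$). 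The same weighting caveat applies to (\ref{Equ_no8}), where your bookkeeping should also include the cross term $r^{\theta}w$ occurring in the second coordinate of the product, not only the terms $ru$, $sw$, $r^{\theta+1}w$ in the third.
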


\begin{proof}
In case G the proof of the assertion can be found in appendix 9 of \cite{Octagons}. For the proof of cases B and F suppose first that 
$$
\nu(s) > (1+\sqrt{2})\nu(t).
$$
Then $\min\left\{\nu\left(s^\theta\right), \nu\left(t^{\theta+2}\right)\right\} = \nu\left(t^{\theta+2}\right)$ and hence
\begin{align}
\nu(st) 	&> (\sqrt{2}+2)\nu(t)=\nu\left(t^{\theta +2}\right) \label{NUM21} \text{ and}\\
\nu\left(s^\theta\right) 	&= \sqrt{2}\nu(s) > (\sqrt{2}+2)\nu(t) = \nu\left(t^{\theta +2}\right). \label{NUM22}
\end{align}
Since for any $a,b$ and any valuation $\nu(a+b)\geq \min\left\{\nu(a),\nu(b)\right\}$ is satisfied, we can, together with inequalities (\ref{NUM21}) and (\ref{NUM22}), conclude that
$$
 \nu\left(s^\theta+st\right) > \nu\left(t^{\theta +2}\right).
$$
And hence 
$$
\nu\left(R(s,t)\right)=\nu\left(t^{\theta+2} + st + s^\theta\right) \geq  \min\left\{\nu\left(t^{\theta+2}\right),\nu\left(st + s^\theta\right)\right \}= \nu\left(t^{\theta +2}\right).
$$

Secondly suppose $\nu(s)\leq (1+\sqrt{2})\nu(t)$. Therefore $\min\left\{\nu\left(s^\theta\right), \nu\left(t^{\theta+2}\right)\right\}= \nu\left(s^\theta\right)$. In case that $\nu\left(R(s,t)\right)$ is strictly bigger than $\nu\left(s^\theta\right)$ we have
\begin{align*}
\nu\left(tR^\theta(s,t)\right) 	&= \nu(t) +\sqrt{2}\;\nu\left(R(s,t)\right) \geq \frac{1}{1+\sqrt{2}}\nu(s) + \sqrt{2}\;\nu\left(R(s,t)\right)\\
		&= (\sqrt{2}-1)\nu(s) + \sqrt{2}\;\nu\left(R(s,t)\right) \\
		&> (\sqrt{2}-1)\nu(s) + \sqrt{2}\;\nu\left(s^\theta\right) = (\sqrt{2} +1)\nu(s).
\end{align*}
On the other hand, using the assumption $\nu(s)\leq (1+\sqrt{2})\nu(t)$, we can conclude
\begin{align*}
\nu\left((t^{\theta +1}+s)R(s,t)\right) 
	& = \nu\left(R(s,t)\right)+\nu\left(t^{\theta+1}+s\right) \\
	& \geq \nu\left(R(s,t)\right)+\nu(s) 
	 > (\sqrt{2}+1)\nu(s).
\end{align*}
And hence 
\begin{align}
\nu\left(tR^\theta(s,t)\right) 	&>(\sqrt{2} +1)\nu(s) \label{NUM23}\\
\nu\left(\left(t^{\theta +1}+s\right)R(s,t)\right) &> (\sqrt{2}+1)\nu(s). \label{NUM24}
\end{align}
Using the fact that $s^{\theta +1}= tR^\theta(s,t) + (t^{\theta +1} + s)R(s,t)$ and the inequalities (\ref{NUM23}) and (\ref{NUM24}) we have
\begin{align*}
\nu\left(s^{\theta+1}\right) &\geq \min \left\{ \nu\left(tR^\theta(s,t)\right), \nu\left((t^{\theta+1} + s)R(s,t)\right) \right\}\\
		& > (\sqrt{2} +1)\nu(s)
\end{align*}
which is a contradiction. Therefore 
$\nu\left(R(s,t)\right) = \nu\left(s^\theta\right)$ and (\ref{Equ_no5}) holds.

To prove (\ref{Equ_no6}) let $y\define x_\alpha(s,t)$ and $z=x_\alpha(u,v)$. Then $y\cdot z=x_\alpha\left(s+u+t^\theta v, t+v\right)$ and, using (\ref{Equ_no5}), therefore
\begin{align*}
\nu\left(R(y\cdot z)\right) &= \min \left\{ \nu\left((s+u+t^\theta v)^\theta\right), \nu\left((t+v)^{\theta+2}\right) \right\} \\
	&= \min \left\{ \nu\left (s^\theta+u^\theta +t^2v^\theta \right), \nu\left( t^{\theta+2}+v^{\theta+2}+t^\theta v^2+t^2v^\theta \right) \right\} \\
	&\geq \min \left\{ \nu\left(s^\theta\right), \nu\left(u^\theta\right), \nu\left(t^2v^\theta\right), \nu\left(v^2t^\theta\right), \nu\left(t^{\theta+2}\right), \nu\left(v^{\theta+2}\right) \right\} 
\end{align*}
Simple calculations imply that both expressions $\nu\left(t^2 v^\theta\right)$ and $\nu\left(v^2 t^\theta\right)$ are greater or equal than $\min \left\{ \nu\left(t^{\theta+2}\right), \nu\left(v^{\theta+2}\right) \right\} $.
Hence 
$$
\nu\left(R(y\cdot z)\right)\geq \min \left\{\nu(s^\theta), \nu(t^{\theta+2}), \nu(u^\theta), \nu(v^{\theta+2}) \right\}=\min \left\{\nu\left(R(s,t)\right), \nu\left(R(u,v)\right)\right\}
$$
and equation (\ref{Equ_no6}) follows.
\end{proof}

In case F we need the following additional Lemma.
\begin{lemma}\label{Lem_opRg}
In case F let $\varphi$ be a valuation of a root datum of $\dot{\Delta}$. Let the roots be enumerated as in \ref{Prop_rgCaseF} and let the root $\alpha$ of \ref{Not_global}, be identified with $\alpha_8$. Assume that $\varphi_\alpha(x_\alpha(s,t))=\nu(R(s,t))$. Then 
$$
\varphi_{\alpha_i}\left(x_{\alpha_i}(k)\right) = \sqrt{2+\sqrt{2}}\; \nu(k)
$$
for odd $i$ and all $k\in K$.
\end{lemma}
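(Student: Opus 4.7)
The plan is to combine the transitive $N$-action on odd-indexed roots (Property~\ref{Prop_oct1}), axiom $(V3)$ of the root datum valuation, and the explicit $h_8(s,t)$-action on $U_{\alpha_1}$ derived in the proof of Lemma~\ref{Lem_NUM19}, thereby reducing everything to a single scalar comparison with $\varphi_{\alpha_8}$.

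First I would reduce to the case $i=1$. Since $N=\langle m_\Sigma(x_1(1)),m_\Sigma(x_8(1,0))\rangle$ acts transitively on the odd-indexed roots in $\dot{\RS}$, and since the same commutator computation used for Lemma~\ref{Lem_oct2}(\ref{NUM4}) in case B carries over verbatim to case F via the formulas of Proposition~\ref{Prop_rgCaseF} (the parameter of each monomial is preserved by the $N$-conjugation, see TW~32.13), one obtains $\varphi_{\alpha_i}(x_{\alpha_i}(k))=\varphi_{\alpha_1}(x_1(k))$ for every odd $i$. Hence it suffices to treat $\alpha_1$.

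Next I would set $\psi(k):=\varphi_{\alpha_1}(x_1(k))-\varphi_{\alpha_1}(x_1(1))$ and show that $\psi$ is a non-trivial real-valued valuation of $K$. The key input is equation~(\ref{Equ_no19}): $x_1(u)^{m_\Sigma(x_1(t))m_\Sigma(x_1(1))}=x_1(t^{-2}u)$. Applying $(V3)$ to each $m_\Sigma$-factor (using $\alpha=\beta=\alpha_1$ in the first step, where the constant is pinned down as $-2\varphi_{\alpha_1}(x_1(t))$) yields the identity $\psi(t^{-2}u)=\psi(u)-2\psi(t)$. Setting $u=1$ and exploiting this shift-invariance forces multiplicativity $\psi(ab)=\psi(a)+\psi(b)$ on $K^{*}$; axiom $(V1)$ translates into the ultrametric inequality $\psi(a+b)\geq\min(\psi(a),\psi(b))$, so $\psi$ is a valuation of $K$, and non-trivial by $(V0)$. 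A parallel argument with the $h_8(s,0)$-action then yields $\theta$-invariance of $\psi$, so by uniqueness of $\theta$-invariant valuations of $K$ up to scaling there exists $c\in\R_{>0}$ with $\psi=c\cdot\nu$.

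Finally I would pin down $c$ through two independent computations of the shift induced by $h_8(s,t)$. The proof of Lemma~\ref{Lem_NUM19} gives $x_1(u)^{h_8(s,t)}=x_1(uR(s,t)^{-1})$, so applying $(V3)$ to both $m_\Sigma$-factors of $h_8(s,t)=m_\Sigma(x_8(1,0))m_\Sigma(x_8(s,t))$ shows the $\psi$-shift equals $\psi(R(s,t)^{-1})=-c\,\nu(R(s,t))$. On the other hand, the same element shifts $\varphi_{\alpha_8}$ by $2\nu(R(s,t))$, as was established inside the proof of Lemma~\ref{Lem_NUM19}. Identifying $h_8(s,t)$ with a translation along $\alpha_8^\vee$ in the ambient space of $\dot{\RS}$, the ratio of the shifts on $\varphi_{\alpha_1}$ and $\varphi_{\alpha_8}$ must equal $\tfrac{1}{2}\langle\alpha_1,\alpha_8^\vee\rangle$. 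In the normalization of $I_2(8)$ forced by the input formula $\varphi_{\alpha_8}(x_8(s,t))=\nu(R(s,t))$ (where odd roots and even roots have equal length), this pairing equals $2\cos(\pi/8)=\sqrt{2+\sqrt{2}}$, so $c=\sqrt{2+\sqrt{2}}$. The main obstacle in this plan will be the last step: one must convert the algebraic $(V3)$-shifts faithfully into translation vectors of the apartment and verify that the root-length normalization in $I_2(8)$ derived from the $\varphi_{\alpha_8}$-formula produces exactly the factor $2\cos(\pi/8)$, since a sign or normalization slip would change $c$ by a multiplicative factor of $\cos(\pi/8)$ or its reciprocal.
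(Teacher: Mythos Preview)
Your route is quite different from the paper's and contains a real gap. The paper does a single direct computation: it applies $(V3)$ with $\alpha=\alpha_1$, $\beta=\alpha_2$ and uses the explicit formula for the shift constant from the proof of Ronan~(10.21), which gives $\varphi_8(x_2(s,t)^{m_\Sigma(x_1(k))})=\varphi_2(x_2(s,t))+2\varphi_1(x_1(k))\,(\alpha_1,\alpha_8)$. Combining this with the conjugation formula $x_2(s,t)^{m_\Sigma(x_1(k))}=x_8(k^{-1-\theta}s,k^{-1}t)$ from \cite[32.13]{TW} and evaluating at $(s,t)=(0,1)$ yields $\varphi_1(x_1(k))=\sqrt{2+\sqrt{2}}\,\nu(k)$ in one line; Property~\ref{Prop_oct1} then transfers this to all odd $i$.

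Your argument instead passes through the claim that $\theta$-invariant valuations of $K$ are unique up to scaling, and this is where it breaks. That statement is not established anywhere in the paper and is false in general: a field may carry many independent non-archimedean real valuations, and $\theta$-invariance does not single out a one-dimensional family. Without it you have no link between $\psi$ and $\nu$ before your final paragraph, yet that paragraph already \emph{uses} $\psi=c\,\nu$ when rewriting $\psi(R(s,t)^{-1})$ as $-c\,\nu(R(s,t))$. The shift-ratio computation in your last step (which is essentially the Ronan formula the paper invokes directly) could in fact be used to \emph{prove} $\psi(R(s,t))=\sqrt{2+\sqrt{2}}\,\nu(R(s,t))$ without any uniqueness hypothesis, but then the whole middle detour through ``$\psi$ is a $\theta$-invariant valuation'' becomes redundant and you are back to the paper's approach. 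Two smaller points: you never argue that the additive constant $\varphi_1(x_1(1))$ vanishes (in the paper this falls out because $\varphi_2(x_2(0,1))=\varphi_8(x_8(0,1))=\nu(1)=0$ via the $N$-action), and $(\alpha_1,\alpha_8)=\cos(7\pi/8)=-\cos(\pi/8)$, not $+\cos(\pi/8)$; the sign is needed to match the shift directions correctly.
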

\begin{proof}
Denote $x_{\alpha_i}$ by $x_i$. Using $s_1(\alpha_2)=\alpha_8$ and the formula in the proof of (10.21) in \cite{Ronan} we have
$$
\varphi_8 \left( x_2(s,t)^{m_\Sigma\left(x_{\alpha_1}(k)\right)}\right) 
	= \varphi_2\left(x_{\alpha_2}(s,t)\right)+2\varphi_1\left(x_{\alpha_1}(k)\right)\cdot(\alpha_1,\alpha_8).
$$
By \ref{Ex_I2(8)} the scalar product $(\alpha_1, \alpha_8)$ equals $-\frac{1}{2}\sqrt{2+\sqrt{2}}$. Equation 32.13 of \cite{TW} implies
$$
\varphi_8\left(x_{\alpha_8}\left(k^{-1-\theta}s, k^{-1}t\right)\right)
	= \varphi_2\left(x_{\alpha_2}(s,t)\right)  - \sqrt{2+\sqrt{2}}\cdot \varphi_1\left(x_{\alpha_1}(k)\right).
$$
With $s=0$ and $t=1$ the previous equation reads
$$
\varphi_8\left(x_{\alpha_8}(0, k^{-1})\right)
	= \varphi_2\left(x_{\alpha_2}(0,1)\right) - \sqrt{2+\sqrt{2}}\;\varphi_1\left(x_{\alpha_1}(k)\right). 
$$ 
Defining $c\define \varphi_2\left(x_{\alpha_2}(0,1)\right)$ we can conclude
$$
 \varphi_1\left(x_{\alpha_1}(k)\right) 
	= \frac{-1}{\sqrt{2+\sqrt{2}}}\; \varphi_8\left(x_{\alpha_8}(0,k^{-1})\right) +c 
	= \sqrt{2+\sqrt{2}}\; \nu(k) .
$$
Using \ref{Prop_oct1} the assertion follows.
\end{proof}

\subsection{Proof of the main result}

We verify the axioms of Definition~\ref{Def_RDvaluation}. 

\begin{proof}[Proof of Theorem \ref{Thm_RDexistence}]
Axiom $(V0)$ is a direct consequence of the assumption that $\vert\nu(K)\vert\geq 3$. 
We prove $(V1)$.
In case F let $\alpha$ be a root whose root group is parametrized by the additive group of $K$. 
The valuation of the product of two elements $x_\alpha(s)$ and $ x_\alpha(t)$ of $U_{\alpha,k}$ is calculated using \ref{Lem_opRg}:
\begin{align*}
\varphi_\alpha\left(x_\alpha(s)\cdot x_\alpha(t)\right) 
	&= \varphi_\alpha\left(x_\alpha(s+t)\right) 
	 = \sqrt{2+\sqrt{2}} \;\nu(s+t) \\
	&= \sqrt{2+\sqrt{2}} \;\min\left\{\nu(s), \nu(t)\right\} \geq k\\
\end{align*}
Hence in this case $(V1)$ holds. 
Consider the cases F and B with $\alpha$ as in \ref{Not_global}. Let $x_\alpha(s,t), x_\alpha(u,v)\in U_{\alpha,k}$ be given. Then, by definition of $U_{\alpha,k}$, the values $\varphi_\alpha\left(x_\alpha(s,t)\right)$ and $\varphi_\alpha\left(x_\alpha(s,t)\right)$ are greater or equal than $k$. Let $y=x_\alpha(s,t)$ and $z=x_\alpha(u,v)$. Equation (\ref{Equ_no6}) implies
$$
\varphi_\alpha\left(y\cdot z\right)=\nu\left(R(y\cdot z)\right)\geq \min\left\{\varphi_\alpha(y), \varphi_\alpha(z)\right\}.
$$
Therefore $(V1)$ holds in case F (for all root groups parametrized by $\KK$) and is true in case B, where all root groups are parametrized by $\LL$. 
In case G let $y=x_\alpha(r,s,t)$ and $z= x_\alpha(w,u,v)$ in $U_{\alpha,k}$ be given. Then, by equation (\ref{Equ_no8}), we have
$$
\varphi_\alpha(y\cdot z) \geq \min\{\varphi_\alpha(y), \varphi_\alpha(z)\} .
$$
Hence $(V1)$ holds in case G.

Condition $(V2)$ is empty in rank one, which are the cases B and G. To prove the assertion in case F one has to verify property $(V2)$ for all pairs of elements whose commutator relations are in the list of \cite[16.9]{TW} first. These, the multiplication of $\KK$ and the action of $N$, as described in \ref{Prop_oct1} imply axiom $(V2)$ for arbitrary root group elements. Checking $(V2)$ for all pairs with commutator relation in \cite[16.9]{TW} forces a lot of calculations which are all of the same kind: Given roots $\alpha,\beta$ one has to determine $p_\gamma,q_\gamma$ for all $\gamma\in (\alpha,\beta)$. Since the elements describing the commutators already appear in the correct order (see \cite[16.9]{TW}) it remains to check that they are contained in $U_{\gamma,p_\gamma k+q_\gamma l}$, i.e. one has to calculate the value under the valuation $\varphi_\gamma$ and compare it with the values of the chosen elements in $U_{\alpha,k}$ and $U_{\beta,l}$. 
We will not write out all the calculations. Hopefully the given example will enable the interested reader to do the missing calculations himself.

Let $\alpha=\alpha_1$ and $\beta=\alpha_4$ and let $k\define \varphi_1(x_1(t))$ and $l\define \varphi_4(x_4(0,u))$. By definition we have
\begin{equation}\label{NUM7}
\varphi_1\left(x_1(t)\right) =\sqrt{2+\sqrt{2}}\; \nu(t)  
\end{equation}
and
\begin{equation}\label{NUM8}
\varphi_4\left(x_4(0,u)\right) = \nu(R(0,u)) = \sqrt{2+\sqrt{2}}\;\nu(u). 
\end{equation}
By \cite[16.9]{TW} we have
\begin{equation}\label{NUM9}
[x_1(t),x_4(0,u)]=x_2(tu)   
\end{equation}
With $p_{\alpha_2} = \frac{\sqrt{2}}{\sqrt{2+\sqrt{2}}}$ and $q_{\alpha_2}=\frac{\sqrt{2}}{2+\sqrt{2+\sqrt{2}}}$ one can check that $\varphi_2\left(x_2(tu)\right)=p_{\alpha_2} k + q_{\alpha_2} l$.

Assume now, that $(V2)$ holds for all pairs of roots where the commutator is contained in~\cite[16.9]{TW}. Following the arguments in (16.9) in \cite{TW} and applying the action of $N$ to this list, axiom $(V2)$ holds for all pairs of arbitrary elements in odd and all monomials in even root groups. They also hold for elements of even root groups having the form $x_i(u,v)$ where neither $u$ nor $v$ equals $0$. It is enough to prove the cases $[x_1(t),x_8(u,v)]$ and $[x_8(u,v),x_1(t)]$. All other missing commutator relations involving non-monomial elements of even root groups can be deduced from these using the action of $N$.

By definition of the multiplication in $\KK$ we have
$$
x_8(u,v)=y_8(v)^{-1}x_8\left(u+v^{\sigma+2}\right).
$$ 
Let $u'=u+v^{\sigma+2}$. 
Since $[a,cb]=[a,b][a,c]^b$ for arbitrary $a,b$ we have 
$$
\left[x_1(t), y_8(v)^{-1}x_8(u')\right]=\left[x_1(t),x_8(u')\right]\left[x_1(t),y_8(v)^{-1}\right]^{x_8(u')}.
$$
It is easy to see that if $\varphi_8\left(x_8(u,v)\right)\geq k$ then also $\varphi_8\left(x_8(u')\right)$ and $\varphi_8\left(y_8(v)^{-1}\right)$ are greater or equal than $k$. By \cite[16.9]{TW} we can write $\left[x_1(t),x_8(u')\right]$ and $\left[x_1(t),y_8(v)^{-1}\right]$ as 
$$
\left[x_1(t),x_8(u')\right]= w_2\ldots w_7 
\hspace{3ex}\text{and}\hspace{3ex}
\left[x_1(t),y_8(v)^{-1}\right]= z_2\ldots z_7 
$$  
where the $w_i$ and $z_i$ are monomials. Therefore the expression
\begin{align*}
\left[x_1(t), x_8(u,v)\right] &=\left[x_1(t),x_8(u')\right]x_8(u')^{-1}\left[x_1(t),y_8(v)^{-1}\right]x_8(u')\\
		&= w_2\ldots w_7 x_8(u')^{-1} z_2\ldots z_7 x_8(u')	
\end{align*}
can be re-sorted using the commutator relations. Hence 
$$
\left[x_1(t), x_8(u,v)\right]=w'_2 \ldots w'_7  x_8(u')^{-1}x_8(u')
$$ 
and $(V2)$ holds. The proof for $\left[x_8(u,v),x_1(t)\right]$ is analogous to the one just finished.

First prove \emph{$(V3)$} in case B. Assume that $\alpha = \beta$ are equal to the positive root, then the positive and negative root are switched by the reflection $s_\alpha=s_\beta$. Let $x_+(s,t)$ and $x\define x_+(u,v)$ be elements of $U_+$. By Lemma~\ref{Lem_oct2} and the formula for the inverse of an element $x_+(s,t)$ observe that
\begin{align*}
\varphi_-\left(x^{m_\Sigma(s,t)}\right) - \varphi_+(x) 
	&= \varphi_+\left(x^{m_\Sigma(s,t)m_\Sigma(1,0)^{-1}}\right) - \varphi_+(x) \\
	&= \varphi_+\left(x^{\left( m_\Sigma(1,0)m_\Sigma(s, t+s^{\theta+1})\right)^{-1} }\right) - \varphi_+(x)\\
	&= \varphi_+\left(x^{\left( h_+(s, t+s^{\theta+1})\right)^{-1} }\right) - \varphi_+(x).
\end{align*}
Lemma (\ref{Lem_tec21}) implies that
$$
\varphi_+\left(x_+(a,b)^{ h(c,d)^{-1} }\right) = \varphi_+\left(x_+\left(aR(c,d)^{-\theta}, b R(c,d)^{\theta-2}\right)\right)
$$ 
for all $(a,b)$ and $(c,d)$ in $\LL$. 
Hence
\begin{align*}
\varphi_-\left(x^{m_\Sigma(s,t)}\right) - \varphi_+(x) 
	&= \varphi_+\left(x^{\left( m_\Sigma(1,0)m_\Sigma(s+t^{\theta+1}, t)\right)^{-1} }\right) - \varphi_+(x).\\
	&= \varphi_+\left(x_+\left(u R\left(s+t^{\theta+1}, t\right)^{-\theta}, vR\left(s+t^{\theta+1}, t\right)^{\theta-2}\right)\right) - \varphi_+(x)\\
	&= \nu\left( R\left(u R\left(s+t^{\theta+1}, t\right)^{-\theta}, vR\left(s+t^{\theta+1}, t\right)^{\theta-2}\right) \right) - \nu\left(R(u,v)\right).
\end{align*}
Simple calculations imply that $R\left(s+t^{\theta+1}, t\right)=R(s,t)$ and therefore
$$
 \varphi_-\left(x^{m_\Sigma(s,t)}\right) - \varphi_+(x) = -2\nu\left(R(s,t)\right)=-2\varphi_+\left(x_+(s,t)\right).
$$
The calculations for $U_-$ are completely analogous to this case. Therefore axiom $(V3)$ is true in case B. 

To prove $(V3)$ in case F 
we have to verify that for each pair of roots $\alpha,\beta$ and $u\in U_\alpha^*$ there exists $t\in\R$, independent of $x\in U_\beta$, such that 
\begin{equation}\label{NUM10}
\varphi_{s_\alpha(\beta)}(x^{m_\Sigma(u)})=\varphi_\beta(x)+t.
\end{equation}
Assume (\ref{NUM10}) holds for all pairs $(\alpha,\beta)$ such that either $\alpha=\alpha_1$ and $\beta\in \{\alpha_2,\ldots, \alpha_8\}$ or $\alpha=\alpha_8$ and $\beta\in \{\alpha_1,\ldots, \alpha_7\}$. Using the action of $N$, as described in \ref{Prop_oct1} equation (\ref{NUM10}) holds for arbitrary pairs: given $(\alpha,\beta)=(\alpha_i,\alpha_j)$ there exists a unique $g\in N$ such that either $\alpha^g=\alpha_1$ and $\beta^g\in \{\alpha_2,\ldots, \alpha_8\}$ or $\alpha^g=\alpha_8$ and $\beta^g\in \{\alpha_1,\ldots, \alpha_7\}$. The assertion follows by Lemma~\ref{Lem_oct2}.

It remains to calculate all the cases where $\alpha=\alpha_1$ and $\beta\in \{\alpha_2,\ldots, \alpha_8\}$ or $\alpha=\alpha_8$ and $\beta\in \{\alpha_1,\ldots, \alpha_7\}$ and the cases $\alpha=\beta\in \{a_1,a_8\}$. 

Let $\alpha=\alpha_1, \beta=\alpha_8$. For arbitrary $x=x_8(u,v)\in U_8$ and $x_1(k)\in U_1^*$, condition (\ref{NUM10}) holds by \cite[32.13]{TW} and the following calculation
\begin{align*}
\varphi_2 & \left(x^{m_\Sigma(k)}\right) -\varphi_8(x) = \varphi_2\left(x_2\left(k^{\theta+1}u,kv\right)\right)-\varphi_8(x)\\
	&= \nu\left(R\left(k^{\theta+1}u, kv\right)\right)\const -\varphi_8(x)\\
	&= \nu\left((kv)^{\theta+2}+k^{\theta+2}uv+k^{\theta+2}u^\theta\right)\const -\varphi_8(x)\\
	&= \nu\left(k^{\theta+2}\right)\const +\nu\left(R(u,v)\right)\const -\nu\left(R(u,v)\right)\const \\
	&= \cconst\; \nu(k) \ddefine t.
\end{align*}
The parameter $t$ is independent of the choice of $x$ since $\nu(k)=\varphi_1(x_1(k))$ is independent of the choice of $x$. The remaining cases where $\alpha\neq \beta$ can easily be calculated the same way using \cite[32.13]{TW}.

Let $\alpha=\beta=\alpha_1$. Recall that $s_1(\alpha_1)=\alpha_9$ and that $\alpha_9^{m_1}=\alpha_1$. By \ref{Lem_oct2} and \ref{Lem_NUM19} we have for arbitrary $x=x_1(t), u=x_1(k) \in U_1$ the following 
\begin{align*}
\varphi_{s_\alpha(\beta)} \left(x^{m_\Sigma(k)}\right) -\varphi_1(x) &= \varphi_9\left(x^{m_\Sigma(k)}\right)-\varphi_1(x)\\
	&= \varphi_1\left(x^{h_1(k,1)}\right)-\varphi_1(x)\\
	&= \varphi_1\left(x_1(k^{-2}t)\right)-\varphi_1(x)\\
	&= -2\nu(k)= -2\varphi_1(u).
\end{align*}
This implies (\ref{NUM10}) with $t=-2\varphi_1(u)$.

Let $\alpha=\beta=\alpha_8$. Using $s_8(\alpha_8)=\alpha_0$, $\alpha_0^{m_8}=\alpha_8$, Lemmata \ref{Lem_oct2} and \ref{Lem_NUM19} we have for arbitrary $x=x_8(v,v'), u=x_8(k,k') \in U_1$ the following
\begin{align*}
\varphi_{s_\alpha(\beta)}\left(x^{m_\Sigma(k,k')}\right) -\varphi_8(x) &= \varphi_0\left(x^{m_\Sigma(k,k')}\right)-\varphi_8(x)\\
	&= \varphi_8\left(x^{h_8(k,k')}\right)-\varphi_8(x)\\
	&= \varphi_8\left(x_8\left(vR_{k,k'}^{-\sigma},v'R_{k,k'}^{\sigma-2}\right)\right)-\varphi_8(x)\\
	&= -2\nu\left(R_{k,k'}\right)\const= -2\varphi_8(u).
\end{align*}
This implies (\ref{NUM10}) with $t=-2\varphi_8(u)$.

We will finish the proof by verifying axiom $(V3)$ in case G. Let $\alpha$ and $\beta$ be equal to the positive root and 
let $x=x_+(w,u,v)$ and $y=x_+(r,s,t)$ in $U_+$ be given. Then, using Proposition~\ref{Prop_tec1}.2., we can calculate
\begin{align*}
 \varphi_-\left(x^{m_\Sigma(y)}\right)-\varphi_+(x)  
	&= \varphi_+\left(x^{m_\Sigma(y)m_\Sigma(x_+(0,0,1))^{-1}}\right) -\varphi_+(x) \\
	&= \varphi_+\left(x^{ (m_\Sigma(x_+(0,0,1))m_\Sigma(y^-1))^{-1}}\right) -\varphi_+(x)	
\end{align*}
By \ref{Lem_tec21} we have for arbitrary $a,b, \ldots, f$ that
$$
x_\alpha(a,b,c)^{h_\alpha(d,e,f)^{-1}}=x_\alpha\left(aN(d,e,f)^{\theta-2}, bN(d,e,f)^{1-\theta}, cN(d,e,f)^{-1}\right)
$$
and hence
\begin{equation}\label{Equ_no20}
 \varphi_-\left(x^{m_\Sigma(y)}\right)-\varphi_+(x)  
	= \varphi_+\left(x_+(w N\left(y^{-1}\right)^{\theta-2}, u N\left(y^{-1}\right)^{1-\theta}, v N\left(y^{-1}\right)^{-1} \right)) -\varphi_+(x).	
\end{equation}
It is easy to verify that (\ref{Equ_no20}) implies
\begin{equation}\label{Equ_no21}
\varphi_+\left(x_+\left(w N\left(y^{-1}\right)^{\theta-2}, u N\left(y^{-1}\right)^{1-\theta}, v N(y^{-1})^{-1} \right)\right)=\nu\left(N\left(y^{-1}\right)^{-2}N(w,u,v)\right).  
\end{equation}
Using equation \ref{Equ_no21} and the fact that $N(y)$ equals $N(y^{-1})$ we can conclude that the term $\varphi_-(x^{m_\Sigma(y)})-\varphi_+(x) $ is independent of $x$, in particular
\begin{align*}
\varphi_-\left(x^{m_\Sigma(y)}\right)-\varphi_+(x)  
	&= \nu\left(N\left(y^{-1}\right)^{-2}N(w,u,v)\right) - \nu\left(N(w,u,v)\right) \\
	&= -2 \nu\left(N\left(y^{-1}\right)^{-2}\right) = -2 \varphi\left(y^{-1}\right).
\end{align*}
Therefore $(V3)$ holds in all cases and the proof of \ref{Thm_RDexistence} is completed.
\end{proof}

 \subsection{Examples of \texorpdfstring{$\theta$-}\.invariant valuations}
\label{Sec_example}
\subsubsection*{Quotient field of polynomials in two variables}

Let $K$ be a field of characteristic $p$, let $\theta$ be the Tits endomorphism. A real valued valuation $\nu$ of the ring of polynomials $K[s,t]$ in two variables is given by the following formula: Let $p(s,t)=\sum_{i,j\in\N}a_{ij}s^i t^j$ with finitely many $a_{ij}\neq 0$ be given and define
$$
\nu(p)\define \min\left\{(i+\sqrt{p}\;j) : a_{ij}\neq 0\right\}.
$$
In particular $\nu(s)=1, \nu(t)=\sqrt{p}$.

Define a valuation, also denoted by $\nu$, of the quotient field $K(s,t)$ using $\nu:K[s,t]\rightarrow \R$ as follows: Given $\frac{p}{q}\in K(s,t)$ then
$$
\nu\left(\frac{p}{q}\right)=\nu(p)-\nu(q).
$$
We extend $\theta$ to an endomorphism from $K(s,t)$ to $\R$ by setting $t^\theta=s^p$ and $s^\theta=t$.
\begin{lemma}
  The map $\nu:K(s,t)\rightarrow \R$ is a $\theta$-invariant valuation. 
\end{lemma}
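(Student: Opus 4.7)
The plan is to verify the three valuation axioms on $K[s,t]$ first, then transfer everything to $K(s,t)$, and finally establish $\theta$-invariance. The single key observation running through the whole argument is that $\sqrt{p}$ is irrational, so the map $\N\times\N\to \R,\ (i,j)\mapsto i+\sqrt{p}\,j$, is injective. In particular, for every nonzero polynomial $f=\sum a_{ij}s^it^j\in K[s,t]$, the minimum $\nu(f)$ is attained by a \emph{unique} index $(i_0,j_0)$.

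First I would check that $\nu$ is a valuation on $K[s,t]$. The ultrametric inequality $\nu(f+g)\geq\min\{\nu(f),\nu(g)\}$ is immediate since $\operatorname{supp}(f+g)\subseteq\operatorname{supp}(f)\cup\operatorname{supp}(g)$. For multiplicativity, let $(i_f,j_f)$ and $(i_g,j_g)$ be the unique minimizers of $f$ and $g$. In $fg$ the monomial $s^{i_f+i_g}t^{j_f+j_g}$ occurs with coefficient $a_{i_fj_f}b_{i_gj_g}\neq 0$ (using that $K$ is a field); by injectivity of $(i,j)\mapsto i+\sqrt{p}\,j$ no other monomial of $fg$ hits this same pair, and no other monomial can achieve a strictly smaller value. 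Hence $\nu(fg)=\nu(f)+\nu(g)$.

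Second, to pass to $K(s,t)$: the assignment $\nu(f/g)=\nu(f)-\nu(g)$ is well defined because $f/g=f'/g'$ gives $fg'=f'g$, whence multiplicativity on $K[s,t]$ yields $\nu(f)+\nu(g')=\nu(f')+\nu(g)$. Multiplicativity on $K(s,t)$ is immediate, and the ultrametric inequality is obtained by clearing denominators against a common multiple.

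Third, for $\theta$-invariance, I would first observe that $\nu$ restricted to $K^*$ is zero (constants correspond to $(i,j)=(0,0)$), and that $\theta$ is a ring endomorphism, so by multiplicativity it suffices to check the formula on $K[s,t]$. For a single monomial $a s^it^j$ with $a\in K^*$ one computes
\begin{equation*}
(as^it^j)^\theta=a^\theta(s^\theta)^i(t^\theta)^j=a^\theta t^is^{pj}=a^\theta s^{pj}t^i,
\end{equation*}
which has $\nu$-value $pj+\sqrt{p}\,i=\sqrt{p}\,(i+\sqrt{p}\,j)$. Thus $\theta$ rescales the $\nu$-value of every monomial by the factor $\sqrt{p}$; since rescaling by a positive number preserves the minimizer, $\nu(f^\theta)=\sqrt{p}\,\nu(f)$ for every $f\in K[s,t]$, and $\theta$-invariance follows on $K(s,t)$.

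The only step with any real content is multiplicativity, and even there the argument collapses to the one clean fact that irrationality of $\sqrt{p}$ forces a unique lowest-order term, so lowest-order terms multiply without cancellation. Everything else is mechanical.
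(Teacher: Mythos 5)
Your proof is correct and follows essentially the same route as the paper: the ultrametric inequality comes from support considerations and the $\theta$-invariance from the fact that $s\mapsto t$, $t\mapsto s^p$ rescales every exponent weight $i+\sqrt{p}\,j$ by the factor $\sqrt{p}$, so the minimum scales accordingly. The only real difference is that you spell out multiplicativity on $K[s,t]$ (and hence well-definedness on $K(s,t)$) via the unique-minimizer argument based on the irrationality of $\sqrt{p}$, a step the paper simply declares obvious.
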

\begin{proof}
Given two elements $\frac{p}{q}$ and $\frac{p'}{q'}$ of $K(s,t)$ it is obvious that
$$
\nu\left(\frac{p}{q}\frac{p'}{q'}\right)= \nu\left(\frac{p}{q}\right)+\nu\left(\frac{p'}{q'}\right).
$$ 
Assume that $\nu\left(\frac{p}{q}\right)\geq \nu\left(\frac{p'}{q'}\right)$ which directly implies that $\nu(p'q)\leq\nu(pq')$. Hence
\begin{align*}
  \nu\left(\frac{p}{q}+\frac{p'}{q'}\right) 
	&= \nu(pq'+p'q) -\nu(qq') \\
	&\geq \min\{\nu(pq'),\nu(p'q) \} -\nu(qq') \\
	&=\nu\left(\frac{p'}{q'}\right) = \min\{\nu\left(\frac{p}{q}\right), \nu\left(\frac{p'}{q'}\right)\}.
\end{align*}
Therefore $\nu$ is a valuation. To prove $\theta$-invariance let $p(s,t)=\sum_{i,j\in\N}a_{ij}s^i t^j$ be given and assume that $\nu(p)=(i_0+\sqrt{p}\;j_0)$.
Observe that $p^\theta (s,t) = \sum_{i,j\in\N}a_{ij}^\theta t^i s^{p\cdot j}$  and therefore
$$
\nu\left(p^\theta\right)=\left(\sqrt{p}\;i_0+ p\cdot j_0\right)=\sqrt{p}\;\nu(p).
$$
By definition of the valuation on $K(s,t)$ we can conclude that $\nu$ is $\theta$-invariant.
\end{proof}

\subsubsection*{Formal Laurent series with real valued exponents}

The following example of a real valued $\theta$-invariant valuation was suggested by Linus Kramer.

Let $(F, \theta)$ be a field of characteristic $p$. Define $K$ to be the field of formal Laurent series $\sum_{i\in \R} a_i x^i$ with exponents $i\in \R$ and coefficients $a_i\in F$ such that there exists a finite subset $I$ of $\R$ with the property that $a_i\neq 0$ if and only if $i\in I$. Extend $\theta$  to $K$ by mapping $x$ to $x^{\sqrt{p}}$ and define a valuation $\nu$ on $K$ as follows
$$
\nu\left(\sum_{i\in I} a_i x^i\right) = \min\left\{i\in \R : a_i\neq 0\right\}=\min\left\{i\in I\right\} .
$$

\begin{lemma}
The map  $\nu$ is a $\theta$-invariant valuation.
\end{lemma}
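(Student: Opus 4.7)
The plan is to verify the three required properties directly from the definitions, since $K$ is built so that all calculations reduce to comparing minima of finite subsets of $\R$.

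First, I would briefly check that $K$ is a well-defined field and that $\theta$ extends to a field endomorphism of $K$ whose square is the Frobenius. Given $a=\sum_{i\in I}a_ix^i$ and $b=\sum_{j\in J}b_jx^j$ with $I,J\subset\R$ finite, the Cauchy product $ab=\sum_{k\in I+J}\bigl(\sum_{i+j=k}a_ib_j\bigr)x^k$ has support in the finite set $I+J$, so $K$ is closed under multiplication (inverses are obtained by the usual geometric-series trick against the leading term). For $\theta$, the formula $\theta(\sum a_ix^i)=\sum a_i^\theta x^{\sqrt p\, i}$ gives a ring endomorphism because $\sqrt p$ is an additive character on exponents, and $\theta^2(\sum a_ix^i)=\sum a_i^p x^{pi}$ is the Frobenius of $K$, as required.

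Next, I would establish multiplicativity of $\nu$. With $a,b$ as above, set $i_0=\min I=\nu(a)$ and $j_0=\min J=\nu(b)$. In $ab$ the coefficient of $x^{i_0+j_0}$ is $a_{i_0}b_{j_0}$, which is nonzero since $F$ is a field; moreover no exponent smaller than $i_0+j_0$ occurs in $I+J$. Hence $\nu(ab)=i_0+j_0=\nu(a)+\nu(b)$. For the ultrametric inequality, the support of $a+b$ is contained in $I\cup J$, so
\[
\nu(a+b)\;\ge\;\min(I\cup J)\;=\;\min\{\nu(a),\nu(b)\}.
\]
Together with the convention $\nu(0)=+\infty$, this shows that $\nu$ is a valuation.

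Finally, for $\theta$-invariance I would simply compute: if $a=\sum_{i\in I}a_ix^i$ then $a^\theta=\sum_{i\in I}a_i^\theta x^{\sqrt p\, i}$, and since $\theta$ is injective on $F$ the support of $a^\theta$ is exactly $\sqrt p\cdot I$. Therefore
\[
\nu(a^\theta)=\min(\sqrt p\cdot I)=\sqrt p\cdot\min I=\sqrt p\,\nu(a),
\]
which is the $\theta$-invariance condition of Definition~\ref{Def_thetaInvariant}. There is no real obstacle here: the finiteness of the supports makes both the multiplicativity (no cancellation at the leading exponent) and the $\theta$-invariance (scaling exponents by a fixed positive real) immediate. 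The only point that deserves care is checking that $\theta$ is well defined on $K$ and squares to the Frobenius, which is the verification carried out in the first paragraph.
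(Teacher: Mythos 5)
Your verification of the three properties (multiplicativity via the leading coefficient, the ultrametric inequality via inclusion of supports, and $\theta$-invariance via scaling of the support by $\sqrt p$) is correct and is essentially the same computation as the paper's proof. One caveat on your preliminary paragraph, which the lemma does not actually require: the ``geometric-series trick'' does not produce inverses inside $K$ as defined, since the resulting series $1-x+x^2-\cdots$ has infinite support; invertibility of finite-support series is not addressed in the paper either and is not needed for showing that $\nu$ is a $\theta$-invariant valuation.
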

\begin{proof}
Let $P(x)=\sum_{i\in \R} a_i x^i$ and $Q(x)=\sum_{i\in \R} b_i x^i$ be elements of $K$. Let $I_P$ and $I_Q$ denote the index set of non-zero coefficients of $P$, respectively $Q$. Assume without loss of generality that $\nu(P)\leq \nu(Q)$. The index set $I_{P \cdot Q}$ of the product of $P$ and $Q$ consists of all sums $i + j$ with $i\in I_P$ and $j\in I_Q$. Since $a_ix^i\cdot b_jx^j = a_ib_jx^{i+j}\neq 0$ if $a_i, b_j$ are non-zero, we have
$$
\nu\left(P\cdot Q \right) = \nu(P) + \nu(Q).
$$

The index set of the sum $P+Q$ is
$$
I_{P+Q}= (I_P\cup I_Q)\setminus\{i\in I_P\cap I_Q : a_i=-b_i\}.
$$
It is a finite set and $\min I_{P+Q} \geq \min I_P\cup I_Q$. Therefore
$$
\nu(P+Q)=\min I_{P+Q} \geq \min I_P\cup I_Q=\min\{\nu(P), \nu(Q)\}.
$$
Hence $\nu$ is a valuation which is $\theta$-invariant by the following observation
\begin{align*}
\nu\left(P^\theta\right) 
	&= \nu\left( \sum_{i\in I_P} a_i^\theta x^{\sqrt{p}\;i}\right) 
	 = \min\{\sqrt{p}\; i : i\in I_P\}
	 = \sqrt{p}\; \nu(P).	
\end{align*}
\end{proof}

 \newpage
\section{Convexity revisited}
\label{Sec_convexityRevisited}

In this section we prove an analog of Theorem \ref{Thm_convexity} for generalized affine buildings. It is obvious that we have to replace the representation theoretic arguments used in the proof of Proposition~\ref{Prop_existenceLSgalleries} by something combinatorial. During the preparation of this thesis Parkinson and Ram published the preprint \cite{ParkinsonRam} providing a combinatorial proof of \ref{Prop_existenceLSgalleries} on which the following is based. The main result in this section is \ref{Thm_convexityGeneral}.

\subsection{A geometric proof of \ref{Prop_existenceLSgalleries}}\label{Sec_ParkinsonRam}

Let us illustrate the main argument of \cite{ParkinsonRam}. Even though we formulated Proposition~\ref{Prop_existenceLSgalleries} in terms of galleries we will now give the equivalent statement using paths and the root operators as defined in \cite{LittelmannPaths}.
This is the language that also applies for non-discrete affine buildings as defined in \cite{Ronan} or \cite{TitsComo}, which is precisely the case with $\Lambda=\R$ in Definition \ref{Def_LambdaBuilding}.

\begin{notation}\label{Not_convRev1}
Let $(X,\App)$ be the geometric realization of a simplicial affine building. Hence $\RS$ is crystallographic. The model space $\MS$ is isomorphic to the tiled vector space $V$ underlying $\RS$. Let $B$ be a basis of $\RS$ with elements indexed by $I=\{1,2,\ldots,n\}$. Denote by $\Pi$ the set of all piecewise linear paths $\pi:[0,1]\rightarrow \MS$ such that $\pi(0)=0$. The \emph{concatenation of paths} $\pi_1$ and $\pi_2$ is denoted $\pi=\pi_1 \ast \pi_2$ and defined by
\begin{equation*}
\pi(t):=\left\lbrace 
\begin{array}{ll} 
	\pi_1(2t), & \text{ if } 0\leq t\leq 1/2 \\
	\pi_1(1) + \pi_2(2t-1), &\text{ if } 1/2\leq t\leq 1 .
\end{array}
\right.
\end{equation*}
We consider paths only up to reparametrization, i.e. paths $\pi_1,\pi_2$ are identified if there exists a continuous, piecewise linear, surjective, nondecreasing map $\phi:[0,1]\rightarrow [0,1]$ such that $\pi_1\circ\phi=\pi_2$. 
\end{notation}

Let $B$ be a basis of $\RS$. For any $\alpha\in B$ let $r_\alpha(\pi)$ be the path $t\mapsto r_\alpha(\pi(t))$.
Define a function $h_\alpha:[0,1] \rightarrow \R$ by $t\mapsto \lb \pi(t),\alpha^\vee\rb$ and let $n_\alpha$ be the critical value
\begin{equation}\label{Equ_tec1}
n_\alpha:=\min\{h_\alpha(t) : t\in [0,1]\}.
\end{equation}
If $n_\alpha \leq -1$  define $t_1$ to be the minimal value in $[0,1]$ such that $n_\alpha = h_\alpha(t_1)$ and let $t_0$ be the maximal value in $[0,t_1]$ such that $h_\alpha(t) \geq n_\alpha + 1$ for all $t\in [0,t_0]$.
Choose points $t_0=s_0 < s_1 < \ldots < s_r = t_1$
such that one of the two conditions holds
\begin{enumerate}
  \item \label{Num_1}$h_\alpha(s_{i-1})=h_\alpha(s_i)$ and $h_\alpha(t)\geq h_\alpha(s_{i-1})$ for all $t\in[s_i, s_{i-1}]$ or
  \item \label{Num_2} $h_\alpha$ is strictly decreasing on $[s_{i-1}, s_i]$ and $h_\alpha(t)\geq r_\alpha(s_{i-1})$ for $t\leq s_{i-1}$.
\end{enumerate}
Define $s_{-1}=0$ and $s_{r+1}=1$ and let $\pi_i$ be the path 
$$\pi_i(t)=\pi(s_{i-1} +t(s_i-s_{i-1})) - \pi(s_{i-1}) \text{ for all } i=0,1\ldots, r+1.$$
Then $\pi=\pi_0\ast\pi_1\ast \cdots\ast\pi_{r+1}$.

\begin{definition}\label{Def_rootOperator}
\index{root operators}
Let $\alpha$ be an element of $B$. Let $e_\alpha\pi:=0$ if $n_\alpha> -1$. Otherwise, let $\eta_i:=\pi_i$ if $h_\alpha$ behaves on $[s_{i-1}, s_i]$ as in \ref{Num_1}. and let $\eta_i:=r_\alpha(\pi_i)$ if $h_\alpha$ is on $[s_{i-1}, s_i]$ as in \ref{Num_2}. 
Then define the \emph{root operator} $e_\alpha$ associated to $\alpha$ by
$$e_\alpha\pi=\pi_0\ast \eta_1\ast\eta_2\ast\cdots\ast\eta_r\ast\pi_{r+1}.$$
\end{definition}

\begin{lemma}{\cite[Lemma 2.1.]{LittelmannPaths}}\label{Lem_rootOperator}
Let $\alpha$ be an element of $B$ and $e_\alpha$ as defined in \ref{Def_rootOperator}.
If $\pi$ is a path such that $e_\alpha\pi\neq 0$ then $(e_\alpha\pi)(1)=\pi(1)+\frac{2}{(\alpha,\alpha)}\alpha$.
\end{lemma}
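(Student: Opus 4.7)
My plan is to exploit the decomposition $\pi=\pi_0\ast\pi_1\ast\cdots\ast\pi_{r+1}$ and compare it piece by piece to the expression $e_\alpha\pi=\pi_0\ast\eta_1\ast\cdots\ast\eta_r\ast\pi_{r+1}$ provided by Definition~\ref{Def_rootOperator}. Since concatenation of paths translates to addition of endpoints, a straightforward telescoping computation yields
\begin{equation*}
(e_\alpha\pi)(1)-\pi(1)=\sum_{i=1}^{r}\bigl(\eta_i(1)-\pi_i(1)\bigr),
\end{equation*}
so the contributions of $\pi_0$ and $\pi_{r+1}$, which are preserved by $e_\alpha$, drop out. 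The task thus reduces to analyzing the two possible cases prescribed for the subintervals $[s_{i-1},s_i]$.

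First I would observe that in case~\ref{Num_1} one has $\eta_i=\pi_i$ by definition, so the summand $\eta_i(1)-\pi_i(1)$ is zero. In case~\ref{Num_2} one has $\eta_i=r_\alpha(\pi_i)$, so using the reflection formula (\ref{Equ_reflection}) and the identification of $V$ with $V^*$ via the scalar product we get
\begin{equation*}
\eta_i(1)-\pi_i(1)=r_\alpha(\pi_i(1))-\pi_i(1)=-\lb\pi_i(1),\alpha^\vee\rb\,\tfrac{2\alpha}{(\alpha,\alpha)}\cdot\tfrac{(\alpha,\alpha)}{2}.
\end{equation*}
Hence each case~\ref{Num_2} contribution is a multiple of $\alpha$, and the sum depends only on the telescoping quantity $\sum_i\lb\pi_i(1),\alpha^\vee\rb=\sum_i\bigl(h_\alpha(s_i)-h_\alpha(s_{i-1})\bigr)$.

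Next I would use the choice of the subdivision points $s_0=t_0<s_1<\cdots<s_r=t_1$ to evaluate this telescoping sum. In case~\ref{Num_1} one has $h_\alpha(s_{i-1})=h_\alpha(s_i)$ and thus the corresponding difference vanishes; therefore only case~\ref{Num_2} intervals contribute to the total sum, which by telescoping equals $h_\alpha(t_1)-h_\alpha(t_0)$. By the definition of $t_0$ and $t_1$ (with $h_\alpha(t_1)=n_\alpha$ and, by maximality of $t_0$ together with continuity of the piecewise linear function $h_\alpha$, $h_\alpha(t_0)=n_\alpha+1$) this difference equals $-1$. Substituting back gives
\begin{equation*}
(e_\alpha\pi)(1)-\pi(1)=-\alpha\cdot(-1)\cdot\tfrac{2}{(\alpha,\alpha)}\cdot\tfrac{(\alpha,\alpha)}{2}=\tfrac{2}{(\alpha,\alpha)}\alpha,
\end{equation*}
which is the desired identity.

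The main obstacle in making this argument rigorous is the verification that $h_\alpha(t_0)=n_\alpha+1$ really does hold under the definition given, together with the fact that on a case~\ref{Num_2} interval $h_\alpha$ is strictly decreasing (so that $\pi_i$ actually ``crosses'' a unit-height drop and the reflection formula applies cleanly). Both facts hinge on the piecewise linearity of $\pi$ and the precise maximality/minimality conventions in choosing $t_0,t_1,s_i$; one must be careful that a suitable subdivision $s_0<\cdots<s_r$ satisfying either condition~\ref{Num_1} or~\ref{Num_2} always exists, and that the concatenation $\pi_0\ast\eta_1\ast\cdots\ast\eta_r\ast\pi_{r+1}$ is well-defined as a path (i.e.\ the endpoints of successive pieces match up). Once these combinatorial bookkeeping points are settled, the endpoint identity falls out of the telescoping argument above.
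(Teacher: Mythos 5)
Your overall strategy is sound and is in fact the standard one: the paper offers no proof of this lemma (it is quoted from \cite{LittelmannPaths}), and Littelmann's own argument is precisely the telescoping computation you set up — endpoints add under concatenation, the case~\ref{Num_1} pieces are unchanged, the case~\ref{Num_2} pieces are reflected, and the total drop of $h_\alpha$ from $t_0$ to $t_1$ is $-1$. Your justification that $h_\alpha(t_0)=n_\alpha+1$ (maximality of $t_0$ plus continuity of the piecewise linear $h_\alpha$, together with $t_0<t_1$ since $h_\alpha(t_1)=n_\alpha$) is fine, and the existence of a suitable subdivision and the matching of endpoints in the concatenation are routine.

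The genuine problem is the final bookkeeping. With the definitions actually in force — $h_\alpha(t)=\lb\pi(t),\alpha^\vee\rb$ and $r_\alpha(x)=x-\lb x,\alpha^\vee\rb\,\alpha$ — a case~\ref{Num_2} piece contributes $\eta_i(1)-\pi_i(1)=-\lb\pi_i(1),\alpha^\vee\rb\,\alpha$, so the telescoping yields $(e_\alpha\pi)(1)-\pi(1)=-\bigl(h_\alpha(t_1)-h_\alpha(t_0)\bigr)\,\alpha=\alpha$, i.e.\ a shift by the root $\alpha$; this is Littelmann's original normalization $e_\alpha\pi(1)=\pi(1)+\alpha$. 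Your two displays insert mutually cancelling factors $\tfrac{2\alpha}{(\alpha,\alpha)}\cdot\tfrac{(\alpha,\alpha)}{2}$ and $\tfrac{2}{(\alpha,\alpha)}\cdot\tfrac{(\alpha,\alpha)}{2}$, so the last equality $-\alpha\cdot(-1)\cdot\tfrac{2}{(\alpha,\alpha)}\cdot\tfrac{(\alpha,\alpha)}{2}=\tfrac{2}{(\alpha,\alpha)}\alpha$ is false unless $(\alpha,\alpha)=2$: its left-hand side is just $\alpha$. The coefficient $\tfrac{2}{(\alpha,\alpha)}$ in the statement reflects a different normalization, namely measuring the unit drop with $h_\alpha(t)=(\pi(t),\alpha)$ (equivalently, arranging that the operators shift endpoints by the coroot $\alpha^\vee=\tfrac{2}{(\alpha,\alpha)}\alpha$, which is what Lemma~\ref{Lem_ParkRam} and Remark~\ref{Rem_idea} later use); it cannot be produced from $h_\alpha=\lb\cdot,\alpha^\vee\rb$ by inserting factors that cancel. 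So either redo the computation with that normalization of $h_\alpha$, or state the conclusion of your argument as $(e_\alpha\pi)(1)=\pi(1)+\alpha$; as written, the final step does not follow from the preceding ones.
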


\begin{notation}\label{Not_convRev2}
Let us add some notation to \ref{Not_convRev1}. Let $x$ be a special vertex in $\MS$ and assume without loss of generality that $x$ is contained in the fundamental Weyl chamber $\Cf$ determined by $B$. In \ref{Not_extendedRetr} we extended the two types of retractions $r$ and $\rho$ to galleries. Analogously it is possible to consider them as maps on paths by defining the image of a point $\pi(t)$ to be the corresponding point in the image of an alcove containing $\pi(t)$. Again denote these extensions by $\hat{r}$ and $\hat{\rho}$.
\end{notation}

 \begin{figure}[htbp]
 \begin{center}
 	\resizebox{!}{0.30\textheight}{\input{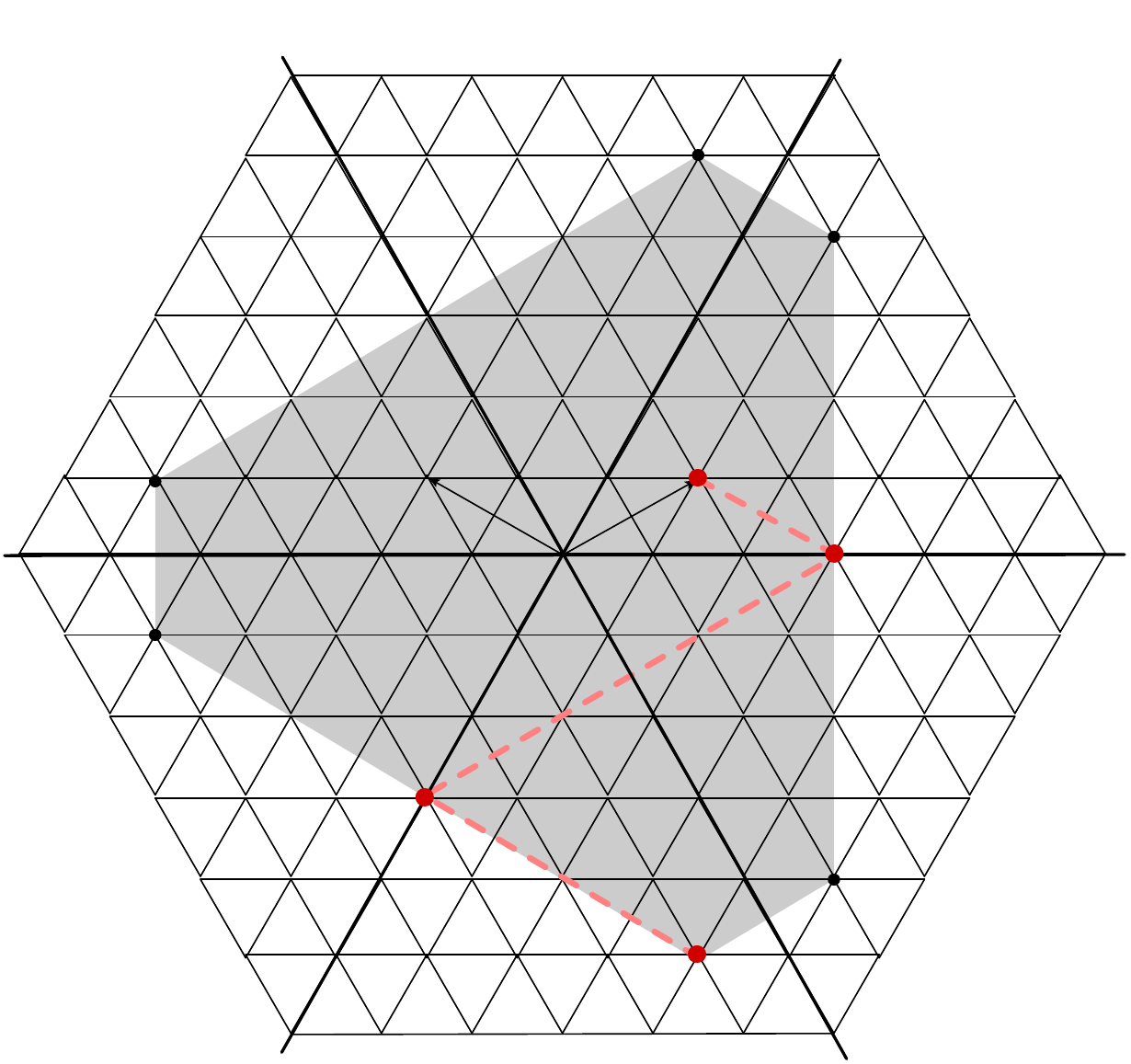tex_t}}
 	\caption[adjacent]{Illustration of Lemma~\ref{Lem_ParkRam} with $x$ and $y$ as pictured, $w_0=s_{\alpha_2}s_{\alpha_1}s_{\alpha_2}$. The defined constants are $m_1=1, m_2=3$ and $m_3=2$.}
 	\label{Fig_folding}
 \end{center} 
 \end{figure}

\begin{definition}
We say that a path $\pi'$ is a \emph{positive fold} of a path $\pi$ if there exists a finite sequence of simple roots $\alpha_{i_j}\in B$ such that $\pi'$ is the image of $\pi$ under the concatenation of the associated root operators $e_{\alpha_{i_j}}$.
\end{definition}

\begin{remark}
It would be possible to define \emph{positively folded paths} similarly to positively folded galleries using the notion of a billiard path as defined by Kapovich and Millson in \cite{KapovichMillson}. In fact Kapovich and Millson remark that a consequence of their results is that the so called \emph{Hecke paths} defined in \cite[3.27]{KapovichMillson} correspond precisely to the positively folded galleries defined in \cite{GaussentLittelmann}. In particular, by Theorem 5.6 of \cite{KapovichMillson}, the LS-paths defined in \cite{LittelmannPaths} are a subclass of the Hecke paths.

Furthermore it is proven in \cite{KapovichMillson} that a path in an apartment $A$ of $X$ is a Hecke path if and only if it is the image of a geodesic segment in $X$ under a ``folding'' which is nothing else than a retraction centered at an alcove as defined in \ref{Def_simplChamberRetraction}. (Compare also Lemma 4.3 and 4.4 and Theorem 4.16 of \cite{KapovichMillson}.)

By \cite{LittelmannPaths} the set of $LS$-paths is invariant under the action of the root operators $e_\alpha$ with $\alpha\in B$, hence any image of a geodesic line (which trivially satisfies the axioms of an $LS$-path) obtained by applications of root operators is again an $LS$-path and therefore a Hecke path.
\end{remark}

Denote by $\QQ^+$ the positive cone in the co-root-lattice $\QQ(\RS^\vee)$.
Following \cite{ParkinsonRam} the first important observation is

\begin{lemma}{\cite[Lemma 3.1]{ParkinsonRam}}\label{Lem_ParkRam}
Identify the co-roots $\alpha^\vee$ with $\frac{2}{(\alpha,\alpha)}\alpha$. Let $x$ be as in \ref{Not_convRev2} and let $y\in \bigcap_{w\in \sW } w(x - \QQ^+)$. Fix a presentation $w_0=s_{i_1}\cdots s_{i_n}$ of the longest word $w_0\in\sW$ and denote by $\alpha_{i_k}$ the root corresponding to $s_{i_k}$. Define vertices $y_i$ in the convex hull $\dconv(\sW.x)$ inductively by $y_0=y$ and for all $k=1,2,\ldots,n$ by the recursive formula 
$$
y_k = y_{k-1} - m_k\alpha^\vee_{i_k} \;\text{ where }  
$$
$$ 
  m_k=\max\{m\in\Z : y_{k-1} - m \alpha^\vee_{i_k} \in \dconv(\sW .x)\cap (x+\QQ) \} .
$$

Then $y_n=w_0x$.
\end{lemma}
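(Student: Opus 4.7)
By Lemma~\ref{Lem_conv^*}, membership in $A^\QQ(x) = \dconv(\sW.x)\cap(x+\QQ)$ is equivalent to the family of conditions $x - wy \in \QQ^+$ for every $w\in\sW$. The integer $m_k$ is therefore the largest value such that $y_{k-1} - m\alpha^\vee_{i_k}$ still satisfies all of these inequalities; geometrically, $y_k$ is the last point on the half-line $\{y_{k-1} - m\alpha^\vee_{i_k} : m\geq 0\}$ that lies inside the lattice polytope $A^\QQ(x)$, and it necessarily lies on one of its bounding walls.

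I would prove by induction on $k$ the sharper statement that, setting $v_k \define s_{i_1}\cdots s_{i_k}$, the point $v_k^{-1} y_k$ agrees with $x$ on the fundamental-weight coordinates indexed by $i_1,\dots,i_k$:
\begin{equation*}
\bigl\langle v_k^{-1} y_k,\;\fweight[i_j]\bigr\rangle = \bigl\langle x,\;\fweight[i_j]\bigr\rangle \quad\text{for every } j\leq k.
\end{equation*}
Since $v_n = w_0$ and the $\fweight[i_1],\dots,\fweight[i_n]$ form a basis, the case $k=n$ forces $w_0^{-1} y_n = x$, and hence $y_n = w_0 x$, as required. The motivation for this invariant is the classical bijection $k \mapsto \beta_k \define s_{i_1}\cdots s_{i_{k-1}}(\alpha_{i_k})$ between $\{1,\dots,n\}$ and $\RS^+$: each subtraction step is designed to saturate the inequality associated to the positive root $\beta_k$.

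For the base case, maximality of $m_1$ forces $y_1$ to attain the extremal value of $\langle\,\cdot\,,\fweight[i_1]\rangle$ in the $-\alpha^\vee_{i_1}$-direction, which upon applying $s_{i_1}$ yields the $j=1$ case of the invariant. For the inductive step, one exploits the duality $\langle \alpha^\vee_{i_k},\fweight[i_j]\rangle = \delta_{jk}$ to check that the $k$th subtraction, after transport back by $v_k^{-1}$, leaves the already-pinned coordinates $j<k$ untouched; the new equality at index $k$ is then supplied by the maximality of $m_k$ together with the fact that the polytope $A^\QQ(x)$ is $\sW$-invariant.

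The principal obstacle I anticipate is verifying that the maximum $m_k$ is actually realised by the wall of $A^\QQ(x)$ associated with $\beta_k$ itself, rather than being cut short by some other, geometrically premature wall of the polytope. Any such interference would have to come from a wall already pinned down at an earlier step $j<k$ lying in a common rank-two parabolic with $\alpha_{i_k}$, so the crucial technical step should reduce to a finite rank-two verification inside the dihedral subgroup $\langle s_{i_{k-1}}, s_{i_k}\rangle$, where one can check by hand that the ordering of walls encountered along the $-\alpha^\vee_{i_k}$ ray is compatible with the one prescribed by the reduced expression. Granted this rank-two check, the invariant propagates through all $n$ steps and the conclusion $y_n = w_0 x$ follows.
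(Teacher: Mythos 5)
Your reduction of the endgame to the invariant $\lb v_k^{-1}y_k,\fweight[{i_j}]\rb=\lb x,\fweight[{i_j}]\rb$ for all $j\leq k$ is exactly where the argument breaks: that invariant is false. Take $\RS$ of type $\mathrm{A}_2$, normalize $(\alpha_i,\alpha_i)=2$ so that $\alpha_i^\vee=\alpha_i$, let $x=\alpha_1+\alpha_2$ (dominant and special), choose the reduced word $w_0=s_1s_2s_1$ and start at $y=y_0=0$. Then $m_1=1$, $m_2=1$, $m_3=0$, so $y_1=-\alpha_1$ and $y_2=y_3=-(\alpha_1+\alpha_2)=w_0x$; the lemma itself holds, but $v_2^{-1}y_2=s_2s_1\bigl(-(\alpha_1+\alpha_2)\bigr)=\alpha_2$, whose coefficient along $\alpha_1$ is $0$ while that of $x$ is $1$, i.e.\ $\lb v_2^{-1}y_2,\fcw[{\alpha_1}]\rb\neq\lb x,\fcw[{\alpha_1}]\rb$, so the case $j=1\leq k=2$ of your invariant fails (and if you instead read ``fundamental-weight coordinate'' as pairing with $\alpha_{i_j}^\vee$, it already fails at $k=1$, since $\lb\alpha_1,\alpha_1^\vee\rb=2\neq1=\lb x,\alpha_1^\vee\rb$). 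The inductive step cannot be saved by $\lb\alpha^\vee_{i_k},\fweight[{i_j}]\rb=\delta_{jk}$: indices repeat in a reduced word for $w_0$, and the transport changes from $v_{k-1}^{-1}$ to $v_k^{-1}=s_{i_k}v_{k-1}^{-1}$, which mixes the coordinates you have already ``pinned''. The geometric premise of your base case is also unavailable: because $m_k$ is an integer, maximality does not force $y_k$ onto any bounding wall of $A^\QQ(x)$, and the thesis's own Figure \ref{Fig_folding2} (type $\mathrm{G}_2$) shows $y_1,y_2$ lying in the interior of $\dconv(\sW.x)$.

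Note that the thesis does not reprove this lemma (it is quoted from \cite{ParkinsonRam}); but its proof of the $\Lambda$-analogue, Lemma \ref{Lem_key}, shows the mechanism you should use instead, and it sidesteps your anticipated rank-two analysis entirely. Introduce the comparison sequence $x_0=x$, $x_k=s_{i_k}x_{k-1}$, so that $x_n=w_0x$, and prove by induction that $x_k-y_k\in\Cp$; maximality of $m_k$ is used only once, to show $m_k\geq\lb x_{k-1},\alpha^\vee_{i_k}\rb-c$, where $c$ is the $\alpha_{i_k}$-coefficient of $x_{k-1}-y_{k-1}$, and no information about which wall stops the ray is needed. At the end this gives $w_0x-y_n\in\Cp$, while $y_n\in\bigcap_{w\in\sW}w(x-\QQ^+)$ applied with $w=w_0$ gives $y_n-w_0x\in\QQ^+\subset\Cp$, forcing $y_n=w_0x$. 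Your opening observation via Lemma \ref{Lem_conv^*} is fine; it is the coordinate-pinning induction that has to be replaced by this dominance-order comparison.
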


Let us restate the assertion of \ref{Prop_existenceLSgalleries} using paths instead of galleries.

\begin{prop}\label{Prop_existenceLSpaths}
Let $x$ be a special vertex in $\MS$ and assume without loss of generality that $x$ is contained in $\Cf$. Let $\pi: 0 \rightsquigarrow x^+$ be the unique geodesic from $0$ to $w_0x$, where $w_0$ is the longest word in the spherical Weyl group $\sW$. Let $y$ be a special vertex in $\MS$. There exists a positive fold $\pi'$ of $\pi$ with endpoint $\pi'(1)=y$ if and only if $y$ is contained in 
$
A^\QQ(x):=\{z\in\MS : z\in \dconv(\sW.x)\cap (x+\QQ)\}.
$
\end{prop}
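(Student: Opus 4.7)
My plan is to prove the two directions separately, using Lemma \ref{Lem_rootOperator} for ``only if'' and Lemma \ref{Lem_ParkRam} for ``if''. The starting observation is that the geodesic $\pi$ ends at $w_0 x$, which is the unique element of $\sW.x$ lying in the antifundamental chamber $-\Cf$; this is the ``lowest'' weight in the orbit with respect to the dominance order, and every other vertex in $A^\QQ(x)$ lies above it in the sense of $\QQ^+$.

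For the forward direction, suppose $\pi'$ is obtained from $\pi$ by a sequence of root operators $e_{\beta_1}, e_{\beta_2}, \ldots, e_{\beta_N}$ with all intermediate paths nonzero. By Lemma \ref{Lem_rootOperator}, each operator that does not annihilate its argument shifts the endpoint by the corresponding simple coroot $\beta_j^\vee$; hence $y = w_0 x + \sum_{j=1}^N \beta_j^\vee \in w_0 x + \QQ^+ \subset x + \QQ$, which shows the coroot-lattice condition. To see $y \in \dconv(\sW.x)$, by Lemma \ref{Lem_conv^*} it suffices to verify that for every $w \in \sW$ we have $wx - y \in \QQ^+$, i.e.\ $\lb wx - y, \fcw[\alpha]\rb \geq 0$ for all $\alpha \in B$. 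I would prove this by induction on the number of operators applied, using the key property of $e_\alpha$ that it can only be applied when the minimum $n_\alpha$ of $h_\alpha$ along the current path satisfies $n_\alpha \leq -1$; a standard computation then shows the endpoint after $e_\alpha$ is still bounded (in the dominance order) by $\sW.x$.

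For the converse direction, let $y \in A^\QQ(x)$. By Lemma \ref{Lem_conv^*} we have $y \in \bigcap_{w \in \sW} w(x - \QQ^+)$, so Lemma \ref{Lem_ParkRam} applies: fixing a reduced expression $w_0 = s_{i_1} \cdots s_{i_n}$, we obtain non-negative integers $m_1, \ldots, m_n$ with $y = w_0 x + \sum_{k=1}^n m_k \alpha^\vee_{i_k}$, the partial sums $y_k$ staying inside $\dconv(\sW.x) \cap (x + \QQ)$. I would then construct $\pi'$ by applying, in reverse order, the root operators $e_{\alpha_{i_n}}^{m_n} \circ e_{\alpha_{i_{n-1}}}^{m_{n-1}} \circ \cdots \circ e_{\alpha_{i_1}}^{m_1}$ to the geodesic $\pi$, so that the endpoint is shifted from $w_0 x$ to $y_{n-1}$, then $y_{n-2}$, and so on up to $y_0 = y$. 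The endpoint at each stage is correct by Lemma \ref{Lem_rootOperator}, provided each operator is applicable (i.e.\ produces a nonzero path).

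The main obstacle in both directions is the applicability condition $n_\alpha \leq -1$ for the relevant simple roots at every intermediate stage. In the forward direction one must rule out the endpoint escaping $\dconv(\sW.x)$ when the $h_\alpha$-profile has a just-barely-deep minimum; in the backward direction one must show that the maximality of the $m_k$ in Lemma \ref{Lem_ParkRam} is exactly matched by the number of times $e_{\alpha_{i_k}}$ remains nonzero on the current path. The clean way to handle this is to track, along the construction, the quantity $\lb \pi'(1) - \pi'(t), \alpha^\vee \rb$ at the critical parameters $s_j$ of Definition \ref{Def_rootOperator}; since the underlying path $\pi$ is the straight geodesic to $w_0 x$, these quantities are controlled explicitly by the coefficients of the reduced decomposition, and Lemma \ref{Lem_ParkRam} ensures they never drop below $-1$ prematurely. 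Once this bookkeeping is established, the proof reduces to citing Lemmas \ref{Lem_rootOperator}, \ref{Lem_conv^*} and \ref{Lem_ParkRam} in the appropriate sequence.
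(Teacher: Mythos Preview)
Your approach matches the paper's: the paper itself does not give a full proof of Proposition~\ref{Prop_existenceLSpaths} but only the sketch in Remark~\ref{Rem_idea}, which is exactly your ``if'' direction---apply the root operators $e_{\alpha_{i_k}}^{m_k}$ for $k$ running from $n$ down to $1$ to the geodesic $\pi$, so that Lemma~\ref{Lem_rootOperator} moves the endpoint along the chain $y_n=w_0x,\,y_{n-1},\ldots,y_0=y$ produced by Lemma~\ref{Lem_ParkRam}. (Note your composition is written in the wrong order: to hit $y_{n-1}$ first you must apply $e_{\alpha_{i_n}}^{m_n}$ first, so the composite is $e_{\alpha_{i_1}}^{m_1}\circ\cdots\circ e_{\alpha_{i_n}}^{m_n}$.) The paper, like you, identifies the applicability condition $n_\alpha\leq -1$ as the point requiring care and defers the detailed verification to \cite{ParkinsonRam}; your bookkeeping suggestion is reasonable but also stops short of a complete argument. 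One small correction on the ``only if'' side: Lemma~\ref{Lem_conv^*} gives $A^\QQ(x)=\bigcap_w w(x-\QQ^+)$, which translates to $x-wy\in\QQ^+$ for all $w$, not $wx-y\in\QQ^+$ as you wrote.
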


\begin{figure}[h]
\begin{center}
 \begin{minipage}[b]{7 cm}
    \resizebox{!}{0.18\textheight}{\input{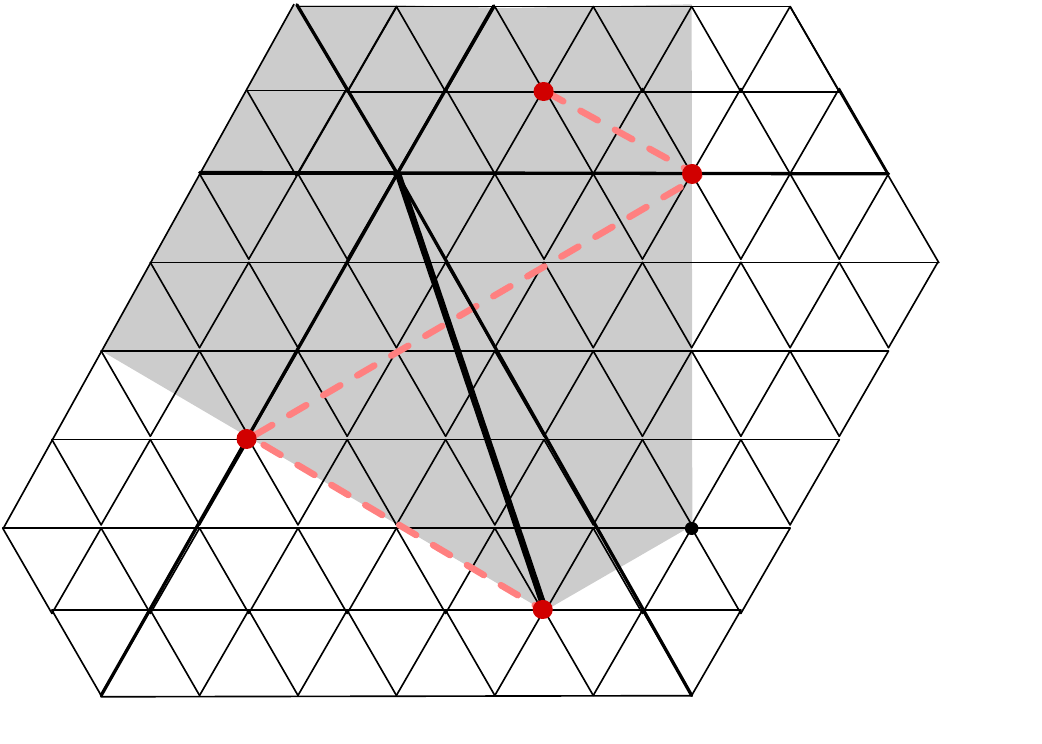tex_t}}
  \end{minipage}
  \begin{minipage}[b]{7 cm}
    	\resizebox{!}{0.18\textheight}{\input{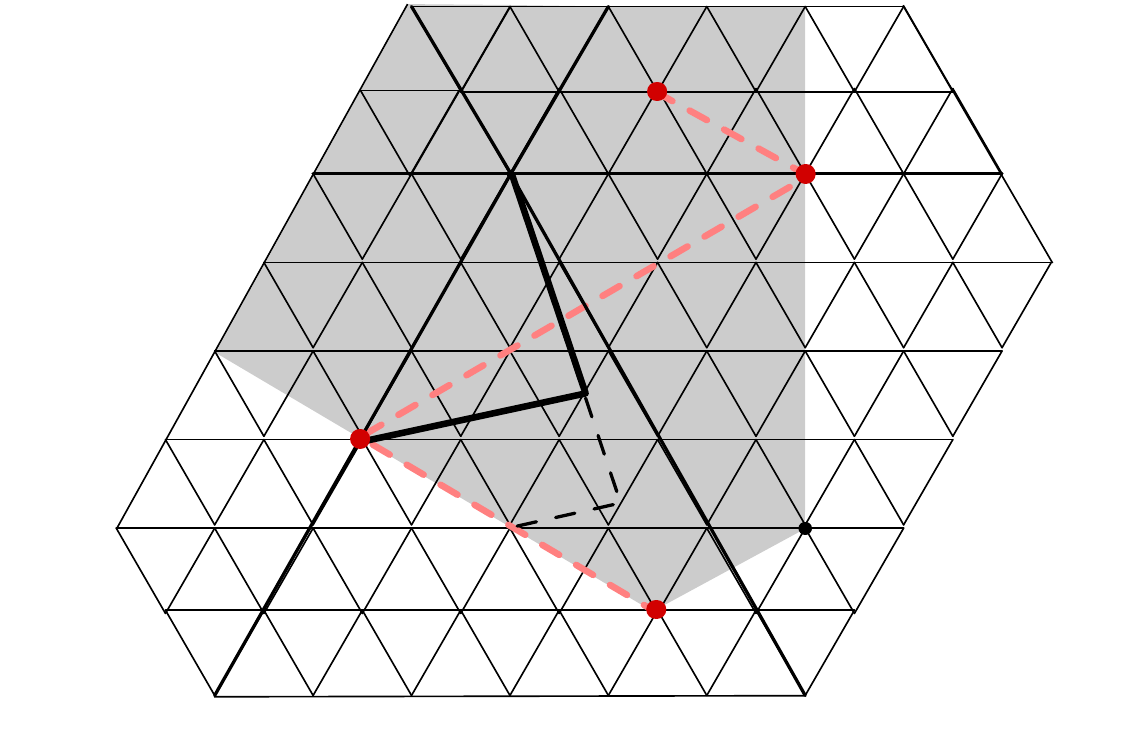tex_t}}
  \end{minipage}

  \begin{minipage}[b]{7 cm}
    	\resizebox{!}{0.18\textheight}{\input{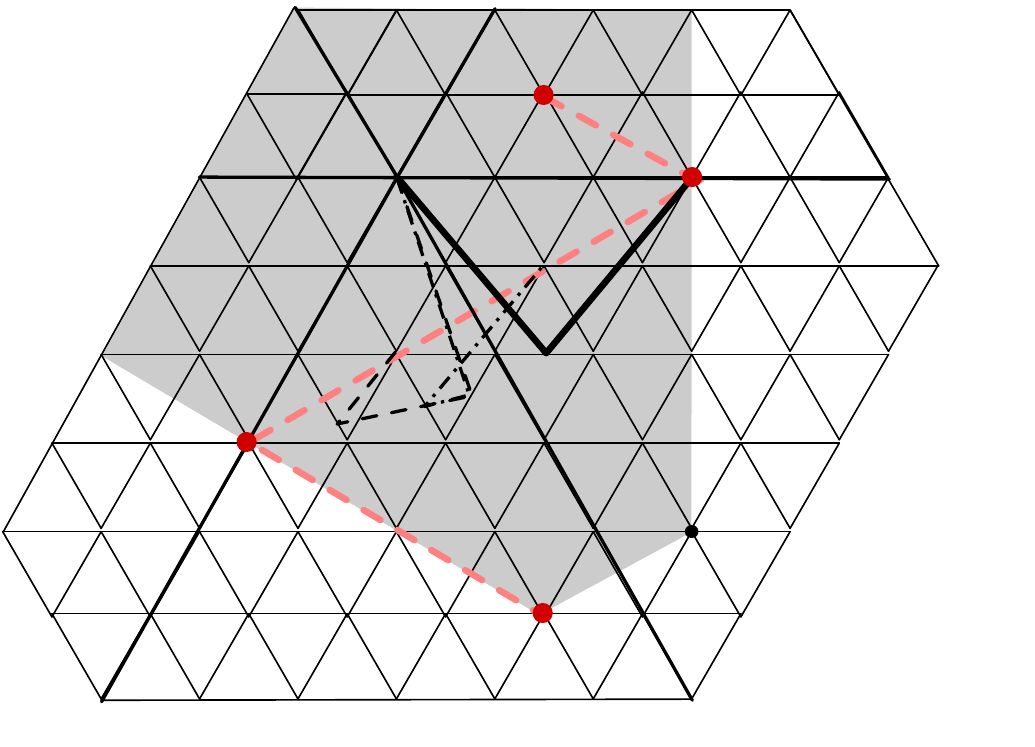tex_t}}
  \end{minipage}
  \begin{minipage}[b]{7 cm}
    	\resizebox{!}{0.18\textheight}{\input{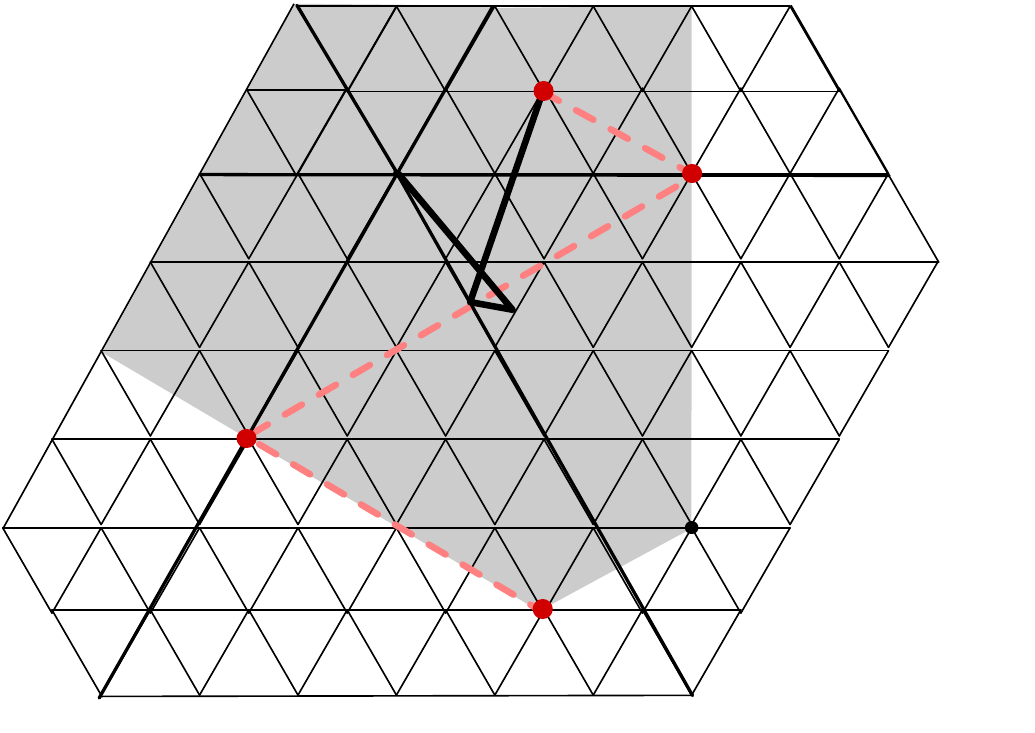tex_t}}
  \end{minipage}
  \caption[adjacent]{Shifted paths according to the algorithm in the proof of Proposition \ref{Lem_rootOperator} with $w_0=s_{\alpha_2}s_{\alpha_1}s_{\alpha_2}$. The constants $m_k$ are $m_1=1, m_2=3$ and $m_3=2$.}
  \label{Fig_foldedpaths}
\end{center} 
\end{figure}

\begin{remark}\label{Rem_idea}
The main idea of the combinatorial proof of Proposition \ref{Prop_existenceLSpaths} is to reverse Lemma~\ref{Lem_ParkRam} and ``shift'' a minimal path  (respectively gallery) $\pi$ from $0$ to $w_0x$ to a folded path from $0$ to $y$. Let $\pi_n\define \pi$. By reverse induction define paths $\pi_{k-1}$ by an $m_k$-fold application of the root operator $e_{\alpha_{i_k}}$ to the previous path $\pi_{k}$ in step $k$, where $k$ goes from $n$ to $1$. According to Proposition~\ref{Lem_rootOperator} the endpoint of $\pi_k$ is translated by $\frac{2}{(\alpha_{i_k},\alpha_{i_k})} m_k\alpha_{i_k}$ in step $k$.
\end{remark}

\begin{example}
In Figure \ref{Fig_foldedpaths} we illustrate the paths one obtains by applying the algorithm of the proof of \ref{Lem_rootOperator} to the example of Figure \ref{Fig_folding}.

The vertex $x$ was chosen in $\Cf$ and an element $y\in\dconv(\sW.x)$ is given. We construct a positive fold of the minimal path connecting $0$ and $w_0x$ having endpoint $y$.

Picture (1) of Figure~\ref{Fig_foldedpaths} shows the minimal path $\pi=\pi_3$ connecting $0$ and $w_0x$. Since $m_3=2$ we apply $e_{\alpha_2}$ twice in the first step. In picture (2) the path $e_{\alpha_2} \pi$ is drawn with a dashed line, the path  $\pi_2$, with notation as in \ref{Rem_idea}, equals $e^{m_3}_{\alpha_2} \pi$ and has endpoint $y_2$. It is drawn with a solid line. Both paths are positive folds of $\pi$. Continue with a $3$-fold application of $e_{\alpha_1}$ to $\pi'$, as illustrated in picture (3). One obtains a path $\pi_1$ whose endpoint is $y_1$. Finally a single application of $e_{\alpha_2}$ gives us the path pictured in (4) which is the desired positive fold of $\pi$ with endpoint $y$.
\end{example}

\subsection{The convexity theorem}\label{Sec_convexityThm}

Let us first collect some necessary definitions. For simplicity assume that $\Lambda=\Lambda'$ in Definition  \ref{Def_LambdaBuilding} of the model space $\MS=\MS(\RS,\Lambda)$. 

\begin{definition}
A \emph{dual hyperplane} in the model space $\MS(\RS,\Lambda)$ is a set
$$\Hdual_{\alpha,k}=\{x\in V : \lb x,\fcw \rb = k\}$$
where $k\in \Lambda$ and $\fcw$ is a fundamental co-weight of $\RS$ as defined in Section~\ref{Sec_rootSystems}.
Again, dual hyperplanes determine \emph{dual half-apartments} ${\Hdual_{\alpha,k}}^\pm$ and \emph{convexity} is defined as in \ref{Def_convex}. The \emph{convex hull} of a set $C$ is denoted by $\dconv(C)$.
\end{definition}

\begin{definition}
Fix a basis $B$ of $\RS$. The \emph{positive cone} $\Cp$ in $\MS$ with respect to $B$ is the set of all positive linear combinations of roots $\alpha\in B$ with coefficients in $\Lambda_{\geq 0}$.
\end{definition}

\begin{remark}
In the simplicial case let $x$ be a special vertex in an apartment $A$. The set of special vertices in $A$ having the same type as $x$ are translates of $x$ by $\QQ(\RS^\vee)$. Therefore we have by Lemma~\ref{Lem_conv^*} that
$$
\bigcap_{w\in \sW } w(\lambda^+ - \QQ^+) = \dconv(\sW\lambda) \cap (\lambda +\QQ)
$$
where $\QQ^+$ is the positive cone in $\QQ$ and $\lambda^+$ is the unique element of $\sW\lambda$ contained in the fundamental Weyl chamber $\Cf$.

In the general case the regarded root system $\RS$ underlying the model space $\MS(\RS,\Lambda,T)$ does not need to be crystallographic. The notion of co-root lattice $\QQ(\RS^\vee)$ does therefore not make sense. The analog role is played by $T$, which is transitive on the images of $0$ under the affine Weyl group $\WT$. Assuming $\RS$ is crystallographic, defining $\Lambda=\R$ and $T=\QQ(\RS^\vee)$ these are precisely the special vertices of fixed type and $\WT$ corresponds to the affine Weyl group associated to $\RS$. 
\end{remark}

\begin{lemma}\label{Lem_conv^*Lambda}
For any special vertex $x$ in $\MS(\RS,\Lambda,T)$ let $x^+$ denote the unique element of $\sW.x$ contained in $\Cf$. Then
$$\dconv(\sW.x)\cap T = \{y\in V :  x^+-y^+\in (\Cp\cap T) \}=\bigcap_{w\in \sW } w(\lambda^+ - (\Cp\cap T)) .$$ 
\end{lemma}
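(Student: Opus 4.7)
The approach is to mimic the proof of Lemma~\ref{Lem_conv^*} line-by-line, replacing $\QQ(\RS^\vee)$ by $T$ throughout. First, observe that both sides of the claimed equality are $\sW$-invariant: $\dconv(\sW.x)$ by construction, and $T$ because $\sW$ normalises $T$ by the very definition of the affine Weyl group $\WT$. Hence it suffices to establish the equality after intersecting with the fundamental Weyl chamber $\Cf$, and by replacing $x$ with $x^+$ we may assume $x \in \Cf$.

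Next I would translate the definition of $\dconv(\sW.x)$ into fundamental co-weight coordinates. For each $\alpha \in B$ there is a dual hyperplane $\Hdual_{\alpha, k_\alpha}$ containing $x$ where $k_\alpha := \lb x, \fcw \rb$, and one checks exactly as in the proof of \ref{Lem_conv^*} that
$$\dconv(\sW.x)\cap \Cf = \bigcap_{\alpha\in B}(\Hdual_{\alpha,k_\alpha})^-\cap \Cf = \{y\in \Cf : 0\le \lb y,\fcw\rb \le \lb x,\fcw\rb \text{ for all }\alpha\in B\}.$$
The key linear-algebra observation used in \ref{Lem_conv^*} carries over verbatim since $\MS$ is a free $F\otimes_F\Lambda$-module on the basis $B$: the faces of $\Cp$ lie on the dual hyperplanes $\Hdual_{\alpha,0}$ with $\alpha\in B$, so for $z \in V$ one has $z \in \Cp$ iff $\lb z, \fcw\rb \ge 0$ for every $\alpha\in B$. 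Applied to $z = x - y$ for $y\in \Cf$, this yields $\dconv(\sW.x)\cap \Cf = \{y\in \Cf : x - y \in \Cp\}$, independent of whether $\Lambda = \R$ or not.

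Intersecting with $T$ gives the first equality, provided one justifies that $y\in T$ is equivalent to $x-y\in T$ when $y\in\dconv(\sW.x)\cap \Cf$. Since $T$ is an abelian group and $x$ is assumed to be a $T$-translate of $0$ (i.e.\ $x^+ \in T$, which is the content of the hypothesis $\dconv(\sW.x)\cap T$ being non-trivial in the first place), the group structure of $T$ yields $y\in T \Leftrightarrow x^+ - y \in T$ immediately. The second equality is purely formal: the condition $x^+ - y^+\in \Cp\cap T$ says exactly that the (unique) $w\in \sW$ with $w.y = y^+$ carries $y$ into $x^+ - (\Cp \cap T)$; taking the intersection over all $w\in \sW$ encodes precisely the fact that this condition depends only on the $\sW$-orbit of $y$, which is $\sW$-invariance of both sides.

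The one place I expect a wrinkle is the bookkeeping around coordinates: in the classical case $F = \Q$, here $F$ is a subfield of $\R$ containing $\{\lb \beta,\alpha^\vee\rb\}$ and $\Lambda$ is a torsion-free $F$-module, so the statement ``$z\in \Cp$ iff $\lb z,\fcw\rb \geq 0$ for all $\alpha\in B$'' requires checking that the dual-basis argument still works in the $F\otimes_F\Lambda$-module setting. This is where the choice of $F$ and the $F$-module structure on $\Lambda$ (rather than just a $\Z$-module structure) becomes essential, and it is the step where I would be most careful; everything else is routine transfer from the proof of \ref{Lem_conv^*}.
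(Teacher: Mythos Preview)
Your approach is correct and essentially identical to the paper's: the paper's entire proof is the single sentence ``Replace $\QQ$ by $T$ in the proof of Lemma~\ref{Lem_conv^*}.'' You have simply spelled out the details of that replacement, including the $\sW$-invariance reduction and the dual-hyperplane/positive-cone bookkeeping, which the paper leaves implicit.
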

\begin{proof}
Replace $\QQ$ by $T$ in the proof of Lemma~\ref{Lem_conv^*}.
\end{proof}

The two retractions in question are on one hand a retraction centered at a germ of a Weyl chamber and on the other hand a retraction centered at infinity. For generalized affine buildings they are defined as follows

\begin{definition}\label{Def_vertexRetraction}
\index{vertex retraction}
Let $(X,\App)$ be an affine building. Fix a Weyl chamber $S$ based at a vertex $x$ in $X$. Denote the germ of $S$ at $x$ by $\Delta_xS$. By Corollary~\ref{Cor_tec18} the building $X$ is as a set the union of all apartments containing $\Delta_xS$.
For an arbitrary vertex $y$ in $X$ fix a chart $g\in\App$ such that $y$ and $\Delta_xS$ are contained in  $g(\MS)$. Define 
$$ r_{A,\Delta_xS}(y) = (f\circ w\circ g^{-1} )(y)  $$
where $w\in\aW$ is such that $g\vert_{g^{-1}(f(\MS))}=(f\circ w)\vert_{g^{-1}(f(\MS))}$.
The map $r_{A,\Delta_xS}$ is called \emph{retraction onto $A$ centered at $\Delta_xS$}.
\end{definition}

\begin{definition}\label{Def_retractionInfty}
\index{retraction centered at infinity}
Let $(X,\App)$ be an affine building. Fix an equivalence class of Weyl chambers $c=\partial S\subset \binfinity X$ and an apartment $A=f(\MS)$ containing some representative $S$ of $c$. By Corollary~\ref{Cor_tec17} the building $X$ is the union of apartments containing a sub-Weyl chamber of $S$. For an arbitrary vertex $x$ in $X$ fix a chart $g\in\App$ such that $(x\cup S')\subset g(\MS)$ for some Weyl chamber $S'$ parallel to $S$ and  define 
$$ \rho_{A,c}(x) = (f\circ w\circ g^{-1} )(x)  $$
where $w\in\aW$ is such that $g\vert_{g^{-1}(f(\MS))}=(f\circ w)\vert_{g^{-1}(f(\MS))}$.
The map $\rho_{A,c}$ is called \emph{retraction onto $A$ centered at $c=\partial S$} or simply \emph{retraction centered at infinity}.
\end{definition}

\begin{prop}\label{Prop_r-rho}
Let $(X,\App)$ be an affine building. Fix an apartment $A$ of $X$. Let $S$ be a Weyl chamber contained in $A$ and let $c$ be a chamber in $\partial A$. Then
\begin{enumerate}
  \item The retractions $r_{A,\Delta_xS}$ and $\rho_{A,c}$, as defined above, are well defined.
  \item The restriction of $\rho_{A,c}$ to an apartment $A'$ containing $c$ at infinity is an isomorphism onto $A$.
  \item The restriction of $r_{A,\Delta_xS}$ to an apartment $A'$ containing $\Delta_xS$ is an isomorphism onto $A$.
\end{enumerate}
\end{prop}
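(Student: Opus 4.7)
The plan is to reduce both well-definedness statements to a single rigidity principle: an element $w\in\WT\subset\sW\ltimes\MS$ that fixes some non-empty open subset of $\MS$ pointwise must be the identity, since $\WT$ acts by affine transformations of $\MS$. Once this is granted, items 2 and 3 are immediate, because by construction the restriction of either retraction to an admissible apartment $A'=g(\MS)$ is the chart change $f\circ w\circ g^{-1}$, which is an isomorphism onto $A$ by axioms (A1) and (A2). The only substantive content of the proposition is therefore item 1.

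First I will handle $\rho_{A,c}$. Corollary~\ref{Cor_tec17} provides at least one admissible chart $g$ for every $x\in X$, so Definition~\ref{Def_retractionInfty} is not vacuous. To prove independence of that choice, let $A'=g(\MS)$ and $A''=g'(\MS)$ be two admissible apartments; invoking axiom (A2) for the pairs $(f,g)$, $(f,g')$ and $(g,g')$ produces elements $w,w',\sigma\in\WT$, and the desired equality of the two candidate values of $\rho_{A,c}(x)$ amounts to the identity $w=w'\circ\sigma$ in $\WT$. Both $A'$ and $A''$ contain Weyl chambers parallel to $S$, hence parallel to each other; using the fact that parallel Weyl chambers share a common sub-Weyl chamber together with axiom (A4), one arranges a common sub-Weyl chamber $T$ of $c$ inside the triple intersection $A\cap A'\cap A''$. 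On the preimage of $T$ both $w$ and $w'\circ\sigma$ reduce to the identity, and the rigidity principle forces them to coincide.

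The argument for $r_{A,\Delta_xS}$ will follow exactly the same outline but is simpler: Corollary~\ref{Cor_tec18} supplies admissible charts, and any two admissible apartments both contain the germ $\Delta_xS$, so their pairwise intersections with $A$ all contain a common non-empty open neighbourhood of $x$ in $\MS$. On this neighbourhood the chart changes to $A$ agree with the identity, and rigidity closes the argument. The main obstacle I anticipate lies in the $\rho$-case, namely the geometric step of placing a common sub-Weyl chamber of $c$ inside the triple intersection $A\cap A'\cap A''$; once this is arranged, the rigidity observation finishes the proof in a few lines.
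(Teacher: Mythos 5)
Your proposal follows essentially the same route as the paper: items 2 and 3 are dismissed as immediate from the definitions, and well-definedness is proved by comparing the two chart-change elements of $\aW$ on a large common subset (a germ, respectively a sub-Weyl chamber of $c$, inside the triple intersection) and using that an affine map is pinned down by its values there — exactly the argument the paper writes out for $r_{A,\Delta_xS}$ and delegates to Bennett's thesis for $\rho_{A,c}$. One small slip: on the preimage of $T$ the maps $w$ and $w'\circ\sigma$ do not reduce to the identity, they merely agree with each other (both equal $f^{-1}\circ g$ there), so your rigidity principle must be applied to $w^{-1}w'\sigma$, which does fix that set pointwise; also the triple-intersection step rests on the parallelism statement of Section 5.3 (parallel Weyl chambers contain a common sub-Weyl chamber) rather than on axiom (A4).
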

\begin{proof}
The second and third assertions are clear by definition.
The fact that $\rho_{A,c}$ is well defined is proven on p. 33 of \cite{BennettDiss}. An argument along the same lines, given below, proves that $r_{A,\Delta_xS}$ is well defined.

Let $y$ be an element of $X$ contained in $f_1(\MS)\cap f_2(\MS)$, where $A_i\define f_i(\MS)$, $i=1,2$ are apartments of $X$ containing $\Delta_xS$. Denote by $w_i$ the element of $\aW$ in the definition of $r_{A,\Delta_xS}(y)$ with respect to $f_i$. It suffices to prove
\begin{equation}\label{Equ_tec28}
f\circ w_1\circ f_1^{-1}(y)=f\circ w_2\circ f_2^{-1}(y).
\end{equation}
The germ $\Delta_xS$ is contained in $A_1\cap A_2$ hence there exists by $(A2)$ an element $w_{12}\in\aW$ such that 
$$ 
f_2\circ w_{12}\;\vert_{f_1^{-1}(f_2(\MS))} = f_1 \;\vert_{f_1^{-1}(f_2(\MS))} .
$$
Since $y\in A_1\cap A_2$, we have 
\begin{equation}\label{Equ_tec29}
f\circ w_2\circ f_2^{-1} 
	= f\circ w_2\circ f_2^{-1}(f_2\circ w_{12}(f_1^{-1}(y)))
	= f\circ w_2 w_{12}(f_1^{-1}(y)).
\end{equation}
Equation (\ref{Equ_tec29}) is true for all $y\in A_1\cap A_2$, hence it is in particular true for the intersection $C$ of the Weyl chambers $S_1$ and $S_2$ contained in apartments $A_1$ and $A_2$, respectively, with equal germ $\Delta_xS_i=\Delta_xS, i=1,2$. Therefore 
$$
f\circ w_1\circ f_1^{-1}(C) = f\circ w_2\circ w_{12} \circ f_1^{-1}(C)
$$
and hence $w_2 w_{12}=w_1.$ Combining this with (\ref{Equ_tec29}) yields equation (\ref{Equ_tec28}).

\end{proof}

\begin{lemma}\label{Lem_compRetractions}
Let $(X,A)$ be an affine building.
\begin{enumerate}
\item Given a Weyl chamber $S$ in $X$ and apartments $A_i, i=1,\ldots, n$ containing a sub-Weyl chamber of $S$. Denote by $\rho_i$ the retraction $\rho_{A_i, \partial S}$. Then
$$(\rho_1\circ\rho_2\circ\ldots\circ\rho_n) = \rho_1 .$$
\item Let $\Delta_xS$ be a germ of a Weyl chamber $S$ at $x$. Let $A_i, i=1,\ldots, n$ be a set of apartments containing $\Delta_xS$ and denote by $r_i$ the retraction onto $A_i$ centered at $\Delta_xS$. Then 
$$(r_1\circ r_2\circ\ldots\circ r_n) = r_1 .$$
\end{enumerate}
\end{lemma}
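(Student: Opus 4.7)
The plan is to reduce both parts to the case $n=2$ by induction and then exploit the uniqueness clause of axiom (A2). The starting observation, which is immediate from Proposition~\ref{Prop_r-rho}, is that for any apartment $B$ containing a sub-Weyl chamber of $S$ (respectively the germ $\Delta_xS$), the restriction $\rho_i|_B\colon B\to A_i$ (resp.\ $r_i|_B\colon B\to A_i$) is an isomorphism of apartments fixing $B\cap A_i$ pointwise, since on this intersection the chart-change element $w\in\WT$ in the definition of the retraction is trivial.

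For part (1), fix $y\in X$ and, using Corollary~\ref{Cor_tec17}, choose an apartment $B$ that contains $y$ together with a sub-Weyl chamber of $S$. Setting $A_{n+1}:=B$, the chain of restrictions $\rho_n|_B\colon B\to A_n$, $\rho_{n-1}|_{A_n}\colon A_n\to A_{n-1}$, $\ldots$, $\rho_1|_{A_2}\colon A_2\to A_1$ is a composable sequence of apartment isomorphisms, so $(\rho_1\circ\cdots\circ\rho_n)|_B$ is an isomorphism $B\to A_1$; likewise $\rho_1|_B$ is an isomorphism $B\to A_1$. Next I would produce a common sub-Weyl chamber $S^*\subset S$ with $S^*\subset B\cap A_1\cap\cdots\cap A_n$: each apartment, identified with $\MS$ via a chart, contains a sub-Weyl chamber $x_i+S_0$ parallel to $S$, and choosing a basepoint $x^*\in\MS$ far enough along the direction of $S$ so that $x^*-x_i$ lies in $S_0$ for every $i$ (and for $B$) produces such an $S^*$. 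By the structural observation, each $\rho_j$ fixes $A_{j+1}\cap A_j\supset S^*$ pointwise, so $(\rho_1\circ\cdots\circ\rho_n)|_B$ fixes $S^*$ pointwise, as does $\rho_1|_B$. Two isomorphisms $B\to A_1$ which agree on a full Weyl chamber must coincide, by the uniqueness of the element $w\in\WT$ in axiom (A2), and therefore $(\rho_1\circ\cdots\circ\rho_n)(y)=\rho_1(y)$.

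Part (2) follows the same template: given $y\in X$, Corollary~\ref{Cor_tec18} supplies an apartment $B$ containing $y$ and the germ $\Delta_xS$. The chain of restrictions $r_n|_B, r_{n-1}|_{A_n},\ldots,r_1|_{A_2}$ is again a composition of apartment isomorphisms, so both $(r_1\circ\cdots\circ r_n)|_B$ and $r_1|_B$ are isomorphisms $B\to A_1$. The role of $S^*$ is played by a common Weyl simplex representing $\Delta_xS$: by definition of the germ, any two apartments containing $\Delta_xS$ share a Weyl chamber-neighborhood of $x$, and intersecting finitely many such neighborhoods yields a Weyl simplex $C^*$ based at $x$, lying in $B\cap A_1\cap\cdots\cap A_n$. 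Each $r_j$ then fixes $C^*$ pointwise, and (A2) again forces the two isomorphisms to agree.

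The main obstacle is the bookkeeping in producing a common sub-Weyl chamber (resp.\ common germ representative) inside the intersection of all the $A_i$ and $B$; once this is in place everything else is a straightforward assembly of Proposition~\ref{Prop_r-rho}, Corollaries~\ref{Cor_tec17} and~\ref{Cor_tec18}, and the uniqueness in axiom (A2).
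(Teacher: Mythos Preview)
Your argument is correct and follows essentially the same line as the paper's proof, only with considerably more detail. The paper's proof is a terse three sentences: it records that each $\rho_i$ restricts to an isomorphism on any apartment containing a sub-Weyl chamber of $S$, invokes Corollary~\ref{Cor_tec17} (respectively~\ref{Cor_tec18}) to cover $X$ by such apartments, and declares the result. What you have written makes explicit the step the paper leaves implicit, namely that the two isomorphisms $B\to A_1$ given by $(\rho_1\circ\cdots\circ\rho_n)|_B$ and $\rho_1|_B$ agree on a common sub-Weyl chamber and therefore coincide. Two minor remarks: axiom~(A2) only asserts existence of $w$, so the uniqueness you invoke is really the elementary fact that an element of $\WT$ fixing a Weyl chamber (or germ) pointwise is the identity; and in part~(2) the finite intersection of germ-representatives need not be a Weyl simplex in the technical sense of the paper, but it is still a full-dimensional neighborhood of $x$, which is all your argument requires.
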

\begin{proof}
For all $i$ the restriction of $\rho_i$ to an apartment containing a sub-Weyl chamber of $S$ is an isomorphism. By Corollary~\ref{Cor_tec17} the building $X$ equals as a set the union of all apartments containing a representative of $\partial S$. Therefore 1. follows. Similar arguments using Corollary~\ref{Cor_tec18} imply the second part of the lemma.
\end{proof}

\begin{prop}\label{Prop_compRetractions}
Let $(X,\App)$ be an affine building modeled on $\MS(\RS,\Lambda)$. For any retraction $\rho_{A,c}$ centered at infinity and all $x\in X$ there exists $y\in A$ such that $$\rho_{A,c}(x)=r_{A,\Delta_yS}(x),$$ where $S$ is the unique Weyl chamber based at $y$ and contained in $c$.
\end{prop}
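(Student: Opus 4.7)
The strategy is to produce a single apartment $A'$ that simultaneously contains $x$ and a full Weyl chamber $S\subset A$ based at some vertex $y\in A$ with $\partial S=c$. Once such an $A'$ has been found, both $\rho_{A,c}$ and $r_{A,\Delta_y S}$ will restrict on $A'$ to isomorphisms $A'\to A$ fixing the intersection $A\cap A'$ pointwise, hence fixing the Weyl chamber $S\subset A\cap A'$ pointwise; since an element of $\aW$ fixing a set with non-empty interior (in particular a whole Weyl chamber) must be the identity, the two restrictions coincide on $A'$, and in particular on $x$.

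To obtain $A'$, I would fix any Weyl chamber $S_x$ based at $x$, pick an arbitrary vertex $y\in A$, and let $S$ be the unique Weyl chamber in $A$ based at $y$ and contained in $c$, which exists and is unique by Corollary~\ref{Cor_WeylChamber}. Applying Proposition~\ref{Prop_A3'} to $S_x$ and $S$ yields an apartment $A'$ containing the germs $\Delta_x S_x$ and $\Delta_y S$; in particular $x\in A'$ and $c\in\partial A'$. The key move is to upgrade the inclusion $\Delta_y S\subset A'$ to the full inclusion $S\subset A'$: the Weyl chamber in $A'$ based at $y$ and contained in $c$ is, by the uniqueness assertion of Corollary~\ref{Cor_WeylChamber} applied inside $X$, the only Weyl chamber in the whole building with these properties, so it must coincide with $S$. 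Hence $S\subset A\cap A'$.

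Having found $A'$, Proposition~\ref{Prop_r-rho} gives that $\rho_{A,c}|_{A'}$ is an isomorphism $A'\to A$ because $A'$ contains the sub-Weyl chamber $S$ of $c$, and that $r_{A,\Delta_y S}|_{A'}$ is an isomorphism $A'\to A$ because $A'$ contains $\Delta_y S$. Reading the definitions of $\rho_{A,c}$ and $r_{A,\Delta_y S}$ together with axiom~(A2), both restrictions fix $A\cap A'$ pointwise, and therefore both fix $S$ pointwise. Composing one with the inverse of the other produces, under the chart identification, an element of $\aW$ fixing a full Weyl chamber of $\MS$ pointwise, which forces it to be the identity. Thus the two restrictions agree on $A'$ and in particular $\rho_{A,c}(x)=r_{A,\Delta_y S}(x)$.

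The main obstacle is the passage from a germ to a full Weyl chamber in step two; this is exactly where Corollary~\ref{Cor_WeylChamber} is indispensable, since without the global uniqueness of $c$\nobreakdash-based Weyl chambers at a given vertex the apartment $A'$ supplied by Proposition~\ref{Prop_A3'} would only share $\Delta_y S$ with $A$ and not the full Weyl chamber $S$. Once that point is secured, the rest of the proof is a standard rigidity argument for isomorphisms between affine apartments.
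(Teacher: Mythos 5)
Your argument breaks down at exactly the step you flag as the ``key move''. Proposition~\ref{Prop_A3'} only produces an apartment $A'$ containing the \emph{germs} $\Delta_xS_x$ and $\Delta_yS$; it does not assert that $c\in\partial A'$, and two Weyl chambers sharing a germ at $y$ need not coincide or even be parallel -- that is precisely why germs were introduced. So the Weyl chamber of $A'$ based at $y$ with germ $\Delta_yS$ may diverge from $S$ away from $y$, and Corollary~\ref{Cor_WeylChamber} cannot force $S\subset A'$: its uniqueness statement applies to the chamber based at $y$ \emph{parallel to} $c$, whereas you have no guarantee that $A'$ contains any representative of $c$ at all. The appeal to it is circular, since ``the Weyl chamber in $A'$ based at $y$ and contained in $c$'' is exactly the object whose existence is in question.

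Moreover, the statement you are trying to prove with an \emph{arbitrary} $y\in A$ is false, which shows the gap cannot be patched without changing the choice of $y$. In a tree, take $A$ a line with end $c$, and let $x$ branch off $A$ at a point $b$. If $y\in A$ lies on the far side of $b$ from $c$, then $\rho_{A,c}(x)$ lands on the non-$c$ side of $b$ while $r_{A,\Delta_yS}(x)$ lands on the $c$ side, and indeed no apartment contains both $x$ and the full ray $S$ from $y$ towards $c$. The proposition only claims existence of \emph{some} $y$ depending on $x$, and the paper obtains it differently: by Proposition~\ref{Prop_tec16} there is an apartment $B$ containing $x$ with $c\in\partial B$; then $A\cap B$ contains a sub-Weyl chamber $S'$ of the representative of $c$, one chooses $y\in S'$, and both $\rho_{A,c}$ and $r_{A,\Delta_yS}$ restrict on $B$ to isomorphisms onto $A$ fixing $A\cap B$ pointwise (Proposition~\ref{Prop_r-rho}), hence agree at $x$. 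Your final rigidity argument is fine, but you need the apartment $B$ with $c$ genuinely at infinity and a point $y$ chosen inside $A\cap B$, not an arbitrary $y$ and Proposition~\ref{Prop_A3'}.
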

\begin{proof}
By Proposition~\ref{Prop_tec16} there exists an apartment $B$ of $X$ containing $x$ and a Weyl chamber $S$ contained in $c\in\binfinity X$. The intersection of $A$ and $B$ contains a sub-Weyl chamber $S'$ of $S$ and the restriction of $\rho_{A,c}$ to $B$ is an isomorphism onto $A$ fixing $A\cap B$ pointwise. Let $y$ be a vertex in $S'$ and denote by $S_y$ the Weyl chamber based at $y$ contained in $S'$. The restriction of the retraction $r_{A,\Delta_yS}$ to $B$ is by \ref{Prop_r-rho} an isomorphism onto $A$ which fixes $A\cap B$ pointwise. Therefore $\rho_{A,c}(x)=r_{A,\Delta_yS}(x)$.
\end{proof}

The remainder of the present subsection is devoted to the proof of Theorem~\ref{Thm_convexityGeneral} which is split into the two propositions \ref{Prop_preimage} and \ref{Prop_image}. For convenience let us first fix notation.

\begin{notation}\label{Not_convexity}
Let $(X,\App)$ be a thick affine building in the sense of Definitions \ref{Def_LambdaBuilding} and \ref{Def_thick} which is modeled on $\MS=\MS(\RS,\Lambda)$ equipped with the full affine Weyl group $\aW$.  Let $A$ be an apartment of $X$ and identify $A$ with $\MS(\RS,\Lambda)$ via a given chart $f$ of $A$ contained in $\App$. Hence an origin $0$ and a fundamental Weyl chamber $\Cf$ are fixed as well. In the following denote the retraction onto $A$ centered at $\Delta_0\Cf$ by $r$ and the retraction onto $A$ centered at $\partial (\Cfm)$ by $\rho$.
\end{notation}

The following lemma is important for the proof of Proposition~\ref{Prop_preimage}.

\begin{lemma}\label{Lem_key}
Let notation be as in \ref{Not_convexity} and let $x$ be a vertex in $A$. Let $w_0=s_{i_1}\cdots s_{i_n}$ be a reduced presentation of the longest word in $\sW$. Denote by $\alpha_{i_k}$ the root corresponding to $s_{i_k}$. Given a vertex $y\in \dconv(\sW.x)$ inductively define vertices $y_i$ in $\dconv(\sW.x)$ by putting $y_0=y$ and defining 
$$
y_k = y_{k-1} - \lambda_k\frac{2}{(\alpha_{i_k},\alpha_{i_k})}\alpha_{i_k}
$$ 
for all $k=1,2,\ldots,n$ with $\lambda_k\in\Lambda$ maximal such that $y_{k-1} - \lambda_k\frac{2}{(\alpha_{i_k},\alpha_{i_k})} \alpha_{i_k}$ is contained in $\dconv(\sW .x)$.
Then $y_n=w_0x^+$.
\end{lemma}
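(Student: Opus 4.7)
The plan is to adapt the geometric induction of Parkinson and Ram \cite[Lemma 3.1]{ParkinsonRam}, whose simplicial formulation is reproduced as Lemma~\ref{Lem_ParkRam}, to the $\Lambda$-valued setting. Two substitutions are required: integer multiples of simple coroots are replaced by arbitrary $\Lambda$-multiples, and the discrete intersection $\dconv(\sW.x)\cap(x+\QQ(\RS^\vee))$ of the simplicial case is replaced by $\dconv(\sW.x)$ itself. The crucial structural input is Lemma~\ref{Lem_conv^*Lambda}, which provides the half-space description $\dconv(\sW.x)=\bigcap_{w\in\sW}w(x^+-\Cp)$ in a form identical to the simplicial one.

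First I would verify that the recursion is well-defined, i.e.\ that the maximum $\lambda_k\in\Lambda$ exists at each step. For each $k$, the set $I_k:=\{\lambda\in\Lambda:y_{k-1}-\lambda\alpha_{i_k}^\vee\in\dconv(\sW.x)\}$ is cut out by finitely many inequalities of the form $\lb y_{k-1}-\lambda\alpha_{i_k}^\vee,\alpha\rb\geq c$ with $\lb\alpha_{i_k}^\vee,\alpha\rb\in\Z$ and $c\in\Lambda$, one for each facet of $\dconv(\sW.x)$; each such inequality restricts $\lambda$ to a $\Lambda$-ray, so $I_k$ is a bounded $\Lambda$-interval and its supremum is attained. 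Non-emptiness is immediate since $0\in I_k$ by the inductive hypothesis $y_{k-1}\in\dconv(\sW.x)$, and boundedness follows from $\dconv(\sW.x)$ being a finite intersection of dual half-apartments.

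The key structural observation powering the induction is the local $s_{i_k}$-symmetry: since $\dconv(\sW.x)$ is $\sW$-invariant and the line $y_{k-1}+\Lambda\alpha_{i_k}^\vee$ is setwise stabilized by $s_{i_k}$ with a unique fixed point, the interval $I_k$ is symmetric about that midpoint, so $s_{i_k}$ exchanges the endpoints of $I_k$. The remaining work is then a transcription of the Parkinson--Ram induction on $k$: using this local symmetry together with the standard combinatorial fact that the roots $\beta_j:=s_{i_1}\cdots s_{i_{j-1}}(\alpha_{i_j})$, $j=1,\dots,n$, enumerate all positive roots of $\RS$, one shows that after $n$ steps the accumulated extremality conditions pick out a unique vertex of $\dconv(\sW.x)$; this vertex must be $w_0 x^+$, the unique element of $\sW.x^+$ lying in the antidominant chamber $w_0\Cf$.

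The main obstacle is the combinatorial bookkeeping of how each maximality choice interacts with the constraints imposed by the earlier steps, which is the geometric core of the Parkinson--Ram proof. The transfer to the $\Lambda$-valued setting is immediate, however, because every inequality appearing in their argument is linear with coefficients in $\Q$ (or the subfield $F\subset\R$ from Definition~\ref{Def_modelSpace}) on the variable side and in $\Lambda$ on the constant side, and is therefore respected by the $F$-module structure of $\Lambda$. No archimedean property of $\R$ is invoked, so the argument carries over line by line.
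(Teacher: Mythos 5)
Your preliminary observations are fine: the existence of the maximal $\lambda_k$ (a point the paper leaves implicit) does follow from the half-space description of $\dconv(\sW.x)$, and the $s_{i_k}$-symmetry of the interval $I_k$ is correct, since both the line $y_{k-1}+\Lambda\alpha_{i_k}^\vee$ and the ($\sW$-invariant) hull are $s_{i_k}$-stable. But the actual content of the lemma is entirely contained in your third paragraph, and there you give no argument: "using this local symmetry together with the fact that $\beta_j=s_{i_1}\cdots s_{i_{j-1}}(\alpha_{i_j})$ enumerate the positive roots, one shows that the accumulated extremality conditions pick out a unique vertex" is an assertion of the conclusion, not a proof. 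Worse, the suggested mechanism does not work as described: the inequality made tight at step $k$ (or the consequence $\lb y_k,\alpha_{i_k}^\vee\rb\le 0$ of your symmetry observation) is in general destroyed by the later moves, because $\lb\alpha_{i_j},\alpha_{i_k}^\vee\rb\le 0$ for $j\neq k$ means subsequent subtractions of $\lambda_j\alpha_{i_j}^\vee$ can increase $\lb\cdot,\alpha_{i_k}^\vee\rb$ and pull the point off the facet it had reached; so extremality conditions do not "accumulate", and no inductive invariant is formulated that would survive all $n$ steps. Deferring this to "a line-by-line transcription" of \cite[Lemma 3.1]{ParkinsonRam} is also not a substitute: that lemma concerns integer maxima over the lattice-restricted set $\dconv(\sW.x)\cap(x+\QQ)$, so its inequalities and the values of its maxima change when the lattice is removed, and you never state the inductive hypothesis that is supposed to transfer, let alone check it against the $\Lambda$-valued maximality used here.

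For comparison, the paper proves the lemma by a concrete dominance induction rather than by quoting Parkinson--Ram: it introduces the comparison sequence $x_0=x$, $x_k=s_{i_k}x_{k-1}=x_{k-1}-\lb x_{k-1},\alpha_{i_k}^\vee\rb\alpha_{i_k}$, and shows inductively that $x_k-y_k\in\Cp$, the key step being that the maximality of $\lambda_k$ forces $\lambda'_k\ge\lb x_{k-1},\alpha_{i_k}^\vee\rb-c$ (writing $x_{k-1}-y_{k-1}=z+c\alpha_{i_k}$ with $z$ supported on the other simple roots). At the end $x_n=w_0x$, so $w_0x-y_n\in\Cp$, while $y_n\in\dconv(\sW.x)=\bigcap_{w\in\sW}w(x^+-\Cp)$ gives $y_n-w_0x\in\Cp$, forcing $y_n=w_0x^+$. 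Some invariant of this kind, propagated through every step using the maximality of $\lambda_k$, is exactly what is missing from your proposal.
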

\begin{proof}
Assume without loss of generality that $x\in\Cf$. Define vertices $x_0,x_1,\ldots,x_n$ in $\dconv(\sW.x)$ 
by $x_0=x$ and $x_k= r_{i_k}x_{k-1} = x_{k-1} - \lb x_{k-1},\alpha^\vee_{i_k}\rb\alpha_{i_k}$ for all $1\leq k \leq n$. 

Define a partial order on $A$ by setting $y\prec x$ if and only if $y-x\in\Cp$. 
We will show that $y_k\prec x_k$ for all $k=0,1,\ldots,n$. Then $y_n\prec x_n=w_0x$ and $y_n\in\dconv(\sW.x)$ and thus the result of the Lemma follows. We have $y_0=y\prec x=x_0$ and by induction hypothesis $x_{k-1} - y_{k-1}\in\Cp$. Therefore
$$
x_k - y_k 
= x_{k-1} - y_{k-1} +(\lambda'_k - \lb  x_{k-1},\alpha_{i_k}^\vee\rb) \alpha_{i_k} 
= z + (\lambda'_k + c - \lb  x_{k-1}, \alpha_{i_k}^\vee\rb) \alpha_{i_k} 
$$ 
where $c\in\Lambda_{\geq 0}$ and $z$ is such that $x_{k-1} - y_{k-1} = z + c\alpha_{i_k}$ and that $z\in \Span_{\Lambda_{\geq 0}}(\{\alpha\in B\setminus\{\alpha_{i_k}\}\})$.
We will prove that $(\lambda'_k + c - \lb x_{k-1}, \alpha_{i_k}^\vee\rb)\geq 0 $.
With $\lb  x_{k-1}, \alpha^\vee_{i_k}\rb\geq 0$ and $\lb\alpha_j,\alpha^\vee_i\rb\leq 0$ if $i\neq j$  we can conclude
$$
0\leq \lb x_{k-1},\alpha_{i_k}^\vee\rb 
= \lb y_{k-1}+c\alpha_{i_k} + z,\alpha_{i_k}^\vee\rb 
\leq \lb y_{k-1} + c\alpha_{i_k}, \alpha_{i_k}^\vee\rb.
$$
We have
$$
y_{k-1} - (\lb x_{k-1},\alpha^\vee_{i_k}\rb -c) \alpha_{i_k} 
= (y_{k-1} + c\alpha_{i_k}) - \lb x_{k-1}, \alpha_{i_k}^\vee\rb\alpha_{i_k} \in\dconv(\sW.x)
$$
and hence by definition of $\lambda_k'$ we can conclude that $\lb x_{k-1}, \alpha_{i_k}^\vee\rb -c \leq \lambda'_k$.
\end{proof}

\begin{prop}\label{Prop_preimage}
With notation as in \ref{Not_convexity} let $x$ be an element of $A$. 
Given a vertex $y\in\dconv(\sW.x)$ there exists a preimage of $y$ under $\rho$ which is contained in $r^{-1}(\sW.x)$.
\end{prop}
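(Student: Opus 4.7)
The plan is to adapt the unfolding argument from the simplicial case (proof of Proposition~\ref{Prop_existencePreimage}) to the generalized setting, using Lemma~\ref{Lem_key} together with the Parkinson-Ram path strategy (cf.\ Section~\ref{Sec_ParkinsonRam}) as the combinatorial input. The idea is: first produce a positively folded piecewise-linear path in $A$ from $0$ to $y$ of total length $d(0,x)$, then unfold its breakpoints one at a time inside $X$ using thickness, so as to obtain a straight geodesic from $0$ whose endpoint $\tilde y$ satisfies $\rho(\tilde y)=y$ and $r(\tilde y)\in\sW.x$.

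Fix a reduced expression $w_0=s_{i_1}\cdots s_{i_n}$ for the longest element of $\sW$ and apply Lemma~\ref{Lem_key} to produce vertices $y=y_0,y_1,\dots,y_n=w_0 x^+$ in $\dconv(\sW.x)$ with $y_{k-1}-y_k=\lambda_k\alpha^\vee_{i_k}$. This combinatorial data determines, via successive applications of the root operators $e_{\alpha_{i_k}}$ to the straight segment $[0,w_0 x^+]\subset A$ (as in Section~\ref{Sec_ParkinsonRam}), a positively folded piecewise-linear path $\pi$ in $A$ from $0$ to $y$ of total length $d(0,x)$ whose breakpoints lie on special walls $H_1,\dots,H_m$ of $A$ with the outgoing direction at each breakpoint the reflection of the incoming direction across the corresponding wall. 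I then straighten the breakpoints of $\pi$ one at a time, proceeding from the breakpoint closest to $0$ outward. After $j$ unfoldings one has a partially straightened path $\pi^{(j)}$ lying in an apartment $A^{(j)}$ of $X$ (with $A^{(0)}=A$); the next breakpoint lies on a wall $H$ of $A^{(j)}$, and by thickness of $X$ with respect to $\WT$ (Definition~\ref{Def_thick}) there exists an apartment $A^{(j+1)}$ with $A^{(j)}\cap A^{(j+1)}$ equal to the half-apartment of $A^{(j)}$ bounded by $H$ containing the already-straight initial portion of $\pi^{(j)}$. Replacing the tail of $\pi^{(j)}$ beyond $H$ by its reflection through $H$ inside $A^{(j+1)}$ eliminates one breakpoint without disturbing the earlier straight portion. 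After $m$ iterations we obtain a straight geodesic $\widetilde\pi$ in $X$ from $0$ to a point $\tilde y$, of length $d(0,x)$, contained in the single apartment $B\ddefine A^{(m)}$.

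To check $\rho(\tilde y)=y$, note that each apartment $A^{(j)}$ contains a sub-Weyl chamber of $\Cfm$ (inherited from $A$), so by Lemma~\ref{Lem_compRetractions} the composition $\rho_{A,\partial\Cfm}\circ\rho_{A^{(1)},\partial\Cfm}\circ\cdots\circ\rho_{A^{(m-1)},\partial\Cfm}$ coincides with $\rho$, and each factor realises the reflection used in the corresponding unfolding step; applied to $\tilde y$, the composition refolds $\widetilde\pi$ piece by piece back into $\pi$. To check $r(\tilde y)\in\sW.x$, observe that the initial segment of $\widetilde\pi$ coincides with $[0,w_0 x^+]\subset A$ (since the first unfolding occurs at $w_0 x^+$ or further out), so the initial direction of the geodesic $[0,\tilde y]\subset B$ points into the chamber $\Delta_0\Cfm$ of the residue $\Delta_0 X$ (Theorem~\ref{Thm_residue}). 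By Proposition~\ref{Prop_A3'} choose an apartment $B'$ of $X$ containing both $\Delta_0\Cf$ and $\tilde y$; by axiom $(A2)$ and Lemma~\ref{Lem_segment} the segment $[0,\tilde y]$ is unique and lies in $B\cap B'$, and its initial direction is the chamber in $\Delta_0 X$ opposite $\Delta_0\Cf$ (oppositeness in the spherical building $\Delta_0 X$ being apartment-independent). The unique isomorphism $B'\to A$ fixing $\Delta_0\Cf$ pointwise thus sends $\tilde y$ to the point of $\Cfm\subset A$ at distance $d(0,x)$ from $0$ in the direction of $w_0 x^+$, namely $w_0 x^+$ itself, so $r(\tilde y)=w_0 x^+\in\sW.x$.

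The principal difficulty is making the construction of $\pi$ in the first part rigorous: unlike in the simplicial case, where positively folded galleries are described by a type sequence, the generalized setting forces us to work directly with piecewise-linear paths and to verify that the root-operator applications produce a path whose breakpoints lie on special walls with ``positive'' direction changes, so that the thickness axiom can be invoked at each unfolding step and each reflecting hyperplane is genuinely a wall of the current apartment. A secondary point of care lies in the identification $r(\tilde y)=w_0 x^+$, which hinges on the uniqueness of segments (Lemma~\ref{Lem_segment}) together with the fact that oppositeness in the spherical building $\Delta_0 X$ is intrinsic and not apartment-dependent.
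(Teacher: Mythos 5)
Your overall strategy (use Lemma~\ref{Lem_key}, unfold across walls using thickness, then compare with $r$ and $\rho$) is in the right spirit, but two of your steps do not go through as written. First, the construction of the positively folded path $\pi$ by root operators is not available in this setting: the operators $e_\alpha$ of Definition~\ref{Def_rootOperator} are tied to $\Lambda=\R$ and to unit coroot steps (the condition $n_\alpha\leq -1$ and the displacement by exactly $\frac{2}{(\alpha,\alpha)}\alpha$), whereas here $\Lambda$ is an arbitrary ordered abelian group and the displacements $\lambda_k$ produced by Lemma~\ref{Lem_key} are arbitrary elements of $\Lambda$. You flag this as ``the principal difficulty'', but it is not a detail to be checked; it is the content of the step, and the paper's proof of Proposition~\ref{Prop_preimage} is organized precisely so as to avoid it: no folded path is ever built. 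Instead one reflects $y$ directly across the special wall $H_1$ through the midpoint $z_1=y_0-\frac{1}{2}\lambda'_1\alpha_{i_1}$, uses thickness to obtain an apartment $A_1$ with $A\cap A_1=H_1^+$, observes via $\sW$-invariance of $\dconv(\sW.x)$ and maximality of $\lambda_1$ that $0$ and $\Cf$ lie in $H_1^+$, and iterates, carrying along the $r$-preimages $y^k_k$ in the apartments $A_k$.

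Second, your verification that $r(\tilde y)\in\sW.x$ is flawed. You claim the initial segment of $\widetilde\pi$ coincides with $[0,w_0x^+]$ because ``the first unfolding occurs at $w_0x^+$ or further out''; but the folded path has total length exactly $d(0,w_0x^+)$, so a first breakpoint at or beyond $w_0x^+$ would mean there are no breakpoints at all (the trivial case $y=w_0x^+$). In general the initial direction of a positively folded path from $0$ to $y$ is not opposite $\Delta_0\Cf$ (for $y=x^+$ the folded path is the straight dominant segment), so the germ-opposition argument collapses; and even when the initial germ is the opposite chamber, knowing only that germ and the length $d(0,x)$ does not determine the image of the endpoint under $r$. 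What is actually needed, namely that after all unfoldings the endpoint retracts under $r$ onto $w_0x$, is exactly the bookkeeping the paper performs: each apartment $A_k$ contains the germ $\Delta_0\Cf$, so $r\vert_{A_k}$ is an isomorphism onto $A$, the lifted point $y^n_n$ is by construction the $r$-preimage of $y_n=w_0x$, and Lemma~\ref{Lem_compRetractions} gives $\rho(y^n_n)=y$. As it stands your argument would (modulo the positivity check you yourself defer) give $\rho(\tilde y)=y$, but not $\tilde y\in r^{-1}(\sW.x)$.
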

\begin{proof}
Assume without loss of generality that $x\in\Cf$.
We will inductively construct a preimage of $y$. Let the points $y_k$ and values $\lambda_k$ for $k=1,\ldots,n$ be as defined in the assertion of Lemma~\ref{Lem_key}.
Abbreviate $\lambda'_k:= \frac{2\lambda_k }{(\alpha_{i_k}, \alpha_{i_k})}$. The first induction step is as follows:
let $H_1$ be the unique hyperplane in the parallel class of $H_{\alpha_{i_1}, 0}$ containing $z_1:=y_0 - \frac{1}{2}\lambda'_1\alpha_{i_1}$. 
The associated reflection $r_1$ fixes $z_1$ and maps $y=y_0$ onto $y_1$ and vice versa.
Since $X$ is thick there exists an apartment $A_1$ in $X$ containing $y_0$ such that $A\cap A^1 = H_1^+$. We claim that $0\in H_1^+$.

The set $\dconv(\sW.x)$ is by definition $\sW$-invariant. Therefore $r_\alpha(y)$ is contained in $\dconv(\sW.x)$ for all $\alpha\in\sW$ and all $y\in\dconv(\sW.x)$. Hence $r_{\alpha_{i_1}}(y) = y- \lb y,\alpha_{i_1}^\vee \rb \alpha_{i_1}$ is contained in $\dconv(\sW.x)$ and $\lambda'_{i_1}\geq \lb y, \alpha_{i_1}^\vee \rb$ and we can conclude that $0\in H_1^+$. But $\Cf$ is contained in $H_1^+$ as well, since $\alpha_{i_1}\in B$.
Therefore $A_1$ contains $x$. We can now consider the convex hull of the orbit of $x$ under $\sW$ in the apartment $A_1$, which we denote by $\dconv_{A_1}(\sW.x)$ in order to be able to distinguish between the convex sets in the different apartments. Notice that $r$ restricted to $A_1$ is an isomorphism onto $A$. Denote the unique preimage of $y_1$ in $A_1$ by $y_1^1$ and for all $k$ the unique preimage of $y_k$ in $A-1$ by $y_k^1$.
Hence we have ``transferred'' the situation to $A_1$. 

The image of $y^1_n$ under $r$ is, by construction and Lemma~\ref{Lem_key}, the point $w_0.x$. Now $(A\setminus A_1 )\cup (A_1\setminus A)$ is again an apartment of $X$ containing a sub-Weyl chamber of $\Cfm$. By construction $\rho(y^1_1)=y$.

Repeating the first induction step with $A_k$ and $y^k_k$ in place of $A$ and $y$ for all $k=1,\ldots,n$ we get a sequence of apartment $A_k$ for $k=1,\ldots,n$ with the property that they are all isomorphically mapped onto $A$ by $r$. Hence Lemma~\ref{Lem_key} implies that $y_n^n$ is a preimage of $w_0x$. Analogously to the first step we conclude that $\rho_{A_{k-1},\partial(\Cfm)}(y^k_k)=y^{k-1}_{k-1}$. With Lemma~\ref{Lem_compRetractions} finally conclude that $\rho(y^n_n)=y$.
\end{proof}

In order to prove Proposition \ref{Prop_image} we need to introduce a technical condition first.

\begin{property}{\bf Finite covering condition}\label{Prop_finiteCovering}
Let $(X,\App)$ be an affine building modeled over $\MS(\RS,\Lambda)$. Let $x,y$ be points in $X$ and let $A$ be an apartment containing $x$ and $y$. The segment of $x$ and $y$ in $A$ is defined by 
$$
\seg_{A}(x,y)\define \{p\in A : d(x,y)=d(x,p)+d(p,y)\}.
$$
Fix a chamber $c$ in the spherical building $\binfinity X$ at infinity of $X$. If the following statement is true for $X$ we say that $X$ satisfies the \emph{finite covering condition}.
\begin{itemize}
  \item[$\mathrm{(FC)}$] The segment $\seg_{A}(x,y)$ of $x$ and $y$ is contained in a finite union of apartments containing a Weyl chamber $S$ such that $c=\partial S$.
\end{itemize}
\end{property}

\begin{remark}\label{Rem_FC}
Similar arguments as in the proof of Lemma 7.4.21 in \cite{BruhatTits} imply that all generalized affine buildings with $\Lambda=\R$ satisfy (FC).
Axiom (T2), the $Y$-condition, implies that $\Lambda$-trees satisfy (FC). We conjecture that (FC) holds for all generalized affine buildings.
\end{remark}

\begin{property}{\bf Observation}\label{Prop_observation}
Let notation be as in \ref{Not_convexity}. Assume that the building $(X,\App)$ satisfies condition (FC).
Let $y$ be an element of $r^{-1}(\sW.x)$ and $\overline A$ an apartment containing $0$ and $y$. By (FC) there exists a finite collection of apartments $A_0=A$, $A_i$, $i=1,\ldots, N$, consecutively enumerated, such that 
\begin{itemize}
  \item for all $i$ the apartment $A_i$ contains a sub-Weyl chamber of $\Cfm$
  \item their union contains $\seg_{\overline{A}}(0,y)$ and 
  \item $y$ is contained in $A_N$.
\end{itemize}
Hence we can find a finite sequence of points $x_i$, $i=1,\ldots,N$ with $x_0=0$ and $x_N=y$ such that 
\begin{itemize}
  \item $d(0,y)=\sum_{i=0}^{N-1} d(x_i,x_{i+1})$
  \item $x_i$ and $x_{i+1}$ are contained in $A_i$ and 
  \item for $N>i>0$ the Weyl chambers $S_{i-}$ and $S_{i+}$ based at $x_i$ containing $x_{i-1}$ and $x_{i+1}$, respectively, are such that $\xi_i\define \Delta_{x_i}S_{i-}$ and $\eta_i\define\Delta_{x_i}S_{i+}$ are opposite chambers in $\Delta_{x_i}X$. 
\end{itemize}
Let $\eta_0$ and $\xi_N$ be defined analogously.

Without loss of generality we may further assume that for all $i$ the point $x_{i-1}$ is not contained in $A_i$. Otherwise, if for some $i_0$ the point $x_{i_0}\in A_{i_0-1}$, we can define a shorter sequence of point by setting $x'_i\define x_i$ for all $i< i_0$ and $x'_i\define x_{i+1}$ for all $i\geq i_0$, i.e. we omit $x_i$ in the sequence of points. Similarly define apartments $A_i'$ by omitting $A_{i-1}$.

Notice that $\rho$ restricted to $A_i$ is an isomorphism onto $A$ for all $i$. Hence the distance $d(x_i, x_{i+1})=d(\rho(x_i), \rho(x_{i+1}))$ for all $i\neq N$.

Let $z$ be a point contained in the interior of $\Cfm\cap \bigcap_{i=0}^N A_i$. Then $r_z\define r_{A,\Delta_z\Cf}$ equals $\rho$ on $\seg_{\overline{A}}(0,y)$ and the restriction of $r_z$ to $A_i$ is an isomorphism onto $A$ for all $i=0,\ldots,N$.
\end{property}

\begin{corollary}\label{Cor_rho-distancediminishing}
The retractions $r_{A,S}$ and $\rho_{A,c}$ are distance non-increasing. 
\end{corollary}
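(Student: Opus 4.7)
The plan is to reduce the assertion to the triangle inequality by decomposing the segment joining any two points of $X$ into finitely many pieces, each contained in an apartment on which the retraction restricts to an isomorphism onto $A$. The decomposition is exactly what Observation~\ref{Prop_observation} provides (under the standing hypothesis (FC)), and the isometric restriction property is provided by Proposition~\ref{Prop_r-rho}.

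First I would treat $\rho = \rho_{A,c}$. Given arbitrary $x,y \in X$, axiom (A3) furnishes an apartment $\overline A$ containing both, and applying Observation~\ref{Prop_observation} to the pair $(x,y)$ (rather than $(0,y)$, an inessential relabeling) yields apartments $A_0, A_1, \ldots, A_N$, each containing a sub-Weyl chamber of a representative $S$ of $c$, together with a sequence of points $x_0=x, x_1, \ldots, x_N=y$ satisfying
\begin{equation*}
x_i,x_{i+1}\in A_i \qquad\text{and}\qquad d(x,y)=\sum_{i=0}^{N-1} d(x_i,x_{i+1}).
\end{equation*}
By Proposition~\ref{Prop_r-rho}(2) each restriction $\rho|_{A_i}$ is an isomorphism onto $A$, so $d(\rho(x_i),\rho(x_{i+1})) = d(x_i,x_{i+1})$. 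Since $A$ is isometric to $\MS$ and hence satisfies the triangle inequality, we conclude
\begin{equation*}
d(\rho(x),\rho(y)) \;\leq\; \sum_{i=0}^{N-1} d(\rho(x_i),\rho(x_{i+1})) \;=\; \sum_{i=0}^{N-1} d(x_i,x_{i+1}) \;=\; d(x,y).
\end{equation*}

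For the germ-based retraction $r = r_{A,\Delta_z S_0}$ the strategy is identical: one needs a decomposition of $\seg_{\overline A}(x,y)$ into finitely many sub-segments, each contained in an apartment containing the germ $\Delta_z S_0$. This is the dual of (FC) in which the chamber at infinity is replaced by a germ at a vertex, and it can be obtained by a symmetric version of the argument underlying Observation~\ref{Prop_observation}: since by Corollary~\ref{Cor_tec18} the building is covered by apartments containing $\Delta_z S_0$, the same successive-apartment construction works verbatim. On each such apartment $r$ restricts to an isomorphism by Proposition~\ref{Prop_r-rho}(3), and the triangle-inequality chain above gives $d(r(x), r(y)) \leq d(x,y)$.

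The only delicate step is therefore the finite-covering input, which is precisely condition (FC); in the two cases where the covering is known unconditionally (namely $\Lambda = \R$ and $\Lambda$-trees, see Remark~\ref{Rem_FC}) the corollary is unconditional, while in general it rests on (FC), conjectured to hold for all generalized affine buildings.
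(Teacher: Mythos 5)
Your treatment of $\rho_{A,c}$ is correct and is essentially the paper's argument: the finite chain $A_0,\dots,A_N$ and the points $x_i$ with $d(x,y)=\sum_i d(x_i,x_{i+1})$ come from Observation~\ref{Prop_observation} (hence from (FC)), the restriction of the retraction to each $A_i$ is an isomorphism onto $A$, and the triangle inequality in $A$ finishes the proof. (The paper runs this chain for the germ retraction $r_z$ first and then transfers it to $\rho$ via Proposition~\ref{Prop_compRetractions} and the identity $r_z=\rho$ on the segment, but that is only a difference in bookkeeping.)

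The gap is in your second half, for $r=r_{A,\Delta_zS_0}$. You invoke a ``dual of (FC)'' --- a finite cover of $\seg(x,y)$ by apartments containing the germ --- and claim it follows ``verbatim'' from Corollary~\ref{Cor_tec18}. It does not: Corollary~\ref{Cor_tec18} only says that $X$ is the \emph{union} of all apartments containing the germ, and the whole content of (FC), which Observation~\ref{Prop_observation} merely invokes rather than derives, is the \emph{finiteness} of such a cover of the segment together with the additive decomposition of the distance. There is no ``successive-apartment construction'' in \ref{Prop_observation} that manufactures finiteness out of a covering statement, so the symmetric version you appeal to is an assertion, not a proof. The paper avoids this issue by a different device: having fixed the apartments $A_i$ supplied by (FC) (each containing a sub-Weyl chamber of $\Cfm$), it chooses the base point $z$ of the germ in the interior of $\Cfm\cap\bigcap_{i}A_i$, so that every $A_i$ already contains $\Delta_z\Cf$ and the same chain serves for $r_z$ via Proposition~\ref{Prop_r-rho}(3); no germ-analogue of (FC) is needed at this point. (The genuinely local finite-cover statement you want is proved only later, in Appendix~\ref{Sec_FC} as Lemma~\ref{Lem_FC'}, and by a different argument through Proposition~\ref{Prop_tec16} --- indeed there it is used to prove (FC), not deduced from it.) To repair your proof, either restrict the germ case to germs based at a point of $\Cfm\cap\bigcap_i A_i$ as the paper does, or prove the local covering statement separately instead of asserting it.
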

\begin{proof}
Above we made the observation, that $r_z$ retracted to $A_i$ is an isomorphism onto $A$ for all $i$ and hence $d(x_i, x_{i+1})=d(r_z(x_i), r_z(x_{i+1}))$ for all $i\neq N$. Therefore the image of $\bigcup_{i=0}^{N-1}\seg_{A_i}(x_i, x_{i+1})$ under $r_z$ is connected and $d(r(x),r(y))\leq d(x,y)$ using the triangle inequality.
By Proposition \ref{Prop_compRetractions} the assertion holds for $\rho$ as well.
\end{proof}

\begin{lemma}\label{Lem_germs}
With notation as in \ref{Prop_observation} define $\xi'_i\define r_z(\xi_i)$ and $\eta'_i\define r_z(\eta_i)$.
We prove: 
let $x'_i$ denote $r_z(x_i)$ then $\xi'_i$ and $\eta'_i$ are chambers in $\Delta_{x'_i}A$ and there exists $w\in\sW$ such that $\xi'_i$ is the image of $w_0\eta'_i$ by $w=s_{i_1}\cdots s_{i_l}$.
\end{lemma}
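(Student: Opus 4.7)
The first assertion is essentially tautological once one unwinds the setup in \ref{Prop_observation}. Since $S_{i-}$ is a Weyl chamber contained in the apartment $A_{i-1}$ and, by the observation, $r_z|_{A_{i-1}}$ is an isomorphism onto $A$, its image $r_z(S_{i-})$ is a Weyl chamber of $A$ based at $x'_i = r_z(x_i)$. Thus $\xi'_i = r_z(\Delta_{x_i}S_{i-}) = \Delta_{x'_i} r_z(S_{i-})$ is indeed a chamber germ in $\Delta_{x'_i}A$. The identical argument applied to $S_{i+} \subset A_i$ and the isomorphism $r_z|_{A_i}$ yields $\eta'_i \in \Delta_{x'_i} A$.

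For the second assertion I would invoke Theorem \ref{Thm_residue}: the residue $\Delta_{x'_i}X$ is a spherical building of type $\RS$, and $\Delta_{x'_i}A$ is one of its apartments, hence a spherical Coxeter complex on which $\sW$ acts simply transitively on the set of chambers. Since both $\xi'_i$ and $w_0\eta'_i$ are chambers of this apartment, there exists a unique $w \in \sW$ sending $w_0\eta'_i$ to $\xi'_i$. Writing down any expression $w = s_{i_1}\cdots s_{i_l}$ in the simple reflections associated to the basis $B$ yields the claimed form.

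The only conceptual point worth emphasising — and what the lemma really records — is the contrast between the situation before and after retraction. In $\Delta_{x_i}X$ the germs $\xi_i$ and $\eta_i$ are by construction opposite (as stated in \ref{Prop_observation}), so they are related by $w_0$. However, they live in \emph{different} apartments $A_{i-1}$ and $A_i$, on which $r_z$ acts via \emph{different} isomorphisms (both onto $A$). Consequently, once transported into the common residue apartment $\Delta_{x'_i}A$ by $r_z$, the images $\xi'_i$ and $\eta'_i$ need no longer be opposite: the ``discrepancy'' between $w_0\eta'_i$ and $\xi'_i$ is precisely the element $w \in \sW$ produced above. There is no real obstacle in the proof itself — the work has already been done in \ref{Prop_observation} in setting up $r_z$ as a simultaneous isomorphism on each $A_j$; the lemma is a formal consequence packaging this data in the form needed for the subsequent image-computation of $\rho(r^{-1}(\sW.x))$.
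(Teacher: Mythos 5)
Your proof of the first assertion is fine and is essentially what the paper does: $r_z$ restricts to an isomorphism $A_{i-1}\to A$ and $A_i\to A$, so the germs of $r_z(S_{i-})$ and $r_z(S_{i+})$ at $x'_i$ are chambers of $\Delta_{x'_i}A$.

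For the second assertion, however, you take a genuinely different route, and the difference matters. You obtain $w$ from the simple transitivity of $\sW$ on the chambers of the Coxeter complex $\Delta_{x'_i}A$; this does verify the statement as literally worded, but it renders it an empty tautology, since \emph{any} two chambers of that apartment are related by some $w=s_{i_1}\cdots s_{i_l}$. The paper instead produces a specific $w$: choosing charts $f_{i-1},f_i$ of $A_{i-1},A_i$ agreeing at $x_i$, axiom $(A2)$ gives $w\in\aW$ with $f_{i-1}=(f_i\circ w)$ on the overlap, and since the charts agree at $x_i$ the translation part of $w$ is trivial, so $w\in\sW$; composing with the isomorphisms $\iota_{i-1}=r_z\vert_{A_{i-1}}$ and $\iota_i=r_z\vert_{A_i}$ one identifies this $w$ as the element carrying $w_0\eta'_i$ to $\xi'_i$. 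The payoff of the paper's construction is that $w$ is the transition element of the two apartments: it fixes (a neighborhood of $x_i$ in) $A_{i-1}\cap A_i$ and is therefore a product of reflections in precisely those walls through $x_i$ along which the building is folded onto $A$. This geometric identification, not bare existence, is what the subsequent proof of Proposition \ref{Prop_image} exploits when it concludes that $x'_{2}$ arises from $r(x_{2})$ by reflections in hyperplanes through $x_1$ bounding the overlap, i.e.\ by a positive fold. So while your argument is not wrong for the statement as printed, it discards the content the lemma is meant to deliver; if you only keep abstract transitivity, the later folding argument no longer follows from the lemma alone.
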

\begin{proof}
By construction the Weyl chamber germs $\xi_i$ and $\eta_i$ are opposite at $x_i$ and contained in the apartments $A_{i-1}$ and $A_{i}$, respectively. Let $f_{i-1}, f_i$ be charts of $A_{i-1}$ and $A_i$ and assume without loss of generality that there exist a point $p_i\in\MS$ such that $f_{i-1}(p_i)=f_i(p_i)= x_i$. By axiom $(A2)$ there exists $w\in\aW$ such that 
$$
f_{i-1}\vert_{f_{i-1}^{-1}(f_i(\MS))} = (f_{i}\circ w) \vert_{f_{i-1}^{-1}(f_i(\MS))}.
$$
In fact by the assumption $f_{i-1}(p_i)=f_i(p_i)= x_i$ the translation part of $w$ is trivial and $w$ is contained in $\sW$.
The opposite germ of $f_{i}^{-1}(\eta_i)$ is mapped onto $f_{i-1}^{-1}(\xi_i)$ by $w$.

Denote the restriction of the projection $r_z$ to $A_{i-1}$, respectively $A_i$, by $\iota_{i-1}$ and $\iota_i$. Then $\iota_j: A_j\rightarrow A$ is an isomorphism for $j=i-1,i$. Therefore
$$
f_{i-1}^{-1}\circ \iota_{i-1}^{-1}\circ\iota_i\circ f_i : \MS\rightarrow \MS
$$
is an isomorphism, which coincides with $f_{i-1}^{-1} \circ (f_{i}\circ w)$ in a neighborhood of $x_i$.

Fix a chart $h$ of $A$ such that $h(p_i)=x'_i$. Then, since $\iota_i, \iota_{i-1}$ are isomorphisms, we conclude
$f_{i-1}^{-1}(\xi_i) = h^{-1}(\iota_{i-1}(\xi_i))$ and $f_i^{-1}(\eta_i) = h^{-1}(\iota_i(\eta_i))$.
Therefore $\xi'_i=\iota_{i-1}(\xi_i)$ is the image of $w_0 \eta'_i=\iota_{i}(\eta_i)$ by $w$, where $w_0$ is the longest word in $\sW$.

Restricting the retractions $r_{A_i, \Delta_z\Cf}$ to an apartment $A_j$ is an isomorphism, hence by Lemma~\ref{Lem_compRetractions} applying several of these retractions only to apartments of that type is commutative.
\end{proof}

\begin{lemma}\label{Lem_positiveFold}
Let notation be as in \ref{Not_convexity}. Let $x\neq 0$ be an element of $A$ and $H$ a hyperplane separating $0$ and $\Cfm$. If $y$ is a vertex in $\dconv(\sW.x)$ contained in the same half-apartment determined by $H$ which contains a sub-Weyl chamber of $\Cfm$, then the reflected image of $y$ at $H$ is contained in $\dconv(\sW.x)$.
\end{lemma}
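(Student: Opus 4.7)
The plan is to exhibit $s_H(y)$ as a point on the $\Lambda$-segment joining $y$ to its linear image $s_\alpha(y)$, and then to observe that dual half-apartments are $\Lambda$-affine half-spaces, so that $\dconv(\sW.x)$ is closed under taking such segments. First I would write $H = H_{\alpha,k}$ with $\alpha \in \RS^+$ and put $k' := 2k/(\alpha,\alpha)$, so that $H = \{z : \lb z,\alpha^\vee\rb = k'\}$. The hypothesis that $H$ separates $0$ from $\Cfm$ forces $k' \le 0$ (since $\lb 0,\alpha^\vee\rb = 0$ while $\Cfm$ extends to arbitrarily negative values of $\lb \cdot,\alpha^\vee\rb$), and the hypothesis on $y$ reads $\lb y,\alpha^\vee\rb \le k' \le 0$.

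From the formula for an affine reflection one has $s_H(y) = s_\alpha(y) + k'\alpha = y + t_0\alpha$ where $t_0 := k' - \lb y,\alpha^\vee\rb$. The two inequalities $t_0 \ge 0$ and $t_0 \le -\lb y,\alpha^\vee\rb$ (the second being equivalent to $k' \le 0$) place $t_0$ inside the $\Lambda$-interval $[0,-\lb y,\alpha^\vee\rb]_\Lambda$. A short computation with the metric $d$ of Definition~\ref{Def_distance}, together with Lemma~\ref{Lem_segment}, identifies $\{y + t\alpha : t \in [0,-\lb y,\alpha^\vee\rb]_\Lambda\}$ as a subset of the segment $\seg(y,s_\alpha(y))$, so in particular $s_H(y)$ lies on that segment.

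Since $\dconv(\sW.x)$ is $\sW$-invariant, both endpoints $y$ and $s_\alpha(y)$ already lie in $\dconv(\sW.x)$, so it suffices to prove that the entire $\alpha$-segment between them does. By definition $\dconv(\sW.x)$ is the intersection of the dual half-apartments containing $\sW.x$, so I would fix one such $\Hdual = \{z : \lb z,\fcw[\beta]\rb \le m\}$ and check the inclusion separately. The map $t \mapsto \lb y + t\alpha,\fcw[\beta]\rb = \lb y,\fcw[\beta]\rb + t\,\lb \alpha,\fcw[\beta]\rb$ is $\Lambda$-affine with slope in $F\subset\R$; its values at $t = 0$ and $t = -\lb y,\alpha^\vee\rb$ are $\lb y,\fcw[\beta]\rb$ and $\lb s_\alpha(y),\fcw[\beta]\rb$ respectively, both $\le m$. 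Monotonicity of an affine function on a totally ordered $F$-module then bounds the values by $m$ throughout the interval $[0, -\lb y,\alpha^\vee\rb]_\Lambda$, which finishes the proof.

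The main obstacle is this final $\Lambda$-convexity observation. Over $\R$ it is immediate because linear half-spaces are convex; in the $\Lambda$-setting it relies on the compatibility between the total ordering on $\Lambda$ and its $F$-module structure, namely that multiplication of a non-negative $\Lambda$-element by a positive $F$-scalar is again non-negative, so that affine functions are monotone on $\Lambda$-intervals. Once that compatibility is invoked, the bookkeeping is routine and the lemma follows.
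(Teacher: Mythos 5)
Your argument is correct and follows essentially the same route as the paper's proof: there one reflects $y$ at the hyperplane through $0$ parallel to $H$ (i.e.\ applies $r_\alpha$, using $\sW$-invariance of $\dconv(\sW.x)$) and observes, via the coordinates of Proposition~\ref{Prop_aboveHyperplane}, that the reflection of $y$ at $H$ lies on the $\alpha$-segment between $y$ and $r_\alpha(y)$, concluding by convexity. The only difference is that you spell out, through the dual half-apartment description and monotonicity of affine functions on $\Lambda$-intervals, the final step the paper dismisses as ``obviously contained in $\dconv(\sW.x)$''.
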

\begin{proof}
Denote by $H'$ the hyperplane parallel to $H$ containing $0$ and let $\alpha\in\RS$ be such that $H'$ is fixed by $r_{\alpha}$. The convex hull of $\sW.x$ is by definition $\sW$-invariant. Therefore the image of $y$ under the reflection at $H'$ is contained in $\dconv(\sW.x)$ and equals, by Proposition~\ref{Prop_aboveHyperplane}, $y-2y^\alpha\alpha$, since $y=m_\alpha+y^\alpha\alpha$ with $m_{\alpha}\in H'$. Similarly we can find $m\in H$ and $\lambda < y^\alpha$ such that $y=m+\lambda\alpha$. The image of $y$ under the reflection at $H$ equals $y-2\lambda\alpha$ which is obviously contained in $\dconv(\sW.x)$ as well.
\end{proof}

\begin{prop}\label{Prop_image}
With notation as in \ref{Not_convexity}, assume that (FC) holds. Let $x$ be an element of $A$. Given a vertex $y$ in $r^{-1}(\sW.x)$ the image $\rho(y)$ is contained in $\dconv(\sW.x)$, where we identify $\sW$ with the stabilizer of $0$ in $\aW$.
\end{prop}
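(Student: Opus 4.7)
The plan is to adapt the simplicial argument underlying Lemma \ref{Lem_folding} and the proof of Theorem \ref{Thm_convexity} to the non-discrete setting, replacing the gallery combinatorics by the finite covering of the segment $\seg_{\overline A}(0,y)$ afforded by condition (FC). First I invoke Observation \ref{Prop_observation} to obtain the chain of apartments $A_0=A,A_1,\ldots,A_{N-1}$ (each containing a sub-Weyl chamber of $\Cfm$), the intermediate vertices $0=x_0,x_1,\ldots,x_N=y$ with additive distances, a deep point $z\in\Cfm\cap\bigcap_iA_i$, and the retraction $r_z = r_{A,\Delta_z\Cf}$ which coincides with $\rho$ on $\seg_{\overline A}(0,y)$. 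I set $x_i':=\rho(x_i)$, so that $x_0'=0$ and $x_N'=\rho(y)$; since $r$ restricts to an isometry on any apartment through $\Delta_0\Cf$ and fixes $0$, one has $d(0,y)=d(0,r(y))=d(0,x)$ by hypothesis $y\in r^{-1}(\sW.x)$.

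Under $\rho$ the segment becomes a piecewise geodesic in $A$ with breaks only at the $x_i'$. By Lemma \ref{Lem_germs} the germs $\xi_i',\eta_i'\in\Delta_{x_i'}A$ at the $i$-th break are related by $\xi_i' = w_i(w_0\eta_i')$ for some $w_i\in\sW$; I fix a reduced decomposition $w_i=s_{i_1}\cdots s_{i_{\ell_i}}$ into simple reflections, which expresses the $i$-th fold as a composition of $\ell_i$ elementary reflections across walls $H_{i,1},\ldots,H_{i,\ell_i}$ passing through $x_i'$. The essential step is to prove, by adapting Lemma \ref{Lem_folding} together with thickness and the sundial configuration \ref{Prop_sundial}, that each such elementary fold is \emph{positive}: the outgoing tail lies on the side of $H_{i,j}$ opposite to a sub-Weyl chamber of $\Cfm$. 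This positivity is the non-discrete analog of the positive-folding property produced by retractions centered at infinity, and it should follow from the fact that $r_z$ is an isomorphism on each $A_i$, each of which itself contains a sub-Weyl chamber of $\Cfm$.

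Granted positivity, I straighten the piecewise geodesic by passing through the breaks $x_{N-1}',x_{N-2}',\ldots,x_1'$ in turn; at each break I reflect the entire tail past $x_i'$ across $H_{i,1},\ldots,H_{i,\ell_i}$ in succession. After all straightenings the path becomes a single geodesic from $0$ to a point $y^*\in A$ with $d(0,y^*)=d(0,y)=d(0,x)$, hence $y^*\in\sW.x\subseteq\dconv(\sW.x)$. I then run the procedure in reverse to recover $\rho(y)=x_N'$ from $y^*$: each reverse step is a reflection across one of the walls $H_{i,j}$, which by the positivity established above separates $0$ from $\Cfm$ and which sends the image of the tail endpoint from the $\Cfm$-side back to the $0$-side. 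Lemma \ref{Lem_positiveFold} applies at every such step and preserves membership in $\dconv(\sW.x)$, so iterating the reversed reflections yields $\rho(y)\in\dconv(\sW.x)$, which is the desired conclusion.

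The principal obstacle will be the positivity claim at the level of the \emph{individual} simple reflections composing each $w_i$, rather than only for the composite folds $w_i$. Lemma \ref{Lem_folding} handles a single elementary fold in the thick simplicial case by invoking the sundial configuration; here the analogous verification should go through with a suitable reduced decomposition of $w_i$, mimicking the inductive straightening used by Parkinson--Ram and sketched in Section \ref{Sec_ParkinsonRam}, but the bookkeeping needed to ensure that each wall $H_{i,j}$ is correctly positioned with respect to $\Cfm$ (so that the hypotheses of Lemma \ref{Lem_positiveFold} really are met at every reverse step) is the main technical point and is where condition (FC) and the careful choice of the deep point $z$ will be used.
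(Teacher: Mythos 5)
Your outline follows the same overall strategy as the paper --- (FC) via Observation \ref{Prop_observation}, the germ comparison of Lemma \ref{Lem_germs}, positive folds, and Lemma \ref{Lem_positiveFold} --- but it leaves unproved exactly the two steps that carry the proof. First, the positivity of the folds cannot be assumed and deferred as ``bookkeeping'': it is the entire content of the paper's argument. After reducing to a single break point by induction on $N$ (using Lemma \ref{Lem_compRetractions}), the paper takes a minimal gallery in the residue $\Delta_{x_1}X$ from $\xi_1$ to $\eta_1$, locates the panel where it passes from $A_0$ to $A_1$, and analyses the four possible positions of the two hyperplanes $H_0,H_1$ spanned by that panel relative to $0$ and $\Cfm$: one case yields a positive fold, one yields no fold at all (there $\rho(y)=r(y)$), and the remaining two are ruled out using Corollary \ref{Cor_CO} (uniqueness of the apartment spanned by opposite germs at $x_1$) and the minimality of the chosen gallery. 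Nothing in your proposal substitutes for this analysis; in particular the claim that \emph{every} elementary reflection in a reduced word for $w_i$ crosses a wall separating $0$ from $\Cfm$ is precisely what has to be established, and it does not follow formally from $r_z$ being an isomorphism on each $A_i$.

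Second, your identification of the straightened endpoint is a non sequitur. From $d(0,y^*)=d(0,y)=d(0,x)$ you conclude $y^*\in\sW.x$, but the sphere $\{p\in A : d(0,p)=d(0,x)\}$ for the metric of Definition \ref{Def_distance} contains far more points than the finite orbit $\sW.x$ (already in type $A_2$ it is a whole hexagonal boundary), so the distance count gives nothing. What is needed is that the unfolding reproduces the image of $y$ under $r=r_{A,\Delta_0\Cf}$, i.e.\ $y^*=r(y)$, so that $y^*\in\sW.x$ by the hypothesis $y\in r^{-1}(\sW.x)$; this is how the paper argues, exhibiting $\rho(y)$ as the image of $r(y)$ under reflections in walls through the break point that separate $0$ from $\Cfm$, whence Lemma \ref{Lem_positiveFold} applies with the point $r(y)\in\sW.x\subset\dconv(\sW.x)$ as input. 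Until both the positivity statement and the identification $y^*=r(y)$ are supplied, your text is an outline of the intended proof rather than a proof.
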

\begin{proof}
By induction on $N$ and Lemma \ref{Lem_compRetractions} it is enough to prove the assertion in case that $N=1$.

We re-use the notation of Lemma~\ref{Lem_germs} and its proof.
Recall that $A_0$ contains $x_0=0$, $x_1$ and a sub-Weyl chamber $C_0$ of $\Cfm$.

Let $(c_1=\xi_1, c_2, \ldots, c_k=\eta_1)$ be a minimal gallery in $\Delta_{x_1}X$ from $\xi_1$ to $\eta_1$. If $x_1$ is not contained in a bounding hyperplane of $A_{0}\cap A_1$ we may replace $x_1$ by another point satisfying this assumption.
Hence without loss of generality we can assume that $\eta_1\subset (A_1\setminus A_{0})$ and $\xi_1\subset (A_{0}\setminus A_1)$. 
Then there exists an index $j\in\{2,\ldots, k\}$ such that $c_{j-1}$ is contained in $A_{0}\setminus A_1$ and $c_j$ in $A_1\setminus A_{0}$. 

Denote by $H_1$, respectively $H_{0}$, the hyperplane spanned by the panel $p_j\define c_{j-1}\cap c_j$ in $A_1$, respectively $A_{0}$. Notice that $x_1\in H_1 \cap H_{0}$.

Assume a) that $H_1$ separates $x_{2}$ and $C_2\subset \Cfm$ in $A_2$ and that $0$ and $\Cfm$ are separated by $H_{0}$ in $A$.
The image of $(c_j,\ldots, c_k=\eta_j)$ under $\rho$ is the unique gallery $(c'_j,\ldots, c'_k=\eta'_j)$ of the same type which is contained in $A$ and starts in $c_{j-1}$. 
Hence $\seg_{A_1}(x_1, x_{2})$ is mapped onto the segment 
$\seg_{A}(x_1, \rho(x_{2}))$ which has initial direction $\eta'_1$.
Let $x'_{2}$ denote $\rho(x_{2})$.
By assumption the hyperplane $H_{0}$ separates $0$ and $\Cfm$. Apply Lemma \ref{Lem_germs} and obtain that  there exists $w\in\sW$ such that $\eta'_1=\rho(\eta_1)$ is the image of $w_0\xi_1$. (We actually prove the opposite in \ref{Lem_germs}, but the argument is symmetric.)
Hence $x'_{2}$ is the reflected image of $r(x_{2})$ by a finite number of reflections along hyperplanes containing $x_1$ and separating $C_0$ and $0$.
Therefore $x'_{2}$ is obtained from $r(x_{2})$ by a positive fold in $A$. By Lemma \ref{Lem_positiveFold} $x'_2$ is contained in $\dconv(\sW.x)$.

In the second case b) $H_1$ separates $x_{2}$ and $\Cfm$ but $0$ and $\Cfm$ are not separated by $H_{0}$. Then $\rho$ maps $(c_j,\ldots, c_k=\eta_1)$ onto $(\rho(c_j),\ldots, \rho(\eta_j)$ which is a gallery of the same type contained in $A$ and $\rho(c_j)$ is the unique chamber in $\Delta_{x_1}A$ sharing the panel $c_j\cap c_{j-1}$ with $c_{j-1}$. Therefore $\xi_1$ and $\eta'_1\define\rho(\eta_1)$ are opposite in $x_1$ and $\rho(x_2)=r(x_2)$.

The case c) that both $x_{2}$ and $0$ are not separated from $\Cfm$ by $H_1$, respectively $H_{0}$, can not happen: Let $S_{1-}$ and $S_{1+}$ be the Weyl chambers based at $x_1$ having germs $\xi_1,\eta_1$ and are contained in $A$ and $A_1$, respectively. By Corollary \ref{Cor_CO} the Weyl chambers $S_{1-}$ and $S_{1+}$ are contained in a unique apartment $B$. The span $H_B$ of the panel $p_j=c_{j-1}\cap c_j$ in $B$ separates the segments $\seg_B(0,x_1)$ and $\seg_B(x_1,x_2)$ and hence $0$ and $x_2$ which can therefore not be contained in the same half-apartment determined by $H_B$.

Finally assume d) that $H_1$ does not separate $x_{2}$ and $\Cfm$ but that $0$ and $\Cfm$ are separated by $H_{0}$. The germ $\gamma$ of $\Cfm$ at $x_1$ is then contained in the same half-apartment of $\Delta_{x_1}$ as $\eta_1$. By the assumption that $0$ is separated from $\Cfm$ by $H_1$ we have that there exists a minimal  gallery in $\Delta_{x_1} A$ from $\xi_1$ to $\gamma$ containing either $p_j$ or its opposite panel $q_j$ in $\Delta_{x_1}A$. If $\xi_1$ and $\gamma$ are not opposite this gallery is unique. But this contradicts the choice of $j$.
\end{proof}

\begin{thm}\label{Thm_convexityGeneral}
Let notation be as in \ref{Not_convexity} and assume in addition that (FC) holds. Given a vertex $x$ in $A$ we can conclude
$$
\rho(r^{-1}(\sW.x)) =\dconv(\sW.x).
$$
\end{thm}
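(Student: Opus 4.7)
The statement factors into the two set inclusions $\dconv(\sW.x) \subseteq \rho(r^{-1}(\sW.x))$ and $\rho(r^{-1}(\sW.x)) \subseteq \dconv(\sW.x)$, and both halves have essentially been prepared by the two propositions immediately preceding the theorem. The plan is therefore to package Propositions~\ref{Prop_preimage} and \ref{Prop_image} into a short combined argument, with the assumption (FC) feeding only into the second inclusion.

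For the inclusion $\dconv(\sW.x) \subseteq \rho(r^{-1}(\sW.x))$, I would fix an arbitrary vertex $y\in\dconv(\sW.x)$ and invoke Proposition~\ref{Prop_preimage} to produce a vertex $\tilde y\in X$ with $r(\tilde y)\in\sW.x$ and $\rho(\tilde y)=y$; this directly exhibits $y$ as an element of $\rho(r^{-1}(\sW.x))$. Note that this direction uses thickness (via the construction of the apartments $A_k$ obtained by folding along the hyperplanes determined by the $\lambda_k$) but not the finite covering condition.

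For the reverse inclusion, pick $y\in r^{-1}(\sW.x)$ and apply Proposition~\ref{Prop_image}, which under hypothesis (FC) yields $\rho(y)\in\dconv(\sW.x)$. The assumption (FC) enters here through Observation~\ref{Prop_observation} and its consequences (in particular Lemma~\ref{Lem_germs}), which allow the segment from $0$ to $y$ to be traversed through a finite chain of apartments all sharing a sub-Weyl chamber of $\Cfm$, so that the local ``positive folding'' argument along each hyperplane crossing can be iterated finitely many times and combined with Lemma~\ref{Lem_positiveFold} to keep the image inside $\dconv(\sW.x)$.

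Putting the two inclusions together gives the equality claimed by the theorem. The only step where something genuinely new has to be said is the verification that the argument can be iterated: the two propositions are formulated for a single ``crossing'' of a separating hyperplane, so I would explicitly invoke Lemma~\ref{Lem_compRetractions} to ensure that compositions of the relevant retractions behave well, and apply induction on the number $N$ from Observation~\ref{Prop_observation}. The main obstacle in principle is the finite covering condition itself, but that has been pushed into the hypothesis (FC); modulo this assumption (which holds for $\Lambda=\R$ and for all $\Lambda$-trees as noted in Remark~\ref{Rem_FC}), the proof reduces to a clean two-line combination of Propositions~\ref{Prop_preimage} and \ref{Prop_image}.
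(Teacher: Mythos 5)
Your proposal is correct and follows exactly the paper's route: the paper proves the theorem by simply combining Proposition~\ref{Prop_preimage} (giving $\dconv(\sW.x)\subseteq\rho(r^{-1}(\sW.x))$) with Proposition~\ref{Prop_image} (giving the reverse inclusion under (FC)). Your extra remarks on where thickness and (FC) enter are accurate but already contained in the proofs of those two propositions, so nothing further is needed.
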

\begin{proof}
Combining Proposition~\ref{Prop_preimage} and \ref{Prop_image} the assertion follows.
\end{proof}

\begin{remark}
After handing in her thesis the author was able to prove what was conjectured in Remark \ref{Rem_FC}. The fact that condition (FC) is true for all generalized affine buildings is given in Appendix \ref{Sec_FC}.
\end{remark}

\begin{remark}
Note that Theorem \ref{Thm_convexity} is \emph{not} a special case of \ref{Thm_convexityGeneral}. This would follow from the result conjectured in \ref{Conj_convexityGeneral}.
\end{remark}

\subsection{Loose ends}\label{Sec_looseEnds}

Obviously the same questions as above can be asked if we restrict ourselves to affine buildings that are thick with respect to an affine Weyl group $\WT$ which is a proper subgroup of the full affine Weyl group $\aW$, meaning that the building is only assumed to branch at special hyperplanes in the sense of Definition \ref{Def_specialHyperplane}.

The arising problem is the following.
In Figure~\ref{Fig_folding} we gave an example of a sequence of points $y_i$ appearing according to Lemma~\ref{Lem_ParkRam} in an apartment of an $\widetilde{A}_2$ affine building. Each $y_i$ with $i\neq 0$ is contained in the boundary of the convex hull of the orbit (the gray shaded region). 

This is not the case in general as you can see in Figure~\ref{Fig_folding2} where we illustrate an example in an apartment of type $\widetilde{G}_2$. The shaded region is the convex hull of the orbit of $x$. The vertex $y$ is a special point contained in the convex hull of $\sW.x$ and has the same type as $x$. The sequence of vertices $y_i$ defined in Lemma~\ref{Lem_ParkRam} is connected with the dotted line. Note that $y_1$ and $y_2$ are in the interior of the convex hull.

\begin{figure}[h]
 \begin{center}
	\resizebox{!}{0.37\textheight}{\input{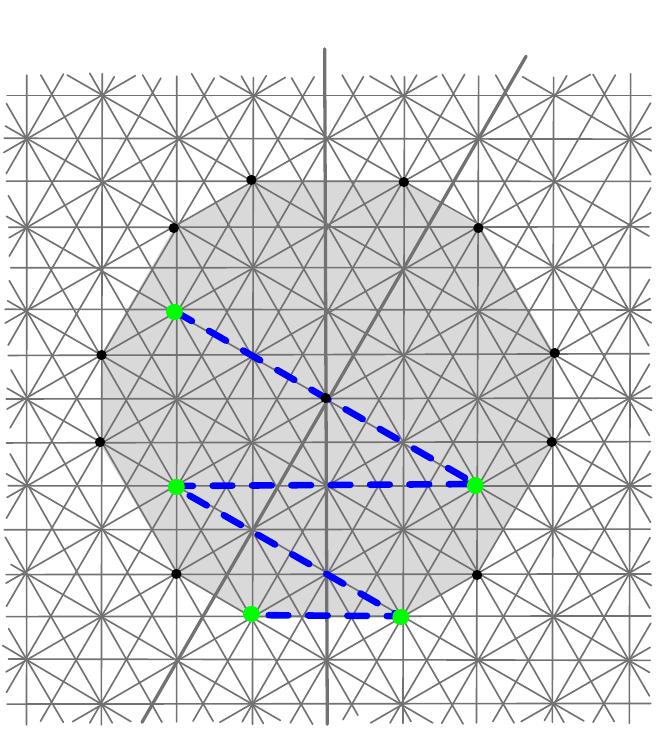tex_t}}
 	\caption[adjacent]{Illustration of Lemma~\ref{Lem_ParkRam} with $w_0=s_{\alpha_1}s_{\alpha_2}s_{\alpha_1}s_{\alpha_2}s_{\alpha_1}s_{\alpha_2}$.}
 	\label{Fig_folding2}
 \end{center} 
 \end{figure}

If $X$ is a simplicial affine building this observation is irrelevant since the minimum taken in \ref{Lem_ParkRam} to define the sequence of the $y_i$ always exists. But dealing with a generalized affine building and special vertices defined with respect to  some proper subgroup $\WT$ of the full affine Weyl group $\aW$ it is not true in general that such a minimum exists. There is no obvious way to adjust the construction, but we hope nevertheless that making appropriate changes will enable us to prove what we conjectured in \ref{Conj_convexityGeneral}.

\begin{notation}\label{Not_convexity2}
Let $(X,\App)$ be an affine building modeled over $\MS=\MS(\RS,\Lambda, T)$. Let $A$ be an apartment of $X$ and identify $A$ with $\MS$ via a given chart $f\in\App$. Hence an origin $0$ and a fundamental chamber $\Cf$ are fixed. In the following denote the retraction onto $A$ centered at $\Delta_x\Cf$ by $r$ and the retraction onto $A$ centered at $\partial (\Cfm)$ by $\rho$.
\end{notation}

\begin{conj}\label{Conj_convexityGeneral}
With notation as in \ref{Not_convexity2} assume that $X$ is thick with respect to $\WT$ as defined in \ref{Def_thick}. For any special vertex $x$ in $A$ we have
$$
\rho(r^{-1}(\sW.x)) =\dconv(\sW.x)\cap (x+T).
$$
\end{conj}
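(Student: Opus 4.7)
My plan is to mirror the two-step structure used in the proof of Theorem \ref{Thm_convexityGeneral}, splitting the desired equality into the two inclusions and adapting each to the restricted setting where only hyperplanes fixed by reflections in $\WT$ yield branching. Throughout I will use that the retractions $r$ and $\rho$, when restricted to a single apartment, are isomorphisms and in particular preserve the ``type'' of a special vertex (i.e.\ its $T$-coset in $\MS$), since any element of $\aW$ implementing a chart change between two apartments containing a common germ of a Weyl chamber based at a special vertex lies in $\WT$ whenever the building branches only along special hyperplanes.

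For the inclusion $\rho(r^{-1}(\sW.x)) \subseteq \dconv(\sW.x) \cap (x+T)$, I would first argue that the proof of Proposition \ref{Prop_image} carries over verbatim under the weaker thickness hypothesis, because every hyperplane through an intermediate vertex $x_i$ invoked in that argument separates two special vertices of the same type and is therefore special with respect to $\WT$; this gives $\rho(z)\in\dconv(\sW.x)$ for every $z \in r^{-1}(\sW.x)$. Combined with the type-preservation observation above, any such $z$ must itself be a special vertex of type $x$ in any apartment containing it (since $r(z)\in\sW.x$), hence $\rho(z)\in x+T$, completing this inclusion.

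For the reverse inclusion I would adapt Lemma \ref{Lem_key} and Proposition \ref{Prop_preimage} as follows. Fix a reduced expression $w_0=s_{i_1}\cdots s_{i_n}$ and, for $y\in\dconv(\sW.x)\cap(x+T)$, inductively define $y_k = y_{k-1} - \lambda_k \tfrac{2}{(\alpha_{i_k},\alpha_{i_k})}\alpha_{i_k}$ with $\lambda_k\in\Lambda$ chosen \emph{maximal subject to the two constraints} that $\lambda_k\tfrac{2}{(\alpha_{i_k},\alpha_{i_k})}\alpha_{i_k}\in T$ (so that $y_k\in x+T$) and $y_k\in\dconv(\sW.x)$. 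The first constraint ensures that the bisecting hyperplane $H_k$ between $y_{k-1}$ and $y_k$ is special with respect to $\WT$, so thickness allows us to cross into a new apartment at each step exactly as in the original argument. The $T$-restricted analog of Lemma \ref{Lem_key} should then force $y_n = w_0x^+$ by refining the partial-order induction $y_k\prec x_k$ on $\Cp$: when writing $x_{k-1}-y_{k-1}=z+c\alpha_{i_k}$ with $z\perp\alpha_{i_k}$ in the appropriate sense, the inequality chain from the proof of Lemma \ref{Lem_key} shows the \emph{unrestricted} maximum lies in $T$, since $x_{k-1}\in\sW.x\subset x+T$ and $y_{k-1}\in x+T$ force $\lb x_{k-1}-y_{k-1},\alpha_{i_k}^\vee\rb$ to produce an admissible translation by an element of $T$.

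The main obstacle will be the existence of the maximum $\lambda_k$. In the simplicial case $T$ is a lattice and the supremum is automatically attained; in the general $\Lambda$-valued setting $T$ need not be discrete, so the set of admissible $\lambda_k$ may well be a bounded subset of $\Lambda$ whose supremum is not achieved, blocking the induction outright. This is precisely the phenomenon illustrated in the loose-ends discussion around Figure \ref{Fig_folding2}, and it is the reason the conjecture is only established in dimension one, where the tree axiom (T2) together with the sap structure makes the corresponding maxima automatic. For higher rank I expect that bypassing this obstacle requires either an auxiliary discreteness or ``finite covering''--type hypothesis on $T$, or else a base-change argument: embed $X$ via an epimorphism $e\colon\Lambda\to\Gamma$ onto a larger or better-behaved building, apply Theorem \ref{Thm_convexityGeneral} (or its $\WT$-analog) there, and pull back the preimage using the extension machinery of Theorem \ref{Thm_iso} together with the panel- and wall-tree realizations from Proposition \ref{Prop_baseChangeTrees}. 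Establishing that such a base change preserves both the convex hull and the $T$-sublattice will be the technical heart of the argument.
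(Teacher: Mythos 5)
The statement you are trying to prove is, in the paper itself, only a conjecture: the text establishes it in dimension one (Theorem \ref{Thm_Conj1}) by a direct argument in a $\Lambda$-tree (locating the branch point $z$ between $A$ and a second apartment, using thickness with respect to $\WT$ at that special hyperplane), and explicitly leaves the higher-rank case open in Section \ref{Sec_looseEnds}. Your proposal does not close that gap — and, to your credit, you say so yourself. The reverse inclusion is exactly where the argument breaks: you define $\lambda_k$ as the \emph{maximal} element of $\Lambda$ subject to $y_k\in\dconv(\sW.x)$ and $\lambda_k\tfrac{2}{(\alpha_{i_k},\alpha_{i_k})}\alpha_{i_k}\in T$, but when $T$ is a proper subgroup of the full translation group this maximum need not exist (the admissible set can be bounded with unattained supremum), and even when it exists the induction of Lemma \ref{Lem_key} does not force $y_n=w_0x^+$: the $\widetilde{\mathrm{G}}_2$ example of Figure \ref{Fig_folding2} shows that the intermediate points $y_1,y_2$ can lie in the interior of $\dconv(\sW.x)$, so the "walk along the boundary" picture underlying the folding construction already fails in the discrete case restricted to vertices of one type. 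Declaring that the refined partial-order induction "should then force" the conclusion, or deferring to an unspecified base-change argument via Theorem \ref{Thm_iso} and Proposition \ref{Prop_baseChangeTrees}, is precisely the missing mathematical content; note also that the paper warns that Theorem \ref{Thm_convexity} is \emph{not} a special case of Theorem \ref{Thm_convexityGeneral}, so even the simplicial instance of the conjecture required the separate LS-gallery/Parkinson--Ram machinery rather than the scheme you propose to transplant.

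Two smaller points. In the forward inclusion, the justification that the relevant hyperplanes are special "because they separate two special vertices of the same type" is not the right reason; what saves you there is Definition \ref{Def_thick} itself, namely that apartments branch only at special hyperplanes, so the walls along which the proof of Proposition \ref{Prop_image} folds are automatically $\WT$-walls, and chart changes lie in $\WT$ by (A2), which gives $\rho(z)\in x+T$. That half can indeed be salvaged. But as a whole your text is a programme that reduces the conjecture to the same obstruction the authors already identify, not a proof; if you want an actual result, the honest target is the one-dimensional case, where the paper's argument (two apartments meeting in a ray at a special branch point, with $\rho$ reflecting $x^-$ across it) is short, direct, and makes no use of root operators or folded galleries at all.
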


This coincides with the classical case if we assume $\RS$ to be crystallographic, put $\Lambda=\R$ and let $T$ equal the co-root lattice $\QQ(\RS^\vee)$ of $\RS$.

We can prove the conjecture if $X$ is one-dimensional, i.e. $X$ is a $\Lambda$-tree with sap, the defining root system $\RS$ is of type $A_1$ and $\MS\cong \Lambda$. 
\begin{thm}\label{Thm_Conj1}
With notation as in \ref{Not_convexity2} assume that $X$ is thick with respect to $\WT$, as defined in \ref{Def_thick}. Assume further that $X$ is of dimension one. Let $x$ be a special vertex in $A$. Then
$$
\rho(r^{-1}(\sW.x)) =\dconv(\sW.x)\cap (x+T).
$$
\end{thm}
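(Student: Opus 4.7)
Identify $A$ with $\Lambda$ via the fixed chart so that $\Cf=\Lambda_{\geq 0}$ and $\Cfm=\Lambda_{\leq 0}$; since $\sW.x$ and $\dconv(\sW.x)$ are invariant under the non-trivial element $s\in\sW$ and since $x\in\tfrac{1}{2}T$ implies $-x\in x+T$, we may assume $x\in\Cf$. Then $\sW.x=\{-x,x\}$ and $\dconv(\sW.x)=\{y\in\Lambda : -x\leq y\leq x\}$. An affine reflection $t\circ r_\alpha\in\WT$ fixes $p\in A$ precisely when $2p\in T$, so the special hyperplanes of Definition \ref{Def_specialHyperplane} are exactly the points of $\tfrac{1}{2}T\subseteq\Lambda$, and by Definition \ref{Def_thick} the tree $X$ branches at every such point and at no other point of $A$.

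Next I describe the two retractions combinatorially. For $p\in X$ let $\gamma_p$ denote the germ at $0$ of the geodesic $[0,p]$; Definition \ref{Def_vertexRetraction} together with $\mathrm{(TA2)}$ yields $r(p)=d(0,p)$ if $\gamma_p=\Delta_0\Cf$ and $r(p)=-d(0,p)$ otherwise, so $r^{-1}(\sW.x)$ is the set of points at tree-distance $x$ from $0$. To evaluate $\rho$ on such a point $p$, let $q\in A$ be the unique point of $A$ closest to $p$; then the ray from $p$ to $\partial\Cfm$ first meets $A$ at $q$, and Definition \ref{Def_retractionInfty} gives $\rho(p)=q+d(q,p)$, with the distance added in the positive direction of $A$.

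Three cases now arise. If $\gamma_p=\Delta_0\Cf$, then $[0,p]$ lies in the $\Cf$-cone of $X$ and $q\in[0,x]$, so $\rho(p)=q+(x-q)=x$. If $\gamma_p\notin\{\Delta_0\Cf,\Delta_0\Cfm\}$, the geodesic $[0,p]$ leaves $A$ already at $0$, hence $q=0$ and $\rho(p)=x$. If $\gamma_p=\Delta_0\Cfm$, the geodesic runs along $A$ from $0$ to some $q\in[-x,0]$ before branching off; since apartments branch only at special hyperplanes we have $q\in\tfrac{1}{2}T$, and $d(q,p)=x+q$ gives $\rho(p)=2q+x\in[-x,x]\cap(x+T)$. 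This proves $\rho(r^{-1}(\sW.x))\subseteq\dconv(\sW.x)\cap(x+T)$.

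For the reverse inclusion, fix $y\in[-x,x]\cap(x+T)$. If $y\in\{-x,x\}$, take $p=y\in A$. Otherwise set $q:=\tfrac{1}{2}(y-x)\in(-x,0)$; since $y-x\in T$ the point $q$ lies in $\tfrac{1}{2}T$, so thickness at the special hyperplane $q$ produces an apartment $A'$ with $A\cap A'=\{z\in\Lambda : z\leq q\}$, and by $\mathrm{(TA2)}$ we may choose $p\in A'\setminus A$ at distance $x+q$ from $q$ on the side of $A'$ opposite to $\partial\Cfm$. Then $d(0,p)=-q+(x+q)=x$ and $\gamma_p=\Delta_0\Cfm$, so $p\in r^{-1}(\sW.x)$, while $\rho(p)=q+(x+q)=2q+x=y$. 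The main technical ingredient is the formula $\rho(p)=q+d(q,p)$ combined with the identification of the branching point $q$ in terms of $\gamma_p$; in dimension one the $Y$-condition $\mathrm{(T2)}$ controls this entirely. In higher dimensions a folding formula of this kind is precisely what is needed for a direct analogue of Proposition \ref{Prop_existencePreimage} in the non-simplicial setting, and its absence is the obstruction highlighted in \ref{Sec_looseEnds}.
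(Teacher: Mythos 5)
Your proof is correct and follows essentially the same route as the paper's: identify $r^{-1}(\sW.x)$ with the set of points at distance $d(0,x)$ from $0$, compute $\rho$ via the branch point with $A$ (which lies in $\tfrac{1}{2}T$ because apartments branch only at special hyperplanes), and for the reverse inclusion branch off $A$ at the special point $\tfrac{1}{2}(y-x)$ using thickness. The only cosmetic difference is that in the converse you attach the new branch on the $\partial(\Cfm)$ side and place the preimage on it directly, whereas the paper branches off on the $\partial\Cf$ side and then passes to the third apartment $(A\setminus A')\cup(A'\setminus A)$; the underlying computations and the appeal to thickness are the same.
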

\begin{proof}
The set $\sW.x \subset A$ has two elements $x^+$ and $x^-$ contained in $\Cf$ and $\Cfm$, respectively. Both points, $x^+$ and $x^-$ are at distance $d(0,x)$ from $0$. The preimage $r^{-1}(\sW.x)$ is the set of all vertices $y\in X$ such that $d(0,x)=d(0,y)$. We denote the positive root in $\RS$ by $\alpha$. 

Let us first prove that $\rho(r^{-1}(\sW.x)) \subset\dconv(\sW.x)\cap \{x+T\}$. Fix an element $y$ of $\rho(r^{-1}(\sW.x))$ which is not contained in $A$.
Is $A'$ an apartment containing $\partial(\Cfm)$ and $y$, the restriction of $\rho$ to $A'$ is an isometry onto $A$. Let $a$ denote the end of $A'$ different from $\partial(\Cfm)$ and define $z$ to be the branch point $\kappa(\partial(\Cfm) , \partial\Cf, a)$ of $A$ and $A'$. There are two cases:  Either $z$ is contained in $\Cf\setminus\{0\}$ or in $\Cfm$. In the first case $r$ and $\rho$ coincide on $A'$ and $y$ is mapped onto $x^+$. If $z\in\Cfm$ then $y\in r^{-1}(x^-)$, the distance of $x^-$ and $z$ equals $d(y,z)$ and $\rho(y)=x^- + 2d(x^-,z)\ddefine y'$, which is obviously contained in $\dconv(\sW.x)$. The point $y'$ is the same as the image of $x^-$ reflected at (the hyperplane) $z$. Since there are no branchings other than at special hyperplanes, the branchpoint $z$ of $A$ and $A'$ is a fixed point (fixed hyperplane) of a reflection $r=t\circ r_{\alpha}$ in $\WT$. Therefore $y'\in\dconv(\sW.x)\cap \{x+T\}$.

To prove the converse let $y\in\dconv(\sW.x)\cap \{x+T\}$ be given. For arbitrary $v\in\MS$ let $t_v$ denote the unique translation of $\MS$ mapping $0$ to $v$. The unique element of $T$ mapping $x^-$ to $y$ is, with this notation, the map $t_{y-x^-}$. The reflection at $x^-$ is given by $t_{2x^-}\circ r_\alpha$. Apply the element $t_{y+x^-} \circ r_\alpha$ of $\WT$ to $x^-$ and observe that the image is $y$. Further easy calculations imply that $z\define x^- + \frac{1}{2} (y-x^-)$ is fixed by $t_{y+x^-} \circ r_\alpha$.
Therefore $z$ is a special hyperplane and, since $X$ is thick, there exists an apartment $A'$ intersecting $A$ precisely in the ray $\overrightarrow{z \partial\Cf}$. Obviously $A'$ contains $0, x^+$ and $z$, and the restriction of $r$ to $A'$ is an isomorphism onto $A$. Denote by $y'$ the preimage $r^{-1}(x^-)$ in $A'$. By construction we have $d(y',z)=d(x^-,z)$. The apartment $A''\define (A\setminus A' )\cup (A'\setminus A)$ contains $x^-,z$ and $y'$. Observe that $\rho$ restricted to $A''$ is an isomorphism mapping $y'$ onto $y$. This proves $\rho(r^{-1}(\sW.x)) \supset\dconv(\sW.x)\cap \{x+T\}$ and we are done.
\end{proof}

 \newpage

\begin{appendix}
\section{Proof of Theorem \ref{Thm_trees}}\label{Sec_prooftrees}

We will now give a detailed proof of Theorem~\ref{Thm_trees} providing the existence of a tree associated to a projective valuation. The proof is based on \cite{BennettDiss} and it is done in four steps. 

\subsubsection*{Step 1 - two technical Lemmata}

\begin{lemma}{\bf (3-point Lemma) }\label{3ptlemma}
Given a $\Lambda$-valued projective valuation $\val$ on a set $E$ and four pairwise distinct elements $a, a_1, a_2, a_3 $ of $E$. Then exactly one of the following cases occurs:
\begin{enumerate}
\item $\val(a_1, a; a_2, a_3) > 0$ \label{num12}
\item $\val(a_2, a; a_3, a_1) > 0$ \label{num13}
\item $\val(a_3, a; a_1, a_2) > 0$ \label{num14}
\item $\val(a_i, a; a_j, a_k) = 0$ for all $\{i,j,k\}=\{1,2,3\}$. \label{num15}
\end{enumerate}
\end{lemma}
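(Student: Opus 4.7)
The plan is to reduce the lemma to a sign analysis of the three cyclic cross ratios
\[
k_1 := \val(a_1, a; a_2, a_3), \qquad k_2 := \val(a_2, a; a_3, a_1), \qquad k_3 := \val(a_3, a; a_1, a_2),
\]
and then to show that whenever at least one of them is nonzero, (PV2) together with the symmetries in (PV1) forces one $k_i$ to be strictly positive, one to be strictly negative, and the remaining one to vanish.

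First I would derive two antisymmetry rules from (PV1) alone. The identity $\val(a,b;c,d) = -\val(a,b;d,c)$ is explicit. The companion identity $\val(a,b;c,d) = -\val(b,a;c,d)$ follows by swapping the two pairs via $\val(a,b;c,d) = \val(c,d;a,b)$, applying the first antisymmetry inside the new second pair, and then swapping the pairs back. With these two rules any permutation of the four entries produces the same $\val$ up to a predictable sign, and this is the only piece of calculational machinery needed in the rest of the argument.

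The main step is then to suppose $k_1 > 0$ and to apply (PV2) to the quadruple $(a_1, a; a_2, a_3)$, obtaining simultaneously $\val(a_1, a_3; a_2, a) = k_1$ and $\val(a_1, a_2; a, a_3) = 0$. Reordering the first identity with the antisymmetry rules gives $k_2 = -k_1 < 0$, and reordering the second gives $k_3 = 0$. Hence $k_1 > 0$ selects case (1) of the lemma and simultaneously excludes (2), (3) and (4). By the cyclic symmetry of the roles of $a_1, a_2, a_3$, the assumptions $k_2 > 0$ and $k_3 > 0$ analogously force cases (2) and (3).

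It remains to rule out a negative $k_i$ with no positive $k_j$. If, for instance, $k_1 < 0$, then $\val(a_1, a; a_3, a_2) > 0$ by (PV1), and applying the previous step to this relabeled quadruple (which amounts to swapping $a_2$ and $a_3$ throughout) forces $k_3 > 0$, a case already handled; symmetric arguments apply to $k_2 < 0$ and $k_3 < 0$. Consequently either some $k_i > 0$ and exactly one of (1)--(3) holds, or every $k_i$ vanishes and case (4) holds; mutual exclusivity is automatic, since the four cases prescribe different signs for the same triple. The main obstacle is purely bookkeeping, namely tracking the two sign-changing rules from (PV1) together with the two identities produced by (PV2) and verifying that no subcase is missed under the cyclic permutation of $(a_1, a_2, a_3)$; notably (PV3) is not required.
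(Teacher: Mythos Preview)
Your proof is correct and follows essentially the same route as the paper: assume one of the cyclic quantities is positive, apply (PV2) once, and reorder with (PV1) to pin down the signs of the other two; (PV3) is never used. In fact your argument is slightly more complete than the paper's, which verifies only mutual exclusivity of the four cases, whereas you also explicitly handle the existence direction by showing that a strictly negative $k_i$ forces some $k_j>0$.
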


The proof of this Lemma, which is not difficult, was not contained in \cite{BennettDiss}.
\begin{proof}
Assume \ref{num12}. Then $(PV2)$ implies 
$$ \val(a_1, a_3; a_2, a)=k>0\;\; \text{ and } \;\;\val(a_1, a_2; a, a_3)=0 .$$
Using $(PV1)$ we can calculate
\begin{align*}
0 < k = & \val(a_1, a_3; a_2, a) = \val(a_2, a; a_1, a_3)= -\val(a_2, a; a_3, a_1) \\
0 = &\val(a_1, a_2; a, a_3) = - \val(a_3, a; a_1, a_2) 
\end{align*}
and hence $\neg$~\ref{num13} and $\neg$~\ref{num14}. 
By the first line of the last calculation $0 < k = \val(a_2, a; a_1, a_3)$ meaning in particular that $\val(a_2,a; a_1,a_3)\neq 0$. Hence \ref{num15} is not true. 

Similar arguments prove that \ref{num13}. (or \ref{num14}.) imply the negation of the other three cases. The fact that \ref{num15}. implies the negation of the others is obvious. 
\end{proof}

\begin{remark}
If the valuation $\val$ is the canonical valuation of a tree $T$ cases ~\ref{num12} -~\ref{num15} of Lemma~\ref{3ptlemma} may be illustrated as in figure~\ref{figure6} (the given enumeration coincides with the enumeration of Lemma~\ref{3ptlemma} and Figure~\ref{figure6}). Write $\overrightarrow{ab}$ for the directed line determined by the ends $a$ and $b$, then the four cases can be stated as follows: 
The first case $(1)$ is that the directed line $\overrightarrow{a_2a_3}$ first passes $\kappa(a_1, a, a_2)$ then $\kappa(a_1, a, a_3)$.
In case $(2)$ the directed line $\overrightarrow{a_3a_1}$ passes $\kappa(a_2, a, a_3)$ before $\kappa(a_2, a, a_1)$.
In case $(3)$ the point $\kappa(a_3, a, a_1)$ is first passed by the line $\overrightarrow{a_1a_2}$ which afterwards passes $\kappa(a_3, a, a_2)$. And in the last case $(4)$ all $\kappa(a_i,a,a_j)$ are equal for all choices of $i\neq j$. 
\end{remark}

\begin{figure}[htbp]
\begin{center}
	\resizebox{!}{0.3\textheight}{\input{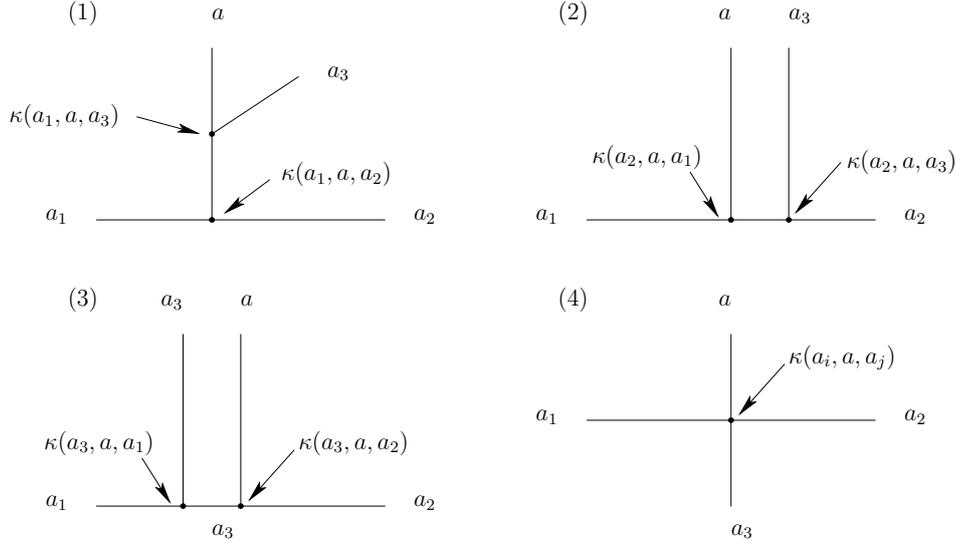}}
	\caption[tree configurations]{Possible geometric configurations in a tree.}
	\label{figure6}
\end{center} 
\end{figure}

\begin{lemma}{ \cite[Lemma 4.5]{BennettDiss}}\label{tec6}
For $a\in E$ let $(i_a, j_a, k_a)$ be an even permutation\footnote{All permutations can be interpreted as symmetries of an equilateral triangle with vertices labeled clockwise with $1,2$ and $3$. There are three reflections, i.e. the odd permutations and three rotations, i.e. the even permutations. Hence the even permutations are exactly $(2,3,1), (3,1,2)$ and $(1,2,3)$.} 
of $(1,2,3)$ such that $\val(a_{i_a}, a; a_{j_a}, a_{k_a})$ is maximal.
Let $a\in E\setminus\{a_1, a_2, a_3\}$, $b\in E\setminus\{a_{i_a}, a_{j_a}\}$ and assume $\val(a_{i_a}, a; a_{j_a}, b) > 0$. 
Then $i_b=i_a, j_b=j_a$ and $k_b=k_a$. 
\end{lemma}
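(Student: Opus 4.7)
The strategy is to show $\val(a_{i_a},b;a_{j_a},a_{k_a})>0$; Lemma~\ref{3ptlemma} applied at centre $b$ to the triple $(a_1,a_2,a_3)$ then singles out $(i_a,j_a,k_a)$ as the unique positive even permutation for $b$, forcing $(i_b,j_b,k_b)=(i_a,j_a,k_a)$. In the degenerate case where $\val(a_{i_a},a;a_{j_a},a_{k_a})=0$ all three centre-$b$ values will turn out to vanish, so any even permutation -- in particular $(i_a,j_a,k_a)$ -- is a legitimate choice.

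Write $m:=\val(a_{i_a},a;a_{j_a},a_{k_a})\ge 0$ and $Y:=\val(a_{i_a},a;a_{j_a},b)>0$. Applying axiom (PV2) to $Y>0$ yields $\val(a_{i_a},a_{j_a};a,b)=0$ and $\val(a_{i_a},b;a_{j_a},a)=Y$; if $m>0$, (PV2) applied to $m$ similarly gives $\val(a_{i_a},a_{j_a};a,a_{k_a})=0$ and $\val(a_{i_a},a_{k_a};a_{j_a},a)=m$. The key identity then comes from the cocycle relation (PV3), applied to the chain $a_{i_a},a,b$ with the last pair fixed at $(a_{j_a},a_{k_a})$:
$$
\val(a_{i_a},b;a_{j_a},a_{k_a})\;=\;m+\val(a,b;a_{j_a},a_{k_a}).
$$

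To control the second summand I would first compute the auxiliary cross-ratio $\val(a_{j_a},a;a_{k_a},b)$: a second use of (PV3), chaining $a_{i_a},a_{k_a},b$ on the pair $(a_{j_a},a)$ and inserting the identities above, gives -- after one application of (PV1) -- $\val(a_{j_a},a;a_{k_a},b)=Y-m$. Feeding this into Lemma~\ref{3ptlemma} with centre $a$ and triple $(a_{j_a},a_{k_a},b)$ forces the three even-permutation values there to follow the cyclic $(+k,-k,0)$ pattern derived in the proof of the 3-point lemma. A case split on $\mathrm{sgn}(Y-m)$ determines $\val(b,a;a_{j_a},a_{k_a})$ (it equals $0$ when $Y>m$ and $m-Y$ when $Y<m$, with the borderline $Y=m$ handled separately), and (PV1) translates this into the missing summand $\val(a,b;a_{j_a},a_{k_a})$. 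A direct computation in each case shows that the right-hand side of the displayed identity equals $\min\{m,Y\}>0$, which is the required positivity.

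The remaining subcase $m=0$ places $a$ in case~(\ref{num15}) of Lemma~\ref{3ptlemma}, so $(i_a,j_a,k_a)$ is only defined up to even-permutation ambiguity. A completely parallel computation, now with the three centre-$a$ values for the triple $(a_1,a_2,a_3)$ all vanishing, shows that the three centre-$b$ even-permutation values for the same triple also vanish; hence $b$ lies in case~(\ref{num15}) as well, and the conclusion holds by any consistent choice. The main point of friction is not structural but arithmetic: the sign bookkeeping under (PV1) and the case analysis on $\mathrm{sgn}(Y-m)$. Apart from that, the proof consists of just two invocations of Lemma~\ref{3ptlemma} and two applications of the cocycle axiom (PV3).
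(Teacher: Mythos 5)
Your proposal is correct, and it reaches the conclusion by a genuinely different decomposition than the paper's. The paper (after reducing w.l.o.g. to $(i_a,j_a,k_a)=(1,2,3)$) splits along the second pair, writing $\val(a_1,b;a_2,a_3)=\val(a_1,b;a_2,a)+\val(a_1,b;a,a_3)$, identifies the first summand with $Y=\val(a_1,a;a_2,b)>0$ via (PV2), and then cases on the sign of $\val(a_1,b;a,a_3)$: if it is nonnegative the target value is positive outright, and if it is negative a chain of (PV1)--(PV3) manipulations shows $\val(a_1,b;a_2,a_3)=\val(a_1,a;a_2,a_3)$, with the remaining subcase $\val(a_1,a;a_2,a_3)=0$ settled by a contradiction argument proving $\val(a,b;a_j,a_k)=0$ for all pairs. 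You instead chain on the first pair, $\val(a_{i_a},b;a_{j_a},a_{k_a})=m+\val(a,b;a_{j_a},a_{k_a})$, compute the auxiliary value $\val(a_{j_a},a;a_{k_a},b)=Y-m$ by a second application of (PV3), and read the correction term off the $(+k,-k,0)$ sign pattern from the proof of Lemma \ref{3ptlemma} at centre $a$, casing on $\mathrm{sgn}(Y-m)$ rather than on $\mathrm{sgn}(\val(a_1,b;a,a_3))$. Your route yields a quantitative conclusion (the target value equals $\min\{m,Y\}$ away from the borderline) where the paper only needs, and only gets, positivity or equality with $m$; both arguments use exactly the same toolkit, namely (PV1)--(PV3) and the 3-point lemma.

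Two points you leave implicit do work out but should be spelled out in a write-up. At the borderline $Y=m$ the identity ``right-hand side $=\min\{m,Y\}$'' must be weakened to $\ge\min\{m,Y\}$, since $\val(a_{k_a},a;b,a_{j_a})$ may be strictly positive there; positivity, which is all you use, still follows. In the degenerate case $m=0$ your main computation only gives $\val(a_{i_a},b;a_{j_a},a_{k_a})=0$, which by itself is compatible with case~(\ref{num13})-type behaviour at centre $b$; to conclude that all three centre-$b$ values vanish you must rerun the computation for the cyclically shifted permutation, taking the already derived $\val(a_{j_a},a;a_{k_a},b)=Y>0$ as the new hypothesis (or argue by contradiction as the paper does); once two of the three values vanish, the sign pattern forces the third to vanish as well.
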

\begin{proof}
According to the assumptions Lemma \ref{3ptlemma} implies that either $\val(a_{i_a}, a; a_{j_a}, a_{k_a}) >0$ (cases \ref{num12}-\ref{num14}) or that $\val(a_{i_a}, a; a_{j_a}, a_{k_a}) =0$ (case \ref{num15}). To make the proof easier to read we assume w.l.o.g. that $(i_a, j_a, k_a)=(1,2,3)$.

Axiom $(PV3)$ implies
\begin{equation}\label{num16}
\val(a_1, b; a_2, a_3) = \val(a_1, b; a_2, a) + \val(a_1, b;a, a_3)
\end{equation}
By assumption and $(PV2)$
\begin{equation}\label{num17}
0 < \val(a_1, a; a_2, b)=\val(a_1, b; a_2, a)
\end{equation}
By regarding two different cases, we prove the assertion:

1. Assume that $\val(a_1, b; a, a_3) \geq 0$:
Then, according to (\ref{num16}) and (\ref{num17}), one can conclude $\val(a_1, b; a_2,a_3)>0$. Hence $\val(a_1, b; a_2, a_3)$ is maximal by Lemma~\ref{3ptlemma} and $i_b=i_a, j_b=j_a,k_b=k_a$.

2. Assume now $\val(a_1, b; a, a_3) < 0$:
Axiom $(PV1)$ implies
\begin{align}
\val(a_1, b; a, a_3) &=-\val(a_1, b; a_3, a)=-\val(a_3, a; a_1, b) > 0\label{num18} 
\end{align}
and, by $(PV2)$, we have 
\begin{equation}\label{num19}
-\val(a_1, a; a_3, b) >0.
\end{equation}
Repeated use of $(PV1)$ to $(PV3)$ implies
\begin{align*} \val(a_1, b; a_2, a_3)  &= \val(a_1, b; a_2, a) + \val(a_1, b; a, a_3) 
			 = \val(a_1, a; a_2, b) + \val(a_1, b; a, a_3) \\ 
			&= \val(a_1, a; a_2, b) - \val(a_1, b; a_3, a)  
			 = \val(a_1, a; a_2, b) - \val(a_1, a; a_3, b) \\ 
			&= \val(a_1, a; a_2, b) + \val(a_1, a; b, a_3) 
			 = \val(a_1, a; a_2, a_3).
\end{align*}

The terms $\val(a_1, b; a_2, a)$ and $\val(a_1, a; b, a_3)$ are positive by equations (\ref{num17}) and (\ref{num19}). If $\val(a_1, a; a_2, a_3) > 0$  the assertion follows and $i_b=i_a, j_b=j_a, k_b=k_a$.

Hence we assume that $\val(a_1, a; a_2, a_3) = 0$, which implies that we are in case \ref{num15} of Lemma~\ref{3ptlemma}. 
One has to prove $\val(a_i, b; a_j, a_k)=0$ for all $\{i,j,k\}=\{1,2,3\}$.
Since 
$$\val(a_i, b; a_j, a_k)=\val(a_i, a; a_j, a_k)+\val(a, b; a_j, a_k)$$
it is enough to prove that $\val(a,b;a_j, a_k)=0$ for all $\{j,k\}\subset\{1,2,3\}$. 

The rest of the proof is done in three steps a) to c). Assume 
\begin{equation}\label{num20}
\exists\; j,k \text{ such that } l\define \val(a,b;a_j, a_k)>0. 
\end{equation}

a) The assumption (\ref{num20}) together with $(PV2)$ implies 
$$0 < l= \val(a, a_k; a_j,b) 	= -\val(a_k, a; a_j, b)
			= -\val(a_k, a; a_j, a_i)-\val(a_k, a; a_i, b).$$
Since $\val(a_k, a; a_j, a_i)=0$ we have
$$ \val(a, a_k; a_j, b)= - \val(a_k, a; a_i, b)= \val(a_k, a; b, a_i) > 0$$
and hence, using $(PV1)$, we conclude
\begin{equation}\label{num21}
l=\val(a,b;a_i, a_k) > 0. 
\end{equation}

b) Axiom $(PV1)$ together with (\ref{num20}) implies
$\val(b,a;a_k, a_j) >0$. Similar calculations as in a) lead to
\begin{equation}\label{num22}
l=\val(b,a;a_i, a_j) > 0 .
\end{equation}

c) We now combine a) and b) to finish 2. Using $(PV3)$ we have
$$l=\val(a,b; a_j, a_k)=\val(a,b; a_j, a_i)+\val(a,b; a_i, a_k).$$
According to (\ref{num21}) the term $\val(a,b; a_i, a_k)$ equals $l$, implying $\val(a,b; a_j, a_i)=0$ which is a contradiction to (\ref{num22}).
Hence assumption (\ref{num20}) is wrong and for all $i,j$ the valuation $\val(a,b; a_j, a_k)$ equals $0$. This finishes the proof.
\end{proof}

\begin{remark}
The assumptions of Lemma~\ref{tec6} are slightly different than the ones in \cite[Lemma 4.5]{BennettDiss}. There $a,b$ were assumed to be arbitrary elements of $E$. Making this assumption is actually not fully correct, since $\val$ is just defined on ordered quadruples of pairwise distinct elements of $E$, i.e. elements of $\Evier$. They suggested to extend $\val$ to arbitrary quadruples of elements. We restrict the hypothesis of Lemma~\ref{tec6} and suggest another solution in Step 2. 
Note that our proof is different from the one in \cite{BennettDiss}. 
\end{remark}

\subsubsection*{Step 2 - Definition of a rooted tree datum}

A rooted tree datum encodes more than a projective valuation $\val$ does. Besides the branching of the tree an origin (or root) is fixed. Hence we want to create more information than we actually have. The following construction therefore depends on the choice of the origin (which is equivalent to the choice of three ends $a_1, a_2, a_3$) and is unique up to that choice.

Let $E$ be a set and $\val$ a projective valuation on $E$.
\begin{definition}\label{Def_rtd}
Fix pairwise distinct elements $a_1, a_2, a_3$ of $E$. Let $(i_a, j_a, k_a)$ be the even permutation of $(1,2,3)$ such that $\val(a_{i_a}, a; a_{j_a},a_{k_a})$ is maximal. Define a function $\wedge : E\times E \rightarrow \Lambda$ by  
\begin{enumerate}
\item \label{num23} $a\wedge b = \max\{0,\val(a_{i_a}, a; a_{j_a}, b)\}$ for all $a\in  E\setminus\{a_{i_a},a_{j_a},a_{k_a}\}$ and $b\in E\setminus \{a_{i_a},a_{j_a}\}$.
\item \label{num24} $a\wedge a = \infty$ for all $a\in E$. 
\item \label{num25} $a_i\wedge a_j = 0$ for all $ i,j \in \{1,2,3\}$ such that $i\neq j$.
\item \label{num26} $a\wedge a_l = 0 \ddefine a_l\wedge a$ if $l= i_a$ or $l=j_a$.
\end{enumerate}
\end{definition}

\begin{remark}
Cases \ref{num24}. to \ref{num26}. of Definition~\ref{Def_rtd} do not appear in Bennett's Dissertation. Yet they are necessary to determine the origin $o_T$ of the tree in the Alperin-Bass construction. It is not possible to cover these cases with $\val$ since a projective valuation is just defined on quadruples in $\Evier$. 

Koudela suggested to extend $\val$ to arbitrary quadruples to eliminate this inaccuracy of Bennett's definition which is in a certain sense equivalent to our Definition~\ref{Def_rtd}.

Our motivation for Definition~\ref{Def_rtd} came from Figure~\ref{figure8} and the fact that $a\wedge b$ is supposed to describe the distance of $o_T$ to the branching point of the rays $S_a, S_b$ starting at $o_T$ directing to $a, b$, respectively. 
\end{remark}

\begin{figure}[htbp]
\begin{center}
	{\input{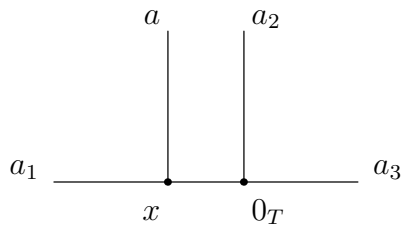}}
	\caption[rooted tree data]{Definition of a rooted tree datum.}
	\label{figure8}
\end{center}
\end{figure}

\begin{lemma}{ \cite[Lemma 4.6]{BennettDiss}}\label{tec7}
The pair $(E,\wedge)$ is a rooted tree datum.
\end{lemma}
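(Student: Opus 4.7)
The plan is to verify the three axioms (RT0), (RT1) and (RT2) in turn, using the 3-point Lemma~\ref{3ptlemma}, the technical Lemma~\ref{tec6}, and the axioms (PV1)--(PV3) of a projective valuation.

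Axiom (RT0) is immediate from Definition~\ref{Def_rtd}: in the generic case $a\wedge b$ is a maximum with $0$, while in the remaining cases the value assigned is explicitly $0$ or $+\infty$.

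For (RT1), I reduce at once to the substantial case $a,b\in E\setminus\{a_1,a_2,a_3\}$ with $a\neq b$; for every other configuration one simply checks that both $a\wedge b$ and $b\wedge a$ are assigned the same explicit value ($0$ or $\infty$) by Definition~\ref{Def_rtd}. Suppose first that $\omega(a_{i_a},a;a_{j_a},b)>0$. Then Lemma~\ref{tec6} yields $(i_b,j_b,k_b)=(i_a,j_a,k_a)$, and applying (PV2) to the positive value $\omega(a_{i_a},a;a_{j_a},b)$ gives $\omega(a_{i_a},b;a_{j_a},a)$ equal to the same value, so $a\wedge b=b\wedge a$. If instead $\omega(a_{i_a},a;a_{j_a},b)\le 0$, I argue by contradiction: were $\omega(a_{i_b},b;a_{j_b},a)>0$ to hold, a symmetric application of Lemma~\ref{tec6} (with the roles of $a$ and $b$ interchanged) would force $(i_a,j_a,k_a)=(i_b,j_b,k_b)$ and, via (PV2), give $\omega(a_{i_a},a;a_{j_a},b)>0$, a contradiction. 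Hence both sides of (RT1) vanish.

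The main work is in (RT2). Setting $p:=a\wedge b$ and $q:=b\wedge c$, I reduce to the configuration where $p,q>0$ and $a,b,c$ are pairwise distinct elements of $E\setminus\{a_1,a_2,a_3\}$, since in every other configuration $\min(p,q)=0$ and the inequality is immediate from (RT0). Two applications of Lemma~\ref{tec6} then give $(i_a,j_a,k_a)=(i_b,j_b,k_b)=(i_c,j_c,k_c)$; after relabelling this common triple as $(1,2,3)$ we have $p=\omega(a_1,a;a_2,b)$ and $q=\omega(a_1,b;a_2,c)$. I then apply the 3-point Lemma~\ref{3ptlemma} to the triple $\{a,b,c\}$ with ``fourth point'' $a_2$. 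In each of the four resulting cases, (PV2) produces a vanishing auxiliary $\omega$-value, which I expand through $a_1$ using the chaining axiom (PV3) to compute $\omega(a_1,a;a_2,c)$ explicitly in terms of $p$ and $q$: cases (i) and (iv) give $\omega(a_1,a;a_2,c)=q$; case (ii) gives $\omega(a_1,a;a_2,c)=p$; and in case (iii) the same manipulation forces $p=q$, after which a short additional computation yields $\omega(a_1,a;a_2,c)=q+\kappa$ for some $\kappa\ge 0$. In every case $\omega(a_1,a;a_2,c)\ge\min(p,q)>0$, whence $a\wedge c\ge\min(p,q)$.

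The main obstacle will be keeping the sign conventions of (PV1) and the propagation of positivity under (PV2) correctly bookkept across the four-fold case split in (RT2), especially in case (iii), where the relation $p=q$ is discovered in the course of the computation rather than assumed at the outset. The edge cases in which one of $a,b,c$ coincides with some $a_l$ are finally handled by unpacking the explicit values from Definition~\ref{Def_rtd} and observing that either $\min(a\wedge b,b\wedge c)=0$ or the relevant $\omega$-expressions reduce to those already analysed.
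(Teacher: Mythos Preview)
Your argument is correct. For (RT0) and (RT1) it coincides with the paper's proof essentially verbatim. For (RT2) in the generic case $a,b,c\notin\{a_1,a_2,a_3\}$ with $a\wedge b,\,b\wedge c>0$, the paper takes a different route: after aligning the indices via Lemma~\ref{tec6} it expands $\val(a_1,a;a_2,c)$ through $b$ in two symmetric ways using (PV3), obtaining
\[
a\wedge c \ge a\wedge b + \val(a_1,a;b,c),\qquad
a\wedge c \ge c\wedge b + \val(a_1,c;b,a),
\]
and the (implicit) punch line is that at least one of the two error terms is nonnegative: if $\val(a_1,a;b,c)<0$ then $\val(a_1,a;c,b)>0$, whence (PV2) forces $\val(a_1,c;a,b)=0$, i.e.\ $\val(a_1,c;b,a)=0$. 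Your approach instead applies the 3-point Lemma to $\{a,b,c\}$ relative to $a_2$ and computes $\val(a_1,a;a_2,c)$ \emph{exactly} in each of the four cases. This is a genuinely different organisation: the paper's two-line argument avoids any case split, while your computation is longer but yields the precise value (e.g.\ $q+\kappa$ in case~(iii)), and in fact makes explicit the concluding step that the paper's written proof leaves rather elliptical. Both routes are sound; the edge cases where some $a,b,c$ lies in $\{a_1,a_2,a_3\}$ are handled with comparable informality in both.
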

The following proof is based on \cite{BennettDiss}.
\begin{proof}
Axiom $(RT0)$ is obvious by definition.
For the proof of $\mathrm{(RT1)}$ consider two cases. 
First if $a \wedge b > 0$ then, by Lemma~\ref{tec6}, we have that $i_b=i_a, j_b=j_a$ and $k_b=k_a$. And hence
$$
0<a\wedge b =\val(a_{i_a}, a; a_{j_a}, b)=\val(a_{i_b}, a; a_{j_b}, b).
$$ 
Using $\mathrm{(PV2)}$ we can conclude
$$
a\wedge b=\val(a_{i_b}, a; a_{j_b}, b)=\val(a_{i_b}, b; a_{j_b}, a)=b\wedge a.
$$
Second assume $a \wedge b = 0$. Here we are in case~\ref{num15} of Lemma~\ref{3ptlemma}. If $b\wedge a >0$ the same argument as in case (1) implies $b\wedge a= a\wedge b >0$. Hence $b\wedge a =0 = a\wedge b$. 

We prove $\mathrm{(RT2)}$: Let $a,b,c \in E$ and consider seven cases.
First assume that $a\wedge b, a\wedge c$ and $b\wedge c$ are given by \ref{num23} of Definition~\ref{Def_rtd} meaning that $a,b,c$ are not contained in $\{a_1,a_2,a_3\}$.
\begin{itemize}
\item[a)] if $a\wedge b=0$ or $b\wedge c=0$ then $\mathrm{(RT2)}$ is obvious.
\item[b)] assume $a\wedge b >0$ and $b\wedge c>0$. Lemma~\ref{tec6} implies $i_b=i_a=i_c, j_b=j_a=j_c$ and $k_b=k_a=k_c$. Let without loss of generality $i_a=1$ and $j_a=2$. The fact that $a\wedge b >0$ implies $a\wedge b=\val(a_1, a; a_2, b)$. By $\mathrm{(PV3)}$ we therefore have
\begin{equation}\label{num27}
	a\wedge c \geq \val(a_1, a; a_2, c) = a\wedge b + \val(a_1, a; b,c).	
\end{equation}
The following is true since $c\wedge b >0$ 
\begin{equation}\label{num28}
	a\wedge c=c\wedge a \geq \val(a_1, c; a_2, a)=c\wedge b+ \val(a_1, c; b,a).
\end{equation}
Therefore $c\wedge b=\val(a_1, c; a_2, b)$.
\end{itemize}

Secondly assume that $b=c$ then $a\wedge c= a\wedge b$ and $b\wedge c = b\wedge b = \infty$, hence $(RT2)$ is obvious. 
In case three assume that $a=c$ then $a\wedge c=\infty$ and the assertion is obvious. 
If $a=b$, which is case four, then $a\wedge b=\infty$ and $b \wedge c= a\wedge c$. 
Fifth, if $\{a,c\}=\{a_i, a_j\}$ where $i\neq j$ then $a\wedge c=0$ by definition. We also can conclude that either $i$ or $j$ is contained in $\{i_b, j_b\}$ and hence $\min\{a\wedge b, b\wedge c\}=0$, which implies $(RT2)$. 
The second last case is, that $c\in \{a_{i_a}, a_{j_a}\}$. But then by definition $a\wedge c=0$ and $\min\{a\wedge b, b\wedge c\}=0$.
Finally, if $b\in \{a_{i_a}, a_{j_a}\}$, then $a\wedge b =0$ and $c\wedge b =0$. Since $a\wedge c\geq 0$ the assertion follows. 
\end{proof}

\subsubsection*{Step 3 - Constructing a tree with sap}

Let $T= T(E,\wedge)$ be the Alperin-Bass tree associated to $(E, \wedge)$. We now define an apartment system for $T$ by first defining rays, then lines. 

\begin{definition}
A \emph{ray} in $T$ is a set 
$\{\langle e,\lambda \rangle : e\in E, \lambda\in \Lambda, \lambda\geq \lambda_0 \}$
where $\lambda_0$ is a fixed element of $\Lambda$.

A subset 
$\overleftrightarrow{ab}\define \{\langle a,\lambda\rangle : \lambda \geq a \wedge b\} 
			\cup \{\langle b,\lambda\rangle : \lambda \geq a \wedge b\}$
of $T$ is called \emph{line} and $\App$ denotes the set of all lines in $T$.
\end{definition}

Notice that ends of rays (and lines) are, by definition, elements of $E$. The set $\App$ is an apartment system for the Alperin-Bass tree.

\subsubsection*{Step 4 - The projective valuation $\val$ equals the canonical valuation}

Finally one has to prove
\begin{prop}\label{Prop_valuations}
If $T= T(E,\wedge)$ is the Alperin-Bass tree associated to a rooted tree datum $(E,\val)$ and if  $\val_T$ denotes its canonnical valuation defined in~\ref{Def_canval}, then $\val_T=\val.$
\end{prop}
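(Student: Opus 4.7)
The plan is to reduce the identity $\omega_T=\omega$ to a small family of distinguished quadruples and then propagate using the projective-valuation axioms. Both maps satisfy $(PV1)$, $(PV2)$, $(PV3)$ — for $\omega$ by hypothesis, for $\omega_T$ by \cite{BennettDiss} (cited in Definition~\ref{Def_canval}) — so it suffices to show agreement on a generating subfamily and invoke the axioms.

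First I would give a concrete geometric description of branching points in $T=T(E,\wedge)$. Every $e\in E$ determines a ray $R_e=\{\langle e,\lambda\rangle:\lambda\ge 0\}$ based at $o_T=\langle a_1,0\rangle=\langle a_2,0\rangle=\langle a_3,0\rangle$ (these classes coincide since $a_i\wedge a_j=0$). For any two distinct ends $a,b\in E$, unwinding the definition of $d$ on $\Sigma(E,\wedge)$ shows that $R_a$ and $R_b$ first separate at the point $\langle a,a\wedge b\rangle=\langle b,a\wedge b\rangle$, which is therefore the branching point $\kappa(o_T,a,b)$. Consequently, for any three pairwise distinct ends $x,y,z\in E$ the branching point $\kappa(x,y,z)$ can be read off from the three values $x\wedge y$, $y\wedge z$, $x\wedge z$ via the $Y$-condition $(T2)$ and a straightforward case distinction based on which of the three is the unique minimum.

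Second, I would compute $\omega_T$ on the distinguished family
\[
\mathcal{F}=\bigl\{(a_{i_a},a\,;a_{j_a},b)\;:\;a\in E\setminus\{a_1,a_2,a_3\},\;b\in E\setminus\{a_{i_a},a_{j_a}\}\bigr\},
\]
and compare with $\omega$ as prescribed in Definition~\ref{Def_rtd}. If $a\wedge b>0$, then by Lemma~\ref{tec6} the permutation $(i_b,j_b,k_b)$ equals $(i_a,j_a,k_a)$, so the branching configuration of $\{a_{i_a},a_{j_a},a,b\}$ in $T$ is of type (1) in Figure~\ref{figure6}: the points $\kappa(a_{i_a},a,a_{j_a})=o_T$ and $\kappa(a_{i_a},a,b)=\langle a,a\wedge b\rangle$ lie on the ray $R_a$ on the $b$-side, and the signed distance between them equals $+(a\wedge b)$. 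If $a\wedge b=0$, the same recipe gives $\omega_T(a_{i_a},a;a_{j_a},b)\le 0$ and Definition~\ref{Def_rtd} combined with Lemma~\ref{3ptlemma} and the 3-point Lemma forces $\omega(a_{i_a},a;a_{j_a},b)=0$ as well. Thus $\omega_T=\omega$ on $\mathcal{F}$.

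Finally, I would extend from $\mathcal{F}$ to all of $E^{(4)}$. Given an arbitrary quadruple $(a,b;c,d)$, axioms $(PV1)$ and $(PV3)$ let me rewrite
\[
\omega(a,b;c,d)=\omega(a,b;c,a_{i_a})+\omega(a,b;a_{i_a},d)
\]
and iterate so that all four entries lie in $\{a_1,a_2,a_3\}\cup\{a,b,c,d\}$ with at least two coordinates in $\{a_{i_a},a_{j_a}\}$. Repeated application of $(PV1)$ and $(PV2)$ rearranges each resulting term into the shape of an element of $\mathcal{F}$ or a term whose value is forced to be $0$ by the 3-point Lemma. Since $\omega_T$ admits the exact same manipulations, equality on $\mathcal{F}$ lifts to equality everywhere.

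The main obstacle I anticipate is the bookkeeping of signs and the case analysis in step two: I must match the sign convention in Definition~\ref{Def_canval} (which depends on whether $\kappa(a,b,d)$ lies on the ray $\overrightarrow{\kappa(a,b,c)\,b}$ or not) with the fact that $\omega(a_{i_a},a;a_{j_a},b)$ is non-negative only on the \emph{maximizing} permutation — and that this is precisely the geometric content of Lemma~\ref{tec6}. Once that matching is verified cleanly, the rest is algebraic unwinding via the axioms.
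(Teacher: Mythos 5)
Your overall strategy (compute $\val_T$ on a distinguished family of quadruples with two entries among $a_1,a_2,a_3$, then propagate to $E^{(4)}$ via (PV1)--(PV3)) is the same skeleton as the paper's proof, and your positive case is fine: when $a\wedge b>0$, Lemma~\ref{tec6} does force $(i_b,j_b,k_b)=(i_a,j_a,k_a)$, the branch point $\kappa(a_{i_a},a,a_{j_a})$ is $o_T$, and $\val_T(a_{i_a},a;a_{j_a},b)=a\wedge b=\val(a_{i_a},a;a_{j_a},b)$. But there is a genuine gap in your treatment of the case $a\wedge b=0$. You claim the 3-point Lemma then ``forces $\val(a_{i_a},a;a_{j_a},b)=0$''; this is false. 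The definition of the maximizing permutation only guarantees $\val(a_{i_a},a;a_{j_a},a_{k_a})\geq 0$ with $a_{k_a}$ in the fourth slot; with an arbitrary $b$ there, the value can be strictly negative. Concretely, if $b\wedge a_{i_a}>0$ (which happens when $a_{i_a}=a_{k_b}$), the median $\kappa(a_{i_a},a,b)$ lies on the ray toward $a_{i_a}$, so $\val_T(a_{i_a},a;a_{j_a},b)=-(b\wedge a_{i_a})<0$, and one must show this equals $\val(a_{i_a},a;a_{j_a},b)$, which is not $0$. Establishing that equality is exactly where the paper's work lives: Lemma~\ref{Lem_tec8} computes $\val_T$ in \emph{all} configurations as $a\wedge b - a_j\wedge a - a_i\wedge b$ (or its mirror), Lemma~\ref{tec20} rewrites this purely in terms of $\max\{0,\val(\cdot)\}$ expressions, and the final proof does a sign case analysis on $\val(a_3,a;a_1,b)$ to identify the result with $\val$.

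The gap propagates into your reduction step. The (PV1)/(PV3) decomposition only brings an arbitrary quadruple down to terms of the form $\val(a_i,a;a_j,b)$ for \emph{arbitrary} $i\neq j$, not just the maximizing pair $(i_a,j_a)$; and your claim that repeated use of (PV1)/(PV2) rearranges these into elements of $\mathcal{F}$ ``or terms forced to be $0$'' cannot work as stated, because (PV2) is a conditional axiom (its hypothesis is a strictly positive value) and the problematic terms are precisely those with negative value. So the non-maximizing / negative quadruples, which you try to argue away, must be handled by an explicit computation of $\val_T$ in the tree as in Lemmas~\ref{Lem_tec8} and~\ref{tec20} — without that, the proof does not close.
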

For the proof of Proposition~\ref{Prop_valuations} we need two additional Lemmata.

\begin{figure}[htbp]
\begin{center}
	{\input{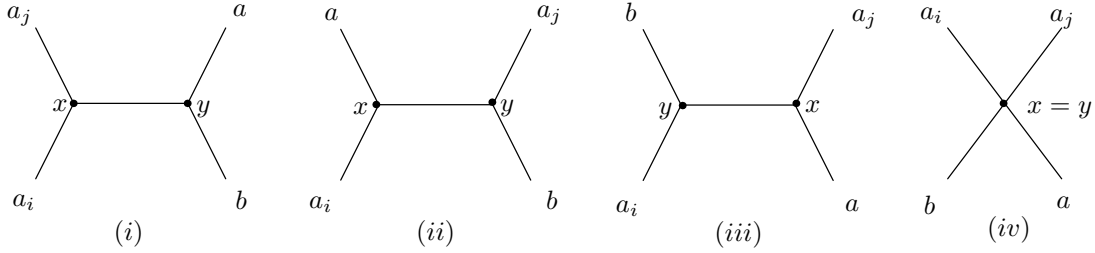}}
	\caption[constellations in the tree]{The present figure illustrates all possible constellations in the Alperin-Bass tree, and is used to distinguish cases in Lemma~\ref{Lem_tec8}. }
	\label{Fig_figure9}
\end{center} 
\end{figure}

The first one of the following lemmata differs from the one in \cite{BennettDiss} in which one of the occuring cases for the value of $\val_T$ was missing.

\begin{lemma}\label{Lem_tec8}
Let $a_1, a_2, a_3 \in E$ be fixed and pairwise distinct. Choose $i\neq j$ and $a\neq b\in E\setminus\{a_i, a_j\}$. Let $(i)-(iv)$ be the possible constellations in the Alperin-Bass tree, as described in Figure~\ref{Fig_figure9}, then 
\begin{align}\label{num29}
\val_T(a_i, a; a_j, b) = 
\left\{\begin{array}{ll}
	a\wedge b - a_j\wedge a - a_i\wedge b  & \text{ in cases }(i), (iii), (iv)\\
	a\wedge b - a_j\wedge b - a_i\wedge a  & \text{ in cases }(i), (ii), (iv)
      \end{array}
\right. 
\end{align}
with  $x=\kappa(a_i, a, a_j)$ and $y=\kappa(a_i,a,b)$.
\end{lemma}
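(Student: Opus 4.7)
The plan is to unpack both sides of \eqref{num29} directly from the construction of the Alperin-Bass tree and its canonical valuation. First I would locate the branch points $x = \kappa(a_i, a, a_j)$ and $y = \kappa(a_i, a, b)$ explicitly as points of $T = T(E, \wedge)$. The key observation is that for any two distinct ends $c, d \in E$ with $c \wedge d \geq 0$, the rays from $o_T$ towards $c$ and $d$ coincide up to the point $\langle c, c\wedge d\rangle = \langle d, c\wedge d\rangle$ in $T$ and diverge beyond it. Hence for three ends, the three pairwise branch points reduce to at most two distinct points of $T$, and the median $\kappa$ is the one closer to $o_T$. This lets me identify $x$ and $y$ as explicit equivalence classes $\langle e, \lambda\rangle$ in terms of the pairwise $\wedge$-values.

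Next I would compute $d_T(x, y)$ using the pseudometric $d$ on $\Sigma$ and pass to the quotient. By Definition \ref{Def_canval}, the signed valuation $\val_T(a_i, a; a_j, b)$ equals $\pm d_T(x, y)$, with the sign positive if $y \in \overrightarrow{xb}$ and negative otherwise. Thus the proof reduces to reading off, in each of the four configurations depicted in Figure \ref{Fig_figure9}, which $\wedge$-values determine the positions of $x$ and $y$ on their respective rays, and which sign applies.

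The case analysis then proceeds as follows. In case $(iv)$, $x = y$ and both formulas evaluate to zero, giving $\val_T = 0$; this provides a consistency check. In case $(i)$, the configuration is symmetric enough that both formulas hold and yield the same value — one verifies this by writing $d_T(x,y)$ as a signed difference of the relevant $\wedge$-values. In cases $(ii)$ and $(iii)$, only one of the two formulas is asserted; here one uses the specific geometric arrangement (which ray from $x$ contains $y$) to determine the sign and to see that exactly one of the claimed algebraic identities reduces to the correct signed distance.

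The main obstacle will be managing the sign conventions together with the two-branch definition of the metric $d$ on $\Sigma$ (the case split on whether $s, t \leq x \wedge y$ or not), and verifying, for each configuration, that the resulting expression really collapses to the stated combination of pairwise $\wedge$-values — using the fact, established via the $3$-point Lemma \ref{3ptlemma} and Lemma \ref{tec6}, that the magnitudes of the $\wedge$-values satisfy the inequalities imposed by each picture. A secondary subtlety is handling the degenerate situations where $x$ or $y$ coincides with $o_T$, or where some $\wedge$-value vanishes because one of $a, b$ agrees with $a_i$ or $a_j$ in the sense of Definition \ref{Def_rtd}, cases which need to be checked separately but follow the same pattern.
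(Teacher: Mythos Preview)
Your approach is essentially the same as the paper's: identify $x$ and $y$ explicitly as points $\langle e,\lambda\rangle$ in the Alperin--Bass tree, compute $d_T(x,y)$ from the pseudometric on $\Sigma$, fix the sign from the canonical valuation, and run through the four configurations. Two small points. First, a notational slip: for $\val_T(a_i,a;a_j,b)$ the second argument is $a$, so by Definition~\ref{Def_canval} the sign is positive when $y\in\overrightarrow{xa}$, not $\overrightarrow{xb}$. Second, what you call the ``main obstacle'' is exactly where the paper spends its effort: within each configuration $(i)$--$(iii)$ it makes a further sub-case split on the position of the root $o_T=\kappa(a_1,a_2,a_3)$ relative to $x$ and $y$ (e.g.\ in case $(i)$ whether $o_T\in\overrightarrow{xa_i}$ or $o_T\in\overrightarrow{xa_j}$; in case $(ii)$ whether $o_T\in[x,y]$, $\overrightarrow{ya_j}$, or $\overrightarrow{xa_i}$). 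This determines which branch of the pseudometric applies and which $\wedge$-values vanish, and is what makes the algebra collapse cleanly --- so treat it as the heart of the computation rather than a secondary degeneracy check.
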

\begin{proof}
It is easy to convince oneself that the constellations described in Figure~\ref{Fig_figure9} are the only possible ones in an Alperin-Bas tree. We prove the above lemma doing a case by case analysis. The cases are enumerated as in Figure~\ref{Fig_figure9}.

We assume w.l.o.g. that $i=1$ and $j=2$.
\begin{bf}Case (i):\end{bf} Hence $x=\kappa(a_1, a, a_2)$ and $y=\kappa(a_1, a,b)$. There are just two possibilities for the position of the origin $o_T=\kappa(a_1, a_2, a_3)$ which lies on $\overleftrightarrow{a_1a_2}$. Either {\bf a)} $o_T$ in contained in the ray $\overrightarrow{xa_1}$ or, {\bf b)} in $\overrightarrow{xa_2}$. Compare figure~\ref{Fig_figure10}.

\begin{figure}[htbp]
\begin{center}
	\resizebox{!}{0.3\textwidth}{\input{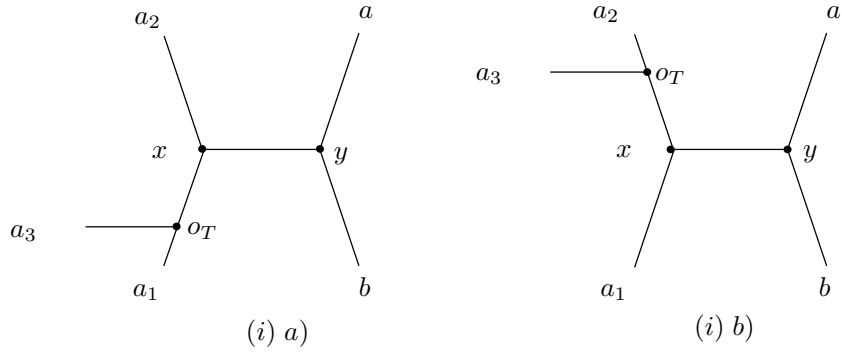}}
	\caption[case of proof]{Possible positions of $o_T$ in case (i) figure~\ref{Fig_figure9}.}
	\label{Fig_figure10}
\end{center} 
\end{figure}

The origin of the tree, as constructed, is $o_T=\kappa(a_1,a_2,a_3)$. The points $x$ and $y$ are, as any point in the Alperin-Bass-tree $T$ of the following form 
\begin{align*}
\left.\begin{array}{l}
x=(a, a\wedge a_2)\\
y=(a, a\wedge b)
\end{array} \right\}  & \text{ if } o_T\in\overrightarrow{xa_1} \\
\left.\begin{array}{l}
x=(a, a\wedge a_1)\\
y=(a, a\wedge b)
\end{array} \right\}  & \text{ if } o_T\in\overrightarrow{xa_2} \\
\end{align*}
The construction also implies, that 
$$\val_T(a_1,a; a_2,b)=\left\{\\
	\begin{array}{ll}
		 d(x,y) & \;\text{ if }y\in\overrightarrow{xa}\\
		-d(x,y) & \;\text{ if }y\notin\overrightarrow{xa}
	\end{array}\right..
$$

Is $o_T\in\overrightarrow{xa_1}$, which is case $(i)\; a)$ of Figure~\ref{Fig_figure10}, we have that $y\in \overrightarrow{xa}$. The definition of $\val_T$ implies
\begin{align*}
 \val_T(a_1,a; a_2,b)	&= d(x,y)= d((a, a\wedge a_2), (b,a\wedge b)) \\
			&= \vert a\wedge a_2 - a\wedge b\vert \\
			&= a\wedge b - a\wedge a_2
\end{align*}
where the second last equation holds since $d(0_T,y) > d(o_T, x)$.

If $o_T\in\overrightarrow{xa_2}$, which is case $(i)\; b)$ of Figure~\ref{Fig_figure10}, we have $y\in \overrightarrow{xa}$. The definition of $\val_T$ hence implies
\begin{align*}
 \val_T(a_1,a; a_2,b)	&= d(x,y)= d((a, a\wedge a_1), (b,a\wedge b)) \\
			&= \vert a\wedge a_1 - a\wedge b\vert \\
			&= a\wedge b - a\wedge a_1
\end{align*}
where the equalities hold by $d(0_T,y) > d(0_T, x)$.\\
Furthermore we have
\begin{align*}
\begin{array}{ll}
a_2\wedge a = a_2\wedge b \;\text{ and }\; b\wedge a_1=a\wedge a_1 =0 & \text{ if } o_T\in\overrightarrow{xa_1} \\
a_1\wedge a = a_1\wedge b \;\text{ and }\; b\wedge a_2=a\wedge a_2 =0 & \text{ if } o_T\in\overrightarrow{xa_2}.
\end{array}
\end{align*}
And hence the following equation holds
$$\val_T(a_1,a; a_2,b) 	=a\wedge b - a_1\wedge a -a_2\wedge b
			=a\wedge b - a_1\wedge b -a_2\wedge a.
$$
This finishes case (i).

\begin{bf}Case (ii):\end{bf} 
In this situation $x=\kappa(a_1,a,a_2)$ and $y=\kappa(a_1,b,a_2)$. As in case $(i)$ we do a case by case analysis. The possible situations here are the following: Either {\bf a)} $o_T\in [x,y]$ or, {\bf b)} $o_T\in \overrightarrow{ya_2}$ or, as a third case, {\bf c)} $o_T\in \overrightarrow{xa_1}$. Compare also figure~\ref{Fig_figure11}. The construction of the tree leads to the following descriptions of the points $x$ and $y$, which are written as in the Alperin-Bass construction, according to the three cases:
\begin{align*}
\left.\begin{array}{r}
x=(a, a\wedge a_1)\\
y=(a, b\wedge a_2)
\end{array} \right\}  & \text{ if } o_T\in [x,y] \\
\left.\begin{array}{r}
x=(a, a\wedge a_1)\\
y=(a, b\wedge a_1)
\end{array} \right\}  & \text{ if } o_T\in \overrightarrow{ya_2} \\
\left.\begin{array}{r}
x=(a, a\wedge a_2)\\
y=(a, b\wedge a_2)
\end{array} \right\}  & \text{ if } o_T\in \overrightarrow{xa_1}. \\
\end{align*}
Since $y\notin\overrightarrow{xa}$ the definition of $\val_T$ implies that $\val_T(a_1,a; a_2,b)= -d(x,y)$.

\begin{figure}[htbp]
\begin{center}
	\resizebox{!}{0.3\textwidth}{\input{graphics/caseii.pdftex_t}}
	\caption[case of proof]{Possible positions of $0_T$ in case (ii) of figure~\ref{Fig_figure9}.}
	\label{Fig_figure11}
\end{center} 
\end{figure}

If $o_T\in[x,y]$, then $a\wedge a_1\geq a\wedge b=0$ and $b\wedge a_2\geq a\wedge b=0$. We calculate:
\begin{align*}
\val_T(a_1,a;a_2,b) 	&= -d(x,y) = -d((a, a\wedge a_1),(b,b\wedge a_2))\\
			&= - (\vert a\wedge a_1 - a\wedge b\vert + \vert b\wedge a_2 - a\wedge b) \\
			&=a\wedge b - a\wedge a_1 - b\wedge a_2.
\end{align*}

If the ray  $\overrightarrow{ya_2}$ contains $o_T$, then $a\wedge a_1 > a\wedge b$, $b\wedge a_1 = a\wedge b$ and $b\wedge a_2=0$ and hence
\begin{align*}
\val_T(a_1,a;a_2,b) 	&= -d(x,y) = -d((a, a\wedge a_1),(b,b\wedge a))\\
			&= - \vert a\wedge a_1 - a\wedge b\vert \\
			&=a\wedge b - a\wedge a_1 - b\wedge a_2.
\end{align*}

In case $o_T\in\overrightarrow{xa_1}$ we have $b\wedge a_2 > a\wedge b$, and $a\wedge a_1 = 0$ and one can conclude:
\begin{align*}
\val_T(a_1,a;a_2,b) 	&= -d(x,y) = -d((a, a\wedge b),(b,b\wedge a_2))\\
			&= - \vert a\wedge b - b\wedge a_2\vert \\
			&=a\wedge b - a\wedge a_1 - b\wedge a_2.
\end{align*}
Therefore, using the above calculations, 
$$
\val_T(a_1,a; a_2,b) 	=a\wedge b - a_1\wedge a -a_2\wedge b. 
$$
Note, that $=a\wedge b - a_1\wedge b -a_2\wedge a \neq \val_T(a_1,a; a_2,b)$.
This finishes case (ii).

Similar observations in {\bf case (iii) and (iv) } imply 
\begin{align*}
\val_T(a_1,a; a_2,b) 	&=a\wedge b - a_1\wedge b -a_2\wedge a \\
			&\neq a\wedge b - a_1\wedge a -a_2\wedge b  \hspace{4ex}\text{ in case (iii)}
\end{align*}

and 
\begin{align*}
\val_T(a_1,a; a_2,b) 	&=a\wedge b - a_1\wedge a -a_2\wedge b \\
			&=a\wedge b - a_1\wedge b -a_2\wedge a   \hspace{4ex}\text{ in case (iv)}.
\end{align*}
Combining these results, the assertion follows.
\end{proof}

In the next lemma an explicit formula for the value of $\val_T$ is given.

\begin{lemma}\label{tec20}
In addition to the assumptions in the previous lemma let 
$$\val_T(a_i,a; a_j,b)=a\wedge b - a_j\wedge a - a_i\wedge b.$$ 
Then, with $\{i,j,k\}=\{1,2,3\}$, we have 
\begin{align*}
\val_T(a_i,a;a_j,b) = &\max\{0,\val(a_{i_a}, a; a_{j_a},b)\} - \max\{0, \val(a_k,a; a_i,a_j)\} \\
			& -\max\{0,\val(a_j,b; a_k,a_i)\}
\end{align*}
where $(i_a,j_a,k_a)$ is an even permutation of $(1,2,3)$ such that the term  $\val(a_{i_a},a;a_{j_a},a_{k_a})$ is maximal. 
A similar statement is true if 
$\val_T(a_i,a; a_j,b)=a\wedge b - a_i\wedge a - a_j\wedge b.$
\end{lemma}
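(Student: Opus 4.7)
The strategy is to unfold each of the three summands $a \wedge b$, $a_j \wedge a$, and $a_i \wedge b$ appearing on the right-hand side of the hypothesis according to Definition~\ref{Def_rtd}, and to recognize each unfolded expression as a $\max\{0, \val(\cdot)\}$ term appearing in the conclusion. Since $a, b \notin \{a_1, a_2, a_3\}$ (as is implicit from the geometric picture in Lemma~\ref{Lem_tec8}), item~(\ref{num23}) of Definition~\ref{Def_rtd} yields immediately
\[
a \wedge b \;=\; \max\{0, \val(a_{i_a}, a; a_{j_a}, b)\},
\]
which matches the first summand of the conclusion directly.

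Next I would handle $a_j \wedge a$. Using $\mathrm{(RT1)}$ to rewrite this as $a \wedge a_j$, I split into two sub-cases. If $j \in \{i_a, j_a\}$, then item~(\ref{num26}) of Definition~\ref{Def_rtd} gives $a \wedge a_j = 0$; I would then use Lemma~\ref{3ptlemma} together with the sign rules from $\mathrm{(PV1)}$ to verify that $\val(a_k, a; a_i, a_j) \leq 0$ in this configuration, so that $\max\{0, \val(a_k, a; a_i, a_j)\} = 0$ as well. If instead $j = k_a$, then item~(\ref{num23}) gives $a \wedge a_j = \max\{0, \val(a_{i_a}, a; a_{j_a}, a_{k_a})\}$, and I would compare this with $\max\{0, \val(a_k, a; a_i, a_j)\}$ by checking whether $(k, i, j)$ is the even permutation $(i_a, j_a, k_a)$ or its odd sibling; in the odd-sibling case $\mathrm{(PV1)}$ introduces a sign, and the geometric restriction to cases (i), (iii), (iv) of Lemma~\ref{Lem_tec8} (as singled out by the precise form of the hypothesis $\val_T(a_i, a; a_j, b) = a \wedge b - a_j \wedge a - a_i \wedge b$) is what rules out the inconsistent combinations.

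The third summand $a_i \wedge b$ is treated analogously, first writing it as $b \wedge a_i$ via $\mathrm{(RT1)}$ and then unfolding by Definition~\ref{Def_rtd}, now with respect to the distinguished even permutation $(i_b, j_b, k_b)$ associated to $b$. The apparent obstacle is that $(i_b, j_b, k_b)$ need not coincide with $(i_a, j_a, k_a)$; however, Lemma~\ref{tec6} provides exactly the needed compatibility, since in the situations where $a \wedge b > 0$ (which is the interesting case, as otherwise everything vanishes trivially), it forces $i_b = i_a$, $j_b = j_a$, $k_b = k_a$. This then lets $\max\{0, \val(a_j, b; a_k, a_i)\}$ be identified with $b \wedge a_i$ by the same case analysis as in the previous paragraph.

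The main obstacle I anticipate is the bookkeeping in the case analysis: for each of the three cases (i), (iii), (iv) of Lemma~\ref{Lem_tec8}, one must verify that the position of $o_T = \kappa(a_1, a_2, a_3)$ in the tree $T$ forces the relevant permutation $(i_a, j_a, k_a)$ (and the corresponding $(i_b, j_b, k_b)$) to be compatible with $(i, j, k)$ in such a way that the $\max\{0, \cdot\}$ expressions collapse correctly. Once the three summands are matched, the identity follows immediately, and the proof of the second formula (when $\val_T(a_i, a; a_j, b) = a \wedge b - a_i \wedge a - a_j \wedge b$, corresponding to cases (i), (ii), (iv)) follows by the symmetric argument obtained by swapping the roles of $i$ and $j$.
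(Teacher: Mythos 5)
Your overall skeleton --- unfold $a\wedge b$, $a_j\wedge a$ and $a_i\wedge b$ via Definition~\ref{Def_rtd} and match each with one of the three $\max\{0,\cdot\}$ terms --- is the same as the paper's, and your treatment of the first two terms essentially follows the paper. The genuine gap is in the third term. You identify $a_i\wedge b$ with $\max\{0,\val(a_j,b;a_k,a_i)\}$ by invoking Lemma~\ref{tec6} to force $(i_b,j_b,k_b)=(i_a,j_a,k_a)$, but that lemma only applies when $\val(a_{i_a},a;a_{j_a},b)>0$, i.e.\ when $a\wedge b>0$, and your dismissal of the complementary case as one where ``everything vanishes trivially'' is false. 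If $a\wedge b=0$, only the first max-term vanishes; $a_j\wedge a$ and $a_i\wedge b$ (and the corresponding max-terms) can still be strictly positive, so the identity retains content, and $(i_b,j_b,k_b)$ is then not controlled by $(i_a,j_a,k_a)$ at all. The paper never uses Lemma~\ref{tec6} here: it proves $a_i\wedge b=\max\{0,\val(a_j,b;a_k,a_i)\}$ (with $(i,j)=(1,2)$, $k=3$ w.l.o.g.) by a direct case distinction on whether $i\in\{i_b,j_b\}$ or $i=k_b$, using Lemma~\ref{3ptlemma} together with (PV1)/(PV2) to show that in the first case $\val(a_j,b;a_k,a_i)\le 0$, so both sides are $0$, while in the second case the defining formula of $\wedge$ is literally the desired maximum. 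That case analysis, which is valid whether or not $a\wedge b>0$, is the actual content of the proof and is missing from your plan.

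A smaller point: your appeal to the geometric cases (i), (iii), (iv) of Lemma~\ref{Lem_tec8} to ``rule out inconsistent combinations'' and to handle an ``odd sibling'' permutation does no work. After the normalization $(i,j)=(1,2)$, $k=3$, the case $j=k_a$ forces $(i_a,j_a,k_a)=(3,1,2)=(k,i,j)$, so $a_j\wedge a$ is by definition exactly $\max\{0,\val(a_k,a;a_i,a_j)\}$ with no sign correction; the lemma is proved purely by computation with Definition~\ref{Def_rtd}, Lemma~\ref{3ptlemma} and the (PV) axioms, and nowhere needs the tree-geometric configuration.
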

\begin{proof}
Without loss of generality we may assume that $i=1, j=2$.
By definition of $\wedge $ the term $a\wedge b$ equals $\max\{0,\val(a_{i_a},a; a_{j_a},b)\}$. It remains to calculate $a_2\wedge a$ and $a_1\wedge b$.

{\bf a)} Prove $a_1\wedge b= \max\{0,\wedge(a_2,b; a_3,a_1)\}$: \\
If $i_b$ or $j_b=1$ then \ref{num26} of Definition~\ref{Def_rtd} implies $a_1\wedge b=0$. The pair $(i_b, j_b)$ then equals $(1,2)$ or $(3,1)$.

If $(i_b, j_b)=(1,2)$ then, by Lemma~\ref{3ptlemma}, 
$\val(a_1,b;a_2,a_3)  >0$ or $\val(a_i,b; a_j,a_k) =0$ for all choices of $i,j,k $. In the second case the assertion is obvious. In the first case  $(PV2)$ implies 	
$$ 0 < \val(a_1,a_3; a_2,b)=\val(a_2,b; a_1,a_3) = -\val(a_2,b; a_3,a_1) .$$
Hence $\val(a_2,b;a_3,a_1) < 0 $ and we have
$$\max\{0,\val(a_2,b;a_3,a_1)\} = 0 = a_1\wedge b .$$

If $(i_b, j_b)=(3,1)$ then $\val(a_3,b;a_1,a_2)  =0$  or $\val(a_3,b; a_1,a_2)  >0$ for all choices of $i,j,k$. Lemma~\ref{3ptlemma} implies $\val(a_2, b; a_3, a_1)\leq 0$ and hence
$$\max\{0,\val(a_2, b; a_3, a_1) \}=0=a_1\wedge b.$$ 

In the case that $i_b\neq 1$ and $j_b\neq 1$ we have that $(i_b, j_b)=(2,3)$ and $\val(a_2, b; a_3, a_1)  =0$ or that $\val(a_2, b; a_3, a_1) >0 $. In the first case the assertion is obvious, in the second we can conclude
$$\max\{0,\val(a_2,b; a_3,a_1)\} = \val(a_2,b; a_3,a_1) = b\wedge a_1.$$
This finishes part a). 

{\bf b)} Prove $a_2\wedge a= \max\{0,\wedge(a_3,a; a_1,a_2)\}$: \\
If $i_a$ or $j_a=2$ then, by definition, $a\wedge a_2=0$. The pair $(i_a, j_a)$ equals $(2,3)$ or $(1,2)$. 

If $(i_a, j_a)=(1,2)$ then $\val(a_1, a; a_2, a_3) =0$ or $\val(a_1, a; a_2, a_3)>0 .$
Axioms $(PV1), (PV2)$ and Lemma~\ref{3ptlemma} imply $\val(a_3, a; a_1, a_2) < 0$ and hence
$$
\max\{0, \val(a_1,a; a_2, a_3)\} = 0 = a_2\wedge a .
$$

If $(i_a, j_a)=(2,3)$ then $\val(a_2, a; a_3, a_1)=0$ or $\val(a_2, a; a_3, a_1)>0$
In the first case the assertion follows by Lemma~\ref{3ptlemma} and in the second axiom $(PV2)$ implies
\begin{align*}
	0 &< \val(a_2, a; a_3, a_1) =\val(a,a_2;a_1, a_3) \\
		&=\val(a, a_3; a_1, a_2) = - \val(a_3, a; a_1, a_2).
\end{align*}
Hence $\max\{0,\val(a_2, a; a_3, a_1)\}=0=a\wedge a_2.$

If $i_a \neq 2$ and $j_a\neq 2$, then $(i_a, j_a)=(3,1)$ and 
$$
a\wedge a_2 = \{0, \val(a_{i_a}, a; a_{j_a}, a_2)\}=\val(a_3,a; a_1,a_2)
$$
which is maximal. This finishes the proof. 
\end{proof}

Now we are able to prove the main proposition of step 4.

\begin{proof}[Proof of Proposition~\ref{Prop_valuations}]
Let $a_1, a_2, a_3$ be three ends of the Alperin-Bass tree such that $\kappa(a_1,a_2, a_3)=o_T$. 
Given pairwise distinct $a,b,c,d \notin\{ a_1,a_2,a_3\}$ axiom $(PV3)$ implies
\begin{align*}
\val(a,b;c,d) 	&= \val(a_i,b;c,d) + \val(a, a_i; c,d) \\
		&= \val(a_i,b;c,a_j) + \val(a_i; b; a_j, d) + \val(a, a_i;a_j,d) + \val(a, a_i; c,a_j) \\
		&= -\val(a_i,b;a_j,c) + \val(a_i; b; a_j, d) - \val(a_i, a;a_j,d) + \val(a_i, a; a_j, c).
\end{align*}
It is therefore sufficient to prove
$$\val(a_i, a; a_j, b)=\val(a_i, a; a_j, b)$$ for any distinct $a,b \notin\{ a_1,a_2,a_3\}$ and any $i\neq j$.

According to Lemma~\ref{3ptlemma} there are four cases. We will give the proof in the case where $\val(a_3, a; a_1, a_2)> 0$. The proofs of all the other cases use similar arguments and are left to the reader. Here axioms $(PV2)$ and $(PV1)$ imply 
$$0<\val(a_3, a; a_1, a_2)= -\val(a_1, a; a_2, a_3).$$
We are able to apply Lemma~\ref{tec20}. Together with the fact that $\val(a_3, a; a_1, a_2)> 0$ we have
\begin{align}\label{num32}
\val_T(a_1,a; a_2,b)	
	=& \max\{0, \val(a_3,a; a_1,b)\} - \val(a_3,a; a_1,a_2) -\max\{0,\val(a_2,b; a_3,a_1)\}.
\end{align}
By $(PV3)$ 
\begin{equation}\label{num33}
\val(a_1,a;a_2,b) = \val(a_1,a;a_2,a_3) + \val(a_1,a; a_3,b)
\end{equation}
and $\val(a_1,a; a_2, a_3)<0$.
A case by case analysis will complete the proof. 

{\bf Case (i):} assume $\val(a_3,a;a_1,b)>0.$\\
Axiom $\mathrm{(PV2)}$ implies
\begin{equation}\label{num34}
\val(a_1,a;a_3,b)=\val(a_3,a;a_1,b) > 0.
\end{equation}
By (\ref{num32}), (\ref{num33}) and (\ref{num34}) we have
\begin{align*}
\val_T(a_1,a;a_2,b)	
	&= \max\{0,\val(a_3,a;a_1,b)\}-\max\{0,\val(a_2,b;a_3,a_1)\}+\val(a_3,a;a_1,a_2)\\
	&= \val(a_1,a;a_2,b)\}-\max\{0,\val(a_2,b;a_3,a_1)\}.
\end{align*}
Hence it suffices to prove
$$ b \wedge a_1=\max\{0,\val(a_2,b;a_3,a_1)\}=0.$$
Lemma~\ref{tec6} implies $i_a=i_b=3, j_a=j_a=1$. Therefore $b\wedge a_1=0$ by definition and $\val_T(a_1,a;a_2,b)=\val(a_1,a; a_2,b)$.

{\bf Case (ii):} assume $\val(a_3,a;a_1,b)<0.$\\
By $\mathrm{(PV1)}$ we have $\val(a_3,a;b,a_1)>0$. The following two equations hold therefore by $\mathrm{(PV2)}$:
\begin{align*}
0 &= \val(a_3,b;a,a_1)=\val(a_3,b;a_1,a) \text{ and }\\
0 &< \val(a_3,a_1;b,a)=-\val(a_3,a_1,a,b).
\end{align*}
Equation (\ref{num32}) together with the assumption of case (ii) implies 
$$\val_T(a_1,a;a_2,b)= -\max\{0,\val(a_2,b;a_3,a_1)\} + \val(a_1,a; a_2,a_3)$$
and
$$ \val_T(a_1,a;a_2,b)=\val(a_1,a;a_2,b)-\val(a_1,a;a_3,b) -\max\{0,\val(a_2,b;a_3,a_1) \}.$$
But $\val(a_1,a;a_3,b)=0$ and therefore
$$ \val_T(a_1,a;a_2,b)=\val(a_1,a;a_2,b) -\max\{0,\val(a_2,b;a_3,a_1) \}.$$
It remains to prove $\val(a_2,b;a_3,a_1)\leq 0$ for the assertion to hold in case (ii).
But since $i_b=i_a=3$ and $j_b=j_a=1$ we have $\val(a_2,b; a_3,a_1)=\val(a_2,a; a_3,a_1)\geq 0$ and are done.

{\bf Case (iii):} $\val(a_3,a;a_1,b)=0.$\\
We prove several inequalities which, combined with (\ref{num33}), will allow us to finish the proof. 
First, if $\val(a_3,a_1;a,b)<0$, axioms $\mathrm{(PV1)}$ and $\mathrm{(PV2)}$ imply $\val(a_3,a;a_1,b)<0$
 which contradicts the assumption of case (iii). Hence 
\begin{equation}\label{num35}
\val(a_3,a_1;a,b)\geq 0.
\end{equation}
Second, assume $\val(a_1,a;a_3,b)>0$, then, by $\mathrm{(PV2)}$ again $\val(a_3,a;a_1,b)>0$ contradicting the assumption of case (iii). Therefore
\begin{equation}\label{num36}
\val(a_1,a;a_3,b)\leq 0.
\end{equation}
We prove
\begin{equation}\label{num37}
\val(a_3,a_1;a,b)=-\val(a_1,a;a_3,b).
\end{equation}
Because of (\ref{num36}) and $\mathrm{(PV1)}$ we have $\val(a_1,a;b,a_3)\geq 0$.
If $k\define\val(a_1,a;b,a_3)>0$, then $\val(a_1,b;a,a_3)=0$ and $$\val(a_3,a_1;a,b)=k=\val(a_1,a;b,a_3)=-\val(a_1,a;a_3,b)$$ and (\ref{num37}) holds in this case. 
Let  $\val(a_1,a;b,a_3)=0$. Then $\val(a_1,a;a_3,b)=0$ as well. Assume $\val(a_3,a_1;a,b)>0$, which in general is greater or equal to $0$ by (\ref{num35}). By $\mathrm{(PV1)}$ we have then $\val(a_1,a_3;b,a)>0$. But also $\val(a_1,a;b,a_3)>0$, by $(PV2)$, which contradicts the assumption that $\val(a_1,a;b,a_3)=0$. Hence $\val(a_3,a_2;a,b)=0$ and we are finished with the proof of (\ref{num37}).

By (\ref{num37}) one can conclude
\begin{equation}\label{num38}
\val(a_3,a_1;a_2,b) = -\val(a_1,a;a_3,b)+\val(a_2,a; a3,a_1).
\end{equation}
Since $(2,3,1)$ is an even permutation of $(1,2,3)$ and since $i_a=3$ and $j_a=1$ we have $\val(a_2,a;a_3,a_1)\leq 0$. If $\val(a_2,a;a_3,a_1) < 0$ then $\val(a_2,a;a_1,a_3)>0$ and, by $(PV2)$, $\val(a_1,a;a_2,a_3)>0$. But the latter contradicts Lemma~\ref{3ptlemma}, since $i_a=3$ and $j_a=1$. Therefore $\val(a_2,a;a_3,a_1)$ equals $0$ and hence the following holds using equation (\ref{num38})
\begin{equation}\label{num39}
\val(a_3,a_1;a_2,b)=-\val(a_1,a;a_3,b).
\end{equation}

Now we are ready to prove case (iii). Recall (\ref{num32})
$$
\val_T(a_1,a;a_2,b) = -\max\{0,\val(a_2,b;a_3,a_1)\} + \val(a_1,a;a_2,b)-\val(a_1,a;a_3,b).
$$
Using, in this order, (\ref{num37}),(\ref{num39}) and (\ref{num36}), allows us to calculate
\begin{align*}
\val_T(a_1,a;a_2,b) 
	&= -\max\{0,\val(a_2,b;a_3,a_1)\} + \val(a_1,a;a_2,b)+\val(a_3,a_1;a,b)\\
	&= -\max\{0,-\val(a_1,a;a_3,b)\} + \val(a_1,a;a_2,b)  +\val(a_3,a_1;a,b)\\
	&= \val(a_1,a;a_2,b)  +\val(a_1,a; a_3,b) +\val(a_3,a_1;a,b).
\end{align*}
Since $\val(a_1,a; a_3,b) +\val(a_3,a_1;a,b)=0$ case (iii) is finished.
\end{proof}


\newpage
\section{Dictionary}\label{Sec_dictionary}

On the following few pages, we provide a list of expressions used in this thesis and the corresponding names of objects in \cite{BruhatTits, Bennett, Parreau, TitsComo, KleinerLeeb, KapovichMillson} and \cite{AffineW}. Each column contains one object. In the first line the name used in the present thesis is given.

Notice, however, that the correspondences are only one to one if buildings are considered which are covered by both definitions. For example did Parreau prove in \cite{Parreau} that the \emph{Euclidean buildings} of Kleiner and Leeb \cite{KleinerLeeb} are a subclass of affine $\R$-buildings (or \emph{syst\`eme d'appartements}) in the sense of \cite{TitsComo}, namely precisely the ones equipped with the complete system of apartments.
In general one has the following inclusions:

$$
\left\{\!
\begin{array}{c}
\text{generalized}\\
\text{affine buildings}\\
\text{\cite{Bennett}}
\end{array}
\!\right\} 
\supsetneq
\left\{\!
\begin{array}{c}
\text{affine}\\
\R-\text{buildings}\\
\text{\cite{TitsComo, BruhatTits}}\\
\text{\cite{Parreau}}
\end{array}
\!\right\} 
\supsetneq
\left\{\!
\begin{array}{c}
\text{Euclidean}\\
\text{buildings}\\
\text{\cite{KleinerLeeb}}
\end{array}
\!\right\} 
\supsetneq
\left\{\!
\begin{array}{c}
\text{simplicial}\\
\text{affine buildings}\\
\text{\cite{AffineW, KapovichMillson}}
\end{array}
\!\right\} 
$$

Sometimes it was not possible to find a precise counterpart of our definitions; in some cases simply because they did not appear in the considered reference, sometimes because another approach to buildings was used. For this reason we made comments on some of the entries of the table.

Doubtless the presented table is in no way complete. Making a choice is always a subjective thing and implies that I have omitted references others would wish to find here. I am aware of this fact and take the chance to apologize for not including standard references on buildings as for example \cite{AB, Brown, Garrett} or \cite{Ronan}.

\vspace{5ex}
\begin{remark}\label{Remark} Come back to the following remarks whenever there is a reference given in the table.
 \begin{enumerate}
	\item \label{B1} The mentioned faces of a Weyl chamber, called sector panels in \cite{Bennett}, are the Weyl simplices of dimension one. Other Weyl simplices are not named.
	 \item \label{BT1}  In \cite{BruhatTits} spherical Weyl groups are not defined explicitly but the authors remark that for each special vertex $x$ the affine Weyl group can be written as the semi-direct product $W=W_xV$, where $V$ is the translation part of $W$ and $W_x$ the stabilizer of $x$ in $W$.
	\item \label{BT3} Associated with a given ``quartier'' $D$ the authors of \cite{BruhatTits} define a basis $B(D)$. Then $D$ is precisely what is called ``the fundamental Weyl chamber associated to $B(D)$'' in the present thesis.
	\item \label{BT4} Bruhat and Tits use the notion of a retraction based at an alcove $C$. In the simplicial case an alcove together with a distinguished vertex $v$ of $C$ is the same as a germ of a Weyl chamber at $v$ in our sense.
	\item \label{KL2} Kleiner and Leeb define $\Delta_{mod}$ to be the quotient of the Tits boundary $\delta_{Tits}E$ by the spherical Weyl group. This definition does not precisely correspond to the fundamental Weyl chamber, as used in the present thesis.
	\item \label{W1} In \cite{AffineW} buildings are defined as certain chamber systems. The Coxeter chamber system $\Sigma_\Pi$ has a representation as a set of alcoves in a Euclidean vector space $V$. This representation corresponds precisely to our model apartment $\MS$.
	\item \label{W2} A wall of a given root $\alpha$ in \cite{AffineW} is actually defined to be  the collection of panels (which is a certain set of chambers) whose intersections with $\alpha$ have cardinality one.
	\item \label{W3} Since Weiss uses the chamber system approach to affine buildings, which is in a certain sense dual to the simplicial approach, a slight difference to our definition occurs. A sector $S$ is a collection of chambers with terminus $R$. Here $R$ is a residue containing the chamber at the tip of $S$. The type of the basepoint $x$ of $S$ (in our language) determines the type $I\setminus\{o\}$ of $R$.
	\item \label{W4} In \cite{AffineW} convexity is defined with respect to galleries: a set of chambers is convex if for all $c,d$ in the set all galleries connecting $c$ and $d$ are contained in the set as well. This corresponds to $\WT$-convexity in our sense if the intersection of half-apartments is not contained in a wall.
 \end{enumerate}
. 
\end{remark}

\begin{sidewaystable}[!p]
\centering

\begin{tabularx}{\textheight}{| >{\raggedright\arraybackslash} X || >{\raggedright\arraybackslash} X | >{\raggedright\arraybackslash} X | >{\raggedright\arraybackslash} X | >{\raggedright\arraybackslash} X | >{\raggedright\arraybackslash} p{4.5cm} |}
\hline
 present thesis 
	& spherical Weyl group $\sW$
	& affine Weyl group $\WT$, \newline full affine Weyl group $\aW$
	& model space \newline $\MS(\RS,\Lambda,\WT)=\MS$
	& generalized affine building $(X,\App)$ \newline modeled on $\MS$
	& $\App$ atlas; $f\in\App$ chart; apartments are images of charts
	\\ 
\hline
  C. Bennett \newline \cite{Bennett}
	& spherical Weyl group $\overline W$ 
	& affine Weyl group $W= T\overline W$,\newline  full affine Weyl group $W'=\Lambda^n\overline{W}$
	& model space $\Sigma$  
	& $\Lambda$-affine building $(\Delta,\mathcal{F})$ 
	& $\mathcal{F}\ni f$ chart; apartments are images of charts
	\\
 \hline 
  J. Bruhat, J. Tits \newline \cite{BruhatTits}
	& $W_x$ \newline \emph{see \ref{Remark}.\ref{BT1}}
	& $W$ groupe de Weyl affine  
	& $\mathbb{A}$ espace affine 
	& $\mathcal{I}$ immeuble affine 
	& $\phi:\mathbb{A} \rightarrow \mathcal{I}$ application structurale; apartments are images of charts
	\\
\hline 
  M. Kapovich,\newline J. Millson \newline \cite{KapovichMillson}
	& $W_{\mathrm{sph}}$ finite Weyl group
	& $W_{\mathrm{aff}}$,  or simply $W$, discrete Euclidean Coxeter group
	& $(A,W)$ Euclidean Coxeter complex; $A$ model apartment
	& $X$ Bruhat-Tits building (associated with a group)
	& atlas; $\varphi : A\hookrightarrow X$ chart; apartments are images of charts
	\\	 
 \hline 
  B. Kleiner,\newline B. Leeb \newline \cite{KleinerLeeb}
	& spherical Weyl group $\overline W$ 
	& affine Weyl group $W_{\mathrm{aff}}$ 
	& $(E,W_{\mathrm{aff}})$ the Euclidean Coxeter complex
	& Euclidean building $X$ modeled on $(E,W_{\mathrm{aff}})$
	& $\mathcal{A}$ atlas; $ \iota: E\rightarrow X$ chart; apartments are images of charts
	\\
\hline 
  A. Parreau \newline \cite{Parreau}
	& groupe des reflexions fini $\overline W$ 
	&  $W= T W_a$ where $W_a=Stab_W(a)$\newline $\widetilde{W}$ (the full affine Weyl group) 
	& la structur model\'ee $\mathbb{A}$ 
	& $(\Delta,\mathcal{A})$ immeuble affine model\'e sur $(\mathbb{A},W)$ 
	& $\mathcal{A}\ni f$ appartements marqu\'ees; appartement = image de $f$
	\\
 \hline 
  J. Tits \newline \cite{TitsComo}
	& {\vspace{-8pt}$\overline W$ }
	& $W$ 
	& $\mathbb{A}$ espace affine
	& $(\mathcal{I}, \mathcal{F})$
	& $\mathcal{F}$ famille d'injections $f:\mathbb{A}\rightarrow \mathcal{I}$; appartement =  images de $f\in\mathcal{F}$ 
	\\
 \hline 
  R. Weiss \newline \cite{AffineW}
	& $W_\Pi$, $\Pi$ a spherical diagram
	& $W_\Pi$, $\Pi$ an affine diagram
	& $\Sigma_\Pi$ Coxeter chamber system of type $\Pi$\newline \emph{see~\ref{Remark}.\ref{W1}} 
	& affine building $\Delta$ of type $\Pi$
	& apartments are images of a special isomorphism $\phi:\Sigma_\Pi \rightarrow \Delta$
 	\\ 
 \hline 
\end{tabularx}

\end{sidewaystable} 
\clearpage

\begin{sidewaystable}[!p]
\centering

\begin{tabularx}{\textheight}{| >{\raggedright\arraybackslash} X || >{\raggedright\arraybackslash} X | >{\raggedright\arraybackslash} X | >{\raggedright\arraybackslash} X | >{\raggedright\arraybackslash} X | >{\raggedright\arraybackslash} p{0.17 \textheight} |}
\hline
 present thesis
	& $H_{\alpha,k}$ wall or hyperplane
	& half-apartment $H_{\alpha,k}^\pm$
	& $\Cf$ fundamental Weyl chamber (with respect to a basis $B$ of $\RS$)
	& Weyl chamber $S$ based at a point $x$
	& Weyl simplices of $S$  or (if codimension is one) panel of $S$
	\\ 
 \hline
  C. Bennett \newline \cite{Bennett}
	&  $M_r$ wall 
	& half-apartment 
	& fundamental sector $\hat{S}$
	& (closed) sector $S$ based at $x$
	& sector panel $P_i$ of type $i$\newline \emph{see \ref{Remark}.\ref{B1}}
	\\ 
 \hline 
   J. Bruhat, J. Tits \newline \cite{BruhatTits} 
	& mur $L_{a,k}$,\newline l'ensemble des murs $\mathcal{H}$
	& racine affine $\alpha$ dans $\mathbb{A}$\newline 
		demi-appartement = image de racine affine dans  $\mathcal{I}$ 
	& \emph{see \ref{Remark}.\ref{BT3} } 
	& quartier $D$
	& ---
	\\ 
 \hline 
   M. Kapovich,\newline J. Millson \newline \cite{KapovichMillson}\newline{\ }
	& affine wall $H_{\alpha,t}$
	& half-apartment 
	& positive Weyl chamber
	& Weyl chamber $\Delta$ of $W_{\mathrm{sph}}$ with tip $v$
	& Weyl simplices
	\\ 
 \hline 
   B. Kleiner, B. Leeb \newline \cite{KleinerLeeb}
	& wall 
	& half-apartment
	& $\Delta_{\mathrm{mod}}$ anisotropy polyhedron \newline \emph{see \ref{Remark}.\ref{KL2} } 
	& Weyl chamber with tip at a point $p$
	& ---
	\\ 
 \hline 
    A. Parreau \newline \cite{Parreau} 
	& mur affines
	& demi-appartement 
	& {\vspace{-8pt}$\overline{C}$ chambre de Weyl ferm\'ee fondamentale}
	& $C$ chambre de Weyl affine ferm\'ees
	& facettes affines ferm\'ee 
	\\ 
 \hline 
  J. Tits \newline \cite{TitsComo}
	& mur 
	& racine (dans $\mathbb{A}$)\newline demi-appartement (dans l'immeuble) 
	& --- 
	& quartier de sommet $x$
	& facettes des quartiers et cloisons des quartiers\newline(si co\-di\-men\-sion est 1)
	\\ 
 \hline 
   R. Weiss \newline \cite{AffineW}
	& wall of a root \newline \emph{see \ref{Remark}.\ref{W2}}
	& root $\alpha$
	& \emph{see \ref{Remark}.\ref{BT3} } 
	& sector $S$ with terminus $R$ \newline \emph{see \ref{Remark}.\ref{W3}}
	& face of a sector
	\\ 
 \hline
\end{tabularx}

\end{sidewaystable} 
\clearpage

\begin{sidewaystable}[!p]
\centering

\begin{tabularx}{\textheight}{| >{\raggedright\arraybackslash} X || >{\raggedright\arraybackslash} X | >{\raggedright\arraybackslash} p{4.2cm} | >{\raggedright\arraybackslash} X | >{\raggedright\arraybackslash} X | >{\raggedright\arraybackslash} p{4.2cm} |}
\hline
 present thesis \newline{\ }
	& retraction $r$ of axiom $(A5)$
	& retraction $\rho_{A,c}$ onto $A$ centered at $c=\partial S$, or retraction centered at infinity
	& retraction $r_{A,\Delta_xS}$ onto $A$ centered at $\Delta_xS$
	& $\Delta_xS$ germ of $S$ at $X$
	& $\Delta_xX$ residue of $X$ at $x$
	.\\ 
 \hline
  C. Bennett \newline \cite{Bennett}\newline{\ }
	& retraction $\rho$ of axiom $(A5)$
	& sector retraction $\rho_{A,S}$ onto $A$ based at $S$
	& ---
	& ---
	& ---
	\\ 
 \hline 
   J. Bruhat, J. Tits \newline \cite{BruhatTits} \newline{\ }
	& ---
	& $\rho_{A, \mathcal{C}}$ r\'etraction de $\mathcal{I}$ sur $A$ relativement au quartier $\mathcal{C}$ 
	& $\rho$ r\'etraction de $\mathcal{I}$ sur $A$ de centre $C$ \newline \emph{see \ref{Remark}.\ref{BT4}}
	& $\gamma(x,D)$ germe de quartier $D$
	& ---
	\\ 
 \hline 
   B. Kleiner, B. Leeb \newline \cite{KleinerLeeb} \newline{\ }
	& ---
	& ---
	& ---
	& ---
	& $\Sigma_xX$ space of directions at $x$
	\\
 \hline 
   M. Kapovich, J. Millson \newline \cite{KapovichMillson} \newline{\ }
	& ---
	& ---
	& $Fold_{a, A}$ folding of $X$ onto $A$\newline ($a$ alcove in $A$) 
	& ---
	& $\Sigma_xX$ space of directions at $X$; as a polysimplicial complex the link of $X$ at $x$
	\\ 
 \hline 
    A. Parreau \newline \cite{Parreau} \newline{\ }
	& r\'etraction $r$ de l'axiom $(A5')$
	& r\'etraction $r$ centr\'ee en $C(\infty)$
	& $r$ centr\'ee en $\Sigma_xC$
	& $\Sigma_xC$ germe en $x$ de $C$
	& $\Sigma_xX$ germes en $x$
	\\ 
 \hline 
  J. Tits \newline \cite{TitsComo} \newline{\ }
	& r\'etraction $\rho$ de l'axiom $(A5)$
	& ---
	&  ---
	& ---
	& ---
	\\ 
 \hline 
 R. Weiss \newline \cite{AffineW} \newline{\ }
	& ---
	& sector retraction $\rho_{A,S}$
	& retraction map $r_{A,x}$ 
	& $d$ apex of $S$
	& gem; or residue of type $I\setminus\{o\}$ 
	\\ 
 \hline
\end{tabularx}

\end{sidewaystable} 
\clearpage

\begin{sidewaystable}[!p]
\centering

\begin{tabularx}{\textheight}{| >{\raggedright\arraybackslash} X || >{\raggedright\arraybackslash} X | >{\raggedright\arraybackslash} X | >{\raggedright\arraybackslash} X | p{0.28\textheight}}
 \cline{1-4}
 present thesis\newline{\ }\newline{\ }
	& parallel walls / hyperplanes
	& $\aW$-convex set
	& $\seg_A(x,y)$ segment of $x$ and $y$ in $A$
	&
	\\ 
 \cline{1-4}
  C. Bennett \newline \cite{Bennett} \newline{\ }
	& parallel walls
	& closed convex set
	& $c(\{x,y\})$ convex hull of $x$ and $y$
	&
	\\ 
 \cline{1-4}
   J. Bruhat, J. Tits \newline \cite{BruhatTits}  \newline{\ }
	& murs equipollentes 
	& ---
	& segment ferm\'e $[xy]$
	&
	\\ 
 \cline{1-4}
   B. Kleiner,\newline B. Leeb \newline \cite{KleinerLeeb} \newline{\ }
	& asymptotic lines 
	& Weyl polyhedron
	& geodesic segment $\overline{xy}$
	&
	\\
 \cline{1-4}
   M. Kapovich, \newline J. Millson \newline \cite{KapovichMillson} \newline{\ }
	& ---
	& ---
	& geodesic segment $\overline{xy}$	
	&
	\\ 
 \cline{1-4}
    A. Parreau \newline \cite{Parreau} \newline{\ }
	& asymptotes
	& convexe ferm\'e
	& segment $[x,y]$
	&
	\\ 
 \cline{1-4}
  J. Tits \newline \cite{TitsComo} \newline{\ }\newline{\ }
	& (facettes de quartiers) sont parall\`eles
	& convexe ferm\'e
	& ---
	&
	\\ 
 \cline{1-4}
 R. Weiss \newline \cite{AffineW} \newline{\ }
	& parallel walls
	& convex \newline \emph{see \ref{Remark}.\ref{W4}}
	& ---
	&
	\\ 
 \cline{1-4}
\end{tabularx}

\end{sidewaystable} 
\clearpage
\newpage
\section{Segments and condition (FC)}\label{Sec_FC}

In section \ref{Sec_convexityRevisited} we made use of the \emph{finite cover condition} (FC) to prove that certain retractions are distance non-increasing. By the time this thesis was handed in, the proof that this condition holds for all generalized affine buildings in the sense of definition \ref{Def_LambdaBuilding} was not yet completed.
In the meantime I was able to verify (FC). Continue reading and you will find a proof on the subsequent pages.

We first prove that segments are contained in apartments.  Let in the following $(X,\App)$ be a generalized affine building and recall the definition of the \emph{segment} of points $x$ and $y$ in $X$:
$$\seg(x,y)\define\{z\in X : d(x,y)=d(x,z)+d(z,y) \}.$$

\begin{lemma}\label{Lem_tec31}
Let $x,y$ be points in $X$. Let $A$ be an apartment containing $x$ and $y$. For all retractions $r\define r_{A,S}$ based at a germ of a Weyl chamber $S$ in $A$ and all $z\in \seg(x,y)$ the following holds
\begin{enumerate}
  \item $r(z)\in \seg(x,y)$ and 
  \item $d(x,z)=d(x,r(z))$ as well as $d(y,z)=d(y,r(z))$.
\end{enumerate}
\end{lemma}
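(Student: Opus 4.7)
The two assertions reduce to establishing the distance equalities
\[d(x, z) = d(x, r(z)) \quad \text{and} \quad d(y, z) = d(y, r(z)).\]
Indeed, granting these, the triangle inequality applied in the apartment $A$ (which contains $x$, $y$, and $r(z)$) gives
\[d(x,y) \le d(x, r(z)) + d(r(z), y) = d(x, z) + d(z, y) = d(x,y),\]
so equality holds throughout, forcing $r(z) \in \seg(x,y)$ and proving assertion~1. Hence I focus on showing $d(x, z) = d(x, r(z))$; the argument for $y$ is identical.

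\emph{First inequality, $d(x, r(z)) \le d(x, z)$.} Let $p$ be the basepoint of $S$. By Corollary~\ref{Cor_tec18}, there exists an apartment $B$ containing both $z$ and the germ $\Delta_p S$. By Proposition~\ref{Prop_r-rho}, the restriction $r|_B : B \to A$ is an isomorphism of apartments fixing $A \cap B$ pointwise. Let $\tilde x \in B$ denote the unique preimage of $x$ under this isomorphism. Since $r|_B$ is a $\Lambda$-isometry,
\[d(x, r(z)) = d_A(x, r(z)) = d_B(\tilde x, z) = d(\tilde x, z).\]
The required inequality then reduces to $d(\tilde x, z) \le d(x, z)$, which I would establish by composing with the point-based retraction $r_{B, \tilde x}$ provided by axiom (A5): this map is distance non-increasing, fixes $\tilde x$, and its restriction to any apartment containing both $x$ and $\tilde x$ (existing by (A3)) sends $x$ into $B$, producing the bound.

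\emph{Second inequality, $d(x, z) \le d(x, r(z))$.} Here the hypothesis $z \in \seg(x,y)$ is essential. Apply the first inequality symmetrically to both $x$ and $y$ to obtain $d(x, r(z)) \le d(x,z)$ and $d(y, r(z)) \le d(y,z)$. Summing and using the triangle inequality in $A$,
\[d(x,y) \le d(x, r(z)) + d(r(z), y) \le d(x,z) + d(z,y) = d(x,y),\]
which forces both inequalities to be equalities. This upgrades $d(x, r(z)) \le d(x,z)$ to the desired equality $d(x,z) = d(x, r(z))$, and simultaneously yields assertion~1.

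\emph{Main obstacle.} The technical heart of the argument is the bound $d(\tilde x, z) \le d(x, z)$ in the first inequality, which amounts to showing that the germ-based retraction $r_{A,\Delta_p S}$ is distance non-increasing. The difficulty is that the apartments containing $x$ and $\tilde x$ typically only share a half-apartment, so one must link them through a finite chain of apartments using the sundial configuration of Proposition~\ref{Prop_sundial} and the triple-intersection axiom (A6); each intermediate step is distance non-increasing by (A5) and the isometry property of Proposition~\ref{Prop_r-rho}, and the chain terminates in finitely many steps by the local structure of the residues \ref{Thm_residue}. Once distance non-increasing for germ-based retractions is in place, the rest of the proof is the purely metric manipulation above.
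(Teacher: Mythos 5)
Your closing squeeze argument is exactly the paper's proof of this lemma: the paper simply quotes that $r_{A,S}$ is distance non-increasing and then writes the one chain $d(x,y)\le d(x,r(z))+d(r(z),y)\le d(x,z)+d(z,y)=d(x,y)$, forcing equality everywhere, which yields both assertions at once. (In the paper the non-expansiveness of germ-based retractions is a separate result, Proposition~\ref{Prop_retraction}, proved from the local finite-cover Lemma~\ref{Lem_FC'}, whose proof does not use the present lemma, so there is no circularity in quoting it here.)

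The genuine gap is in your attempted proof of the ``first inequality'' $d(x,r(z))\le d(x,z)$. The reduction $d(x,r(z))=d(\tilde x,z)$ via Corollary~\ref{Cor_tec18} and Proposition~\ref{Prop_r-rho} is fine, but the next step does not work: axiom (A5) only provides \emph{some} distance non-increasing retraction $r_{B,\tilde x}:X\to B$ with $r_{B,\tilde x}^{-1}(\tilde x)=\{\tilde x\}$. Applying it to the pair $(x,z)$ gives $d\bigl(r_{B,\tilde x}(x),z\bigr)\le d(x,z)$, and nothing identifies $r_{B,\tilde x}(x)$ with $\tilde x$; on the contrary, the axiom forces $r_{B,\tilde x}(x)\neq\tilde x$ whenever $x\neq\tilde x$, and (A5) gives no control over how the restriction to an apartment containing $x$ and $\tilde x$ sits relative to $z$. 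So no bound on $d(\tilde x,z)$ comes out of this composition. The inequality you need is precisely the non-expansiveness of the germ-based retraction applied to $(x,z)$, which you then concede in your ``main obstacle'' paragraph and only sketch; in the paper this is handled not by chaining sundial configurations through residues but by covering the relevant segment with finitely many apartments containing the germ (Lemma~\ref{Lem_FC'}) and adding distances along the chain (Proposition~\ref{Prop_retraction}). Once that statement is available, your first-inequality machinery is superfluous and the lemma reduces to the squeeze you already wrote.
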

\begin{proof}
Since $r$ is distance non-increasing and $d$ satisfies the triangle inequality we have
$$
d(x,y)\leq d(x,r(z)) + d(y,r(z)) \leq d(x,z)+d(y,z)=d(x,y)
$$
and $r(z)$ is contained in $\seg(x,y)$.

Therefore 
\begin{equation}\label{Equ_1}
d(x,r(z))+d(r(z),y) = d(x,z)+d(z,y).
\end{equation}
The assumption $d(x,r(z)) < d(x,z)$ contradicts equation \ref{Equ_1}, hence $d(x,z)=d(x,r(z))$ and for symmetric reasons $d(y,z)=d(y,r(z))$.
\end{proof}

\begin{lemma}\label{Lem_oppSectors}
Given an apartment $A$ and two Weyl chambers $S$ and $T$ in $A$ facing in opposite directions. If $T\cap S$ is nonempty then 
$T\cap S$ is contained in the segment of a pair of points $s,t$ with $s\in S\setminus T$ and $t\in T\setminus S$.
\end{lemma}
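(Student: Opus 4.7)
My plan is to work entirely inside the apartment $A$. Fix a chart identifying $A$ with the model space $\MS(\RS, \Lambda)$, and choose a basis of $\RS$ so that the cone of $S$ is the fundamental Weyl chamber $\Cf$; then $S = p + \Cf$ where $p$ is the basepoint of $S$. Since $T$ faces in the opposite direction to $S$, its cone is $-\Cf$ and so $T = q - \Cf$ for $q$ the basepoint of $T$. First I would observe that
\[
S \cap T = (p + \Cf) \cap (q - \Cf) = \{z \in A : z-p \in \Cf \text{ and } q-z \in \Cf\}
\]
is nonempty precisely when $q - p \in \Cf$: adding $z - p \in \Cf$ and $q - z \in \Cf$ yields $q - p \in \Cf$, and conversely $p$ itself lies in the intersection whenever $q - p \in \Cf$.

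The heart of the argument is the choice of the pair $(s, t)$. Since $\Lambda$ is a nontrivial totally ordered abelian group and $\RS$ is nonempty, I can pick some nonzero $c \in \Cf$ (for instance $c = \lambda \sum_{\alpha \in B} \alpha$ for any $\lambda > 0$ in $\Lambda$ and any basis $B$ of $\RS$). Set $s := q + c$ and $t := p - c$. That $s \in S$ and $t \in T$ is immediate from rewriting $s - p = (q-p) + c$ and $q - t = (q-p) + c$ as elements of $\Cf + \Cf \subseteq \Cf$. That $s \notin T$ and $t \notin S$ reduces to $q - s = -c \notin \Cf$ and $t - p = -c \notin \Cf$, which hold because $\Cf$ is a simplicial cone with apex $0$, so $\Cf \cap (-\Cf) = \{0\}$.

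The remaining task is the inclusion $S \cap T \subseteq \seg(s, t)$. For any $z \in S \cap T$ both pairings
\[
\langle s - z, \alpha^\vee\rangle = \langle q - z, \alpha^\vee\rangle + \langle c, \alpha^\vee\rangle
\quad\text{and}\quad
\langle z - t, \alpha^\vee\rangle = \langle z - p, \alpha^\vee\rangle + \langle c, \alpha^\vee\rangle
\]
are sums of non-negative terms for $\alpha \in \RS^+$. Consequently $s - t = (s-z) + (z-t) \in \Cf$ as well, and unpacking the definition of the metric in \ref{Def_distance} collapses the three sums of absolute values into a single linear sum, giving $d(s, z) + d(z, t) = d(s, t)$. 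I expect no serious obstacle here: the geometric content is simply that $s$ and $t$ extend the ``diagonal'' between the basepoints $p$ and $q$ far enough in the two opposite cone directions to push themselves out of the other Weyl chamber while sweeping the entire intersection $S \cap T$ along the way.
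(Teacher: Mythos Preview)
Your argument is correct and in fact more careful than the paper's. The paper's proof simply takes $s$ and $t$ to be the base points $p$ and $q$ of $S$ and $T$, notes that $S\cap T$ is the $\aW$-convex hull of $\{p,q\}$, and invokes Lemma~\ref{Lem_segment} (segment equals convex hull) to conclude. That is shorter, but it does not actually meet the condition in the statement: once $S\cap T\neq\emptyset$ one has $q-p\in\Cf$, so both base points lie in $S\cap T$, and the paper's choice satisfies neither $s\in S\setminus T$ nor $t\in T\setminus S$. Your perturbation by a nonzero $c\in\Cf$ is precisely what pushes $s$ and $t$ out of the intersection while keeping the segment containment, and your direct metric computation via Definition~\ref{Def_distance} replaces the black-box appeal to Lemma~\ref{Lem_segment}, making the argument self-contained.

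One minor caveat: the parenthetical example $c=\lambda\sum_{\alpha\in B}\alpha$ need not lie in $\Cf$; for type $G_2$ one gets $\langle\alpha_1+\alpha_2,\alpha_1^\vee\rangle=-1$. Replace it by, say, $c=\lambda\,\fweight$ for any fundamental weight, or $c=\lambda\rho$; the existence of some nonzero $c\in\Cf$ is all you need and is clear.
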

\begin{proof}
Since $S$ and $T$ are opposite, the intersection is the $\aW$-convex hull of the base points $s$ and $t$ of $S$ and $T$. By Lemma \ref{Lem_segment} this equals the segment of $s$ and $t$. \end{proof}

\begin{prop}\label{Prop_SegInApp}
Segments are contained in apartments.
\end{prop}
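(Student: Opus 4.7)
The goal is to show that for $x,y \in X$ and $z \in \seg(x,y)$, we can find an apartment containing all three, and (more strongly) that the apartment obtained from axiom (A3) applied to the pair $x,y$ already contains $\seg(x,y)$. My plan uses Theorem \ref{Thm_projection1} as the central characterization: a point lies in $A$ if and only if a pair of opposite chambers at infinity of $\partial A$ projects to opposite chambers in the residue at that point.

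First I would set up the geometry. By axiom (A3), fix an apartment $A$ containing $x$ and $y$, let $S$ be the Weyl chamber of $A$ based at $x$ with $y \in S$, and let $T$ be the opposite Weyl chamber based at $y$, so that $x \in T$ and $\seg_A(x,y) = S \cap T$ by Lemma \ref{Lem_oppSectors}. Set $c = \partial S$ and $c' = \partial T$; these are opposite chambers in $\partial A$. By Theorem \ref{Thm_projection1} it suffices to prove that for every $z \in \seg(x,y)$, the projections $\pi_z(c)$ and $\pi_z(c')$ are opposite in the spherical residue $\Delta_z X$; this yields $z \in A$.

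To verify the opposition, I would use (A3) once more to produce apartments $A_1 \ni x,z$ and $A_2 \ni y,z$, and inside them take the Weyl chambers $U_1$ (based at $z$, containing $x$, inside $A_1$) and $V_2$ (based at $z$, containing $y$, inside $A_2$). By Corollary \ref{Cor_WeylChamber}, $\Delta_z U_1 = \pi_z(c')$ and $\Delta_z V_2 = \pi_z(c)$, since these germs are determined by the chamber class at infinity in which the Weyl chamber lies. Now I would apply Lemma \ref{Lem_tec31}: the retraction $r := r_{A_1,\,\Delta_z U_1}$ is distance non-increasing, fixes $x, z$, and maps $y$ into $\seg_{A_1}(x,y) \cap A_1$ while preserving both $d(x,y)$ and $d(z,y)$. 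Combined with Lemma \ref{Lem_oppSectors} applied inside $A_1$, the image $r(y)$ must lie in the Weyl chamber of $A_1$ based at $z$ opposite to $U_1$, forcing the direction from $z$ to $y$ (as realized inside $A_2$) to agree with that opposite direction — hence $\Delta_z V_2$ is opposite to $\Delta_z U_1$ in $\Delta_z X$.

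The main obstacle is the final identification: one must transfer information about directions from the apartment $A_2$ (which contains $y$ and $z$ but perhaps not $x$) to the apartment $A_1$ (which contains $x, z$ but not a priori $y$). I expect to bridge the gap by using Proposition \ref{Prop_A3'} to produce a common apartment containing the germ $\Delta_z U_1$ together with a germ at $y$ pointing back toward $z$, and then comparing the two Weyl chamber germs at $z$ via Corollary \ref{Cor_GG} inside the residue $\Delta_z X$. Once the opposition $\Delta_z U_1 \perp \Delta_z V_2$ is established, Theorem \ref{Thm_projection1} finishes the proof, because opposite chambers at $z$ in $\Delta_z X$ come exactly from a pair of opposite chambers of $\partial A$ at infinity, giving $z \in A$.
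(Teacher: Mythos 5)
Your reduction via Theorem~\ref{Thm_projection1} is a legitimate starting point and genuinely different from the paper's argument, but the proof has a gap exactly where the hypothesis $z\in\seg(x,y)$ must do its work, and as written that step is not carried out. The identification $\Delta_z U_1=\pi_z(c')$ and $\Delta_z V_2=\pi_z(c)$ is not given by Corollary~\ref{Cor_WeylChamber}: that corollary only produces the unique Weyl chamber based at $z$ \emph{parallel} to a given one; it does not say that an arbitrary Weyl chamber $U_1\subset A_1$ based at $z$ and containing $x$ lies in the parallel class $c'=\partial T$. Since $z$ is not yet known to lie in $A$, nothing ties the class $\partial U_1\in\partial A_1$ to $c'$, and asserting this identification is essentially assuming the compatibility between the metric condition $d(x,z)+d(z,y)=d(x,y)$ and the directions of $\partial A$ that the proposition is about. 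Note also that your argument, as written, scarcely uses $z\in\seg(x,y)$ at all outside the flagged middle step, so it would ``prove'' that every point of $X$ lies in $A$; the missing ingredient is precisely how the metric equality forces the two germs at $z$ to be opposite \emph{and} to be the projections of $c$ and $c'$ (opposition of $\Delta_zU_1$ and $\Delta_zV_2$ alone would, via Corollary~\ref{Cor_CO}, only give some apartment containing $U_1\cup V_2$, not $A$, so you really need the projections of the specific pair $c,c'$).

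The supporting step is also misapplied: Lemma~\ref{Lem_tec31} requires the apartment to contain \emph{both} endpoints $x$ and $y$ of the segment, whereas your $A_1$ contains only $x$ and $z$; in that situation $r_{A_1,\Delta_zU_1}$ need not preserve $d(x,y)$, and the expression $\seg_{A_1}(x,y)$ is not even defined. The tools you propose for the bridge (Proposition~\ref{Prop_A3'}, Corollary~\ref{Cor_GG}) give common apartments for germs, but they do not convert the distance equality into the required opposition in $\Delta_zX$. For comparison, the paper avoids this issue entirely by arguing by contradiction inside $A$: if $z\in\seg(x,y)\setminus A$, it retracts $z$ onto $A$ with two retractions based at germs of two \emph{distinct} Weyl chambers of $A$ opposite the germ of a chamber leaving $A$; Lemma~\ref{Lem_tec31} (legitimately applied, since $A\ni x,y$) places both images in $\seg_A(x,y)$ at the same distances from $x$ and $y$, and since $\seg_A(x,y)$ is the convex hull of $x$ and $y$ (Lemma~\ref{Lem_segment}) the two images must coincide, a contradiction. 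If you want to salvage your route, the substantive task is to prove, from $d(x,z)+d(z,y)=d(x,y)$, that the germ at $z$ of a Weyl chamber toward $x$ is $\pi_z(c')$ and the germ toward $y$ is $\pi_z(c)$ — which is where all the real work lies.
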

\begin{proof}
Fix two points $x,y\in X$. Let $A$ be an apartment containing $x$ and $y$ and assume there exists $z\in\seg(x,y)\setminus A$.
Let $p$ be a point in $A$ such that there exists a Weyl chamber $S$ at $p$ containing $z$ whose intersection with $A$ equals $p$, i.e. the germ of $S$ is not contained in $A$. There are two different Weyl chambers $S_1$ and $S_2$ in $A$ with $\sigma_i\define \Delta_pS_i$ is opposite $\Delta_pS$ for $i=1,2$. Denote by $r_i$ the retraction $r_{A,S_i}$ onto $A$ based at the germ of $S_i$ and denote by $z_i$ the image of $z$ under $r_i$. Obviously $z_1\neq z_2$.

There exists a $w\in\sW$ such that $S_2=wS_1$ and $w.z_1 = z_2$. We therefore can find a sequence of reflections $r^i\define r_{\alpha_i, k_i}$, $i=1,\ldots, l$, with $\alpha_i\in B$ and $k_i\in\Lambda$ such that 
$$z_2= r^l(r^{l-1}( \ldots r^1(z_1) \ldots )).$$

But recall, that by Lemma \ref{Lem_segment} the segment of $x$ and $y$ in $A$ is the same as the convex hull of $x$ and $y$. Therefore $\seg(x,y)$ is contained in a Weyl chamber based at $x$.  By Lemma \ref{Lem_tec31} both $z_1$ and $z_2$ are contained in $\seg(x,y)$, therefore $w$ has to be the identity. But this contradicts the fact that $S_1\neq S_2$.
\end{proof}

In particular Proposition \ref{Prop_SegInApp} shows that $\seg(x,y)$ is the same as $\seg(x,y)$ for all apartments $A$ containing $x$ and $y$.
We continue proving a local version (FC') of the cover condition and will conclude the chapter with a proof of (FC).

\begin{lemma}
Given an apartment $A$ and a point $z$ in $X$. Then $A$ is contained in the (finite) union of all Weyl chambers based at $z$ with equivalence class contained in $\partial A$.
\end{lemma}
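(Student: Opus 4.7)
The chambers of $\partial A$ are in bijection with the spherical Weyl group $\sW$, hence finite in number, and by Corollary~\ref{Cor_WeylChamber} each chamber $c\in\partial A$ uniquely determines a Weyl chamber $S_c$ at $z$ with $\partial S_c=c$; the union $\bigcup_{c\in\partial A}S_c$ is thus automatically finite. It remains to show $A\subseteq\bigcup_{c\in\partial A}S_c$, so fix $x\in A$ with $x\ne z$. My plan is to produce an apartment $B$ through $z$ and $x$ whose Weyl chamber at $z$ containing $x$ has parallel class in $\partial A$; by Corollary~\ref{Cor_WeylChamber} such a Weyl chamber must coincide with $S_c$ for the appropriate $c$, and the desired conclusion $x\in S_c$ follows.

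The naive choice of $B$ via axiom~(A3) need not have $\partial T\in\partial A$, where $T$ is the Weyl chamber at $z$ in $B$ containing $x$, so one must be more careful. I would pick $c\in\partial A$, let $S$ be the unique Weyl chamber at $x$ contained in $A$ with $\partial S=c$, and apply Proposition~\ref{Prop_A3'} to the germs $\Delta_x S$ and $\Delta_z S_c$: this yields an apartment $B_c$ containing $x$, $z$, and both germs. In $B_c$ the direction ``away from $z$'' at $x$ extends the segment $[z,x]$ past $x$; this direction has some class $-c^*\in\partial B_c$, and the Weyl chamber at $z$ in $B_c$ containing $x$ has precisely that class $-c^*$. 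By Theorem~\ref{Thm_projection1} applied to both $\partial A$ and $\partial B_c$ (each containing $x$) one has $\pi_x(c^*)=\pi_x(c)$, and hence $\pi_x(-c^*)=\pi_x(-c)$ in $\Delta_x X$ by uniqueness of opposites in the spherical residue; so at the germ level the class of the Weyl chamber at $z$ through $x$ matches the class $-c\in\partial A$ opposite to $c$.

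The hardest step is upgrading this germ-level identification at $x$ to an honest equality $-c^*=-c$ in $\binfinity X$, which would immediately place the class of the Weyl chamber at $z$ containing $x$ in $\partial A$. In dimension one the upgrade is automatic because a ray in a tree is determined by any one of its germs; in higher dimensions a chamber of $\binfinity X$ is not determined by its germ at a single point, and the main obstacle is to bridge this gap. The tool I would employ is axiom~(A2) together with Proposition~\ref{Prop_A5} and the sundial configuration~\ref{Prop_sundial}: since $A\cap B_c$ is closed convex and contains a germ at $x$ in direction $c$, it should contain a full Weyl chamber at $x$ of class $c$, forcing $-c\in\partial A\cap\partial B_c$. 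Should this direct comparison fail for some particular $c$, an iteration over the finitely many chambers of $\partial A$ (or an induction on the gallery distance in $\binfinity X$ from $\partial B_0$ to $\partial A$, with a sundial step at each stage preserving $x\in T$) closes the argument.
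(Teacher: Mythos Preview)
Your proposal has a genuine gap that you yourself flag: the ``hardest step'' of upgrading the germ-level identification $\pi_x(-c^*)=\pi_x(-c)$ to an equality $-c^*=-c$ in $\binfinity X$ is never carried out, and the tools you suggest (axiom~(A2), the sundial configuration, an unspecified induction) are not shown to close it. In fact this step is genuinely problematic as stated: a chamber at infinity is not determined by its germ at a single point, and there is no mechanism in your argument forcing the Weyl chamber at $z$ through $x$ in $B_c$ to have its parallel class land in $\partial A$ rather than merely in $\partial B_c$.

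The paper sidesteps the entire difficulty by reversing the order of choices. Rather than fixing a chamber $c\in\partial A$ first and then struggling to relate it to the direction from $z$ to $x$, the paper starts from the point $p\in A$ to be covered, takes a Weyl chamber $S_+$ based at $p$ containing $z$ (in some apartment through $p$ and $z$), and then chooses $S_-$ to be the Weyl chamber in $A$ based at $p$ whose germ is \emph{opposite} to $\Delta_pS_+$ in $\Delta_pX$. Such an $S_-$ exists in $A$ because $\Delta_pA$ is a full apartment of the residue, and automatically $\partial S_-\in\partial A$. By Corollary~\ref{Cor_CO} the opposite germs $\Delta_pS_+$ and $\Delta_pS_-$ lie in a common apartment $A''$; inside $A''$ one sees immediately that the $z$-based representative $T$ of $\partial S_-$ contains $p$ (since $z\in S_+$ and $S_+$, $S_-$ are opposite at $p$). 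No germ-to-infinity upgrade is needed: the class $\partial S_-$ is in $\partial A$ by construction. The moral is that the correct $c$ depends on $p$, namely $c=\partial S_-$ opposite to the direction toward $z$; picking $c$ first and hoping it works is what creates your obstacle.
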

\begin{proof}
Is $z$ contained in $A$ this is obvious. Hence assume that $z$ is not contained in $A$. For all $p\in A$ there exists, by definition, an apartment $A'$ containing $z$ and $p$. Let $S_+$ be a Weyl chamber based at $p$ containing $z$ and denote by $\sigma_+$ its germ at $p$. There exists a unique Weylchamber $S_-$ contained in $A$, based at $p$, such that its germ $\sigma_-$ is opposite $\sigma_+$ in the residue $\Delta_pX$. By Corollary~\ref{Cor_CO} the Weyl chambers $S_-$ and $S_+$ are contained in common apartment $A''$. Let $T$ be the unique representative of $\partial S_-$ which is based at $z$. Since $z\in S_+$ and $\sigma_+$ and $\sigma_-$ are opposite in $\Delta_pX$, the point $p$ is contained in $T$. Therefore $T$ contains $p$ and $\partial T$ is contained in $A$. To finish the prove observe that there are only finitely many chambers in $\partial A$, and that by Corollary~\ref{Cor_WeylChamber} for each of them there exists a unique representing Weyl chamber based at $z$.
\end{proof}

\begin{lemma}\label{Lem_FC'}
Let $(X,\App)$ be an generalized affine building, let $x,y$ be points in $X$ contained in a common apartment $A$. For any $z\in X$ the following is true:
\begin{itemize}[label={(FC')}, leftmargin=*]
  \item[$\mathrm{(FC')}$] The segment $\seg(x,y)$ of $x$ and $y$ in $A$ is contained in a finite union of Weyl chambers based at $z$.
\end{itemize}
Furthermore, is $\mu$ a germ of a Weyl chamber based at $z$, then $\seg(x,y)$ is contained in a finite union of apartments containing $\mu$.
\end{lemma}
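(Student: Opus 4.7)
The plan is to reduce Lemma \ref{Lem_FC'} to an immediate consequence of the preceding unnumbered lemma, Proposition \ref{Prop_SegInApp} and Proposition \ref{Prop_tec16}.

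For the first assertion $(\mathrm{FC}')$, I would first invoke Proposition \ref{Prop_SegInApp} to conclude that the segment $\seg(x,y)$ is entirely contained in the apartment $A$. The preceding lemma then asserts that $A$ itself is the union of Weyl chambers $T_c$ based at $z$, indexed by the equivalence classes $c \in \partial A$; since the spherical apartment $\partial A$ has only finitely many chambers, there are only finitely many such $T_c$, and the segment is covered by this finite collection.

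For the second assertion, let $T_1,\dots,T_k$ denote the finite family of Weyl chambers based at $z$ produced above, and let $c_i \define \partial T_i \in \binfinity X$. Given the germ $\mu$ at $z$, fix a Weyl chamber $M$ based at $z$ representing $\mu$ and apply Proposition \ref{Prop_tec16} to $M$ and the chamber $c_i$ at infinity: this yields an apartment $A_i$ containing the germ $\mu = \Delta_z M$ and having $c_i \in \partial A_i$. Since $z \in A_i$ and $c_i \in \partial A_i$, Corollary \ref{Cor_WeylChamber} applied inside $A_i$ gives a unique Weyl chamber in $A_i$ based at $z$ with boundary class $c_i$; this Weyl chamber must coincide with $T_i$, so $T_i \subset A_i$. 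Consequently
\[
\seg(x,y) \;\subset\; \bigcup_{i=1}^k T_i \;\subset\; \bigcup_{i=1}^k A_i,
\]
and each $A_i$ contains $\mu$ by construction, which is the desired finite cover.

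There is no real obstacle in this argument — the key work has already been done in Proposition \ref{Prop_SegInApp} (segments live inside apartments) and in the preceding lemma (apartments are finitely covered by Weyl chambers based at any external vertex). The only point requiring care is to verify that the apartment $A_i$ supplied by Proposition \ref{Prop_tec16} actually contains all of $T_i$ and not merely its germ at $z$, which is handled by the uniqueness clause of Corollary \ref{Cor_WeylChamber}.
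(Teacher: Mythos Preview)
Your proposal is correct and follows essentially the same route as the paper's proof: the paper also derives $(\mathrm{FC}')$ directly from the preceding lemma and obtains the second assertion by applying Proposition~\ref{Prop_tec16} to $c=\partial T_i$ and a Weyl chamber with germ $\mu$. Your added justification via Corollary~\ref{Cor_WeylChamber} that $T_i\subset A_i$ is a detail the paper leaves implicit, but otherwise the arguments coincide.
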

\begin{proof}
Condition (FC') is a direct consequence of the previous lemma.

Let $T$ be a Weyl chamber based at $z$, containing a given point $p\in \seg(x,y)$ with the property that $\partial T\in\partial A$. Apply Proposition~\ref{Prop_tec16} to $c\define\partial T$ and any Weyl chamber $S$ having germ  $\mu$ to obtain that $\seg(x,y)$ is contained in a finite union of apartments containing $\mu$.
\end{proof}

\begin{prop}\label{Prop_retraction}
For any apartment $A$ and any Weyl chamber $S$ in $X$ with germ in $A$ the retraction $r_{A,S}$, as defined in \ref{Def_vertexRetraction}, is distance non-increasing.
\end{prop}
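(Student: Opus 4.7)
The plan is to reduce the statement to the behaviour of $r_{A,S}$ on a single apartment, where it acts as an isomorphism, and then to exploit the fact that the segment between two given points is a finite union of apartment-pieces meeting $\mu\define \Delta_zS$. Concretely, given $x,y\in X$, I would first apply axiom $(A3)$ to obtain an apartment $A'$ containing both, and then invoke Proposition~\ref{Prop_SegInApp} to conclude that $\seg(x,y)\subset A'$. The key input from the preceding material is Lemma~\ref{Lem_FC'}, which provides a finite collection of apartments $A_1,\dots,A_N$, each containing the germ $\mu$, whose union covers $\seg(x,y)$.

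The next step is to chop the segment along these apartments. Since $\seg(x,y)$ is $\aW$-convex inside $A'$ (Lemma~\ref{Lem_segment}) and is covered by the finitely many convex pieces $\seg(x,y)\cap A_i$, I would pick a sequence of points $x=x_0,x_1,\dots,x_N=y$ along $\seg(x,y)$ with $x_{i-1},x_i\in A_i$ and $d(x,y)=\sum_{i=1}^N d(x_{i-1},x_i)$; the convexity of the segment in $A'$ and the fact that each intersection $\seg(x,y)\cap A_i$ is a (closed, possibly degenerate) sub-segment make such a subdivision straightforward. By Proposition~\ref{Prop_r-rho} the restriction of $r\define r_{A,S}$ to each $A_i$ is an isomorphism onto $A$, so $d(r(x_{i-1}),r(x_i))=d(x_{i-1},x_i)$ for every $i$.

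A single application of the triangle inequality in $(A,d)$ then yields
\[
 d(r(x),r(y)) \;\leq\; \sum_{i=1}^N d(r(x_{i-1}),r(x_i)) \;=\; \sum_{i=1}^N d(x_{i-1},x_i) \;=\; d(x,y),
\]
which is exactly the desired inequality. The argument is, in spirit, the one already used in Corollary~\ref{Cor_rho-distancediminishing} for the sector retraction; here the role of the retraction centred at infinity is played by the germ $\mu$, and the role of (FC) is played by its local variant (FC') established in Lemma~\ref{Lem_FC'}.

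The only non-routine point is the existence of the subdivision $x_0,\dots,x_N$. The potential difficulty is that, a priori, $\seg(x,y)\cap A_i$ could be arranged along the segment in a complicated manner. This is handled by choosing a parametrisation of $\seg(x,y)$ by the closed interval $[0,d(x,y)]_\Lambda$ given in axiom $(T1)$ (applied to the one-dimensional situation inside $A'$), ordering the $A_i$ according to when the parametrisation first enters each, and then taking the $x_i$ to be the endpoints of consecutive overlaps; convexity of $\seg(x,y)\cap A_i$ guarantees that the resulting chain actually realises the length of the segment additively. Once this combinatorial step is in place, the proposition follows.
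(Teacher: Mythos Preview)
Your argument is essentially identical to the paper's: invoke Lemma~\ref{Lem_FC'} to cover $\seg(x,y)$ by finitely many apartments containing the germ, subdivide by a chain of points $x=x_0,\dots,x_N=y$ with additive distances and consecutive pairs in a common $A_i$, use that $r$ restricts to an isometry on each $A_i$, and finish with the triangle inequality.

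One caveat on your justification of the subdivision: the appeal to axiom~$(T1)$ is misplaced. In the model space $\MS(\RS,\Lambda)$ the set $\seg(x,y)$ is, by Lemma~\ref{Lem_segment}, the $\aW$-convex hull $c_{\aW}(\{x,y\})$, which is a full-dimensional parallelotope-type region rather than an interval, so there is no parametrisation of $\seg(x,y)$ by $[0,d(x,y)]_\Lambda$ in the sense of $(T1)$. The paper simply asserts the existence of the chain $x_0,\dots,x_N$ without further comment; if you want to justify it, argue instead that inside $A'$ one can choose any sequence of points in $\seg(x,y)$ lying in consecutive overlaps $A_i\cap A_{i+1}$, and then use that for points $p,q,r\in\seg(x,y)$ with $q\in\seg(p,r)$ the distances add (which follows from the description of $\seg$ as a convex hull and the explicit form of $d$ in $\MS$). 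Apart from this cosmetic point your proof is correct and matches the paper's.
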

\begin{proof}
Given two points $x$ and $y$ in $X$. By Proposition \ref{Lem_FC'} there exists a finite collection of apartments $A_0,\ldots, A_n$ enumerated and chosen in such a way that 
\begin{itemize}
  \item each $A_i$ contains the germ of $S$
  \item their union contains the segment of $x$ and $y$ in an apartment $A\ni x,y$ and 
  \item $A_i\cap A_{i+1} \neq \emptyset$ for all $i=0,\ldots, n-1$. 
\end{itemize}
Observe that one can find a finite sequence of points $x_i$, $i=0,\ldots,n$ with $x_0=0$ and $x_n=y$ such that 
$$d(x,y)=\sum_{i=0}^{n-1} d(x_i,x_{i+1})$$ and the points $x_i$ and $x_{i+1}$ are contained in $A_i$.

Notice that for all $i$ the restriction of $r_{A,S}$ to $A_i$ is an isomorphism onto $A$. Hence $d(x_i, x_{i+1})=d(\rho(x_i), \rho(x_{i+1}))$ for all $i\neq N$. By the triangle inequality for $d$ we have that $d(r(x),r(y))\leq d(x,y)$.
\end{proof}

Backtracking the proof of \ref{Prop_retraction} it is easy to see, that besides axioms $(A1)-(A4)$ and $(A6)$ we only needed the fact that the distance function on $X$, induced by the distance function on the model space which was defined in Section \ref{Sec_modelSpace}, satisfies the triangle inequality. From this we can define a distance non-increasing retraction satisfying the condition of $(A5)$. Therefore we have the following

\begin{corollary}
Let $(X,\App)$ be a space satisfying all conditions of Definition~\ref{Def_LambdaBuilding} except axiom $(A5)$. Then the following are equivalent:
\begin{itemize}[label={(A5*)}, leftmargin=*]
\item[(A5)] For any apartment $A$ and all $x\in A$ there exists a \emph{retraction} $r_{A,x}:X\to A$ such that $r_{A,x}$ does not increase distances and $r^{-1}_{A,x}(x)=\{x\}$.
\item[(A5')] The distance function $d$ on $X$, which is induced by the distance function on the model space, satisfies the triangle inequality.
\end{itemize}
\end{corollary}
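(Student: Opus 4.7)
For $(A5)\Rightarrow(A5')$, fix points $x,y,z\in X$ and use $(A3)$ to select an apartment $A$ containing $x$ and $z$. Applying $(A5)$ gives a distance non-increasing retraction $r:=r_{A,x}\colon X\to A$ which is the identity on $A$ (so $r(x)=x$ and $r(z)=z$) and satisfies $d(x,r(y))\leq d(x,y)$ and $d(r(y),z)=d(r(y),r(z))\leq d(y,z)$. The triangle inequality for the metric on the model space $\MS$ (proved in Section~\ref{Sec_modelSpace}), transported via a chart, then yields
$$d(x,z)\leq d(x,r(y))+d(r(y),z)\leq d(x,y)+d(y,z),$$
which is $(A5')$.

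For $(A5')\Rightarrow(A5)$, the plan is to construct the retraction explicitly: given an apartment $A$ and $x\in A$, I would pick any Weyl chamber $S$ of $A$ based at $x$ and define $r_{A,x}$ to be the vertex retraction $r_{A,\Delta_x S}$ of Definition~\ref{Def_vertexRetraction}. Well-definedness and the fact that $r_{A,x}$ is the identity on every apartment $B\supset\Delta_x S$ (hence on $A$, which gives $r_{A,x}^{-1}(x)=\{x\}$) are established in Proposition~\ref{Prop_r-rho} using only $(A1)$--$(A4)$ and $(A6)$. That $r_{A,x}$ is distance non-increasing is the content of Proposition~\ref{Prop_retraction}, whose proof I would replicate under the hypothesis $(A5')$: given $p,q\in X$, Lemma~\ref{Lem_FC'} provides finitely many apartments $A_0,\ldots,A_n$ each containing $\Delta_x S$ whose union covers the $\aW$-convex hull of $p$ and $q$ in a common apartment through them; one then picks consecutive points $p=p_0,p_1,\ldots,p_n=q$ with $p_i,p_{i+1}\in A_i$ and $d(p,q)=\sum_i d(p_i,p_{i+1})$; since $r_{A,x}|_{A_i}$ is an isomorphism, $d(p_i,p_{i+1})=d(r_{A,x}(p_i),r_{A,x}(p_{i+1}))$, and the triangle inequality supplied by $(A5')$ applied to the chain of images in $A$ yields
$$d(r_{A,x}(p),r_{A,x}(q))\leq \sum_{i=0}^{n-1} d(r_{A,x}(p_i),r_{A,x}(p_{i+1}))=\sum_{i=0}^{n-1} d(p_i,p_{i+1})=d(p,q).$$

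The main obstacle is verifying that every intermediate result invoked in the second paragraph -- Proposition~\ref{Prop_tec16} and its corollaries, Proposition~\ref{Prop_r-rho}, the covering lemma preceding Lemma~\ref{Lem_FC'}, and Lemma~\ref{Lem_FC'} itself -- is genuinely a consequence of axioms $(A1)$--$(A4)$ and $(A6)$ alone, with no implicit appeal to the distance non-increasing property of some retraction, which would be circular. A careful inspection of those proofs shows that they rely only on the atlas structure, the sundial configuration of \ref{Prop_sundial}, and the existence of apartments through germs provided by \ref{Prop_tec16}, none of which invoke $(A5)$; this is exactly the content of the author's remark that "backtracking the proof of Proposition~\ref{Prop_retraction}" reveals the triangle inequality to be the only property beyond $(A1)$--$(A4),(A6)$ that is actually used.
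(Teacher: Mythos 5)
Your proof follows essentially the same route as the paper: the direction (A5)$\Rightarrow$(A5') is the observation already recorded in Remark~\ref{Def_buildingMetric} (retract $y$ into an apartment through $x$ and $z$ and use the model-space triangle inequality there), and (A5')$\Rightarrow$(A5) is precisely the paper's ``backtracking'' of Proposition~\ref{Prop_retraction}, taking $r_{A,x}=r_{A,\Delta_xS}$ for a Weyl chamber $S$ of $A$ based at $x$ and checking that the chain of lemmas behind it uses only $(A1)$--$(A4)$ and $(A6)$. One small correction: $r_{A,\Delta_xS}$ is not the identity on every apartment containing $\Delta_xS$ (only on $A$ itself), and being the identity on $A$ alone does not give $r^{-1}_{A,x}(x)=\{x\}$; the correct argument combines Proposition~\ref{Prop_r-rho} with Corollary~\ref{Cor_tec18}: any $y$ with $r_{A,x}(y)=x$ lies in some apartment $B$ containing $\Delta_xS$, and $r_{A,x}\vert_B$ is an isomorphism onto $A$ fixing $x\in B$, hence $y=x$.
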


\begin{lemma}\label{Lem_tec40}
Let $S$ be a Weyl chamber in an apartment $A$ of a generalized affine building $(X,\App)$. Let $Y\define \seg(x,y)$ for two points $x,y$ in $X$. Then there exists a sub-Weyl chamber $S'$ of $S$ based at a point $z\in A$ such that $r\define r_{A,\Delta_zS}$ satisfies the following:
$$d(r(p),w) = d(p,w) \text{ for all } w\in S' \text{ and all } p\in Y.$$
Furthermore a Weyl chamber based at $z$ containing a point $p$ of $Y$ is opposite $S'$.
\end{lemma}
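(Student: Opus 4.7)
My plan is to reduce the problem to finitely many apartments via the local covering condition (FC') established in Lemma \ref{Lem_FC'}, and then choose $z$ sufficiently deep in $S$ that simultaneously a sub-Weyl chamber $S'$ based at $z$ lies in every apartment of the cover and each Weyl chamber based at $z$ containing a point of $Y$ has germ opposite $\Delta_z S'$.

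First, I would fix an initial $z_0 \in S$ and apply the second part of Lemma \ref{Lem_FC'} with $\mu = \Delta_{z_0} S$, obtaining a finite cover $Y \subset \bigcup_{i=1}^n A_i$ in which each apartment $A_i$ contains the germ. Because $A_i$ contains this germ, its boundary at infinity contains $\partial S$, and hence $A_i$ contains a Weyl chamber parallel to $S$; by the intersection property of parallel Weyl chambers recorded in Section \ref{Sec_modelSpace}, this Weyl chamber meets $S$ in a sub-Weyl chamber $S^{(i)} \subset A \cap A_i$. I would then pick $z \in S$ deep enough that $z$ lies in each $S^{(i)}$; the sub-Weyl chamber $S'$ of $S$ based at $z$ then satisfies $S' \subset A \cap A_i$ for every $i$, and each $A_i$ in particular contains $\Delta_z S = \Delta_z S'$.

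With this setup the distance identity is almost immediate: the retraction $r := r_{A, \Delta_z S}$ restricts on each $A_i$ to an isomorphism $r|_{A_i}: A_i \to A$ which fixes $A \cap A_i$ pointwise, so for $p \in Y \cap A_i$ and any $w \in S' \subset A \cap A_i$,
\begin{equation*}
d(r(p), w) \;=\; d(r|_{A_i}(p), r|_{A_i}(w)) \;=\; d(p, w).
\end{equation*}
For the oppositeness statement I would further require $z$ to satisfy $r(p) \in z - \mathrm{int}(\Cf)$ for every $p \in Y$. Since $r$ is distance non-increasing by Proposition \ref{Prop_retraction}, the image $r(Y)$ is bounded in $A$, so pushing $z$ sufficiently far out in the $\Cf$-direction along $S$ places every $z - r(p)$ inside the open positive cone (using that $\Lambda$ is an $F$-module with $F\subset \R$, so one can rescale as needed).

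The circular dependence of $r$ on the choice of $z$ I would handle by first working with the auxiliary retraction $r_0 := r_{A, \Delta_{z_0} S}$ and the fixed bounded set $r_0(Y)$, then observing that for $z$ chosen as above the two retractions $r$ and $r_0$ agree on $Y$: for each $A_i$ both restrict to isomorphisms $A_i \to A$ that are the identity on $A \cap A_i$, a set with nonempty interior since it contains the sub-Weyl chamber $S^{(i)}$, and two such isomorphisms necessarily coincide. Once the positional condition $r(p) \in z - \mathrm{int}(\Cf)$ is secured, the segment $\seg(z, p) \subset A_i$ maps isometrically under $r|_{A_i}$ onto $\seg(z, r(p))$, which starts at $z$ into the open Weyl chamber of $A$ opposite $S'$; via the inverse isomorphism this direction pulls back into the Weyl chamber of $A_i$ opposite $S'$ at $z$, so that any Weyl chamber based at $z$ containing $p$ has germ opposite $\Delta_z S'$. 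The main obstacle is to realize a single $z$ meeting both the containment condition $z \in \bigcap_i S^{(i)}$ and the positional condition on $r(p)$; both amount to choosing $z$ sufficiently deep along $S$, hence they can be arranged simultaneously.
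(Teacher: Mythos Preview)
Your argument has a genuine gap at the second step. You claim that because each $A_i$ contains the germ $\Delta_{z_0}S$, its boundary at infinity contains $\partial S$, and hence $A_i$ contains a Weyl chamber parallel to $S$. This inference is false: containing a germ is a purely local condition and does not force the apartment to contain the corresponding chamber at infinity. Already in a $\Lambda$-tree one sees lines that share an initial segment of a ray $S$ and then branch off; such a line contains the germ of $S$ at its basepoint yet contains no sub-ray of $S$. Consequently there is no reason for the apartments $A_i$ produced by Lemma~\ref{Lem_FC'} to share a sub-Weyl chamber $S^{(i)}\subset S$ with $A$, and without that your construction of $z$ and $S'$ collapses. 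Repairing this by demanding that the $A_i$ contain $\partial S$ at infinity would amount to invoking (FC), which is precisely what this lemma is a step towards.

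The paper sidesteps the issue by not working through a finite cover at all. It uses only that $Y$ is bounded: take $x'\in A$ and $\lambda$ with $Y\subset B_\lambda(x')$, let $Z=B_\lambda(x')\cap A$, and choose a Weyl chamber $S''$ in $A$ opposite $S$ with $Z\subset S''$; its basepoint is $z$ and $S'$ is the Weyl chamber at $z$ parallel to $S$. Since $r=r_{A,\Delta_zS}$ is distance non-increasing (Proposition~\ref{Prop_retraction}) and fixes $x'$, one gets $r(Y)\subset Z\subset S''$ directly, with no circularity. Then for $q\in S'$ and $p\in Y$ one has $d(q,r(p))=d(q,z)+d(z,r(p))=d(q,z)+d(z,p)$, whence the triangle inequality together with $d(q,r(p))\leq d(q,p)$ forces $d(q,p)=d(q,r(p))$ and $z\in\seg(p,q)$; the oppositeness assertion follows.
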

\begin{proof}
Let $x'$ be some point in $A$ and let $\lambda$ be such that $Y$ is contained in $B_\lambda(x')$. Define $Z\define B_\lambda(x')\cap A$  and let $S''$ be a Weyl chamber in $A$ opposite $S$ such that $Z$ is contained in $S''$. Denote by $z$ its basepoint and let $S'$ be the Weyl chamber at $z$ parallel to $S$.
Lemma \ref{Lem_oppSectors} and the fact that $r(Y)\subset Z\subset S''$ imply 
$$d(q,r(p))=d(q,z)+d(z,r(p)) = d(q,z) + d(z,p)$$
for all $q\in S'$ and all $p\in Y$. Therefore $z$ is contained in the segment of $p$ and $q$.
By Lemma \ref{Prop_SegInApp} $p,q$ and $z$ are contained in a common apartment.
\end{proof}

\begin{prop}\label{Prop_FC}
Let $x$ and $y$ be points in $X$ both contained in an apartment $A$. For any $c\in \binfinity X$ the following is true:
\begin{itemize}[label={(FC')}, leftmargin=*]
  \item[$\mathrm{(FC)}$] The segment $\seg(x,y)$ of $x$ and $y$ is contained in a finite union of apartments containing $c$ at infinity.
\end{itemize}
\end{prop}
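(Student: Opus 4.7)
The plan is to derive (FC) directly from the already-established local cover condition (FC') of Lemma \ref{Lem_FC'} by choosing the germ there to point in the direction of $c$. Concretely, I would fix an arbitrary base point $z \in X$ (for definiteness, $z = x$) and, using Corollary \ref{Cor_WeylChamber}, take the unique Weyl chamber $S$ based at $z$ with $\partial S = c$. Setting $\mu := \Delta_z S$ and applying the second assertion of Lemma \ref{Lem_FC'} to this germ, I obtain a finite list of apartments $A_1, \ldots, A_n$, each containing $\mu$, whose union covers $\seg(x,y)$. This reduces the problem to verifying that every apartment $A_i$ in this list actually has $c$ in its boundary at infinity.

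For the remaining verification I would argue as follows. Since $A_i$ contains the germ $\mu$, it contains a full Weyl chamber $S_i$ based at $z$ with $\Delta_z S_i = \mu$ (the germ extends uniquely within $A_i$ to a cone, by the definition of an apartment as the image of the model space). On the other hand, Corollary \ref{Cor_WeylChamber} tells us that for each parallel class at infinity there is exactly one Weyl chamber of that class based at $z$ in all of $X$. Since $S$ and $S_i$ share a germ at $z$, they share a direction at infinity and therefore coincide. Hence $S \subset A_i$ and $c = \partial S \in \partial A_i$, which is precisely (FC).

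The substantive work has already been carried out in Lemma \ref{Lem_FC'}; the only conceptual point here is the assertion that a Weyl chamber based at $z$ is determined by its germ at $z$, which I expect to be the mildest of obstacles, since it is simply the composition of the bijection between $z$-based Weyl chambers and chambers of the residue $\Delta_z X$ with the bijection between parallel classes of Weyl chambers at $z$ and chambers at infinity. In short, the proof should be a one-step reduction: pick the germ so that (FC') produces apartments forced to contain $c$ at infinity, and invoke uniqueness of $z$-based Weyl chambers in a given direction to make the containment automatic.
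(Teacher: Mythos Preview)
Your reduction has a genuine gap at the point where you claim that two Weyl chambers based at $z$ which share a germ must coincide. This is false: the map $\pi_z:\binfinity X\to\Delta_zX$ of Proposition~\ref{Prop_epi} is only an \emph{epimorphism}, so distinct chambers at infinity can give the same germ at $z$. Concretely, in a $\Lambda$-tree take a point $z$ and a branch point $p$ at positive distance from $z$ with two ends $c_1,c_2$ beyond $p$; the two rays $\overrightarrow{zc_1}$ and $\overrightarrow{zc_2}$ are distinct Weyl chambers based at $z$ with identical germ. Any apartment through $z$ and $c_2$ contains the germ $\mu=\Delta_z\overrightarrow{zc_1}$ but does \emph{not} contain $c_1$ at infinity. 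Hence the apartments supplied by Lemma~\ref{Lem_FC'} with $\mu=\Delta_zS$ need not see $c=\partial S$ at all, and your last step---``$S_i$ and $S$ share a germ, hence $S_i=S$''---fails.

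This is precisely why the paper does not argue via the germ pointing toward $c$. Instead it first moves the basepoint $z$ far into the $c$-direction (Lemma~\ref{Lem_tec40}) so that every Weyl chamber from $z$ meeting $\seg(x,y)$ has germ \emph{opposite} to the germ of $c$ at $z$. Opposition is the decisive ingredient: by Corollary~\ref{Cor_CO}, two Weyl chambers with opposite germs at $z$ lie in a unique common apartment, and that apartment contains the \emph{full} Weyl chamber toward $c$, hence $c\in\partial A_i$. Finiteness then comes from (FC'). Your approach only secures the germ $\mu$ inside each $A_i$, which is strictly weaker; to repair it you would need exactly the opposition argument the paper uses.
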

\begin{proof}
Lemma \ref{Lem_tec40} combined with Proposition \ref{Prop_SegInApp} implies that there exists a point $z$ such that for each $p\in \seg(x,y)$ there is a Weyl chamber $S$ based at $z$ containing $p$ which is in $\Delta_zX$ opposite the unique representative of $c$ based at $z$. By Corollary \ref{Cor_CO}
these two are contained in a common apartment. Together with $(FC')$ the assertion follows.
\end{proof}

As already argued in Corollary \ref{Cor_rho-distancediminishing} condition (FC) implies that the retractions centered at infinity, defined in \ref{Def_retractionInfty}, are distance non-increasing.

\newpage
\end{appendix}

\phantomsection
\renewcommand{\refname}{Bibliography}
\addcontentsline{toc}{section}{\refname}
\bibliography{files/literaturliste}
\bibliographystyle{alpha}
\newpage

\end{document}